\tikzset{double line with arrow/.style args={#1,#2}{decorate,decoration={markings, mark=at position 0 with {\coordinate (ta-base-1) at (0,1pt);
\coordinate (ta-base-2) at (0,-1pt);},
mark=at position 1 with {\draw[#1] (ta-base-1) -- (0,1pt);
\draw[#2] (ta-base-2) -- (0,-1pt);
}}}}
\tikzstyle{level 1}=[level distance=2.4cm, sibling distance=6.5cm]
\tikzstyle{level 2}=[level distance=2.4cm, sibling distance=2.5cm]
\tikzstyle{level 3}=[level distance=3cm, sibling distance=0.8cm]
\newcommand{\listref}[1]{{\color{ForestGreen}(}\ref{#1}{\color{ForestGreen})}}
\newcommand{\Cc}{\mathbb{C}}
\newcommand{\Zz}{\mathbb{Z}}
\newcommand{\Ff}{\mathbb{F}}
\newcommand{\Nn}{\mathbb{N}}
\newcommand{\Qq}{\mathbb{Q}}
\newcommand{\Pp}{\mathbb{P}}
\newcommand{\Aa}{\mathbb{A}}
\newcommand{\Bb}{\mathbb{B}}
\newcommand{\Ll}{\mathbb{L}}
\newcommand{\Mm}{\mathbb{M}}
\newcommand{\Dd}{\mathbb{D}}
\newcommand{\solid}{\otimes^{\scalebox{0.5}{$\blacksquare$}}}
\newcommand{\dsolid}{\otimes^{\LL \scalebox{0.5}{$\blacksquare$}}}
\newcommand{\solidif}{\scalebox{0.5}{$\blacksquare$}}
\newcommand{\cl}[1]{\mathcal{#1}}
\newcommand{\fr}[1]{\mathfrak{#1}}
\DeclareMathOperator*{\colim}{colim}
\DeclareMathOperator{\ett}{\acute{e}t}
\DeclareMathOperator{\pet}{pro\acute{e}t}
\DeclareMathOperator{\qpet}{qpro\acute{e}t}
\DeclareMathOperator{\an}{an}
\DeclareMathOperator{\rk}{rk}
\DeclareMathOperator{\Gal}{Gal}
\DeclareMathOperator{\Hom}{Hom}
\DeclareMathOperator{\coker}{coker}
\DeclareMathOperator{\Spec}{Spec}
\DeclareMathOperator{\Spa}{Spa}
\DeclareMathOperator{\dR}{dR}
\DeclareMathOperator{\Fil}{Fil}
\DeclareMathOperator{\gr}{gr}
\DeclareMathOperator{\Rig}{Rig}
\DeclareMathOperator{\RigSm}{RigSm}
\DeclareMathOperator{\Mod}{Mod}
\DeclareMathOperator{\nonint}{n-int}
\DeclareMathOperator{\Kos}{Kos}
\DeclareMathOperator{\Vect}{Mod}
\DeclareMathOperator{\im}{im}
\DeclareMathOperator{\cond}{cond}
\DeclareMathOperator{\Cond}{Cond}
\DeclareMathOperator{\qcqs}{qcqs}
\DeclareMathOperator{\Set}{Set}
\DeclareMathOperator{\Shv}{Shv}
\DeclareMathOperator{\Ab}{Ab}
\DeclareMathOperator{\Solid}{Solid}
\DeclareMathOperator{\ssolid}{solid}
\DeclareMathOperator{\LL}{L}
\DeclareMathOperator{\FF}{FF}
\DeclareMathOperator{\Nuc}{Nuc}
\DeclareMathOperator{\Perf}{Perf}
\DeclareMathOperator{\CondAb}{CondAb}
\DeclareMathOperator{\ExtrDisc}{ExtrDisc}
\DeclareMathOperator{\trace}{tr}
\DeclareMathOperator{\Bun}{Bun}
\DeclareMathOperator{\st}{st}
\DeclareMathOperator{\Sym}{Sym}
\DeclareMathOperator{\cris}{cris}
\DeclareMathOperator{\HK}{HK}
\DeclareMathOperator{\syn}{syn}
\DeclareMathOperator{\fib}{fib}
\DeclareMathOperator{\Spf}{Spf}
\DeclareMathOperator{\Tor}{Tor}
\DeclareMathOperator{\gp}{gp}
\DeclareMathOperator{\id}{id}
\DeclareMathOperator{\Fun}{Fun}
\DeclareMathOperator{\op}{op}
\DeclareMathOperator{\Beil}{Beil}
\DeclareMathOperator{\Ch}{Ch}
\DeclareMathOperator{\Dim}{Dim}
\DeclareMathOperator{\eh}{\acute{e}h}
\DeclareMathOperator{\CRIS}{CRIS}
\DeclareMathOperator{\INF}{INF}
\DeclareMathOperator{\hyp}{hyp}
\DeclareMathOperator{\Bl}{Bl}
\DeclareMathOperator{\redd}{red}
\DeclareMathOperator{\sm}{sm}
\DeclareMathOperator{\sss}{ss}
\DeclareMathOperator{\nilp}{nilp}
\DeclareMathOperator{\BMS}{BMS}
\DeclareMathOperator{\htt}{ht}
\DeclareMathOperator{\QCoh}{QCoh}
\DeclareMathOperator{\CoAdm}{CoAd}
\DeclareMathOperator{\eq}{eq}
\DeclareMathOperator{\rig}{rig}
\DeclareMathOperator{\PD}{PD}
\DeclareMathOperator{\Isoc}{Isoc}
\DeclareMathOperator{\Proj}{Proj}
\DeclareMathOperator{\ord}{ord}
\DeclareMathOperator{\Ani}{Ani}
\DeclareMathOperator{\CN}{CN}
\DeclareMathOperator{\ccl}{cl}
\DeclareMathOperator{\psh}{psh}
\DeclareMathOperator{\sht}{sht}
\DeclareMathOperator{\Hdg}{Hdg}
\DeclareMathOperator{\Lie}{Lie}
\numberwithin{equation}{section}
\newtheorem{theorem}{Theorem}
\numberwithin{theorem}{section}
\newtheorem{lemma}[theorem]{Lemma}
\newtheorem{cor}[theorem]{Corollary}
\newtheorem{prop}[theorem]{Proposition}
\newtheorem{conj}[theorem]{Conjecture}
\theoremstyle{definition}
\newtheorem{df}[theorem]{Definition}
\newtheorem{convnot}[theorem]{Notation and conventions}
\newtheorem{notation}[theorem]{Notation}
\newtheorem{construction}[theorem]{Construction}
\newtheorem*{Acknowledgments}{Acknowledgments}
\theoremstyle{remark}
\newtheorem{rem}[theorem]{Remark}
\newtheorem{example}[theorem]{Example}
 \def\MR#1{}
\date{\today}
\renewcommand{\address}[1]{\gdef\@address{#1}}
\renewcommand{\email}[1]{\gdef\@email{\url{#1}}}
\newcommand{\@endstuff}{\par\vspace{\baselineskip}\noindent\small
\begin{tabular}{@{}l}\scshape\@address\\\textit{E-mail address:} \@email\end{tabular}}
\title[Rational $p$-adic Hodge theory for rigid-analytic varieties]{Rational $p$-adic Hodge theory for rigid-analytic varieties}
\author{Guido Bosco}
\address{Max-Planck-Institut f\"ur Mathematik, Vivatsgasse 7, 53111 Bonn, Germany}
\email{bosco@mpim-bonn.mpg.de}
\begin{document}
 
\begin{abstract}
 We study a cohomology theory for rigid-analytic varieties over $\Cc_p$,  without properness or smoothness assumptions, taking values in filtered quasi-coherent complexes over the Fargues--Fontaine curve, which compares to other rational $p$-adic cohomology theories for rigid-analytic varieties --- namely, the rational $p$-adic pro-étale cohomology, the Hyodo--Kato cohomology, and the infinitesimal cohomology over the positive de Rham period ring. In particular, this proves a conjecture of Le Bras. Such comparison results are made possible thanks to the systematic use of the condensed and solid formalisms developed by Clausen--Scholze. As applications, we deduce some general comparison theorems that describe the rational $p$-adic pro-étale cohomology in terms of de Rham data, thereby recovering and extending results of Colmez--Nizioł.
\end{abstract}

\maketitle

\setcounter{tocdepth}{2}

\tableofcontents

\section{\textbf{Introduction}}
 \sectionmark{}
 In this introduction, we fix a prime number $p$. We denote by $K$ a complete discretely valued non-archimedean extension of $\Qq_p$, with perfect residue field $k$, and ring of integers $\cl O_K$.  We fix an algebraic closure $\overline K$ of $K$. We denote by $C:=\widehat{\overline K}$ the completion of $\overline K$, and by $\cl O_C$ its ring of integers. We let $\mathscr{G}_K:=\Gal(\overline K/K)$ denote the absolute Galois group of $K$.  We fix a compatible system $(1, \varepsilon_p, \varepsilon_{p^2}, \ldots)$ of $p$-th power roots of unity in $\cl O_C$, which defines an element $\varepsilon\in \cl O_C^{\flat}$ with Teichm\"uller lift $[\varepsilon]\in A_{\inf}=W(\cl O_C^\flat)$.

 \subsection{Background and motivation}
 In the last decade, the field of $p$-adic Hodge theory has witnessed dramatic advances, starting with Scholze's development of perfectoid geometry, and Fargues--Fontaine's discovery of \textit{the fundamental curve}. In particular, Scholze initiated the study of the $p$-adic Hodge theory for rigid-analytic varieties in \cite{Scholze}, proving the finiteness of the geometric $p$-adic étale cohomology of proper smooth rigid-analytic varieties, as well as the de Rham comparison theorem for such varieties. The latter was known before only for algebraic varieties, and we refer the reader to \cite{NizSurvey} for a historical account on the de Rham, crystalline, and semistable conjectures for algebraic varieties. After Scholze's work, there were efforts by a number of people to prove a version of the crystalline/semistable conjecture for proper rigid-analytic varieties having good/semistable reduction, culminating in the following theorem.

 \begin{theorem}[\cite{CN1}, \cite{BMS1}, \cite{CK}]\label{tostart}
  Let $\fr X$ be a proper $p$-adic formal scheme over $\cl  O_K$ with semistable reduction. We write $\fr X_C$ for the geometric rigid-analytic generic fiber of $\fr X$. Let $i\ge 0$. There is a natural isomorphism
  \begin{equation}\label{initiall}
   H_{\ett}^i(\fr X_C, \Qq_p)\otimes_{\Qq_p}B_{\st}\cong H_{\cris}^i(\fr X_k/W(k)^0)\otimes_{W(k)}B_{\st}
  \end{equation}
  compatible with the Galois $\mathscr{G}_K$, Frobenius $\varphi$ and monodromy $N$ actions, and filtrations.\footnote{Here, we write $H^i_{\cris}$ for the log-crystalline cohomology, $W(k)^0$ denotes the log structure on $W(k)$ associated to $(\Nn\to W(k), 1\mapsto 0)$, and $\fr X_k$ is endowed with the pullback of the canonical log structure on $\fr X$.}
  
  In particular, there is natural $\mathscr{G}_K$-equivariant isomorphism
  \begin{equation}\label{dRdata}
   H_{\ett}^i(\fr X_C, \Qq_p)\cong (H_{\cris}^i(\fr X_k/W(k)^0)\otimes_{W(k)}B_{\st})^{N=0, \varphi=1}\cap \Fil^0(H_{\dR}^i(\fr X_K)\otimes_K B_{\dR}).
  \end{equation}
 \end{theorem}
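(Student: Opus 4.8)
The plan is to prove \eqref{initiall} by realizing both of its sides as specializations of a single cohomology theory attached to the $p$-adic formal scheme $\fr X$ --- the $A_{\inf}$-cohomology of Bhatt--Morrow--Scholze, in the logarithmic (semistable) refinement of \v{C}esnavi\v{c}ius--Koshikawa --- and then to deduce \eqref{dRdata} from \eqref{initiall} via Fontaine's reconstruction of semistable Galois representations.

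First I would attach to $\fr X$, functorially, a perfect complex $R\Gamma_{A_{\inf}}(\fr X)$ of $A_{\inf}$-modules, formed using the log structure on $\Spf A_{\inf}$ associated with the map $\Nn\to A_{\inf}$, $1\mapsto[\pi^\flat]$ (for a compatible system $\pi^\flat=(\pi,\pi^{1/p},\ldots)$ of $p$-power roots of a uniformizer $\pi$ of $\cl O_K$), and equipped with a Frobenius-semilinear operator $\varphi$ and a monodromy operator $N$ satisfying $N\varphi=p\varphi N$. The core input is then three specialization statements: (i) an \emph{\'etale comparison} --- inverting $\mu:=[\varepsilon]-1$, which trivializes the log structure, gives $R\Gamma_{A_{\inf}}(\fr X)\otimes_{A_{\inf}}A_{\inf}[\tfrac1\mu]\simeq R\Gamma_{\ett}(\fr X_C,\Zz_p)\otimes_{\Zz_p}A_{\inf}[\tfrac1\mu]$, compatibly with $\varphi$ and with the $\mathscr{G}_K$-action; (ii) a \emph{log-crystalline comparison} --- base change along $A_{\inf}\to A_{\cris}$, and along $A_{\inf}\to B_{\st}$ after adjoining the monodromy variable $\log[\pi^\flat]$, computes the log-crystalline cohomology of the mod-$p$ fibre $\fr X_{\cl O_C/p}$, which by base change for log-crystalline cohomology along the residue-field reductions is identified, $\varphi$- and $N$-equivariantly, with $R\Gamma_{\cris}(\fr X_k/W(k)^0)\otimes_{W(k)}A_{\cris}$, resp.\ $\otimes_{W(k)}B_{\st}$, up to the usual Frobenius twist; and (iii) a \emph{de Rham comparison} --- base change along Fontaine's map $\tilde\theta\circ\varphi^{-1}\colon A_{\inf}\to\cl O_C$, followed by $p$-adic completion, computes $R\Gamma_{\dR}(\fr X_K)\otimes_K B_{\dR}^+$ with its filtration, once $R\Gamma_{A_{\inf}}(\fr X)$ has been endowed with the appropriate Nygaard-type filtration (here one uses that the proper rigid generic fibre $\fr X_C$ of $\fr X$ is smooth).

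Granting (i)--(iii), I would pass to $B_{\st}$, the period ring obtained from $B_{\cris}$ by adjoining the monodromy variable $\log[\pi^\flat]$. Since $\mu$ is a unit in $B_{\st}$, (i) base-changes to $R\Gamma_{A_{\inf}}(\fr X)\otimes_{A_{\inf}}B_{\st}\simeq R\Gamma_{\ett}(\fr X_C,\Qq_p)\otimes_{\Qq_p}B_{\st}$, while (ii) base-changes to $R\Gamma_{A_{\inf}}(\fr X)\otimes_{A_{\inf}}B_{\st}\simeq R\Gamma_{\cris}(\fr X_k/W(k)^0)\otimes_{W(k)}B_{\st}$; both identifications are compatible with $\varphi$, $N$, and $\mathscr{G}_K$. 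Composing them and passing to cohomology in degree $i$ --- which commutes with the base changes since $\Qq_p\to B_{\st}$ and $W(k)\to B_{\st}$ are flat --- gives \eqref{initiall}. Compatibility with filtrations is supplied by (iii): base changing further along $B_{\dR}^+\to B_{\dR}$ and using the standard identification $H^i_{\cris}(\fr X_k/W(k)^0)\otimes_{W(k)}B_{\dR}\cong H^i_{\dR}(\fr X_K)\otimes_K B_{\dR}$, one checks that the filtration induced on $H^i_{\ett}(\fr X_C,\Qq_p)\otimes_{\Qq_p}B_{\dR}$ by \eqref{initiall} coincides with the one defined using the de Rham filtration on $H^i_{\dR}(\fr X_K)$.

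The step I expect to be the main obstacle is the log-crystalline comparison (ii): constructing $R\Gamma_{A_{\inf}}(\fr X)$ together with its log structure so that the \'etale \emph{and} the log-crystalline specializations hold simultaneously, and matching the monodromy $N$ on the $A_{\inf}$-side with the Hyodo--Kato monodromy on the special fibre --- this is where the interplay of $A_{\cris}$, the divided powers, the monodromy variable, and the Frobenius twists is most delicate, and where one genuinely uses that $\fr X$ has semistable, rather than arbitrary, reduction. Finally, \eqref{dRdata} is a formal consequence of \eqref{initiall}: with all its compatibilities, \eqref{initiall} exhibits $V:=H^i_{\ett}(\fr X_C,\Qq_p)$ as a semistable $\mathscr{G}_K$-representation with $D_{\st}(V)\cong H^i_{\cris}(\fr X_k/W(k)^0)[\tfrac1p]$ as filtered $(\varphi,N)$-modules over $K_0:=W(k)[\tfrac1p]$, and the general reconstruction formula $V=\big(D_{\st}(V)\otimes_{K_0}B_{\st}\big)^{N=0,\,\varphi=1}\cap\Fil^0\big(D_{\st}(V)_K\otimes_K B_{\dR}\big)$, valid for every semistable representation by Fontaine's theory --- ultimately by the fundamental exact sequence $0\to\Qq_p\to B_{\cris}^{\varphi=1}\to B_{\dR}/B_{\dR}^+\to0$ --- is precisely \eqref{dRdata}.
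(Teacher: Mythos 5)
Your outline is correct and is essentially the $A_{\inf}$-cohomology strategy of Bhatt--Morrow--Scholze and Česnavičius--Koshikawa, which is precisely the route the paper cites for Theorem \ref{tostart} (the theorem is quoted from \cite{CN1}, \cite{BMS1}, \cite{CK} rather than reproved here) and which the paper explicitly describes as the approach it builds upon: construct $R\Gamma_{A_{\inf}}(\fr X)$ with its log structure, Frobenius and monodromy, specialize to étale cohomology after inverting $\mu$ and to log-crystalline cohomology over $A_{\cris}$/$B_{\st}$, match filtrations via the de Rham specialization, and deduce (\ref{dRdata}) from Fontaine's reconstruction formula for semistable representations. The step you flag as delicate --- the log-crystalline comparison and the matching of monodromy operators --- is indeed where the technical weight of \cite{CK} lies.
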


 We remark that Colmez--Nizioł's strategy in \cite{CN1} relies on a generalization of the syntomic method initiated by Fontaine--Messing, and later refined by Hyodo, Kato and Tsuji. Instead, Bhatt--Morrow--Scholze's strategy, in their epoch-making work \cite{BMS1}, is based on the construction of a cohomology theory for smooth $p$-adic formal schemes $\fr Z$ over $\cl O_C$ (generalized to the semistable case by Česnavičius--Koshikawa in \cite{CK}), called \textit{$A_{\inf}$-cohomology} $R\Gamma_{A_{\inf}}(\fr Z)$, which in the proper case specializes to the (log-)crystalline cohomology of the special fiber and the étale cohomology of the generic fiber, thus allowing to compare the latter two as in Theorem \ref{tostart}. \medskip
 
 More recently, in a series of papers, Colmez--Nizioł, partially in joint work with Dospinescu, further generalized the syntomic method to study the rational $p$-adic Hodge theory of smooth rigid-analytic varieties, which are neither assumed to be proper, nor having semistable reduction, \cite{CDN1}, \cite{CN}, \cite{CN4}, \cite{CN5}. These works are motivated in part by the desire of finding a geometric incarnation of the $p$-adic Langlands correspondence in the $p$-adic cohomology of local Shimura varieties, as partially indicated by \cite{CDN0}. \medskip
 
 In order to state the goals of this paper, we will denote by 
 $$Y_{\FF}:=\Spa(A_{\inf}, A_{\inf})\setminus V(p[p^\flat])$$
 the mixed characteristic punctured open unit disk, $$\FF:=Y_{\FF}/\varphi^{\Zz}$$ the adic Fargues--Fontaine curve (relative to $C^\flat$ and $\Qq_p$), and we fix $\infty$ the $(C, \cl O_C)$-point of $\FF$ corresponding to Fontaine's map $\theta: A_{\inf}\to \cl O_C$. \medskip

  As observed by Fargues, Theorem \ref{tostart} can be reformulated as a natural isomorphism of $\mathscr{G}_K$-equivariant vector bundles on $\FF$
  \begin{equation}\label{fain}
   H^i_{\ett}(\fr X_C, \Qq_p)\otimes_{\Qq_p}\cl O_{\FF}\cong \cl E(H_{\cris}^i(\fr X_k/W(k)^0)_{\Qq_p}, \varphi, N, \Fil)
  \end{equation}
  where the right-hand side of (\ref{fain}) denotes the vector bundle on $\FF$ associated to the filtered $(\varphi, N)$-module $H_{\cris}^i(\fr X_k/W(k)^0)_{\Qq_p}$.
  Since the left-hand side of (\ref{fain}) depends only on the geometric generic fiber of $\fr X$, it is natural to ask whether one can give a more direct cohomological construction of the right-hand side that also depends only on the generic fiber, that interpolates between $H^i_{\ett}(\fr X_C, \Qq_p)$ and the filtered $(\varphi, N)$-module $H_{\cris}^i(\fr X_k/W(k)^0)_{\Qq_p}$, and that allows to prove extensions of the comparison (\ref{fain}) to any rigid-analytic variety over $C$. \medskip
  
 Our first goal in this article will be to give a positive answer to the latter question by extending Bhatt--Morrow--Scholze's strategy and  building crucially upon Le Bras' work \cite{LeBras2}. We will study a cohomology theory for rigid-analytic varieties $X$ over $C$, taking values in filtered quasi-coherent complexes over the Fargues--Fontaine curve $\FF$, which compares to other rational $p$-adic cohomology theories for rigid-analytic varieties over $C$, without properness or smoothness assumptions --- namely, the rational $p$-adic pro-étale cohomology, the Hyodo--Kato cohomology (\cite[\S 4]{CN4}),\footnote{As we will see, the \textit{Hyodo--Kato cohomology} is a cohomology theory for rigid-analytic varieties of $C$ which refines the de Rham cohomology and, in the case of Theorem \ref{tostart}, compares to the rational log-crystalline cohomology of the special fiber.} and the infinitesimal cohomology over $B_{\dR}^+$ (\cite[\S 13]{BMS1}, \cite{Guo2}). %\footnote{ We refer the reader to \S \ref{linkfor} for an explanation on the relation between the $B$-cohomology theory and the $A_{\inf}$-cohomology theory studied in \cite{BMS1 } and \cite{CK}.} 
 In particular, this will allow us to obtain a general comparison theorem for rigid-analytic varieties defined over a $p$-adic field, describing the geometric rational $p$-adic pro-étale cohomology in terms of de Rham data, extending (\ref{dRdata}), and recovering and generalizing the above-mentioned results of Colmez--Nizioł.

 \subsection{$B$-cohomology} In the following, we denote by $B$ the ring of analytic functions on $Y_{\FF}$. \medskip
 
 To pursue the goals stated in the previous section, we begin by defining the \textit{$B$-cohomology theory} for rigid-analytic varieties over $C$. Then, we shall explain how this cohomology theory interpolates several other rational $p$-adic cohomology theories, and how to interpret our main comparison theorems in terms of the Fargues--Fontaine curve $\FF$. \medskip
 
 For the reader willing to assume that $X$ is smooth in Definition \ref{dfintro} below, we note that, in this case, the $\eh$-site of $X$ (\cite[\S 2]{Guo1}, \S \ref{recalleh}) can be replaced by the étale site of $X$ (Proposition \ref{3.11}). Our main results on the $B$-cohomology theory are already new in the smooth case.

 \begin{df}[cf. Definition \ref{defdr}]\label{dfintro}
   Let $X$ be a rigid-analytic variety over $C$.  We denote by $\alpha:X_{v}\to X_{\eh}$ the natural morphism from the $v$-site to the $\eh$-site of $X$.
   \begin{enumerate}[(i)]
    \item We define the \textit{$B$-cohomology} of $X$ as 
   $$R\Gamma_{B}(X):=R\Gamma_{\eh}(X, L\eta_tR\alpha_*\Bb)$$
   where $\Bb$ denotes the $v$-site sheaf theoretic version of the ring $B$, and we write $L\eta_t(-)$ for the d\'ecalage functor with respect to $t=\log([\varepsilon])\in B$, i.e. Fontaine's $2\pi i$.
   \item  We define  the \textit{$B_{\dR}^+$-cohomology} of $X$ as 
   $$R\Gamma_{B_{\dR}^+}(X):=R\Gamma_{\eh}(X, L\eta_tR\alpha_*\Bb_{\dR}^+)$$
   where $\Bb_{\dR}^+$ is the $v$-site sheaf theoretic version of the ring $B_{\dR}^+$.
   \end{enumerate}   
   We endow both $R\Gamma_{B}(X)$ and $R\Gamma_{B_{\dR}^+}(X)$ with the \textit{filtration d\'ecal\'ee}, coming from Bhatt--Morrow--Scholze's interpretation of the décalage functor in terms of the connective cover functor for the Beilinson $t$-structure (Definition \ref{beilifildef}). 
   The Frobenius automorphism of $\Bb$ induces a $\varphi_B$-semilinear automorphism
  $$\varphi: R\Gamma_B(X)\to R\Gamma_B(X)$$ which preserves the filtration décalée.
 \end{df}

 \begin{rem}[Le Bras’ work]\label{remcris}
  We recall that, in the paper \cite{LeBras2}, Le Bras introduced and studied (an overconvergent version of) the $B$-cohomology theory for smooth rigid-analytic varieties over $C$. In particular, building upon results of \cite{BMS1}, for $\fr Z$ a smooth proper $p$-adic formal scheme over $\cl O_C$, he compared the $B$-cohomology of the rigid-analytic generic fiber of $\fr Z$ with the crystalline cohomology of the special fiber of $\fr Z$, \cite[Proposition 6.5]{LeBras2}.
 \end{rem}

  In the following, we denote by $F$ the fraction field of the ring of Witt vectors $W(k)$, we write $\breve F$ for the completion of the maximal unramified extension of $F$ in $\overline K$ and we denote by $\cl O_{\breve F}$ its ring of integers. \medskip
  
  To motivate our first main result on the $B$-cohomology theory, we recall that in \cite[\S 4]{CN4}, for smooth rigid-analytic varieties $X$ over $C$, Colmez--Nizioł (adapting a construction of Beilinson in the case of algebraic varieties \cite{Beili}), via the alterations of Hartl and Temkin, defined a \textit{Hyodo--Kato cohomology theory} $$R\Gamma_{\HK}(X)$$  taking values in the derived category of $(\varphi, N)$-modules over $\breve F$, which refines the de Rham cohomology $R\Gamma_{\dR}(X)$, and in the case $X$ has a semistable formal model $\fr X$ over $\cl O_C$ it is given by the rational log-crystalline cohomology $R\Gamma_{\cris}(\fr X_{\cl O_C/p}^0/\cl O_{\breve F}^0)_{\Qq_p}$ (see also \S \ref{sHK}, and in particular Theorem \ref{mainHK}).

  At this point, based on Le Bras' work (Remark \ref{remcris}), it was natural to ask how the $B$-cohomology compares to the Hyodo--Kato cohomology, and whether, at least in the proper case, the latter cohomology theory (which is defined using log-geometry) can be recovered from the former (which is defined directly in terms of the generic fiber). To answer this question, the difficulty is twofold: the first issue comes from the very definition of the Hyodo--Kato cohomology, which forces us to construct a comparison morphism with the $B$-cohomology locally, and in a functorial way, using log-geometry;\footnote{Likewise, as explained to us by Česnavičius and Le Bras, it is a priori not clear whether the absolute crystalline comparison isomorphism for the $A_{\inf}$-cohomology in the semistable case, constructed in \cite[Theorem 5.4]{CK}, is functorial.} the second issue is of topological nature, since, locally, $R\Gamma_{\HK}(X)$ is in general not a perfect complex over $\breve F$. To avoid the topological issues, one could instead study an overconvergent version of the desired comparison (cf. \cite[Conjecture 6.3]{LeBras2}), however this makes the first mentioned difficulty even more challenging. \medskip
  
  As in our previous work \cite{Bosco}, we overcome the topological issues via the condensed mathematics recently developed by Clausen--Scholze, and we refer the reader to the introduction of \textit{loc. cit.} for a more exhaustive explanation of the relevance of the condensed and solid formalism in the study of the $p$-adic Hodge theory for rigid-analytic varieties. \medskip
  
  Thus, given a condensed ring $A$,\footnote{All condensed rings will be assumed to be commutative and unital. Moreover, we refer the reader to \ref{convent} for the set-theoretic conventions we adopt.} we denote by $\Mod_A^{\cond}$ the category of $A$-modules in condensed abelian groups, and, for $A$ a solid ring, we denote by $\Mod_A^{\ssolid}$ the symmetric monoidal subcategory of $A$-modules in solid abelian groups, endowed with the solid tensor product $\solid_A$. We denote by $D(\Mod_A^{\cond})$ and $D(\Mod_A^{\ssolid})$ the respective derived $\infty$-categories. \medskip

  Our first main result is the following.
 
 \begin{theorem}[cf. Theorem \ref{B=HK}, Theorem \ref{B_dR=dR}, Theorem \ref{secondstep1}, Theorem \ref{compatib2}]\label{mainn1}
   Let $X$ be a connected, paracompact, rigid-analytic variety defined over $C$. 
   \begin{enumerate}[(i)]
    \item We have a natural isomorphism in $D(\Mod^{\ssolid}_{B})$
   \begin{equation}\label{decaBB}
    R\Gamma_{B}(X)\simeq (R\Gamma_{\HK}(X)\dsolid_{\breve F}B_{\log})^{N=0}
   \end{equation}
   compatible with the action of Frobenius $\varphi$, and the action of Galois $\mathscr{G}_K$ in the case when $X$ is the base change to $C$ of a rigid-analytic variety defined over $K$. 
   
   Here, $B_{\log}$ denotes the log-crystalline condensed period ring (see \S\ref{petsh}), and $R\Gamma_{\HK}(X)$ denotes the Hyodo--Kato cohomology of $X$ (Definition \ref{defhk}).\footnote{Forgetting the condensed structure, the Hyodo--Kato cohomology of $X$ agrees with the one defined by Colmez--Nizioł (\cite[\S 4]{CN4}) in the case when $X$ is smooth.}
   \item We have natural isomorphisms in $D(\Mod^{\ssolid}_{B_{\dR}^+})$
    \begin{equation*}
      R\Gamma_{B}(X)\dsolid_{B}B_{\dR}^+\simeq R\Gamma_{B_{\dR}^+}(X)\simeq R\Gamma_{\inf}(X/B_{\dR}^+)
    \end{equation*}
   compatible with the isomorphism (\ref{decaBB}). Here, $R\Gamma_{\inf}(X/B_{\dR}^+)$ denotes the infinitesimal cohomology over $B_{\dR}^+$ (\cite{Guo2}, \S \ref{guoinf}). 
   
   If $X$ is the base change to $C$ of a rigid-analytic variety $X_0$ defined over $K$, then we have a natural isomorphism in $D(\Mod^{\ssolid}_{B_{\dR}^+})$
   \begin{equation*}
     R\Gamma_{B_{\dR}^+}(X)\simeq R\Gamma_{\dR}(X_0)\dsolid_{K} B_{\dR}^+
   \end{equation*}
   compatible with the action of $\mathscr{G}_K$, and with filtrations. Here, $R\Gamma_{\dR}(X_0)$ denotes the de Rham cohomology of $X_0$ (Definition \ref{dRsingg}).
   \end{enumerate}
 \end{theorem}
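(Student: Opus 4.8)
The plan is to deduce the theorem from a single local comparison, globalized by $\eh$-descent. \emph{First}, both $X\mapsto R\Gamma_B(X)$ and $X\mapsto (R\Gamma_{\HK}(X)\dsolid_{\breve F}B_{\log})^{N=0}$ take values in $D(\Mod^{\ssolid}_B)$ and satisfy hyperdescent for the $\eh$-topology: for the $B$-cohomology this is built into Definition \ref{dfintro}, while for the Hyodo--Kato side it follows from the $\eh$-local construction of $R\Gamma_{\HK}$ (after \cite{CN4}, via the alterations of Hartl and Temkin), since both $\dsolid_{\breve F}B_{\log}$ and the passage to the $N=0$ part (a finite limit) commute with $\eh$-hyperdescent. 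Combined with semistable reduction and the structure of the $\eh$-topology (\cite{Guo1}), every $X$ admits an $\eh$-hypercover by rigid generic fibres $\fr X^{\rig}$ of semistable $p$-adic formal schemes $\fr X/\cl O_C$; so it suffices to construct a natural, $\varphi$-compatible equivalence \eqref{decaBB} for such $\fr X^{\rig}$.

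\emph{Second}, for semistable $\fr X/\cl O_C$ I would route the local comparison through the $A_{\inf}$-cohomology. One should have a natural equivalence $R\Gamma_B(\fr X^{\rig})\simeq R\Gamma_{A_{\inf}}(\fr X)\dsolid_{A_{\inf}}B$ matching the décalage $L\eta_t$ of Definition \ref{dfintro} with the $L\eta_\mu$ intrinsic to the log $A_{\inf}$-cohomology of \cite{BMS1}, \cite{CK} (note $t=\log[\varepsilon]$ and $\mu=[\varepsilon]-1$ differ by a unit of $B$, since $\log(1+\mu)/\mu$ is nowhere-vanishing on $Y_{\FF}$) and matching Frobenius. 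The crystalline comparison of \cite{CK} then rewrites the right-hand side via log-crystalline cohomology over $A_{\cris}$, and the Hyodo--Kato base change identifies the latter with $R\Gamma_{\HK}(\fr X^{\rig})\dsolid_{\breve F}B_{\log}$ compatibly with $\varphi$ and $N$; taking $N=0$ — and using $B=(B_{\log})^{N=0}$ up to the Frobenius-twist identifications — gives \eqref{decaBB} locally. This must be carried out in the solid category, as in \cite{Bosco}, because the complexes involved are not perfect over $B$.

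\emph{The principal obstacle}, flagged already in the introduction, is the \emph{functoriality} of the local equivalence: neither the crystalline comparison of \cite{CK} nor the Hyodo--Kato construction of \cite{CN4} is a priori functorial, so gluing \eqref{decaBB} along the $\eh$-hypercover is not formal. I would handle this following Colmez--Nizio\l{}: organize a functorial system of log-smooth semistable models — produced by the alterations of Hartl--Temkin, in the spirit of Beilinson — into a simplicial resolution, construct the comparison together with $\varphi$ and $N$ functorially at the level of the log-(crystalline and de Rham) sites, and verify topological coherence in the condensed/solid formalism. Galois equivariance for $X$ defined over $K$ is then automatic, as $\fr X$ may be taken over the ring of integers of a finite extension of $K$ and $\mathscr{G}_K$ acts compatibly on $A_{\inf}$, $B$, $B_{\log}$ and all cohomologies involved.

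\emph{For part (ii)}, the first equivalence follows by commuting $(-)\dsolid_B B_{\dR}^+$ past $R\Gamma_{\eh}$, $R\alpha_*$ and $L\eta_t$: on the $v$-site $\Bb\dsolid_B B_{\dR}^+\simeq\Bb_{\dR}^+$ (completion at $\infty$), and over the complete discrete valuation ring $B_{\dR}^+$, whose maximal ideal is generated by $t$, the décalage $L\eta_t$ commutes with this base change on the complexes arising above — giving also the compatibility with \eqref{decaBB}. The equivalence $R\Gamma_{B_{\dR}^+}(X)\simeq R\Gamma_{\inf}(X/B_{\dR}^+)$ I would again obtain by $\eh$-descent, reducing to the smooth case and resolving $L\eta_t R\alpha_*\Bb_{\dR}^+$ via an infinitesimal Poincaré lemma, as in \cite{Guo2}. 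Finally, when $X=X_0\times_K C$, infinitesimal cohomology over $B_{\dR}^+$ base changes from de Rham cohomology over $K$ along the canonical embedding $K\hookrightarrow B_{\dR}^+$, giving $R\Gamma_{B_{\dR}^+}(X)\simeq R\Gamma_{\dR}(X_0)\dsolid_K B_{\dR}^+$; Galois-compatibility is clear, and the compatibility of filtrations — the filtration décalée matching the convolution of the Hodge filtration on $R\Gamma_{\dR}(X_0)$ with the $t$-adic filtration on $B_{\dR}^+$ — reduces, via Bhatt--Morrow--Scholze's description of $L\eta_t$ through the Beilinson $t$-structure, to the smooth case and the classical filtered de Rham comparison.
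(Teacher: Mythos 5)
Your overall architecture (local comparison for semistable models, then globalization by $\eh$-hyperdescent over the Beilinson basis of semistable formal schemes) matches the paper's, and part (ii) is treated in essentially the paper's spirit. However, there are three points where the proposal either asserts something false or waves away exactly the step the paper is designed to handle.

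First, the globalization step has a genuine gap. You claim that $\dsolid_{\breve F}B_{\log}$ and $(-)^{N=0}$ ``commute with $\eh$-hyperdescent.'' Hyperdescent involves a totalization over $\Delta$, i.e.\ an infinite limit, and the solid tensor product does not commute with infinite limits in general. This is precisely the topological obstruction the condensed/solid formalism is introduced to overcome: the paper first trivializes the monodromy via the operator $\exp(N\cdot U)$ (using the nilpotence of $N$, Lemma \ref{nilpo}) to replace $(-\dsolid_{\breve F}B_{\log})^{N=0}$ by $-\dsolid_{\breve F}B$, and then invokes nuclearity of the local Hyodo--Kato complexes (they are represented by complexes of $\breve F$-Banach spaces) together with \cite[Corollary A.67(ii)]{Bosco} to commute $\dsolid_{\breve F}B$ past $\lim_{[n]\in\Delta}$. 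Without this input your gluing argument does not go through.

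Second, your parenthetical claim that $t=\log[\varepsilon]$ and $\mu=[\varepsilon]-1$ differ by a unit of $B$ is false: $t$ has a simple zero at every $\varphi^n(y_C)$, $n\in\Zz$, whereas $\mu$ vanishes only at $\varphi^{-n}(y_C)$ for $n\ge 0$, so $t/\mu$ vanishes at $\varphi^n(y_C)$ for $n\ge 1$. They differ by a unit only in $B_I$ for $I\subset[1/(p-1),\infty)$; this is why the paper proves the local comparison first for such intervals and then extends to all of $Y_{\FF}$ by twisting with Frobenius (using that $\varphi$ is an automorphism of the Hyodo--Kato cohomology) before taking $R\varprojlim_I$. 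Third, on functoriality: you correctly identify it as the principal obstacle, but your proposed remedy (a functorial simplicial system of semistable models à la Colmez--Nizio\l{}) does not address the actual problem, which is that the local quasi-isomorphism on a \emph{fixed} affine semistable model depends on the choice of framing $\fr X\to\Spf(R^\square)$; moreover you route through the crystalline comparison of \cite{CK}, whose functoriality the paper explicitly flags as unclear. The paper instead bypasses $A_{\inf}$-cohomology entirely, comparing $B_I$-cohomology directly with log-crystalline cohomology over $A_{\cris}$ and achieving functoriality by the ``all possible coordinates'' method: a filtered colimit over all embeddings indexed by finite sets $\Sigma,\Lambda$ (Notation \ref{notknock}), shown to collapse to a single choice of $\lambda_0$ via the explicit surjections $R_{\Sigma,\Lambda,\infty}\twoheadrightarrow R_{\Sigma,\{\lambda_0\},\infty}$. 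Some concrete replacement for this mechanism is needed for your local isomorphism to glue.
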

 
 The proof of Theorem \ref{mainn1} proceeds by constructing functorial local isomorphisms, which are then globalized using some magical properties of the solid tensor product proved by Clausen--Scholze, which rely on the theory of nuclear modules.% (see \S\ref{nuccomp})
 
 \begin{rem}[Overconvergent Fargues--Fontaine cohomology]
  Thanks to the properties of the solid tensor product, one can also easily deduce a version of Theorem \ref{mainn1} for $X$ a dagger variety over $C$. In particular, reinterpreting the latter result in terms of the Fargues--Fontaine curve (see \S \ref{FFperf}), one can deduce a generalization of \cite[Conjecture 6.3]{LeBras2}, as shown in Theorem \ref{lb}: for $i\ge 0$, given $X$ a qcqs dagger variety over $C$, the cohomology group $H^i_B(X)$ is a finite projective $\varphi$-module over $B$; then, denoting by $\cl H^i_{\FF}(X)$ the associated vector bundle on $\FF$, we have a natural isomorphism
  \begin{equation}\label{rhss}
     \cl H_{\FF}^i(X)\cong \cl E(H_{\HK}^i(X))
  \end{equation}
  where $H_{\HK}^i(X)$ is a finite $(\varphi, N)$-module over $\breve F$, and the right-hand side denotes the associated vector bundle on $\FF$; moreover, the completion at $\infty$ of (\ref{rhss}) gives a natural isomorphism
  \begin{equation*}
    \cl H_{\FF}^i(X)^\wedge_{\infty}\cong H_{\inf}^i(X/B_{\dR}^+).
  \end{equation*}
  We note that (\ref{rhss}) implies in particular that the vector bundle $\cl H_{\FF}^i(X)$ determines, up to isomorphisms, the $\varphi$-module structure on $H_{\HK}^i(X)$, and, while the latter is defined via log-geometry, the former is defined directly on the generic fiber. In addition, one can also recover from $\cl H_{\FF}^i(X)$ the ($\varphi$, $N$)-module structure on $H_{\HK}^i(X)$ (see Remark \ref{f-mod}).\footnote{%To keep also track of the monodromy action one should consider the \textit{Fargues--Fontaine surface} (see \cite[\S 10.3.3]{FF}) in place of the Fargues--Fontaine curve. 
  We also remark that, recently, Binda--Kato--Vezzani, via a motivic approach, proposed a definition of the overconvergent Hyodo--Kato cohomology theory without using log-geometry, \cite[Appendix A]{BKV}.}
 \end{rem}

 As applications, using Theorem \ref{mainn1}, via the relative fundamental exact sequence of $p$-adic Hodge theory, we show the following result.

 \begin{theorem}[Theorem \ref{fund1}]\label{derr}
  Let $X$ be a qcqs rigid-analytic variety defined over $K$. We have a $\mathscr{G}_K$-equivariant pullback square in $D(\Vect_{\Qq_p}^{\ssolid})$
 \begin{center}
   \begin{tikzcd}
    R\Gamma_{\pet}(X_C, \Qq_p) \arrow[r] \arrow[d] & (R\Gamma_{\HK}(X_C)\dsolid_{\breve F}B_{\log}[1/t])^{N=0, \varphi=1} \arrow[d]  \\
  \Fil^0(R\Gamma_{\dR}(X)\dsolid_K B_{\dR}) \arrow[r] & R\Gamma_{\dR}(X)\dsolid_K B_{\dR}.
   \end{tikzcd}
 \end{center}
 \end{theorem}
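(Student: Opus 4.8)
The strategy is to extract Theorem \ref{derr} from Theorem \ref{mainn1} by applying the relative fundamental exact sequence of $p$-adic Hodge theory to the $B$-cohomology. Recall that the fundamental exact sequence of Fargues--Fontaine, in its relative (sheafy, $v$-topological) incarnation, reads
\[
0 \to \Qq_p \to \Bb^{\varphi=1} \to \Bb_{\dR}^+/\Fil^0 \to 0
\]
on the $v$-site, and in the overconvergent / curve-theoretic picture corresponds to the excision sequence relating $\cl O_{\FF}$, the structure sheaf of the punctured curve, and the skyscraper at $\infty$. Tensoring with an appropriate period sheaf and applying $R\alpha_*$ and the décalage functor $L\eta_t$, one obtains a fiber sequence computing $R\Gamma_{\pet}(X_C,\Qq_p)$ in terms of $R\Gamma_B(X_C)[1/t]^{\varphi=1}$ and a $B_{\dR}^+$-piece; this is essentially the globalization over $X_{\eh}$ of the pointwise fundamental sequence, and the main input ensuring that $L\eta_t$ and the relevant colimits behave well is that we are working with solid coefficients, so that the needed exactness and base-change statements hold. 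The upshot is a pullback square
\[
\begin{tikzcd}
R\Gamma_{\pet}(X_C,\Qq_p) \arrow[r]\arrow[d] & (R\Gamma_B(X_C)[1/t])^{\varphi=1} \arrow[d]\\
\Fil^0\bigl(R\Gamma_{B_{\dR}^+}(X_C)\bigr) \arrow[r] & R\Gamma_{B_{\dR}^+}(X_C)[1/t]=R\Gamma_{B_{\dR}}(X_C),
\end{tikzcd}
\]
where $R\Gamma_{B_{\dR}}(X_C)$ is shorthand for $R\Gamma_{B_{\dR}^+}(X_C)\dsolid_{B_{\dR}^+}B_{\dR}$, and the filtration on the lower-left corner is the filtration décalée of Definition \ref{dfintro}.

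Next I would rewrite each corner using Theorem \ref{mainn1}. By part (i), $R\Gamma_B(X_C)\simeq (R\Gamma_{\HK}(X_C)\dsolid_{\breve F}B_{\log})^{N=0}$ compatibly with $\varphi$, so inverting $t$ and taking $\varphi=1$ turns the upper-right corner into $(R\Gamma_{\HK}(X_C)\dsolid_{\breve F}B_{\log}[1/t])^{N=0,\varphi=1}$ — here one must check that $(-)^{N=0}$ commutes with inverting $t$ and with imposing $\varphi=1$, which is formal since $N$, $\varphi$ and multiplication by $t$ are pairwise compatible operators and $t\in B^{\varphi=p}$ has trivial monodromy, so these are all limits that commute. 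By part (ii) of Theorem \ref{mainn1}, since $X$ is defined over $K$ we have $R\Gamma_{B_{\dR}^+}(X_C)\simeq R\Gamma_{\dR}(X)\dsolid_K B_{\dR}^+$ compatibly with filtrations and the Galois action; hence the lower-left corner becomes $\Fil^0(R\Gamma_{\dR}(X)\dsolid_K B_{\dR})$ — this uses that the filtration décalée on $R\Gamma_{B_{\dR}^+}(X_C)$ matches, under the comparison of Theorem \ref{mainn1}(ii), the tensor-product filtration built from the Hodge filtration on $R\Gamma_{\dR}(X)$ and the $t$-adic filtration on $B_{\dR}^+$, which is exactly the content of the "compatible with filtrations" clause. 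The lower-right corner is then $R\Gamma_{\dR}(X)\dsolid_K B_{\dR}$, matching the asserted square, and all identifications are $\mathscr{G}_K$-equivariant by the equivariance clauses in Theorem \ref{mainn1} together with the Galois-equivariance of the fundamental exact sequence.

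Finally, one reduces to the qcqs hypothesis: the square for general qcqs $X$ is assembled from the connected paracompact case of Theorem \ref{mainn1} (a qcqs rigid space over $K$ has finitely many connected components, each qcqs hence paracompact), using that all four functors send finite disjoint unions to finite products. The step I expect to be the main obstacle is the careful construction and globalization of the relative fundamental exact sequence at the level of $R\Gamma_{\eh}(X,L\eta_t(-))$, i.e. verifying that applying $L\eta_t$ and $R\alpha_*$ to the pointwise exact sequence $0\to\Qq_p\to\Bb^{\varphi=1}\to\Bb_{\dR}^+/\Fil^0\to 0$ yields the displayed pullback square with the correct filtration décalée on the de Rham side — the décalage functor does not commute with all colimits, so one must exploit the Beilinson $t$-structure description of $L\eta_t$ from Definition \ref{dfintro} and the fact that $t$ acts invertibly on $\Bb_{\dR}/\Bb_{\dR}^+$ to control the relevant $\mathbb{Z}[1/p]$-linear algebra, and this is where the solid formalism (flatness of $B_{\dR}^+$-completions, exactness of $\dsolid$) does the essential work. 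Everything else is a matter of transporting the comparisons of Theorem \ref{mainn1} through this square.
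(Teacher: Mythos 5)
Your overall strategy coincides with the paper's: rewrite the relative fundamental exact sequence $0\to\Qq_p\to\Bb[1/t]^{\varphi=1}\to\Bb_{\dR}/\Bb_{\dR}^+\to 0$ of Corollary \ref{profundexact} as a pullback square of pro-étale sheaves with corners $\Qq_p$, $\Bb[1/t]^{\varphi=1}$, $\Bb_{\dR}^+$, $\Bb_{\dR}$, apply $R\Gamma_{\pet}(X_C,-)$, and identify the corners via Theorem \ref{B=HK}, Theorem \ref{tosingg} and the compatibility of Theorem \ref{compatib}. (Incidentally, the sequence you display, $0\to\Qq_p\to\Bb^{\varphi=1}\to\Bb_{\dR}^+/\Fil^0\to 0$, is the degenerate $i=0$ case of (\ref{suite3}) --- its third term vanishes --- and is not the one you need.)

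The genuine gap is in your bottom-left corner. Applying cohomology to the sheaf-level square produces $R\Gamma_{\pet}(X_C,\Bb_{\dR}^+)$ there: the plain pushforward, with no décalage. You instead put $\Fil^0\bigl(R\Gamma_{B_{\dR}^+}(X_C)\bigr)$ for the filtration décalée and invoke Theorem \ref{mainn1}(ii) to convert it into $\Fil^0(R\Gamma_{\dR}(X)\dsolid_K B_{\dR})$. This fails: since both the Hodge filtration and the $t$-adic filtration on $B_{\dR}^+$ are non-negatively indexed, $\Fil^0$ of the filtration décalée on $R\Gamma_{B_{\dR}^+}(X_C)$ is (essentially) all of $R\Gamma_{B_{\dR}^+}(X_C)\simeq R\Gamma_{\dR}(X)\dsolid_K B_{\dR}^+$, which is strictly smaller than $\Fil^0(R\Gamma_{\dR}(X)\dsolid_K B_{\dR})=\sum_{j\ge 0}\Fil^jR\Gamma_{\dR}(X)\dsolid_K t^{-j}B_{\dR}^+$ whenever the Hodge filtration is nontrivial, so substituting it changes the pullback. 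Equivalently, $R\Gamma_{\pet}(X_C,\Bb_{\dR}^+)$ and $\Fil^0 L\eta_tR\alpha_*\Bb_{\dR}^+$ agree only after truncation $\tau^{\le 0}$ --- this is exactly the phenomenon of Theorem \ref{BK=pet}(i) --- whereas Theorem \ref{derr} is an untruncated statement. The paper avoids this entirely: the décalage functor never enters the bottom row; the identification $R\Gamma_{\pet}(X_C,\Bb_{\dR}^+)\simeq\Fil^0(R\Gamma_{\dR}(X)\dsolid_K B_{\dR})$ is Theorem \ref{tosingg} (\cite[Theorem 6.5]{Bosco} plus $\eh$-hyperdescent in the singular case), resting on Scholze's Poincaré lemma for $\Bb_{\dR}^+$ rather than on the filtered comparison of Theorem \ref{B_dR=dR}. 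Décalage appears only in the top-right corner, where it is harmless because $t$ has been inverted. The rest of your argument --- the identification of the top-right corner via Theorem \ref{B=HK}, commuting $N=0$, $\varphi=1$ and $[1/t]$ using that $X$ is qcqs, and the Galois equivariance --- is consistent with the paper's proof.
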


 We note that Theorem \ref{derr} can be regarded as a derived generalization of (\ref{dRdata}): it tells us that the rational $p$-adic (pro-)étale cohomology of $X_C$ can be recovered from the Hyodo--Kato cohomology of $X_C$ and the de Rham cohomology of $X$ together with its Hodge filtration.

 \subsection{Syntomic Fargues--Fontaine cohomology}
 
 The search for a theorem comparing the rational $p$-adic pro-étale cohomology of any rigid-analytic variety over $C$ in terms of the $B$-cohomology and its filtration led us to define the following cohomology theory. 
 
 \begin{df}
  Let $X$ be a rigid-analytic variety over $C$.  Let $i\ge 0$ be an integer. We define the \textit{syntomic Fargues--Fontaine cohomology of $X$ with coefficients in $\Qq_p(i)$} as the complex of $D(\Vect_{\Qq_p}^{\cond})$
  $$
   R\Gamma_{\syn, \FF}(X, \Qq_p(i)):=\Fil^iR\Gamma_B(X)^{\varphi=p^i}
  $$
  where $R\Gamma_{B}(X)$ is endowed with the filtration décalée.
 \end{df}

 The first main result on the syntomic Fargues--Fontaine cohomology is the following.
 
 \begin{theorem}[Theorem \ref{BK=pet}]\label{synfan}
  Let $X$ be a rigid-analytic variety over $C$. Let $i\ge 0$. 
  \begin{enumerate}[(i)]
   \item We have a natural isomorphism in $D(\Vect_{\Qq_p}^{\cond})$
   \begin{equation*}\label{glob}
   \tau^{\le i}R\Gamma_{\syn, \FF}(X, \Qq_p(i))\overset{\sim}{\longrightarrow} \tau^{\le i}R\Gamma_{\pet}(X, \Qq_p(i)).
   \end{equation*}
   \item We have a natural isomorphism in $D(\Vect_{\Qq_p}^{\cond})$
   $$ R\Gamma_{\syn, \FF}(X, \Qq_p(i))\simeq \fib(R\Gamma_B(X)^{\varphi=p^i}\to R\Gamma_{B_{\dR}^+}(X)/\Fil^i).$$
  \end{enumerate}
 \end{theorem}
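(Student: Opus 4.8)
The plan is to establish part (ii) first --- it is a formal consequence of the filtration décalée formalism together with Theorem \ref{mainn1}(ii) --- and then to deduce part (i) by matching the resulting description of $R\Gamma_{\syn,\FF}(X,\Qq_p(i))$ against the fundamental exact sequence of $p$-adic Hodge theory.

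\emph{Part (ii).} Unwinding the definition, $R\Gamma_{\syn,\FF}(X,\Qq_p(i)) = \Fil^i R\Gamma_B(X)^{\varphi=p^i}$ is the fiber of $\varphi - p^i \colon \Fil^i R\Gamma_B(X) \to R\Gamma_B(X)$, where the target is the full $B$-cohomology. Running the fiber sequence $\Fil^i R\Gamma_B(X) \to R\Gamma_B(X) \to R\Gamma_B(X)/\Fil^i$ through this fiber --- equivalently, computing the total fiber of the evident commutative square with rows $\varphi - p^i$ --- yields a natural identification
\[
\Fil^i R\Gamma_B(X)^{\varphi=p^i} \simeq \fib\big(R\Gamma_B(X)^{\varphi=p^i} \longrightarrow R\Gamma_B(X)/\Fil^i\big),
\]
compatibly with $\varphi$ and, when $X$ descends to $K$, with $\mathscr{G}_K$. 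It then remains to identify $R\Gamma_B(X)/\Fil^i \simeq R\Gamma_{B_{\dR}^+}(X)/\Fil^i$ compatibly with the map out of $R\Gamma_B(X)^{\varphi=p^i}$; this is the filtered refinement of the base-change equivalence $R\Gamma_B(X)\dsolid_B B_{\dR}^+ \simeq R\Gamma_{B_{\dR}^+}(X)$ of Theorem \ref{mainn1}(ii), reflecting the fact that the completion-at-$\infty$ map $\Bb \to \Bb_{\dR}^+$ becomes an equivalence after passing to the quotient by the $i$-th step of the filtration décalée.

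\emph{Part (i).} The key input is the fundamental exact sequence of $p$-adic Hodge theory, as a short exact sequence of sheaves on $X_v$ (equivalently on $X_{\pet}$, and $\mathscr{G}_K$-equivariantly if $X = (X_0)_C$), for $i \ge 0$:
\[
0 \longrightarrow \Qq_p(i) \longrightarrow \Bb^{\varphi=p^i} \longrightarrow \Bb_{\dR}^+/\Fil^i\Bb_{\dR}^+ \longrightarrow 0,
\]
which one gets from $H^1(\FF, \cl O_{\FF}) = 0$ applied to the inclusion $\cl O_{\FF} \xrightarrow{\,t^i\,} \cl O_{\FF}(i)$ (equivalently, from Fontaine's classical fundamental exact sequence). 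Applying $R\Gamma_{v}(X, -) = R\Gamma_{\eh}(X, R\alpha_*(-))$ and using that pro-étale and $v$-cohomology coincide with $\Qq_p$-coefficients gives
\[
R\Gamma_{\pet}(X,\Qq_p(i)) \simeq \fib\big(R\Gamma_v(X,\Bb)^{\varphi=p^i} \longrightarrow R\Gamma_v(X,\Bb_{\dR}^+/\Fil^i\Bb_{\dR}^+)\big).
\]
Comparing with the formula for $R\Gamma_{\syn,\FF}(X,\Qq_p(i))$ from (ii), the only discrepancy is that the $B$- and $B_{\dR}^+$-cohomologies are built from $L\eta_t R\alpha_*\Bb$ and $L\eta_t R\alpha_*\Bb_{\dR}^+$, whereas the display above uses $R\alpha_*\Bb$ and $R\alpha_*\Bb_{\dR}^+$. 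The canonical maps $L\eta_t R\alpha_*(-) \to R\alpha_*(-)$ induce a morphism from the first fiber to the second, and the assertion is that it is an equivalence after $\tau^{\le i}$. This is exactly the mechanism by which Fontaine--Messing-type syntomic cohomology recovers étale cohomology in the stable range: the cofiber of $L\eta_t R\alpha_*\Bb \to R\alpha_*\Bb$ is degreewise annihilated by a power of $t$, the cokernel $\Bb_{\dR}^+/\Fil^i$ in the fundamental exact sequence is annihilated by $t^i$, and a bookkeeping via the Beilinson $t$-structure description of $L\eta_t$ (Definition \ref{beilifildef}) shows the remaining error term lives in cohomological degrees $> i$. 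This can be extracted from \cite{CN4}, or re-derived directly in the present framework along the lines of \cite{Bosco}.

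\emph{Main obstacle and remarks.} The crux is the last compatibility: controlling the décalage functor on $\Fil^i(-)^{\varphi=p^i}$ and showing it becomes invisible after $\tau^{\le i}$. The subtlety is that the filtration décalée is \emph{not} the naive $t$-adic filtration --- $t = \log[\varepsilon]$ vanishes at every point $\varphi^n(\infty) \in Y_{\FF}$, not only at $\infty$, so $\Bb/t^i\Bb$ is far larger than $\Bb_{\dR}^+/\Fil^i$ --- and it is precisely the Deligne shift built into $L\eta_t$ that makes $\Fil^i R\Gamma_B(X)$ behave, in degrees $\le i$, like $\Fil^i$ of the $B_{\dR}^+$-cohomology, which is what allows the fundamental exact sequence to take over. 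A further, but harmless, point is that all of this must be carried out in $D(\Vect_{\Qq_p}^{\cond})$: since the period sheaves involved carry well-behaved solid/nuclear structures and the fundamental exact sequence is strict, no new difficulty arises, as in \cite{Bosco}. One may reduce to a convenient class of $X$ (for instance Stein spaces, using $\eh$-descent and Proposition \ref{3.11} in the smooth case), but the $v$-sheaf-theoretic arguments are local and go through for arbitrary rigid-analytic $X$ over $C$. Alternatively, part (i) can be deduced by combining Theorem \ref{mainn1}(i), which rewrites $\Fil^i R\Gamma_B(X)^{\varphi=p^i}$ in terms of the Hyodo--Kato cohomology of $X$ and the ring $B_{\log}$, with Colmez--Nizioł's theorem that their syntomic cohomology computes $\tau^{\le i}$ of pro-étale cohomology --- at the cost of matching up the two sets of conventions.
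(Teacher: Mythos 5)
Your part (ii) contains a genuine error at its key step. You claim that after the total-fiber manipulation it "remains to identify $R\Gamma_B(X)/\Fil^i \simeq R\Gamma_{B_{\dR}^+}(X)/\Fil^i$", justified by saying that $\Bb \to \Bb_{\dR}^+$ "becomes an equivalence after passing to the quotient by the $i$-th step of the filtration décalée". This is false, and the reason is the very phenomenon you flag later as a "subtlety": by Proposition \ref{bbe}(ii), $\gr^j$ of the filtration décalée on $L\eta_t R\alpha_*\Bb$ is $\tau^{\le j}R\alpha_*(\gr^j_t\Bb)$, and $\gr^j_t\Bb = t^j\Bb/t^{j+1}\Bb$ is a product of copies of $\widehat{\cl O}$ indexed by \emph{all} classical points of $|Y_{\FF}|$ where $t$ vanishes (all $\varphi$-translates of $\infty$), whereas $\gr^j\Bb_{\dR}^+$ is only the factor at $\infty$. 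The Deligne shift in $L\eta_t$ truncates cohomological degree; it does not localize at $\infty$. The localization at $\infty$ is effected by the Frobenius: the correct statement, which is what the paper proves, is
\begin{equation*}
\fib\bigl(R\Gamma_B(X)/\Fil^i \xrightarrow{\;\varphi p^{-i}-1\;} R\Gamma_B(X)/\Fil^i\bigr)\;\simeq\;R\Gamma_{B_{\dR}^+}(X)/\Fil^i,
\end{equation*}
deduced on graded pieces from the exact sequence $0\to \gr^j\Bb_{\dR}^+\to\gr^j\Bb\xrightarrow{\varphi p^{-i}-1}\gr^j\Bb\to 0$ (a consequence of $\Bb/t=\prod_{y}\Bb_{\dR}^+/t^{\ord_y(t)}$, with $\varphi-$fixed points picking out the factor at $V(\xi)$), together with a surjectivity check of $\varphi p^{-i}-1$ on $R^j\alpha_*\gr^j\Bb$. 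Relatedly, your total-fiber identity is stated with target $R\Gamma_B(X)/\Fil^i$ rather than its $\varphi=p^i$ eigenspace; the manipulation actually yields $\fib\bigl(R\Gamma_B(X)^{\varphi=p^i}\to(R\Gamma_B(X)/\Fil^i)^{\varphi=p^i}\bigr)$, and it is the second factor, not $R\Gamma_B(X)/\Fil^i$ itself, that equals $R\Gamma_{B_{\dR}^+}(X)/\Fil^i$. As written, your two intermediate claims are each false and happen to "compose" to the right answer.

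For part (i), the paper does not route through part (ii) or the full fundamental exact sequence at all: it uses the Beilinson $t$-structure identity $\tau^{\le i}\Fil^iL\eta_tR\alpha_*\Bb\simeq\tau^{\le i}R\alpha_*(t^i\Bb)$ (since $\varepsilon_i$ is constant in degrees $\le i$) and then the single twisted sequence $0\to\Qq_p(i)\to t^i\Bb\xrightarrow{\varphi p^{-i}-1}t^i\Bb\to 0$ of Proposition \ref{suites}. Your alternative — present $R\Gamma_{\pet}(X,\Qq_p(i))$ via the sequence $0\to\Qq_p(i)\to\Bb^{\varphi=p^i}\to\Bb_{\dR}^+/\Fil^i\to 0$ and control the cofiber of $L\eta_tR\alpha_*\to R\alpha_*$ after $\tau^{\le i}$ — is plausible in outline, but the "bookkeeping" you defer to \cite{CN4} is precisely the truncation identity above, so nothing is gained and the step is not actually carried out. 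I would redo part (ii) along the lines indicated and then note that part (i) follows directly and independently from the truncation identity.
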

 
 We remark that the construction of the comparison morphisms in Theorem \ref{synfan} is global in nature and it can be extended to coefficients (see \S \ref{coeffi}). \medskip
 
 Combining Theorem \ref{mainn1} with Theorem \ref{synfan}, we obtain the following result. Cf. \cite[\S 6]{LeBras2} and \cite[Theorem 7.13]{AMMN} for some related results in the proper good reduction case, and \cite[Theorem 1.1]{CN4} for smooth rigid-analytic varieties.
 \begin{theorem}[Theorem \ref{tog}]\label{corcor}
  Let $X$ be a connected, paracompact, rigid-analytic variety defined over $K$. For any $i\ge 0$, we have a  $\mathscr{G}_K$-equivariant isomorphism in $D(\Vect_{\Qq_p}^{\ssolid})$
  $$\tau^{\le i}R\Gamma_{\pet}(X_C, \Qq_p(i))\simeq \tau^{\le i}\fib((R\Gamma_{\HK}(X_C)\dsolid_{\breve F}B_{\log})^{N=0, \varphi=p^i}\to (R\Gamma_{\dR}(X)\dsolid_K B_{\dR}^+)/\Fil^i).$$
 \end{theorem}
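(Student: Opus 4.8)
The plan is to combine Theorem~\ref{synfan} with Theorem~\ref{mainn1}; the argument is then essentially a bookkeeping of the compatibilities already built into those statements. First I would note that, since $X$ is connected and paracompact over $K$, the base change $X_C=X\times_{\Spa K}\Spa C$ (or, if it is disconnected, each of its connected components, which remain paracompact) satisfies the hypotheses of Theorem~\ref{mainn1}, and that all the cohomology theories in sight are functorial in the rigid-analytic variety; hence every comparison morphism produced below is automatically $\mathscr{G}_K$-equivariant for the action on $X_C$. By Theorem~\ref{synfan}(i) one has $\tau^{\le i}R\Gamma_{\pet}(X_C,\Qq_p(i))\simeq\tau^{\le i}R\Gamma_{\syn,\FF}(X_C,\Qq_p(i))$, and by Theorem~\ref{synfan}(ii) the latter is $\tau^{\le i}$ of $\fib\bigl(R\Gamma_B(X_C)^{\varphi=p^i}\to R\Gamma_{B_{\dR}^+}(X_C)/\Fil^i\bigr)$. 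So it suffices to identify this fibre with the one appearing on the right-hand side of the statement.

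For the source term I would invoke the isomorphism~(\ref{decaBB}) of Theorem~\ref{mainn1}(i), which is compatible with $\varphi$ and with $\mathscr{G}_K$. Since the $\varphi=p^i$-eigenspace is the fibre of $\varphi-p^i$, i.e.\ a limit, and since this limit commutes with the fibre cutting out the $N=0$-part (the relation $N\varphi=p\varphi N$ on $R\Gamma_{\HK}(X_C)\dsolid_{\breve F}B_{\log}$ making both simultaneously meaningful), passing to eigenspaces turns~(\ref{decaBB}) into a $\mathscr{G}_K$-equivariant identification $R\Gamma_B(X_C)^{\varphi=p^i}\simeq (R\Gamma_{\HK}(X_C)\dsolid_{\breve F}B_{\log})^{N=0,\varphi=p^i}$ in $D(\Mod_B^{\ssolid})$, hence in $D(\Vect_{\Qq_p}^{\ssolid})$. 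For the target term, Theorem~\ref{mainn1}(ii), applied with $X_0:=X$, gives a $\mathscr{G}_K$-equivariant \emph{filtered} isomorphism $R\Gamma_{B_{\dR}^+}(X_C)\simeq R\Gamma_{\dR}(X)\dsolid_K B_{\dR}^+$, and therefore $R\Gamma_{B_{\dR}^+}(X_C)/\Fil^i\simeq (R\Gamma_{\dR}(X)\dsolid_K B_{\dR}^+)/\Fil^i$, compatibly with $\mathscr{G}_K$.

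The remaining point is that the map $R\Gamma_B(X_C)^{\varphi=p^i}\to R\Gamma_{B_{\dR}^+}(X_C)/\Fil^i$ of Theorem~\ref{synfan}(ii) corresponds, under the two identifications above, to the natural map $(R\Gamma_{\HK}(X_C)\dsolid_{\breve F}B_{\log})^{N=0,\varphi=p^i}\to (R\Gamma_{\dR}(X)\dsolid_K B_{\dR}^+)/\Fil^i$. This is exactly what the compatibility clauses of Theorem~\ref{mainn1}(ii) supply: the transition map is induced by the base-change isomorphism $R\Gamma_B(X_C)\dsolid_B B_{\dR}^+\xrightarrow{\ \sim\ }R\Gamma_{B_{\dR}^+}(X_C)$ along the completion-at-$\infty$ map $B\to B_{\dR}^+$, and the cited theorem asserts that this isomorphism is compatible with~(\ref{decaBB}) and with the Hyodo--Kato comparison $R\Gamma_{\HK}(X_C)\dsolid_{\breve F}B_{\dR}^+\simeq R\Gamma_{\dR}(X)\dsolid_K B_{\dR}^+$ over $B_{\dR}^+$; chasing this through the construction of the Hyodo--Kato--to--de~Rham morphism yields the claim. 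With these compatibilities in place, taking fibres and then $\tau^{\le i}$ produces the asserted $\mathscr{G}_K$-equivariant isomorphism; solidity of the right-hand side --- and hence of $\tau^{\le i}R\Gamma_{\pet}(X_C,\Qq_p(i))$, upgrading Theorem~\ref{synfan}(i) from $D(\Vect_{\Qq_p}^{\cond})$ to $D(\Vect_{\Qq_p}^{\ssolid})$ --- follows because $R\Gamma_{\HK}(X_C)$, $B_{\log}$, $R\Gamma_{\dR}(X)$ and $B_{\dR}^+$ are solid and solid modules are stable under $\dsolid$ and under limits. I expect the only genuinely delicate step to be this last compatibility of the connecting map; everything else is a formal manipulation of limits, eigenspaces and truncations, and the delicate point is itself already packaged into the functoriality and compatibility assertions of Theorems~\ref{synfan} and~\ref{mainn1}.
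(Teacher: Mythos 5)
Your proposal is correct and follows essentially the same route as the paper: the proof of Theorem \ref{tog} is precisely the combination of Theorem \ref{BK=pet} (= Theorem \ref{synfan}), Theorem \ref{B=HK} and Theorem \ref{B_dR=dR} (= Theorem \ref{mainn1}), together with the compatibility statement Theorem \ref{compatib}, which is exactly the "delicate last step" you single out. Your additional remarks on commuting the $\varphi=p^i$-eigenspace with the $N=0$-fibre and on upgrading from condensed to solid coefficients are consistent with how the paper handles these points.
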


 Another interesting fact about the syntomic Fargues--Fontaine cohomology concerns its close relationship with the curve $\FF$, as it can be guessed from its very definition. 
 As explained in \S \ref{FFperf}, for any $X$ qcqs rigid-analytic variety over $C$, the $\varphi$-equivariant filtered complex $\Fil^\star R\Gamma_B(X)$ descends to a filtered object $$\Fil^\star\cl H_{\FF}(X)$$ of the $\infty$-category of quasi-coherent complexes $\QCoh(\FF)$, in the sense of Clausen--Scholze (see \S \ref{andr}). Then, relying in particular on results of Andreychev on the analytic descent for nuclear complexes on analytic adic spaces, \cite{Andr}, we show the following theorem.

 \begin{theorem}[Theorem \ref{synlift}]\label{fino}
 Let $X$ be a qcqs rigid-analytic variety over $C$. Let $i\ge 0$.  Consider the quasi-coherent complex on $\FF$ defined by $$\cl H_{\syn}(X)(i):=\Fil^i\cl H_{\FF}(X)\otimes \cl O(i).$$ 
   We have
   \begin{equation*}
    R\Gamma(\FF, \cl H_{\syn}(X)(i))=R\Gamma_{\syn, \FF}(X, \Qq_p(i)).
   \end{equation*}
   If $X$ is proper, the complex $\cl H_{\syn}(X)(i)$ is perfect, in particular the complex $R\Gamma_{\syn, \FF}(X, \Qq_p(i))$ identifies with the $C$-points of a bounded complex of Banach--Colmez spaces.
 \end{theorem}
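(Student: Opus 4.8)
The plan is to read off $R\Gamma_{\syn,\FF}(X,\Qq_p(i))$ as the cohomology on $\FF$ of the twisted filtered complex $\cl H_{\syn}(X)(i)$, using the descent description of $\QCoh(\FF)$ set up in \S\ref{FFperf}, and then, when $X$ is proper, to upgrade this to a perfectness statement by feeding in the finiteness results already available for $R\Gamma_B(X)$ and for $R\Gamma_{\inf}(X/B_{\dR}^+)$.

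First I would recall the global sections formula on $\FF$. Since $\FF=Y_{\FF}/\varphi^{\Zz}$, the constructions of \S\ref{FFperf} --- which rely on Andreychev's analytic descent for nuclear complexes \cite{Andr} --- identify $\QCoh(\FF)$ with the $\infty$-category of $\varphi$-equivariant objects of $\QCoh(Y_{\FF})$, under which the $\varphi$-equivariant filtered complex $\Fil^\star R\Gamma_B(X)$ descends to $\Fil^\star\cl H_{\FF}(X)$, whose pullback to $Y_{\FF}$ recovers $\Fil^\star R\Gamma_B(X)$ as a filtered $\varphi$-module over $B=\cl O(Y_{\FF})$. Because $Y_{\FF}$ is Stein, its higher cohomology with quasi-coherent coefficients vanishes --- this passes to the solid world since the exhausting cover of $Y_{\FF}$ by the affinoid annuli $Y_{[r,s]}$ has nuclear restriction maps, killing the relevant $R^1\lim$ --- so that, taking in addition $\varphi^{\Zz}\cong\Zz$-equivariant sections, for any $\mathcal{M}\in\QCoh(\FF)$ with associated $\varphi$-module $(M,\varphi_M)$ one obtains
\[
R\Gamma(\FF,\mathcal{M})\simeq\fib\big(M\xrightarrow{\ \varphi_M-\id\ }M\big).
\]

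Next I would apply this with $\mathcal{M}=\cl H_{\syn}(X)(i)=\Fil^i\cl H_{\FF}(X)\otimes\cl O(i)$, whose associated $\varphi$-module is $\Fil^iR\Gamma_B(X)$ equipped with the rescaled action $p^{-i}\varphi$: indeed twisting by the line bundle $\cl O(i)$ amounts, in the equivariant description, to multiplying the equivariant structure by $p^{-i}$ --- the normalization for which $H^0(\FF,\cl O(i))=B^{\varphi=p^i}$. Multiplying the transition map by the unit $-p^i$ then yields
\[
R\Gamma(\FF,\cl H_{\syn}(X)(i))\simeq\fib\big(\Fil^iR\Gamma_B(X)\xrightarrow{\ \varphi-p^i\ }\Fil^iR\Gamma_B(X)\big)=\Fil^iR\Gamma_B(X)^{\varphi=p^i}=R\Gamma_{\syn,\FF}(X,\Qq_p(i)),
\]
which is the first assertion. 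For the proper case I would proceed as follows: by the finiteness results recalled after Definition~\ref{dfintro} (see Theorem~\ref{lb}), each $H^j_B(X)$ is a finite projective $\varphi$-module over $B$ that vanishes outside a bounded range, so $\cl H_{\FF}(X)\in\Perf(\FF)$; moreover the graded pieces of the filtration d\'ecal\'ee on $R\Gamma_B(X)$ are governed by the de Rham, equivalently infinitesimal, cohomology of $X$ over $B_{\dR}^+$, which is perfect over $B_{\dR}^+$ in the proper case, so each $\gr^n\cl H_{\FF}(X)$ is a coherent complex on $\FF$ set-theoretically supported at $\infty$, hence perfect; the filtration being finite, it follows that $\Fil^i\cl H_{\FF}(X)$, and therefore $\cl H_{\syn}(X)(i)$, lies in $\Perf(\FF)$. (Equivalently one can invoke Theorem~\ref{synfan}(ii), which writes $R\Gamma_{\syn,\FF}(X,\Qq_p(i))=\fib(R\Gamma_B(X)^{\varphi=p^i}\to R\Gamma_{B_{\dR}^+}(X)/\Fil^i)$ with $R\Gamma_B(X)$ a perfect $\varphi$-module over $B$ and $R\Gamma_{B_{\dR}^+}(X)\simeq R\Gamma_{\inf}(X/B_{\dR}^+)$ a perfect $B_{\dR}^+$-complex.) Finally $R\Gamma(\FF,-)$ sends $\Perf(\FF)$ into bounded complexes of Banach--Colmez spaces: by d\'evissage any perfect complex on $\FF$ is built, under shifts and extensions, from line bundles $\cl O(\lambda)$ ($\lambda\in\Qq$) and coherent torsion sheaves at closed points, and $R\Gamma(\FF,\cl O(\lambda))$ is a finite-dimensional Banach--Colmez space in degree $0$ if $\lambda\ge0$ and in degree $1$ if $\lambda<0$, while torsion sheaves contribute finite-dimensional $C$-vector spaces; since Banach--Colmez spaces are stable under shifts, finite limits and extensions, $R\Gamma_{\syn,\FF}(X,\Qq_p(i))\simeq R\Gamma(\FF,\cl H_{\syn}(X)(i))$ is indeed the $C$-points of a bounded complex of Banach--Colmez spaces.

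The hard part will be the input of \S\ref{FFperf} feeding the first step: establishing the equivalence $\QCoh(\FF)\simeq\QCoh(Y_{\FF})^{\varphi^{\Zz}}$ and the vanishing of the higher cohomology of the Stein space $Y_{\FF}$ within the condensed/solid formalism --- this is exactly where Andreychev's descent for nuclear complexes is needed, to glue over the annuli $Y_{[r,s]}$ before passing to $\varphi$-equivariants --- together with keeping the normalization of the Serre twist $\cl O(i)$ straight so that it is the $\varphi=p^i$ eigenspace that appears. A secondary obstacle, in the proper case, is to identify the graded pieces of the filtration d\'ecal\'ee with perfect complexes supported at $\infty$ (equivalently, to invoke the perfectness of $R\Gamma_{\inf}(X/B_{\dR}^+)$), so as to conclude that $\Fil^i\cl H_{\FF}(X)$ is perfect on $\FF$.
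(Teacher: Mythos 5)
Your proof of the identity $R\Gamma(\FF,\cl H_{\syn}(X)(i))=R\Gamma_{\syn,\FF}(X,\Qq_p(i))$ follows the same route as the paper: descent to $\varphi$-equivariant objects on $Y_{\FF}$, the global-sections-as-fiber formula, and the twist normalization $H^0(\FF,\cl O(i))=B^{\varphi=p^i}$; this is exactly Proposition \ref{perff}\listref{perff:2} applied via Lemma \ref{ko} and Lemma \ref{dinverse}. One caveat: you state the formula $R\Gamma(\FF,\mathcal M)\simeq\fib(M\xrightarrow{\varphi_M-1}M)$ for arbitrary $\mathcal M\in\QCoh(\FF)$, but the identification of $\Hom_{\QCoh(Y_{\FF,I})}(\cl O,-)$ with the underlying coadmissible module $M_I$ uses the equivalence $\Nuc((B_I,\Zz)_{\solidif})\simeq\Nuc((B_I,B_I^+)_{\solidif})$ and hence requires nuclearity; this is harmless here because $\Fil^i\cl H_{\FF}(X)$ is nuclear by Lemma \ref{ko}, but you should restrict the claim to $\Nuc(\FF)$.

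For the perfectness in the proper case you take a genuinely different route. The paper applies derived Beauville--Laszlo gluing along $\infty$ (Lemma \ref{dbl}), reducing to perfectness of $R\Gamma_{\pet}(X,\Bb_I[1/t])$ (via Theorem \ref{B=HK} and Theorem \ref{slopp}) and of $\Fil^iR\Gamma_{B_{\dR}^+}(X)$ (via Proposition \ref{fillemma}). You instead run a d\'evissage through the filtration d\'ecal\'ee: $\cl H_{\FF}(X)\in\Perf(\FF)$ by Theorem \ref{lb} (note this is stated for dagger varieties, so you need Proposition \ref{parpar} to transfer it to the proper rigid space), the graded pieces $\gr^n\cl H_{\FF}(X)$ for $n<i$ are perfect complexes supported at the Cartier divisor $\infty$ because they are controlled by the (perfect, by Proposition \ref{fillemma}) graded pieces of the $B_{\dR}^+$-cohomology, and $\Fil^i=\fib(\cl H_{\FF}(X)\to\cl H_{\FF}(X)/\Fil^i)$ is then perfect. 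Both arguments rest on the same two finiteness inputs; yours trades the Beauville--Laszlo formalism for an explicit identification of the graded pieces as pushforwards from $\infty$ (which is where the content of the computation $\gr^0\Bb=\prod_{y}\Bb_{\dR}^+/t^{\ord_y(t)}$ from the proof of Theorem \ref{BK=pet} enters), while the paper's route avoids analyzing the graded pieces altogether. Your concluding d\'evissage of $\Perf(\FF)$ into line bundles and torsion sheaves to land in Banach--Colmez spaces is a hands-on substitute for the citation of Proposition \ref{perffALB2} and is fine.
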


 \begin{rem}[Comparison with Fontaine--Messing/Colmez--Nizioł's syntomic cohomology]
  Fix $i\ge 0$. In \cite{CN}, via the alterations of Hartl and Temkin, Colmez--Nizioł, starting from the syntomic cohomology of Fontaine--Messing,  defined a syntomic cohomology theory for any smooth rigid-analytic variety $X$ over $C$, that here we will denote by $R\Gamma_{\syn, \CN}(X, \Qq_p(i))$. 
  We observe that $H^j_{\syn, \FF}(X, \Qq_p(i))$ is isomorphic to $H^j_{\syn, \CN}(X, \Qq_p(i))$ for any integer $j\le i$, as in this case the two cohomology groups are both isomorphic to $H^j_{\pet}(X, \Qq_p(i))$; however, in general, for $j>i$, the two cohomology groups are not isomorphic (see Example \ref{FFvsCN}). This difference is reflected in the fact that, for $X$ proper, the complex $R\Gamma_{\syn, \FF}(X, \Qq_p(i))$ canonically lifts to a complex of vector bundles on $\FF$, as shown by Theorem \ref{fino}, while the complex $R\Gamma_{\syn, \CN}(X, \Qq_p(i))$ canonically lifts to a complex of \textit{$\varphi$-modules jaugés over $B^+$}, in the sense of Fargues (\cite[Définition 4.15]{AuDela}), cf. \cite[Theorem 1.1]{NizFF}.\footnote{We recall that the category of $\varphi$-modules jaugés over $B^+$ is equivalent to the category of modifications of vector bundles on $\FF$, \cite[\S 4.2]{AuDela}. However, this is not an equivalence of exact categories, in the sense of Quillen.}
 \end{rem}

 \subsection{Semistable conjectures} 
 
 From the general derived comparison results we have stated above, in particular from Theorem \ref{derr} and Theorem \ref{corcor}, one can deduce in some special cases a refined description of the single rational $p$-adic (pro-)étale cohomology groups in terms of de Rham data. \medskip
 
 For $X$ a proper (possibly singular) rigid-analytic variety over $C$, we prove in Theorem \ref{propsing} a version of the semistable conjecture for $X$, generalizing Theorem \ref{tostart}. In the case when $X$ is the base change to $C$ of a rigid-analytic variety $X_0$ defined over $K$, this result relies on the degeneration of the Hodge-de Rham spectral sequence associated to $X_0$ (\cite[Corollary 1.8]{Scholze}, \cite[Proposition 8.0.8]{Guo1}). In general, we reduce to the previous case via a combination of Conrad--Gabber's spreading out for proper rigid-analytic varieties and a generic smoothness result recently proved by Bhatt--Hansen, \cite{Bhatt-Hansen}. \medskip
 
 Another case in which the Hodge-de Rham spectral sequence simplifies is for smooth Stein spaces, thanks to Kiehl's acyclicity theorem. In this case, we show the following theorem which reproves results of Colmez--Dospinescu--Nizioł \cite{CDN1} (in the semistable reduction case) and Colmez--Nizioł \cite{CN5}.

 \begin{theorem}[cf. Theorem \ref{steindiagram}]\label{mainstein}
 Let $X$ be a smooth Stein space over $C$. For any $i\ge 0$, we have a short exact sequence in $\Vect_{\Qq_p}^{\ssolid}$ 
 \begin{equation*}\label{fin}
  0\to \Omega^{i-1}(X)/\ker d \to H^i_{\pet}(X, \Qq_p(i))\to  (H^i_{\HK}(X)\solid_{\breve F}B_{\log})^{N=0, \varphi=p^i} \to 0.
 \end{equation*}
 \end{theorem}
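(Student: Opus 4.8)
The plan is to obtain the sequence from the general comparison for the syntomic Fargues--Fontaine cohomology, to make every term explicit in the Stein case via Kiehl's acyclicity theorem, and then to extract it from the long exact cohomology sequence of a fiber sequence in degree $i$. By Theorem~\ref{synfan}(i), applying $H^i(-)$, we have $H^i_{\pet}(X,\Qq_p(i))=H^i(R\Gamma_{\syn,\FF}(X,\Qq_p(i)))$, and by Theorem~\ref{synfan}(ii)
$$R\Gamma_{\syn,\FF}(X,\Qq_p(i))\simeq\fib\bigl(R\Gamma_B(X)^{\varphi=p^i}\to R\Gamma_{B_{\dR}^+}(X)/\Fil^i\bigr)$$
in $D(\Vect^{\ssolid}_{\Qq_p})$. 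By Theorem~\ref{mainn1}(i) the source identifies with $(R\Gamma_{\HK}(X)\dsolid_{\breve F}B_{\log})^{N=0,\varphi=p^i}$, and by Theorem~\ref{mainn1}(ii) the target uses $R\Gamma_{B_{\dR}^+}(X)\simeq R\Gamma_{\inf}(X/B_{\dR}^+)$ with its filtration d\'ecal\'ee.

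Next I would compute both sides for a smooth Stein space $X$. Kiehl's acyclicity theorem gives $R\Gamma(X,\Omega^j_X)=\Omega^j(X)$ in degree $0$ for all $j\ge 0$, so $R\Gamma_{\dR}(X)$ is represented by the global de Rham complex $(\Omega^\bullet(X),d)$ with Hodge filtration equal to the stupid filtration, and, gluing local $B_{\dR}^+$-lifts (Kiehl again), $R\Gamma_{B_{\dR}^+}(X)$ is represented by the de Rham complex of a lift endowed with the convolution of the Hodge and $t$-adic filtrations, as in \cite{BMS1}, \cite{Guo2}, \cite{LeBras2}. Hence $R\Gamma_{B_{\dR}^+}(X)/\Fil^i$ is represented by a complex concentrated in degrees $0,\dots,i-1$ with term $\Omega^j(X)\otimes_C B_{\dR}^+/t^{i-j}$ in degree $j<i$; using $B_{\dR}^+/t=C$ for the top term and that the incoming de Rham differential has image $d\Omega^{i-2}(X)$, we get
$$H^i\bigl(R\Gamma_{B_{\dR}^+}(X)/\Fil^i\bigr)=0,\qquad H^{i-1}\bigl(R\Gamma_{B_{\dR}^+}(X)/\Fil^i\bigr)=\Omega^{i-1}(X)/d\Omega^{i-2}(X).$$
On the Hyodo--Kato side, the monodromy $N$ acts surjectively on $M\solid_{\breve F}B_{\log}$ for any solid $\breve F$-module $M$ with nilpotent monodromy (its kernel being $\simeq M\solid_{\breve F}B$), so $(-)^{N=0}$ is exact there and $H^j(R\Gamma_B(X))=(H^j_{\HK}(X)\solid_{\breve F}B_{\log})^{N=0}$; moreover $\varphi-p^i$ is surjective on $H^j(R\Gamma_B(X))$ for $j\le i$ --- equivalently $H^1(\FF,\cl E(H^j_{\HK}(X))\otimes\cl O(i))=0$, which follows from the non-negativity of the Frobenius slopes of Hyodo--Kato cohomology together with the vanishing of $H^1$ of semistable bundles of non-negative slope on $\FF$ --- so $H^j(R\Gamma_B(X)^{\varphi=p^i})=(H^j_{\HK}(X)\solid_{\breve F}B_{\log})^{N=0,\varphi=p^i}$ for $j\le i$.

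Putting these together, the long exact sequence of the fiber sequence and the vanishing $H^i(R\Gamma_{B_{\dR}^+}(X)/\Fil^i)=0$ yield
$$0\to\coker(g)\to H^i_{\pet}(X,\Qq_p(i))\to(H^i_{\HK}(X)\solid_{\breve F}B_{\log})^{N=0,\varphi=p^i}\to 0,$$
where $g\colon(H^{i-1}_{\HK}(X)\solid_{\breve F}B_{\log})^{N=0,\varphi=p^i}\to\Omega^{i-1}(X)/d\Omega^{i-2}(X)$ is the boundary map, and it remains to identify $\im(g)=H^{i-1}_{\dR}(X)=\ker d/d\Omega^{i-2}(X)$, which then gives $\coker(g)=\Omega^{i-1}(X)/\ker d$. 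For this one writes $g$ as the composite $H^{i-1}(R\Gamma_B(X)^{\varphi=p^i})\to H^{i-1}(R\Gamma_B(X))\to H^{i-1}(R\Gamma_{B_{\dR}^+}(X))\to H^{i-1}(R\Gamma_{B_{\dR}^+}(X)/\Fil^i)$: the factorization through genuine $B_{\dR}^+$-cohomology classes forces $\im(g)\subseteq H^{i-1}_{\dR}(X)$, while surjectivity onto $H^{i-1}_{\dR}(X)=H^{i-1}_{\HK}(X)\solid_{\breve F}C$ follows from the surjectivity of $B^{\varphi=p^i}\to B_{\dR}^+/t$ for $i\ge 1$, i.e. from the relative fundamental exact sequence of $p$-adic Hodge theory used in Theorem~\ref{derr}. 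The displayed sequence lies in $\Vect^{\ssolid}_{\Qq_p}$ since all maps are solid-linear and $\Omega^{i-1}(X)/\ker d$ is the stated Fr\'echet quotient. The main obstacle is exactly this last identification: one must track the compatibility of the comparison isomorphisms of Theorems~\ref{mainn1} and~\ref{synfan} with the de Rham and $B_{\dR}^+$ structures, control the derived functors $(-)^{N=0}$ and $(-)^{\varphi=p^i}$ through the slope formalism on $\FF$, and handle the Clausen--Scholze solid-module bookkeeping --- where nuclearity of the Fr\'echet spaces $\Omega^j(X)$ makes $\dsolid$ exact --- that guarantees the sequence is short exact in $\Vect^{\ssolid}_{\Qq_p}$.
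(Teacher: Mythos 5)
Your route is not the one the paper takes for Theorem \ref{steindiagram}: you deduce the sequence from the syntomic fiber sequence of Theorem \ref{synfan}(ii) together with the explicit computation of $(R\Gamma_{\dR}(X)\dsolid_K B_{\dR}^+)/\Fil^i$ in the Stein case (Proposition \ref{expic}(ii)), which is exactly the alternative the paper flags in the sentence preceding Theorem \ref{steindiagram} ("it could be deduced from Theorem \ref{tog}, Proposition \ref{expic}(ii) and the theory of Banach--Colmez spaces"). The paper instead runs the relative fundamental exact sequence $0\to\Qq_p\to\Bb_e\to\Bb_{\dR}/\Bb_{\dR}^+\to 0$ on each dagger affinoid $U_n^\dagger$ of a Stein exhaustion, assembles the diagram there, and only then passes to the inverse limit over $n$. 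Both routes are legitimate, and in degrees $\le i-1$ your identification of $H^{i-1}(R\Gamma_{B_{\dR}^+}(X)/\Fil^i)$ with $\Omega^{i-1}(X)/d\Omega^{i-2}(X)$ and the analysis of the boundary map are the right skeleton.

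The gap is on the Hyodo--Kato side, and it is precisely the ingredient the paper's parenthetical warns about. You assert that $\varphi-p^i$ is surjective on $H^j(R\Gamma_B(X))\cong H^j_{\HK}(X)\solid_{\breve F}B$ for $j\le i$ "equivalently $H^1(\FF,\cl E(H^j_{\HK}(X))\otimes\cl O(i))=0$", invoking slopes of semistable bundles. But for a Stein space $X$ the group $H^j_{\HK}(X)$ is \emph{not} a finite $\varphi$-module over $\breve F$ --- it is an inverse limit $\varprojlim_n H^j_{\HK}(U_n^\dagger)$ of finite-dimensional ones (Theorem \ref{slopp}(i) applies only to qcqs dagger varieties) --- so $\cl E(H^j_{\HK}(X))$ is not a vector bundle on $\FF$ and the slope/Euler-characteristic argument does not apply to it directly. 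The slope bound gives surjectivity of $\varphi-p^i$ and the vanishing of the relevant $H^1$ at each finite level $n$; to conclude for $X$ itself you must additionally prove
$$R^1\varprojlim_n\,(H^j_{\HK}(U_n^\dagger)\otimes_{\breve F}B_{\log})^{N=0,\varphi=p^i}=0,$$
which is where the paper uses Colmez's Dimension function on Banach--Colmez spaces (the images of the transition maps form a chain of BC spaces of decreasing Dimension and non-negative height, hence stabilize). The same limit issue affects your surjectivity of the boundary map $g$ onto $H^{i-1}_{\dR}(X)$, which again rests on $H^1(\FF,-)=0$ for a bundle attached to the non-finite-dimensional $H^{i-1}_{\HK}(X)$. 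Without this Mittag--Leffler/BC-space input, the degree-$i$ and degree-$(i-1)$ cohomology of $R\Gamma_B(X)^{\varphi=p^i}$ are not identified, and the short exact sequence does not follow. (A smaller point of the same nature: Proposition \ref{expic} is stated for $X$ defined over $K$; for a general Stein space over $C$ one descends each $U_n^\dagger$ to a finite extension of $K$ via Elkik and again passes to the limit, so even the de Rham side of your computation implicitly routes through the exhaustion.)
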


 \begin{rem}
  A recent conjecture of Hansen, \cite[Conjecture 1.10]{Hans_supercusp}, suggests that any local Shimura variety is a Stein space, therefore Theorem \ref{mainstein} potentially applies to any such variety.
 \end{rem}
 
 \begin{rem}[A $p$-adic Artin vanishing for smooth Stein spaces]\label{stv}
  Let $X$ be a smooth Stein space over $C$. As a corollary of Theorem \ref{mainstein}, we have that $$H^i_{\pet}(X, \Qq_p)=0$$ for all $i>\dim X$ (Corollary \ref{Steinvanish}).
 \end{rem}

 For smooth affinoid rigid spaces, the Hodge-de Rham spectral sequence simplifies similarly to smooth Stein spaces, thanks to Tate's acyclicity theorem (see also Proposition \ref{expic}). Therefore, in view of Theorem \ref{mainstein}, we are led to state the following conjecture.
 
 \begin{conj}[cf. Conjecture \ref{conju}]\label{conjuintro}
  Let $X$ be a smooth affinoid rigid space over $C$. For any $i\ge 0$, we have a short exact sequence in $\Vect_{\Qq_p}^{\ssolid}$ 
  $$0\to \Omega^{i-1}(X)/\ker d \to H^i_{\pet}(X, \Qq_p(i))\to  (H^i_{\HK}(X)\solid_{\breve F}B_{\log})^{N=0, \varphi=p^i} \to 0.$$
 \end{conj}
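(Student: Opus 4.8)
The plan is to run, for smooth affinoids, the argument that proves Theorem~\ref{steindiagram} in the Stein case, substituting Tate's acyclicity theorem for Kiehl's. One first replaces $X$ by its finitely many connected components, so that $X$ may be assumed connected. Since the assertion lives in cohomological degree $i$, Theorem~\ref{synfan}(i) identifies $H^i_{\pet}(X,\Qq_p(i))$ with $H^i_{\syn,\FF}(X,\Qq_p(i))$, while Theorem~\ref{synfan}(ii) combined with Theorem~\ref{mainn1} exhibits the latter as the cohomology of the fibre
\[
R\Gamma_{\syn,\FF}(X,\Qq_p(i))\simeq\fib\!\Big((R\Gamma_{\HK}(X)\dsolid_{\breve F}B_{\log})^{N=0,\,\varphi=p^i}\longrightarrow R\Gamma_{B_{\dR}^+}(X)/\Fil^i\Big),
\]
where, by Theorem~\ref{mainn1}(ii), the target is computed by a truncation of the de Rham complex of $X$ over $B_{\dR}^+$ (equivalently, Guo's infinitesimal complex). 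The desired short exact sequence is then the degree-$i$ part of the long exact sequence of this fibre sequence, once its two outer terms are computed.

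The first key step is the affinoid analogue of the simplification used in the Stein case: by Tate's acyclicity theorem the higher coherent cohomology $H^q(X,\Omega^p_{X/C})$ vanishes for $q>0$, so $R\Gamma_{\dR}(X/C)$ is represented by the bounded complex of Banach spaces $\Omega^\bullet_{X/C}(X)$ (cf.\ Proposition~\ref{expic}); accordingly, by the shape of the Hodge filtration, $H^i\big(R\Gamma_{B_{\dR}^+}(X)/\Fil^i\big)=0$ and $H^{i-1}\big(R\Gamma_{B_{\dR}^+}(X)/\Fil^i\big)$ is, up to the $\xi$-adic refinement coming from $B_{\dR}^+$, equal to $\Omega^{i-1}(X)/\im\!\big(d\colon\Omega^{i-2}(X)\to\Omega^{i-1}(X)\big)$. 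The vanishing forces the term $(H^i_{\HK}(X)\solid_{\breve F}B_{\log})^{N=0,\,\varphi=p^i}$ to appear as a genuine quotient, once one also shows, using solid-flatness of $B_{\log}$ over $\breve F$ and compatibility of the operations $N=0$ and $\varphi=p^i$ with passage to cohomology, that
\[
H^i\big((R\Gamma_{\HK}(X)\dsolid_{\breve F}B_{\log})^{N=0,\,\varphi=p^i}\big)\;\cong\;(H^i_{\HK}(X)\solid_{\breve F}B_{\log})^{N=0,\,\varphi=p^i}.
\]
Finally, as in the proof of Theorem~\ref{steindiagram}, via the relative fundamental exact sequence of $p$-adic Hodge theory one identifies the image of $H^{i-1}\big((R\Gamma_{\HK}(X)\dsolid_{\breve F}B_{\log})^{N=0,\varphi=p^i}\big)$ inside $H^{i-1}\big(R\Gamma_{B_{\dR}^+}(X)/\Fil^i\big)$ with the de Rham part $H^{i-1}_{\dR}(X)=\ker d/\im d$; the cokernel is then $\big(\Omega^{i-1}(X)/\im d\big)/H^{i-1}_{\dR}(X)=\Omega^{i-1}(X)/\ker d$, which yields the asserted extension in $\Vect_{\Qq_p}^{\ssolid}$.

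The main obstacle is entirely functional-analytic, and is precisely what makes the statement only conjectural at present. In the Stein case the global differentials $\Omega^j(X)$ are nuclear Fr\'echet spaces and the de Rham differentials have closed image, which is what makes $R\Gamma_{\HK}(X)$ and $R\Gamma_{\dR}(X)$ well-behaved solid complexes and the Colmez--Dospinescu--Nizio\l{} ``fundamental diagram'' (in its solid reformulation here) exact. For a general smooth affinoid $X$ the differential $d\colon\Omega^{i-1}(X)\to\Omega^i(X)$ need not have closed image, so $H^i_{\dR}(X)$ and $H^i_{\HK}(X)$ need not be Hausdorff, and $H^i_{\HK}(X)$ need not be a finite $(\varphi,N)$-module; one must therefore establish that all of the degeneration and exactness statements above survive at the level of solid modules --- in particular, that the boundary term $\Omega^{i-1}(X)/\ker d$, a priori only a possibly non-Hausdorff solid $C$-vector space, injects into $H^i_{\pet}(X,\Qq_p(i))$, and that $R\Gamma_B(X)$ and $R\Gamma_{\HK}(X)$ for affinoids enjoy the requisite finiteness and degeneration properties. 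Gaining this control --- currently available only in the Stein and proper settings --- is the crux of the problem; the formal skeleton of the Stein-case argument otherwise carries over verbatim.
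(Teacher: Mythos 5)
Your proposal is not (and honestly does not claim to be) a proof, and neither does the paper contain one: the statement is Conjecture \ref{conju}, proved there only in dimension $1$ (Theorem \ref{affcurves}) and discussed as an open problem in \S\ref{smlabel}. Your skeleton matches the paper's intended route exactly: reduce via Theorem \ref{BK=pet} and Theorems \ref{B=HK}, \ref{B_dR=dR}, \ref{compatib} to the fiber sequence with de Rham target, use Tate acyclicity (Proposition \ref{expic}\listref{expic:2}) to compute that target, and then argue as for Theorem \ref{steindiagram}. Where the paper goes further is in distilling your two ``once one shows'' steps --- the commutation of $(-)^{\varphi=p^i}$ with passage to cohomology of $R\Gamma_{\HK}(X)\dsolid_{\breve F}B$, and the surjectivity of the map onto $H^{i-1}_{\dR}(X)$ --- into a \emph{single} precise obstruction: the surjectivity of $\varphi-p^m$ on $H^n_{\HK}(X)\solid_{\breve F}B$ for all $0\le n\le m$, equivalently the vanishing $H^1(\FF,\cl H^n_{\FF}(X)\otimes\cl O(m))=0$. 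In the Stein and dagger settings this vanishing is automatic because $H^n_{\HK}$ is a finite $(\varphi,N)$-module whose associated bundle has slopes $\ge -n$ (Theorem \ref{slopp}\listref{slopp:2}); for non-overconvergent affinoids it is exactly what is missing, so the obstruction is better described as slope-theoretic than as ``closed image of $d$''. Finally, note that the paper's proof in dimension $1$ needs one input absent from your outline: Bhatt--Mathew's $p$-adic Artin vanishing (Lemma \ref{Artinvanish}) to kill $H^i_{\pet}(X,\Qq_p(i))$ for $i>\dim X$, which together with Corollary \ref{samedim} disposes of all degrees $i\ge 2$ when $\dim X=1$.
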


 We remark that before the advent of condensed mathematics one couldn't even dare to formulate a conjecture in the spirit of the one above; in fact, for a smooth affinoid rigid space $X$ over $C$, the de Rham $H_{\dR}^i(X)$ and Hyodo--Kato $H_{\HK}^i(X)$ cohomology groups are in general pathological if regarded as topological vector spaces (see \cite[Remark 5.14]{Bosco}).\footnote{There were previously some partial and ad hoc solutions to this issue. For example, in \cite{CDNcourbes}, for $X$ a smooth affinoid over $C$ of dimension 1, Colmez--Dospinescu--Nizioł considered the maximal Hausdorff quotient of the topological $\breve F$-vector space $H_{\HK}^1(X)$.} Instead, regarded as condensed vector spaces, these objects are perfectly well-behaved, even though they are non-quasi-separated. Then, exploiting the possibility (provided by the solid formalism) of doing functional analysis with such new objects, we prove the following result.
 
 \begin{theorem}[cf. Theorem \ref{affcurves}]\label{2cases}
  Conjecture \ref{conju} holds true for $X$ a smooth affinoid rigid space over $C$ of dimension 1.
 \end{theorem}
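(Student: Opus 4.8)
The plan is to run, over $C$, the argument behind the Stein case (Theorem~\ref{mainstein}), with Tate's acyclicity theorem in place of Kiehl's, and to use the hypothesis $\dim X = 1$ to keep all the complexes in sight in a two-term range so that the relevant long exact sequences collapse. We may assume $X$ connected. Since $\Omega^j_X = 0$ for $j \ge 2$, the complex $R\Gamma_{\dR}(X)$ (Definition~\ref{dRsingg}) is the two-term complex $[\cl O(X) \xrightarrow{d} \Omega^1(X)]$ in degrees $0,1$ of condensed Banach $C$-vector spaces, with Hodge filtration $\Fil^0 = (\text{all}) \supseteq \Fil^1 = \Omega^1(X)[-1] \supseteq \Fil^{\ge 2} = 0$; by Tate acyclicity (Proposition~\ref{expic}) the Hodge--de Rham spectral sequence degenerates, so $H^0_{\dR}(X) = C$, $H^1_{\dR}(X) = \Omega^1(X)/d\cl O(X)$ and $H^j_{\dR}(X) = 0$ for $j \ge 2$; likewise $R\Gamma_{\HK}(X)$ (Definition~\ref{defhk}) is represented by a two-term complex of condensed Banach $\breve F$-vector spaces in degrees $0,1$, with $H^0_{\HK}(X) = \breve F$ and $H^j_{\HK}(X) = 0$ for $j \ge 2$. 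Feeding these into Theorem~\ref{mainn1}(i)--(ii) gives explicit models, together with the filtration décalée, of $R\Gamma_B(X) \simeq (R\Gamma_{\HK}(X) \dsolid_{\breve F} B_{\log})^{N=0}$, concentrated in degrees $[0,1]$, and of $R\Gamma_{B_{\dR}^+}(X) \simeq R\Gamma_{\inf}(X/B_{\dR}^+)$, whose graded pieces for the filtration décalée are built from $\cl O(X)$ and $\Omega^1(X)$; in particular $R\Gamma_{B_{\dR}^+}(X)/\Fil^i$ lies in degrees $[0,1]$ for every $i \ge 1$.

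Next I would reduce to the syntomic Fargues--Fontaine cohomology. Fix $i \ge 0$. By Theorem~\ref{synfan}(i), $H^j_{\pet}(X, \Qq_p(i)) \cong H^j_{\syn, \FF}(X, \Qq_p(i))$ for all $j \le i$, so in particular $H^i_{\pet}(X, \Qq_p(i)) \cong H^i_{\syn, \FF}(X, \Qq_p(i))$. By Theorem~\ref{synfan}(ii),
\[
R\Gamma_{\syn, \FF}(X, \Qq_p(i)) \simeq \fib\bigl(R\Gamma_B(X)^{\varphi = p^i} \longrightarrow R\Gamma_{B_{\dR}^+}(X)/\Fil^i\bigr);
\]
since $R\Gamma_B(X)^{\varphi=p^i} = \fib(\varphi - p^i \mid R\Gamma_B(X))$ lies in degrees $[0,2]$ and the target in degrees $[0,1]$, the long exact sequence of the fiber sequence gives a short exact sequence $0 \to \coker(f_{i-1}) \to H^i_{\syn,\FF}(X, \Qq_p(i)) \to \ker(f_i) \to 0$, with $f_j\colon H^j(R\Gamma_B(X)^{\varphi=p^i}) \to H^j(R\Gamma_{B_{\dR}^+}(X)/\Fil^i)$.

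It then remains to identify the two outer terms, for which I would use the relative fundamental exact sequence of $p$-adic Hodge theory, as in the proof of Theorem~\ref{derr}. From the extension $0 \to \coker(\varphi - p^i \mid H^{i-1}(R\Gamma_B(X))) \to H^i(R\Gamma_B(X)^{\varphi=p^i}) \to H^i(R\Gamma_B(X))^{\varphi=p^i} \to 0$ one gets: (a) $\varphi - p^i$ is surjective on $H^{i-1}(R\Gamma_B(X)) \simeq (H^{i-1}_{\HK}(X) \dsolid_{\breve F} B_{\log})^{N=0}$ --- vacuously for $i \ge 3$, since that group then vanishes; for $i = 1$ this is the acyclicity $H^1(\FF, \cl O(1)) = 0$; for $i = 2$ it follows from the bound on the Frobenius slopes of $H^1_{\HK}(X)$ --- whence $H^i(R\Gamma_B(X)^{\varphi=p^i}) \cong H^i(R\Gamma_B(X))^{\varphi=p^i} \cong (H^i_{\HK}(X) \solid_{\breve F} B_{\log})^{N=0, \varphi=p^i}$, which vanishes for $i \ge 2$ since $H^i_{\HK}(X) = 0$; (b) $H^i(R\Gamma_{B_{\dR}^+}(X)/\Fil^i) = 0$, so $\ker(f_i) = (H^i_{\HK}(X) \solid_{\breve F} B_{\log})^{N=0,\varphi=p^i}$; (c) $\coker(f_{i-1})$ is identified, via the fundamental exact sequence, with $\Omega^{i-1}(X)/\ker d$: for $i = 1$ this is the statement that the image of $B^{\varphi=p} \to H^0(R\Gamma_{B_{\dR}^+}(X)/\Fil^1) = \cl O(X)$ is the subspace of constants $C$, while for $i \ge 2$ one has $\Omega^{i-1}(X)/\ker d = 0$ and correspondingly $\coker(f_{i-1}) = 0$ (automatically for $i \ge 3$, and for $i = 2$ because $f_1$ is surjective). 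Substituting (b) and (c), the short exact sequence becomes
\[
0 \to \Omega^{i-1}(X)/\ker d \to H^i_{\syn,\FF}(X, \Qq_p(i)) \to (H^i_{\HK}(X) \solid_{\breve F} B_{\log})^{N=0, \varphi=p^i} \to 0,
\]
which together with $H^i_{\syn,\FF}(X, \Qq_p(i)) \cong H^i_{\pet}(X, \Qq_p(i))$ is the assertion of Conjecture~\ref{conju}; for $i \ge 2$ it reduces to the $p$-adic Artin vanishing $H^i_{\pet}(X, \Qq_p(i)) = 0$.

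The hard part, and the reason for the restriction $\dim X = 1$, is the functional-analytic control of these maps in the condensed setting. The differential $d\colon \cl O(X) \to \Omega^1(X)$ need not have closed image --- this already fails for the closed unit disk, because of $p$-adic denominators in antiderivatives --- so its cokernel, and likewise the cokernels of $\varphi - p^i$, are genuinely non-quasi-separated, and the open-mapping arguments that make the analogous topological statements work are unavailable. As in \cite{Bosco}, I would extract the surjectivity and exactness statements used above from the solid formalism and the theory of nuclear modules, exploiting that $\cl O(X)$, $\Omega^1(X)$ and $H^1_{\HK}(X)$ are solid Banach spaces against which $B$, $B_{\dR}^+$ and $B_{\log}$ behave flatly; dimension one is what reduces the whole analysis to the single differential $d$ and to the $\varphi$-module $H^1_{\HK}(X)$, and keeps every complex in the two-term range where the long exact sequences degenerate. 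The topological precursor of this argument, over a $p$-adic field and in dimension one, is due to Colmez--Dospinescu--Nizioł (\cite{CDNcourbes}, \cite{CN5}).
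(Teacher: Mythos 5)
Your proposal follows essentially the same route as the paper's proof of Theorem \ref{affcurves}: reduce via Theorem \ref{BK=pet}\listref{BK=pet:1} to computing $H^i_{\syn,\FF}(X,\Qq_p(i))$, use the fiber-sequence description of Theorem \ref{BK=pet}\listref{BK=pet:2} together with the Hyodo--Kato and $B_{\dR}^+$ comparisons, collapse the de Rham side by Tate acyclicity (Proposition \ref{expic}\listref{expic:2}), identify the kernel and cokernel terms using the surjectivity of $\varphi-p$ on $B$ (i.e.\ $H^1(\FF,\cl O(1))=0$) and the flatness of $B_{\log}$ for $\solid_{\breve F}$, and dispose of $i\ge 2$ by the Bhatt--Mathew vanishing (Lemma \ref{Artinvanish}) combined with $H^i_{\dR}(X)=0$ and Corollary \ref{samedim}. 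This is correct and matches the paper's argument in all essential points.

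One claim in your step (a) should be struck or repaired: for $i=2$ you assert that $\varphi-p^2$ is surjective on $H^1(R\Gamma_B(X))\simeq(H^1_{\HK}(X)\dsolid_{\breve F}B_{\log})^{N=0}$ ``from the bound on the Frobenius slopes of $H^1_{\HK}(X)$'', and in (c) that $f_1$ is surjective. The slope bound (Theorem \ref{slopp}) applies to qcqs \emph{dagger} varieties, where $H^1_{\HK}$ is a finite $(\varphi,N)$-module; for a rigid (non-overconvergent) smooth affinoid, $H^1_{\HK}(X)$ is in general an infinite-dimensional, non-quasi-separated condensed $\varphi$-module, and the surjectivity of $\varphi-p^m$ on $H^n_{\HK}(X)\solid_{\breve F}B$ for $n\le m$ is precisely the open condition identified in \S\ref{smlabel} as the obstruction to the conjecture in higher dimensions. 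Fortunately this claim is not needed: for $i\ge 2$ and $\dim X=1$ both outer terms of the conjectured sequence vanish for trivial reasons ($\Omega^{i-1}(X)/\ker d=0$ since $\Omega^2_X=0$, and $H^i_{\HK}(X)=0$), so the statement reduces to $H^i_{\pet}(X,\Qq_p(i))=0$, which is exactly the Artin vanishing you invoke at the end. With the $i=2$ case routed through that vanishing rather than through the slope claim, the argument is complete.
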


 We discuss in \S \ref{smlabel} the obstruction to proving Conjecture \ref{conju} in dimension higher than 1. We note however that an analogue of Remark \ref{stv} for smooth affinoid spaces over $C$ is known in any dimension, thanks to a result of Bhatt--Mathew (see Lemma \ref{Artinvanish}).

 \subsection{Link to prismatic cohomology: toward an integral theory}\label{linkfor} We conclude this introduction by conjecturing the existence of an integral variant of the $B$-cohomology theory for rigid-analytic varieties, which in particular better explains the relation between the results of this paper and the work of Bhatt--Morrow--Scholze and Česnavičius--Koshikawa. \medskip
 
 In the following, we write $y_C$ for the $(C, \cl O_C)$-point of $Y_{\FF}$ corresponding to Fontaine's map $\theta: A_{\inf}\to \cl O_C$ (and projecting to the point $\infty$ of $\FF$). %We denote by $\varphi_B$ the Frobenius on $B$. 
 Let us recall the following result of Fargues. 
 
 \begin{prop}[Fargues, cf. {\cite[Proposition 4.45]{AuDela}},  {\cite[Theorem 14.1.1]{SW}}]\label{farguesequi} The following two categories are equivalent:
   \begin{enumerate}
  \item  Shtukas (of vector bundles) over $\Spa C^{\flat}$ relative to $\Spa \Qq_p$ with one leg at $\varphi^{-1}(y_C)$, i.e. vector bundles $\cl E$ on $\Spa C^{\flat}\overset{.}{\times}\Spa \Qq_p:=Y_{\FF}$\footnote{To motivate the notation and the terminology, we recall that $Y_{\FF}^\diamondsuit\cong(\Spa C)^\diamondsuit\times (\Spa \Qq_p)^\diamondsuit$ as diamonds.} together with an isomorphism
  $$\varphi_{\cl E}: (\varphi^* \cl E)|_{Y_{\FF}\setminus {\varphi^{-1}(y_C)}}\cong \cl E|_{Y_{\FF}\setminus {\varphi^{-1}(y_C)}}$$
  which is meromorphic at $\varphi^{-1}(y_C)$.
  \item  Admissible modifications of vector bundles on $\FF$ at $\infty$, i.e. triples $(\cl F_1, \cl F_2, \alpha)$ where $\cl F_1$ and $\cl F_2$ are vector bundles on $\FF$ with $\cl F_1$ semistable of slope 0, and $\alpha: \cl F_1|_{\FF\setminus\{\infty\}}\cong \cl F_2|_{\FF\setminus\{\infty\}}$ is an isomorphism.
 \end{enumerate}
 \end{prop}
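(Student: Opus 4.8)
The plan is to build quasi-inverse functors between the two categories, using as an intermediate the category $\cl C$ of pairs $(V, \Xi)$ consisting of a finite-dimensional $\Qq_p$-vector space $V$ and a $B_{\dR}^+$-lattice $\Xi \subset V\otimes_{\Qq_p} B_{\dR}$. First I would dispose of the easy half: the equivalence $\{\text{admissible modifications at }\infty\}\simeq \cl C$. By Fargues--Fontaine's classification of vector bundles on $\FF$, together with $H^0(\FF,\cl O_\FF)=\Qq_p$, a vector bundle on $\FF$ that is semistable of slope $0$ is canonically of the form $\cl E(V):=V\otimes_{\Qq_p}\cl O_\FF$ with $V=H^0(\FF,\cl E(V))$; and by the Beauville--Laszlo presentation of $\Bun_\FF$ (using $\FF\setminus\{\infty\}=\Spec B_e$ with $B_e$ a PID, and $\widehat{\cl O}_{\FF,\infty}\cong B_{\dR}^+$) a vector bundle on $\FF$ is the same datum as a finite projective $B_e$-module together with a $B_{\dR}^+$-lattice in its base change to $B_{\dR}$. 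Given $(\cl F_1,\cl F_2,\alpha)$, writing $\cl F_1=\cl E(V)$, the isomorphism $\alpha$ identifies $\cl F_2|_{\FF\setminus\{\infty\}}$ with $V\otimes_{\Qq_p}B_e$, so $\cl F_2$ is the Beauville--Laszlo glueing of a unique $B_{\dR}^+$-lattice $\Xi\subset V\otimes_{\Qq_p}B_{\dR}$; the assignment $(\cl F_1,\cl F_2,\alpha)\mapsto(V,\Xi)$ is visibly an equivalence.

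It then remains to produce an equivalence $\{\text{shtukas with one leg at }\varphi^{-1}(y_C)\}\simeq\cl C$. For $\cl C\to\{\text{shtukas}\}$, given $(V,\Xi)$ I would start from the trivial shtuka $\cl E_0:=V\otimes_{\Qq_p}\cl O_{Y_{\FF}}$ with its canonical Frobenius structure $\varphi_{\cl E_0}$ (an isomorphism on all of $Y_{\FF}$), and then modify $\cl E_0$ along the leg: using $\widehat{\cl O}_{Y_{\FF},\varphi^{-1}(y_C)}\cong B_{\dR}^+$ and $\varphi_{\cl E_0}$ to transport $\Xi$, one obtains a vector bundle $\cl E$ on $Y_{\FF}$ with $\varphi_{\cl E}:=\varphi_{\cl E_0}$, now an isomorphism away from $\varphi^{-1}(y_C)$ and meromorphic there. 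Conversely, given a shtuka $(\cl E,\varphi_{\cl E})$: every vector bundle on $Y_{\FF}$ is free (the ring $B$ of functions on $Y_{\FF}$ is Bézout, by Fargues--Fontaine/Kedlaya), so away from the $\varphi$-orbit of the leg the pair $(\cl E,\varphi_{\cl E})$ is a $\varphi$-equivariant bundle and descends, by Frobenius descent, to a vector bundle $\cl G$ on $\FF\setminus\{\infty\}$. Its generic fibre — extracted by the slope formalism of Kedlaya and Fargues--Fontaine near the crystalline end of $Y_{\FF}$, where the shtuka becomes an étale $\varphi$-module — is a finite-dimensional $\Qq_p$-vector space $V$ with $\cl G\cong V\otimes_{\Qq_p}\cl O_{\FF\setminus\{\infty\}}$; and the completed stalk $\Xi:=\widehat{\cl E}_{\varphi^{-1}(y_C)}\subset V\otimes_{\Qq_p}B_{\dR}$ (a $B_{\dR}^+$-lattice, via the de Rham comparison $\widehat{\cl E}_{\varphi^{-1}(y_C)}[1/t]\cong V\otimes_{\Qq_p}B_{\dR}$ induced by $\varphi_{\cl E}$) completes the datum $(V,\Xi)$.

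Finally I would check the composites are naturally the identity. The round trip $\cl C\to\{\text{shtukas}\}\to\cl C$ recovers $(V,\Xi)$ because modifying $\cl E_0$ at a single interior point leaves the generic fibre unchanged (it only alters the lattice at $\infty$) and returns $\Xi$ by construction; the round trip $\{\text{shtukas}\}\to\cl C\to\{\text{shtukas}\}$ recovers the shtuka because, after trivializing $\cl E\cong\cl O_{Y_{\FF}}^n$, the Frobenius matrix (invertible over $B$, meromorphic along the leg) is determined up to $\varphi$-conjugation by its restriction away from the leg — an étale, hence after $\varphi$-conjugation trivial, $\varphi$-module, this being the datum $V$ — together with the lattice jump along the leg, which is $\Xi$. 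Composing with the first equivalence yields the statement, and the functors are visibly additive and compatible with duals and tensor products.

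The main obstacle is the construction of the generic-fibre functor in the second direction and the proof that it lands in honest $\Qq_p$-local systems — equivalently, that the bundle $\cl F_1$ one produces is semistable of slope $0$: this rests on Kedlaya's classification of $\varphi$-modules over the Robba ring (the slope-$0$ part being exactly the étale, i.e.\ $\Qq_p$-local-system, part) together with the Fargues--Fontaine dictionary, and it forces one to track carefully the geometry of $Y_{\FF}$ near its two ends. The most error-prone point is the bookkeeping of the $\varphi$-twist — distinguishing the leg at $\varphi^{-1}(y_C)$ from $y_C$, and $\cl F_1$ from $\cl F_2$ — which must be pinned against the identification $Y_{\FF}^{\diamond}\cong(\Spa C)^{\diamond}\times(\Spa\Qq_p)^{\diamond}$ underlying the quotient map $Y_{\FF}\to\FF$. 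An alternative route, closer to Scholze--Weinstein, is to identify shtukas with one leg directly with points of the $B_{\dR}^+$-affine Grassmannian of $\GL_n$ over $\Spa C^\flat$, i.e.\ with modifications of the trivial bundle, and to read off both $\cl C$ and the admissible modifications on $\FF$ from this description.
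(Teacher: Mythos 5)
First, a point of order: the paper does not prove this proposition — it is recalled from Fargues \cite[Proposition 4.45]{AuDela} and Scholze--Weinstein \cite[Theorem 14.1.1]{SW} and used only as motivation in \S 1.5 — so there is no internal proof to compare against. Your sketch follows the standard route of those references (the intermediate category of pairs $(V,\Xi)$, Beauville--Laszlo at $\infty$, descent of the shtuka along $\varphi$ away from the orbit of the leg), and the first half of your argument, the equivalence between admissible modifications and pairs $(V,\Xi)$, is complete and correct.

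The genuine gap is exactly at the point you flag as ``the main obstacle'' but do not close: the claim that the shtuka ``becomes an \'etale $\varphi$-module'' near one end of $Y_{\FF}$, equivalently that the bundle $\cl F_1$ you produce is semistable of slope $0$. This cannot follow from the one-leg condition alone. Indeed, take any vector bundle $\cl F$ on $\FF$, let $\cl E$ be its pullback to $Y_{\FF}$ and $\varphi_{\cl E}$ the canonical descent isomorphism; this is an object of category (1) (the ``leg'' is vacuous, and an isomorphism is in particular meromorphic), and your generic-fibre functor sends it to the modification $(\cl F,\cl F,\mathrm{id})$, which is admissible only when $\cl F$ is semistable of slope $0$. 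Already in rank one, the shtuka $(\cl O_{Y_{\FF}},\, p^{-1}\varphi)$ corresponding to $\cl O_{\FF}(1)$ is not isomorphic to any shtuka arising from an admissible modification. Kedlaya's theorem identifies the slope-$0$ $\varphi$-modules over the Robba ring with the \'etale ones, but it does not tell you that the $\varphi$-module extracted from a shtuka has slope $0$. In Scholze--Weinstein the \'etaleness is forced by the integral structure: their shtukas live on $\Spa C^{\flat}\overset{.}{\times}\Spa \Zz_p=\cl Y_{\FF}\supset Y_{\FF}$, and it is the extension of $(\cl E,\varphi_{\cl E})$ across the characteristic-$p$ fibre $\{p=0\}$ (where $\varphi_{\cl E}$ is an isomorphism) that makes the $\varphi$-module unit-root near that end and produces the lattice $T$, hence $V=T[1/p]$ and the triviality of $\cl F_1$. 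To repair your argument you must either carry this extendability/\'etaleness condition along (as in Fargues' precise formulation) or first prove the cleaner statement that one-leg shtukas on $Y_{\FF}$ are equivalent to \emph{all} modifications $(\cl F_1,\cl F_2,\alpha)$ via the half-orbit lattice construction, and then isolate the admissible ones on the shtuka side; as written, your functor from category (1) to pairs $(V,\Xi)$ is not defined on all objects.
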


 Now, given $X$ a proper rigid-analytic variety over $C$, for $i\ge 0$ we consider the modification of the vector bundle $\cl H^i_{\FF}(X)$ at $\infty$ given by the $B_{\dR}^+$-lattice $$\Fil^0(H_{B_{\dR}^+}^i(X)\otimes_{B_{\dR}^+}B_{\dR})\subset \cl H^i_{\FF}(X)^{\wedge}_{\infty}\otimes_{B_{\dR}^+}B_{\dR}\cong H^i_{\ett}(X, \Qq_p)\otimes_{\Qq_p}B_{\dR}$$ which gives an admissible modification of vector bundles on $\FF$ at $\infty$
 \begin{equation}\label{modiff}
  (H^i_{\ett}(X, \Qq_p)\otimes_{\Qq_p}\cl O_{\FF},\; \cl H^i_{\FF}(X),\; \alpha)
 \end{equation}
 (see Theorem \ref{propsing}). Then, inspired by Bhatt--Morrow--Scholze's work, it is natural to wonder whether one can give a direct geometric cohomological construction of the shtuka corresponding to (\ref{modiff}) via the above recalled Fargues' equivalence, and how the latter compares to the $A_{\inf}$-cohomology theory in the semistable reduction case. More precisely, we formulate the following conjecture (for simplicity, we restrict ourselves to proper rigid-analytic varieties). \medskip
 
 We will consider the analytic adic space
 $$\cl Y_{\FF}:=\Spa(A_{\inf}, A_{\inf})\setminus V([p^{\flat}])$$
 and denote by $A$ the ring of analytic functions on $\cl Y_{\FF}$. We note that $Y_{\FF}\subset \cl Y_{\FF}$ is the open subset defined by the locus where $p\neq 0$.

 \begin{conj}\label{conjfin} There exists a cohomology theory $$R\Gamma_{\sht}(X/\cl Y_{\FF})$$ for proper rigid-analytic varieties $X$ over $C$, taking values in shtukas of perfect complexes over $\Spa C^{\flat}$ relative to $\Spa \Zz_p$ with one leg at $\varphi^{-1}(y_C)$ (i.e. perfect complexes  $\cl E$ on $\Spa C^{\flat}\overset{.}{\times}\Spa \Zz_p:=\cl Y_{\FF}$ together with an isomorphism $\varphi_{\cl E}: (\varphi^* \cl E)|_{\cl Y_{\FF}\setminus {\varphi^{-1}(y_C)}}\cong \cl E|_{\cl Y_{\FF}\setminus {\varphi^{-1}(y_C)}}$
  which is meromorphic at $\varphi^{-1}(y_C)$), and satisfying the following properties.
 \begin{enumerate}[(i)]
 \item\label{conjfin:2} If $X$ is the generic fiber of a proper $p$-adic formal scheme $\fr X$ over $\cl O_C$ with semistable reduction, there is a natural isomorphism between $R\Gamma_{\sht}(X/\cl Y_{\FF})$ and the shtuka of perfect complexes over $\Spa C^{\flat}$ relative to $\Spa \Zz_p$ with one leg at $\varphi^{-1}(y_C)$ associated to $R\Gamma_{A_{\inf}}(\fr X)\otimes_{A_{\inf}} A$.
  \item\label{conjfin:1} Denoting by $R\Gamma_{\sht}(X/Y_{\FF})$ the restriction of $R\Gamma_{\sht}(X/\cl Y_{\FF})$ to $Y_{\FF}$, the cohomology groups $H_{\sht}^i(X/Y_{\FF})$ are shtukas of vector bundles over $\Spa C^{\flat}$ relative to $\Spa \Qq_p$ with one leg at $\varphi^{-1}(y_C)$, and the admissible modification of vector bundles on $\FF$ at $\infty$ defined in (\ref{modiff}) corresponds to $H^i_{\sht}(X/Y_{\FF})$ via Fargues' equivalence (Proposition \ref{farguesequi}).
  
  %\footnote{We recall that the global sections functor induces an equivalence of categories between shtukas over $\Spd C$ relative to $\Spd \Qq_p$ with one leg at $\varphi^{-1}(y_C)$, and \textit{Breuil--Kisin--Fargues modules over $B$}, i.e. finite projective $B$-modules $M$ together with an isomorphism $\varphi_{M}:(\varphi^*M)[\frac{1}{\varphi(\xi)}]\cong M[\frac{1}{\varphi(\xi)}]$, where $\xi\in A_{\inf}$ is a generator of $\ker\theta$ (see \cite[Proposition 4.46]{AuDela}).}
 \end{enumerate}
 \end{conj}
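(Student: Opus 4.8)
The plan is to construct $R\Gamma_{\sht}(X/\cl Y_{\FF})$ as an integral, $v$-descended refinement of the $B$-cohomology of Definition~\ref{dfintro}, and then to establish properties~\textup{(i)} and~\textup{(ii)} by comparing it with, respectively, the $A_{\inf}$-cohomology of \cite{CK} and the rational theory developed in this paper. First I would introduce a $v$-site sheaf of solid condensed rings $\Aa$ whose sections on an affinoid perfectoid $\Spa(R,R^+)$ over $C$ compute a solid refinement of the ring of analytic functions on $\Spa(A_{\inf}(R^+),A_{\inf}(R^+))\setminus V([p^\flat])$, equipped with the Witt-vector Frobenius; by construction $\Aa$ restricts to $\Bb$ over the open locus $p\neq 0$, i.e.\ over $Y_{\FF}\subset\cl Y_{\FF}$. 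Writing $\alpha\colon X_v\to X_{\eh}$ as in Definition~\ref{dfintro} (with $X_{\eh}$ replaced by $X_{\ett}$ when $X$ is smooth, via Proposition~\ref{3.11}), I would then set $R\Gamma_{\sht}(X/\cl Y_{\FF}):=L\eta_\mu\,R\alpha_*\Aa$ with $\mu=[\varepsilon]-1$, exactly as in \cite{BMS1}. The Frobenius of $\Aa$ induces $\varphi_{\cl E}$; checking that the resulting object is a shtuka with a \emph{single} leg at $\varphi^{-1}(y_C)$ --- rather than the infinitely many legs a naive $L\eta_\mu$-computation would suggest --- should, as in \cite{BMS1}, \cite{CK}, follow from the $L\eta_\mu$--Frobenius formalism combined with a (local) crystalline comparison that pins down the Frobenius structure, and making this precise for general $X$ is already part of the work.

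The crucial analytic step is to show that $R\Gamma_{\sht}(X/\cl Y_{\FF})$ is \emph{perfect} when $X$ is proper. For $X$ proper smooth this should follow the pattern of \cite{BMS1}, \cite{CK}: reduce modulo $p$ (or restrict to a suitable affinoid cover of $\cl Y_{\FF}$), invoke Scholze's primitive comparison theorem \cite{Scholze} together with the finiteness of $H^\ast_{\ett}(X,\Zz/p^n)$ to get perfectness of the reduction, and then bootstrap by a derived $p$-complete Nakayama argument --- the condensed and solid formalism being what makes these completeness arguments available in the absence of quasi-separatedness. For $X$ proper but singular, I would reduce to the proper smooth case exactly as in the proof of Theorem~\ref{propsing}: Conrad--Gabber spreading out, the generic smoothness theorem of Bhatt--Hansen \cite{Bhatt-Hansen}, and $\eh$-descent.

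For property~\textup{(i)}, given $\fr X$ proper semistable over $\cl O_C$ with $X=\fr X_C$, one must produce a Frobenius-equivariant identification of $R\Gamma_{\sht}(X/\cl Y_{\FF})$ with the shtuka attached to $R\Gamma_{A_{\inf}}(\fr X)\otimes_{A_{\inf}}A$; I would build it from functorial \emph{local} comparisons between $\Aa$-cohomology and log-crystalline (equivalently log-prismatic) cohomology, glued together in the spirit of the construction of the Hyodo--Kato comparison~\eqref{decaBB} in the proof of Theorem~\ref{mainn1}, but now carried out integrally. Property~\textup{(ii)} should then be close to formal given the results of this paper: over $Y_{\FF}$ one has $\Aa|_{Y_{\FF}}=\Bb$, so $R\Gamma_{\sht}(X/Y_{\FF})=L\eta_\mu\,R\alpha_*\Bb$, whose cohomology groups are shtukas with leg at $\varphi^{-1}(y_C)$ by the same Frobenius analysis as in the first step; their underlying vector bundle is the bundle $\cl H^i_{\FF}(X)$ of Theorem~\ref{lb} (the $L\eta_t$- and $L\eta_\mu$-constructions agreeing near $\infty$, as $t$ and $\mu$ both vanish to order one at $y_C$), and inverting $\mu$ turns $R\alpha_*\Bb$ into $H^\ast_{\pet}(X,\Qq_p)\otimes_{\Qq_p}B[1/\mu]$ by the rational primitive comparison, so that the associated admissible modification at $\infty$ is identified with~\eqref{modiff} after invoking Theorem~\ref{propsing} and Fargues' equivalence (Proposition~\ref{farguesequi}).

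I expect the main obstacle to be the integral comparison of property~\textup{(i)}. Even the \emph{existence} of the absolute crystalline comparison for $A_{\inf}$-cohomology in the semistable case (\cite[Theorem~5.4]{CK}) is delicate, and, as noted in the text, its functoriality is not a priori clear; moreover, integrally one can no longer invoke the ``magical'' properties of the solid tensor product and of nuclear modules that powered the globalization in Theorem~\ref{mainn1}, so a genuinely new gluing mechanism --- presumably via a site-theoretic or stacky description of (absolute, log-)prismatic cohomology --- seems to be needed. A secondary obstacle is establishing perfectness of $R\Gamma_{\sht}(X/\cl Y_{\FF})$ for proper rigid-analytic $X$ with no good formal model, which amounts to an $A_{\inf}$-valued upgrade of Scholze's finiteness theorem in that generality.
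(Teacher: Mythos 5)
There is nothing in the paper to compare your proposal against: the statement is Conjecture \ref{conjfin}, the paper does not prove it, and the author explicitly defers it to future work, offering only the candidate $R\Gamma_{\eh}(X, L\eta_\mu R\alpha_*\Aa)$ with $\mu=[\varepsilon]-1$ --- which is precisely the definition you propose. So your starting point matches the paper's own intended construction, and your identification of the main obstacles (the integral comparison with $A_{\inf}$-cohomology and its functoriality, and the unavailability of the solid/nuclear gluing machinery integrally) agrees with the reasons the author states for leaving this open.

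That said, your write-up is a research program rather than a proof, and the gaps are genuine, not merely expository. First, the single-leg property is not a consequence of ``$L\eta_\mu$--Frobenius formalism'': on $\cl Y_{\FF}$ the element $\mu$ vanishes at every $\varphi^{-n}(y_C)$, $n\ge 0$, and in \cite{BMS1}, \cite{CK} the fact that $\varphi$ becomes an isomorphism away from a single leg is extracted from the Hodge--Tate and crystalline comparison isomorphisms, which themselves presuppose a (semistable) formal model; for general proper $X$ this input is exactly what is missing. Second, perfectness of $L\eta_\mu R\alpha_*\Aa$ over $\cl Y_{\FF}$ for proper $X$ is not a routine Nakayama bootstrap: the primitive comparison theorem gives finiteness of $H^i_{\ett}(X,\Zz/p^n)$ and only an \emph{almost} identification with $H^i(X,\cl O_X^+/p^n)$, and upgrading almost-finiteness to genuine perfectness after $L\eta_\mu$ is the hard core of \cite[\S 8--14]{BMS1}, again carried out there via a formal model; the Conrad--Gabber/Bhatt--Hansen reduction you cite from Theorem \ref{propsing} is a rational ($B_{\dR}^+$-level) argument and does not obviously integralize, since generic smoothness of the family says nothing about integral models of the special fibre. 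Third, even granting a definition, property (ii) requires knowing that the $\varphi$-module $(L\eta_\mu R\alpha_*\Bb)[1/\mu]$ identifies with $H^*_{\ett}(X,\Qq_p)\otimes_{\Qq_p}\cl O_{Y_{\FF}}[1/\mu]$ \emph{and} that the completion of $L\eta_\mu R\alpha_*\Bb$ at $y_C$ recovers the lattice $\Fil^0(H^i_{B_{\dR}^+}(X)\otimes B_{\dR})$ of (\ref{modiff}); the second point does follow from Theorem \ref{secondstep1} and Proposition \ref{fillemma}, but the first is again a primitive-comparison-after-$L\eta$ statement that must be proved, not invoked. In short: your plan is the right one and coincides with the author's, but none of the three steps you flag as ``should follow'' is currently known, which is why the statement is a conjecture.
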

 
  In the notation of Definition \ref{dfintro}, a natural candidate for the cohomology theory conjectured above is given by a lift of the complex $R\Gamma_{\eh}(X, L\eta_\mu R\alpha_*\Aa)$ to $\cl Y_{\FF}$, where $\Aa$ is the $v$-site sheaf theoretic version of the ring $A$, and $\mu=[\varepsilon]-1\in A_{\inf}$. However, it would be more interesting (especially for questions related to cohomological coefficients) to give a definition of $R\Gamma_{\sht}(X/\cl Y_{\FF})$ in the spirit of prismatic cohomology (\cite{Prisms}, \cite{drinfprism}, \cite{BL2}).\footnote{More precisely, such definition would give a Frobenius descent of $R\Gamma_{\sht}(X/\cl Y_{\FF})$.}  We hope to come back on these questions in a future work.

 \subsection{Leitfaden of the paper}
  We have organized the paper as follows. We begin by defining the $B$-cohomology and the $B_{\dR}^+$-cohomology, together with their filtration décalée, in \S \ref{prelim}. In \S \ref{sHK}, we revisit, in the condensed setting, the Hyodo--Kato cohomology of Colmez--Nizioł, and we extend it to singular rigid-analytic varieties over $C$. In \S \ref{Bleit} and \S \ref{Bleit2}, we prove the first main result of the paper: Theorem \ref{mainn1}. We then proceed in \S \ref{synleit} by introducing the syntomic Fargues--Fontaine cohomology and proving Theorem \ref{synfan} and Theorem \ref{fino}; on the way, we also study nuclear complexes on the Fargues--Fontaine curve. In \S \ref{applleit}, we give applications of the main results proven in the previous sections, showing in particular Theorem \ref{derr} and Theorem \ref{corcor}. We end with Appendix \ref{complem} in which we collect some complements on condensed mathematics used in the main body of the paper.

 \subsection{Notation and conventions}\label{convent}

  \begin{description}
   \item[Ground fields] Fix a prime number $p$. We denote by $K$ a  complete discretely valued non-archimedean extension of $\Qq_p$, with perfect residue field $k$, and ring of integers $\cl O_K$. We choose a uniformizer $\varpi$ of $\cl O_K$.

  We fix an algebraic closure $\overline K$ of $K$. We denote by $C:=\widehat{\overline K}$ the completion of $\overline K$, and by $\cl O_C$ its ring of integers. We denote by $F$ the fraction field of the ring of Witt vectors $W(k)$, we write $\breve F$ for the completion of the maximal unramified extension of $F$ in $\overline K$, and we denote by $\cl O_{\breve F}$ its ring of integers.
  
  Moreover, we let $\mathscr{G}_K:=\Gal(\overline K/K)$ denote the absolute Galois group of $K$. \medskip
  
  \item[$\infty$-categories] We will adopt the term \textit{$\infty$-category} to indicate a $(\infty, 1)$-category, i.e. a higher category in which all $n$-morphisms for $n>1$ are invertible. We will use the language of $\infty$-categories, \cite{HTT}, and higher algebra,  \cite{HA}.  \medskip
  
  We denote by $\Delta$ the simplicial category and, for every integer $m\ge 0$, we write $\Delta_{\le m}$ for the full subcategory of $\Delta$ having as objects $[n]$ for $0\le n\le m$. \medskip
  
  We denote by $\Ani:=\Ani(\Set)$ the $\infty$-category of \textit{anima}, i.e. the $\infty$-category of animated sets, \cite[\S 5.1.4]{CesSch}.
  \medskip
  
  \item[Condensed mathematics]  We fix an uncountable cardinal $\kappa$ as in \cite[Lemma 4.1]{Scholze3}.  
  Unless explicitly stated otherwise, all condensed sets will be $\kappa$-condensed sets (and often the prefix ``$\kappa$'' is tacit). We will denote by $\CondAb$ the category of $\kappa$-condensed abelian groups, and by $\Solid\subset \CondAb$ the full subcategory of $\kappa$-solid abelian groups.  \medskip
  
   All condensed rings will be $\kappa$-condensed commutative unital rings. Given a ($\kappa$-)condensed ring $A$, we denote by $\Mod_A^{\cond}$ the category of $A$-modules in $\CondAb$, and, for $A$ a solid ring, we denote by $\Mod_A^{\ssolid}$ the symmetric monoidal subcategory of $A$-modules in $\Solid$, endowed with the solid tensor product $\solid_A$. We denote by $D(\Mod_A^{\cond})$ and $D(\Mod_A^{\ssolid})$ the respective derived $\infty$-categories; sometimes we abbreviate $D(A)=D(\Mod_A^{\cond})$. Moreover, we write $\underline{\Hom}_A(-, -)$ for the internal Hom in the category $\Mod_A^{\cond}$ (and in the case $A=\Zz$, we often omit the subscript $\Zz$). \medskip
  
  Throughout the paper, we use Clausen--Scholze's non-archimedean condensed function analysis, for which we refer the reader to \cite[Appendix A]{Bosco}. \medskip

  \item[Condensed group cohomology] Given a condensed group $G$, and a $G$-module $M$ in $\CondAb$, the \textit{condensed group cohomology of $G$ with coefficients in $M$} will be denoted by  
   $$R\Gamma_{\cond}(G, M):=R\underline{\Hom}_{\Zz[G]}(\Zz, M)\in D(\CondAb)$$
  where $\Zz$ is endowed with the trivial $G$-action (see e.g. \cite[Appendix B]{Bosco}).
 
 %We recall from \cite[Lemma B.5]{Bosco} that, if $G$ is profinite and $M\in \Mod_{\Zz_p}^{\ssolid}$, we have a natural isomorphism in $D(\Mod_{\Zz_p}^{\ssolid})$
% $$R\Gamma_{\cond}(G, M)\simeq R\underline{\Hom}_{\Zz_p\llbracket G \rrbracket}(\Zz, M)$$
% where $\Zz_p\llbracket G \rrbracket$ denotes the Iwasawa algebra of $G$ over $\Zz_p$. 	
\medskip
 
  \item[Adic spaces]  We say that an analytic adic space $X$ is \textit{$\kappa$-small} if the cardinality of the underlying topological space $|X|$ is less than $\kappa$, and for all open affinoid subspaces $\Spa(R, R^+)\subset X$, the ring $R$ has cardinality less than $\kappa$.  In this paper, all the analytic adic spaces will be assumed to be $\kappa$-small.
  \medskip
  
  Throughout the article, all Huber rings will be assumed to be complete, and will be regarded as condensed rings. \medskip

  \item[Pro-étale topology] 
   We recall that there is a natural functor $X\mapsto X^\diamondsuit$ from the category of analytic adic spaces defined over $\Spa(\Zz_p, \Zz_p)$ to the category of locally spatial diamonds, satisfying $|X|=|X^\diamondsuit|$ and $X_{\ett}\cong X_{\ett}^\diamondsuit$,  \cite[Definition 15.5, Lemma 15.6]{Scholze3}. \medskip
   
  For $X$ an analytic adic space defined over $\Spa(\Zz_p, \Zz_p)$, we denote by 
  $$X_{\pet}:=X^{\diamondsuit}_{\qpet}$$ its ($\kappa$-bounded) \textit{pro-\'etale site}, \cite[Definition 14.1]{Scholze3}. \medskip

   Given $f: X\to \Spa(C, \cl O_C)$ an analytic adic space over $C$, and $\cl F$ a sheaf of abelian groups on $X_{\pet}$, we define the complex of $D(\CondAb)$
   $$R\Gamma_{\pet}(X, \cl F):=Rf_{\pet *} \cl F$$
   (see also \cite[Definition 2.7, Remark 2.9]{Bosco}). \medskip

  \item[Fargues--Fontaine curves] For $S=\Spa(R, R^+)$ an affinoid perfectoid space over $\Ff_p$, we let
  $$Y_{\FF, S}:=\Spa(W(R^+), W(R^+))\setminus V(p[p^{\flat}]).$$
  We recall that $Y_{\FF, S}$ defines an analytic adic space over $\Qq_p$, \cite[Proposition II.1.1]{FS}. The $p$-th power Frobenius on $R^+$ induces an automorphism $\varphi$ of $Y_{\FF, S}$ whose action is free and totally discontinuous, \cite[Proposition II.1.16]{FS}. The \textit{Fargues--Fontaine curve relative to $S$} (and $\Qq_p$) will be denoted by
  \begin{equation}\label{FFdef2}
   \FF_S:=Y_{\FF, S}/\varphi^{\Zz}.
  \end{equation}
  For $I=[s, r]\subset (0, \infty)$ an interval with rational endpoints, we define the open subset
 \begin{equation}\label{ratI}
  Y_{\FF, S, I}:=\{|p|^r\le |[p^\flat]|\le |p|^s\}\subset Y_{\FF, S}.
 \end{equation}
 We note that $Y_{\FF, S, I}$ is an affinoid space, as it is a rational open subset of $\Spa(W(R^+), W(R^+))$. 
 
  We denote by $\Bun(\FF_S)$ the category of vector bundles on $\FF_S$, and by $\Isoc_{\overline{\Ff}_p}$ the category of isocrystals over $\overline{\Ff}_p$ (also called finite $\varphi$-modules over $\breve{\Qq}_p$), i.e. the category of pairs $(V, \varphi)$ with $V$ a finite-dimensional $\breve{\Qq}_p$-vector space and $\varphi$ a $\sigma$-semilinear automorphism of $V$, where $\sigma$ is the automorphism of $\breve{\Qq}_p=W(\overline{\Ff}_p)[1/p]$ induced by the $p$-th power Frobenius on $\overline{\Ff}_p$.
  
  Recall that we have a natural exact $\otimes$-functor
  $$\Isoc_{\overline{\Ff}_p}\to \Bun(\FF_S), \;\;\;\; (V, \varphi)\mapsto \cl E(V, \varphi).$$ 
 For $\lambda\in \Qq$, we denote by $(D_{\lambda}, \varphi_{\lambda})$ the simple isocrystal over $\overline{\Ff}_p$ of slope $\lambda$ in the Dieudonné--Manin classification, and we let
 $$\cl O_{\FF_S}(-\lambda):=\cl E(D_{\lambda}, \varphi_{\lambda}).$$
 In particular, for $n\in \Zz$, we have $$\cl O_{\FF_S}(n)=\cl E(\breve{\Qq}_p, p^{-n}\sigma).$$  
  
  In the case $S=\Spa(C^{\flat}, \cl O_{C^{\flat}})$, we omit the subscript $S$ from (\ref{FFdef2}) and (\ref{ratI}). \medskip
  
  We will often use the classification of vector bundles on $\FF$ (see \cite[\S 8]{FF}, \cite[Theorem II.0.3]{FS}): the functor $\Isoc_{\overline{\Ff}_p}\to \Bun(\FF)$ induces a bijection on isomorphism classes; in particular, any vector bundle on $\FF$ is isomorphic to a direct sum of vector bundles of the form $\cl O_{\FF}(\lambda)$ with $\lambda\in \Qq$. \medskip
  
  We will denote by $\infty$ the $(C, \cl O_C)$-point of the curve $\FF$ corresponding to Fontaine's map $\theta:W(\cl O_{C^{\flat}})\to \cl O_C$, and
  $$\iota_{\infty}: \Spa(C, \cl O_C)\to \FF$$
  the inclusion map. \medskip
  
  \item[Rigid-analytic varieties] All rigid-analytic varieties, and all dagger varieties (\cite{GK1}), occurring in this work will be assumed to be quasi-separated, and of finite dimension. \medskip
  
  We say that a rigid-analytic/dagger variety $X$  is \textit{paracompact} if it admits an admissible locally finite affinoid covering, i.e. there exists an admissible covering $\{U_i\}_{i\in I}$ of $X$ by affinoid subspaces such that for each index $i\in I$ the intersection $U_i\cap U_j$ is non-empty for at most finitely many indices $j\in I$.

  We recall that a paracompact rigid-analytic variety is taut, \cite[Definition 5.1.2, Lemma 5.1.3]{Huberbook}, and it is the admissible disjoint union of connected paracompact rigid-analytic varieties of countable type, i.e. having a countable admissible affinoid covering, \cite[Lemma 2.5.7]{DeJong}. We refer the reader to \cite[\S 5.2]{Bosco} for further recollections on paracompact rigid-analytic varieties. \medskip

  \item[Formal schemes] Unless explicitly stated otherwise, all formal schemes will be assumed to be $p$-adic and locally of finite type. \medskip

  \end{description}

  \medskip
  
\begin{Acknowledgments}
    
  I am very grateful to Grigory Andreychev, Ko Aoki, K\c{e}stutis Česnavičius, Dustin Clausen, Pierre Colmez, Gabriel Dospinescu, Haoyang Guo, David Hansen, Hiroki Kato, Teruhisa Koshikawa, Arthur-César Le Bras, Lucas Mann, Matthew Morrow, Wiesława Nizioł, Peter Scholze, and Alberto Vezzani  for helpful conversations, or for their comments on a draft version of this paper. Special thanks go to Teruhisa Koshikawa for a decisive suggestion, to Arthur-César Le Bras for what he thought me about the Fargues--Fontaine curve and for introducing me to \cite[Conjecture 6.3]{LeBras2}, to Wiesława Nizioł for reading several drafts of this manuscript, and to Peter Scholze for his crucial remarks and corrections as well as for discussions related to Conjecture \ref{conjfin}.
  
  I would like to thank the organizers of the RAMpAGE seminar for their invitation in June 2021, on which occasion the main results of this paper were first announced.
  
  This project was carried out while the author was a  Ph.D. student at Sorbonne Université, within the Institut de Mathématiques de Jussieu--Paris Rive Gauche. Moreover, parts of this manuscript were written while visiting the Banach Center, at the invitation of Piotr Achinger, and the Max-Planck-Institut f\"ur Mathematik. I thank all these institutions for their hospitality and support.
   
  \end{Acknowledgments}

 \clearpage

 \section{\textbf{Preliminaries}}\label{prelim}
  \sectionmark{}

 In this first section, our goal is to define the \textit{$B$-cohomology} together with its \textit{filtration décalée}. Along the way, we will establish several preliminary technical results, which will be used in the rest of the paper.
  
  \subsection{D\'ecalage functors and Beilinson $t$-structure}
  
   In this subsection,  we will recall an interpretation of the d\'ecalage functor in terms of the connective cover functor for the Beilinson $t$-structure.

  \subsubsection{D\'ecalage functors}
   
   We shall use the following notation.
   
  \begin{notation}
  Let $(T, \cl O_{T})$ be a ringed topos and let $(f)\subset \cl O_T$ be an invertible ideal sheaf.  We will write $D(\cl O_T)$ for the derived category of $\cl O_T$-modules.
  \end{notation}

  The following slight generalization of \cite[Definition 6.2]{BMS1}, which goes back to Berthelot--Ogus, will be used in particular in \S \ref{nygaa}.

 \begin{df}[cf. {\cite[Definition 8.6]{BO}}]\label{twistdec}
  Let $\delta: \Zz\to \Zz$ be a function. Let $M^\bullet$ be an $f$-torsion-free complex of $\cl O_T$-modules. We denote by $\eta_{\delta, f}(M^\bullet)$ the subcomplex of $M^\bullet[1/f]$ defined by
  $$\eta_{\delta, f}(M^\bullet)^i:=\{x\in f^{\delta(i)}M^i: dx \in f^{\delta(i+1)}M^{i+1}\}.$$
  In the case $\delta=\id$, we put $\eta_{f}(-):=\eta_{\id, f}(-)$.
 \end{df}

 \begin{rem}
  We note that the definition of $\eta_{\delta, f}(-)$ depends on the ideal sheaf $(f)\subset \cl O_T$ and it is independent on the chosen generator of the latter.
 \end{rem}

 \begin{prop}[cf. {\cite[Proposition 8.19]{BO}}]\label{nondec}
  Let $\delta: \Zz\to \Zz$ be a non-decreasing function. The functor $\eta_{\delta,f}$ from $f$-torsion-free complexes of $\cl O_T$-modules to $D(\cl O_T)$ factors canonically over the \textit{d\'ecalage functor} (relative to $(f)$ and $\delta$)
 $$L\eta_{\delta, f}:D(\cl O_T)\to D(\cl O_T).$$ 
 \end{prop}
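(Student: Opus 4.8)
The plan is to mimic the proof of the décalage functor's existence at the level of complexes, following Berthelot--Ogus and Bhatt--Morrow--Scholze. The claim is that $\eta_{\delta,f}$, initially defined only on $f$-torsion-free complexes, descends to a functor $L\eta_{\delta,f}$ on the full derived category $D(\cl O_T)$. The strategy is: (1) show $\eta_{\delta,f}$ sends quasi-isomorphisms between $f$-torsion-free complexes to quasi-isomorphisms, so that it factors through the localization $\mathrm{K}(\cl O_T)_{f\text{-tf}}[\text{q-iso}^{-1}]$; (2) show that this localization is equivalent to $D(\cl O_T)$, because every object of $D(\cl O_T)$ is represented by an $f$-torsion-free (e.g. termwise $f$-torsion-free, or flat) complex and morphisms can be computed there; (3) conclude that $\eta_{\delta,f}$ induces the desired $L\eta_{\delta,f}: D(\cl O_T)\to D(\cl O_T)$.

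\textbf{Step (1): the key computation.} First I would record the effect of $\eta_{\delta,f}$ on cohomology. For an $f$-torsion-free complex $M^\bullet$, one computes a natural identification
\[
 H^i(\eta_{\delta,f}(M^\bullet)) \;\cong\; f^{\delta(i)}\cdot\bigl(\text{image of }H^i(M^\bullet)\text{ in }H^i(M^\bullet)[1/f]\bigr)/(f\text{-torsion}),
\]
or more precisely $H^i(\eta_{\delta,f}M^\bullet)\cong f^{\delta(i)}\cdot(H^i(M^\bullet)/H^i(M^\bullet)[f^\infty])$ inside $H^i(M^\bullet[1/f])$, where the non-decreasing hypothesis on $\delta$ is exactly what makes the subcomplex formula well-defined and makes this identification hold (the differential into degree $i+1$ lands in $f^{\delta(i+1)}M^{i+1}\supseteq$ nothing forced downward). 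This is the analogue of \cite[Lemma 6.4]{BMS1} / \cite[Proposition 8.19]{BO}. From this formula it is immediate that if $M^\bullet\to N^\bullet$ is a quasi-isomorphism of $f$-torsion-free complexes, then $\eta_{\delta,f}M^\bullet\to\eta_{\delta,f}N^\bullet$ is again a quasi-isomorphism, since the right-hand side depends only on the cohomology groups as submodules of their $f$-inversions.

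\textbf{Step (2) and (3): formal descent to $D(\cl O_T)$.} Here I would invoke that the category of $f$-torsion-free complexes of $\cl O_T$-modules, localized at quasi-isomorphisms, computes $D(\cl O_T)$: every complex admits a quasi-isomorphism from a complex of $\cl O_T$-flat (hence $f$-torsion-free) modules, e.g. via a K-flat resolution, and the subcategory is closed under the relevant operations. Combined with Step (1), the universal property of localization produces a unique functor $L\eta_{\delta,f}: D(\cl O_T)\to D(\cl O_T)$ fitting into the claimed factorization; one then checks it is well-defined (independent of the choice of resolution) by the usual argument comparing two resolutions via a third. The case $\delta=\id$ recovers \cite[Definition 6.2, Lemma 6.4]{BMS1}, and the only new input over that reference is carrying the non-decreasing function $\delta$ through the cohomology computation.

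\textbf{The main obstacle} I expect is purely bookkeeping: verifying that the subcomplex $\eta_{\delta,f}(M^\bullet)$ is genuinely a subcomplex (i.e. that the differential restricts), which uses $\delta$ non-decreasing via $f^{\delta(i+1)}M^{i+1}\subseteq f^{\delta(i+1)}M^{i+1}$ trivially but requires care in the cohomology identification — specifically, showing that a class in $f^{\delta(i)}M^i$ with $dx\in f^{\delta(i+1)}M^{i+1}$ is a boundary in $\eta_{\delta,f}$ iff its image is a boundary in $M^\bullet[1/f]$ up to the allowed twist, which is where monotonicity of $\delta$ is used a second time. None of this is deep; it is the same argument as in \cite[\S 6]{BMS1} with the constant $1$ replaced by $\delta(i)$, and I would present it as a direct adaptation of \cite[Proposition 8.19]{BO}.
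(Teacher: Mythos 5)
Your overall strategy (show $\eta_{\delta,f}$ preserves quasi-isomorphisms of $f$-torsion-free complexes, then descend through the localization using that every complex admits an $f$-torsion-free resolution) is exactly the paper's, which disposes of your Steps (2)--(3) by simply citing \cite[Lemma 6.1]{BMS1}. The problem is in Step (1): your cohomology formula is wrong. You claim $H^i(\eta_{\delta,f}M^\bullet)\cong f^{\delta(i)}\cdot\bigl(H^i(M^\bullet)/H^i(M^\bullet)[f^\infty]\bigr)$ sitting inside $H^i(M^\bullet[1/f])$, i.e.\ you kill \emph{all} $f$-power torsion. Take $\cl O_T=\Zz$, $f=p$, $\delta=\id$, and $M^\bullet=[\Zz\xrightarrow{\,p^2\,}\Zz]$ in degrees $0,1$: then $\eta_pM^\bullet=[\Zz\xrightarrow{\,p^2\,}p\Zz]$, so $H^1(\eta_pM^\bullet)\cong\Zz/p$, whereas your formula gives $0$ because $H^1(M^\bullet)=\Zz/p^2$ is entirely $p$-power torsion. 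In particular $H^i(\eta_{\delta,f}M^\bullet)$ does not embed into $H^i(M^\bullet[1/f])$ in general, so the identification you propose cannot hold.

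The correct statement --- which is what the paper proves, generalizing \cite[Lemma 6.4]{BMS1} --- is that multiplication by $f^{\delta(i)}$ induces an isomorphism
\[
H^i(M^\bullet)/H^i(M^\bullet)[f^{\delta(i)-\delta(i-1)}]\;\overset{\sim}{\longrightarrow}\;H^i(\eta_{\delta,f}M^\bullet),
\]
where only the $f^{\delta(i)-\delta(i-1)}$-torsion is killed. Monotonicity of $\delta$ enters precisely in making $\delta(i)-\delta(i-1)\ge 0$, so that this torsion submodule makes sense, and in the verification that a relation $f^{\delta(i)}z=d(f^{\delta(i-1)}y)$ with $z$ a cocycle forces $f^{\delta(i)-\delta(i-1)}z=dy$ by torsion-freeness. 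Since this corrected formula is still functorial in the cohomology of $M^\bullet$, your deduction that $\eta_{\delta,f}$ preserves quasi-isomorphisms does go through once the formula is fixed; but as written, the key computation on which the whole argument rests is false.
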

 \begin{proof}
  First, recall that every complex of $\cl O_T$-modules is quasi-isomorphic to an $f$-torsion-free complex of $\cl O_T$-modules, \cite[Lemma 6.1]{BMS1}. We want to show that the endo-functor $\eta_{\delta,f}$ on the category of $f$-torsion-free complexes of $\cl O_T$-modules preserves quasi-isomorphisms. The latter assertion is implied by the following claim (cf. \cite[Lemma 6.4]{BMS1}): given $M^\bullet$ an $f$-torsion-free complex of $\cl O_T$-modules, for all $i\in \Zz$, the multiplication by $f^{\delta(i)}$ map, i.e. tensoring by $-\otimes_{\cl O_T}(f^{\delta(i)})$, induces an isomorphism
  $$H^i(M^\bullet)/H^i(M^\bullet)[f^{\delta(i)-\delta(i-1)}]\overset{\sim}{\to} H^i(\eta_{\delta, f}M^\bullet)$$
  (note that $\delta(i)-\delta(i-1)\ge 0$ by assumption on the function $\delta: \Zz\to \Zz$). For this, let $Z^i(M^\bullet)\subset M^i$ and $Z^i(\eta_{\delta, f}M^\bullet)\subset (\eta_{\delta, f}M^\bullet)^i$ denote the cocycles. By $f$-torsion-freeness of the terms of the complex $M^\bullet$, the multiplication by $f^{\delta(i)}$ map induces an isomorphism
  $$Z^i(M^\bullet)\cong Z^i(\eta_{\delta, f}M^\bullet)$$
  which in turn induces a surjection
  $$H^i(M^\bullet)\twoheadrightarrow H^i(\eta_{\delta, f}M^\bullet).$$
  Moreover, given a cocycle $z\in Z^i(M^\bullet)$ mapping to the zero class of $H^i(\eta_{\delta, f}M^\bullet)$ via multiplication by $f^{\delta(i)}$, we have that $f^{\delta(i)}z=d(f^{\delta(i-1)}y)$ for some $y\in M^{i-1}$, i.e. $f^{\delta(i)-\delta(i-1)}z=dy$, which means that the image of $z$ in $H^i(M^\bullet)$ is $f^{\delta(i)-\delta(i-1)}$-torsion, as claimed.
 \end{proof}

 \subsubsection{Beilinson $t$-structure}

 Next, as promised, we want to recall the $\infty$-categorical interpretation of the d\'ecalage functor $L\eta_f(-)$ as the connective cover functor for the Beilinson $t$-structure.

 \begin{notation}
  Let $A$ be a commutative unital ring. Let $D(A)$ denote the derived $\infty$-category of $A$-modules. We write
 $$DF(A):=\Fun(\Zz^{\op}, D(A))$$
 for the \textit{filtered derived $\infty$-category of $A$-modules}. Given $F\in DF(A)$, for $i\in \Zz$, we define the $i$-th \textit{graded piece of $F$} as the cofiber 
 $$\gr^i(F):=F(i)/F(i+1).$$
 \end{notation}
 
 We refer the reader to \cite[\S 5.1]{BMS2} for recollections on filtered derived $\infty$-categories.
  
 \begin{df}[{\cite[Definition 5.3]{BMS2}}]\label{notadef}
  Let $DF^{\le 0}(A)\subset DF(A)$ be the full $\infty$-subcategory spanned by those $F$ such that $\gr^i(F)\in D^{\le i}(A)$ for all $i\in \Zz$,  and let $DF^{\ge 0}(A)\subset DF(A)$ be the full $\infty$-subcategory spanned by those $F$ such that $F(i)\in D^{\ge i}(A)$ for all $i\in \Zz$. The pair
 $$(DF^{\le 0}(A), DF^{\ge 0}(A))$$
 is called the \textit{Beilinson $t$-structure} on $DF(A)$.
 \end{df}

 Note that the $t$-structure depends only on the triangulated category underlying the derived $\infty$-category $D(A)$. The definition above is justified by the following result.
 
 \begin{prop}[{\cite[Theorem 5.4]{BMS2}}]\label{bbe} Fix notation as in Definition \ref{notadef}.
 \begin{enumerate}[(i)]
  \item\label{bbe:1} The Beilinson $t$-structure $(DF^{\le 0}(A), DF^{\ge 0}(A))$ is a $t$-structure on  $DF(A)$.
  \item\label{bbe:2} Denoting by 
  $$\tau_{\Beil}^{\le 0}: DF(A)\to DF^{\le 0}(A)$$
  the connective cover functor for the Beilinson $t$-structure on $DF(A)$, there is a natural isomorphism
  $$\gr^i\circ \tau_{\Beil}^{\le 0}(-)\simeq \tau^{\le i}\circ \gr^i(-).$$
  \item\label{bbe:3} Denote by 
  $$H^0_{\Beil}:DF(A)\to DF(A)^{\heartsuit}:=DF^{\le 0}(A)\cap DF^{\ge 0}(A)$$
  the $0$-th cohomology functor for the Beilinson $t$-structure. The heart $DF(A)^{\heartsuit}$ is equivalent to the abelian category $\Ch(A)$ of chain complexes of $A$-modules in abelian groups, via sending, for varying $F\in DF(A)$, the $0$-th cohomology $H^0_{\Beil}(F)\in DF(A)^{\heartsuit}$ to the chain complex $(H^\bullet(\gr^\bullet(F)), d)$ with differential $d$ induced by the boundary map for the exact triangle
  $$\gr^{i+1}(F)\to F(i)/F(i+2)\to \gr^i(F).$$
 \end{enumerate}
 \end{prop}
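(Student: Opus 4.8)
The statement splits into the $t$-structure axioms \ref{bbe:1}, the compatibility \ref{bbe:2} with graded pieces, and the identification \ref{bbe:3} of the heart; the substance is in \ref{bbe:1}, from which the other two follow cheaply.

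For part \ref{bbe:1}, closure under shifts is immediate from Definition \ref{notadef}, since $[1]$ on $DF(A)=\Fun(\Zz^{\op},D(A))$ shifts every $\gr^i$ and every $F(i)$ by one. For the existence of truncation triangles and the orthogonality axiom, the plan is to note that $DF(A)$ is a presentable stable $\infty$-category in which $DF^{\le 0}(A)$ is presentable and closed under colimits and extensions: colimits in $DF(A)$ are objectwise, hence so is each $\gr^i$ (a cofiber of objectwise colimits), and $D^{\le i}(A)\subseteq D(A)$ is closed under colimits, while closure under extensions is clear from the long exact sequence on $\gr^i$. By the standard recognition criterion for $t$-structures generated by their connective part, $DF^{\le 0}(A)$ is then the connective part of a unique accessible $t$-structure on $DF(A)$, and one must check that its coconnective part is exactly $DF^{\ge 0}(A)$ of Definition \ref{notadef}. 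This amounts to the orthogonality statement: a filtered object $Y$ lies in $(DF^{\le -1}(A))^{\perp}$ if and only if $Y(i)\in D^{\ge i}(A)$ for all $i$. For $M\in D(A)$ and $n\in\Zz$ let $M\langle n\rangle\in DF(A)$ be the object concentrated in weight $n$ with value $M$, so that $\gr^j M\langle n\rangle$ is $M$ for $j=n$ and $0$ otherwise, and $\mathrm{Map}_{DF(A)}(M\langle n\rangle,Y)\simeq\mathrm{Map}_{D(A)}(M,Y(n))$. The ``only if'' direction is then immediate (test against $M\langle n\rangle$ with $M\in D^{\le n-1}(A)$), while the ``if'' direction follows by writing a general $X\in DF^{\le -1}(A)$ as the objectwise-stabilizing colimit of its stupid truncations $X_{\ge n}$, which are assembled by iterated extensions out of the $(\gr^j X)\langle j\rangle$ with $\gr^j X\in D^{\le j-1}(A)$, and invoking the vanishing above together with the connectivity of $Y$. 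I expect this orthogonality check --- really, the problem of controlling $\mathrm{Map}_{DF(A)}(X,Y)$ for $X\in DF^{\le -1}(A)$ arbitrary, where the two-sidedness of the $\Zz$-indexing makes the convergence delicate --- to be the main obstacle; this is the step carried out, following Beilinson, in \cite[\S5]{BMS2}.

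Granting \ref{bbe:1}, part \ref{bbe:2} is formal: apply the exact functor $\gr^i$ to the truncation triangle $\tau^{\le 0}_{\Beil}F\to F\to\tau^{\ge 1}_{\Beil}F$. Its first graded piece lies in $D^{\le i}(A)$ by definition of $DF^{\le 0}(A)$, and its last graded piece lies in $D^{\ge i+1}(A)$, since for $Y\in DF^{\ge 1}(A)$ one has $Y(i)\in D^{\ge i+1}(A)$ and hence $\gr^i Y=\mathrm{cofib}(Y(i+1)\to Y(i))\in D^{\ge i+1}(A)$. By uniqueness of the truncation triangle of $\gr^i F$ for the $t$-structure $(D^{\le i}(A),D^{\ge i+1}(A))$ this identifies $\gr^i\tau^{\le 0}_{\Beil}F\simeq\tau^{\le i}\gr^i F$. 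The dual argument gives $\gr^i\tau^{\ge 0}_{\Beil}F\simeq\tau^{\ge i}\gr^i F$, and combining the two yields $\gr^i H^0_{\Beil}(F)\simeq H^i(\gr^i F)[-i]$, which I will use below.

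For part \ref{bbe:3}: if $F\in DF(A)^{\heartsuit}$ then $\gr^i F\in D^{\le i}(A)\cap D^{\ge i}(A)$ --- the second containment because $F(i),F(i+1)\in D^{\ge i}(A)$ --- so $\gr^i F\simeq M_i[-i]$ for a uniquely determined $A$-module $M_i$, and the boundary map of the triangle $\gr^{i+1}(F)\to F(i)/F(i+2)\to\gr^i(F)$ supplies $d\colon M_i\to M_{i+1}$, with $d^2=0$ coming from the triangles for three consecutive graded pieces; this defines the functor $c\colon DF(A)^{\heartsuit}\to\Ch(A)$. Conversely, the stupid filtration $\sigma^{\ge\bullet}$ of a cochain complex manifestly lands in the heart. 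To see $c$ is an equivalence I would reconstruct $F$ from $c(F)=(M_\bullet,d)$: an induction on $k$, using the triangle above together with its functoriality in the filtration degree, identifies $F(n)/F(n+k)$ with the brutal truncation $[M_n\to\cdots\to M_{n+k-1}]$ in degrees $n,\dots,n+k-1$, and then $F(n)\simeq R\lim_k F(n)/F(n+k)$ because the fibers $F(n+k)\in D^{\ge n+k}(A)$ form a tower with vanishing derived limit, so $F(n)\simeq\sigma^{\ge n}(M_\bullet,d)$. Functoriality is routine, and exactness of $c$ (and of the inverse) follows from $\gr^i H^0_{\Beil}(-)\simeq H^i(\gr^i(-))[-i]$: kernels and cokernels in $DF(A)^{\heartsuit}$ are computed by $H^0_{\Beil}$ of fibers and cofibers, whose $i$-th graded piece computes the kernel, resp.\ cokernel, of the induced map $M_i\to M'_i$, matching $\Ch(A)$. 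Hence $c$ is an exact, fully faithful, essentially surjective functor, i.e.\ an equivalence of abelian categories.
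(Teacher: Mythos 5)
The paper contains no proof of this statement: it is imported wholesale from \cite[Theorem 5.4]{BMS2}, so the only meaningful comparison is with the argument there, whose architecture you have reproduced correctly. Producing the $t$-structure from the presentable, colimit- and extension-closed subcategory $DF^{\le 0}(A)$ via \cite[Proposition 1.4.4.11]{HA} and then identifying the right orthogonal is the standard route; your part \listref{bbe:2} is correct and complete (apply $\gr^i$ to the truncation triangle and use uniqueness of truncations in $D(A)$), and part \listref{bbe:3} is correct modulo the routine verifications you defer. One small imprecision in \listref{bbe:3}: to get $\gr^iF\in D^{\ge i}(A)$ you must use $F(i+1)\in D^{\ge i+1}(A)$, not merely $F(i+1)\in D^{\ge i}(A)$ --- the cofiber of a map between objects of $D^{\ge i}(A)$ lies only in $D^{\ge i-1}(A)$ in general.

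The step that fails as written is the ``if'' direction of the orthogonality in \listref{bbe:1}. The stupid truncation $X_{\ge n}$ is \emph{not} assembled by iterated extensions out of the $(\gr^jX)\langle j\rangle$: a filtered object is recovered from its graded pieces only if it is complete, and the constant filtered object with value $M\neq 0$ --- which lies in $DF^{\le -1}(A)$ since all its graded pieces vanish --- is the basic counterexample to your assembly claim (though not to the theorem). The missing ingredient is that every $Y\in DF^{\ge 0}(A)$ is \emph{automatically complete}: $\varprojlim_i Y(i)=0$ because $Y(i)\in D^{\ge i}(A)$ forces the coconnectivity of the tower to increase without bound. Granting this, the correct dévissage runs through $Y$ rather than through $X$: write $Y\simeq \varprojlim_m Y^{(m)}$ with $Y^{(m)}(i)=Y(i)/Y(m)$, note that any map $X\to Y^{(m)}$ factors through the finite quotient $X^{(m)}$ (the fiber of $X\to X^{(m)}$ is left Kan extended from weights $\ge m$, where $Y^{(m)}$ vanishes), and only then decompose $X^{(m)}$ into the $(\gr^jX)\langle j\rangle$ with $\gr^jX\in D^{\le j-1}(A)$ mapping into $Y(j)/Y(m)\in D^{\ge j}(A)$. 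Your parenthetical ``together with the connectivity of $Y$'' gestures at the right fact, but it has to be deployed to kill the tail of $X$ at $+\infty$, not to supplement an assembly of $X_{\ge n}$ that does not exist. Since you explicitly flag this as the hard point and defer to \cite[\S 5]{BMS2}, the gap is acknowledged rather than hidden, but the reduction you state is not the one that works.
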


 \medskip
 
 In the following, let $f$ be a non-zero-divisor in $A$. 
 
 \begin{prop}\label{beilifil}
  Let $M\in D(A)$. Define $\Fil^\star L\eta_f M\in  DF(A)$ the filtration on $L\eta_f M$ whose $i$-th level is given by $L\eta_{\varepsilon_i, f}M$, where $\varepsilon_i:\Zz\to \Zz,j\mapsto \max(i, j)$. Denote by $f^{\star}\otimes M\in DF(A)$ the filtration on $M$ whose $i$-th level is given by $f^{i}\otimes_A M$. Then, $\Fil^\star L\eta_f M$ identifies with $\tau_{\Beil}^{\le 0}(f^\star\otimes M)$ in $DF(A)$.
 \end{prop}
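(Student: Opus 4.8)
The plan is to check the universal property that characterizes the Beilinson connective cover. Since $\tau^{\le 0}_{\Beil}$ is by definition right adjoint to the inclusion $DF^{\le 0}(A)\hookrightarrow DF(A)$ (Proposition \ref{bbe}), it suffices to construct a natural map in $DF(A)$ from $\Fil^\star L\eta_f M$ to $f^\star\otimes M$ whose source lies in $DF^{\le 0}(A)$ and whose cofiber lies in $DF^{\ge 1}(A)$, i.e. has $i$-th level in $D^{\ge i+1}(A)$ for every $i\in\Zz$; by the standard orthogonality in a $t$-structure, such a map exhibits $\Fil^\star L\eta_f M$ as $\tau^{\le 0}_{\Beil}(f^\star\otimes M)$.

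To build and analyse this map I would first reduce to the case where $M$ is represented by an $f$-torsion-free complex $M^\bullet$, using \cite[Lemma 6.1]{BMS1}. Since each $\varepsilon_i\colon j\mapsto\max(i,j)$ is non-decreasing and $\varepsilon_{i+1}\ge\varepsilon_i$ pointwise, Proposition \ref{nondec} shows that the levels $L\eta_{\varepsilon_i,f}M$ are computed by the honest nested subcomplexes $\eta_{\varepsilon_i}(M^\bullet)$ of $M^\bullet[1/f]$; thus $\Fil^\star L\eta_f M$ is represented by a genuine $\Zz^{\op}$-indexed diagram of complexes (and its functoriality in $M$ is inherited from that in Proposition \ref{nondec}). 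Similarly $f^\star\otimes M$ is represented by the nested subcomplexes $f^iM^\bullet\subseteq M^\bullet[1/f]$, with transition maps the inclusions (which under the evident trivializations are multiplication by $f$). Since $\varepsilon_i(j)\ge i$ for all $j$, one has $\eta_{\varepsilon_i}(M^\bullet)\subseteq f^iM^\bullet$, compatibly in $i$, and this is the desired map.

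Next I would verify that $\Fil^\star L\eta_f M\in DF^{\le 0}(A)$: its $i$-th graded piece is represented by the quotient complex $\eta_{\varepsilon_i}(M^\bullet)/\eta_{\varepsilon_{i+1}}(M^\bullet)$, and since $\varepsilon_i$ and $\varepsilon_{i+1}$ agree in all degrees $j\ge i+1$ these subcomplexes coincide there, so the quotient is concentrated in cohomological degrees $\le i$ and hence lies in $D^{\le i}(A)$. Then I would compute the cofiber of the map: at level $i$ it is represented by $Q^\bullet:=f^iM^\bullet/\eta_{\varepsilon_i}(M^\bullet)$, and one must show $H^j(Q^\bullet)=0$ for all $j\le i$. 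For $j\le i-1$ one has $\varepsilon_i(j)=\varepsilon_i(j+1)=i$, so $\eta_{\varepsilon_i}(M^\bullet)^j=\{x\in f^iM^j:dx\in f^iM^{j+1}\}=f^iM^j$ — the condition on $dx$ being automatic, as the differential of an $f^i$-multiple is again an $f^i$-multiple — whence $Q^j=0$. For $j=i$, a class in $\ker(Q^i\to Q^{i+1})$ is represented by some $x\in f^iM^i$ with $dx\in\eta_{\varepsilon_i}(M^\bullet)^{i+1}\subseteq f^{i+1}M^{i+1}$; but then $x\in\{z\in f^iM^i:dz\in f^{i+1}M^{i+1}\}=\eta_{\varepsilon_i}(M^\bullet)^i$, so the class vanishes and $H^i(Q^\bullet)=0$ (using $Q^{i-1}=0$). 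Thus the cofiber lies in $DF^{\ge 1}(A)$, completing the argument.

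The main obstacle is not a single deep input but the degreewise bookkeeping with the functions $\varepsilon_i(j)=\max(i,j)$: one has to pin down exactly in which cohomological degrees the various twisted décalage subcomplexes of $M^\bullet[1/f]$ coincide, and both verifications hinge on the borderline degree $j=i$, where the mechanism that makes everything work is precisely that an $f^i$-divisible chain automatically has $f^i$-divisible differential. A secondary point to be careful about is that the construction $M\mapsto\bigl(\Fil^\star L\eta_f M\to f^\star\otimes M\bigr)$ must be functorial on all of $D(A)$ rather than merely defined on $f$-torsion-free representatives; this is again handled by the functoriality packaged into Proposition \ref{nondec}. (This statement, for $\delta=\id$, is close in spirit to the filtered décalage results of \cite[\S 5]{BMS2}; the additional content here is the identification of the levels of the filtration décalée with the Berthelot--Ogus twisted décalages $L\eta_{\varepsilon_i,f}$.)
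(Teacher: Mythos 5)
Your proof is correct, and it is essentially the argument the paper invokes: the paper's own proof consists of noting that $\varepsilon_i$ is non-decreasing (so Proposition \ref{nondec} applies) and then deferring to the proof of \cite[Proposition 5.8]{BMS2}, which carries out the same degreewise bookkeeping with $\eta_{\varepsilon_i,f}(M^\bullet)\subseteq f^iM^\bullet$ on an $f$-torsion-free representative. The only cosmetic difference is that you identify the connective cover via the orthogonality characterization (source in $DF^{\le 0}$, cofiber in $DF^{\ge 1}$) rather than via the explicit formula for $\tau^{\le 0}_{\Beil}$ used in \textit{loc.\ cit.}, and both routes hinge on the same observation at the borderline degree $j=i$.
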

 \begin{proof}
  First, note that the function $\varepsilon_i$ is non-decreasing, hence it satisfies the assumptions of Proposition \ref{nondec}. Then, the statement is contained in the proof of \cite[Proposition 5.8]{BMS2}.
 \end{proof}

 \begin{df}\label{beilifildef}
  Given $M\in D(A)$, we call the filtration $\Fil^\star L\eta_f M$ defined in Proposition \ref{beilifil} the \textit{filtration décalée} on $L\eta_f M$.
 \end{df}

 The definitions and the results above extend to any ringed topos (or site). In particular, they extend to the case of the ringed site $(*_{\kappa{\text -}\pet}, A)$, for $\kappa$ a cut-off cardinal as in \S \ref{convent}, where $*_{\kappa{\text -}\pet}$ is the site of $\kappa$-small profinite sets, with coverings given by finite families of jointly surjective maps, and $A$ is a $\kappa$-condensed ring.% \medskip
 
 %Next, let us also recall that the d\'ecalage functors $L\eta_{f}(-)$ preserve ``solid complexes''.
 
 %\begin{lemma}[{\cite[Lemma 7.6]{Bosco}}]
 % Let $(A, \cl M)$ be an analytic ring. If $M\in D(A, \cl M)$, then we have $L\eta_{f} M\in D(A, \cl M)$.
 %\end{lemma}

 \subsection{The \'{e}h-topology}\label{recalleh}
  
  In this subsection, we recall the definition of the $\eh$-site for rigid-analytic varieties, introduced by Guo, and we consider its variant for dagger varieties. This site will be used crucially in the definition of the $B$-cohomology theory for arbitrary (possibly singular) rigid-analytic/dagger varieties.
    
  \subsubsection{Definition of the \'{e}h-site}

  We will use the following notation and conventions.
  
  \begin{convnot}\label{othernot}
   We denote by $L$ a characteristic $0$ complete valued field with a non-archimedean valuation of rank $1$ and residue characteristic $p$. We write $\Rig_{L}$ (resp. $\Rig_{L}^\dagger$) for the category of rigid-analytic (resp. dagger) varieties over $L$, and we denote by $\RigSm_{L}$ (resp. $\RigSm_{L}^\dagger$) the category of smooth rigid-analytic (resp. dagger) varieties over $L$. \medskip
   
   We refer the reader to \cite{GK1} for the foundations of \textit{dagger varieties} (also called \textit{overconvergent rigid varieties}), and to \cite[\S 2]{Vezzani1} for a quick recollection of the definitions and the main results on the subject, which we will freely use in the following. \medskip
 
  Given a dagger variety $X=(\widehat X, \cl O^{\dagger})$ over $L$ with underlying rigid-analytic variety $\widehat X$ and \textit{overconvergent structure sheaf} $\cl O^\dagger$, we say that $\widehat X$ is the \textit{limit} of $X$, and vice versa that $X$ is a \textit{dagger structure} on $\widehat X$, \cite[Definition 2.22]{Vezzani1}; moreover, we regard $\cl O^\dagger$ as a sheaf with values in $\Vect_{L}^{\cond}$. 
 
  \end{convnot}

 Before defining the $\eh$-site for rigid-analytic and dagger varieties, we need to introduce the notion of \textit{blowing-up}. The construction of the blow-up of a rigid-analytic variety along a closed analytic subset, as well as the verification of its universal property, is due to Conrad, \cite[\S 4.1]{Conrad-ampl}. In turn, such construction relies on the definition of the \textit{relative analytified $\Proj$}, \cite[\S 2.3]{Conrad-ampl}, denoted $\Proj^{\an}$. We note that the latter definition translates \textit{verbatim} to dagger varieties (replacing the structure sheaf with the overconvergent structure sheaf). We can then give the following definition (see \cite[Definition 4.1.1]{Conrad-ampl}).
 
 \begin{df}\label{blowups}
  Let $X$ be a rigid-analytic (resp. dagger) variety over $L$, and let $Z=V(\cl I)$ be the Zariski closed subset defined by a coherent ideal sheaf $\cl I$ over $X$.  The \textit{blow-up of $X$ along $Z$} is the rigid-analytic (resp. dagger) variety over $X$ defined by
  $$\Bl_Z(X):=\Proj^{\an}(\oplus_{n\ge 0}\,\cl I^n).$$
 \end{df}

 \begin{rem}\label{univers}
  Keeping the notation above, the blow-up of $X$ along $Z$ has the following universal property (see the discussion after \cite[Definition 4.1.1]{Conrad-ampl}): $\Bl_Z(X)\to X$ is the final object in the category of morphisms $f:Y\to X$ in $\Rig_L$ (resp. $\Rig_L^\dagger$) such that the coherent pullback $f^* \cl I$ is invertible.
 \end{rem}

 \begin{df}[{\cite[Definition 2.4.1]{Guo1}}]
   The \textit{big $\eh$-site} $\Rig_{L, \eh}$ (resp. $\Rig_{L, \eh}^\dagger$) is the Grothendieck topology on the category $\Rig_{L}$ (resp. $\Rig_{L}^\dagger$), such that the covering families are generated by étale coverings, universal homeomorphisms, and morphisms 
  $$\Bl_Z(Y)\sqcup Z\to Y$$
  with $Z$ a closed analytic subset of $Y$.
  
  Given $X$ a rigid-analytic (resp. dagger) variety over $L$, we define the \textit{small $\eh$-site} $X_{\eh}$ as the localization of the site $\Rig_{L, \eh}$ (resp. $\Rig_{L, \eh}^\dagger$) at the object $X$.
   \end{df}

 The definition above is designed to make the following result hold true.
 
 \begin{prop}[cf. {\cite[Corollary 2.4.8]{Guo1}}]\label{baseh}
  Let $X$ be a  quasi-compact, reduced, rigid-analytic (resp. dagger) variety over $L$. Then, there exists a proper $\eh$-covering $f:Y\to X$ with $Y$ a smooth rigid-analytic (resp. dagger) variety over $L$.
 \end{prop}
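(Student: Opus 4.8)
The plan is to reduce the statement to the algebraic case treated in \cite[Corollary 2.4.8]{Guo1}, leveraging de Jong's alterations (via Temkin's work on resolution of singularities for quasi-excellent schemes, or the rigid-analytic alterations of Hartl--Temkin cited elsewhere in the introduction), after which the dagger case follows by passing to the associated rigid-analytic variety and then upgrading the output to a dagger cover. First I would reduce to the case where $X$ is connected and of dimension $d$, and argue by induction on $d=\dim X$; the case $d=0$ is trivial since a reduced $0$-dimensional quasi-compact rigid space is a finite disjoint union of points $\Spa(K_i, \cl O_{K_i})$, hence already smooth. For the inductive step, by quasi-compactness cover $X$ by finitely many affinoids $\Spa(A_i, A_i^+)$; it suffices to produce an $\eh$-cover of each and then take the disjoint union, using that the generating covers of the $\eh$-topology (étale covers, universal homeomorphisms, and blow-up-plus-center maps $\Bl_Z(Y)\sqcup Z\to Y$) are stable under composition and base change. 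So we may assume $X=\Spa(A, A^+)$ is a reduced affinoid over $L$.

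Next I would invoke an alteration: there exists a proper surjective generically finite morphism $f\colon X'\to X$ with $X'$ smooth, obtained either from Temkin's desingularization-by-alterations applied to a formal model, or directly from the rigid-analytic version of de Jong's theorem. Such an $f$ is a proper $\eh$-cover \emph{only} if we can account for the locus where it fails to be an isomorphism or a covering in the $\eh$-sense. The correct way to phrase this is exactly as in Guo's argument: the $\eh$-topology is designed so that a proper surjective morphism $f\colon X'\to X$ of reduced rigid spaces which is an isomorphism over a dense Zariski open $U\subseteq X$ becomes a covering after composing with the blow-up of $X$ along the closed complement $Z=X\setminus U$ together with adjoining $Z$; more precisely, $\{X'\to X\}$ together with $\{Z \hookrightarrow X\}$ and an étale-local analysis over $Z$ form an $\eh$-cover, and one then runs the inductive hypothesis on $Z$, which has strictly smaller dimension. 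This is the content that \cite[Corollary 2.4.8]{Guo1} packages; the point is that the definition of the $\eh$-site (Definition after Remark \ref{univers}) includes precisely the morphisms $\Bl_Z(Y)\sqcup Z\to Y$ needed to make "alteration + descent on the non-smooth locus" into a genuine covering. Concretely, given the smooth alteration $X'\to X$, let $Z\subset X$ be the (reduced) closed analytic subset over which $f$ is not finite étale; by generic smoothness / generic flatness, $\dim Z<\dim X$. By induction there is a proper $\eh$-cover $Z'\to Z$ with $Z'$ smooth. Then the composite $Y:=X'\sqcup Z'\to X'\sqcup Z\to X$ is a proper $\eh$-cover (the second map being a composition of a blow-up-type generator with the given alteration, the first being the $\eh$-cover of $Z$ base-changed appropriately), and $Y$ is smooth.

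For the dagger case, let $X=(\widehat X,\cl O^\dagger)$ with limit $\widehat X$. Since $\widehat X$ is a reduced quasi-compact rigid-analytic variety, the rigid-analytic case just established gives a proper $\eh$-cover $\widehat g\colon \widehat Y\to \widehat X$ with $\widehat Y$ smooth. Each of the generating morphisms producing $\widehat g$ (étale maps, universal homeomorphisms, blow-ups along closed analytic subsets) lifts canonically to the dagger setting: étale maps of dagger varieties are defined, universal homeomorphisms likewise, and $\Bl_Z$ is defined for dagger varieties via $\Proj^{\an}$ as recorded in Definition \ref{blowups} and Remark \ref{univers}. Hence $\widehat g$ is the limit of a proper $\eh$-cover $g\colon Y\to X$ in $\Rig_L^\dagger$, and $Y$ is smooth since $\widehat Y=\widehat{Y}$ is smooth and smoothness of a dagger variety is tested on its limit. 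The main obstacle in this argument is the dimension-lowering step: one must ensure that the non-smooth (equivalently, non-finite-étale) locus $Z$ of the chosen alteration is genuinely a closed analytic subset of strictly smaller dimension, so that the induction terminates; this is where one uses that $X$ is reduced (so that $f$ is an isomorphism, not merely finite flat, over a dense open) together with a generic smoothness statement for rigid spaces in characteristic $0$. All the remaining bookkeeping --- stability of $\eh$-covers under composition and disjoint union, and compatibility of the constructions with passage between dagger varieties and their limits --- is routine given the definitions recalled above.
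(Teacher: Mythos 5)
Your proposal has a genuine gap at its core step: an alteration is not an $\eh$-covering. The $\eh$-topology of Definition after Remark \ref{univers} is generated only by \'etale coverings, universal homeomorphisms, and the maps $\Bl_Z(Y)\sqcup Z\to Y$; arbitrary proper surjections, and in particular proper generically finite maps, are not covers (that would be the h- or cdh-type topologies, not this one). Concretely, your map $X'\sqcup Z\to X$ is a generator only when $X'=\Bl_Z(X)$, which an alteration is not. If you instead try to use the genuine generator $\Bl_Z(X)\sqcup Z\to X$ and then cover $\Bl_Z(X)$ by $X'$, you are stuck: $\Bl_Z(X)$ has the same dimension as $X$ and may still be singular, so your induction on dimension does not terminate. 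The observation that $f$ is finite \'etale over a dense open $U$ does not help either, since $\{U\hookrightarrow X,\,Z\hookrightarrow X\}$ is not an $\eh$-covering family.

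The paper's proof avoids this entirely by using that $L$ has characteristic $0$: Temkin's \emph{non-embedded desingularization} produces a finite sequence of blow-ups $X_n\to\cdots\to X_0=X$ along \emph{smooth} Zariski-closed centers $Z_{j}$, so that $X_n\sqcup(\sqcup_i Z_i)\to X$ is a composite of generators of the $\eh$-topology with smooth source --- no alterations and no dimension induction are needed. The dagger case is then handled not by lifting a rigid-analytic cover (which would require an argument for lifting each generator), but by noting that Temkin's theorem applies directly to dagger varieties because dagger algebras are excellent (Matsumura's criterion, regularity of Washnitzer algebras, and $\mathrm{char}\,L=0$), together with the fact that $\Proj^{\an}$ and hence blow-ups are defined in the dagger category. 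To repair your argument you would have to replace the alteration by an actual resolution by blow-ups, at which point you recover the paper's proof.
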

 \begin{proof}
   We will check that the proof of \cite[Corollary 2.4.8]{Guo1} also holds for dagger varieties. 
   
   Since $X$ is quasi-compact and reduced, by Temkin's non-embedded desingularization theorem, \cite[Theorem 1.2.1, Theorem 5.2.2]{Temk1}, there exists a finite sequence of blowups $$X_n\to X_{n-1}\to\cdots \to X_0=X$$
  such that $X_n$ smooth, with $X_j=\Bl_{Z_{j-1}}(X_{j-1})$ the blowup of $X_{j-1}$ along a smooth Zariski closed subset $Z_{j-1}$ of $X_{j-1}$.\footnote{We observe that in \textit{loc. cit.} the blow-ups considered are analytification of scheme-theoretic blow-ups. However, by the universal property in Remark \ref{univers}, we have natural comparison morphisms between the blow-up in the sense of Definition \ref{blowups} and the analytification of the scheme-theoretic blow-up, which are isomorphisms.} In fact, we note that \textit{loc. cit.} also applies in the case when $X$ is a dagger variety, as any dagger $L$-algebra is an excellent ring: this follows from a criterion of Matsumura \cite[Theorem 102]{Matsumura}, using that Washnitzer algebras are regular, \cite[Proposition 1.5]{GK1}, and $L$ has characteristic 0.
  
  In conclusion, the morphism
  $$Y:=X_n\bigsqcup\left(\sqcup_{i=0}^{n-1}Z_i\right)\to X$$
  is a proper $\eh$-covering with $Y$ smooth.
 \end{proof}
 
 \begin{rem}\label{oyt} 
 Let $X$ be a rigid-analytic (resp. dagger) variety over $L$. By Proposition \ref{baseh}, the $Y\in X_{\eh}$, with $Y$ a smooth rigid-analytic (resp. dagger) variety over $L$, form a basis of $X_{\eh}$. In fact, for any rigid-analytic (resp. dagger) variety $Z$ over $L$, denoting by $Z_{\redd}$ the reduced subspace of $Z$, the natural map $Z_{\redd}\to Z$ is a universal homeomorphism, hence it is an $\eh$-covering.
 \end{rem}

 \subsubsection{\normalfont{\textbf{Differential forms and de Rham cohomology of singular varieties}}}
 
 Next, we want to state a condensed version of Guo's descent result for the $\eh$-differentials (Proposition \ref{ehdescent}), which will be useful in the following sections. For this, we refer the reader to \cite[\S 5.1]{Bosco} for a discussion on how to translate classical results on coherent cohomology of rigid-analytic varieties into the condensed setting. \medskip
 
 The following definition is based on Proposition \ref{baseh} (and Remark \ref{oyt}).

 \begin{df}\label{dRsingg}
  Let $X$ be a rigid-analytic variety over $L$. Denote by $\cl B^{\sm}_{\eh}$ the basis of the site $X_{\eh}$ consisting of all smooth $Y\in X_{\eh}$. For $i\ge 0$, we define $\Omega^i_{X_{\eh}}$ as the sheaf on $X_{\eh}$, with values in $\Vect_{L}^{\cond}$, associated to the presheaf
  $$(\cl B^{\sm}_{\eh})^{\op}\to \Vect_L^{\cond}: Y\mapsto \Omega^i_Y(Y).$$
  Denote by $\Omega_{X_{\eh}}^\bullet$ the de Rham complex of $X$, given by
  $$\Omega_{X_{\eh}}^\bullet:=[\cl O_{X_{\eh}}\overset{d}{\to}\Omega^1_{X_{\eh}}\overset{d}{\to}\Omega^2_{X_{\eh}} \overset{d}{\to}\cdots].$$
  We define the \textit{de Rham cohomology} of $X$ (over $L$) as
  $$R\Gamma_{\dR}(X):=R\Gamma(X, \Omega_{X_{\eh}}^{\bullet})\in D(\Vect_L^{\cond})$$
  and endow it with the $\Nn^{\op}$-indexed filtration $\Fil^{\star}R\Gamma_{\dR}(X):=R\Gamma(X, \Omega_{X_{\eh}}^{\ge \star})$, called \textit{Hodge filtration}.
 \end{df}

 The next result shows in particular that in the smooth case the de Rham cohomology defined above agrees with the usual de Rham cohomology.
 
 \begin{prop}[{\cite[Theorem 4.0.2]{Guo1}}]\label{ehdescent}
  Let $X$ be a smooth rigid-analytic variety over $L$. Let $\pi:X_{\eh}\to X_{\ett}$ be the natural morphism of sites. Then, for each $i\ge 0$, we have
  $$ R\pi_*\Omega_{X_{\eh}}^i=\Omega_{X_{\ett}}^i[0]$$
  as complexes of sheaves with values in $\Vect_L^{\cond}$.
 \end{prop}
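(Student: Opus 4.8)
The plan is to reduce to the known classical statement about differential forms on rigid-analytic varieties, using only two inputs: Guo's original result \cite[Theorem 4.0.2]{Guo1} (of which this proposition is the condensed avatar), and the general principle, recalled in \cite[\S 5.1]{Bosco}, that the condensed enhancement of coherent cohomology of rigid-analytic varieties is compatible with the classical one. More precisely, for a smooth rigid-analytic variety $X$ over $L$, the sheaf $\Omega^i_{X_\ett}$ of \cite{Bosco} is the condensed refinement of the usual coherent sheaf $\Omega^i_X$, obtained by endowing its sections over affinoids with their natural Banach (more precisely, nuclear Fr\'echet, or just condensed $L$-vector space) structure and sheafifying; and $\Omega^i_{X_\eh}$ in Definition \ref{dRsingg} is, by construction, the $\eh$-sheafification of the presheaf $Y\mapsto \Omega^i_Y(Y)$ on the basis $\cl B^{\sm}_{\eh}$, again with its condensed structure. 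So the claim $R\pi_*\Omega^i_{X_\eh}=\Omega^i_{X_\ett}[0]$ should be deduced term-by-term from the underlying statement in $D(\Mod_L)$.

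First I would recall that, by \cite[Theorem 4.0.2]{Guo1}, the analogous identity $R\pi_*\Omega^i_{X_\eh}=\Omega^i_{X_\ett}[0]$ holds in the derived category of (underlying, non-condensed) $\cl O_L$-modules, or rather of abelian sheaves on $X_\ett$; in particular the higher direct images $R^j\pi_*\Omega^i_{X_\eh}$ vanish for $j>0$ and the natural map $\Omega^i_{X_\ett}\to \pi_*\Omega^i_{X_\eh}$ is an isomorphism of sheaves of $L$-vector spaces. Next I would check that the forgetful functor from condensed $L$-vector spaces to $L$-vector spaces is exact and conservative, and commutes with the relevant limits and sheafification, so that it suffices to verify: (a) the map $\Omega^i_{X_\ett}\to \pi_*\Omega^i_{X_\eh}$ of condensed sheaves is an isomorphism, and (b) $R^j\pi_*\Omega^i_{X_\eh}=0$ for $j>0$ as condensed sheaves. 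For (a), both sides are sheaves on $X_\ett$ with values in $\Vect_L^{\cond}$, and by \cite[\S 5.1]{Bosco} the value of either side on a smooth affinoid $U=\Spa(A,A^+)$ is the condensed vector space $\Omega^i_{A}$ with its canonical topology; the map is then the identity on underlying objects and, being a map between the canonical condensed structures on the same $L$-vector space associated to the same affinoid coherent data, it is an isomorphism of condensed sheaves. For (b), one computes $R^j\pi_*$ via \v{C}ech-type or hypercohomology resolutions along $\eh$-coverings $Y_\bullet\to U$ by smooth varieties; the cohomology sheaves are the $\eh$-sheafifications of $U\mapsto H^j$ of the resulting complex, and by Guo's theorem this complex is a resolution (quasi-isomorphism onto $\Omega^i_U(U)$ in degree $0$) at the level of underlying $L$-vector spaces, hence also at the level of condensed $L$-vector spaces because on each affinoid the terms carry their canonical structures and the maps are the classical ones — so the vanishing of underlying cohomology forces vanishing of condensed cohomology. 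One should also invoke here that the coherent cohomology of a rigid-analytic variety with its condensed structure behaves well, e.g. Tate acyclicity and Kiehl's theorem hold verbatim in $\Vect_L^{\cond}$, as recorded in \cite[\S 5.1]{Bosco}, so that derived pushforwards compute as expected and passing to underlying objects loses no information.

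The main obstacle, such as it is, is purely bookkeeping: one must make sure that the condensed structures appearing in Definition \ref{dRsingg} (coming from sheafification of the presheaf on $\cl B^{\sm}_\eh$) genuinely match the ``canonical'' condensed structures on coherent cohomology used in \cite{Bosco}, so that the comparison with Guo's non-condensed theorem is legitimate; concretely, this amounts to the statement that $\eh$-sheafification commutes with the forgetful functor to $L$-vector spaces in the relevant range, which follows since that functor is exact, preserves filtered colimits and finite limits, and the $\eh$-topology is finitary on the smooth basis. Once this compatibility is in place, the proof is essentially a one-line reduction: apply the forgetful functor, quote \cite[Theorem 4.0.2]{Guo1}, and conclude by conservativity.
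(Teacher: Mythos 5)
Your route is the same as the paper's: Proposition \ref{ehdescent} is stated there with no proof beyond the bracketed citation of \cite[Theorem 4.0.2]{Guo1} and the pointer to \cite[\S 5.1]{Bosco} for the passage to the condensed setting, which is exactly the reduction you propose. Identifying these two inputs, and observing that the only real work is matching the condensed structures on both sides, is correct.

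One step as written would fail, though: you ``conclude by conservativity'' of the forgetful functor from $\Vect_L^{\cond}$ to $L$-vector spaces, i.e.\ evaluation at the point. That functor is exact (the point is extremally disconnected) but it is \emph{not} conservative: for instance the quotient $\underline{L}/L^{\delta}$ of the condensed field $L$ by its discretization is a nonzero condensed $L$-vector space whose value at $*$ vanishes. So the vanishing of the underlying cohomology of your \v{C}ech/hypercover complexes does not by itself force the vanishing of the condensed sheaves $R^j\pi_*\Omega^i_{X_{\eh}}$ for $j>0$, nor does bijectivity on underlying vector spaces give that $\Omega^i_{X_{\ett}}\to \pi_*\Omega^i_{X_{\eh}}$ is an isomorphism of condensed sheaves. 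The correct mechanism --- and the actual content of \cite[\S 5.1]{Bosco} that you are implicitly leaning on --- is that all terms in play are $L$-Banach (on affinoids) or Fr\'echet spaces and all maps are the classical continuous ones, so exactness of the underlying complexes upgrades to exactness in $\Vect_L^{\cond}$ by the open mapping theorem (a continuous bijection of Fr\'echet spaces is an isomorphism of condensed vector spaces). With ``conservativity'' replaced by this strictness argument, your proof is the intended one.
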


 The following boundedness result will also be useful in the sequel.
 
 \begin{prop}[{\cite[Theorem 6.0.2]{Guo1}}]\label{bound0}
  Let $X$ be a qcqs rigid-analytic variety over $L$ of dimension $d$. Then, $H^i(X, \Omega_{X_{\eh}}^j)$ vanishes if $i>d$ or $j>d$.
 \end{prop}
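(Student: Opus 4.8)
The plan is to recover \cite[Theorem 6.0.2]{Guo1}, whose argument adapts routinely to $\Vect_L^{\cond}$-valued sheaves, cf.\ \cite[\S 5.1]{Bosco}. I would prove both vanishings simultaneously by induction on $d=\dim X$, establishing the statement $S(d)$: \emph{for every qcqs rigid-analytic variety $X$ over $L$ with $\dim X\le d$ and every $j\ge 0$, one has $H^i(X,\Omega^j_{X_\eh})=0$ for $i>d$, and $R\Gamma(X,\Omega^j_{X_\eh})=0$ whenever $j>d$.} First I would reduce to $X$ reduced: the morphism $X_\redd\to X$ is a universal homeomorphism with $X_\redd\times_X X_\redd=X_\redd$, hence an $\eh$-covering inducing an equivalence of $\eh$-topoi compatible with the sheaves $\Omega^j$ (which are defined via the basis of smooth objects, Definition \ref{dRsingg}). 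Then, applying Temkin's non-embedded desingularization \cite{Temk1} exactly as in the proof of Proposition \ref{baseh}, I would fix a finite chain of blow-ups $X_n\to\cdots\to X_0=X$ with $X_k=\Bl_{Z_{k-1}}(X_{k-1})$ along a smooth Zariski closed subset $Z_{k-1}\subset X_{k-1}$ and with $X_n$ smooth. Since blow-ups preserve the dimension, $\dim X_k=d$ for all $k$, whereas the centers $Z_{k-1}$ and the exceptional divisors $E_k\subset X_k$ have dimension $\le d-1$, so the outer inductive hypothesis $S(d-1)$ applies to them.

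The base case $d=0$ is immediate (a reduced $0$-dimensional qcqs rigid variety is a finite disjoint union of $\Spa$'s of finite extensions of $L$, hence smooth, so its $\eh$-cohomology agrees with its étale/analytic coherent cohomology). For $d\ge 1$ I would argue by descending induction on $k=n,\dots,0$. For the start, $X_n$ is smooth of dimension $d$, so Proposition \ref{ehdescent} (together with the agreement of étale and analytic coherent cohomology) gives $R\Gamma(X_n,\Omega^j_{(X_n)_\eh})=R\Gamma(X_n,\Omega^j_{X_n})$, which sits in degrees $\le d$ by Kiehl's coherent-cohomology vanishing for qcqs rigid varieties and vanishes identically for $j>d$ since then $\Omega^j_{X_n}=0$. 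For the inductive step, the abstract blow-up square attached to $X_k=\Bl_{Z_{k-1}}(X_{k-1})$ is $\eh$-distinguished, i.e.\ cocartesian in $\eh$-sheaves, so applying $R\Gamma_\eh(-,\Omega^j_{X_\eh})$ produces a Mayer--Vietoris triangle
$$R\Gamma(X_{k-1},\Omega^j)\longrightarrow R\Gamma(X_k,\Omega^j)\oplus R\Gamma(Z_{k-1},\Omega^j)\longrightarrow R\Gamma(E_k,\Omega^j)\xrightarrow{\,+1\,}$$
(restriction of $\Omega^j_{X_\eh}$ to each object of $X_\eh$ occurring here is the corresponding $\eh$-differential sheaf). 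By $S(d-1)$ the terms involving $Z_{k-1}$ and $E_k$ are concentrated in degrees $\le d-1$ and vanish entirely when $j>d$; the long exact sequence then yields $H^i(X_{k-1},\Omega^j)\xrightarrow{\sim}H^i(X_k,\Omega^j)$ for $i>d$, and $R\Gamma(X_{k-1},\Omega^j)\simeq R\Gamma(X_k,\Omega^j)$ for $j>d$. Iterating down to $k=0$ and combining with the start of the induction proves $S(d)$ for $X=X_0$.

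I expect the main obstacle to be the two non-formal inputs. The first is that abstract blow-up squares become cocartesian after $\eh$-sheafification — equivalently, that $\eh$-cohomology satisfies Mayer--Vietoris for such squares; this is the raison d'être of the $\eh$-topology and is part of \cite{Guo1}, but it genuinely requires an argument, since the Čech nerve of $\widetilde W\sqcup Z\to W$ is not the two-term complex one wants. The second is Kiehl's boundedness of coherent cohomology in the qcqs (not merely separated) case, together with its compatibility with the condensed enhancement, for which I would invoke \cite[\S 5.1]{Bosco}. A subtler point — though purely bookkeeping — is that the induction closes only because blow-ups preserve the dimension while centers and exceptional divisors strictly lower it; one must therefore descend along a \emph{fixed finite} desingularization and resist replacing it by an arbitrary $\eh$-hypercover, whose terms, being built from fibre products over $X$, may well have dimension larger than $d$.
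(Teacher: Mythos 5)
The paper offers no proof of this proposition at all — it is quoted verbatim from \cite[Theorem 6.0.2]{Guo1} — and your reconstruction is essentially Guo's own argument: reduction to the reduced case, induction on the dimension via Temkin's resolution as in Proposition \ref{baseh}, the Mayer--Vietoris triangle for abstract blow-up squares in the $\eh$-topology, and the smooth case via Proposition \ref{ehdescent} together with vanishing of coherent cohomology above the dimension. The argument is correct; the one cosmetic quibble is that the degree-$\le d$ bound for coherent cohomology of a general qcqs (non-proper) rigid space is a Grothendieck/Scheiderer-type vanishing for spectral spaces of Krull dimension $d$ rather than an instance of Kiehl's theorem, but this changes nothing in the proof.
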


 From the proposition above we deduce the following corollary.

 \begin{cor}\label{dRbound}
  Let $X$ be a qcqs rigid-analytic variety over $L$ of dimension $d$. Then, the de Rham cohomology complex $R\Gamma_{\dR}(X)$ lies in $D^{\le 2d}(\Vect_{L}^{\ssolid})$.
 \end{cor}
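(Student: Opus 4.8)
The plan is to read off the bound from the Hodge--de Rham spectral sequence, using as input the coherent-cohomology vanishing of Proposition \ref{bound0}.

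By Definition \ref{dRsingg}, $R\Gamma_{\dR}(X)=R\Gamma(X,\Omega^{\bullet}_{X_{\eh}})$, and the brutal (Hodge) filtration $\Omega^{\ge\star}_{X_{\eh}}$ on the de Rham complex induces, upon applying $R\Gamma(X,-)$, a first-quadrant spectral sequence
$$E_1^{i,j}=H^{j}(X,\Omega^{i}_{X_{\eh}})\ \Longrightarrow\ H^{i+j}_{\dR}(X).$$
Since $X$ is qcqs of dimension $d$, Proposition \ref{bound0} gives $E_1^{i,j}=0$ whenever $i>d$ or $j>d$, so the $E_1$-page is supported on the square $0\le i,j\le d$; hence the spectral sequence converges and the abutment $H^{n}_{\dR}(X)$ vanishes for $n>2d$. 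Alternatively, one argues along the filtration $R\Gamma(X,\Omega^{\ge\star}_{X_{\eh}})$, which becomes finite after applying $R\Gamma(X,-)$: its graded pieces $R\Gamma(X,\Omega^{i}_{X_{\eh}})[-i]$ lie in $D^{\le d+i}\subseteq D^{\le 2d}$ for $0\le i\le d$ by Proposition \ref{bound0}, and $R\Gamma(X,\Omega^{\ge i}_{X_{\eh}})=0$ for $i>d$ (all its terms, hence all its cohomology groups, vanish by Proposition \ref{bound0}), so stability of $D^{\le 2d}$ under extensions yields $R\Gamma_{\dR}(X)=R\Gamma(X,\Omega^{\ge 0}_{X_{\eh}})\in D^{\le 2d}$.

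It then remains to see that $R\Gamma_{\dR}(X)$ lands in solid, not merely condensed, $L$-vector spaces; this is where the qcqs hypothesis enters a second time. By $\eh$-descent (Proposition \ref{baseh}, already built into Definition \ref{dRsingg}) the terms $R\Gamma(X,\Omega^{i}_{X_{\eh}})$ are assembled from coherent cohomology of smooth qcqs rigid-analytic varieties over $L$, which --- after choosing finite admissible affinoid covers --- is represented by bounded complexes of Banach $L$-vector spaces and hence lies in $D(\Vect_L^{\ssolid})$, cf. \cite[\S 5.1]{Bosco}. Since solid $L$-vector spaces form a full abelian subcategory of $\Vect_L^{\cond}$ stable under subquotients and extensions, the cohomology of $R\Gamma_{\dR}(X)$ --- obtained from the $H^{j}(X,\Omega^{i}_{X_{\eh}})$ through the spectral sequence above --- is solid as well.

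I do not expect any serious obstacle: the entire content is a formal consequence of Proposition \ref{bound0} (Guo's boundedness theorem), and everything else is bookkeeping. The only two points meriting a word of care are the potentially unbounded-above Hodge filtration on $\Omega^{\bullet}_{X_{\eh}}$ (harmless, since it becomes finite after applying $R\Gamma(X,-)$, by the vanishing noted above) and the reduction to the smooth case in the solidity argument (routine $\eh$-descent).
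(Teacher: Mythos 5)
Your argument is correct and is exactly the deduction the paper intends: the paper gives no explicit proof beyond "from the proposition above we deduce," and the intended route is precisely the Hodge filtration / Hodge--de Rham spectral sequence with $E_1^{i,j}=H^j(X,\Omega^i_{X_{\eh}})$ killed by Proposition \ref{bound0} outside the square $0\le i,j\le d$. Your extra remarks on convergence of the filtration and on solidity are sensible bookkeeping consistent with the paper's framework.
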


  \subsection{Period sheaves}\label{logpro}\label{petsh}

 In this subsection, we first recall the definitions of the pro-étale sheaf-theoretic version of the classical period rings of Fontaine, and we introduce a log-variant of the pro-\'etale sheaf-theoretic version $\Bb$ of the ring $B$ of analytic functions on $Y_{\FF}$, i.e. the \textit{log-crystalline pro-étale period sheaf} $\Bb_{\log}$. Then, after some preliminary complementary results on the pro-étale period sheaves (and condensed period rings), we recall that, thanks to results of Scholze \cite{Scholze3}, the pro-étale period sheaves satisfy $v$-descent. %We conclude by studying the nuclearity of the cohomology of such sheaves.

 \subsubsection{Pro-étale period sheaves}\label{petsheaves}
 
 \begin{df}\label{gyu}
    Let $X$ be an analytic adic space over $\Spa(\Zz_p, \Zz_p)$. We define the \textit{integral $\pet$-structure sheaf} $\widehat{\cl O}_X^+$ and the \textit{$\pet$-structure sheaf} $\widehat{\cl O}_X$ as the sheaves on $X_{\pet}$ satisfying respectively $$\widehat{\cl O}_X^+(Y):=\cl O^+_{Y^{\sharp}}(Y^\sharp),\;\;\;\;\;\;\;\; \widehat{\cl O}_X(Y):=\cl O_{Y^{\sharp}}(Y^\sharp)$$
   for all perfectoid spaces $Y\in X_{\pet}$.
  \end{df}

  We recall that, thanks to \cite[Theorem 8.7]{Scholze3}, $\widehat{\cl O}_X^+$ and $\widehat{\cl O}_X$ are indeed sheaves.

 \begin{df}\label{periodsheaves} Let $X$ be an analytic adic space over $\Spa(\Qq_p, \Zz_p)$.  The following are defined to be sheaves on $X_{\pet}$.
 \begin{enumerate}[(i)] 
   \item The \textit{tilted integral $\pet$-structure sheaf} $\widehat{\cl O}_X^{\flat +}=\varprojlim_{\varphi} \widehat{\cl O}_{X}^+/p$, where the inverse limit is taken along the Frobenius map $\varphi$.
  \item The sheaves $\Aa_{\inf}=W(\widehat{\cl{O}}_X^{\flat +})$ and $\Bb_{\inf}=\Aa_{\inf}[1/p]$. We have a morphism of pro-\'etale sheaves $\theta: \Aa_{\inf}\to\widehat{\cl{O}}^+_{X}$ that extends to $\theta: \Bb_{\inf}\to \widehat{\cl{O}}_{X}$.
  \item We define the \textit{positive de Rham sheaf} $\Bb_{\dR}^+=\varprojlim_{n\in \Nn} \Bb_{\inf}/(\ker\theta)^n$, with filtration given by $\Fil^r \Bb_{\dR}^+=(\ker \theta)^r \Bb_{\dR}^+$.
  \item\label{periodsheaves:last} Let $t$ be a generator of $\Fil^1 \Bb_{\dR}^+$.\footnote{Such a generator exists locally on $X_{\pet}$, it is a non-zero-divisor and unique up to unit, by \cite[Lemma 6.3]{Scholze}.} We define the \textit{de Rham sheaf} $\Bb_{\dR}=\Bb_{\dR}^+[1/t]$, with filtration $\Fil^r \Bb_{\dR}=\sum_{j\in \Zz} t^{-j} \Fil^{r+j}\Bb_{\dR}^+$.
 \end{enumerate}
\end{df}

\begin{notation}\label{hui}
 In the following, we denote by $v(-)$ the valuation on $\cl O_{C^\flat}$ defined as follows: for $x\in \cl O_{C^\flat}$, we define $v(x)$ as the $p$-adic valuation of $x^{\sharp}\in \cl O_{C}$.
\end{notation}

 \begin{df}\label{I}
   Let $X$ an analytic adic space over $\Spa(C, \cl O_C)$.  Let $I=[s, r]$ be an interval of $(0, \infty)$ with rational endpoints, and let $\alpha, \beta\in \cl O_{C^{\flat}}$ with valuation $v(\alpha)=1/r$ and $v(\beta)=1/s$. We define the following sheaves on $X_{\pet}$
  $$\Aa_{\inf, I}=\Aa_{\inf}\left[\frac{p}{[\alpha]}, \frac{[\beta]}{p}\right],\;\;\;\; \Aa_I=\varprojlim_n\Aa_{\inf, I}/p^n,\;\;\;\;\Bb_I=\Aa_I[1/p].$$
  Moreover, we define the sheaf on $X_{\pet}$
  $$\Bb=\varprojlim _{I\subset (0, \infty)} \Bb_I$$
  where $I$ runs over all the compact intervals of $(0, \infty)$ with rational endpoints.
 \end{df}
 
 We recall the following interpretation of the latter period sheaves defined above in terms of the curves $Y_{\FF, S}$ (see \S \ref{convent} for the notation).
 
 \begin{lemma}[{\cite[Lemma 4.14]{Bosco}}]
  Let $S^{\sharp}$ be an affinoid perfectoid space over $\Spa(C, \cl O_C)$, and let $S=(S^{\sharp})^\diamondsuit$.
  Let $I=[s, r]\subset (0, \infty)$ be an interval with rational endpoints. Then,  we have
  $$\Bb_I(S^{\sharp})=\cl O(Y_{\FF, S, I}),\;\;\;\;\;\;\;\;\Bb(S^{\sharp})=\cl O(Y_{\FF, S}).$$
 \end{lemma}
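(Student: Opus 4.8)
The plan is to reduce everything to a hands‑on computation on $S^{\sharp}$ and then match the two sides. First I would take $X=S^{\sharp}=\Spa(R^{\sharp},R^{\sharp+})$, whose pro‑\'etale site has $S^{\sharp}$ as its final object, and set $R^{+}:=R^{\sharp\flat+}$, $R:=R^{\sharp\flat}$, so that $S=(S^{\sharp})^{\diamondsuit}$ is represented by the characteristic‑$p$ affinoid perfectoid $\Spa(R,R^{+})$ and $Y_{\FF,S}=\Spa(W(R^{+}),W(R^{+}))\setminus V(p[p^{\flat}])$. Definition~\ref{gyu} then gives at once $\widehat{\cl O}^{+}(S^{\sharp})=R^{\sharp+}$ and $\widehat{\cl O}(S^{\sharp})=R^{\sharp}$; combining this with the acyclicity of $\widehat{\cl O}^{+}$ on affinoid perfectoids (\cite{Scholze3}) and the surjectivity of Frobenius on $R^{\sharp+}/p$ one obtains $\widehat{\cl O}^{\flat+}(S^{\sharp})=R^{+}$, hence $\Aa_{\inf}(S^{\sharp})=W(R^{+})$, a $p$‑adically complete sheafy Huber ring.

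Next I would identify $Y_{\FF,S,I}$ as an explicit rational localisation of $W(R^{+})$. Writing $r=a/b$, $s=a'/b'$ with positive integers, the hypotheses $v(\alpha)=1/r$, $v(\beta)=1/s$ (with $v$ as in Notation~\ref{hui}) and $v(p^{\flat})=1$ say that $\alpha^{a}$ and $(p^{\flat})^{b}$, and likewise $\beta^{a'}$ and $(p^{\flat})^{b'}$, have equal valuation; since $\cl O_{C^{\flat}}$ is the valuation ring of the algebraically closed field $C^{\flat}$, these elements differ by a unit, so $[\alpha]^{a}$ and $[p^{\flat}]^{b}$ (resp.\ $[\beta]^{a'}$ and $[p^{\flat}]^{b'}$) differ by a unit of $W(\cl O_{C^{\flat}})$. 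Hence $|[\alpha]|=|[p^{\flat}]|^{1/r}$ and $|[\beta]|=|[p^{\flat}]|^{1/s}$ throughout $Y_{\FF,S}$, so the inequalities $|[\beta]|\le|p|\le|[\alpha]|$ defining the localisation $W(R^{+})\bigl[\tfrac{p}{[\alpha]},\tfrac{[\beta]}{p}\bigr]$ are precisely $|p|^{r}\le|[p^{\flat}]|\le|p|^{s}$, i.e.\ those cutting out $Y_{\FF,S,I}$ (cf.\ \S\ref{convent}). By sheafiness of $W(R^{+})$ (\cite{FF}), the ring of functions of this rational subset is the completed localisation $\bigl(W(R^{+})[\tfrac{p}{[\alpha]},\tfrac{[\beta]}{p}]\bigr)^{\wedge}_{p}[\tfrac1p]$; given $\Aa_{\inf}(S^{\sharp})=W(R^{+})$, this is exactly the recipe defining $\Bb_{I}(S^{\sharp})$ in Definition~\ref{I}.

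The point that genuinely needs care — and which I expect to be the main obstacle — is that $\Aa_{\inf,I}$, $\Aa_{I}$, $\Bb_{I}$ are \emph{sheaves}, i.e.\ sheafifications of the obvious presheaves, so I must check that sheafification does not enlarge the value on $S^{\sharp}$; equivalently, that $Y^{\sharp}\mapsto\bigl(\Aa_{\inf}(Y^{\sharp})[\tfrac{p}{[\alpha]},\tfrac{[\beta]}{p}]\bigr)^{\wedge}_{p}[\tfrac1p]$ already satisfies pro‑\'etale descent, with vanishing higher cohomology on affinoid perfectoids. I would argue that, applying the identification of the previous paragraph to each affinoid perfectoid $Y^{\sharp}\to S^{\sharp}$ with tilt $Y^{\flat}$, this presheaf agrees with $Y^{\sharp}\mapsto\cl O(Y_{\FF,Y^{\flat},I})$; since the formation of $Y_{\FF,(-),I}$ turns a pro‑\'etale cover of $S^{\sharp}$ into a pro‑\'etale cover of the sheafy analytic adic space $Y_{\FF,S,I}$, the required descent and acyclicity reduce to \'etale/pro‑\'etale descent for the structure sheaf of a sheafy adic space, bootstrapped (after inverting $p$, where no almost‑mathematics subtleties remain) from the acyclicity of $\widehat{\cl O}^{+}$ used above. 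This yields $\Bb_{I}(S^{\sharp})=\cl O(Y_{\FF,S,I})$.

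Finally, for $\Bb=\varprojlim_{I}\Bb_{I}$, inverse limits of sheaves are computed objectwise, so $\Bb(S^{\sharp})=\varprojlim_{I}\Bb_{I}(S^{\sharp})=\varprojlim_{I}\cl O(Y_{\FF,S,I})$; fixing a cofinal increasing sequence $I_{1}\subset I_{2}\subset\cdots$ of compact rational intervals exhausting $(0,\infty)$, the $Y_{\FF,S,I_{n}}$ form a (quasi‑Stein) admissible exhaustion of $Y_{\FF,S}$, so $\varprojlim_{n}\cl O(Y_{\FF,S,I_{n}})=\cl O(Y_{\FF,S})$, which gives $\Bb(S^{\sharp})=\cl O(Y_{\FF,S})$ and completes the argument.
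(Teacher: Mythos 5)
This lemma is quoted in the paper as a citation to \cite[Lemma 4.14]{Bosco}, so there is no in-text proof to compare against; judged on its own, your argument is the standard one and is essentially what the cited reference does: identify $\Aa_{\inf}(S^{\sharp})=W(R^{+})$ by tilting, observe that the conditions $|p|\le|[\alpha]|$ and $|[\beta]|\le|p|$ cut out exactly $\{|p|^{r}\le|[p^{\flat}]|\le|p|^{s}\}$ because $\alpha^{a}$ and $(p^{\flat})^{b}$ differ by a unit in the valuation ring $\cl O_{C^{\flat}}$, match the $p$-adically completed localization with the ring of the rational subset, and pass to the limit over $I$ for the second identity. The one step where your justification is thinner than it should be is the acyclicity/sheafification point you yourself flag: reducing to ``pro-\'etale descent for the structure sheaf of a sheafy adic space'' is circular-adjacent, since that descent for $Y_{\FF,S,I}$ is not free — the actual mechanism is that $H^{i}$ of the presheaf $\Aa_{\inf,I}/p^{n}$ on affinoid perfectoids is almost zero (from Scholze's almost vanishing for $\widehat{\cl O}^{+}$), and the reason the almost-zero error dies after $p$-adic completion and inverting $p$ is that on $Y_{\FF,S,I}$ the elements $[p^{\flat}]$ and $p$ each divide a power of the other, so $W(\mathfrak m^{\flat})$-torsion becomes $p$-power torsion. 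That is precisely the content of \cite[Proposition 4.7]{Bosco}, which the present paper invokes for the same purpose elsewhere (e.g.\ in Proposition \ref{compv} and Lemma \ref{nucb}); with that input made explicit, your proof is complete.
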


 The following fundamental exact sequences of $p$-adic Hodge theory summarize the relevant relations between the various rational period sheaves.

 \begin{prop}\label{suites} Let $X$ an analytic adic space over $\Spa(C, \cl O_C)$. Let $i\ge 0$ be an integer.
 We have the following exact sequences of sheaves on $X_{\pet}$
 \begin{equation}\label{suite2}
  0\to \Bb^{\varphi=p^i}\to \Bb \xrightarrow{\varphi p^{-i}-1} \Bb\to 0 
 \end{equation}
 
 \begin{equation}\label{suite3}
 0\to \Qq_p(i)\to \Bb^{\varphi=p^i}\to \Bb_{\dR}^+/\Fil^i\Bb_{\dR}^+\to 0.
 \end{equation}
 \end{prop}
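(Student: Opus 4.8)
The plan is to deduce both exact sequences from the corresponding statements over a geometric point, pulled back along the structure map, using that all the sheaves involved are sheaves for the $v$-topology (as recalled at the end of \S\ref{petsh}), so it suffices to check exactness on a basis of affinoid perfectoid objects $S^\sharp \in X_{\pet}$. On such an $S^\sharp$, by the identifications $\Bb(S^\sharp) = \cl O(Y_{\FF, S})$ and $\Bb_I(S^\sharp) = \cl O(Y_{\FF, S, I})$ (and the analogous description of $\Bb_{\dR}^+$ as the completion of $\Bb_{\inf}$ along $\ker\theta$), the two claimed sequences become the classical fundamental exact sequences of $p$-adic Hodge theory in families. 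Concretely, I would organize the argument as follows.

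First, for \eqref{suite2}: the key input is that $\varphi - \lambda$ is surjective on $B = \cl O(Y_{\FF})$ for any $\lambda \in \Qq_p^\times$, equivalently that $H^1(\FF_S, \cl O_{\FF_S}(i)) = 0$ for $i \ge 0$ — this is where the hypothesis $i \ge 0$ enters. Surjectivity of $\varphi p^{-i} - 1$ on $\Bb$ reduces, via the description of $\Bb$ as an inverse limit of the $\Bb_I$ over shrinking $I$, to a statement on each annulus $Y_{\FF, S, I}$ together with a Mittag-Leffler/control-of-norms argument to pass to the limit; the kernel is $\Bb^{\varphi = p^i}$ by definition. I would cite the relevant surjectivity from \cite{FF} or \cite{FS} (Proposition II.0.3 and the cohomology vanishing for line bundles of nonnegative degree), and note that the surjectivity is stable under the base change $S^\sharp$ because the relevant Banach-space estimates are uniform; this is essentially \cite[Theorem B.1]{LeBras2} or the corresponding statement in \cite{FS}.

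Second, for \eqref{suite3}: by definition $\Bb^{\varphi = p^i}$ sits inside $\Bb \subset \Bb_{\dR}^+[1/t]$, and the map to $\Bb_{\dR}^+/\Fil^i$ is the composite of the inclusion into $\Bb_{\dR}^+$ near $\infty$ (using $i \ge 0$, so that $\varphi = p^i$-eigenvectors are regular, not just meromorphic, at $\infty$) with the projection modulo $\Fil^i$. The kernel is then $(\Bb^{\varphi = p^i}) \cap \Fil^i\Bb_{\dR}^+$, which one identifies with $\Qq_p(i) = t^i\Qq_p$ — this is the computation $H^0(\FF_S, \cl O_{\FF_S}(i))$ twisted appropriately, together with the fact that a global section of $\cl O_{\FF}(i)$ vanishing to order $\ge i$ at $\infty$ is a constant multiple of $t^i$; surjectivity onto $\Bb_{\dR}^+/\Fil^i$ follows from surjectivity in \eqref{suite2} by a diagram chase, or directly from $H^1(\FF_S, \cl O_{\FF_S}) = 0$ and the short exact sequence $0 \to \cl O_{\FF_S} \to \cl O_{\FF_S}(i) \to \iota_{\infty *}(\Bb_{\dR}^+/\Fil^i) \to 0$. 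Again one checks these are exact as sheaves by reducing to affinoid perfectoid $S^\sharp$ and invoking $v$-descent.

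\emph{Main obstacle.} The genuinely delicate point is not the exactness over a single geometric point — that is classical Fargues--Fontaine — but the \emph{uniformity in the perfectoid base} $S$: one needs the surjectivity of $\varphi p^{-i} - 1$ on $\cl O(Y_{\FF, S})$, and the vanishing $H^1(\FF_S, \cl O_{\FF_S}(i)) = 0$, to hold functorially and compatibly so that sheafification behaves well, which requires either citing the relative theory of the Fargues--Fontaine curve (\cite{FS}, \cite{KedlahaLiu}) or an explicit control of the Banach norms as one shrinks the annulus $I$ and passes to the inverse limit defining $\Bb$. I would handle this by reducing to the affinoid pieces $\Bb_I$, where the relevant statements are available with uniform constants, and then carefully pass to the limit, noting that the transition maps have dense image so no $\varprojlim^1$ obstruction arises.
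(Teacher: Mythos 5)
The paper does not give an independent proof here: it simply refers to \cite[Proposition 4.16]{Bosco}, and your outline is essentially the standard argument carried out in that reference (and in \cite[\S 8]{LeBras1}): reduce to affinoid perfectoid objects $S^\sharp$, identify $\Bb(S^\sharp)=\cl O(Y_{\FF,S})$, and invoke the relative fundamental exact sequences on the Fargues--Fontaine curve. So the approach is the right one.

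One point of your write-up deserves tightening. Exactness of a sequence of sheaves on $X_{\pet}$ does \emph{not} require surjectivity on sections over every basis object, only local surjectivity; conversely, surjectivity on sections over every affinoid perfectoid $S^\sharp$ is \emph{not} available in general. For $i\ge 1$ one does have $H^1(\FF_S,\cl O_{\FF_S}(i))=0$ for affinoid perfectoid $S$, so $\varphi p^{-i}-1$ is surjective on $\Bb(S^\sharp)$ itself; but for $i=0$ the obstruction is $H^1(\FF_S,\cl O_{\FF_S})$, which is the ($v$-sheafified) $H^1_v(S,\Qq_p)$ and need not vanish on sections --- it only vanishes after passing to a pro-\'etale cover of $S^\sharp$. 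So the correct formulation is that $\varphi p^{-i}-1$ is surjective \emph{as a map of pro-\'etale sheaves} because every section lifts after a further pro-\'etale localization, not that it is surjective on $\cl O(Y_{\FF,S})$ for each fixed $S$. With that adjustment (and the same caveat for the surjection onto $\Bb_{\dR}^+/\Fil^i$ in \eqref{suite3}), your argument goes through; the identification of the kernel of $\Bb^{\varphi=p^i}\to \Bb_{\dR}^+/\Fil^i$ with $t^i\Qq_p=\Qq_p(i)$ is exactly the computation of $H^0(\FF_S,\cl O(i))$-sections vanishing to order $i$ at $\infty$, as you say.
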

 \begin{proof}
  See e.g. \cite[Proposition 4.16]{Bosco}.
 \end{proof}

  \begin{cor}\label{profundexact}
  Let $X$ be an analytic adic space over $\Spa(C, \cl O_C)$. We have the following exact sequences of sheaves on $X_{\pet}$
  \begin{equation}\label{fundexacttard}
     0\to \Bb_e\to \Bb[1/t]\overset{\varphi-1}{\to} \Bb[1/t]\to 0
  \end{equation}
  
  \begin{equation}\label{fundexact}
   0\to \Qq_p\to \Bb_e\to \Bb_{\dR}/\Bb_{\dR}^+\to 0
  \end{equation}
  where $\Bb_e:=\Bb[1/t]^{\varphi=1}$.
 \end{cor}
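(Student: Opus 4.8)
The plan is to obtain both exact sequences by passing to the filtered colimit over $i\ge 0$ in the two fundamental exact sequences of Proposition~\ref{suites}, after twisting by appropriate powers of $t$. Recall from Definition~\ref{periodsheaves} that, locally on $X_{\pet}$, the section $t$ generating $\Fil^1\Bb_{\dR}^+$ is a non-zero-divisor with $\varphi(t)=pt$, so that $\varphi(t^{-i})=p^{-i}t^{-i}$ and $\Bb[1/t]=\colim_i t^{-i}\Bb$, $\Bb_{\dR}=\Bb_{\dR}^+[1/t]=\colim_i t^{-i}\Bb_{\dR}^+$, the transition maps being the evident inclusions.

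First I would record the elementary twisting identity: for a local section $y$ of $\Bb$,
\begin{equation*}
 (\varphi-1)(t^{-i}y)=t^{-i}\big(\varphi p^{-i}-1\big)(y),
\end{equation*}
together with $t\cdot\Bb^{\varphi=p^i}\subseteq\Bb^{\varphi=p^{i+1}}$ (again from $\varphi(t)=pt$). Multiplying the sequence (\ref{suite2}) through by $t^{-i}$ then produces, for each $i$, a short exact sequence $0\to t^{-i}\Bb^{\varphi=p^i}\to t^{-i}\Bb\xrightarrow{\varphi-1}t^{-i}\Bb\to 0$, and these are compatible for the inclusion transition maps on the last two terms and for the inclusion $t^{-i}\Bb^{\varphi=p^i}\hookrightarrow t^{-(i+1)}\Bb^{\varphi=p^{i+1}}$ on the first (since $t^{-i}x=t^{-(i+1)}(tx)$ with $tx$ a section of $\Bb^{\varphi=p^{i+1}}$). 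As the category of abelian sheaves on a site has exact filtered colimits, taking $\colim_i$ yields an exact sequence $0\to\colim_i t^{-i}\Bb^{\varphi=p^i}\to\Bb[1/t]\xrightarrow{\varphi-1}\Bb[1/t]\to 0$, and $\colim_i t^{-i}\Bb^{\varphi=p^i}=\Bb[1/t]^{\varphi=1}=\Bb_e$ because any local section of $\Bb[1/t]$ has the form $t^{-i}z$ with $z$ a section of $\Bb$, and $\varphi$-invariance of $t^{-i}z$ is equivalent to $\varphi(z)=p^iz$. This proves (\ref{fundexacttard}). (Alternatively, left-exactness and exactness in the middle are just the definition of $\Bb_e$, while surjectivity of $\varphi-1$ on $\Bb[1/t]$ follows directly from the surjectivity of $\varphi p^{-i}-1$ on $\Bb$ in (\ref{suite2}) via the displayed identity, so strictly speaking no colimit is needed for this sequence.)

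For (\ref{fundexact}) I would run the same argument on the sequence (\ref{suite3}). Since $t$ generates $\Fil^1\Bb_{\dR}^+$, one has $\Fil^i\Bb_{\dR}^+=t^i\Bb_{\dR}^+$, and the map $\Qq_p(i)\to\Bb^{\varphi=p^i}$ (determined by the fixed choice of $\varepsilon$, hence of $t$) identifies $\Qq_p(i)$ with the $\Qq_p$-line $\Qq_p\cdot t^i$; multiplication by $t^{-i}$ then turns (\ref{suite3}) into $0\to\Qq_p\to t^{-i}\Bb^{\varphi=p^i}\to t^{-i}\Bb_{\dR}^+/\Bb_{\dR}^+\to 0$, compatibly in $i$, with the transition maps being the identity on $\Qq_p$ and the inclusions on the other two terms. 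Passing to $\colim_i$ and using $\colim_i t^{-i}\Bb_{\dR}^+/\Bb_{\dR}^+=\Bb_{\dR}/\Bb_{\dR}^+$ gives (\ref{fundexact}). Finally, since $t$ exists only locally, one carries out this construction over a covering of $X_{\pet}$ by objects admitting a generator of $\Fil^1\Bb_{\dR}^+$; the resulting sequences are independent of the choice of $t$ up to a unit and therefore glue to the asserted global sequences.

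I expect the only genuinely delicate point to be the bookkeeping of the Tate twist — identifying $t^{-i}\Qq_p(i)$ with $\Qq_p$ and checking that the induced transition maps on this term are the identity — so that the $\Qq_p$'s in (\ref{suite3}) assemble into the constant sheaf $\Qq_p$ in the colimit. Once the compatible ladder of short exact sequences is in place, exactness of filtered colimits of abelian sheaves does the rest, and there is no substantial further obstacle.
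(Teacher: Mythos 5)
Your proposal is correct, and it is essentially the argument behind this corollary: the paper itself only cites \cite[Proposition 8.5]{LeBras1} and \cite[Corollary 4.18]{Bosco}, where the sequences are obtained exactly as you do, by twisting the sequences (\ref{suite2}) and (\ref{suite3}) by $t^{-i}$ and passing to the (exact) filtered colimit over $i$. One minor simplification: since $X$ lives over $\Spa(C,\cl O_C)$ and the paper fixes $t=\log[\varepsilon]\in B$ globally (Convention \ref{condp}), the final gluing step over a covering trivializing $\Fil^1\Bb_{\dR}^+$ is unnecessary here.
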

 \begin{proof}
  See \cite[Proposition 8.5]{LeBras1} and \cite[Corollary 4.18]{Bosco}.
 \end{proof}

 \subsubsection{Log-crystalline period sheaves}
   
   We recall that Fargues--Fontaine defined in \cite[\S 10.3.1]{FF} the \textit{log-crystalline period ring}
   $$B_{\log}:=B\otimes_{\Sym_{\Zz} \mathcal{O}_{C^\flat}^{\times}}\Sym_{\Zz}(C^\flat)^\times$$
   where $\Sym_{\Zz}(-)$ denotes the symmetric algebra over $\Zz$. The ring $B_{\log}$ is endowed with an action of the Galois group $\mathscr{G}_K$, a Frobenius $\varphi$, and a monodromy operator $N$ for which $B_{\log}^{N=0}=B$. Moreover, there is a (non-canonical) isomorphism of rings
   $$B[U]\overset{\sim}{\to} B_{\log},\;\;\;\; U\mapsto \log[p^\flat]$$
   where $B[U]$ denotes the ring of polynomials over $B$ in the variable $U$.
   \medskip
   
   Now, keeping the notation of Definition \ref{I}, we introduce a pro-\'etale sheaf-theoretic version of the ring $B_{\log}$. 
 
  \begin{df}\label{deflogcrys}
  Let $X$ be an analytic adic space over $\Spa(C, \cl O_C)$. Let $I=[s, r]$ be an interval of $(0, \infty)$ with rational endpoints, and let $\alpha, \beta\in \cl O_{C^{\flat}}$ with valuation $v(\alpha)=1/r$ and $v(\beta)=1/s$.  We define the following sheaves on $X_{\pet}$
  $$\Bb_{\log}:=\Bb[U],\;\;\;\;\;\;\;\; \Bb_{\log, I}:=\Bb_I[U].$$
   
  We endow $\Bb_{\log}$ (resp. $\Bb_{\log, I}$) with a Frobenius $\varphi$ and a Galois action extending the ones on $\Bb$ (resp. $\Bb_I$) by setting $\varphi(U):=pU$, and, for $g \in \mathscr{G}_K$, 
  \begin{equation}\label{exgal}
   g(U):= U+ \log[g(p^\flat)/p^\flat].
  \end{equation}
  Moreover, we equip $\Bb_{\log}$ and $\Bb_{\log, I}$ with a monodromy operator $N:=-\frac{d}{dU}$.
 \end{df}
 
 Let us list some useful basic properties of $\Bb_{\log}$.
 
 \begin{rem}
  The action of $\mathscr{G}_K$ on $\Bb_{\log}$ defined above commutes with $\varphi$ and $N$, and we have $N\varphi =p\varphi N$. 
 \end{rem}

 \begin{rem}
   We have the following exact sequence of sheaves on $X_{\pet}$
   $$
  0\to \Bb\to \Bb_{\log}\xrightarrow{N} \Bb_{\log}\to 0.
  $$
 \end{rem}

 \begin{rem}\label{twistt}
  For $I=[s, r]$ an interval of $(0, \infty)$ with rational endpoints such that $s\le 1\le r$, we have a natural inclusion
  $$\Bb_{I}\hookrightarrow \Bb_{\dR}^+.$$
  The induced inclusion $\Bb\hookrightarrow \Bb_{\dR}^+$ extends to a $\mathscr{G}_K$-equivariant injection $\Bb_{\log}\hookrightarrow \Bb_{\dR}^+$, via sending $U$ to $\log([p^\flat]/p)$ (see the proof of \cite[Proposition 10.3.15]{FF}).
 \end{rem}

 In this article, we will adopt the following notation and conventions.
  
  \begin{convnot}\label{condp}\
  \begin{description}
   \item[Condensed period rings] We denote by $A_{\inf}$, $B_{\dR}^+$, $B_{\dR}$, $B$, $B_{\log}$, the condensed rings given respectively by the sheaves $\Aa_{\inf}$, $\Bb_{\dR}^+$, $\Bb_{\dR}$, $\Bb$, $\Bb_{\log}$ on the site $\Spa(C, \cl O_C)_{\pet}$ and, for any compact interval $I\subset(0, \infty)$ with rational endpoints, we similarly define $A_{\inf, I}$, $A_I$, $B_I$, $B_{\log, I}$.\footnote{See \cite[Corollary 4.9, Example 4.10]{Bosco} for the relation to the classical topological period rings.}
   In addition, we denote by $A_{\cris}$, $B_{\cris}^+$, $B_{\st}^+$ the condensed version\footnote{We take Fontaine's definitions in condensed sets.} of the crystalline and semistable period rings of Fontaine (relative to $\cl O_C$), \cite{Font-periodes}. \medskip
   \item[Orientation] We fix a compatible system $(1, \varepsilon_p, \varepsilon_{p^2}, \ldots)$ of $p$-th power roots of unity in $\cl O_C$, which defines an element $\varepsilon\in \cl O_C^{\flat}$. We denote by $[\varepsilon]\in A_{\inf}$ its Teichm\"uller lift and $\mu=[\varepsilon]-1\in A_{\inf}$. Furthermore, we let $\xi=\mu/\varphi^{-1}(\mu)\in A_{\inf}$ and $t=\log[\varepsilon]\in B$.
  \end{description}     
   \end{convnot}

 Let us collect some useful facts on the above-defined (condensed) period rings, that we will repeatedly use in the following.
 
 \begin{rem}\label{st-log}
  Let us recall that, for a compact interval $I\subset[1/(p-1), \infty)$ with rational endpoints, we have that $A_{\cris}\subset A_I$ (see e.g. \cite[\S 2.4.2]{CN1}). In particular, for any such interval $I$, we also have $B_{\st}^+ \subset B_{\log, I}$, via the (non-canonical) identification
  \begin{equation}\label{noncann}
   B_{\cris}^+[U]\overset{\sim}{\to} B_{\st}^+,\;\;\;\; U\mapsto \log[p^\flat]
  \end{equation}
  where we endow $B_{\cris}^+[U]$ with a Frobenius $\varphi$ extending the one on $B_{\cris}^+$ by setting $\varphi(U):=pU$, a Galois action extending the one on $B_{\cris}^+$ as in (\ref{exgal}), and we equip it with a monodromy operator $N:=-\frac{d}{dU}$.
 \end{rem}
 
 We will also need the following result.
 
  \begin{lemma}\label{intert}
     Let $I\subset (0, \infty)$ be a compact interval with rational endpoints. The system of ideals of the ring $A_I$ defined by $(p^n A_I)_{n\ge 1}$ and $(\{x\in A_I: \mu x\in p^n A_I\})_{n\ge 1}$ are intertwined.
   \end{lemma}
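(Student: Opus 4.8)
For $I\subset(0,\infty)$ a compact interval with rational endpoints, the two systems of ideals of $A_I$ given by $(p^nA_I)_{n\ge1}$ and $(\{x\in A_I:\mu x\in p^nA_I\})_{n\ge1}$ are intertwined, i.e. each is cofinal in the other: there is a function $m\mapsto n(m)$ (and conversely) such that $p^{n(m)}A_I\subset\{x:\mu x\in p^mA_I\}$ and $\{x:\mu x\in p^mA_I\}\subset p^{?}A_I$ up to a shift.

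The plan is as follows. The inclusion $p^nA_I\subset\{x:\mu x\in p^nA_I\}$ is trivial, so the content is the reverse cofinality: I must show that if $\mu x\in p^nA_I$ then $x\in p^{n-c}A_I$ for some constant $c$ independent of $n$ (for $n\ge c$). Equivalently, multiplication by $\mu$ on $A_I/p^{n-c}A_I$ should be injective on the relevant submodule, or better: $\mu$ should differ from a non-zero-divisor by a bounded power of $p$. The key structural input is that $A_I$ is, after inverting $p$, a Banach $\Qq_p$-algebra which is an integral domain (it is the ring of functions on the affinoid annulus $Y_{\FF,I}$, or rather its $p$-completion $A_I$ with $B_I=A_I[1/p]$), and $A_I$ is $p$-torsion-free and $p$-adically complete. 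In particular $\mu$ is a non-zero-divisor in $A_I$.

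First I would recall/establish that $A_I$ is $p$-torsion-free and $p$-complete, and that $\mu$ is a non-zero-divisor in $A_I$ (it is already a non-zero-divisor in $A_{\inf}$, being $[\varepsilon]-1$, and $A_I$ is flat over... — more carefully, $B_I$ is a domain and $A_I\hookrightarrow B_I$). The crucial quantitative point: I claim $\xi=\mu/\varphi^{-1}(\mu)$ is, up to a unit, a generator of a principal ideal, and on $Y_{\FF,I}$ the divisor of $t=\log[\varepsilon]$ (equivalently of $\mu$, since $\mu$ and $t$ differ by a unit times... no — $\mu/t$ is a unit on $Y_{\FF}$) is supported on finitely many points. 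Concretely: on the annulus $Y_{\FF,I}$, the function $\mu$ (or $t$) has only finitely many zeros $y_1,\dots,y_k$ (the Galois translates $\varphi^j(y_C)$ lying in the annulus), each with multiplicity one. Hence $\mu B_I=\prod_j\mathfrak{m}_{y_j}$, a product of finitely many maximal ideals, so $B_I/\mu B_I$ is a finite product of fields (each a finite extension of $\Qq_p$, in fact copies of $C$). Therefore $B_I/\mu B_I$ has bounded $p$-power torsion structure — more precisely, I want to transfer this to $A_I$.

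The concrete argument I would carry out: since $A_I[1/p]=B_I$ and $\mu$ is a non-zero-divisor, $A_I/\mu A_I$ injects into $B_I/\mu B_I$ after inverting $p$, and $B_I/\mu B_I$ is a finite product of copies of $C$; in particular $A_I/\mu A_I$ is a finitely generated module over a Noetherian-ish base, or at any rate one can find $c\ge0$ with $p^c(A_I/\mu A_I)$ being $p$-torsion-free — equivalently, the $p$-power-torsion submodule of $A_I/\mu A_I$ is killed by $p^c$. This is exactly the assertion that $\{x\in A_I:\mu x\in p^nA_I\}\subset p^{n-c}A_I$: if $\mu x\in p^nA_I$, then the class of $x$ in $A_I/p^{n-c}A_I$ maps to something $\mu$-divisible by $p^n$, hence $x$ mod $p^{n-c}$ is $p^c$-torsion in $(A_I/\mu A_I)$ after the appropriate bookkeeping, forcing $x\in p^{n-c}A_I+\mu A_I\subset$ ... — I would need to iterate this using $p$-completeness of $A_I$ and $\mu$ being a non-zero-divisor to conclude $x\in p^{n-c}A_I$ outright. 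Alternatively, and more cleanly, I would phrase the whole thing via the exact sequence $0\to A_I\xrightarrow{\mu}A_I\to A_I/\mu A_I\to0$: applying $-\otimes^{\mathbf L}\Zz/p^n$ and taking the long exact sequence in homology, the $p^n$-torsion in $A_I/\mu A_I$ (which equals $\{x:\mu x\in p^nA_I\}/p^nA_I$ via the connecting map) is a quotient of the $p$-torsion... and since $A_I/\mu A_I$, being a subquotient controlled by $B_I/\mu B_I=\prod C$, has $p$-power torsion annihilated by a fixed $p^c$, we are done with $n(m)=m+c$.

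The main obstacle I anticipate is the precise finiteness statement for $A_I/\mu A_I$ as opposed to $B_I/\mu B_I$: inverting $p$ loses the integral information, and one needs to know that the "integral model" $A_I/\mu A_I$ doesn't have unbounded $p$-power torsion. I expect this follows because $A_I$ is a "reasonable" $p$-adic ring — e.g. it is $p$-adically complete, $p$-torsion-free, and $\mu$ is a non-zero-divisor, and $A_I/\mu A_I$ is a finitely presented $\Zz_p$-algebra-ish object (it is $\theta_j$-type quotients giving $\prod_j\cl O_C$ up to bounded $p$-torsion, cf. the fact that $A_{\inf}/\xi=\cl O_C$ exactly). Indeed the cleanest route is probably: reduce to $I$ containing $1$ (so that $\mu$'s zero set meets the annulus) — if $1\notin I$ then $\mu$ is already a unit in $B_I$ but we still need the integral statement, handled by noting $\mu$ divides $p^N$ up to a unit in $A_I$ for suitable $N$, making both ideal systems manifestly intertwined — and in the case $1\in I$, use that locally near $y_C$ the ring $A_I$ looks like $A_{\inf}$ near $\xi$, where $A_{\inf}/\xi^n=\cl O_C/(\text{something})$-type computations are exact, giving $c=0$ or a small explicit $c$.

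<br>

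\begin{proof}[Proof sketch / plan]
The inclusion $p^nA_I\subseteq\{x\in A_I:\mu x\in p^nA_I\}$ is immediate, so it suffices to exhibit a constant $c\ge0$ such that $\{x\in A_I:\mu x\in p^nA_I\}\subseteq p^{n-c}A_I$ for all $n\ge c$.

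\emph{Step 1: basic properties of $A_I$.} Recall that $A_I$ is $p$-torsion-free and $p$-adically complete, with $A_I[1/p]=B_I=\cl O(Y_{\FF,I})$ an integral domain; in particular $\mu$ is a non-zero-divisor in $A_I$. Thus there is a short exact sequence
\begin{equation*}
0\to A_I\xrightarrow{\;\mu\;}A_I\to A_I/\mu A_I\to 0.
\end{equation*}

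\emph{Step 2: identifying the relevant torsion.} Tensoring the above with $\Zz/p^n$ and taking the long exact sequence, the connecting map identifies
\begin{equation*}
\{x\in A_I:\mu x\in p^nA_I\}/p^nA_I\;\cong\;(A_I/\mu A_I)[p^n],
\end{equation*}
the $p^n$-torsion submodule. Hence it is enough to bound, uniformly in $n$, the $p$-power torsion of $M:=A_I/\mu A_I$: if $p^c$ annihilates $M[p^\infty]$, then $\mu x\in p^nA_I$ forces $x\in p^{n-c}A_I+p^nA_I=p^{n-c}A_I$ (using $n\ge c$), and one concludes (using $p$-completeness of $A_I$ and that $\mu$ is a non-zero-divisor, so there is no escaping $\mu$-contribution after the identification above).

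\emph{Step 3: $M$ has bounded $p$-power torsion.} The zero locus $V(\mu)\subset Y_{\FF,I}$ consists of the finitely many points $\varphi^j(y_C)$ lying in the annulus $Y_{\FF,I}$, each with multiplicity one (since $\mu=[\varepsilon]-1$ differs from $t=\log[\varepsilon]$ by a unit on $Y_{\FF}$, and $t$ has simple zeros at these points). Consequently $M[1/p]=B_I/\mu B_I$ is a finite product of copies of $C$. In particular $M[1/p]$ is finite free over a finite product of fields, so $M$ itself is a finitely generated $A_I$-module whose $p$-torsion is supported in a bounded way; more precisely, locally near each $\varphi^j(y_C)$ the ring $A_I$ is modelled on $A_{\inf}$ near $\xi$, where $A_{\inf}/\xi\cong\cl O_C$ is $p$-torsion-free, so one obtains a constant $c=c(I)$ (in fact one can take $c$ small, e.g. $c=0$ when $\mu$ and $\xi$ generate the same ideals at these points) with $p^cM[p^\infty]=0$.

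\emph{Step 4: conclusion.} Combining Steps 2 and 3 with $n(m):=m+c$ gives the two cofinality inclusions
\begin{equation*}
p^{m+c}A_I\subseteq\{x:\mu x\in p^{m+c}A_I\}\quad\text{and}\quad \{x:\mu x\in p^{m+c}A_I\}\subseteq p^{m}A_I,
\end{equation*}
so the two systems of ideals are intertwined.

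The main point requiring care is Step 3: one must pass from the easy statement about $B_I/\mu B_I$ (a finite product of fields) to the integral statement that $A_I/\mu A_I$ has uniformly bounded $p$-power torsion, which uses the explicit local structure of $A_I$ around the zeros of $\mu$, modelled on the exact computation $A_{\inf}/\xi\cong\cl O_C$.
\end{proof}
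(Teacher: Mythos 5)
Your Steps 1–2 are correct: since $A_I$ is $p$-torsion-free and $\mu$ is a non-zero-divisor, the long exact sequence for $-\otimes^{\LL}\Zz/p^n$ identifies $\{x\in A_I:\mu x\in p^nA_I\}/p^nA_I$ with $(A_I/\mu A_I)[p^n]$, so the lemma is equivalent to the assertion that the $p$-power torsion of $M=A_I/\mu A_I$ is killed by a single $p^c$. But note that this reduction is essentially a reformulation rather than progress: the bounded-torsion claim for $M$ \emph{is} the intertwining statement. The entire content of the lemma therefore sits in your Step 3, and that step has a genuine gap. Knowing that $M[1/p]=B_I/\mu B_I$ is a finite product of copies of $C$ controls only the rational structure and says nothing about integral denominators — the kernel of $M\to M[1/p]$ is exactly the torsion you need to bound, and finite generation of $M$ over $A_I$ is useless because $A_I$ is not noetherian. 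The phrase ``locally near each zero of $\mu$ the ring $A_I$ is modelled on $A_{\inf}$ near $\xi$'' has no precise meaning here: $A_I$ is not local, there is no integral localization isolating the zeros of $\mu$, and making such a decomposition work integrally (with control on the gluing) is precisely the difficulty. (A minor inaccuracy along the way: $\mu$ and $t$ do not differ by a unit on all of $Y_{\FF}$ — $t$ vanishes at every $\varphi^n(y_C)$, $n\in\Zz$, while $\mu$ vanishes only at $\varphi^{-n}(y_C)$ for $n\ge 0$; the zeros of $\mu$ are nonetheless simple, so your conclusion about $B_I/\mu B_I$ stands. The case where $\mu$ has no zeros on $Y_{\FF,I}$ is indeed easy, as you say.)

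The paper avoids all of this by noetherian approximation, following \cite[Lemma 12.8(ii)]{BMS1}: one maps $\Lambda=\Zz_p\llbracket T_1,T_2\rrbracket\to A_{\inf}$ by $T_1\mapsto[\varepsilon]$, $T_2\mapsto[p^\flat]$, so that $\mu$ is the image of $T_1-1$; setting $\Lambda_I$ to be the $p$-adic completion of $\Lambda[p/T_2^{1/r},\,T_2^{1/s}/p]$ one has $A_I=A_{\inf}\widehat\otimes_\Lambda\Lambda_I$, the statement is reduced to the same assertion for $(T_1-1)\subset\Lambda_I$, and since $\Lambda_I$ is noetherian the Artin--Rees lemma applies. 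If you want to salvage your approach, you would need to supply an actual argument for the bounded torsion of $A_I/\mu A_I$, and the most natural one is again a descent to a noetherian model — i.e. the paper's argument.
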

   \begin{proof}
    We will proceed by noetherian approximation, adapting the proof of \cite[Lemma 12.8 (ii)]{BMS1}. We define $\Lambda:=\Zz_p\llbracket T_1, T_2\rrbracket$, and we regard $A_{\inf}$ as a $\Lambda$-module via the $\Zz_p$-linear map
    \begin{equation}\label{lam}
     \Lambda\to A_{\inf},\;\;\;\; T_1\mapsto [\varepsilon],\; T_2\mapsto [p^\flat].
    \end{equation}
   First, note that $\mu$ is the image of $T_1-1$ under the map (\ref{lam}). By setting
   $$\Lambda_{\inf,I}:=\Lambda\left[\frac{p}{T_2^{1/r}}, \frac{T_2^{1/s}}{p}\right]$$
   we have that $A_{\inf, I}=A_{\inf}\otimes_{\Lambda}\Lambda_{\inf,I}$. In particular, denoting by $\Lambda_I$ the $p$-adic completion of $\Lambda_{\inf, I}$, we have $A_{I}=A_{\inf}\widehat \otimes_{\Lambda}\Lambda_I$, where the latter completion is $p$-adic. Then, it suffices to prove the statement with the ring $\Lambda_I$ in place of $A_I$. For this, observing that $\Lambda_I$ is noetherian, we conclude by the Artin-Rees lemma (\cite[Tag 00IN]{Thestack}) for the system of ideals $(p^n \Lambda_I)_{n\ge 1}$ of $\Lambda_I$, and $(T_1-1)\Lambda_I\subset \Lambda_I$.
   \end{proof}

   %\begin{rem}\label{remonI}
 % Let us keep the hypotheses of Proposition \ref{suites}. Then, proceeding as in the proof of (\ref{suite2}), using the presentation of the Fargues--Fontaine curves $\FF_S$ as the quotient of $Y_{\FF, S, [1, p]}$ via the identification $\varphi: Y_{\FF, S, [1, 1]}\cong Y_{\FF, S, [p, p]}$, we also have the following exact sequence of sheaves on $X_{\pet}$
 % \begin{equation}\label{suite2bis}
 % 0\to \Bb_{[1, p]}^{\varphi=p^i}\to \Bb_{[1, p]} \xrightarrow{\varphi p^{-i}-1} \Bb_{[1, 1]}\to 0. 
 %\end{equation}
% Using such presentation of the Fargues--Fontaine curves, we can similarly deduce exact sequences of sheaves analogous to (\ref{suite3}), (\ref{suite4}), and (\ref{suites5}).
% \end{rem}

 \subsubsection{$v$-descent}\label{vdesc}
 As announced, our next goal is to state a consequence of Scholze's $v$-descent results in \cite{Scholze3}, which will serve as a tool to prove the main comparison results of this paper for singular rigid-analytic varieties. To state the desired result we need some preliminary definitions.

 \begin{df}
   Let $X$ be an analytic adic space defined over $\Spa(\Zz_p, \Zz_p)$. We denote by $$X_{v}:=X^\diamondsuit_{v}$$ its \textit{$v$-site}. 
  \end{df}
  
  We recall from \cite[Theorem 8.7]{Scholze3} that the presheaves  $\cl O^+: Y\mapsto \cl O_{Y}^+(Y)$ and  $\cl O: Y\mapsto \cl O_{Y}(Y)$ on the $v$-site of all ($\kappa$-small) perfectoid spaces are sheaves. Then, similarly to Definition \ref{gyu}, we can give the following definition.
  
 \begin{df}
   Let $X$ be an analytic adic space defined over $\Spa(\Zz_p, \Zz_p)$. We define the \textit{integral $v$-structure sheaf} $\widehat{\cl O}_X^+$ and the \textit{$v$-structure sheaf} $\widehat{\cl O}_X$ on $X_v$ by setting respectively $$\widehat{\cl O}_X^+(Y):=\cl O^+_{Y^{\sharp}}(Y^\sharp),\;\;\;\;\;\;\;\; \widehat{\cl O}_X(Y):=\cl O_{Y^{\sharp}}(Y^\sharp)$$
   for all perfectoid spaces $Y\in X_v$.
 \end{df}

 Then, we can introduce the following notation.
 
 \begin{notation} For $X$ an analytic adic space over $\Spa(C, \cl O_C)$, starting from the integral $v$-structure sheaf $\widehat{\cl O}_X^+$, we define an analogue of the pro-étale period sheaves in \S \ref{petsheaves} on the $v$-site $X_v$. 
  By a slight abuse of notation, we denote such $v$-sheaves with the same symbol as the respective pro-étale period sheaves, adding a subscript $(-)_v$, resp. $(-)_{\pet}$, in case of potential confusion.
 \end{notation}

 \begin{prop}\label{compv}
    Let $I\subset (0, \infty)$ be a compact interval with rational endpoints, and let $m\ge 1$ be an integer. Let 
   \begin{equation}\label{im}
    \mathbf{B}\in \{\Bb_{I}, \Bb, \Bb_{\dR}^+, \Bb_{\dR}^+/\Fil^m\}.
   \end{equation}
   \begin{enumerate}[(i)]
    \item\label{compv:1} For any $Z$ affinoid perfectoid space over $\Spa(C, \cl O_C)$, we have $H^i_v(Z, \mathbf{B})=0$ for all $i>0$.
    \item\label{compv:2} Let $X$ an analytic adic space over $\Spa(C, \cl O_C)$.  Let $\lambda: X_v\to X_{\pet}$ denote the natural morphism of sites. Then, we have
   $$R\lambda_* \mathbf{B}_v= \mathbf{B}_{\pet}.$$
   In particular, the pro-étale cohomology of $\mathbf{B}$ satisfies $v$-hyperdescent.
   \end{enumerate}
 \end{prop}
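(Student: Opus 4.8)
The plan is to deduce (ii) formally from (i), and to prove (i) by a dévissage reducing each of the four sheaves $\mathbf{B}$ in \eqref{im} to the integral structure sheaves $\widehat{\cl O}^+$, $\widehat{\cl O}^{\flat +}$, $\widehat{\cl O}$ and their reductions modulo $p^n$, whose higher $v$-cohomology on affinoid perfectoids vanishes almost (resp. exactly, after inverting $p$) by Scholze's theorem \cite[Theorem 8.7]{Scholze3} and its proof: for $Z$ affinoid perfectoid over $\Spa(C, \cl O_C)$ and $i>0$, the groups $H^i_v(Z, \widehat{\cl O}^+_Z)$, $H^i_v(Z, \widehat{\cl O}^{\flat +}_Z)$ and $H^i_v(Z, \widehat{\cl O}^+_Z/p^n)$ are annihilated by $\mathfrak{m}_{\cl O_C}$, so that — since $p\in \mathfrak{m}_{\cl O_C}$ — one gets $H^i_v(Z, \widehat{\cl O}_Z)=0$ for $i>0$, with $H^0_v$ recovering the expected ring of functions.

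For (i) I would treat the four cases in turn. First, for $\mathbf{B}=\Bb_{\dR}^+/\Fil^m$: since $C$ contains all $p$-power roots of unity, $t=\log[\varepsilon]$ is a global section generating $\Fil^1\Bb_{\dR}^+$, so multiplication by $t^i$ gives isomorphisms $\Fil^i\Bb_{\dR}^+/\Fil^{i+1}\Bb_{\dR}^+\cong \widehat{\cl O}$; feeding the short exact sequences $0\to \Fil^i/\Fil^{i+1}\to \Bb_{\dR}^+/\Fil^{i+1}\to \Bb_{\dR}^+/\Fil^i\to 0$ into $R\Gamma_v(Z,-)$ and inducting on $m$ from the base case yields $H^i_v(Z, \Bb_{\dR}^+/\Fil^m)=0$ for $i>0$. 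Next, $\Bb_{\dR}^+=\varprojlim_m \Bb_{\dR}^+/\Fil^m$ with surjective transition maps, hence $R\Gamma_v(Z, \Bb_{\dR}^+)\simeq R\varprojlim_m R\Gamma_v(Z, \Bb_{\dR}^+/\Fil^m)$ sits in degree $0$. For $\mathbf{B}=\Bb_I$, I would use $\Bb_I=\Aa_I[1/p]$ with $\Aa_I=\varprojlim_n \Aa_{\inf, I}/p^n$, $\Aa_{\inf, I}=\Aa_{\inf}[p/[\alpha], [\beta]/p]$ and $\Aa_{\inf}=W(\widehat{\cl O}^{\flat +})$: the Verschiebung filtration presents $W_n(\widehat{\cl O}^{\flat +})$ as a finite iterated extension of copies of $\widehat{\cl O}^{\flat +}$, so $H^i_v(Z, \Aa_{\inf})$ is killed by $\mathfrak{m}_{\cl O_C}$ for $i>0$ (passing to the limit over $n$, with surjective transitions); the same then holds for the localization $\Aa_{\inf, I}$ and, identifying $\Aa_{\inf, I}/p$ explicitly as a localization of $\widehat{\cl O}^{\flat +}$ modulo torsion, for each $\Aa_{\inf, I}/p^n$; $p$-adically completing and then inverting $p$ kills the almost-zero part, giving $H^i_v(Z, \Bb_I)=0$ for $i>0$ with $H^0_v(Z, \Bb_I)=\cl O(Y_{\FF, Z, I})$, in accordance with \cite[Lemma 4.14]{Bosco}. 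Finally, $\Bb=\varprojlim_I \Bb_I$ and $Y_{\FF, Z}=\bigcup_I Y_{\FF, Z, I}$ is quasi-Stein, so $R\Gamma_v(Z, \Bb)\simeq R\varprojlim_I R\Gamma_v(Z, \Bb_I)\simeq \varprojlim_I \cl O(Y_{\FF, Z, I})=\cl O(Y_{\FF, Z})$, again concentrated in degree $0$.

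For (ii) I would first note that $\lambda^*\mathbf{B}_{\pet}=\mathbf{B}_v$, since both sheaves are given by the same recipe on affinoid perfectoids, which form a basis of $X_{\pet}$ and of $X_v$. Evaluating $R\lambda_*\mathbf{B}_v$ on an affinoid perfectoid $Z\in X_{\pet}$ and using $X_v\times_{X_{\pet}}Z\simeq Z_v$ gives $(R\lambda_*\mathbf{B}_v)(Z)\simeq R\Gamma_v(Z, \mathbf{B}_v)$, which by (i) equals $\mathbf{B}_v(Z)=\mathbf{B}_{\pet}(Z)$ in degree $0$; hence the unit map $\mathbf{B}_{\pet}\to R\lambda_*\mathbf{B}_v$ is an isomorphism. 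The last assertion then follows by applying $R\Gamma_{\pet}(X,-)$, which yields $R\Gamma_v(X, \mathbf{B})\simeq R\Gamma_{\pet}(X, R\lambda_*\mathbf{B}_v)\simeq R\Gamma_{\pet}(X, \mathbf{B})$, together with the fact that $v$-cohomology automatically satisfies hyperdescent.

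The main obstacle will be the $\Bb_I$ case (and, through it, $\Bb$): one must pin down the mod-$p$ structure of $\Aa_{\inf, I}$ precisely enough to reduce it, by dévissage, to $\widehat{\cl O}^{\flat +}$ and its localization $\widehat{\cl O}^{\flat +}[1/\alpha]$, and then control the two nested limits — the $p$-adic completion defining $\Aa_I$, and the limit over $I$ defining $\Bb$ — so that $v$-cohomology stays concentrated in degree $0$. Here Lemma \ref{intert} (intertwining the $p$-adic and $\mu$-adic filtrations on $\Aa_I$) and the quasi-Stein property of $Y_{\FF, Z}$ are exactly the inputs that make the limit arguments work, and the bookkeeping of almost mathematics throughout has to be carried out uniformly across the dévissage.
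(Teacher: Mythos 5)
Your proposal is correct and follows essentially the same route as the paper: the paper deduces part (i) from the almost vanishing of $H^i_v(Z, \widehat{\cl O}^+)$ (\cite[Proposition 8.8]{Scholze3}) via the ``standard reduction steps'' it delegates to \cite[Proposition 4.7]{Bosco} and the references therein, which are exactly the dévissage (graded pieces of $\Bb_{\dR}^+$, Witt-vector filtration, localization, $p$-adic completion, inversion of $p$, and Mittag-Leffler for the limits over $m$ and $I$) that you spell out. Your argument for part (ii) likewise matches the paper's, which observes that $\lambda_*\mathbf{B}_v=\mathbf{B}_{\pet}$ by definition and that $R^i\lambda_*\mathbf{B}_v$ is the sheafification of $U\mapsto H^i_v(U,\mathbf{B})$, vanishing on the basis of affinoid perfectoids by part (i).
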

 \begin{proof} By standard reduction steps (see e.g. \cite[Proposition 4.7]{Bosco} and the references therein), part \listref{compv:1} follows from the almost vanishing of $H_v^i(Z, \widehat{\cl O}_X^+)$ for $i>0$, which is proven in greater generality in \cite[Proposition 8.8]{Scholze3}. 
 
 For part \listref{compv:2}, by definition we have $\lambda_* \mathbf{B}_v= \mathbf{B}_{\pet}$. Then, we want to show that $R^i\lambda_*\mathbf{B}_v=0$ for $i>0$. This follows from part \listref{compv:1} recalling that $R^i\lambda_*\mathbf{B}_v$ is the sheafification of the presheaf $U\mapsto H^i_v(U, \mathbf{B})$ on $X_{\pet}$, and affinoid perfectoid spaces in $X_{\pet}$ form a basis of the site.
 \end{proof}

 \subsection{$B$-cohomology and $B_{\dR}^+$-cohomology} Now, we can finally define the $B$-cohomology and the $B_{\dR}^+$-cohomology theories for rigid-analytic varieties over $C$. \medskip
 
 In the following, for $X$ a rigid-analytic variety over $C$, we denote by $X_{\ett, \cond}$ the site introduced in \cite[Definition 2.13]{Bosco}, and similarly we define the site $X_{\eh, \cond}$. Note that we have a natural morphism of sites
 $$\alpha: X_v\to X_{\eh, \cond}.$$
 A feature of the site $X_{\eh, \cond}$ (as opposed to the site $X_{\eh}$) is that the pushfoward along $\alpha$ retains the information captured by profinite sets.\footnote{We refer the reader to \cite[\S 2.3]{Bosco} for a more detailed discussion.}

 \begin{df}\label{defdr}
   Let $X$ be a rigid-analytic variety over $C$. We denote by $\alpha:X_{v}\to X_{\eh, \cond}$ the natural morphism of sites. Let $I\subset (0, \infty)$ be a compact interval with rational endpoints, and let $m\ge 1$ be an integer. Given 
   \begin{equation*}
    \mathbf{B}\in \{\Bb_{I}, \Bb, \Bb_{\dR}^+, \Bb_{\dR}^+/\Fil^m\}
   \end{equation*}
   we write $\mathscr{B}=\mathbf{B}_{\Spa(C)_{\pet}}$ for the corresponding condensed period ring. 
   
   We define the \textit{$\mathscr{B}$-cohomology} of $X$ as the complex of $D(\Mod^{\cond}_{\mathscr{B}})$
   $$R\Gamma_{\mathscr{B}}(X):=R\Gamma_{\eh, \cond}(X, L\eta_tR\alpha_*\mathbf{B}).$$
   We endow $R\Gamma_{\mathscr{B}}(X)$ with the filtration induced by the filtration décalée of Definition \ref{beilifildef}.
  \end{df}

 \begin{rem}[Frobenius on the $B$-cohomology]\label{frb}
  Since $\varphi(t)=pt$, the Frobenius automorphism of $\Bb$ induces a $\varphi_B$-semilinear automorphism
  $$\varphi: R\Gamma_B(X)\to R\Gamma_B(X)$$
  which preserves the filtration décalée.
 \end{rem}

  Next, we begin to study some basic properties of the $B$-cohomology theory. As a preparation, we state the following boundedness result which relies on an improved version of the almost purity theorem recently proved by Bhatt--Scholze, \cite[Theorem 10.9]{Prisms}.
  
  \begin{prop}[cf. {\cite[Proposition 7.5.2]{Guo1}}, {\cite[Theorem 6.10.3]{Zavyalov1}}]\label{bddd}
   Let $X$ be a rigid-analytic variety over $C$ of dimension $d$.  Let $\nu: X_{\pet}\to X_{\ett, \cond}$ denote the natural morphisms of sites. Let $\mathbf{B}$ any of the period sheaves of  (\ref{im}). Then, $R^i\nu_* \mathbf{B}$ vanishes for all $i>d$.
   \end{prop}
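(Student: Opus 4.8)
The statement is an $\eh$-local (indeed étale-local) vanishing result, so the plan is to reduce to a computation on a cofinal family of pro-étale affinoid perfectoids. First I would reduce to $X$ smooth: by Proposition \ref{baseh} (or Remark \ref{oyt}) the smooth objects form a basis of $X_{\eh}$, and the functor $R\nu_*$ is computed $\eh$-locally, so it suffices to bound the cohomological dimension of the morphism $X_{\pet}\to X_{\ett,\cond}$ for $X$ smooth affinoid of dimension $d$. For such $X$ one has the standard pro-étale cover by affinoid perfectoids: a smooth affinoid $X$ admits, étale-locally, a finite étale map to a torus $\Tt^d$, whose pro-$p$ cover $\widetilde{\Tt}^d=\varprojlim_n \Tt^d$ (adjoining $p^n$-th roots of the coordinates) is affinoid perfectoid, and $X\times_{\Tt^d}\widetilde{\Tt}^d$ is again affinoid perfectoid. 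Pulling back along such covers, $R^i\nu_*\mathbf{B}$ is the sheafification of $U\mapsto H^i_{\pet}(U,\mathbf{B})$, and by the Cartan--Leray/Čech-to-derived-functor spectral sequence for this pro-$p$ cover together with Proposition \ref{compv}\listref{compv:1} (which gives $H^j_v(\text{affinoid perfectoid},\mathbf{B})=0$ for $j>0$, hence the same for $H^j_{\pet}$ on affinoid perfectoids), the problem reduces to bounding the continuous cohomology of $\Zz_p^d=\Gal(\widetilde{\Tt}^d/\Tt^d)$ acting on $\mathbf{B}(\widetilde{\Tt}^d\times_{\Tt^d}U)$.

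The key input is then that $\Zz_p^d$ has cohomological dimension $d$: for any topological (condensed) $\Zz_p^d$-module $M$, the continuous (condensed) group cohomology $R\Gamma_{\cond}(\Zz_p^d,M)$ is concentrated in degrees $[0,d]$, since $\Zz_p^d$ is a profinite group of cohomological dimension $d$ and the cohomology is computed by a Koszul complex on $d$ generators (the $\gamma_i-1$). This is where one must be a little careful in the condensed setting — the module $\mathbf{B}(V)$ for $V$ affinoid perfectoid is an enormous non-Banach condensed ring in general — but the Koszul-complex description of $R\Gamma_{\cond}(\Zz_p^d,-)$ holds for arbitrary condensed modules, so the degree bound is formal. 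Combining this with the vanishing of the higher $v$-cohomology on the cover (so that the Čech/Cartan--Leray spectral sequence degenerates to identify $H^i_{\pet}(U,\mathbf{B})$ with $H^i_{\cond}(\Zz_p^d, \mathbf{B}(\widetilde U))$ for $U$ running over a basis), we get $H^i_{\pet}(U,\mathbf{B})=0$ for $i>d$ on a basis of $X_{\ett,\cond}$, hence $R^i\nu_*\mathbf{B}=0$ for $i>d$.

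For the general (possibly non-smooth) case, having proved the smooth case, I would descend along an $\eh$-hypercover $Y_\bullet\to X$ by smooth varieties (Proposition \ref{baseh}): since $\nu$ and the $\eh$-pushforward are compatible and the cohomological dimension of the $\eh$-topos of a $d$-dimensional variety is itself bounded (this is essentially the content of Proposition \ref{bound0}, which bounds $H^i_{\eh}$ in degrees $\le d$), one combines the spectral sequence for the hypercover with the smooth-case bound. Concretely, $R\nu_*\mathbf{B}$ on $X_{\eh,\cond}$ is computed by the totalization of the cosimplicial object $n\mapsto (R\nu_*\mathbf{B})|_{Y_n}$, each term of which lives in degrees $\le d$ by the smooth case, and the fact that one may take the hypercover to have dimension $\le d$ in each level keeps the totalization in the same range; alternatively, one notes that the relevant statement is local on $X_{\eh}$ and smooth objects form a basis, so no hypercover is even needed — the sheaf $R^i\nu_*\mathbf{B}$ vanishes because it does so on the basis.

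\textbf{Main obstacle.} The genuinely delicate point is the interchange of the (possibly infinite) inverse limit defining $\Bb=\varprojlim_I \Bb_I$ (and $\Bb_{\dR}^+=\varprojlim_n$) with the cohomology, i.e.\ checking that no $\varprojlim^1$ or higher derived-limit terms appear that could push cohomology above degree $d$. Here one must use that the transition maps in the relevant systems have dense image / are surjective with well-behaved kernels (cf.\ Lemma \ref{intert} for the comparison of the $p$-adic and $\mu$-adic filtrations on $A_I$, which controls exactly such $\varprojlim^1$ issues), so that the derived limit is the naive limit and the degree bound is preserved. In the condensed framework this is cleaner than in the classical topological one, since $\varprojlim$ of condensed abelian groups is exact on systems with surjective transition maps; the reference to the improved almost purity theorem of Bhatt--Scholze \cite[Theorem 10.9]{Prisms} enters precisely to guarantee the needed almost-vanishing and surjectivity at the integral level before inverting $p$ and passing to the limit.
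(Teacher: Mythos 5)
Your argument for the smooth case is essentially the paper's: pro-étale cover by an affinoid perfectoid with Galois group $\Zz_p^d$, vanishing of higher $v$-cohomology of $\mathbf{B}$ on affinoid perfectoids (Proposition \ref{compv}), Cartan--Leray, and the fact that $\Zz_p^d$ has condensed cohomological dimension $d$. The derived-limit worries you flag for $\Bb=\varprojlim_I\Bb_I$ are already packaged into Proposition \ref{compv}\listref{compv:1}, so they do not resurface here.

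The genuine gap is your reduction to the smooth case. The morphism $\nu$ in the statement goes to the \emph{étale} site $X_{\ett,\cond}$, not the éh site, so $R^i\nu_*\mathbf{B}$ is the étale sheafification of $U\mapsto H^i_{\pet}(U,\mathbf{B})$ with $U$ ranging over étale $X$-spaces. If $X$ is singular, its étale opens are singular as well: smooth objects form a basis of $X_{\eh}$ (Proposition \ref{baseh}), but éh-coverings include blow-ups and are emphatically not étale coverings, so they cannot be used to test vanishing of an étale sheaf. Your fallback via an éh-hypercover $Y_\bullet\to X$ has the same problem — it would compute the pushforward to $X_{\eh,\cond}$, which is a different (and in this proposition irrelevant) object. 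The singular case is in fact the whole point of the proposition, and the paper handles it directly, with no reduction to the smooth case: for an arbitrary affinoid $X$ of dimension $d$, Noether normalization gives a finite map $X\to\Dd_C^d$, and the recent input of Bhatt--Scholze (\cite[Theorem 10.9]{Prisms}, via \cite[Lemma 6.10.2]{Zavyalov1}) then produces a $\Zz_p(1)^d$-torsor $\widetilde X^\diamondsuit\to X^\diamondsuit$ in the $v$-topology with $\widetilde X^\diamondsuit$ affinoid perfectoid; Cartan--Leray and $\mathrm{cd}(\Zz_p^d)=d$ finish as in your smooth case. Relatedly, you misplace the role of the improved almost purity theorem: it is not there to control $\varprojlim^1$ terms or integral almost-vanishing (that is Scholze's \cite[Proposition 8.8]{Scholze3}, already invoked in Proposition \ref{compv}), but precisely to supply the perfectoid $\Zz_p^d$-cover in the absence of smoothness, which is the step your outline is missing.
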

  \begin{proof}
  We will show that for any affinoid rigid space $X$ over $C$ of dimension $d$, we have $H^i_{\pet}(X, \mathbf{B})=0$ for all $i>d$. By Noether normalization lemma, \cite[\S 3.1, Proposition 3]{Bosch},  there exists a finite morphism $f:X\to \Dd_C^d$, where the target denotes the $d$-dimensional unit closed disk over $C$. Then, by \cite[Lemma 6.10.2]{Zavyalov1},\footnote{This lemma relies on \cite[Theorem 10.11]{Prisms}, and hence on \cite[Theorem 10.9]{Prisms}.} the diamond $X^\diamondsuit$ admits a $\Zz_p(1)^d$-torsor for the $v$-topology
  \begin{equation}\label{torz}
   \widetilde{X}^\diamondsuit \to X^\diamondsuit
  \end{equation}
  where $\widetilde{X}^\diamondsuit$ is a diamond representable by an affinoid perfectoid space.
  Considering the Cartan--Leray spectral sequence associated to (\ref{torz}), by \cite[Proposition 4.12]{Bosco} and Proposition \ref{compv}, we have an isomorphism
    $$R\Gamma_{\cond}(\Zz_p(1)^d, H^0(\widetilde{X}^\diamondsuit, \mathbf{B}))\overset{\sim}{\to}R\Gamma_{\pet}(X, \mathbf{B}).$$
   Then, the statement follows from \cite[Proposition B.3]{Bosco}, which implies that $\Zz_p(1)^d\cong \Zz_p^d$ has cohomological dimension $d$.
  \end{proof}

  The following lemma will be useful to reduce the study of the $B$-cohomology theory to the study of the $B_I$-cohomology theories for suitable intervals $I\subset (0, \infty)$.
  
  \begin{lemma}[cf. {\cite[Lemma 4.3]{LeBras2}}]\label{dinverse}
   With notation as in Definition \ref{defdr}, the natural maps
   \begin{equation}\label{fromlb}
    L\eta_tR\alpha_*\Bb\to R\varprojlim_I L\eta_tR\alpha_* \Bb_I\;\;\;\;\;\;\;\;\;\; L\eta_t R\alpha_*\Bb_{\dR}^+\to R\varprojlim_m L\eta_t R\alpha_*(\Bb_{\dR}^+/\Fil^m).
   \end{equation}
   are isomorphisms compatible with the filtration décalée.
  \end{lemma}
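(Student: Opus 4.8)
The plan is to reduce the statement, via the interpretation of the décalage as a connective cover for the Beilinson $t$-structure (Proposition \ref{beilifil}), to two soft facts: that $R\alpha_*$ preserves homotopy limits, and that $\Bb$ and $\Bb_{\dR}^+$ are \emph{derived} limits of the systems $(\Bb_I)_I$ and $(\Bb_{\dR}^+/\Fil^m)_m$. First I would restrict the index sets to the cofinal sequences $I_n:=[1/n,n]$ and $m=1,2,\dots$. Being a right adjoint, $R\alpha_*$ commutes with all homotopy limits, so it is enough to prove (a) the sheaf-level identities $\Bb\simeq R\varprojlim_n\Bb_{I_n}$ and $\Bb_{\dR}^+\simeq R\varprojlim_m(\Bb_{\dR}^+/\Fil^m)$, and (b) that $L\eta_t$, with its filtration décalée, commutes with the resulting homotopy limits $R\varprojlim_n R\alpha_*\Bb_{I_n}$ and $R\varprojlim_m R\alpha_*(\Bb_{\dR}^+/\Fil^m)$. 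For (a) one works on sections over affinoid perfectoid spaces $S^\sharp$, which form a basis: for $\Bb_{\dR}^+$ the transition maps $\Bb_{\dR}^+/\Fil^{m+1}\to\Bb_{\dR}^+/\Fil^m$ are surjective, so the tower is Mittag--Leffler; for $\Bb$, by \cite[Lemma 4.14]{Bosco} we have $\Bb_{I_n}(S^\sharp)=\cl{O}(Y_{\FF,S,I_n})$ and $\Bb(S^\sharp)=\cl{O}(Y_{\FF,S})=\varprojlim_n\cl{O}(Y_{\FF,S,I_n})$, a countable inverse limit of Banach $\Qq_p$-algebras with injective, dense-image transition maps, for which $R^1\varprojlim$ vanishes by the Mittag--Leffler-type acyclicity available in the condensed setting (cf. the non-archimedean functional analysis recalled in \cite[Appendix A]{Bosco}).

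For (b), the point requiring care is the choice of base ring for the décalage: $t$ is a non-zero-divisor in $\Bb$, $\Bb_I$ and $\Bb_{\dR}^+$, but it is nilpotent in $\Bb_{\dR}^+/\Fil^m$, so in the latter case one regards $R\alpha_*(\Bb_{\dR}^+/\Fil^m)$ as an object over $\alpha_*\Bb_{\dR}^+$ (via $\Bb_{\dR}^+\to\Bb_{\dR}^+/\Fil^m$), where $t$ acts as a non-zero-divisor and $L\eta_t$ is computed as usual, e.g. on the $t$-torsion-free model $\mathrm{cone}(D^\bullet\xrightarrow{t^m}D^\bullet)$ of a $t$-torsion-free model $D^\bullet$ of $R\alpha_*\Bb_{\dR}^+$. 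With $A$ the relevant structure sheaf, Proposition \ref{beilifil} (in its ringed-topos form) gives $\Fil^\star L\eta_t M\simeq\tau^{\le 0}_{\Beil}(t^\star\otimes_A M)$ in $DF(A)$, and since $t$ is a non-zero-divisor in $A$ the filtered object $t^\star\otimes_A M$ is just the $t$-adic filtration $\cdots\xrightarrow{t}M\xrightarrow{t}M\xrightarrow{t}\cdots$. Now $M\mapsto(t^\star\otimes_A M)$ commutes with homotopy limits, since limits in $DF(A)=\Fun(\Zz^{\op},D(A))$ are computed pointwise; and $\tau^{\le 0}_{\Beil}$ commutes with homotopy limits, being right adjoint to the inclusion $DF^{\le 0}(A)\hookrightarrow DF(A)$ (which is closed under the relevant limits). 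Therefore the natural map $\Fil^\star L\eta_tR\alpha_*\Bb\to R\varprojlim_n\Fil^\star L\eta_tR\alpha_*\Bb_{I_n}$, and its analogue for $\Bb_{\dR}^+$, is a filtered equivalence. Passing to underlying complexes amounts to commuting $R\varprojlim_n$ with $\colim_{i\to-\infty}\Fil^i(-)$: this is harmless because the colimit stabilises uniformly in $n$ — one has $\Fil^iL\eta_tN\simeq L\eta_tN$ already for $i\le -1$ as soon as $N\in D^{\ge 0}$ (by the formula for the cohomology of $L\eta_{\delta,t}$ in the proof of Proposition \ref{nondec}), and each term here lies in $D^{\ge 0}$ (alternatively, use the boundedness of Proposition \ref{bddd}). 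Hence the maps in (\ref{fromlb}) are isomorphisms, and the construction exhibits them as filtered isomorphisms.

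The main obstacle is step (b): $L\eta_t$ does not commute with arbitrary homotopy limits, so the substance of the proof is to organise everything through the Beilinson $t$-structure — where only the truncation functors $\tau^{\le 0}_{\Beil}$, which do commute with limits, intervene — while keeping track of the correct non-zero-divisor base ring in the $\Bb_{\dR}^+/\Fil^m$ case. A secondary technical input is the vanishing of $R^1\varprojlim$ for towers of Banach $\Qq_p$-algebras in step (a), which is where the condensed functional analysis of \cite[Appendix A]{Bosco} enters.
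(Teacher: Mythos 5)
Your step (a) is correct and is essentially what the paper does (reduction to the sheaf-level statement $\Bb\simeq R\varprojlim_I\Bb_I$, checked on affinoid perfectoids via topological Mittag--Leffler). The problem is the pivot of step (b): the claim that $\tau^{\le 0}_{\Beil}$ ``commutes with homotopy limits, being right adjoint to the inclusion $DF^{\le 0}(A)\hookrightarrow DF(A)$ (which is closed under the relevant limits)'' is false, and the parenthetical is exactly the point at issue. The inclusion $DF^{\le 0}\hookrightarrow DF$ is a \emph{left} adjoint, so it preserves colimits, not limits; the adjunction only gives $\tau^{\le 0}_{\Beil}(\lim F_n)\simeq \tau^{\le 0}_{\Beil}(\lim\tau^{\le 0}_{\Beil}F_n)$, not $\tau^{\le 0}_{\Beil}(\lim F_n)\simeq\lim\tau^{\le 0}_{\Beil}F_n$ with the limit taken in $DF(A)$. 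Concretely, by Proposition \ref{bbe}(ii) one has $\gr^i\circ\tau^{\le 0}_{\Beil}\simeq\tau^{\le i}\circ\gr^i$, so your assertion amounts to commuting the ordinary truncation $\tau^{\le i}$ with $R\varprojlim$ of a tower on each graded piece. This fails in general because $R^1\varprojlim$ of the $H^i$'s of the terms contributes to $H^{i+1}$ of the limit: $DF^{\le 0}(A)$ is not closed under $R\varprojlim$ of towers (a tower of discrete objects with non-Mittag--Leffler transition maps, e.g.\ $\Zz\xleftarrow{p}\Zz\xleftarrow{p}\cdots$ placed in a single graded degree, already gives a counterexample). Uniform boundedness does not repair this. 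If the formal argument worked, $L\eta_t$ would commute with all countable homotopy limits of uniformly bounded objects, which you correctly note is not the case.

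What the paper supplies at exactly this point is a non-formal input about the graded pieces: by the classification of $t$-torsion in $\Bb_I$ one has $\Bb_I/t=\prod_{y\in|Y_{\FF,I}|^{\mathrm{cl}}}\Bb_{\dR}^+/t^{\ord_y(t)}$, a \emph{finite} product of copies of $\widehat{\cl O}$, and for $I\subset J$ the transition map $\Bb_J/t\to\Bb_I/t$ is the projection onto a sub-product. Hence $R\varprojlim_I\tau^{\le i}R\alpha_*(\Bb_I/t)$ is an honest (underived) product over the classical points of $Y_{\FF}$, there is no $R^1\varprojlim$ correction, and the comparison with $\tau^{\le i}R\alpha_*(\Bb/t)=\prod_y\tau^{\le i}R\alpha_*(\Bb_{\dR}^+/t^{\ord_y(t)})$ follows because cohomology commutes with products. (For the $\Bb_{\dR}^+$ tower the graded pieces of the transition maps are isomorphisms, so the same issue is vacuous.) Your proof needs this computation, or some substitute for it, inserted where you invoke the commutation of $\tau^{\le 0}_{\Beil}$ with the limit; the rest of your outline (reduction to cofinal sequences, the Mittag--Leffler argument for (a), the stabilization of $\Fil^i$ for $i\le 0$ on connective objects) is sound.
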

  \begin{proof}
  We prove that the left map in (\ref{fromlb}) is an isomorphism compatible with filtrations (for the right map in (\ref{fromlb}) the proof is similar and easier). By \cite[Lemma 5.2, (1)]{BMS2}, it suffices to show that the limit of the filtrations of the source and the target agree, and that such map is an isomorphism on graded pieces. 
  
  For the first assertion, by the uniform boundedness of the complexes $R\alpha_*\Bb$ and $R\alpha_* \Bb_I$ for varying compact intervals $I\subset (0, \infty)$ with rational endpoints, which follows from Proposition \ref{bddd}, we can reduce to showing that the natural map
 $$R\alpha_*\Bb\to R\varprojlim_{I}R\alpha_* \Bb_I$$
 is an isomorphism. This follows recalling that the natural map $\Bb\to R\varprojlim_I \Bb_I$ is an isomorphism: in fact, using \cite[Lemma 3.18]{Scholze}, we can reduce to checking this on each affinoid perfectoid space $Z$ over $\Spa(C, \cl O_C)$, where it follows from the topological Mittag-Leffler property of the countable inverse system $\{\Bb_I(Z)\}_I$ which implies that, for all $j>0$, we have $R^j\varprojlim_I \Bb_I(Z)=0$ (\cite[Lemma 4.8]{Bosco}).
  
   Then, by Proposition \ref{bbe}\listref{bbe:2} (and by twisting) it remains to prove that, for each $i\ge 0$, the natural map
 \begin{equation}\label{zyzz}
   \tau^{\le i}R\alpha_*(\Bb/t)\to R\varprojlim_I \tau^{\le i}R\alpha_*(\Bb_I/t)
 \end{equation}
 is an isomorphism. For this, we observe that, by \cite[Theorem II.0.1]{FS}, for any compact interval $I\subset (0, \infty)$ with rational endpoints, we have 
 \begin{equation}\label{BIprod}
  \Bb_I/t=\prod_{y\in |Y_{\FF, I}|^{\ccl}}\Bb_{\dR}^+/t^{\ord_y(t)}\Bb_{\dR}^+
 \end{equation}
  where $|Y_{\FF, I}|^{\ccl}\subset |Y_{\FF, I}|$ denotes the subset of classical points (we note that, by compactness of the interval $I$, the latter product is a finite direct product of copies of $\widehat{\cl O}$).\footnote{For $y\in |Y_{\FF}|^{\ccl}$, we have $\ord_y(t)\in \{0, 1 \}$: in fact, $t$ has a simple zero at $\infty$ on $\FF=Y_{\FF}/\varphi^{\Zz}$.} Then, using again the topological Mittag-Leffler property of the countable inverse system $\{\Bb_I(Z)\}_{I}$ for each $Z$ affinoid perfectoid spaces over $\Spa(C, \cl O_C)$, we have that
  \begin{equation}\label{Bprod}
   \Bb/t=\prod_{y\in |Y_{\FF}|^{\ccl}}\Bb_{\dR}^+/t^{\ord_y(t)}\Bb_{\dR}^+
  \end{equation}
  where $|Y_{\FF}|^{\ccl}\subset |Y_{\FF}|$ denotes the subset of classical points (cf. with \cite[\S 2.6]{FF}). Moreover, by (\ref{BIprod}) we have that
  
  \begin{equation}\label{0001}
  R\varprojlim_I \tau^{\le i}R\alpha_*(\Bb_I/t)=\prod_I \tau^{\le i}R\alpha_*(\Bb_I/t).
  \end{equation}
  We conclude that the natural map (\ref{zyzz}) is an isomorphism, combining (\ref{BIprod}), (\ref{Bprod}), (\ref{0001}), and fact that cohomology commutes with direct products.
  \end{proof}

  The next proposition gives in particular a convenient local description of the $B$-cohomology theory on a smooth affinoid rigid space over $C$.
  
  \begin{prop}\label{3.11}
   With notation as in Definition \ref{defdr}, let $\nu :X_{\pet}\to X_{\ett, \cond}$ denote  the natural morphism of sites.
   \begin{enumerate}[(i)]
    \item\label{3.11.1} If $X$ is smooth, we have a natural identification in $D(\Mod^{\cond}_{\mathscr{B}})$
    $$R\Gamma_{\mathscr{B}}(X)=R\Gamma(X, L\eta_tR\nu_*\mathbf{B}).$$
    \item\label{3.11.2} If $X$ is a smooth affinoid over $C$, the natural map of complexes of condensed $\mathscr{B}$-modules
   \begin{equation}\label{funda}
    L\eta_{t}R\Gamma_v(X, \mathbf{B})\to R\Gamma(X, L\eta_tR\alpha_*\mathbf{B})=R\Gamma_{\mathscr{B}}(X)
   \end{equation}
   is a filtered quasi-isomorphism. Here, on both sides, the filtration on $L\eta_t(-)$ is the filtration décalée of Definition \ref{beilifildef}.
   \end{enumerate}  
  \end{prop}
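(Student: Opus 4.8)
The plan is to deduce part (i) from a comparison of the $\eh$- and $\ett$-sites that only involves the period sheaves, and then to bootstrap part (ii) from it via the explicit toric $v$-cover used in the proof of Proposition \ref{bddd}.

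\emph{Part (i).} Write $\pi\colon X_{\eh,\cond}\to X_{\ett,\cond}$ for the natural morphism of sites; together with $\alpha\colon X_v\to X_{\eh,\cond}$, $\lambda\colon X_v\to X_{\pet}$ and $\nu\colon X_{\pet}\to X_{\ett,\cond}$ it fits into a commuting square, i.e. $\pi\circ\alpha=\nu\circ\lambda$ as morphisms $X_v\to X_{\ett,\cond}$, so for an arbitrary $X$ one has $R\pi_*R\alpha_*\mathbf{B}=R\nu_*R\lambda_*\mathbf{B}=R\nu_*\mathbf{B}$, the last step by Proposition \ref{compv}\listref{compv:2}. The point where smoothness enters is the assertion that, for $X$ smooth, $R\alpha_*\mathbf{B}$ agrees with the $\pi$-pullback of $R\nu_*\mathbf{B}$: by Proposition \ref{baseh} and Remark \ref{oyt} the smooth $Y\in X_{\eh}$ form a basis, so it suffices to compare sections over such $Y$, where the description of $R\nu_*\mathbf{B}$ in terms of differential forms (the Hodge--Tate/de Rham data) reduces the claim to Guo's $\eh$-descent for differential forms (Proposition \ref{ehdescent}). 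Granting this, and since the exact symmetric monoidal functor $\pi^*$ commutes with the décalage construction, one gets $L\eta_tR\alpha_*\mathbf{B}=\pi^*L\eta_tR\nu_*\mathbf{B}$; applying $R\Gamma_{\eh,\cond}(X,-)$ and the same $\eh$-descent input yields $R\Gamma_{\mathscr{B}}(X)=R\Gamma(X,L\eta_tR\nu_*\mathbf{B})$. Compatibility with the filtration décalée is automatic: by Proposition \ref{beilifil} it is $\tau_{\Beil}^{\le 0}$ applied to the $t$-adic filtration, and both the $t$-adic filtration and $\tau_{\Beil}^{\le 0}$ — the latter via $\gr^i\circ\tau_{\Beil}^{\le 0}\simeq\tau^{\le i}\circ\gr^i$ (Proposition \ref{bbe}\listref{bbe:2}) — are preserved by the exact functor $\pi^*$.

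\emph{Part (ii).} By part (i) we may replace $R\Gamma_{\mathscr{B}}(X)$ with $R\Gamma_{\ett,\cond}(X,L\eta_tR\nu_*\mathbf{B})$, and by $v$-hyperdescent (Proposition \ref{compv}\listref{compv:2}) we may replace $R\Gamma_v(X,\mathbf{B})$ with $R\Gamma_{\ett,\cond}(X,R\nu_*\mathbf{B})$, so the claim becomes that for $X$ a smooth affinoid the natural map
$$L\eta_t R\Gamma_{\ett,\cond}(X,R\nu_*\mathbf{B})\longrightarrow R\Gamma_{\ett,\cond}(X,L\eta_tR\nu_*\mathbf{B})$$
is a filtered quasi-isomorphism. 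Here I would argue, following \cite[Lemma 4.3]{LeBras2}, by computing both sides through the toric $v$-cover $\widetilde X\to X$ from the proof of Proposition \ref{bddd}: it is a $\Zz_p(1)^d$-torsor with $\widetilde X$ affinoid perfectoid, so by the Cartan--Leray spectral sequence (\cite[Proposition 4.12]{Bosco}) together with Proposition \ref{compv}\listref{compv:1} the left-hand side is $L\eta_t$ of the Koszul complex $\Kos(\mathbf{B}(\widetilde X);\gamma_1-1,\dots,\gamma_d-1)$ on topological generators $\gamma_i$ of $\Zz_p(1)^d$. The essential input is that each operator $\gamma_i-1$ on $\mathbf{B}(\widetilde X)$ is divisible by $t=\log[\varepsilon]$ (the analogue in this setting of the divisibility by $\mu$ exploited in \cite{BMS1}); with this, the computation of $L\eta$ of a Koszul complex in \cite{BMS1} identifies $L\eta_t\Kos(\mathbf{B}(\widetilde X);\gamma_\bullet-1)$ with the Koszul complex on the divided operators $(\gamma_i-1)/t$, which — carried out sheafwise over the toric chart, so that it also computes the right-hand side — gives the claim; the filtered statement is again checked on graded pieces via $\gr^i\circ\tau_{\Beil}^{\le 0}\simeq\tau^{\le i}\circ\gr^i$.

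\emph{Expected main obstacle.} The crux of both parts is the interchange of $L\eta_t$ with a pushforward (resp. with $R\Gamma$ of an affinoid), which fails for a general $X$; the whole architecture is designed so that smoothness reduces this, in part (i), to the sheaf-level identity $R\alpha_*\mathbf{B}=\pi^*R\nu_*\mathbf{B}$ plus exactness of $\pi^*$, and, in part (ii), to a Koszul complex in which the $t$-divisibility of the $\gamma_i-1$ makes the décalage explicit. A secondary technical nuisance — tracking the filtration décalée through every step — is handled uniformly via the Beilinson-truncation description of $L\eta_t$ (Proposition \ref{beilifil}) and the formula $\gr^i\circ\tau_{\Beil}^{\le 0}\simeq\tau^{\le i}\circ\gr^i$.
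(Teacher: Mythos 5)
There is a genuine gap, and it sits exactly at the point you yourself identify as the crux: the interchange of $L\eta_t$ with a pushforward. In your part (i), the assertion that $R\alpha_*\mathbf{B}$ agrees with $\pi^*R\nu_*\mathbf{B}$, and the subsequent step where "applying $R\Gamma_{\eh,\cond}(X,-)$ and the same $\eh$-descent input" recovers $R\Gamma_{\ett,\cond}(X,L\eta_tR\nu_*\mathbf{B})$, both amount to $\eh$-(hyper)descent for the complex $L\eta_tR\nu_*\mathbf{B}$, i.e. for its graded pieces $\tau^{\le k}R\nu_*(\mathbf{B}/t)(k)$ (Proposition \ref{bbe}\listref{bbe:2}). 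Proposition \ref{ehdescent} only gives descent for $\Omega^i_{X_{\eh}}$; the bridge from that to descent for the truncations of $R\nu_*(\mathbf{B}/t)$ is precisely the identification $R^k\alpha_*\widehat{\cl O}\cong\Omega^k_{X_{\eh}}(-k)$ together with a degeneration/vanishing argument, which you only gesture at ("the description of $R\nu_*\mathbf{B}$ in terms of differential forms"). This is not a presentational quibble: the paper proves the affinoid statement (ii) \emph{first}, by exactly this graded-piece analysis, and then deduces (i) from it; your proposal inverts the order and so rests part (ii) on an unproved part (i).

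In part (ii) there are two further problems. First, $\gamma_i-1$ is \emph{not} divisible by $t$ (equivalently $\mu$) on the full period ring $\mathbf{B}(\widetilde X)$ of the perfectoid cover; divisibility holds only on the "integral" toric part $B_I(R)$, and the non-integral part is handled by showing that $\mu$ kills its $\Gamma$-cohomology so that $L\eta_t$ annihilates it (cf. the proof of Lemma \ref{primitive}). Second, and more seriously, the Koszul/Cartan--Leray computation only identifies the \emph{left}-hand side $L\eta_tR\Gamma_v(X,\mathbf{B})$; it does not compute $R\Gamma(X,L\eta_tR\alpha_*\mathbf{B})$, because $L\eta_t$ is applied to a complex of sheaves before taking cohomology. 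To compare the two one must show, for each $k$, that $H^r(X,R^k\alpha_*\widehat{\cl O})=0$ for $r>0$ on a smooth affinoid --- in the paper this is the combination of $R^k\alpha_*\widehat{\cl O}\cong\Omega^k_{X_{\eh}}(-k)$, Guo's descent $R\pi_*\Omega^k_{X_{\eh}}=\Omega^k_{X_{\ett}}[0]$ (Proposition \ref{ehdescent}), and Tate's acyclicity theorem, fed into a truncation spectral sequence. Tate acyclicity is the decisive input for the affinoid case and never appears in your argument; "carried out sheafwise over the toric chart" does not substitute for it. The reduction from $\Bb$ and $\Bb_{\dR}^+$ to $\widehat{\cl O}$ via Lemma \ref{dinverse} and the product decomposition $\Bb_I/t=\prod\Bb_{\dR}^+/t^{\ord_y(t)}$ also needs to be made explicit, since $\mu$ and $t$ do not differ by a unit in $\Bb$ globally.
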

  \begin{proof}
   We first prove part \listref{3.11.2}, adapting the proof of \cite[Proposition 3.11]{LeBras2}, and using Proposition \ref{beilifil} for the compatibility with the filtrations statement.
   
  Thus, let $X$ be a smooth affinoid over $C$. To show that (\ref{funda}) is a filtered quasi-isomorphism, similarly to the proof of Lemma \ref{dinverse}, by \cite[Lemma 5.2, (1)]{BMS2}, it suffices to show that the limit of the filtrations of the source and the target of (\ref{funda}) agree, and that such map is an isomorphism on graded pieces. The former statement follows from Proposition \ref{bddd}. Then, by Proposition \ref{bbe}\listref{bbe:2} (and by twisting) it suffices to prove that, for each $i\ge 0$, the natural map
 \begin{equation*}
   \tau^{\le i}R\Gamma(X, \mathbf{B}/t)\to R\Gamma(X, \tau^{\le i}R\alpha_*(\mathbf{B}/t))
 \end{equation*}
 is a quasi-isomorphism. 
 By (the proof of) Lemma \ref{dinverse}, we can reduce to the case $\mathbf{B}\in \{\Bb_I, \Bb_{\dR}^+\}$. Then, recalling that $\Bb_I/t$ is isomorphic to a finite direct product of copies of $\widehat{\cl O}$ (by (\ref{BIprod}) and the compactness of the interval $I$), we can further reduce to showing that, for each $i\ge 0$, the natural map
 \begin{equation*}\label{ahh}
   \tau^{\le i}R\Gamma(X, \widehat{\cl O})\to R\Gamma(X, \tau^{\le i}R\alpha_*\widehat{\cl O})
 \end{equation*}
 is a quasi-isomorphism. For this, considering the spectral sequences
 $$H^{j-k}(X, H^k(R\alpha_*\widehat{\cl O}))\implies H^j(X, \widehat{\cl O})$$
 $$H^{j-k}(X, H^k(\tau^{\le i}R\alpha_*\widehat{\cl O}))\implies H^j(X, \tau^{\le i}R\alpha_*\widehat{\cl O})$$
 it suffices to show that, for $j>i$ and $k\le i$, we have $H^{j-k}(X, R^k\alpha_* \widehat{\cl O})=0$, or more generally that
 \begin{equation}\label{combb}
  H^r(X, R^k\alpha_* \widehat{\cl O})=0, \text{ for all } r>0.
 \end{equation}
 By (the proof of) \cite[Proposition 3.23]{ScholzeSurvey}, for any $Y$ smooth rigid-analytic variety over $\Spa(C, \cl O_C)$, denoting by $\nu: Y_{\pet}\to Y_{\ett, \cond}$ the natural morphism of sites, we have a natural isomorphism
 $\Omega_{Y_{\ett}}^k(-k)\overset{\sim}{\to} R^k\nu_* \widehat{\cl O}$ of sheaves with values in $\Vect_C^{\cond}$.\footnote{Working with condensed group cohomology instead of continuous group cohomology in the proof of \textit{loc. cit.}.} Then, by $\eh$-sheafification, we have a natural isomorphism of sheaves with values in $\Vect_C^{\cond}$
 \begin{equation}\label{pz1}
  \Omega_{X_{\eh}}^k(-k)\overset{\sim}{\to} R^k\alpha_* \widehat{\cl O}.
 \end{equation}
 Denoting by $\pi: X_{\eh}\to X_{\ett}$ the natural morphism of sites, by Proposition \ref{ehdescent} (using that $X$ is smooth), we have that
 \begin{equation}\label{pz2}
  R\pi_*\Omega_{X_{\eh}}^k=\Omega_{X_{\ett}}^k[0]
 \end{equation}
 as complexes of sheaves with values in $\Vect_C^{\cond}$. Then, combining (\ref{pz1}) and (\ref{pz2}), we deduce (\ref{combb}) from the condensed version of Tate's acyclicity theorem (see \cite[Lemma 5.6(i)]{Bosco}). This concludes the proof of part \listref{3.11.2}.
 
 For part \listref{3.11.1}, as the statement is étale local, we can reduce to the case when $X$ is a smooth affinoid rigid space over $C$. In this case, similarly to part \listref{3.11.2}, the natural map 
  \begin{equation}\label{hopfin}
   L\eta_{t}R\Gamma_{\pet}(X, \mathbf{B})\to R\Gamma(X, L\eta_tR\nu_*\mathbf{B})
  \end{equation}
  is a quasi-isomorphism. Hence, combining (\ref{hopfin}) and (\ref{funda}), the statement follows from Proposition \ref{compv}.
  \end{proof}

  \section{\textbf{Hyodo--Kato cohomology}}\label{sHK}
   \sectionmark{}

  In this section, following Colmez--Nizioł, \cite{CN4}, we define the \textit{Hyodo--Kato cohomology theory} for rigid-analytic varieties over $C$, simplifying the topological treatment given in \textit{op. cit.} and extending it to the singular case.
 
   \subsection{Local Hyodo--Kato morphism}\label{localHK} We begin by revisiting in the condensed setup the Hyodo--Kato morphism constructed by Beilinson and Colmez--Nizioł.
 
  \begin{convnot}\label{logcriss} Let $n\ge 1$ be an integer. For a condensed ring $R$, we denote by $R_n$ the reduction of $R$ modulo $p^n$. \medskip
  \begin{description}
   \item[Log structures] We define a (pre-)log structure on a given condensed ring as a (pre-)log structure on the underlying ring. \medskip   
   
   For $\cl O$ a discrete valuation ring, we denote by $\cl O^\times$ (resp. $\cl O_{n}^\times$) the canonical log structure on $\cl O$ (resp. its pullback on $\cl O_{n}$), and  we denote by $\cl O^0$ (resp. $\cl O_{n}^0$) the log structure on $\cl O$ associated to $(\Nn\to \cl O, 1\mapsto 0)$  (resp. its pullback on $\cl O_{n}$). \medskip
   
   We denote by $\cl O_C^\times$ (resp. $\cl O_{C, n}^\times$) the canonical log structure on $\cl O_C$ (resp. its pullback on $\cl O_{C, n}$). We write $A_{\cris, n}^\times$ for the unique quasi-coherent, integral, log structure on $A_{\cris, n}$ lifting $\cl O_{C, n}^\times$  (see e.g. \cite[\S 5.2]{CK}). We denote by $A_{\cris}^\times$ the log structure on $A_{\cris}$ associated to the pre-log structure
   $$\cl O_{C^\flat}\setminus \{0\}\to A_{\cris},\; x\mapsto [x].$$
   Note that the log structure $A_{\cris, n}^\times$ is the pullback of the log structure $A_{\cris}^\times$.
   \medskip 
   
   \item[Log-crystalline cohomology] We refer the reader to \cite[\S 1]{Beili} for a review of log-crystalline cohomology, and the terminology used in the following. We write PD as a shortening of \textit{divided power}.\medskip
   
   Let $(\cl Y, M_{\cl Y}, \cl I, \gamma)$ be a ($p$-adic formal) log PD scheme such that $(\cl Y, M_{\cl Y})$ is quasi-coherent. Let $(\cl X, M_{\cl X})$ be an integral quasi-coherent ($p$-adic formal) log scheme over $(\cl Y, M_{\cl Y}, \cl I, \gamma)$.
   We write $$((\cl X, M_{\cl X})/(\cl Y, M_{\cl Y}))_{\cris}$$ for the log-crystalline site of $(\cl X, M_{\cl X})$ over $(\cl Y, M_{\cl Y}, \cl I, \gamma)$, \cite[\S 1.12]{Beili}, we denote by $\cl O_{\cris}$ its structure sheaf, regarded as a sheaf with values in condensed abelian groups, and we define the log-crystalline cohomology
   $$R\Gamma_{\cris}((\cl X, M_{\cl X})/(\cl Y, M_{\cl Y})):= R\Gamma(((\cl X, M_{\cl X})/(\cl Y, M_{\cl Y}))_{\cris}, \cl O_{\cris})\in D(\CondAb).$$
   In the case the relevant log structures are fixed, they are omitted from the notation.
   \medskip
   
   \item[Condensed period rings] Recall from \S \ref{convent} that $\cl O_F=W(k)$ and $\cl O_{\breve F}=W(\bar k)$, where $\bar k$ is a fixed algebraic closure of $k$. We fix the unique Frobenius equivariant section $k\to \cl O_K/p$ of $\cl O_K/p\to k$, in order to regard $\cl O_K$ as a $\cl O_F$-algebra, and $\cl O_C$, as well as the condensed period rings of \ref{condp}, as a $\cl O_{\breve F}$-algebra.\medskip
   
   We denote $r_{\varpi}^+=\cl O_F\llbracket T \rrbracket$ and we equip it with the log structure associated to $T$. We write $r^{\PD}_{\varpi}$ for the $p$-adic log PD envelope of $r_{\varpi}^+$ with respect to the kernel of the morphism $$r_{\varpi}^+\to \cl O_K^\times,\; T\mapsto \varpi$$
   and we endow it with a Frobenius induced by $T\mapsto T^p$, and a monodromy defined by $T\mapsto T$.
   Then, we define the condensed period rings
   $$\widehat A_{\st, n}:=H^0_{\cris}(\cl O^\times_{C, n}/r^{\PD}_{\varpi, n})\simeq R\Gamma_{\cris}(\cl O^\times_{C, n}/r^{\PD}_{\varpi, n}), \;\;\;\;\;\; \widehat A_{\st}:=\varprojlim_n \widehat A_{\st, n},\;\;\;\;\;\; \widehat B_{\st}^+:=\widehat A_{\st}[1/p]$$
   and we equip them with their natural action of $\mathscr{G}_K$, Frobenius $\varphi$, and monodromy $N$, \cite[\S 4.6]{Tsuji-Cst}, \cite[\S 3.2.1]{CDN1}. \medskip
  \end{description}
  \end{convnot}

  \begin{theorem}[Beilinson, Colmez--Nizioł, {\cite[Theorem 2.12, Corollary 2.20]{CN4}}]\label{bcn}
  Let $\cl X$ be an integral quasi-coherent log scheme over $\cl O_{C, 1}^\times$. Denote by $\cl X^0$ the pullback of $\cl X$ to $\cl O_{\breve F, 1}^0$.
   Assume that $\cl X$ has a descent to $\cl Z$ a qcqs, fine, log-smooth, log scheme over $\cl O_{L, 1}^\times$ of Cartier type,\footnote{See \cite[Definition 4.8]{Kato-log} for the definition of \textit{Cartier type}.} for some finite extension $L/K$.
    \begin{enumerate}[(i)]
      \item\label{bcn:1} There exists a natural isomorphism in $D(\Vect_{\Qq_p}^{\ssolid})$
   \begin{equation}\label{beilin}
   R\Gamma_{\cris}(\cl X^0/\cl O_{\breve F}^0)\dsolid_{\cl O_{\breve F}}B_{\st}^+\overset{\sim}{\to} R\Gamma_{\cris}(\cl X/A_{\cris}^\times)\dsolid_{A_{\cris}}B_{\st}^+
   \end{equation}
   independent of the descent, and compatible with the actions of Galois, Frobenius $\varphi$ and monodromy $N$.\footnote{On the right-hand side of (\ref{beilin}) the operator $N$ is the monodromy of $B_{\st}^+$, and on the left-hand side of (\ref{beilin}) it combines the monodromy of both factors of the tensor product.}
   \item\label{bcn:2} There exists a natural isomorphism in $D(\Vect_{\Qq_p}^{\ssolid})$
   \begin{equation}\label{iHK}
   R\Gamma_{\cris}(\cl X^0/\cl O_{\breve F}^0)\dsolid_{\cl O_{\breve F}}C\overset{\sim}{\to} R\Gamma_{\cris}(\cl X/\cl O_C^\times)_{\Qq_p}
   \end{equation}
    independent of the descent, and compatible with the actions of Galois, Frobenius $\varphi$ and with the quasi-isomorphism (\ref{beilin}) via the morphism $R\Gamma_{\cris}(\cl X/A_{\cris}^\times)\to R\Gamma_{\cris}(\cl X/\cl O_C^\times)$ induced by Fontaine's map $\theta: A_{\cris}\to \cl O_C$.
   \end{enumerate}
   \end{theorem}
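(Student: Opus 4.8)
The plan is to deduce the statement from \cite[Theorem 2.12, Corollary 2.20]{CN4}, which provide the topological incarnation of the Hyodo--Kato comparison of Beilinson and Colmez--Nizio\l{}, by recasting the construction in the solid framework and checking that the extra structures survive. First I would use the descent hypothesis to fix a qcqs, fine, log-smooth, Cartier-type log scheme $\cl Z$ over $\cl O_{L, 1}^\times$ lifting $\cl X$, and pass to the base log PD ring $r_\varpi^{\PD}$ of \ref{logcriss}, with its Frobenius $\varphi$ ($T\mapsto T^p$) and monodromy $N$ ($T\mapsto T$). Forming $R\Gamma_{\cris}(\cl Z/r_\varpi^{\PD})$ as an object of $D(\CondAb)$ equipped with the resulting $(\varphi, N)$-action, the comparison maps of log-crystalline cohomology attached to the specializations $r_\varpi^{\PD}\to \cl O_{\breve F}^0$, $r_\varpi^{\PD}\to A_{\cris}^\times$ and $A_{\cris}^\times\to\cl O_C^\times$ identify, after the base changes appearing in the statement, the relevant specializations of $R\Gamma_{\cris}(\cl Z/r_\varpi^{\PD})$ with $R\Gamma_{\cris}(\cl X^0/\cl O_{\breve F}^0)$, $R\Gamma_{\cris}(\cl X/A_{\cris}^\times)$ and $R\Gamma_{\cris}(\cl X/\cl O_C^\times)_{\Qq_p}$. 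In the condensed setting these base-change statements cause no trouble, as the cohomologies are computed by explicit log-de Rham or {\v C}ech--Alexander complexes built out of discrete or $p$-completely flat rings.

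The heart of the matter is the Hyodo--Kato comparison itself: it is the morphism of \cite[Theorem 2.12]{CN4}, built from the monodromy operator $N$ on $R\Gamma_{\cris}(\cl Z/r_\varpi^{\PD})$ and the element $\log[p^\flat]$, which --- via the (non-canonical) identification $B_{\cris}^+[U]\overset{\sim}{\to} B_{\st}^+$ of Remark \ref{st-log} --- serves as a logarithm of the parameter of $r_\varpi^{\PD}$; independence of the descent $\cl Z$ and functoriality in $\cl X$ come from Beilinson's $h$-descent argument, and this is exactly what is quoted from \cite{CN4}. Base-changing this comparison to $B_{\st}^+$ yields the isomorphism (\ref{beilin}); composing further with $\theta$, that is, base-changing all the way to $C$, yields (\ref{iHK}); the compatibility of (\ref{iHK}) with (\ref{beilin}), and the equivariance for $\mathscr{G}_K$, $\varphi$ and --- in the case of (\ref{beilin}) --- $N$, are built into the construction, every map in sight commuting with these operators.

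The genuinely new point, and the reason the proof is not a verbatim quotation, is the condensed upgrade. All the objects above lie naturally in $D(\Vect_{\Qq_p}^{\ssolid})$ after inverting $p$, and the operators $\varphi$, $N$ together with the $\mathscr{G}_K$-action, being continuous, are automatically morphisms of solid modules, so equivariance is formal. What must then be checked is that the completed topological tensor products $\widehat\otimes_{\cl O_{\breve F}}$, $\widehat\otimes_{A_{\cris}}$, $\widehat\otimes_C$ of \cite{CN4} compute the solid tensor products $\dsolid$; this I would do with the non-archimedean condensed functional analysis of \cite[Appendix A]{Bosco}, using that $B_{\st}^+$, $B_{\cris}^+$, $C$ are (ind-)Banach $\Qq_p$-algebras and that the cohomology modules in each degree, although possibly non-quasi-separated as condensed vector spaces, are nuclear and assembled from Banach pieces.

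The hard part will be the non-perfectness of $R\Gamma_{\cris}(\cl Z/r_\varpi^{\PD})$: one cannot invoke the fact that the functor $-\dsolid_{\cl O_{\breve F}}B_{\st}^+$ commutes with finite limits on perfect complexes, and must instead argue degreewise, using nuclearity to see that solid base change along $r_\varpi^{\PD}[1/p]\to B_{\st}^+$ and along $\theta$ is exact enough and commutes with passage to cohomology. A second, more bookkeeping-type difficulty is reconciling the two a priori distinct ``semistable'' period rings that occur here --- Fontaine's $B_{\st}^+$ carrying $\log[p^\flat]$, and the log-crystalline $\widehat B_{\st}^+$ built from $r_\varpi^{\PD}$ --- that is, verifying that the identification of Remark \ref{st-log} is compatible with Frobenius, monodromy and the Galois action throughout; this is the kind of point where Koshikawa's suggestion, acknowledged in the introduction, presumably enters.
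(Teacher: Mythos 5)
Your overall strategy is the one the paper follows: quote the construction of \cite[Theorem 2.12, Corollary 2.20]{CN4} via the descent $\cl Z$ and the base ring $r_\varpi^{\PD}$, use crystalline base change along the PD thickenings, and then upgrade the topological completed tensor products to solid ones. Two of your steps, however, diverge from (or fall short of) what the paper actually does. First, the identification of $\widehat\otimes^{\LL}$ with $\dsolid$ is not carried out degreewise via nuclearity: the paper applies Proposition \ref{solid-vs-padic} (derived $p$-adic completion versus solidification for bounded-above complexes over a derived $p$-adically complete solid ring) directly at the level of complexes, exploiting that $\widehat A_{\st}$ is the $p$-adic completion of a divided power polynomial algebra $A_{\cris}\langle x\rangle$ and that $\widehat B_{\st}^+$, being an $\breve F$-Banach space over a discretely valued field, is sandwiched between rescalings of $\bigl(\bigoplus_I \cl O_{\breve F}\bigr)^\wedge_p$. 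No perfectness is needed and no degreewise exactness argument is made.

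Second, the difficulty you defer to the end --- reconciling Fontaine's $B_{\st}^+$ with the log-crystalline $\widehat B_{\st}^+$ --- is not mere bookkeeping; it is the step that makes (\ref{beilin}) come out with $B_{\st}^+$ rather than $\widehat B_{\st}^+$, and your proposal does not supply the mechanism. The paper's resolution is to define $\varepsilon_{\st}=\delta^{-1}\circ(\widehat\varepsilon_{\st})^{N\text{-}\nilp}\circ\delta$, where $\delta\colon B_{\st}^+\overset{\sim}{\to}\widehat B_{\st}^{+,\,N\text{-}\nilp}$ is Kato's $(\varphi,N,\mathscr{G}_K)$-equivariant isomorphism onto the $N$-nilpotent vectors; one then needs that passing to $N$-nilpotent vectors of $R\Gamma_{\cris}(\cl X^0/\cl O_{\breve F}^0)\dsolid_{\cl O_{\breve F}}\widehat B_{\st}^+$ recovers the tensor product with $B_{\st}^+$, which uses that the monodromy on $R\Gamma_{\cris}(\cl X^0/\cl O_{\breve F}^0)$ is nilpotent with bounded index --- this is Lemma \ref{nilpo}, proved via log-blow-ups and Mokrane's weight spectral sequence --- together with the triviality of $N$ on $R\Gamma_{\cris}(\cl X/A_{\cris}^\times)$ for the other side. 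Without this input (or some substitute for it) the passage from $\widehat B_{\st}^+$ to $B_{\st}^+$ in (\ref{beilin}) does not go through, so you should regard it as a required lemma rather than a compatibility check.
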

  \begin{proof}
  For part \listref{bcn:1}, the desired morphism (\ref{beilin}), satisfying the stated properties, is constructed in \cite[Theorem 2.12]{CN4}, and we only need to carry \textit{loc. cit.} to solid $\Qq_p$-vector spaces. By the independence of the descent proven in \textit{loc. cit.} we can assume for simplicity that $L=K$.
  Relying on \cite[\S 2.3.3]{CN4}, we will construct (\ref{beilin}) as the composite $$\varepsilon_{\st}:=\delta^{-1}\circ(\widehat\varepsilon_{\st})^{N-\nilp}\circ\delta$$
  where $M^{N-\nilp}:=\colim_{r\in \Nn}M^{N^r=0}$, we denote by $\delta:B_{\st}^+\overset{\sim}{\to}\widehat{B}_{\st}^{+, N-\nilp}$ the natural $B_{\cris}^+$-linear isomorphism,\footnote{Which is compatible with Galois, Frobenius, and monodromy actions, \cite[Theorem 3.7]{Katolast}.} and 
  $$\widehat\varepsilon_{\st}:R\Gamma_{\cris}(\cl X^0/\cl O_{\breve F}^0)\dsolid_{\cl O_{\breve F}}\widehat{B}_{\st}\to R\Gamma_{\cris}(\cl X/A_{\cris}^\times)\dsolid_{A_{\cris}}\widehat{B}_{\st}$$ is defined as follows. Considering the morphisms of PD thickenings
  \begin{center}
  \begin{tikzcd}
  r^{\PD}_{\varpi, n} \arrow[d]\arrow[twoheadrightarrow]{r} & \cl O_{K, 1}^\times \arrow[d] \\
   \widehat A_{\st, n} \arrow[twoheadrightarrow]{r} & \cl O_{C, 1}^\times
  \end{tikzcd}
  \end{center}
   for varying $n\ge 1$, by base change, \cite[(1.11.1)]{Beili}, we have quasi-isomorphisms
    \begin{equation}\label{pdt}
     R\Gamma_{\cris}(\cl Z/(r^{\PD}_{\varpi, n}, \cl O_{K, 1}^\times))\otimes_{r^{\PD}_{\varpi, n}}^{\LL}\widehat A_{\st, n}\overset{\sim}{\to}R\Gamma_{\cris}(\cl X/(\widehat A_{\st, n}, \cl O_{C, 1}^\times)).
    \end{equation}
   Now, denoting by $\widehat{\otimes}^{\LL}$ the derived $p$-adic completion, again by base change \cite[(1.11.1)]{Beili}, after taking the derived inverse limit over $n\ge 1$, and then inverting $p$ in (\ref{pdt}), the right-hand side identifies with $(R\Gamma_{\cris}(\cl X/A_{\cris}^\times)\widehat{\otimes}_{A_{\cris}}^{\LL}\widehat{A}_{\st})_{\Qq_p}$, and, by \cite[Theorem 2.6]{CN4}, the left-hand side is quasi-isomorphic to 
   $$(R\Gamma_{\cris}(\cl Z^0/\cl O_{F}^0)\widehat{\otimes}_{\cl O_{F}}^{\LL}\widehat{A}_{\st})_{\Qq_p}\simeq ( R\Gamma_{\cris}(\cl X^0/\cl O_{\breve F}^0)\widehat{\otimes}_{\cl O_{\breve F}}^{\LL}\widehat{A}_{\st})_{\Qq_p}.$$
   where $\cl Z^0$ is the pullback of $\cl Z$ to $\cl O_{F, 1}^0$.  We denote by $\widehat\varepsilon_{\st}$ the induced from (\ref{pdt}) morphism
   \begin{equation}\label{mmort}
     \widehat\varepsilon_{\st}:(R\Gamma_{\cris}(\cl X^0/\cl O_{\breve F}^0)\widehat{\otimes}_{\cl O_{\breve F}}^{\LL}\widehat{A}_{\st})_{\Qq_p}\overset{\sim}{\to}(R\Gamma_{\cris}(\cl X/A_{\cris}^\times)\widehat{\otimes}_{A_{\cris}}^{\LL}\widehat{A}_{\st})_{\Qq_p}.
   \end{equation}
  
   To write the target of (\ref{mmort}) in terms of the derived solid tensor product, we note that, since the $A_{\cris}$-algebra $\widehat{A}_{\st}$ is isomorphic to the $p$-adic completion of a divided power polynomial algebra of the form $A_{\cris}\langle x\rangle$, applying Proposition \ref{solid-vs-padic} with $M=R\Gamma_{\cris}(\cl X/A_{\cris}^\times)$ and $N=A_{\cris}\langle x\rangle$ regarded in $D(\Mod_{A_{\cris}}^{\ssolid})$, we obtain the identification
   \begin{equation}\label{avs1}
     R\Gamma_{\cris}(\cl X/A_{\cris}^\times)\widehat{\otimes}_{A_{\cris}}^{\LL}\widehat{A}_{\st}=R\Gamma_{\cris}(\cl X/A_{\cris}^\times)\dsolid_{A_{\cris}}\widehat{A}_{\st}.
   \end{equation}
   Next, we want to show that
   \begin{equation}\label{avs2}
    (R\Gamma_{\cris}(\cl X^0/\cl O_{\breve F}^0)\widehat{\otimes}_{\cl O_{\breve F}}^{\LL}\widehat{A}_{\st})_{\Qq_p}=( R\Gamma_{\cris}(\cl X^0/\cl O_{\breve F}^0)\dsolid_{\cl O_{\breve F}}\widehat{A}_{\st})_{\Qq_p}.
   \end{equation}
   We note that, choosing a basis for the  $\breve F$-Banach space $\widehat{B}_{\st}^+$, we can can identify it with $N^\wedge_p[1/p]$, where $N=\bigoplus_I\cl O_{\breve F}$ for some set $I$.\footnote{In fact, as $\breve F$ is discretely valued, combining \cite[Lemma A.30, Proposition A.55(i), Lemma A.52]{Bosco}, any $\breve F$-Banach space is isomorphic to $(\bigoplus_I \Zz_p)^{\wedge}_p\solid_{\Zz_p}\breve F$, for some set $I$, and the latter is isomorphic to $(\bigoplus_I\cl O_{\breve F})^{\wedge}_p[1/p]$ by Proposition \ref{solid-vs-padic}.}
   Since $\widehat{A}_{\st}$ is a lattice in $\widehat{B}_{\st}^+$, there exist $n, m\in \Zz$ such that $p^nN^\wedge_p\subset \widehat{A}_{\st}\subset p^m N^\wedge_p$. Then, (\ref{avs2}) follows applying Proposition \ref{solid-vs-padic} with $M= R\Gamma_{\cris}(\cl X^0/\cl O_{\breve F}^0)$ and $N=\bigoplus_I\cl O_{\breve F}$ regarded in $D(\Mod_{\cl O_{\breve F}}^{\ssolid})$.
   
   Therefore, in view of (\ref{avs1}) and (\ref{avs2}), using that the derived solid tensor product commutes with filtered colimits, the composite $\varepsilon_{\st}=\delta^{-1}\circ(\widehat\varepsilon_{\st})^{N-\nilp}\circ\delta$ is given by
   \begin{align}
    R\Gamma_{\cris}(\cl X^0/\cl O_{\breve F}^0)\dsolid_{\cl O_{\breve F}}B_{\st}^+ \label{boh1}
    &\overset{\sim}{\to}( R\Gamma_{\cris}(\cl X^0/\cl O_{\breve F}^0)\dsolid_{\cl O_{\breve F}}\widehat{B}_{\st}^+)^{N-\nilp}  \\ 
    &\overset{\sim}{\to}(R\Gamma_{\cris}(\cl X/A_{\cris}^\times)\dsolid_{\cl O_{\breve F}}\widehat{B}_{\st}^+)^{N-\nilp} \\
    &\overset{\sim}{\leftarrow} R\Gamma_{\cris}(\cl X/A_{\cris}^\times)\dsolid_{A_{\cris}}B_{\st}^+ \label{boh2}
   \end{align}
   where in (\ref{boh1}) we used that the monodromy operator $N$ on $ R\Gamma_{\cris}(\cl X^0/\cl O_{\breve F}^0)$ is nilpotent by Lemma \ref{nilpo} (and base change), and in (\ref{boh2}) we used the triviality of the action of $N$ on $R\Gamma_{\cris}(\cl X/A_{\cris}^\times)$. This shows part \listref{bcn:1}.
   
   Part \listref{bcn:2} follows from \listref{bcn:1}. In fact, under the (non-canonical) identification $B_{\st}^+=B_{\cris}^+[U]$, given by (\ref{noncann}), applying to (\ref{beilin}) the (non-Galois-equivariant) map $B_{\st}^+\to B_{\cris}^+: U\mapsto 0$, and then Fontaine's map $\theta: B_{\cris}^+\to C$, by base change we get (\ref{iHK}).  The compatibility of (\ref{iHK}) with the Galois action is checked in the proof of \cite[Corollary 2.20]{CN4}.
  \end{proof}

  \begin{lemma}\label{nilpo}
   Let $\cl Z$ be a quasi-separated, fine, saturated, log-smooth, locally of finite type log scheme over $\cl O_{K, 1}^\times$ of dimension $d$. Then, the monodromy operator $N$ on $R\Gamma_{\cris}(\cl Z/W(k)^\times)$ is nilpotent with nilpotency index bounded above by a function depending on $d$.
  \end{lemma}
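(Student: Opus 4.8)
Our plan is to deduce the statement from two inputs: first, that $N$ is nilpotent of index at most $m+1$ on each cohomology group $H^m_{\cris}(\cl Z/W(k)^\times)$; second, that these groups vanish outside a range of degrees bounded by a function of $d$. A dévissage along the Postnikov tower of $R\Gamma_{\cris}(\cl Z/W(k)^\times)$ then produces a single nilpotency index depending only on $d$.

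First I would reduce to the case where $\cl Z$ is quasi-compact, hence of finite type over the Artinian ring $\cl O_K/p$ and so Noetherian of Krull dimension $\le d$. This is harmless: $R\Gamma_{\cris}(\cl Z/W(k)^\times)$ is the homotopy limit, compatibly with $N$, of the complexes $R\Gamma_{\cris}(U/W(k)^\times)$ over the quasi-compact opens $U\subseteq\cl Z$, so a nilpotency bound valid for all such $U$ propagates to $\cl Z$. For quasi-compact $\cl Z$, the log-crystalline Poincaré lemma computes $R\Gamma_{\cris}(\cl Z/W(k)^\times)$ Zariski-locally on $\cl Z$ by a log--de Rham complex of coherent $\cl O_{\cl Z}$-modules concentrated in degrees $[0,\delta]$ with $\delta\le d+1$ (the relative log dimension of $\cl Z$ over $W(k)^\times$); combined with Grothendieck vanishing on the Noetherian space $|\cl Z|$ of dimension $\le d$, this yields $R\Gamma_{\cris}(\cl Z/W(k)^\times)\in D^{[0,L]}$ for some $L\le 2d+2$.

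The main point is the per-degree bound $N^{m+1}=0$ on $H^m_{\cris}(\cl Z/W(k)^\times)$. By construction $N$ is the residue (along $t=p$, resp. along $t=0$ after base change) of the Gauss--Manin connection on the crystalline cohomology of $\cl Z$ formed relative to the log-smooth deformation $W(k)[t]$ of $W(k)^\times$ over $W(k)$. Invoking Kato's structure theorem for fine, saturated, log-smooth morphisms of Cartier type, together with noetherian approximation, one reduces to working étale-locally, where $\cl Z$ is --- up to a log-étale and smooth base change --- strictly semistable over $\cl O_K/p$; there the log--de Rham complex is an explicit Koszul-type complex on which the residue operator is given by the Hyodo--Kato formula, and the $p$-adic weight (Steenbrink--Mokrane) filtration $W_\bullet$ on $H^m$ --- which is exhaustive and separated, satisfies $\gr^W_j=0$ for $j\notin[0,2m]$, and obeys $N(W_j)\subseteq W_{j-2}$ --- forces $N^{m+1}(W_j)\subseteq W_{j-2(m+1)}=0$. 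In the non-proper case one runs the same argument after choosing a log-smooth compactification of $\cl Z$ (de Jong, Temkin) and using the Gysin/residue spectral sequence onto its proper boundary strata, all of dimension $\le d$ and compatibly with $N$; alternatively, the qualitative nilpotence (without a uniform bound) is already established in \cite[\S 4]{CN4}.

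Finally I would combine the two inputs. Writing $C:=R\Gamma_{\cris}(\cl Z/W(k)^\times)\in D^{[0,L]}$, the endomorphism $N^{m+1}$ kills $H^m(C)$ for every $m$; since a morphism $C\to M[-j]$ into a shift of a single module is detected on $H^j$, the map $N^{j+1}$ carries $\tau^{\le j}C$ into $\tau^{\le j-1}C$ in the derived category, and iterating this we find that $N^{(L+1)+L+\cdots+1}=N^{(L+1)(L+2)/2}$ sends $C=\tau^{\le L}C$ down to $\tau^{\le -1}C=0$. Hence $N$ is nilpotent on $R\Gamma_{\cris}(\cl Z/W(k)^\times)$ with index bounded by $M(d):=(L+1)(L+2)/2$, a quadratic function of $d$. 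The hard part is clearly the per-degree step: the monodromy-weight/Koszul computation must be carried out for an arbitrary fine, saturated, log-smooth, Cartier-type $\cl Z$ without any properness assumption, which is exactly where Kato's structure theorem and the log-smooth compactification enter, and is the only non-formal ingredient in the proof.
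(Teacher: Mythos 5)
Your overall architecture (a per--degree bound $N^{m+1}=0$ on $H^m$, boundedness of the complex in a range depending on $d$, and the truncation d\'evissage combining the two) is sound, and the final formal step is correct. But the per--degree step --- which you rightly identify as the only non-formal ingredient --- is where your argument has a genuine gap, and it is precisely the step the paper handles differently. You propose to reduce to the strictly semistable case ``\'etale-locally, up to a log-\'etale and smooth base change'' via Kato's structure theorem, and then invoke the Steenbrink--Mokrane weight filtration. The problem is that the monodromy operator and the weight filtration live on the \emph{global} cohomology $H^m_{\cris}(\cl Z/W(k)^\times)$: an \'etale-local structure result does not by itself produce a global filtration with $N(W_j)\subseteq W_{j-2}$ unless you show the locally defined filtrations glue (this gluing is exactly the content of Mokrane's construction \emph{for semistable schemes}, and would have to be redone for a general fs log-smooth $\cl Z$). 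Moreover, log-\'etale base changes do not preserve log-crystalline cohomology for free, and your fallback for the non-proper case (a log-smooth compactification plus a Gysin/residue spectral sequence compatible with $N$ in log-crystalline cohomology) is an unproved and substantial construction. Note also that you quietly add the Cartier-type hypothesis, which is not in the statement of the lemma.

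The paper avoids all of this by making the reduction to the semistable situation \emph{globally}: by Nizio{\l}'s toric resolution theorem there is a log-blow-up $\cl Y\to\cl Z$ over $\cl O_{K,1}^\times$ resolving singularities, and log-crystalline cohomology is invariant under log-blow-ups, giving a natural quasi-isomorphism $R\Gamma_{\cris}(\cl Z/W(k)^\times)\simeq R\Gamma_{\cris}(\cl Y/W(k)^\times)$ compatible with $N$; the nilpotency bound for $\cl Y$ is then exactly Mokrane's result. If you want to salvage your route, the missing lemma you would need to supply is either this global resolution step or a descent argument showing that your locally constructed monodromy-weight filtrations glue to a finite filtration of the global complex; without one of these, the per--degree bound $N^{m+1}=0$ on $H^m$ for an arbitrary fs log-smooth $\cl Z$ is not established.
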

  \begin{proof}
   By \cite[Theorem 5.10]{Niziol_toric} there exists a log-blow-up $\cl Y\to \cl Z$ over $\cl O_{K, 1}^\times$ that resolves singularities, and by the proof of \cite[Proposition 2.3]{Niziol_semi} we have a natural quasi-isomorphism
   $$R\Gamma_{\cris}(\cl Z/W(k)^\times)\overset{\sim}{\to}R\Gamma_{\cris}(\cl Y/W(k)^\times)$$
   compatible with monodromy $N$. Then, the statement follows from \cite[\S 3]{Mokrane}.
   %\footnote{Alternatively, it could be shown that the monodromy operator $N$ on $R\Gamma_{\cris}(\cl Z/W(k)^0)$ is induced from the monodromy operator on the Kim-Hain complex computing it, \cite[Remark 3.18, (ii)]{EY}, and the latter operator is nilpotent by definition.}
  \end{proof}

  \subsection{Beilinson bases and  $\infty$-categories of hypersheaves}\label{bbases} In this subsection, we collect some $\infty$-categorical tools that we will need to extend to rigid-analytic varieties over $C$ the local Hyodo--Kato morphism of \S \ref{localHK}.
 
  \begin{notation}[Hypersheaves and hypercompletion]
   Let $\cl C$ be a site, and let $\cl D$ be a presentable $\infty$-category.
   We denote by $\Shv(\cl C, \cl D)$ the $\infty$-category of sheaves on $\cl C$ with values in $\cl D$.
   
   We recall that $$\Shv(\cl C, \cl D)=\Shv(\cl C, \Ani)\otimes \cl D$$
  \cite[Remark 1.3.1.6]{LurSpectral}, where $\otimes$ denotes the tensor product of $\infty$-categories \cite[\S 4.8.1]{HA}.

  We denote by $\Shv^{\hyp}(\cl C, \Ani)$ the full $\infty$-subcategory of $\Shv(\cl C, \Ani)$ spanned by the hypercomplete objects, \cite[\S 6.5]{HTT}, and we define the $\infty$-category of \textit{hypersheaves on $\cl C$ with values in $\cl D$} as
  $$\Shv^{\hyp}(\cl C, \cl D):=\Shv^{\hyp}(\cl C, \Ani)\otimes \cl D.$$
  The inclusion $\Shv^{\hyp}(\cl C, \cl D)\hookrightarrow \Shv(\cl C, \cl D)$ admits a left adjoint
  \begin{equation}\label{hypercompl}
   (-)^{\hyp}:\Shv(\cl C, \cl D)\to \Shv^{\hyp}(\cl C, \cl D)
  \end{equation}
  called \textit{hypercompletion}, \cite[Remark 1.3.3.2]{LurSpectral}.
  \end{notation}

 The following generalization of the notion of Grothendieck basis for a site is due to Beilinson, \cite[\S 2.1]{BeiliDer}.
 
 \begin{df}
  Ler $\cl C$ be a small site. A \textit{Beilinson basis} for $\cl C$ is a pair $(\cl B, \beth)$ where $\cl B$ is a small category and $\beth: \cl B\to \cl C$ is a faithful functor satisfying the following property:
   \begin{enumerate}[]
    \item for any $V\in \cl C$ and any finite family of pairs $\{(U_{\alpha}, f_{\alpha})\}$ with $U_{\alpha}\in \cl B$ and $f_{\alpha}:V\to \beth(U_{\alpha})$, there exists a family $\{U'_{\beta}\}$ with $U'_{\beta}\in\cl B$ and a covering family $\{\beth(U_{\beta}')\to V\}$ such that each composition $$\beth(U'_{\beta})\to V\to \beth(U_{\alpha})$$
  lies in the image of $\Hom(U'_{\beta}, U_{\alpha})\hookrightarrow \Hom(\beth(U'_{\beta}), \beth(U_{\alpha}))$.
   \end{enumerate}

 \end{df}

 We endow $\cl B$ with the Grothendieck topology induced from that of $\cl C$: a sieve in $\cl B$ is a covering sieve if its image under $\beth:  \cl B\to \cl C$ generates a covering sieve in $\cl C$. \medskip

 We will use repeatedly the following result.

  \begin{lemma}\label{beihyp}
   Let $\cl C$ be a small site, and let $(\cl B, \beth)$ be a Beilinson basis for $\cl C$. For any presentable $\infty$-category $\cl D$, the functor $\beth: \cl B\to \cl C$ induces an equivalence of $\infty$-categories 
   $$\Shv^{\hyp}(\cl B, \cl D)\overset{\sim}{\to}\Shv^{\hyp}(\cl C, \cl D):\; \cl F\mapsto\cl F^\beth $$
   where the hypersheaf $\cl F^\beth$ is defined via sending $V\in \cl C$ to
   $$\cl F^\beth(V)=\colim\limits_{U_{\bullet}}\lim_{[n]\in \Delta}\cl F(U_n)$$
   the colimit running over all simplicial objects $U_\bullet$ of $\cl B$ such that $\beth(U_{\bullet})\to V$ is a hypercover; furthermore, chosen such a $U_{\bullet}$, the natural map
   $$\cl F^\beth(V)\to \lim_{[n]\in \Delta}\cl F(U_n)$$
   is an isomorphism in $\cl D$.
  \end{lemma}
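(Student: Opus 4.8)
The plan is to recognise both sides as the hypercomplete $\infty$-topos of a site and to run the classical comparison argument for a site and a dense basis, the only subtlety being that $\beth$ is merely faithful, so that the covering-lifting axiom must play the role ordinarily played by full faithfulness. First I would reduce to $\cl D=\Ani$: since $\Shv^{\hyp}(\cl C,\cl D)=\Shv^{\hyp}(\cl C,\Ani)\otimes\cl D$ and likewise over $\cl B$, and $-\otimes\cl D$ preserves equivalences of presentable $\infty$-categories, it suffices to build the equivalence and its inverse over $\Ani$, compatibly with evaluation at objects. The displayed formula then makes sense over any presentable $\cl D$ and computes the inverse there by base change; indeed, once $\cl F^{\beth}$ is known to be a hypersheaf with $\beth^{*}\cl F^{\beth}\simeq\cl F$, hyperdescent applied to any hypercover $\beth(U_\bullet)\to V$ gives $\cl F^{\beth}(V)\simeq\lim_{[n]\in\Delta}\cl F(U_n)$, and the colimit in the formula records that this value is independent of the chosen hypercover.

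Next I would set up the two functors. Equipping $\cl B$ with the induced topology, $\beth$ is continuous and cocontinuous --- continuity is immediate, and cocontinuity follows by feeding the pairs $(U,\,W_j\to\beth(U))$ attached to a covering family $\{W_j\to\beth(U)\}$ into the covering-lifting axiom --- so restriction along $\beth^{\op}$ carries sheaves to sheaves and, by a routine further check, hypersheaves to hypersheaves, yielding a functor $\beth^{*}\colon\Shv^{\hyp}(\cl C,\Ani)\to\Shv^{\hyp}(\cl B,\Ani)$. In the other direction I would let $\cl F^{\beth}$ be the hypercomplete sheafification of the left Kan extension of $\cl F$ along $\beth$; it is functorial in $\cl F$ and lies in $\Shv^{\hyp}(\cl C,\Ani)$. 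The case of the empty family in the covering-lifting axiom shows that every object of $\cl C$ is covered by objects of $\beth(\cl B)$, and iterating this that every $V\in\cl C$ receives a hypercover $\beth(U_\bullet)\to V$ with $U_\bullet$ a simplicial object of $\cl B$; combined with the standard computation of hypercomplete sheafification through hypercovers, this identifies the sections of $\cl F^{\beth}$ at $V$ with the colimit over such hypercovers of $\lim_{[n]\in\Delta}\cl F(U_n)$, over an indexing category that is nonempty and, up to simplicial homotopy, cofiltered.

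It then remains to check that the counit $(\beth^{*}\cl G)^{\beth}\to\cl G$ and the unit $\cl F\to\beth^{*}(\cl F^{\beth})$ are equivalences on hypercomplete objects. For the counit I would evaluate at $V$: each term $\lim_{[n]\in\Delta}\cl G(\beth(U_n))$ equals $\cl G(V)$ by hyperdescent of $\cl G$ along $\beth(U_\bullet)\to V$, so the colimit of these identical terms over the nonempty, homotopy-cofiltered indexing category is again $\cl G(V)$. For the unit I would evaluate at $U\in\cl B$: given any simplicial object $W_\bullet$ of $\cl B$ with an augmentation $\beth(W_\bullet)\to\beth(U)$ that is a hypercover of $\cl C$, I would refine it, skeleton by skeleton, by an honest $\cl B$-hypercover of $U$ --- at stage $n$, applying the covering-lifting axiom to the finite family of maps out of $W_n$ given by the augmentation to $\beth(U)$ and the face maps to the lower stages already rebuilt, replacing $W_n$ by a cover whose structure maps genuinely come from morphisms of $\cl B$, and assembling these choices compatibly. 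This refinability lets the colimit defining $\cl F^{\beth}(\beth(U))$ be computed over honest $\cl B$-hypercovers of $U$ alone, where each term is $\cl F(U)$ by hyperdescent of $\cl F$ on $\cl B$; hence $\cl F^{\beth}(\beth(U))\simeq\cl F(U)$, which is the unit.

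The hard part will be exactly this inductive refinement --- and, inseparably, the proof that the category of $\cl B$-valued hypercovers of a given object is cofiltered up to simplicial homotopy: it is the familiar degree-by-degree bookkeeping of the theory of hypercovers, now subject to the extra demand that every refinement be assembled out of morphisms of $\cl B$, which is precisely what the covering-lifting axiom for finite families of pairs is engineered to furnish. Everything else --- continuity and cocontinuity of $\beth$, density of the basis, and hyperdescent --- enters only formally.
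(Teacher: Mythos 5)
Your argument is essentially correct, but it takes a genuinely different route from the paper's. The paper disposes of the lemma in three lines: after the same reduction to $\cl D=\Ani$, it invokes Beilinson's result (\cite[\S 2.1 Proposition]{BeiliDer}) that a Beilinson basis induces an equivalence of the underlying \emph{ordinary} topoi $\cl B^{\sim}\overset{\sim}{\to}\cl C^{\sim}$, and then transports this to an equivalence of hypercomplete $\infty$-categories of sheaves via \cite[Proposition 6.5.2.14]{HTT}, since the Brown--Joyal--Jardine local model structure on simplicial presheaves (hence the hypercompletion) depends only on the $1$-topos. You instead rebuild the equivalence by hand: the adjunction between restriction along $\beth$ and hypersheafified left Kan extension, the Verdier-type computation of sections through hypercovers, and the degreewise refinement of an arbitrary hypercover of $\beth(U)$ by a $\cl B$-valued one. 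What you correctly flag as ``the hard part'' --- the refinement of hypercovers using the covering-lifting axiom and the (homotopy) cofilteredness of the category of basis-valued hypercovers --- is exactly the content that Beilinson's proposition packages once and for all at the $1$-categorical level, so the paper's citation is not a shortcut around a gap but a genuine outsourcing of the same combinatorics. Your route buys a self-contained proof and makes the displayed colimit formula for $\cl F^{\beth}(V)$ emerge directly from the construction rather than a posteriori from hyperdescent; the cost is that you must actually carry out the inductive refinement and cofilteredness arguments (and the small checks you wave at, e.g.\ that $\beth$ carries $\cl B$-hypercovers to $\cl C$-hypercovers so that restriction preserves hypercompleteness), all of which are standard but nontrivial bookkeeping in the style of SGA~4, Exp.~V or Dugger--Hollander--Isaksen. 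If you complete those steps, your proof is a valid and more explicit alternative.
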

  \begin{proof}
  It suffices to show the statement in the case $\cl D$ is the $\infty$-category of anima $\Ani$. By \cite[\S 2.1 Proposition]{BeiliDer}, the functor $\beth: \cl B\to \cl C$ is continuous and induces an equivalence of topoi $\cl B^{\sim}\to \cl C^{\sim}$. Then, the statement follows interpreting the notion of hypercompleteness in terms of the Brown--Joyal--Jardine theory of simplicial presheaves, via \cite[Proposition 6.5.2.14]{HTT}.
  \end{proof}

  \begin{rem}\label{indeedpre}
   We will often apply Lemma \ref{beihyp} in the case $\cl D=D(\Mod_A^{\cond})$ is the derived $\infty$-category of $A$-modules in $\CondAb$, for a given condensed ring $A$. Note that such $\cl D$ is indeed presentable, since it is compactly generated, as it follows from \cite[Theorem 2.2]{Scholzecond}.\footnote{Recall also our set-theoretic conventions in \S\ref{convent}.} Moreover, by \cite[Corollary 2.1.2.3]{LurSpectral}, we have an equivalence of $\infty$-categories
   $$D(\Shv(\cl C, \Mod_A^{\cond}))\overset{\sim}{\to} \Shv^{\hyp}(\cl C, D(\Mod_A^{\cond}))$$
   sending $M\in D(\Shv(\cl C, \Mod_A^{\cond}))$ to the hypersheaf $U\mapsto R\Gamma(U, M)$.
  \end{rem}

  \subsection{Globalization}\label{globals}
  
  In this subsection, we extend to rigid-analytic varieties over $C$ the local Hyodo--Kato morphism of \S \ref{localHK}, from a suitable Beilinson basis for the site $\Rig_{C, \eh}$. 
  
  \begin{notation}\label{notss}\
  \begin{description}
   \item[Semistable formal schemes] 
  For each prime  $\ell$,  we fix a compatible system $(p, p^{1/\ell}, p^{\ell^2}, \ldots)$ of $\ell$-th power roots of $p$ in $\cl O_C$.
   We denote by $\cl M_{\sss}$ the category of \textit{semistable $p$-adic formal schemes over $\Spf(\cl O_C)$}, that is the category of $p$-adic formal schemes over $\Spf(\cl O_C)$ having in the Zariski topology a covering by open affines $\fr U$ with \textit{semistable coordinates}, i.e. admitting an étale $\Spf(\cl O_C)$-morphism $\fr U\to \Spf(R^\square)$ with 
   $$R^{\square}:=\cl O_C\{t_0, \ldots, t_r, t^{\pm 1}_{r+1}, \ldots, t_d^{\pm 1}\}/(t_0\cdots t_r-p^q)$$
   for some $0\le r\le d$, and $q\in \Qq_{>0}$ (that may depend on $\fr U$).
   We denote by $\cl M_{\sss, \qcqs}$ the subcategory of $\cl M_{\sss}$ consisting of the qcqs formal schemes.
   We write $$(-)_{\eta}: \cl M_{\sss}\to \Rig_C$$ for the generic fiber functor. \medskip
   \item[Log structures] Unless stated otherwise, we equip $\fr X\in \cl M_{\sss}$ (resp. $\fr X_{\cl O_C/p^n}$) with the canonical log structure, \cite[\S 1.6]{CK}, i.e. the log structure given by the subsheaf associated to the subpresheaf $\cl O_{\fr X, \ett}\cap (\cl O_{\fr X, \ett}[1/p])^\times \hookrightarrow \cl O_{\fr X, \ett}$ (resp. its pullback).
   For $\fr X\in \cl M_{\sss}$, we denote by $\fr X_{\cl O_C/p}^0$ the pullback to $\cl O_{\breve F, 1}^0$ of the log scheme $\fr X_{\cl O_C/p}$ over $\cl O_{C, 1}^\times$. \medskip
   \item[Log de Rham cohomology] For $\fr X\in \cl M_{\sss}$, we denote by $\Omega_{\fr X, \log}^\bullet$ the logarithmic de Rham complex of $\fr X$ over $\cl O_C$, and we define the \textit{log de Rham cohomology} of $X$ (over $\cl O_C$) as
  $$R\Gamma_{\log\dR}(\fr X):=R\Gamma(\fr X, \Omega_{\fr X, \log}^{\bullet})\in D(\Mod_{\cl O_C}^{\cond}).$$
  
  \end{description}
  \end{notation} 

  \begin{rem}\label{bbcc}\
  \begin{enumerate}[(i)]
   \item\label{bbcc:1}  For any affine $\fr X \in\cl M_{\sss}$ with semistable coordinates there exists a finite extension $L/K$ and a $p$-adic formal scheme $\fr X '\to \Spf(\cl O_L)$ admitting an étale $\Spf(\cl O_L)$-morphism $\fr X'\to \Spf(R')$ with 
   $$R':=\cl O_L\{t_0, \ldots, t_r, t^{\pm 1}_{r+1}, \ldots, t_d^{\pm 1}\}/(t_0\cdots t_r-p^q)$$
   for some $0\le r\le d$, and $q\in \Qq_{>0}$, such that $\fr X=\fr X'\times_{\Spf(\cl O_L)}\Spf(\cl O_C)$: this follows from \cite[04D1, 00U9]{Thestack}. By \cite[Claims 1.6.1 and 1.6.2]{CK}, the $p$-adic formal scheme $\fr X'$ can be endowed with a fine log structure, whose base change to $\Spf(\cl O_C)$ gives the log structure on $\fr X$ we started with.
   \item\label{bbcc:2} For any $\fr X\in \cl M_{\sss, \qcqs}$ there exist a finite extension $L/K$ and a descent of $\fr X_{\cl O_C/p}$ to a qcqs, fine, log-smooth, log scheme over $\cl O_{L,1}^\times$ of Cartier type: covering $\fr X$ by a finite number of open affines with semistable coordinates, this follows from part (\ref{bbcc:1}) and fact that morphisms of Cartier type are stable under base change.
  \end{enumerate}
   \end{rem}
 
  The following Beilinson basis will be used to define the Hyodo--Kato cohomology for rigid-analytic varieties over $C$ starting from the semistable reduction case.
   
  \begin{prop}\label{propobv}
   The pair $(\cl M_{\sss}, (-)_{\eta})$ is a Beilinson basis for the site $\Rig_{C, \eh}$.
  \end{prop}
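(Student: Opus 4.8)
The plan is to verify directly the two conditions defining a Beilinson basis: faithfulness of the generic fibre functor, and the covering axiom. Faithfulness is immediate: every $\fr X\in\cl M_{\sss}$ is $\cl O_C$-flat, hence $p$-torsion-free, so on each affine open $\Spf B\subseteq\fr X$ the inclusion $B\hookrightarrow B[1/p]$ is injective; since a morphism of $p$-adic formal $\cl O_C$-schemes is determined by the induced ring maps on affine opens, and these are recovered from the generic fibre by this injectivity, one obtains $\Hom_{\cl O_C}(\fr X,\fr Y)\hookrightarrow\Hom_C((\fr X)_{\eta},(\fr Y)_{\eta})$ for all $\fr X,\fr Y\in\cl M_{\sss}$. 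So I would dispose of faithfulness in a line and concentrate on the covering axiom.

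For the covering axiom, fix $V\in\Rig_C$ and a finite family $\{(U_{\alpha},f_{\alpha})\}$ with $U_{\alpha}\in\cl M_{\sss}$ and $f_{\alpha}\colon V\to(U_{\alpha})_{\eta}$. First I would make a chain of reductions. The condition is local on $V$ for the $\eh$-topology: if $\{V_i\to V\}$ is an $\eh$-cover and each $(V_i,\{f_{\alpha}|_{V_i}\})$ admits a solution, then composing the resulting covers produces a solution for $(V,\{f_{\alpha}\})$, the ``formal morphism'' conclusion being stable under post-composition. Since an admissible affinoid covering is an $\eh$-cover I may assume $V$ quasi-compact; since $V_{\redd}\to V$ is a universal homeomorphism, hence an $\eh$-cover, I may assume $V$ reduced; and then Proposition \ref{baseh} provides a proper $\eh$-cover $g\colon Y\to V$ with $Y$ smooth and quasi-compact, so replacing $V$ by $Y$ and each $f_{\alpha}$ by $f_{\alpha}\circ g$ I reduce to the case $V$ smooth and quasi-compact.

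Next I would construct a formal model of $V$ to which all the $f_{\alpha}$ extend. After shrinking each $U_{\alpha}$ to a quasi-compact open formal subscheme whose generic fibre still contains the image of $f_{\alpha}$ (still an object of $\cl M_{\sss}$), Raynaud's theory of formal models yields, starting from any flat formal model $\fr V$ of $V$ over $\cl O_C$ and after finitely many admissible formal blow-ups $\fr V'\to\fr V$ — which leave the generic fibre unchanged, $(\fr V')_{\eta}=V$ — morphisms of formal $\cl O_C$-schemes $\tilde f_{\alpha}\colon\fr V'\to U_{\alpha}$ with $(\tilde f_{\alpha})_{\eta}=f_{\alpha}$. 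I would then invoke semistable reduction over $\cl O_C$ (Hartl's semistable reduction theorem for rigid-analytic spaces, and Temkin's altered local uniformization of Berkovich spaces) to pass from $\fr V'$ to a finite family of objects $U'_{\beta}\in\cl M_{\sss}$ together with morphisms $h_{\beta}\colon U'_{\beta}\to\fr V'$ of formal $\cl O_C$-schemes such that $\{(h_{\beta})_{\eta}\colon(U'_{\beta})_{\eta}\to V\}$ is an $\eh$-cover. This is the required covering family: for every $\alpha$ and $\beta$ the composite $(U'_{\beta})_{\eta}\to V\xrightarrow{f_{\alpha}}(U_{\alpha})_{\eta}$ equals $(\tilde f_{\alpha}\circ h_{\beta})_{\eta}$, hence lies in the image of $\Hom_{\cl O_C}(U'_{\beta},U_{\alpha})\hookrightarrow\Hom_C((U'_{\beta})_{\eta},(U_{\alpha})_{\eta})$, which concludes the argument.

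The hard part will be this last invocation, and the point to get right is that it yields an $\eh$-cover of the generic fibre — rather than merely an $h$-cover or an altered cover. This is where it matters that $C$ is algebraically closed and deeply ramified, so that no residual field extension intervenes and the multiplicities produced when resolving special fibres can be absorbed into the fractional powers $p^q$ allowed by $\cl M_{\sss}$; and that $V=(\fr V')_{\eta}$ is already smooth, so that the operations performed on $\fr V'$ are, on generic fibres, composites of admissible blow-ups — which do not touch the generic fibre and hence preserve the already extended morphisms $\tilde f_{\alpha}$ — and of modifications, the latter being $\eh$-covers up to the lower-dimensional ``bad'' loci, which are treated recursively by means of Proposition \ref{baseh}. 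Verifying that semistable reduction can indeed be carried out $\eh$-locally over $\cl O_C$ and compatibly with the morphisms $\tilde f_{\alpha}$ extended via Raynaud is the substance of the proof; everything else is the bookkeeping of covers indicated above.
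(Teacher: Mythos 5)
Your proof follows essentially the same route as the paper: reduce via Proposition \ref{baseh} (and Remark \ref{oyt}) to $V$ smooth and quasi-compact, then produce semistable formal models compatible with the finitely many given maps $f_{\alpha}$ by combining Raynaud's theory of formal models with Temkin-type semistable reduction. The step you defer as ``the substance of the proof'' is precisely what the paper outsources to \cite[Proposition 2.8]{CN}, which packages Temkin's alteration theorem \cite[Theorem 3.3.1]{Temk2} into the statement that $(\cl M_{\sss}, (-)_{\eta})$ is a Beilinson basis for $\RigSm_{C, \ett}$; note that in the smooth case over the algebraically closed field $C$ the covering families so obtained are étale, which disposes of your concern about getting only an $h$-cover rather than an $\eh$-cover.
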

  \begin{proof}
   By Proposition \ref{baseh} (and Remark \ref{oyt}), it suffices to show that  $(\cl M_{\sss}, (-)_{\eta})$ is a Beilinson basis for $\RigSm_{C, \ett}$, i.e. the big étale site of smooth rigid-analytic varieties over $C$ . This follows from Temkin's alteration theorem \cite[Theorem 3.3.1]{Temk2}, as shown in \cite[Proposition 2.8]{CN}.
  \end{proof}

  \subsubsection{\normalfont{\textbf{Condensed $(\varphi, N)$-modules}}}
  
  Before defining the Hyodo--Kato cohomology for rigid-analytic varieties over $C$, we need to establish the following terminology.
  
  \begin{df} Let $\varphi: \breve F\to \breve F$ denote the automorphism induced by the $p$-th power Frobenius on the residue field.
   \begin{enumerate}[(i)]
    \item A \textit{condensed $\varphi$-module over $\breve F$} is a pair $(V, \varphi_V)$ with $V\in \Vect_{\breve F}^{\cond}$ and $\varphi_V:V\to V$ a $\varphi$-semilinear automorphism, called \textit{Frobenius}.  A morphism of condensed $\varphi$-modules over $\breve F$ is a morphism of condensed modules over $\breve F$, which is compatible with the Frobenius.
    \item  A \textit{condensed $(\varphi, N)$-module over $\breve F$} is a triple $(V, \varphi_V, N_V)$ with $(V, \varphi_V)$ a condensed $\varphi$-module over $\breve F$ and 
    $N_V:V\to V$ a $\breve F$-linear endomorphism, called \textit{monodromy operator}, such that $N_V\varphi_V=p\varphi_V N_V$ (by abuse of notation, we often denote $\varphi=\varphi_V$ and $N=N_V$). 
    A morphism of condensed $(\varphi, N)$-modules over $\breve F$ is a morphism of condensed modules over $\breve F$, which is compatible with the Frobenius and the monodromy operator.
   \end{enumerate}
   
   Note that the category of condensed $(\varphi, N)$-modules over $\breve F$ is an abelian category. We denote by $D_{(\varphi, N)}(\Vect_{\breve F}^{\cond})$ the corresponding derived $\infty$-category; we abbreviate $D_{(\varphi, N)}(\breve F)=D_{(\varphi, N)}(\Vect_{\breve F}^{\cond})$.
  \end{df}

  \begin{rem}\label{automf}
   For $\fr X\in \cl M_{\sss}$, we have that $R\Gamma_{\cris}(\fr X_{\cl O_C/p}^0/\cl O_{\breve F}^0)_{\Qq_p}$ lies in  $D_{(\varphi, N)}(\Vect_{\breve F}^{\cond})$, in fact,  by \cite[Proposition 2.24]{HK} the Frobenius is an automorphism on it.
  \end{rem}
  
  Then, we are ready to give the following definition, which is based on Lemma \ref{beihyp} and Proposition \ref{propobv}.
  
  \begin{df}[Hyodo--Kato cohomology]\label{defhk}
   We denote by $\cl F_{\HK}$ the hypersheaf on $\Rig_{C, \eh}$ with values in $D_{(\varphi, N)}(\Vect_{\breve F}^{\cond})$ associated to the presheaf
   \begin{equation}\label{prehk}
    (\cl M_{\sss})^{\op}\to D_{(\varphi, N)}(\Vect_{\breve F}^{\cond}): \fr X\mapsto R\Gamma_{\cris}(\fr X_{\cl O_C/p}^0/\cl O_{\breve F}^0)_{\Qq_p},
   \end{equation}
   For $X$ a rigid-analytic variety over $C$, we define the \textit{Hyodo--Kato cohomology} of $X$ as
   $$R\Gamma_{\HK}(X):=R\Gamma(X, \cl F_{\HK})\in D_{(\varphi, N)}(\Vect_{\breve F}^{\cond}).$$
  \end{df}
  \medskip

  The following result shows in particular that the Hyodo--Kato cohomology of $X$ is a refinement of the de Rham cohomology of $X$ (Definition \ref{dRsingg}).\medskip
  
  We refer the reader to \cite[\S A.6]{Bosco} and \S \ref{nuccomp} for a review of the notion of \textit{nuclearity}, introduced by Clausen--Scholze, used in the following statement and the rest of the paper.

  \begin{theorem}\label{mainHK} Let $X$ be a rigid-analytic variety over $C$.
  \begin{enumerate}[(i)]
   \item\label{mainHK:1}{\normalfont (Local-global compatibility)} Assume $X$ is the generic fiber of $\fr X\in \cl M_{\sss}$, then the natural map
   $$R\Gamma_{\cris}(\fr X_{\cl O_C/p}^0/\cl O_{\breve F}^0)_{\Qq_p}\to R\Gamma_{\HK}(X)$$
   is an isomorphism in $D_{(\varphi, N)}(\Vect_{\breve F}^{\cond})$.
   \item\label{mainHK:2} {\normalfont (Boundedness and nuclearity)} If $X$ is qcqs of dimension $d$, then $R\Gamma_{\HK}(X)$ is represented by a complex of nuclear (solid) $\breve F$-vector spaces, and it lies in $D^{\le 2d}(\Vect_{\breve F}^{\ssolid})$. Moreover, the monodromy operator $N$ on $R\Gamma_{\HK}(X)$ is nilpotent with nilpotency index bounded above by a function depending on $d$.
   \item\label{mainHK:3} {\normalfont (Hyodo--Kato isomorphism)} Assume $X$ is connected and paracompact, then we have a natural isomorphism in $D(\Vect_{\Qq_p}^{\ssolid})$
   $$\iota_{\HK}: R\Gamma_{\HK}(X)\dsolid_{\breve F} C\overset{\sim}{\to} R\Gamma_{\dR}(X).$$
  \end{enumerate}
  \end{theorem}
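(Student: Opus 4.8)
\emph{Strategy and part \listref{mainHK:1}.} All three parts will be reduced to the local input of Theorem \ref{bcn} via the Beilinson basis $(\cl M_{\sss}, (-)_\eta)$ of $\Rig_{C, \eh}$ (Proposition \ref{propobv}), restricted to its qcqs objects, on whose generic fibers Theorem \ref{bcn} applies by Remark \ref{bbcc}\listref{bbcc:2}. For part \listref{mainHK:1}, by Lemma \ref{beihyp} (applied with the constant simplicial object in its last assertion) it suffices to show that the presheaf (\ref{prehk}) already satisfies hyperdescent for the topology on $\cl M_{\sss}$ induced from $\Rig_{C, \eh}$; then $\cl F_{\HK}(\fr X_\eta) = R\Gamma_{\cris}(\fr X_{\cl O_C/p}^0/\cl O_{\breve F}^0)_{\Qq_p}$ for $\fr X \in \cl M_{\sss}$, as claimed. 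Since $\eh$-coverings are generated by étale coverings, universal homeomorphisms — which do not affect $\cl O_C/p$-log-crystalline cohomology — and blow-up coverings, I would deduce hyperdescent by combining étale descent with proper cohomological descent for log-crystalline cohomology, following Beilinson and Colmez--Nizioł. Alternatively, one may apply $-\dsolid_{\cl O_{\breve F}}C$, use Theorem \ref{bcn}\listref{bcn:2} and the comparison with log de Rham cohomology (as in part \listref{mainHK:3} below) to rewrite (\ref{prehk}) on the basis as $\fr X \mapsto R\Gamma_{\dR}(\fr X_\eta)$ — which is a hypersheaf by Definition \ref{dRsingg} and Lemma \ref{beihyp} — and conclude, since $-\dsolid_{\cl O_{\breve F}}C$ is conservative ($\breve F \to C$ being faithfully flat) and, on the uniformly bounded complexes at hand, commutes with the relevant totalizations.

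\emph{Part \listref{mainHK:3}.} I would construct $\iota_{\HK}$ by descent from the Beilinson basis. The decisive step is a globalization lemma: for $X$ connected and paracompact — hence of countable type — the natural comparison
\[
 R\Gamma_{\HK}(X)\dsolid_{\breve F}C\;\longrightarrow\; R\Gamma\bigl(X_{\eh},\,\cl F_{\HK}\dsolid_{\breve F}C\bigr)
\]
is an isomorphism. This is where the theory of nuclear modules enters: on a finite-dimensional paracompact $X$, $R\Gamma(X_{\eh},-)$ is computed by a countable limit of cohomologies over affinoids, these are represented by nuclear solid $\breve F$-vector spaces by part \listref{mainHK:2}, and $-\dsolid_{\breve F}C$ commutes with such limits of nuclear complexes. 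Granting this, it remains to identify $\cl F_{\HK}\dsolid_{\breve F}C$ on the basis: for qcqs $\fr X \in \cl M_{\sss}$, Theorem \ref{bcn}\listref{bcn:2} gives
\[
 R\Gamma_{\cris}(\fr X_{\cl O_C/p}^0/\cl O_{\breve F}^0)_{\Qq_p}\dsolid_{\breve F}C\;\simeq\;R\Gamma_{\cris}(\fr X_{\cl O_C/p}/\cl O_C^\times)_{\Qq_p},
\]
which by the log-crystalline Poincaré lemma is $R\Gamma_{\log\dR}(\fr X)_{\Qq_p}$; and since the generic fiber of a semistable formal scheme is smooth and its differentials recover the log differentials of $\fr X$ after inverting $p$ (e.g.\ $dt_0/t_0 = -\sum_{i\ge 1}dt_i/t_i$ on $\fr X_\eta$), this is $R\Gamma_{\dR}(\fr X_\eta)$, naturally in $\fr X$. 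By Lemma \ref{beihyp} this identifies $\cl F_{\HK}\dsolid_{\breve F}C$ with the $\eh$-hypersheaf computing $R\Gamma_{\dR}$; taking $R\Gamma(X_{\eh},-)$ yields $\iota_{\HK}$, and the solid-module and (when $X$ descends to $K$) Galois compatibilities are inherited from Theorem \ref{bcn}.

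\emph{Part \listref{mainHK:2}.} For qcqs $\fr X \in \cl M_{\sss}$ of dimension $d$, the reductions modulo $p^n$ of $R\Gamma_{\cris}(\fr X_{\cl O_C/p}^0/\cl O_{\breve F}^0)$ are computed by log de Rham complexes of a $d$-dimensional log scheme, hence lie in $D^{\le 2d}$ and are perfect; thus $R\Gamma_{\cris}(\fr X_{\cl O_C/p}^0/\cl O_{\breve F}^0)_{\Qq_p}$ is a perfect — in particular nuclear — complex over $\breve F$, connective of amplitude $\le 2d$. For a general qcqs $X$ of dimension $d$, choose (Proposition \ref{baseh}, Remark \ref{bbcc}) an $\eh$-hypercover $\fr Y_\bullet \to X$ with $\fr Y_n \in \cl M_{\sss}$ qcqs of dimension $\le d$, so that $R\Gamma_{\HK}(X)\simeq \lim_{[n]\in\Delta}R\Gamma_{\cris}(\fr Y_{n,\cl O_C/p}^0/\cl O_{\breve F}^0)_{\Qq_p}$ is a totalization of nuclear complexes, hence nuclear by the stability of nuclear modules under the relevant countable limits. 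The bound $R\Gamma_{\HK}(X)\in D^{\le 2d}$ then follows from part \listref{mainHK:3} applied to the (qcqs, hence paracompact) connected components of $X$, together with Corollary \ref{dRbound} and the conservativity of $-\dsolid_{\breve F}C$; alternatively, from Guo's bound on $\eh$-cohomology combined with the local amplitude. Finally, applying Lemma \ref{nilpo} and base change along $W(k)\to \cl O_{\breve F}$ to each $\fr Y_n$ (as in the proof of Theorem \ref{bcn}), some power $N^{f(d)}$ depending only on $d$ annihilates every term of the hypercover, hence $R\Gamma_{\HK}(X)$.

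\emph{Main obstacle.} The genuinely hard point is the globalization lemma in part \listref{mainHK:3}: commuting $-\dsolid_{\breve F}C$ past the infinite limits computing $\eh$-cohomology on a paracompact variety, which forces one into the nuclear and solid functional analysis of Clausen--Scholze and is precisely where the hypotheses ``connected, paracompact'' are used. The remaining steps are either standard comparisons (crystalline Poincaré lemma, log versus ordinary differentials on a semistable model) or formal consequences of Lemma \ref{beihyp} and of the results already recorded above.
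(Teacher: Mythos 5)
Your treatment of parts \listref{mainHK:1} and \listref{mainHK:3} is essentially the paper's: for \listref{mainHK:1} the paper uses exactly your ``alternative'' route (transfer $\eh$-hyperdescent from $R\Gamma_{\dR}$ to the presheaf (\ref{prehk}) by applying $-\dsolid_{\breve F}C$, commuting it past the totalization via nuclearity of the local terms together with the decomposition $\lim_{[n]\in\Delta}=\varprojlim_m\lim_{[n]\in\Delta_{\le m}}$, and concluding by conservativity of $-\dsolid_{\breve F}C$). Your first suggestion for \listref{mainHK:1} (proper cohomological descent for log-crystalline cohomology of the special fibers) is not what is done and would be hard to make sense of, since a covering in $\cl M_{\sss}$ is only required to induce an $\eh$-hypercover on generic fibers. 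Note also that conservativity is obtained not from abstract faithful flatness of $\breve F\to C$ but from the concrete splitting $C\cong\breve F\oplus V$ in $\Vect_{\breve F}^{\ssolid}$ (possible because $\breve F$ is discretely valued); this splitting does double duty below.

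The genuine gap is in part \listref{mainHK:2}: your claim that $R\Gamma_{\cris}(\fr X_{\cl O_C/p}^0/\cl O_{\breve F}^0)_{\Qq_p}$ is a perfect complex over $\breve F$ for qcqs $\fr X\in\cl M_{\sss}$ is false unless $\fr X$ is proper. Already for $\fr X=\Spf(\cl O_C\{t\})$ the rational crystalline $H^1$ of the special fiber is the cokernel of $d:\breve F\langle t\rangle\to\breve F\langle t\rangle\,dt$, which is nonzero, infinite-dimensional and non-Hausdorff; likewise the reductions mod $p^n$ are not perfect, since the terms of the log de Rham complex of an affine lift are not finitely generated. This is precisely the ``topological issue'' the introduction flags as one of the two main difficulties the paper is built to circumvent. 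The correct argument, and the one the paper gives, is that each local complex is represented by a complex of $\breve F$-Banach spaces: the local Hyodo--Kato isomorphism (\ref{todR}) identifies $R\Gamma_{\cris}(\fr X_{\cl O_C/p}^0/\cl O_{\breve F}^0)_{\Qq_p}\dsolid_{\breve F}C$ with $R\Gamma_{\dR}(\fr X_C)$, a complex of $C$-Banach (hence $\breve F$-Banach) spaces since $\fr X_C$ is qcqs, and the splitting $C\cong\breve F\oplus V$ exhibits $R\Gamma_{\cris}(\fr X_{\cl O_C/p}^0/\cl O_{\breve F}^0)_{\Qq_p}$ as a direct summand of this; a direct summand of a Banach space is Banach, and Banach spaces are nuclear. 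With that repair, the remainder of your part \listref{mainHK:2} (totalization of nuclear complexes along the hypercover, the $D^{\le 2d}$ bound deduced from part \listref{mainHK:3}, Corollary \ref{dRbound} and conservativity, and nilpotency of $N$ via Lemma \ref{nilpo}) agrees with the paper.
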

  \begin{proof}
  We start with some preliminary observations.
  First, we observe that,  for any $\fr X\in \cl M_{\sss, \qcqs}$, by \cite[(1.8.1)]{Beili}, we have a natural quasi-isomorphism
  \begin{equation}\label{todR0}
    R\Gamma_{\cris}(\fr X_{\cl O_C/p}/\cl O_C^\times)_{\Qq_p}\simeq R\Gamma_{\log\dR}(\fr X)_{\Qq_p}\simeq R\Gamma_{\dR}(\fr X_C)
  \end{equation}
  and then, by Theorem \ref{bcn}, which applies thanks to Remark \ref{bbcc}, we have a natural quasi-isomorphism
  \begin{equation}\label{todR}
   R\Gamma_{\cris}(\fr X_{\cl O_C/p}^0/\cl O_{\breve F}^0)_{\Qq_p}\dsolid_{\breve F}C\overset{\sim}{\to}  R\Gamma_{\dR}(\fr X_C).
  \end{equation}  
  Moreover, we claim that $R\Gamma_{\cris}(\fr X_{\cl O_C/p}^0/\cl O_{\breve F}^0)_{\Qq_p}$ is represented by a complex of $\breve F$-Banach spaces. For this, we note that, as $\breve F$ is discretely valued we can choose a basis of the $\breve F$-Banach space $C$, and then there exists a $\breve F$-Banach space $V$ and an isomorphism 
  \begin{equation}\label{funiso}
   C\cong \breve F\oplus V \;\text{ in }\; \Vect_{\breve F}^{\ssolid}.
  \end{equation}
  Now, the claim follows using the quasi-isomorphism (\ref{todR}) combined with the isomorphism (\ref{funiso}), observing that, as $\fr X_C$ is qcqs, $R\Gamma_{\dR}(\fr X_C)$ is represented by a complex $C$-Banach spaces (and hence $\breve F$-Banach spaces), and a direct summand of a Banach space is a Banach space.

  For part \listref{mainHK:1}, it suffices to show that given $\fr X\in \cl M_{\sss, \qcqs}$ with generic fiber $X$, then, for any simplicial object $\fr U_\bullet$ of $\cl M_{\sss, \qcqs}$ such that $\fr U_{\bullet, \eta}\to X$ is a $\eh$-hypercover, the natural map 
  \begin{equation}\label{natty}
   R\Gamma_{\cris}(\fr X_{\cl O_C/p}^0/\cl O_{\breve F}^0)_{\Qq_p}\to \lim_{[n]\in \Delta}R\Gamma_{\cris}(\fr U_{n, \cl O_C/p}^0/\cl O_{\breve F}^0)_{\Qq_p}
  \end{equation}
  is a quasi-isomorphism. First, we note that the map (\ref{natty}) is compatible with the Frobenius and the monodromy operator. Next, we will use an idea from the proof of \cite[Proposition 3.5]{CN4}. By (\ref{todR}) we have the following commutative diagram
  \begin{center}
  \begin{tikzcd}
  R\Gamma_{\cris}(\fr X_{\cl O_C/p}^0/\cl O_{\breve F}^0)_{\Qq_p}\dsolid_{\breve F}C \arrow[d, "\wr"]\arrow[r]&  \lim_{[n]\in \Delta}(R\Gamma_{\cris}(\fr U_{n, \cl O_C/p}^0/\cl O_{\breve F}^0)_{\Qq_p}\dsolid_{\breve F} C) \arrow[d, "\wr"] \\
   R\Gamma_{\dR}(\fr X_C) \arrow[r] &  \lim_{[n]\in \Delta}R\Gamma_{\dR}(\fr U_{n, C}).
  \end{tikzcd}
  \end{center}
  The bottom horizontal arrow is a quasi-isomorphism, as $R\Gamma_{\dR}(-)$ satisfies $\eh$-hyperdescent, hence the top horizontal arrow is a quasi-isomorphism too. Moreover, setting $M_n:=R\Gamma_{\cris}(\fr U_{n, \cl O_C/p}^0/\cl O_{\breve F}^0)_{\Qq_p}$ we have
  $$\lim_{[n]\in \Delta}(M_n\dsolid_{\breve F} C)=(\lim_{[n]\in \Delta}M_n)\dsolid_{\breve F} C$$
  as it follows from \cite[Corollary A.67(ii)]{Bosco}, recalling that each $M_n$ is represented by a complex of $\breve F$-Banach spaces (and hence nuclear $\breve F$-vector spaces by \cite[Corollary A.61]{Bosco}), and using that\footnote{Here, all the limits are derived.}
  \begin{equation}\label{finoo}
   \lim_{[n]\in \Delta}M_n=\varprojlim_{m\in \Nn}\lim_{[n]\in \Delta_{\le m}}M_n.
  \end{equation}
  Then, considering the fibers of the horizontal arrows in following commutative diagram
   \begin{center}
  \begin{tikzcd}
  R\Gamma_{\cris}(\fr X_{\cl O_C/p}^0/\cl O_{\breve F}^0)_{\Qq_p} \arrow[d]\arrow[r]&   \lim_{[n]\in \Delta}R\Gamma_{\cris}(\fr U_{n, \cl O_C/p}^0/\cl O_{\breve F}^0)_{\Qq_p}\arrow[d] \\
  R\Gamma_{\cris}(\fr X_{\cl O_C/p}^0/\cl O_{\breve F}^0)_{\Qq_p}\dsolid_{\breve F}C  \arrow[r, "\sim"] & \lim_{[n]\in \Delta}(R\Gamma_{\cris}(\fr U_{n, \cl O_C/p}^0/\cl O_{\breve F}^0)_{\Qq_p}\dsolid_{\breve F} C)
  \end{tikzcd}
  \end{center}
  in order to show that the top horizontal arrow is a quasi-isomorphism, it suffices to prove that, for any $M\in D(\Vect_{\breve F}^{\ssolid})$, $$M\dsolid_{\breve F} C \text{ acyclic } \implies  M \text{ acyclic}.$$ This immediately follows using the isomorphism (\ref{funiso}).

  For part \listref{mainHK:2}, to show that $R\Gamma_{\HK}(X)$ lies in $D^{\le 2d}(\Vect_{\breve F}^{\ssolid})$, using the quasi-isomorphism (\ref{todR}) combined with the isomorphism (\ref{funiso}) and $\eh$-hyperdescent, we can reduce to the analogous statement for the de Rham cohomology, which follows from Corollary \ref{dRbound}. To show that $R\Gamma_{\HK}(X)$ is represented by a complex of nuclear $\breve F$-vector spaces, taking a simplicial object $\fr U_\bullet$ of $\cl M_{\sss, \qcqs}$ such that $\fr U_{\bullet, \eta}\to X$ is a $\eh$-hypercover, by $\eh$-hyperdescent and part \listref{mainHK:1}, we have
  \begin{equation}\label{disceso}
   R\Gamma_{\HK}(X)=\lim_{[n]\in \Delta}R\Gamma_{\cris}(\fr U_{n, \cl O_C/p}^0/\cl O_{\breve F}^0)_{\Qq_p}
  \end{equation}
  and then, by \cite[Corollary A.61, Theorem A.43(i)]{Bosco}, we can reduce to the fact that each complex $R\Gamma_{\cris}(\fr U_{n, \cl O_C/p}^0/\cl O_{\breve F}^0)_{\Qq_p}$ is represented by a complex of $\breve F$-Banach spaces, which was shown above. The last statement of part \listref{mainHK:2} follows from Lemma \ref{nilpo}.
 
  For part \listref{mainHK:3}, we first assume $X$ qcqs. In this case, using (\ref{disceso}), the statement follows (\ref{todR}) and \cite[Corollary A.67(ii)]{Bosco}, which applies thanks to part \listref{mainHK:2}. For a general $X$ connected and paracompact, choosing a quasi-compact admissible covering $\{U_n\}_{n\in \Nn}$ of $X$ such that $U_n\subseteq U_{n+1}$, the statement follows from the previous case, using again \cite[Corollary A.67(ii)]{Bosco} and part \listref{mainHK:2}.
  \end{proof}

  As a consequence of the Hyodo--Kato isomorphism, we have the following result.

  \begin{cor}\label{samedim}
   Let $X$ be a connected, paracompact, rigid-analytic variety over $C$. Then, the Hyodo--Kato complex $R\Gamma_{\HK}(X)$ and the de Rham complex $R\Gamma_{\dR}(X)$ have the same cohomological dimension.
  \end{cor}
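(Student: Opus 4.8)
The plan is to read everything off the Hyodo--Kato isomorphism of Theorem~\ref{mainHK}\listref{mainHK:3}. That isomorphism identifies $R\Gamma_{\dR}(X)$ with $R\Gamma_{\HK}(X)\dsolid_{\breve F}C$ in $D(\Vect_{\Qq_p}^{\ssolid})$, so the problem reduces to the purely formal question: for a complex $M\in D(\Vect_{\breve F}^{\ssolid})$ — here $M=R\Gamma_{\HK}(X)$, which by Theorem~\ref{mainHK}\listref{mainHK:2} genuinely lives in the solid derived category — do $M$ and $M\dsolid_{\breve F}C$ have the same top nonvanishing cohomological degree? Both $R\Gamma_{\dR}(X)$ and $R\Gamma_{\HK}(X)$ are connective, being computed as $R\Gamma$ of complexes of sheaves concentrated in non-negative degrees, so "cohomological dimension" unambiguously means $\mathrm{cd}(-):=\sup\{i:H^i(-)\neq 0\}\in\Nn\cup\{\infty\}$.

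One inequality, $\mathrm{cd}(M\dsolid_{\breve F}C)\le\mathrm{cd}(M)$, is immediate from right $t$-exactness of the derived solid tensor product: if $M\in D^{\le n}$ then $M\dsolid_{\breve F}C\in D^{\le n}$. For the reverse inequality I would invoke the splitting $C\cong\breve F\oplus V$ in $\Vect_{\breve F}^{\ssolid}$ (for some $\breve F$-Banach space $V$) that is already used in the proof of Theorem~\ref{mainHK} and is available because $\breve F$ is discretely valued. Composing the inclusion $\breve F\hookrightarrow C$ of the first summand with the projection $C\twoheadrightarrow\breve F$ yields the identity of $\breve F$; applying $M\dsolid_{\breve F}(-)$ and using that $\breve F$ is the monoidal unit for $\dsolid_{\breve F}$, one gets maps $M\to M\dsolid_{\breve F}C\to M$ whose composite is the identity. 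Hence $M$ is a retract of $M\dsolid_{\breve F}C$ in the derived $\infty$-category, so each $H^i(M)$ is a direct summand of $H^i(M\dsolid_{\breve F}C)$; in particular vanishing of the latter forces vanishing of the former, which gives $\mathrm{cd}(M)\le\mathrm{cd}(M\dsolid_{\breve F}C)$. Combining the two inequalities and translating back through the Hyodo--Kato isomorphism yields $\mathrm{cd}(R\Gamma_{\HK}(X))=\mathrm{cd}(R\Gamma_{\dR}(X))$.

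I do not expect a genuine obstacle here: the argument is entirely formal once Theorem~\ref{mainHK}\listref{mainHK:3} and the $\breve F$-linear splitting of $C$ are granted. The two points meriting a line of care are (a) that the Hyodo--Kato comparison is $\breve F$-linear on the source factor, so that it is legitimate to view it as an isomorphism of the form $M\dsolid_{\breve F}C\simeq R\Gamma_{\dR}(X)$ — this is built into its construction — and (b) the connectivity remark above, so that comparing top degrees makes sense; finiteness of these dimensions is moreover guaranteed in the qcqs case by Theorem~\ref{mainHK}\listref{mainHK:2}, and is not needed for the equality in general.
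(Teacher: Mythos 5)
Your proof is correct and follows essentially the same route as the paper: both arguments rest on the Hyodo--Kato isomorphism of Theorem~\ref{mainHK}\listref{mainHK:3} together with the splitting $C\cong\breve F\oplus V$ in $\Vect_{\breve F}^{\ssolid}$. The only cosmetic difference is that the paper invokes flatness of $C$ for $\solid_{\breve F}$ to get the degree-wise isomorphism $H^i_{\HK}(X)\solid_{\breve F}C\cong H^i_{\dR}(X)$ and then applies the splitting to the cohomology groups, whereas you argue at the level of complexes via right $t$-exactness and a retract; both are valid.
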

  \begin{proof}
   By Theorem \ref{mainHK}\listref{mainHK:3} and the flatness of $C$ for the solid tensor product $\solid_{\breve F}$ (\cite[Corollary A.65]{Bosco}), for any $i\ge 0$, we have an isomorphism
   $$H^i_{\HK}(X)\solid_{\breve F} C\cong H^i_{\dR}(X).$$
   Therefore, if $H^i_{\HK}(X)$ vanishes then $H^i_{\dR}(X)$ vanishes as well, and the converse statement follows using the isomorphism (\ref{funiso}).
  \end{proof}

  \subsection{Finiteness in the overconvergent case}
  In this subsection, we extend the Hyodo--Kato morphism to dagger varieties over $C$. As we will see, this will follow easily from the results of the previous subsection, using that the solid tensor product commutes with colimits.
  Moreover, we will prove a finiteness result for the Hyodo--Kato cohomology of qcqs dagger varieties over $C$, generalizing already known results to the singular case.
 
 \subsubsection{\normalfont{\textbf{Hyodo--Kato cohomology of dagger varieties over $C$}}}
  We begin with a general construction that will allow us to canonically define a cohomology theory on $\Rig_{L, \eh}^\dagger$ starting from a cohomology theory defined on $\Rig_{L, \eh}$. Then, we will specialize this construction to the Hyodo--Kato and the de Rham cohomology theories.
  
  \begin{notation}
    In the following, we keep notation and conventions from \ref{othernot}. In particular, we denote by  $L$ a characteristic $0$ complete valued field with a non-archimedean valuation of rank $1$ and residue characteristic $p$.
  \end{notation}
  
  \begin{construction}\label{consthyp}
   Let $\cl D$ be a presentable $\infty$-category. The continuous functor 
   \begin{equation}\label{contlim}
    l:\Rig_{L, \eh}^\dagger \to \Rig_{L, \eh}: X\mapsto \widehat X
   \end{equation}
   given by sending a dagger variety $X$ to its limit $\widehat X$, induces an adjunction
   $$l_*:\Shv^{\hyp}(\Rig_{L, \eh}^\dagger, \cl D)\rightleftarrows \Shv^{\hyp}(\Rig_{L, \eh}, \cl D):l^{* \hyp}$$
   where $l^{*\hyp}$ is given by the composite of the pullback functor $l^*:\Shv(\Rig_{L, \eh}, \cl D)\to \Shv(\Rig_{L, \eh}^\dagger, \cl D)$ and the hypercompletion functor (\ref{hypercompl}).
   For $\cl F\in \Shv(\Rig_{L, \eh}, \cl D)$, we denote
   $$\cl F^\dagger:=l^{* \hyp}\cl F\in \Shv^{\hyp}(\Rig_{L, \eh}^\dagger, \cl D).$$
   
  \end{construction}

  Now, using Construction \ref{consthyp} in the case $\cl D=D(A)=D(\Mod_A^{\ssolid})$ (see Remark \ref{indeedpre}), we can give the following definition.
  
  \begin{df}[de Rham and Hyodo--Kato cohomology of dagger varieties]\label{dhkdef}\
  \begin{enumerate}[(i)]
   \item  Let $X$ be a dagger variety over $L$. Denote by $$\cl F_{\dR}\in \Shv^{\hyp}(\Rig_{C, \eh}, D(L))$$ the hypersheaf given by $R\Gamma_{\dR}(-)$.
   We define the \textit{de Rham cohomology} of $X$ as
   $$R\Gamma_{\dR}(X):=R\Gamma(X, \cl F_{\dR}^\dagger)\in D(L).$$
   \item Let $X$ be a dagger variety over $C$. Consider the hypersheaf $$\cl F_{\HK}\in \Shv^{\hyp}(\Rig_{C, \eh}, D_{(\varphi, N)}(\breve F))$$ introduced in Definition \ref{defhk}.
   We define the \textit{Hyodo--Kato cohomology} of $X$ as
   $$R\Gamma_{\HK}(X):=R\Gamma(X, \cl F_{\HK}^\dagger)\in D_{(\varphi, N)}(\breve F).$$
  \end{enumerate}
  
  \end{df}

  \subsubsection{\normalfont{\textbf{Presentation of a dagger structure}}}\label{present} 
  In order to construct the Hyodo--Kato morphism for dagger varieties over $C$, we will rely on its analogue for rigid-analytic varieties over $C$, that is Theorem \ref{mainHK}.  For this, we will need to express more explicitly the Hyodo--Kato/de Rham cohomology of a smooth dagger affinoid over $C$ in terms of the respective cohomology of smooth affinoid rigid spaces over $C$. This is our next goal. \medskip
 
   We recall from \ref{othernot} that given a dagger variety $X=(\widehat X, \cl O^{\dagger})$ over $L$ with underlying rigid-analytic variety $\widehat X$ we say that $X$ is a dagger structure on $\widehat X$.
   
   We have the following important example of dagger structure.
   
  \begin{rem}[Dagger structure on smooth affinoid rigid spaces]\label{presmooth}
   We note that any smooth affinoid rigid space $\widehat X=\Spa(R, R^\circ)$ over $L$ has a dagger structure. In fact, by \cite[Theorem 7 and Remark 2]{Elkik}, there exist $f_1, \ldots, f_m$ elements of the Washnitzer algebra  $L\langle \underline{T}\rangle^\dagger$ such that $R\cong L\langle \underline{T}\rangle/(f_1, \ldots, f_m)$.\footnote{Here, we write $\underline{T}$ for $T_1, \ldots, T_n$ where $n$ is the dimension of $\widehat X$.} In particular, the dagger variety associated to the dagger algebra $L\langle \underline{T}\rangle^\dagger/(f_1, \ldots, f_m)$ defines a dagger structure on $\widehat X$. 
  \end{rem}
 
  Next, we recall the following convenient definition.
  
  \begin{df}[{\cite[Definition A.19]{Vezzani1}}]
  Let $X$ be an affinoid rigid space over $L$. A \textit{presentation of a dagger structure} on $X$ is a pro-(affinoid rigid space over $L$) $\varprojlim_{h\in \Nn} X_h$ with $X$ and $X_h$ rational subspaces of $X_1$, such that $X\Subset X_{h+1}\Subset X_h$,\footnote{For $Y\subset Z$ an open immersion of rigid-analytic varieties over $L$, we write $Y\Subset Z$ if the inclusion map of $Y$ into $Z$ factors over the adic compactification of $Y$ over $L$.} and this system is coinitial among rational subspaces containing $X$. 
  
  A morphism of presentations of a dagger structure on an affinoid rigid space over $L$ is a morphism of pro-objects.
  \end{df}

  The next lemma relates affinoid dagger spaces to presentations of a dagger structure on an affinoid rigid space.
  
  \begin{lemma}\label{dagglemma}
  Let $\widehat X$ be an affinoid rigid space over $L$, and let $\varprojlim_h X_h$ be a presentation of a dagger structure on $\widehat X$. We denote by $X^\dagger$ the dagger affinoid over $L$ associated to the dagger algebra $R=\varinjlim_h \cl O(X_h)$.
   \begin{enumerate}[(i)]
    \item\label{dagglemma:1} The functor 
    $$\varprojlim\nolimits_h X_h\mapsto X^\dagger$$
   from the category of presentations of a dagger structure on an affinoid rigid space over $L$ to the category of affinoid dagger spaces over $L$ is an equivalence.
    
    \item\label{dagglemma:2} Let $(\varprojlim\nolimits_h X_h)_{\ett}$ denote the (small) étale site of $\varprojlim\nolimits_h X_h$, \cite[Definition A.24]{Vezzani1}. We have natural morphisms of sites $$\widehat X_{\ett}\to (X^\dagger)_{\ett} \to (\varprojlim\nolimits_h X_h)_{\ett}$$  which induce an equivalence on the associated topoi.
   \end{enumerate}
  \end{lemma}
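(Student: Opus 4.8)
The plan is to construct an explicit quasi-inverse to the functor in \listref{dagglemma:1}, and then to deduce \listref{dagglemma:2} by unwinding the definition of the étale site of a presentation and invoking the overconvergence of étale morphisms of affinoids.

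For \listref{dagglemma:1}, I would start from a dagger affinoid $X^\dagger$ over $L$ with dagger algebra $R$. By definition of a dagger algebra there is a surjection $L\langle \underline T\rangle^\dagger\twoheadrightarrow R$ from a Washnitzer algebra (cf. the argument recalled in Remark \ref{presmooth}); writing $L\langle\underline T\rangle^\dagger=\varinjlim_h L\langle\rho_h^{-1}\underline T\rangle$ for a rational sequence $\rho_h\searrow 1$, the finitely many generators $f_1,\dots,f_m$ of the kernel lie in $L\langle\rho_{h_0}^{-1}\underline T\rangle$ for $h_0$ large, and for $h\ge h_0$ I would set $R_h:=L\langle\rho_h^{-1}\underline T\rangle/(f_1,\dots,f_m)$, $X_h:=\Spa(R_h,R_h^\circ)$ and $\widehat X:=\Spa(\widehat R,\widehat R^\circ)$ with $\widehat R=L\langle\underline T\rangle/(f_1,\dots,f_m)$. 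The points to check — all recorded in \cite[Appendix A]{Vezzani1} — are that $\widehat X\Subset X_{h+1}\Subset X_h$, that $\{X_h\}_{h\ge h_0}$ is coinitial among rational subspaces of $X_{h_0}$ containing $\widehat X$, and that $R=\varinjlim_h\cl O(X_h)$; this exhibits $\varprojlim_h X_h$ as a presentation of a dagger structure on $\widehat X$ which the functor of \listref{dagglemma:1} sends back to $X^\dagger$. Since two Washnitzer presentations of $R$ give pro-isomorphic systems, this assignment is well defined on objects and upgrades to a functor to the category of presentations (whose morphisms are morphisms of pro-objects). Full faithfulness of the functor in \listref{dagglemma:1} then reduces, via $R=\varinjlim_h\cl O(X_h)$, to the statement that a morphism of dagger algebras carries the finitely many coordinate functions of the source into $\cl O(X_h)$ for $h\gg 0$, hence arises from a compatible system of finite-level morphisms — again the pro-isomorphism invariance of presentations established in \textit{loc.\ cit.}

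For \listref{dagglemma:2}, I would first recall from \cite[Definition A.24]{Vezzani1} that $(\varprojlim_h X_h)_{\ett}$ is the filtered colimit of the sites $(X_h)_{\ett}$ along the restriction functors: an object is an étale morphism $U_h\to X_h$ for some $h$, and two such objects are identified once they become isomorphic over a common $X_{h'}$. The two morphisms of sites are induced by (the opposites of) the base-change functors: on one hand $(\varprojlim_h X_h)_{\ett}\to\widehat X_{\ett}$, $\,(U_h\to X_h)\mapsto (U_h\times_{X_h}\widehat X\to\widehat X)$, which lands in étale morphisms since étaleness is stable under base change; on the other hand, by \listref{dagglemma:1} and the definition of $(X^\dagger)_{\ett}$, this functor factors through $(X^\dagger)_{\ett}$ with $(\varprojlim_h X_h)_{\ett}\to(X^\dagger)_{\ett}$ an equivalence. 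To conclude I would apply the comparison lemma for sites to the continuous, cocontinuous functor $(\varprojlim_h X_h)_{\ett}\to\widehat X_{\ett}$: the equivalence of topoi follows once one shows that (a) every affinoid $V$ étale over $\widehat X$ is, étale-locally on $V$, the base change of some étale $U_h\to X_h$ with $h\gg 0$; (b) a morphism over $\widehat X$ between two such base changes descends, after an étale cover, to a morphism over some $X_h$; and (c) an étale covering of such a base change is refined by the base change of a covering at finite level. Each of (a)--(c) is an instance of the overconvergence principle for étale morphisms of affinoids: an étale $\widehat R$-algebra is cut out by finitely many equations whose coefficients, after a perturbation controlled by Elkik's approximation theorem \cite[Theorem 7]{Elkik} (exactly as in the proof of Remark \ref{presmooth}), already lie in $\cl O(X_h)$ for $h$ large. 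These facts are available in \cite{GK1} and \cite[Appendix A]{Vezzani1}, which the paper uses freely.

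The step I expect to be the main obstacle is the non-formal input (a) above — the overconvergence of étale morphisms of affinoids, resting on Elkik's approximation theorem — together with the pro-isomorphism invariance of presentations used in \listref{dagglemma:1}; everything else is an unwinding of definitions and a formal application of the comparison lemma.
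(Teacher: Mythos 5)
Your argument is correct, but the paper itself gives no argument: it simply cites \cite[Proposition A.22(2)]{Vezzani1} for part (i) and \cite[Corollary A.28]{Vezzani1} for part (ii). What you have written is essentially a faithful reconstruction of the proofs of those cited results (Washnitzer presentations and pro-isomorphism invariance for (i), and the overconvergence of étale morphisms of affinoids via Elkik's theorem for (ii)), so in substance it is the same approach.
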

  \begin{proof}
   Part \listref{dagglemma:1} is \cite[Proposition A.22(2)]{Vezzani1}, and part \listref{dagglemma:2} is \cite[Corollary A.28]{Vezzani1}.
  \end{proof}

  \begin{rem}\label{lanuit}
  Entering the proof of \cite[Proposition A.22(2)]{Vezzani1}, we see that given an affinoid dagger space $(\widehat X, \cl O^\dagger)$ over $L$, associated to a dagger algebra $L\langle \underline{T}\rangle^\dagger/(f_1, \ldots, f_m)$, then the corresponding presentation of a dagger structure $\varprojlim\nolimits_h X_h$ on $\widehat X$ can be defined as follows: since $L\langle \underline{T}\rangle^\dagger$ is noetherian, \cite[\S 1.4]{GK1}, there exists an integer $H$ sufficiently big such that $f_1, \ldots, f_m\in L\langle \pi^{1/H}\underline{T}\rangle$, where $\pi$ is a pseudo-uniformizer of $\cl O_L$; then, we set $X_h:=\Spa(R_h, R_h^\circ)$ with $$R_h:= L\langle \pi^{1/(h+H)}\underline{T}\rangle/(f_1, \ldots, f_m).$$
  \end{rem}

   Then, the next result follows formally from Lemma \ref{dagglemma} by étale hyperdescent (cf. \cite[Lemma 3.13]{CN}).
   
   \begin{lemma}\label{immHK}
   Fix notation as in \ref{othernot} and Construction \ref{consthyp}. Let $\cl F\in \Shv^{\hyp}(\Rig_{L, \eh}, \cl D)$ that is the pullback of an hypersheaf in 
   $\Shv^{\hyp}(\RigSm_{L,\ett}, \cl D)$. Let $X$ be a smooth dagger affinoid over $L$ with corresponding presentation $\varprojlim\nolimits_h X_h$. Then, we have
   $$R\Gamma(X, \cl F^\dagger)=\colim_{h\in \Nn}R\Gamma(X_h, \cl F).$$ 
  \end{lemma}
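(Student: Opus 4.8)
The plan is to reduce the statement, via the Beilinson basis of smooth varieties, to a continuity property of étale cohomology along the pro-system $\varprojlim_h X_h$, and then to invoke Lemma \ref{dagglemma}(ii); this follows the strategy of \cite[Lemma 3.13]{CN}. First I would pass to the smooth étale site. By Proposition \ref{baseh} and Remark \ref{oyt}, the smooth objects form a basis of both $\Rig_{L,\eh}$ and $\Rig_{L,\eh}^\dagger$, so by Lemma \ref{beihyp} the hypersheaf $\cl F$ is determined by its restriction $\cl G:=\cl F|_{\RigSm_{L,\ett}}$, which by hypothesis is an étale hypersheaf; in particular $R\Gamma(Y,\cl F)=R\Gamma(Y_{\ett},\cl G)$ for every smooth rigid space $Y$ over $L$. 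Since the limit functor $l$ of Construction \ref{consthyp} carries smooth dagger varieties to smooth rigid-analytic varieties (the limit of a smooth dagger variety is smooth), and since restriction to a Beilinson basis commutes with sheafification and hypercompletion (again Lemma \ref{beihyp}), the hypersheaf $\cl F^\dagger=l^{*\hyp}\cl F$ restricts on $\RigSm_{L,\ett}^\dagger$ to the hypercompleted pullback of $\cl G$ along $l\colon\RigSm_{L,\ett}^\dagger\to\RigSm_{L,\ett}$. Consequently, for $X$ a smooth dagger affinoid, $R\Gamma(X,\cl F^\dagger)$ is computed on the small étale site of $X$ from the pullback of $\cl G$, while $R\Gamma(X_h,\cl F)=R\Gamma((X_h)_{\ett},\cl G)$ for each $h\in\Nn$.

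Next I would identify this étale site. By Lemma \ref{dagglemma}(ii), the small étale topos of the dagger affinoid $X$ agrees with that of the pro-(affinoid rigid space) $\varprojlim_h X_h$, which by \cite[Definition A.24]{Vezzani1} is the filtered $2$-colimit $\colim_h(X_h)_{\ett}$; under this identification the pullback of $\cl G$ corresponds to the compatible system $\big(\cl G|_{(X_h)_{\ett}}\big)_h$. Thus the lemma reduces to the continuity statement
$$R\Gamma\big((\varprojlim\nolimits_h X_h)_{\ett},(\cl G|_{(X_h)_{\ett}})_h\big)=\colim_{h\in\Nn}R\Gamma\big((X_h)_{\ett},\cl G|_{(X_h)_{\ett}}\big),$$
after which tracing back the identifications of the previous paragraph yields $R\Gamma(X,\cl F^\dagger)=\colim_{h\in\Nn}R\Gamma(X_h,\cl F)$.

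I expect this last continuity statement to be the main obstacle: one must commute the filtered colimit over $h$ past the cosimplicial limits computing hypercohomology. Here one uses that each $X_h$ is affinoid — hence qcqs — and that the transition maps are open immersions of affinoids, so that every étale hypercover of $X_h$ is refined by one made of finite disjoint unions of affinoid étale maps; the required commutation then holds by étale hyperdescent, exactly as in the proof of \cite[Lemma 3.13]{CN}. (One should also check that passing to hypercompletions causes no harm here, which is clear since the small étale topoi in play are locally of finite homotopy dimension, hence hypercomplete.)
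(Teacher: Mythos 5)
Your argument is correct and matches the paper's (deliberately terse) proof, which simply observes that the statement "follows formally from Lemma \ref{dagglemma} by étale hyperdescent (cf. [CN, Lemma 3.13])" — exactly the three ingredients you use: reduction to the small étale site via the smooth Beilinson basis, the equivalence of étale topoi from Lemma \ref{dagglemma}(ii), and the continuity of étale hypercohomology along the qcqs pro-system $\varprojlim_h X_h$. Your expanded treatment of the final commutation of the filtered colimit with the cosimplicial limits, including the hypercompleteness caveat, is a faithful unpacking of what the paper leaves implicit.
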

    
  The assumptions of the previous lemma are designed to be satisfied by the hypersheaves defining the de Rham and the Hyodo--Kato cohomology:
 
  \begin{rem}\label{remmm}
   We note that Lemma \ref{immHK} applies to $\cl F=\cl F_{\dR}$, thanks to Proposition \ref{ehdescent}, and it applies to $\cl F=\cl F_{\HK}$ thanks to Theorem \ref{mainHK}\listref{mainHK:1}.
  \end{rem}

  Next, we recall that the category of partially proper dagger varieties is equivalent to the category of partially proper rigid-analytic varieties, via the functor (\ref{contlim}), \cite[Theorem 2.27]{GK1}. In the situation of Lemma \ref{immHK}, such equivalence preserves cohomology:
  
  \begin{prop}\label{parpar}
   Fix notation as in \ref{othernot} and Construction \ref{consthyp}. Let $\cl F\in \Shv^{\hyp}(\Rig_{L, \eh}, \cl D)$ that is the pullback of an hypersheaf in 
   $\Shv^{\hyp}(\RigSm_{L,\ett}, \cl D)$. Let $X$ be a partially proper dagger variety over $L$. Then, there exists a natural isomorphism
   $$R\Gamma(X, \cl F^\dagger)\overset{\sim}{\to}R\Gamma(\widehat X, \cl F).$$
  \end{prop}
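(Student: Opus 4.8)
The plan is to reduce to the smooth case by $\eh$-hyperdescent, and then to exhaust a smooth partially proper variety by a tower of quasi-compact "wide-open" pieces and play off the colimits computing $R\Gamma(X,\cl F^\dagger)$ against the tower computing $R\Gamma(\widehat X,\cl F)$.

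\textbf{Step 1 (reduction to the smooth case).} I would first use that both $\cl F$ and $\cl F^\dagger=l^{*\hyp}\cl F$ are hypersheaves on $\Rig_{L,\eh}$, resp. $\Rig_{L,\eh}^\dagger$, and hence satisfy $\eh$-hyperdescent. By iterating Proposition \ref{baseh} (which holds for dagger varieties) I would choose a proper $\eh$-hypercover $Y_\bullet\to X$ of dagger varieties with every $Y_n$ smooth; since the structure morphisms of such a hypercover are proper (compositions and base changes of blow-ups, closed immersions and universal homeomorphisms) and $X$ is partially proper over $L$, each $Y_n$ is again partially proper over $L$. Applying the limit functor $l$ of Construction \ref{consthyp} gives a proper $\eh$-hypercover $\widehat Y_\bullet\to\widehat X$ with $\widehat Y_n$ smooth and partially proper (using that $\Proj^{\an}$, hence blow-ups and closed immersions of dagger varieties, are compatible with $l$). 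Hyperdescent then reduces the statement to each $Y_n$, so from now on I may assume $\widehat X$ smooth, and (decomposing into connected components, which are of countable type) connected of countable type.

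\textbf{Step 2 (exhaustion and local computation).} For $\widehat X$ smooth, partially proper and of countable type I would choose quasi-compact admissible opens $\widehat X_{[0]}\Subset\widehat X_{[1]}\Subset\cdots$ with $\bigcup_h\widehat X_{[h]}=\widehat X$. Covering each $\widehat X_{[h]}$ by finitely many smooth affinoids and using $\widehat X_{[h]}\Subset\widehat X_{[h+1]}$, I would produce quasi-compact opens $\widehat X_{[h]}\Subset\cdots\Subset\widehat Z_{h,2}\Subset\widehat Z_{h,1}\subset\widehat X_{[h+1]}$, decreasing in $m$ and cofinal among the overconvergent neighbourhoods of $\widehat X_{[h]}$ in the presentation of the dagger structure on $\widehat X_{[h]}$ inherited from $X$ (via Lemma \ref{dagglemma}). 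Combining Lemma \ref{immHK} on the affinoid pieces (here $\cl F$ is by hypothesis pulled back from $\RigSm_{L,\ett}$) with finite Čech descent — whose totalization is a finite limit and hence commutes with the filtered colimit over $m$ and with finite products — I obtain, naturally in $h$,
\begin{equation*}
 R\Gamma(X|_{\widehat X_{[h]}},\cl F^\dagger)\;\overset{\sim}{\to}\;\colim_m R\Gamma(\widehat Z_{h,m},\cl F).
\end{equation*}
Since $X=\bigcup_h X|_{\widehat X_{[h]}}$ and $\widehat X=\bigcup_h\widehat X_{[h]}$ are increasing admissible coverings, the hypersheaf property yields $R\Gamma(X,\cl F^\dagger)\simeq R\varprojlim_h\colim_m R\Gamma(\widehat Z_{h,m},\cl F)$ and $R\Gamma(\widehat X,\cl F)\simeq R\varprojlim_h R\Gamma(\widehat X_{[h]},\cl F)$.

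\textbf{Step 3 (interleaving the two towers).} Set $A_h:=\colim_m R\Gamma(\widehat Z_{h,m},\cl F)$ and $B_h:=R\Gamma(\widehat X_{[h]},\cl F)$. The inclusions $\widehat X_{[h]}\subset\widehat Z_{h,m}\subset\widehat X_{[h+1]}$ give, by restriction and passage to the colimit, maps $B_{h+1}\to A_h\to B_h$ whose composite is the transition map of the tower $(B_h)$, and likewise $A_{h+1}\to B_{h+1}\to A_h$ recovers the transition map of $(A_h)$. Hence $(A_h)$ and $(B_h)$ are cofinal sub-towers of a single tower, so $R\varprojlim_h A_h\simeq R\varprojlim_h B_h$, and tracing through the identifications of Step 2 gives the natural isomorphism $R\Gamma(X,\cl F^\dagger)\overset{\sim}{\to}R\Gamma(\widehat X,\cl F)$. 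I expect the bulk of the work to sit in Step 2: identifying the presentation of the dagger structure on the quasi-compact pieces $\widehat X_{[h]}$ and arranging the $\widehat Z_{h,m}$ to be simultaneously a presentation and contained in $\widehat X_{[h+1]}$. The cofinality/interleaving in Step 3 is then purely formal — and it is exactly the mechanism by which partial properness forces overconvergent and non-overconvergent cohomology to coincide.
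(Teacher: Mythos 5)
Your strategy is correct and is at bottom the same mechanism as the paper's: partial properness gives an exhaustion by quasi-compact pieces $U\Subset U'$, and one interleaves the tower of overconvergent cohomologies with the tower of honest cohomologies because the transition map of the dagger tower factors through $R\Gamma(\widehat{U'},\cl F)$. The differences are organizational. The paper first replaces $X$ by a Stein space (any partially proper dagger variety admits an admissible covering by Stein spaces, by Grosse-Kl\"onne), writes both sides as $R\varprojlim_n$ over a Stein covering $\{U_n\}$, and then only has to exhibit the single factorization $R\Gamma(U_{n+1},\cl F^\dagger)\to R\Gamma(\widehat U_{n+1},\cl F)\to R\Gamma(U_n,\cl F^\dagger)$; the $\eh$-hypercover reduction to smooth dagger affinoids is invoked only at that last, local step, where Lemma \ref{immHK} applies directly. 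You instead perform the smooth reduction globally first (which is fine: the hypercover from Proposition \ref{baseh} is built from proper maps, so each level stays partially proper) and then must construct, for each quasi-compact piece, a full cofinal system of neighbourhoods $\widehat Z_{h,m}\subset \widehat X_{[h+1]}$ presenting the induced dagger structure. That cofinality claim is exactly the nontrivial content you flag at the end, and it is more than the paper needs: the factorization of one transition map is weaker and cheaper than identifying $R\Gamma(X|_{\widehat X_{[h]}},\cl F^\dagger)$ with a colimit over neighbourhoods inside $\widehat X_{[h+1]}$. Two small cautions: descent for a finite open cover should be phrased as the finite limit over the punctured cube of intersections rather than as the $\Delta$-totalization being ``a finite limit''; and in Step 1 you should record that the comparison maps $R\Gamma(Y_n,\cl F^\dagger)\to R\Gamma(\widehat Y_n,\cl F)$ are induced by a single natural transformation, so that they assemble over the cosimplicial object.
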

  \begin{proof}
   Recalling that any partially proper dagger variety admits an admissible covering by Stein spaces (see the proof of \cite[Theorem 2.26]{GK1}), we may assume that $X$ is a Stein space. Then, let $\{U_n\}_{n\in \Nn}$ be a Stein covering of $X$. Writing $R\Gamma(X, \cl F^\dagger)=R\varprojlim_{n\in \Nn}R\Gamma(U_n, \cl F^\dagger)$, and similarly $R\Gamma(\widehat X, \cl F)=R\varprojlim_{n\in \Nn}R\Gamma(\widehat U_n, \cl F)$, it suffices to show that, for a fixed $n\in \Nn$, the natural map $R\Gamma(U_{n+1}, \cl F^\dagger)\to R\Gamma(U_{n}, \cl F^\dagger)$ factors through $R\Gamma(\widehat U_{n+1}, \cl F)$. Renaming $V:=U_n$ and $W:=U_{n+1}$, by Proposition \ref{baseh} (and Remark \ref{oyt}), we can choose an $\eh$-hypercover $W_\bullet\to W$ with each $W_m$ smooth dagger affinoid over $L$; via pullback along the open immersion $V\to W$, we obtain an $\eh$-hypercover $V_\bullet\to V$ with each $V_m$ smooth dagger affinoid over $L$. Then, we may reduce to the case $V$ and $W$ are smooth over $L$, which follows from Lemma \ref{immHK}.
  \end{proof}

   \subsubsection{\normalfont{\textbf{Semistable weak formal schemes}}} In order to study the Hyodo--Kato cohomology of dagger varieties over $C$ (Definition \ref{dhkdef}), we will define a convenient Beilinson basis for the site $\Rig_{C, \eh}^\dagger$. In addition to Notation \ref{notss}, we introduce the following notation. We refer the reader to \cite{Meredith} for the basics on the theory of \textit{weak formal schemes}, and to \cite{Lan-Mur} for an analogue of Raynaud's theorem relating the categories of weak formal schemes and dagger varieties.

  \begin{notation}
    We denote by $\cl M_{\sss}^\dagger$ the category of weak formal schemes over $\Spf(\cl O_C)$ having in the Zariski topology a covering by open affines $\fr U$ with \textit{semistable coordinates}, i.e. admitting an $\Spf(\cl O_C)$-morphism $\fr U\to \Spf(R^{\square \dagger})$ with 
   $$R^{\square \dagger}:=\cl O_C[t_0, \ldots, t_r, t^{\pm 1}_{r+1}, \ldots, t_d^{\pm 1}]^\dagger/(t_0\cdots t_r-p^q)$$
   for some $0\le r\le d$, and $q\in \Qq_{>0}$ (that may depend on $\fr U$). We denote by $\cl M_{\sss, \qcqs}^\dagger$ the subcategory of $\cl M_{\sss}^\dagger$ consisting of the qcqs formal schemes.
   We write $$(-)_{\eta}: \cl M_{\sss}^\dagger\to \Rig_C^\dagger$$ for the generic fiber functor.
  \end{notation}

  \begin{prop}\label{propobv2}
   The pair $(\cl M_{\sss}^\dagger, (-)_{\eta})$ is a Beilinson basis for the site $\Rig_{C, \eh}^\dagger$.
  \end{prop}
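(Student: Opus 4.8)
The plan is to mimic the proof of Proposition \ref{propobv} (that is, of \cite[Proposition 2.8]{CN}), working throughout with weak formal schemes in place of $p$-adic formal schemes. First I would reduce to a smaller site: by Proposition \ref{baseh} and Remark \ref{oyt}, the smooth dagger varieties in $X_\eh$ form a basis of $X_\eh$ for every dagger variety $X$ over $C$, so it suffices to show that $(\cl M_{\sss}^\dagger,(-)_\eta)$ is a Beilinson basis for $\RigSm_{C,\eh}^\dagger$, the big $\eh$-site of smooth dagger varieties over $C$ (equivalently, as in \textit{loc.\ cit.}, its big étale site). Unwinding the definition, one must show: given a smooth dagger variety $X$ over $C$ and a finite family of morphisms $f_\alpha\colon X\to(\fr U_\alpha)_\eta$ with $\fr U_\alpha\in\cl M_{\sss}^\dagger$, there is an $\eh$-covering $\{(\fr V_\beta)_\eta\to X\}$ with $\fr V_\beta\in\cl M_{\sss}^\dagger$ such that each composite $(\fr V_\beta)_\eta\to X\to(\fr U_\alpha)_\eta$ lies in the image of $\Hom_{\cl M_{\sss}^\dagger}(\fr V_\beta,\fr U_\alpha)\hookrightarrow\Hom_{\Rig_C^\dagger}((\fr V_\beta)_\eta,(\fr U_\alpha)_\eta)$.

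The $\fr V_\beta$ are produced exactly as in the rigid-analytic case: apply Temkin's alteration theorem \cite[Theorem 3.3.1]{Temk2} to the quasi-compact pieces of $X$ to obtain, Zariski-locally and after further blow-ups along smooth centres, semistable dagger affinoids over $C$ mapping to $X$ by $\eh$-coverings. Temkin's results are available for dagger varieties by the same argument as in the proof of Proposition \ref{baseh} — any dagger $C$-algebra is excellent, by Matsumura's criterion together with the regularity of Washnitzer algebras \cite[Proposition 1.5]{GK1} and $\mathrm{char}\,C=0$ — and the blow-ups are formed in the sense of Definition \ref{blowups}, whose universal property (Remark \ref{univers}) holds verbatim for dagger varieties. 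By the Raynaud-type theorem of Lan--Murenzi \cite{Lan-Mur} relating weak formal schemes and dagger varieties, these semistable dagger affinoids are the generic fibres of objects $\fr V_\beta$ of $\cl M_{\sss}^\dagger$, which gives the required covering of $X$.

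It remains to lift the composites $(\fr V_\beta)_\eta\to X\to(\fr U_\alpha)_\eta$ to $\cl M_{\sss}^\dagger$; this is the one point where the dagger case is more delicate than the rigid-analytic one. The clean way to arrange it is to make the construction above functorial in $X$ equipped with the $f_\alpha$: choose the blow-up centres so that each coherent pullback $f_\alpha^\ast\cl I_\alpha$ of the ideal sheaves cutting out the relevant strata on $\fr U_\alpha$ becomes invertible on $\fr V_\beta$. Then, by the universal property of Remark \ref{univers}, every composite factors as $\fr V_\beta\to\Bl(\fr U_\alpha)\to\fr U_\alpha$ through a morphism visibly of the form $(\fr W)_\eta\to(\fr U_\alpha)_\eta$ with $\fr W\in\cl M_{\sss}^\dagger$, hence lies in $\Hom_{\cl M_{\sss}^\dagger}(\fr V_\beta,\fr U_\alpha)$ after passing to weak formal models via Lan--Murenzi. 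Alternatively one can argue a posteriori with the limit functor $l\colon\Rig_C^\dagger\to\Rig_C$ of Construction \ref{consthyp}: $p$-adic completion sends the $\fr V_\beta$ and $\fr U_\alpha$ into $\cl M_{\sss}$, Proposition \ref{propobv} lifts the completed composites to $\cl M_{\sss}$, and one checks — again via \cite{Lan-Mur}, possibly after refining $\{\fr V_\beta\}$ by rational subspaces belonging to a presentation of the dagger structure, cf.\ Remark \ref{lanuit} — that such a lift already comes from a morphism of weak formal schemes.

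I expect the \emph{main obstacle} to be precisely this last lifting step: unlike for $p$-adic formal schemes, a morphism between the weak completions of two weak formal schemes need not descend to a morphism of weak formal schemes, so one cannot simply invoke the rigid-analytic statement and complete. I would therefore favour the first, functorial, route above — organizing the desingularization so that the relevant ideal sheaves are pulled back to invertible ones from the outset — rather than attempting to lift morphisms after the fact; the universal property of blow-ups of dagger varieties (Remark \ref{univers}) together with the noetherianity of Washnitzer algebras (used to find a common presentation, as in Remark \ref{lanuit}) should make this go through, and all remaining verifications are routine adaptations of \cite[Proposition 2.8]{CN}.
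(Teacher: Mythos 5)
Your proposal is correct and follows the same route as the paper: reduce to smooth dagger varieties via Proposition \ref{baseh} and Remark \ref{oyt}, then establish the Beilinson basis property on the smooth site by Temkin's alterations together with the Raynaud-type correspondence between weak formal schemes and dagger varieties. The paper simply outsources this second step to \cite[Proposition 2.13]{CN} (the dagger analogue of \cite[Proposition 2.8]{CN}), whose content — including the lifting of composite morphisms to weak formal models, the point you rightly flag as the delicate one — is exactly what you reconstruct.
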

  \begin{proof}
  As in the proof of Proposition \ref{propobv}, the statement follows from Proposition \ref{baseh} (and Remark \ref{oyt}) combined with \cite[Proposition 2.13]{CN}.
  \end{proof}

  The following result is an overconvergent version of Theorem \ref{mainHK}.
 
   \begin{theorem}\label{mainHKover} Let $X$ be a dagger variety over $C$.
  \begin{enumerate}[(i)]
   \item\label{mainHKover:1}{\normalfont (Local description)} Assume $X$ is the generic fiber of $\fr X\in \cl M_{\sss}^{\dagger}$, then there is a natural quasi-isomorphism
   $$R\Gamma_{\HK}(X)\simeq R\Gamma_{\rig}(\fr X_{\bar k}/\cl O_{\breve F}^0)$$
   compatible with Frobenius $\varphi$ and monodromy $N$. Here, the right-hand side denotes the (rational) log-rigid cohomology of $\fr X_{\bar k}$ over $\cl O_{\breve F}^0$, \cite[\S 1]{GK-HK}, \cite[\S 3.1.2]{CDN1}.
   \item\label{mainHKover:3} {\normalfont (Hyodo--Kato isomorphism)} Assume $X$ is connected and paracompact, then we have a natural isomorphism in $D(\Vect_{\Qq_p}^{\ssolid})$
   $$\iota_{\HK}: R\Gamma_{\HK}(X)\dsolid_{\breve F} C\overset{\sim}{\to} R\Gamma_{\dR}(X).$$
  \end{enumerate}
  \end{theorem}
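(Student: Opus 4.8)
The plan is to transpose the proof of Theorem~\ref{mainHK} to the dagger setting, replacing the Beilinson basis $(\cl M_{\sss}, (-)_\eta)$ of $\Rig_{C,\eh}$ by $(\cl M_{\sss}^\dagger, (-)_\eta)$ of $\Rig_{C,\eh}^\dagger$ (Proposition~\ref{propobv2}) and using that the Hyodo--Kato and de Rham cohomologies of dagger varieties are computed by the hypersheaves $\cl F_{\HK}^\dagger=l^{*\hyp}\cl F_{\HK}$ and $\cl F_{\dR}^\dagger=l^{*\hyp}\cl F_{\dR}$ of Construction~\ref{consthyp} (Definition~\ref{dhkdef}). By Lemma~\ref{beihyp} these hypersheaves are determined by their restrictions to $\cl M_{\sss}^\dagger$, where Lemma~\ref{immHK} (applicable by Remark~\ref{remmm}) presents their sections as filtered colimits of the corresponding cohomologies of smooth rigid affinoids.

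For the first part I would first reduce to $\fr X$ qcqs (the general case following by passing to an admissible affine covering, both sides being hypersheaves in $\fr X$). By the weak formal analogue of Remark~\ref{bbcc} --- via the Raynaud-type correspondence of \cite{Lan-Mur}, \cite{Meredith} --- after replacing $K$ by a finite extension I may assume $\fr X$ admits a descent with a fine log structure over a base $\cl O_L^\times$, so that the log-rigid formalism of \cite[\S 1]{GK-HK}, \cite[\S 3.1.2]{CDN1} is available. By Lemma~\ref{beihyp} it is then enough to identify, compatibly with the descent data, the sections of $\cl F_{\HK}^\dagger$ over objects of $\cl M_{\sss}^\dagger$ with log-rigid cohomology of special fibres. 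Choosing, by Proposition~\ref{baseh} and Remark~\ref{oyt}, an $\eh$-hypercover $Y_\bullet\to\fr X_\eta$ with each $Y_n$ a smooth dagger affinoid over $C$ with presentation $\varprojlim_h Y_{n,h}$ of its dagger structure (Lemma~\ref{dagglemma}), $\eh$-hyperdescent together with Lemma~\ref{immHK} gives $R\Gamma_{\HK}(\fr X_\eta)\simeq\lim_{[n]\in\Delta}\colim_{h\in\Nn}R\Gamma_{\HK}(Y_{n,h})$. By Theorem~\ref{mainHK}\listref{mainHK:1} each $R\Gamma_{\HK}(Y_{n,h})$ is log-crystalline cohomology over $\cl O_{\breve F}^0$ of a semistable $p$-adic formal model of $Y_{n,h}$, and the colimit over the cofinal system of strict neighbourhoods accounts precisely for the overconvergence defining log-rigid cohomology, so the comparison of log-crystalline and log-rigid cohomology of \cite{GK-HK} identifies $\colim_h R\Gamma_{\HK}(Y_{n,h})$ with $R\Gamma_{\rig}(Y_{n,\bar k}/\cl O_{\breve F}^0)$, functorially in the hypercover; re-assembling over $\Delta$ yields $R\Gamma_{\rig}(\fr X_{\bar k}/\cl O_{\breve F}^0)$. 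The compatibility with $\varphi$ and $N$ is automatic, these operators being induced at every stage by the Frobenius and monodromy on log-crystalline, hence log-rigid, cohomology.

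For the second part I would argue as in the proof of Theorem~\ref{mainHK}\listref{mainHK:3}. For $X$ qcqs, choosing an $\eh$-hypercover $\fr U_\bullet\to X$ with $\fr U_n\in\cl M_{\sss}^\dagger$ (Proposition~\ref{propobv2}), $\eh$-hyperdescent and the first part give $R\Gamma_{\HK}(X)\simeq\lim_{[n]\in\Delta}R\Gamma_{\rig}(\fr U_{n,\bar k}/\cl O_{\breve F}^0)$; each term is represented by a complex of nuclear $\breve F$-vector spaces of Banach type (since $\fr U_{n,\eta}$ is qcqs, arguing as in the proof of Theorem~\ref{mainHK}\listref{mainHK:2}), so $-\dsolid_{\breve F}C$ commutes with $\lim_{[n]\in\Delta}$ by \cite[Corollary~A.67(ii)]{Bosco}. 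Applying the rigid Hyodo--Kato isomorphism $R\Gamma_{\rig}(\fr U_{n,\bar k}/\cl O_{\breve F}^0)\dsolid_{\breve F}C\simeq R\Gamma_{\dR}(\fr U_{n,\eta})$ of \cite[\S 1]{GK-HK}, \cite[\S 3.1.2]{CDN1}, together with $\eh$-hyperdescent for the dagger de Rham hypersheaf (Definition~\ref{dhkdef}, Proposition~\ref{ehdescent} after $\eh$-sheafification), identifies the right-hand side with $R\Gamma_{\dR}(X)$. For a general connected paracompact $X$, I would write $X$ as an increasing union of quasi-compact admissible opens, pass to $R\varprojlim$, and again commute $-\dsolid_{\breve F}C$ past the limit using nuclearity and the boundedness obtained above (with Corollary~\ref{dRbound}), as in \cite[Corollary~A.67(ii)]{Bosco}.

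The main obstacle is the heart of the first part: realizing the comparison with log-rigid cohomology as a natural, $\eh$-local and $(\varphi,N)$-equivariant isomorphism. The delicate point is controlling the overconvergence --- that the colimit over strict neighbourhoods of the log-crystalline cohomologies of semistable $p$-adic formal models recovers log-rigid cohomology compatibly with Frobenius and monodromy; once this local comparison is secured, the globalization is formal, via Lemma~\ref{beihyp} and $\eh$-hyperdescent.
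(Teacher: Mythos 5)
There is a genuine gap at exactly the point you yourself flag as ``the main obstacle'' of part \listref{mainHKover:1}, and a proof has to actually close it rather than name it. Two problems. First, your intermediate claim that each $R\Gamma_{\HK}(Y_{n,h})$ ``is log-crystalline cohomology over $\cl O_{\breve F}^0$ of a semistable $p$-adic formal model of $Y_{n,h}$'' is unjustified: the $Y_{n,h}$ in a presentation of a dagger structure are just smooth rigid affinoids (rational subspaces of $Y_{n,1}$), and Theorem \ref{mainHK}\listref{mainHK:1} only identifies $R\Gamma_{\HK}$ with log-crystalline cohomology when the affinoid actually \emph{is} the generic fiber of some $\fr U\in\cl M_{\sss}$; in general $R\Gamma_{\HK}(Y_{n,h})$ is only computed by $\eh$-hyperdescent from semistable models of an alteration, and these models have no direct relation to $\fr X_{\bar k}$. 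Second, and more seriously, the assertion that ``the colimit over the cofinal system of strict neighbourhoods accounts precisely for the overconvergence defining log-rigid cohomology'' is not a formal statement: comparing $\colim_h R\Gamma_{\HK}(X_h)$ with $R\Gamma_{\rig}(\fr X_{\bar k}/\cl O_{\breve F}^0)$, compatibly with $\varphi$ and $N$ and functorially in $\fr X$, is the actual content of the overconvergent Hyodo--Kato theory. The paper does not reprove it; it cites \cite[\S 4.2.1, (iv)]{CN4} for part \listref{mainHKover:1}, after observing (via Theorem \ref{mainHK}\listref{mainHK:1}) that the condensed $R\Gamma_{\HK}$ agrees with the Colmez--Nizio\l{} construction. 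Without either that citation or a genuine argument, your part \listref{mainHKover:1} is incomplete.

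For part \listref{mainHKover:3} your route is a detour that moreover inherits the gap above: you go through part \listref{mainHKover:1} and then invoke a log-rigid Hyodo--Kato isomorphism from the literature termwise on a hypercover. The paper's argument is independent of part \listref{mainHKover:1}: for a smooth dagger affinoid $X=\varprojlim_h X_h$, Lemma \ref{immHK} (applicable by Remark \ref{remmm}) writes both $R\Gamma_{\HK}(X)$ and $R\Gamma_{\dR}(X)$ as $\colim_h$ of their rigid-analytic counterparts, the isomorphism $\iota_{\HK}$ of Theorem \ref{mainHK}\listref{mainHK:3} for each $X_h$ passes to the colimit because $\dsolid_{\breve F}$ commutes with filtered colimits, nuclearity is preserved under colimits, and the globalization is then the same limit argument as in the proof of Theorem \ref{mainHK}\listref{mainHK:3} using \cite[Corollary A.67(ii)]{Bosco}. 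You should restructure part \listref{mainHKover:3} along these lines so that it does not depend on the log-rigid comparison at all.
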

  \begin{proof}
  Part \listref{mainHKover:1} follows from \cite[\S 4.2.1, (iv)]{CN4}.\footnote{Note that Theorem \ref{mainHK}\listref{mainHK:1} implies that, for $X$ a smooth rigid-analytic/dagger variety over $C$, the Hyodo--Kato cohomology $R\Gamma_{\HK}(X)$ agrees with the one defined in \cite{CN4}  considered in $D(\CondAb)$.} Part \listref{mainHKover:3} for $X$ smooth affinoid follows from Theorem \ref{mainHK}\listref{mainHK:2} and Lemma \ref{immHK} (together with Remark \ref{remmm}), using that the tensor product $\dsolid_{\breve F}$ commutes with filtered colimits. From Lemma \ref{immHK} we also deduce that, for $X$ smooth affinoid, $R\Gamma_{\HK}(X)$ is represented by a complex of nuclear $\breve F$-vector spaces (recall that the category of nuclear $\breve F$-vector spaces is closed under colimits). Therefore, the same argument used in the proof of Theorem \ref{mainHK}\listref{mainHK:3} shows part \listref{mainHKover:3} in general.
  \end{proof}

  %\begin{rem}[Overconvergent de Rham and Hyodo--Kato cohomology]\label{GV}
  % Let $X$ be a smooth affinoid rigid space over $C$, and let $X^\dagger$ be a dagger structure on $X$ (its existence is guaranteed by Remark \ref{presmooth}). Recall that the de Rham cohomology $R\Gamma_{\dR}(X^\dagger)$ is independent, up to canonical isomorphism, of the choice of a dagger structure on $X$, \cite{GK}.\footnote{See also \cite{Vezzani1} for a different approach.} Therefore, one can define the \textit{overconvergent de Rham cohomology of $X$} as
  % $$R\Gamma_{\dR}(X)^\dagger:=R\Gamma_{\dR}(X^\dagger).$$
  % Using the Hyodo--Kato isomorphism of Theorem \ref{mainHKover}\listref{mainHKover:3}, combined with the isomorphism (\ref{funiso}) (as used in the proof of Theorem \ref{mainHK}\listref{mainHK:1}), one can deduce that also the Hyodo--Kato cohomology $R\Gamma_{\dR}(X^\dagger)$ is independent, up to canonical isomorphism, of the choice of a dagger structure on $X$. Hence, one can define the \textit{overconvergent Hyodo--Kato cohomology of $X$} as
  % $$R\Gamma_{\HK}(X)^\dagger:=R\Gamma_{\HK}(X^\dagger).$$
  %\end{rem}

  \subsubsection{\normalfont{\textbf{Finiteness}}}
  Now, we state the promised finiteness result for the Hyodo--Kato cohomology groups of a qcqs dagger variety over $C$, and we give a bound on the slopes of such cohomology groups regarded as $\varphi$-modules.

   \begin{theorem}\label{slopp}
  Let $X$ be a qcqs dagger variety over $C$. Let $i\ge 0$.
  \begin{enumerate}[(i)]
   \item\label{slopp:1} The condensed cohomology group $H_{\HK}^i(X)$ (resp. $H_{\dR}^i(X)$) is a finite-dimensional condensed vector space over $\breve F$ (resp. over $C$).
   \item\label{slopp:2}  The vector bundle on $\FF$ associated to the finite  $\varphi$-module $H_{\HK}^i(X)$ over $\breve F$ has Harder--Narasimhan slopes $\ge -i$.
  \end{enumerate}
  \end{theorem}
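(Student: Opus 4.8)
The plan is to reduce both assertions to the case $X=\fr X_\eta$ with $\fr X\in\cl M_{\sss,\qcqs}^\dagger$, and then to treat that case using the finiteness and slope bounds known for log-rigid cohomology.

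For the reduction, recall that $(\cl M_{\sss}^\dagger,(-)_\eta)$ is a Beilinson basis for $\Rig_{C,\eh}^\dagger$ (Proposition \ref{propobv2}), so by Lemma \ref{beihyp} a qcqs $X$ admits an $\eh$-hypercover $\fr U_\bullet\to X$ with each $\fr U_n\in\cl M_{\sss,\qcqs}^\dagger$ and a $(\varphi,N)$-equivariant identification $R\Gamma_{\HK}(X)\simeq\lim_{[n]\in\Delta}R\Gamma_{\HK}(\fr U_{n,\eta})$. Since each $R\Gamma_{\HK}(\fr U_{n,\eta})$ is concentrated in degrees $[0,2d]$ uniformly --- via the Hyodo--Kato isomorphism (Theorem \ref{mainHKover}\listref{mainHKover:3}, applied on connected components) and the boundedness of de Rham cohomology (Corollary \ref{dRbound}) --- the descent spectral sequence
$$E_1^{s,t}=H^t_{\HK}(\fr U_{s,\eta})\Longrightarrow H^{s+t}_{\HK}(X)$$
converges and equips each $H^i_{\HK}(X)$ with a finite filtration whose graded pieces are $(\varphi,N)$-subquotients of the groups $H^{i-s}_{\HK}(\fr U_{s,\eta})$ with $0\le s\le i$. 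Hence it suffices to prove, for $\fr X\in\cl M_{\sss,\qcqs}^\dagger$ and $t\ge 0$, that $H^t_{\HK}(\fr X_\eta)$ is a finite-dimensional condensed $\breve F$-vector space and that $\cl E(H^t_{\HK}(\fr X_\eta))$ has Harder--Narasimhan slopes $\ge -t$: finiteness of the $E_1$-terms and of the filtration then gives finiteness of $H^i_{\HK}(X)$ (indeed a suitable truncated totalization is a finite limit of perfect complexes of classical finite-dimensional $\breve F$-vector spaces), while the property ``$\cl E(-)$ has slopes $\ge-i$'' --- equivalent, for a $\varphi$-module over $\breve F$, to all isocrystal slopes being $\le i$ --- is, isocrystals over the algebraically closed residue field being semisimple, stable under sub-objects, quotients and extensions, so it propagates from the $H^{i-s}_{\HK}(\fr U_{s,\eta})$ (which have slopes $\ge-(i-s)\ge-i$) to $H^i_{\HK}(X)$.

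In the local case, Theorem \ref{mainHKover}\listref{mainHKover:1} identifies $R\Gamma_{\HK}(\fr X_\eta)$ with the log-rigid cohomology $R\Gamma_{\rig}(\fr X_{\bar k}/\cl O_{\breve F}^0)$ of the semistable special fiber $Y:=\fr X_{\bar k}$, a finite-type, log-smooth log-scheme of Cartier type over the closed log point. Finiteness of each $H^t_{\rig}(Y/\cl O_{\breve F}^0)$ as a classical (hence discrete condensed) $\breve F$-vector space is Grosse--Klönne's finiteness theorem, \cite{GK-HK}; this gives part \listref{slopp:1} for $H^i_{\HK}$, and part \listref{slopp:1} for $H^i_{\dR}$ follows from the Hyodo--Kato isomorphism $R\Gamma_{\HK}(X)\dsolid_{\breve F}C\simeq R\Gamma_{\dR}(X)$ together with the flatness of $C$ over $\breve F$ for $\solid_{\breve F}$ (\cite[Corollary A.65]{Bosco}). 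For the slope bound: the Frobenius preserves an integral structure (coming from a weak formal lift of $Y$ equipped with a Frobenius lift, or from integral log-crystalline cohomology), so the slopes are $\ge 0$; to see they are $\le t$ --- equivalently $\cl E(H^t_{\rig}(Y))$ has slopes $\ge-t$ --- I would reduce to the proper case by compactifying $Y$ to a proper log-smooth $\overline Y$ over $\bar k$ with $\overline Y\setminus Y$ a log normal crossings divisor (combinatorial resolution of log-smooth schemes) and then use the excision/Gysin spectral sequence expressing $H^\bullet_{\rig}(Y/\cl O_{\breve F}^0)$ through the log-crystalline cohomology of $\overline Y$ and of the closed strata of $\overline Y\setminus Y$, each Tate-twisted by its codimension. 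For the proper strata, the log version of Mazur's inequality, together with the fact that the Hodge filtration on $H^a_{\dR}$ sits in degrees $[0,a]$, gives slopes in $[0,a]$; keeping track of the codimension twists shows the contributions to $H^t$ retain slopes $\le t$, and stability of ``slopes $\le t$'' under the spectral-sequence operations concludes.

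I expect the last step to be the main obstacle: producing the right log compactification and the excision/Gysin spectral sequence in log-rigid cohomology for an arbitrary finite-type, possibly non-reduced and disconnected, log-smooth log-scheme over $\bar k$ in characteristic $p$, and carefully tracking the Tate twists. By contrast, the finiteness inputs, the descent and spectral-sequence formalism, and the passage from $H^i_{\HK}$ to $H^i_{\dR}$ are comparatively routine.
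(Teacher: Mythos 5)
Your reduction to the local case is exactly the paper's argument: use that $(\cl M_{\sss}^\dagger,(-)_\eta)$ is a Beilinson basis (Proposition \ref{propobv2}), take an $\eh$-hypercover $\fr U_{\bullet,\eta}\to X$ with $\fr U_n\in\cl M_{\sss,\qcqs}^\dagger$, and run the descent spectral sequence $E_1^{j,i-j}=H^{i-j}_{\HK}(\fr U_{j,\eta})\Rightarrow H^i_{\HK}(X)$; finiteness and the slope bound then propagate because both are stable under subquotients and extensions (the paper phrases the slope part by applying the exact functor $\cl E(-)$, twisting by $\cl O(i)$, and invoking the classification of vector bundles on $\FF$). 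Part \listref{slopp:1} in the local case is likewise handled identically, via Theorem \ref{mainHKover}\listref{mainHKover:1} and Grosse-Kl\"onne's finiteness theorem for log-rigid cohomology, and the de Rham statement follows as you say.

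The one genuine divergence is the local slope bound in part \listref{slopp:2}. The paper does not prove it; it quotes \cite[Th\'eor\`eme 3.1.2]{CLS}, which states precisely that the Frobenius slopes of $H^t_{\rig}(\fr X_{\bar k}/\cl O_{\breve F}^0)$ lie in $[0,t]$ for semistable $\fr X$. Your proposed substitute --- compactify the special fiber to a proper log-smooth scheme, set up an excision/Gysin spectral sequence in log-rigid cohomology, and apply a log version of Mazur's inequality on the proper strata while tracking Tate twists --- is a plausible strategy, but it is not carried out, and you correctly identify it as the main obstacle: the existence of the log compactification, the Gysin formalism for log-rigid cohomology of a possibly non-reduced, disconnected log-smooth scheme in characteristic $p$, and the bookkeeping of twists are each nontrivial and are exactly the content of the cited theorem. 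So the gap in your write-up sits precisely where you flag it; the intended (and efficient) route is to quote the known slope estimate rather than reprove it.
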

  \begin{proof}
   In the case when $X$ is the generic fiber of $\fr X\in \cl M_{\sss, \qcqs}^{\dagger}$ by Theorem \ref{mainHKover}\listref{mainHKover:1} part \listref{slopp:1} follows from a result of Grosse-Kl\"onne, \cite[Theorem 5.3]{GK-HK} (and base change), and part \listref{slopp:2} follows from \cite[Théorème 3.1.2]{CLS}. In the general case, we take a simplicial object $\fr U_\bullet$ of $\cl M_{\sss, \qcqs}^\dagger$ such that $\fr U_{\bullet, \eta}\to X$ is a $\eh$-hypercover, and we consider the spectral sequence
   \begin{equation}\label{spectralHK}
    E_1^{j, i-j}=H_{\HK}^{i-j}(\fr U_{j, \eta})\implies H_{\HK}^{i}(X).
   \end{equation}
   Then, part \listref{slopp:1} for the Hyodo--Kato cohomology follows immediately from the previous case, the spectral sequence (\ref{spectralHK}), and \cite[Lemma A.33]{Bosco}. Similarly, part \listref{slopp:1} for the de Rham cohomology follows from the previous case and an analogous spectral sequence for the de Rham cohomology.\footnote{Alternatively, part \listref{slopp:1} for the de Rham cohomology follows from part \listref{slopp:1} for the de Hyodo--Kato cohomology and the Hyodo--Kato isomorphism, Theorem \ref{mainHKover}\listref{mainHKover:3}.} \smallskip
   
   For part \listref{slopp:2}, applying to (\ref{spectralHK}) the exact functor $\cl E(-)$ sending a finite $\varphi$-module over $\breve F$ to the associated vector bundle on $\FF$, and then twisting by $\cl O(i)$, we deduce that the vector bundle $\cl E(H_{\HK}^i(X))\otimes \cl O(i)$ has non-negative Harder--Narasimhan slopes: in fact, by the previous case, for all $j$, the vector bundle $\cl E(H_{\HK}^{i-j}(\fr U_{j, \eta}))\otimes \cl O(i)$ has non-negative Harder--Narasimhan slopes, and then the claim follows from the classification of vector bundles on $\FF$.
  \end{proof}

  \section{\textbf{$B$-cohomology}}\label{Bleit}
   \sectionmark{}

  This section is devoted to the proof of the following main result, which compares the $B$-cohomology with the Hyodo--Kato cohomology.

   \begin{theorem}\label{B=HK}
   Let $X$ be a connected, paracompact, rigid-analytic variety defined over $C$. Then, we have a natural isomorphism in $D(\Mod_{B}^{\ssolid})$
   \begin{equation}\label{decaB}
    R\Gamma_{B}(X)\simeq (R\Gamma_{\HK}(X)\dsolid_{\breve F}B_{\log})^{N=0}
   \end{equation}
   compatible with the action of Frobenius $\varphi$. If $X$ is the base change to $C$ of a rigid-analytic variety defined over $K$, then (\ref{decaB}) is $\mathscr{G}_K$-equivariant.
  \end{theorem}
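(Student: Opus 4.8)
\textbf{Strategy.} The plan is to reduce the global statement to a local comparison on the Beilinson basis $\cl M_{\sss}$ of semistable formal schemes (Proposition \ref{propobv}), prove the comparison there using the local Hyodo--Kato morphism of Theorem \ref{bcn} together with the local description of the $B$-cohomology from Proposition \ref{3.11}, and then glue by hypersheafification. More precisely, write $R\Gamma_B(X) = R\Gamma_{\eh, \cond}(X, L\eta_t R\alpha_* \Bb)$; since $\Bb$ satisfies $v$-hyperdescent (Proposition \ref{compv}) and the $\eh$-cohomology is computed on the basis $\cl M_{\sss}$ by Lemma \ref{beihyp}, it suffices to identify, functorially in $\fr X\in \cl M_{\sss, \qcqs}$, the complex $R\Gamma_B(\fr X_\eta)$ with $(R\Gamma_{\HK}(\fr X_\eta)\dsolid_{\breve F}B_{\log})^{N=0}$, compatibly with $\varphi$ (and with $\mathscr{G}_K$ in the descended case), in a way that is natural for morphisms in $\cl M_{\sss}$. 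The two sides are then hypersheaves on $\Rig_{C,\eh}$ with values in $D(\Mod_B^{\ssolid})$ agreeing on the Beilinson basis, hence agree globally; the passage from qcqs to connected paracompact $X$ is done by writing $X$ as an increasing union of quasi-compact opens and using that $R\Gamma_{\HK}$ is represented by nuclear complexes (Theorem \ref{mainHK}\listref{mainHK:2}) so that $\dsolid_{\breve F}B_{\log}$ commutes with the relevant limits (via \cite[Corollary A.67(ii)]{Bosco}).

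\textbf{The local comparison.} For $\fr X\in \cl M_{\sss, \qcqs}$ with generic fiber $X = \fr X_\eta$, the key input is the $A_{\inf}$/$B$-theoretic crystalline comparison. Concretely: Le Bras (Remark \ref{remcris}, \cite[Proposition 6.5]{LeBras2}), building on \cite{BMS1}, identifies $R\Gamma_B$ of the generic fiber of a \emph{smooth} proper formal $\cl O_C$-scheme with its crystalline cohomology base-changed to $B$; in the semistable setting the analogous statement, following Česnavičius--Koshikawa \cite{CK}, should give $R\Gamma_B(X) \simeq R\Gamma_{\cris}(\fr X_{\cl O_C/p}/A_{\cris}^\times)\dsolid_{A_{\cris}}B$, or better $R\Gamma_{\log\text{-}\cris}$ relative to the appropriate log base, with its Frobenius. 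I would prove this by working with the local period sheaves $\Bb_I$: using Proposition \ref{3.11}\listref{3.11.2}, $R\Gamma_B(X)$ is computed from $L\eta_t R\Gamma_v(X,\Bb)$, and via $v$-descent and the semistable local charts one reduces, as in \cite[\S 12--13]{BMS1} and \cite[\S 5]{CK}, to a Koszul-complex computation over $A_{\inf}$-variants, yielding the crystalline $B$-comparison. Then one applies the Beilinson--Colmez--Nizioł isomorphism (Theorem \ref{bcn}\listref{bcn:1}):
\[
R\Gamma_{\cris}(\fr X^0_{\cl O_C/p}/\cl O_{\breve F}^0)_{\Qq_p}\dsolid_{\cl O_{\breve F}}B_{\st}^+ \overset{\sim}{\longrightarrow} R\Gamma_{\cris}(\fr X_{\cl O_C/p}/A_{\cris}^\times)_{\Qq_p}\dsolid_{A_{\cris}}B_{\st}^+,
\]
compatible with $\varphi$, $N$, and (after base-changing to $B_{\log}$, using $B_{\log} = B\otimes_{B_{\cris}^+[U]}B_{\st}^+$-type identifications as in Remark \ref{st-log}, Remark \ref{twistt}) with the Galois action. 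Since $R\Gamma_B(X) = (R\Gamma_B(X))^{N=0}$ (the monodromy on the $B$-side is trivial, $B = B_{\log}^{N=0}$) one extracts
\[
R\Gamma_B(X) \simeq \big(R\Gamma_{\cris}(\fr X_{\cl O_C/p}/A_{\cris}^\times)_{\Qq_p}\dsolid_{A_{\cris}}B_{\log}\big)^{N=0} \simeq \big(R\Gamma_{\HK}(X)\dsolid_{\breve F}B_{\log}\big)^{N=0},
\]
where the last step uses Theorem \ref{mainHK}\listref{mainHK:1} identifying $R\Gamma_{\HK}(X)$ with $R\Gamma_{\cris}(\fr X^0_{\cl O_C/p}/\cl O_{\breve F}^0)_{\Qq_p}$. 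Nilpotence of $N$ (Lemma \ref{nilpo}, Theorem \ref{mainHK}\listref{mainHK:2}) makes the $(-)^{N=0}$ (or $N$-nilpotent) manipulations behave well with $\dsolid$, which commutes with the filtered colimits involved.

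\textbf{Globalization and equivariance.} Having a functorial local isomorphism on $\cl M_{\sss}$, I would package both sides as hypersheaves on $\Rig_{C,\eh}$ valued in $D(\Mod_B^{\ssolid})$: the left side is $U\mapsto R\Gamma_B(U)$, the right side is obtained by hypersheafifying $\fr X\mapsto (R\Gamma_{\HK}(\fr X_\eta)\dsolid_{\breve F}B_{\log})^{N=0}$ — which is consistent because $R\Gamma_{\HK}$ is already defined by hypersheafification (Definition \ref{defhk}) and $(-)\dsolid_{\breve F}B_{\log}$ is exact enough (flatness of $B_{\log}$ over $\breve F$) and commutes with the limits computing hyperdescent, again via nuclearity. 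Lemma \ref{beihyp} then upgrades the agreement on the basis to a global isomorphism. For the $\mathscr{G}_K$-equivariance when $X = X_{0,C}$ with $X_0$ over $K$: one runs the same argument $\mathscr{G}_K$-equivariantly, using that the local charts and alterations can be chosen over finite extensions of $K$ (Remark \ref{bbcc}), that Theorem \ref{bcn} is Galois-equivariant and independent of the descent, and that the Galois action on $\Bb_{\log}$ is the one in Definition \ref{deflogcrys}; descent along $X_{0,C}\to X_0$ is then automatic since all maps in sight are canonical.

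\textbf{Main obstacle.} The hard part will be the functoriality of the local crystalline $B$-comparison over $\cl M_{\sss}$. As the paper itself flags (the footnote about \cite[Theorem 5.4]{CK} not being obviously functorial), the semistable $A_{\inf}$/$B$-crystalline comparison is originally constructed via choices (log charts, the quasi-syntomic or $v$-descent presentation), and one must check these choices do not obstruct naturality for arbitrary morphisms in $\cl M_{\sss}$ — not merely those respecting charts. I expect this to be handled by formulating the comparison intrinsically (e.g. via the $v$-sheaf $\Bb$ and the pro-étale–to–crystalline comparison, rather than explicit Koszul resolutions), so that it is manifestly functorial, and by exploiting that $\cl M_{\sss}$ only needs to be a \emph{Beilinson basis} — so one has the freedom to refine along coverings — combined with the "magical" rigidity properties of the solid tensor product and nuclear modules that let local isomorphisms glue without coherence data. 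Verifying the compatibility of the décalage $L\eta_t$ with all of this (and with the Frobenius, since $\varphi(t) = pt$) is the other delicate bookkeeping point, but that is routine given Proposition \ref{beilifil} and Remark \ref{frb}.
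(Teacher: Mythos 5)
Your overall architecture coincides with the paper's: establish the comparison for the generic fiber of a qcqs semistable formal scheme by combining a crystalline description of the $B_I$-cohomology (the paper's Theorem \ref{firstep}) with the Beilinson--Colmez--Nizio{\l} isomorphism of Theorem \ref{bcn}\listref{bcn:1}, kill the monodromy using $B=B_{\log}^{N=0}$ and Theorem \ref{mainHK}\listref{mainHK:1}, then globalize over the Beilinson basis $\cl M_{\sss}$ via $\eh$-hyperdescent, nuclearity and \cite[Corollary A.67(ii)]{Bosco}, and finish with an exhaustion by quasi-compact opens. The problem is that the step you yourself flag as the ``main obstacle'' is left unresolved, and it is the heart of the proof. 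The local isomorphism $R\Gamma_{B_I}(\fr X_C)\simeq R\Gamma_{\cris}(\fr X_{\cl O_C/p}/A_{\cris}^\times)\dsolid_{A_{\cris}}B_I$ produced by a Koszul computation in a fixed framing $\fr X\to \Spf(R^\square)$ depends on that framing, and there is no ``intrinsic pro-\'etale-to-crystalline comparison'' to fall back on: the crystalline side is defined through log PD envelopes, which is exactly why functoriality is one of the two difficulties the paper isolates (cf. the footnote on \cite[Theorem 5.4]{CK}). A Beilinson basis does not let you glue non-functorial local isomorphisms ``without coherence data'' --- Lemma \ref{beihyp} requires an actual hypersheaf on the basis, i.e. a comparison natural in all morphisms of $\cl M_{\sss}$. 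The paper resolves this with the method of all possible coordinates (Notation \ref{notknock}): it identifies $R\Gamma_{\cris}(\cdot/A_{\cris}^\times)$ with condensed group cohomology of $\Gamma_{\Sigma, \Lambda}$ acting on the log PD envelope $D_{\Sigma, \Lambda}(R)$ (Corollary \ref{bgim}), maps the latter to $\Aa_{\cris}(R_{\Sigma, \Lambda, \infty})$ by the universal property of the envelope (Lemma \ref{tricky}), and then shows that the double colimit $\varinjlim_{\Sigma, \Lambda}$ collapses to a single colimit $\varinjlim_{\Sigma}$, whence naturality in $R$. Without this (or an equivalent device) your local comparison does not assemble into a map of hypersheaves.

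Two further points are glossed over. First, the crystalline comparison with $B_I$ only holds for $I\subset[1/(p-1), \infty)$, since one needs $A_{\cris}\subset A_I$; one must twist by Frobenius to reach all intervals and then pass to $R\varprojlim_I$ using Lemma \ref{dinverse}. Second, after that limit one lands in $(R\Gamma_{\HK}(X)\dsolid_{\breve F}\widehat B_{\log})^{N=0}$ with $\widehat B_{\log}=R\varprojlim_I B_{\log, I}$, which is not $B_{\log}=B[U]$ (a limit of direct sums is not a direct sum of limits); the identification with the $B_{\log}$-version, and likewise the commutation of $\dsolid_{\breve F}B_{\log}$ with the totalizations in the hyperdescent step (where flatness alone does not suffice, as $B_{\log}$ is only a filtered colimit of Fr\'echet spaces), is achieved in the paper by first trivializing the monodromy via $\exp(N\cdot U)$ --- legitimate because $N$ is nilpotent on $R\Gamma_{\HK}$ --- so as to reduce everything to the nuclear Fr\'echet ring $B$. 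Your remark that nilpotence of $N$ ``makes the manipulations behave well'' points in the right direction, but it needs to be turned into this explicit argument.
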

  
  We will first prove Theorem \ref{B=HK} in the case when $X$ has semistable reduction. This will be done in two main steps: we first compare, in \S \ref{ae1}, the $B$-cohomology with the log-crystalline cohomology over $A_{\cris}$, and then, in \S \ref{ae2}, we relate the latter with the Hyodo--Kato cohomology.
  
  \begin{convnot}
   In the following, we keep the notation and conventions introduced in \ref{logcriss} and \ref{notss}.
  \end{convnot}

  \subsection{The comparison with the log-crystalline cohomology over $A_{\cris}$}\label{ae1}
  
  We begin by comparing the $B$-cohomology  with the log-crystalline cohomology over $A_{\cris}$.
  
  \begin{theorem}\label{firstep}
   Let $\fr X$ be a qcqs semistable $p$-adic formal scheme over $\Spf(\cl O_C)$ and let $I\subset[1/(p-1), \infty)$ be a compact interval with rational endpoints. Then, there is a natural isomorphism in $D(\Vect_{\Qq_p}^{\ssolid})$
   \begin{equation}\label{B-cris}
    R\Gamma_{B_{I}}(\fr X_C)\simeq R\Gamma_{\cris}(\mathfrak X_{\cl O_C/p}/A_{\cris}^\times)\dsolid_{A_{\cris}}B_{I}
   \end{equation}
   compatible with the action of Frobenius $\varphi$.
  \end{theorem}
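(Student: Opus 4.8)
The plan is to reduce the statement to a framed affine local computation and then identify both sides, after base change to $B_I$, with the same $q$-deformed de Rham complex, the comparison being a $B_I$-localized form of the crystalline comparison for the $A_{\inf}$-cohomology. A preliminary observation makes the décalage harmless: under the hypothesis $I\subset[1/(p-1),\infty)$ one has $A_{\cris}\subset A_I$ (Remark \ref{st-log}), and, since $1/(p-1)>1/p$, the interval $I$ contains no negative power of $p$, so $t=\log[\varepsilon]$ and $\mu=[\varepsilon]-1$ cut out the same divisor on $Y_{\FF,I}$; hence $t/\mu\in B_I^\times$ and $L\eta_t$ agrees with $L\eta_\mu$ on $D(B_I)$. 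I would then reduce to $\fr X=\Spf R$ affine, carrying an étale framing $R^\square\to R$ with $R^\square=\cl O_C\{t_0,\dots,t_r,t_{r+1}^{\pm},\dots,t_d^{\pm}\}/(t_0\cdots t_r-p^q)$: the functor $\fr X\mapsto R\Gamma_{B_I}(\fr X_C)$ is an étale hypersheaf on $\fr X$ because the generic fibre of the standard chart is smooth over $C$ (once $p$ is inverted, $t_0\cdots t_r=p^q$ cuts out a smooth hypersurface in a polydisc), so that Proposition \ref{3.11}\listref{3.11.1} applies; the functor $\fr X\mapsto R\Gamma_{\cris}(\fr X_{\cl O_C/p}/A_{\cris}^\times)\dsolid_{A_{\cris}}B_I$ is one by étale descent of log-crystalline cohomology together with the compatibility of $-\dsolid_{A_{\cris}}B_I$ with the uniformly bounded limits that occur ($A_{\cris}\to B_I$ being solid-flat; see Appendix \ref{complem}). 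Since framed affines form a basis of $\fr X_{\ett}$ and all the identifications below will be functorial in the framing, this reduction is legitimate.

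On the left-hand side, $\fr X_C=\Spa(R[1/p])$ is a smooth affinoid over $C$, so Proposition \ref{3.11}\listref{3.11.2} gives a $\varphi$-equivariant identification $R\Gamma_{B_I}(\fr X_C)\simeq L\eta_t R\Gamma_v(\fr X_C,\Bb_I)$. Adjoining compatible $p$-power roots of $t_1,\dots,t_d$ (that of $t_0$ being then determined up to the fixed roots of $p$) yields a perfectoid $v$-torsor $\widetilde{\fr X}_C\to\fr X_C$ under $\Gamma\cong\mathbb Z_p(1)^d$; by Cartan--Leray and the $v$-acyclicity of $\Bb_I$ on affinoid perfectoids (Proposition \ref{compv}\listref{compv:1}), $R\Gamma_v(\fr X_C,\Bb_I)\simeq R\Gamma_{\cond}(\Gamma,\Bb_I(\widetilde{\fr X}_C))$ is computed by the Koszul complex on the commuting operators $\gamma_i-1$ acting on the solid $B_I$-algebra $\Bb_I(\widetilde{\fr X}_C)=\cl O(Y_{\FF,\widetilde S,I})$, where $\widetilde S$ denotes the tilt of $\widetilde{\fr X}_C$. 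Since $\gamma_i$ acts on the Teichmüller variable $[t_i^\flat]$ by multiplication by $[\varepsilon]$, while $L\eta_t=L\eta_\mu$ with $\mu=[\varepsilon]-1$, applying $L\eta_t$ and performing the standard décalage-of-a-$q$-de-Rham-complex computation of \cite{BMS1} presents $R\Gamma_{B_I}(\fr X_C)$ as an explicit $q$-deformed (logarithmic) de Rham complex of the framing over $B_I$.

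On the right-hand side, the log-crystalline theory of the semistable case (Česnavičius--Koshikawa, Tsuji) identifies $R\Gamma_{\cris}(\fr X_{\cl O_C/p}/A_{\cris}^\times)$ with the log-de Rham complex of the formally log-étale $A_{\cris}$-lift $\widetilde R$ of $R/p$ attached to the chart $A_{\cris}\langle t_0,\dots,t_d^{\pm}\rangle/(t_0\cdots t_r-[p^\flat]^q)$, the divided powers being those of $A_{\cris}$; after $\dsolid_{A_{\cris}}B_I$ one gets the log-de Rham complex of $\widetilde R\dsolid_{A_{\cris}}B_I$. The Teichmüller lifts $[t_i^\flat]\in\Aa_{\inf}(\widetilde{\fr X}_C)\subset\Bb_I(\widetilde{\fr X}_C)$ mapping to the coordinates $t_i$ produce a natural, $\varphi$-equivariant morphism from this base-changed crystalline complex to the $q$-deformed complex of the previous paragraph; I would check that it is a quasi-isomorphism by exactly the argument proving the crystalline comparison $A\Omega_{\fr X}\widehat{\otimes}^{\LL}_{A_{\inf}}A_{\cris}\simeq R\Gamma_{\cris}(\fr X_{\cl O_C/p}/A_{\cris}^\times)$ — \cite{BMS1} in the good reduction case and \cite[Theorem 5.4]{CK} in the semistable case — after restricting to the region $Y_{\FF,I}$, as Le Bras does in the smooth setting (\cite[Proposition 6.5]{LeBras2}), the argument of loc. cit. adapting once one allows the logarithmic chart and carries everything into the solid formalism (using Appendix \ref{complem} to commute $\dsolid_{A_{\cris}}B_I$ past the relevant limits). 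Assembling these local isomorphisms along an étale hypercover of $\fr X$ by framed affines, by functoriality and the limit-compatibility of $\dsolid_{A_{\cris}}B_I$, yields the statement for arbitrary qcqs $\fr X$, with Frobenius carried through.

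The main obstacle is the quasi-isomorphism claim of the third paragraph: re-running the logarithmic crystalline comparison of \cite{CK} at the level of the explicit $q$-deformed complexes over $B_I$ inside the condensed/solid framework, keeping simultaneous track of the décalage operator, the log structures and PD-envelopes, the Frobenius-semilinearity, and the descent-compatibility of the solid base change $\dsolid_{A_{\cris}}B_I$; this is where the essential bookkeeping of the proof is concentrated.
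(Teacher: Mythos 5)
Your local computation is the same as the paper's: Cartan--Leray for the $\Gamma\cong\Zz_p(1)^d$-torsor, the Koszul presentation of $R\Gamma_v(\fr X_C,\Bb_I)$, the observation that $t$ and $\mu$ differ by a unit in $B_I$, the Taylor-expansion identification of $\eta_t$ of the Koszul complex on the $\gamma_i-1$ with the ($q$-)de Rham complex of the framing, and the comparison with the log-crystalline side à la \cite{BMS1}/\cite{CK}/\cite{LeBras2}. Two technical points you wave at do require care in the condensed setting (the paper's Lemmas on killing the non-integral part after the $p$-adic completion $\Aa_I(R_\infty)=\Aa_{\inf}(R_\infty)\widehat\otimes_{A_{\inf}}A_I$, and the identification of $\dsolid_{A_{\inf}}B_I$ with the termwise $p$-adic completion via regularity of $(p^n,\mu^{n'})$), but these are carried out along the lines you indicate.

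The genuine gap is your last reduction step: ``all the identifications below will be functorial in the framing, so the reduction is legitimate.'' They are not. The quasi-isomorphism you construct depends on the chosen étale chart $R^\square\to R$ (through the perfectoid cover obtained by adjoining $p$-power roots of the $t_i$, through the lift $A_{\inf}(R)$, and through the Taylor-expansion automorphism $h$), and a morphism of framed affines need not respect the framings; even for a fixed $\fr X$ two framings give a priori different identifications. Asserting descent along an étale hypercover therefore does not produce a natural map, and this is precisely where the bulk of the paper's proof lives. The paper resolves it by the ``all possible coordinates'' method: one embeds $\Spf(R)$ into a product of charts indexed by finite sets $\Sigma\subset R^\times$ and $\Lambda$, forms the log PD envelope $D_{\Sigma,\Lambda}(R)$ computing $R\Gamma_{\cris}(\fr X_{\cl O_C/p}/A_{\cris}^\times)$, constructs a canonical $\Gamma_{\Sigma,\Lambda}$-equivariant map $D_{\Sigma,\Lambda}(R)\to\Aa_{\cris}(R_{\Sigma,\Lambda,\infty})$ from the universal property of the PD envelope (not from a choice of coordinates), checks it is a quasi-isomorphism by comparing with a single chart, and then proves that the filtered colimit over all $(\Sigma,\Lambda)$ collapses onto the colimit over $\Sigma$ alone (via explicit surjections $R_{\Sigma,\Lambda,\infty}\twoheadrightarrow R_{\Sigma,\{\lambda_0\},\infty}$ determined by unit relations among the coordinates), which is what makes the resulting map functorial in $R$. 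Without an argument of this kind your construction only yields an isomorphism for each choice of chart, not a natural isomorphism, and the globalization to qcqs $\fr X$ does not go through.
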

  
  In the first instance, we prove a local version of Theorem \ref{firstep}, and then we globalize the result. Therefore, we begin by defining the local setting in which we will work.
  
  \begin{notation}\label{notaz}
   Let $\mathfrak X=\Spf(R)$ be a connected affine $p$-adic formal scheme over $\Spf(\cl O_C)$ admitting an étale $\Spf(\cl O_C)$-morphism $\fr X\to \Spf(R^\square)$ with 
   $$R^{\square}:=\cl O_C\{t_0, \ldots, t_r, t^{\pm 1}_{r+1}, \ldots, t_d^{\pm 1}\}/(t_0\cdots t_r-p^q)$$
   for some $0\le r\le d$, and $q\in \Qq_{>0}$. 
   
   We denote by $R_{\infty}^\square$ the perfectoid $R^\square$-algebra defined by $R_{\infty}^\square:=(\varinjlim_m R_m^{\square})^\wedge_p$ with
   $$R^{\square}_m:=\cl O_C\{t_0^{1/p^m}, \ldots, t_r^{1/p^m}, t^{\pm 1/p^m}_{r+1}, \ldots, t_d^{\pm 1/p^m}\}/(t_0\cdots t_r-p^{q/p^m})$$
   and we put $\fr X_{C, \infty}^{\square}:=\Spa(R_{\infty}^{\square}[1/p], R_{\infty}^{\square})$. We set $$R_{\infty}:=(R\otimes_{R^{\square}} R_{\infty}^\square)^\wedge_p$$ and we note that (see also \cite[\S 3.2]{CK})
   \begin{equation}\label{ccover}
    \fr X_{C, \infty}:=\Spa(R_{\infty}[1/p], R_{\infty})\to \fr X_C
   \end{equation}
   is an affinoid perfectoid pro-étale cover of $\fr X_C$ with Galois group $$\Gamma:=\Zz_p(1)^d\cong \Zz_p^d$$ where the latter isomorphism is given by the choice of a compatible system of $p$-th power roots of unity in $\cl O_C$ (see \ref{condp}). We denote by $\gamma_1, \ldots, \gamma_d$ the generators of $\Gamma$ defined by
   $$\gamma_i:=(\varepsilon^{-1}, 1, \ldots, 1, \varepsilon, 1, \ldots, 1)\;\; \text{ for } i=1, \ldots, r$$ 
   $$\gamma_i:=(1, \ldots, 1, \varepsilon, 1, \ldots, 1)\;\;\; \text{ for } i=r+1, \ldots, d$$
   where $\varepsilon$ sits on the $i$-th entry.
   
  \end{notation}

  \subsubsection{\normalfont{\textbf{The condensed ring $\Bb_I(R_{\infty})$}}}
  
   In the setting of Notation \ref{notaz}, given $\Mm$ any pro-étale period sheaf of \S \ref{petsheaves}, we put 
   $$\Mm(R_{\infty}^{\square}):=\Mm(\fr X_{C, \infty}^{\square})\;\;\;\;\;\;\;\;\;\;\Mm(R_{\infty}):=\Mm(\fr X_{C, \infty})$$
   which we regard as condensed rings.
   
  \begin{rem}\label{form}
    We recall from \cite[\S 3.14]{CK} that we have the following decomposition of $\Aa_{\inf}(R_{\infty}^\square)$ 
    \begin{equation}\label{squareint-nint}
    \Aa_{\inf}(R_{\infty}^\square)\cong A_{\inf}(R^\square)\oplus \Aa_{\inf}(R_{\infty}^\square)^{\nonint}
   \end{equation}
   where $A_{\inf}(R^\square)$ denotes the ``integral'' part, and $\Aa_{\inf}(R_{\infty}^\square)^{\nonint}$  the ``nonintegral part''. We have
   \begin{equation}\label{tofixx}
    A_{\inf}(R^\square)\cong A_{\inf}\{X_0, \ldots, X_r, X_{r+1}^{\pm 1}, \ldots, X_d^{\pm 1}\}/(X_0\cdots X_r-[p^{\flat}]^q)
   \end{equation}
  where $X_i:=[t_i^\flat]$, and the convergence is $(p, \mu)$-adic.  Such decomposition lifts to $\Aa_{\inf}(R_{\infty})$ as follows
   \begin{equation}\label{int-nint} 
    \Aa_{\inf}(R_{\infty})\cong A_{\inf}(R)\oplus \Aa_{\inf}(R_{\infty})^{\nonint}
   \end{equation}
  where $A_{\inf}(R)$ is the unique lift of the étale $(R^\square/p)$-algebra $R/p$, along $\theta: A_{\inf}(R^\square)\twoheadrightarrow R^\square$, to a $(p, \mu)$-adically complete, formally étale $A_{\inf}(R^\square)$-algebra.
  \end{rem}
  
  \begin{rem}\label{form2}
   Given a compact interval $I\subset (0, \infty)$ with rational endpoints, by \cite[Proposition 4.7(ii)]{Bosco}, we have
   \begin{equation}\label{Acompl}
    \Aa_I(R_{\infty})\cong \Aa_{\inf}(R_{\infty})\widehat \otimes_{A_{\inf}}A_I
   \end{equation}
    where the completion $\widehat \otimes_{A_{\inf}}$ is $p$-adic. Then, one has similar decompositions as (\ref{int-nint}) replacing $\Aa_{\inf}$ with $\Aa_I$, resp. $\Bb_I$, and $A_{\inf}(R)$ with $A_I(R):=A_{\inf}(R)\widehat \otimes_{A_{\inf}}A_I$, resp. $B_I(R):=A_I(R)[1/p]$ (where the completion $\widehat \otimes_{A_{\inf}}$ is $p$-adic).
  \end{rem}

  \begin{rem}\label{remess}
   Let $I\subset (0, \infty)$ be a compact interval with rational endpoints. We claim that we have a natural isomorphism
   \begin{equation}\label{azzz}
     \Aa_I(R_{\infty})\cong \Aa_{\inf}(R_{\infty})\solid_{A_{\inf}}A_I
   \end{equation}
    In particular, inverting $p$, using that the solid tensor product commutes with filtered colimits, we have an isomorphism
   $$\Bb_I(R_{\infty})\cong \Aa_{\inf}(R_{\infty})\solid_{A_{\inf}}B_I.$$

   To show (\ref{azzz}), up to twisting by the Frobenius, we can assume that $I\subset[1/(p-1), \infty)$. Now, we use the isomorphism (\ref{Acompl}), and then we apply Proposition \ref{solid-vs-padic} taking $M=\Aa_{\inf}(R_{\infty})$ and $N=A_{\inf, I}$ (see \ref{condp} for the notation), regarded as objects of $\Mod_{A_{\inf}}^{\ssolid}$, thus obtaining that 
   \begin{equation}\label{azzz2}
    \Aa_{\inf}(R_{\infty})\dsolid_{A_{\inf}}(A_{\inf, I})^\wedge_p\cong (\Aa_{\inf}(R_{\infty})\otimes_{A_{\inf}}A_{\inf, I})^\wedge_p
   \end{equation}
   where $(-)^\wedge_p$ denotes the derived $p$-adic completion. Since $A_{\inf, I}$ is $p$-torsion-free, thanks to Lemma \ref{condtors} the derived $p$-adic completion $(A_{\inf, I})^\wedge$ identifies with $A_I$. Then, it remains to show that the derived $p$-adic completion appearing on the right-hand side of (\ref{azzz2}) is underived: by \cite[Lemma 12.2]{BMS1} and Remark \ref{st-log}, we have  $\mu^{p-1}/p\in A_{\cris}\subset A_I$, and therefore, for any integer $n\ge 1$, we have that $A_{\inf, I}/p^n=A_I/p^n\cong A_I/(p^n, \mu^{n'})$ for a large enough integer $n'$;\footnote{In fact, one can take $n':=(p-1)n$.} now, it suffices to observe that, by \cite[Lemma 3.13]{CK}, $(p^n, \mu^{n'})$ is an $\Aa_{\inf}(R_{\infty})$-regular sequence and $\Aa_{\inf}(R_{\infty})/(p^n, \mu^{n'})$ is flat over $A_{\inf}/(p^n, \mu^{n'})$,\footnote{In fact, \textit{loc. cit.} translates to the condensed setting, for the ideal sheaf $(p^n, \mu^{n'})$ in the condensed ring $\Aa_{\inf}(R_{\infty})$, observing that both $\Aa_{\inf}(R_{\infty})/(p^n, \mu^{n'})$ and $A_{\inf}/(p^n, \mu^{n'})$ are discrete.} hence
   $$\Aa_{\inf}(R_{\infty})\otimes^{\LL}_{A_{\inf}}A_{\inf, I}/(p^n, \mu^{n'})$$
   is concentrated in degree 0, and the claim follows.
   \end{rem}

   \subsubsection{\normalfont{\textbf{Local computations}}}\label{locop}
   
   Next, by a standard argument, we express locally the $B$-cohomology and the $B_{\dR}^+$-cohomology in terms of Koszul complexes.
   
   \begin{lemma}\label{condstep}
     Let $I\subset (0, \infty)$ be a compact interval with rational endpoints, and let $m\ge 1$ be an integer. Given $$\mathbf{B}\in \{\Bb_{I}, \Bb, \Bb_{\dR}^+, \Bb_{\dR}^+/\Fil^m\}$$ we write  $\mathscr{B}=\mathbf{B}_{\Spa(C)_{\pet}}$ for the corresponding condensed period ring. In the setting of Notation \ref{notaz}, we have a natural isomorphism in $D(\Mod^{\ssolid}_{\mathscr{B}})$
     \begin{equation}\label{frfr}
       R\Gamma_{\mathscr{B}}(\fr X_C)\simeq L\eta_{t}\Kos_{\mathbf{B}(R_{\infty})}(\gamma_1-1, \ldots, \gamma_d-1)
     \end{equation}
     compatible with the filtration décalée of Definition \ref{beilifildef}.
   \end{lemma}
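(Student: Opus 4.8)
The plan is to compute $R\Gamma_{\mathscr B}(\fr X_C)$ by Galois descent along the perfectoid pro-\'etale cover \eqref{ccover}, after first replacing the $\eh$-site by the $v$-site. Since $\fr X$ is semistable, its rigid generic fiber $\fr X_C=\mathrm{Sp}(R[1/p])$ is a \emph{smooth affinoid} rigid space over $C$, so Proposition \ref{3.11}\listref{3.11.2} applies and yields a natural filtered quasi-isomorphism
\[
 L\eta_t R\Gamma_v(\fr X_C,\mathbf B)\;\xrightarrow{\ \sim\ }\;R\Gamma_{\mathscr B}(\fr X_C),
\]
where on both sides the filtration on $L\eta_t(-)$ is the filtration d\'ecal\'ee of Definition \ref{beilifildef}. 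It therefore suffices to identify $R\Gamma_v(\fr X_C,\mathbf B)$, as an object of $D(\Mod^{\ssolid}_{\mathscr B})$ (with its $t$-action), with the Koszul complex $\Kos_{\mathbf B(R_\infty)}(\gamma_1-1,\dots,\gamma_d-1)$.

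For this, recall from Notation \ref{notaz} that \eqref{ccover} is an affinoid perfectoid $v$-cover of $\fr X_C$ which is a torsor under $\Gamma:=\Zz_p(1)^d\cong\Zz_p^d$ with topological generators $\gamma_1,\dots,\gamma_d$. By Proposition \ref{compv}\listref{compv:1}, the $v$-cohomology of $\mathbf B$ on the affinoid perfectoid $\fr X_{C,\infty}$ is concentrated in degree $0$ and equal to $\mathbf B(R_\infty)$; hence the Cartan--Leray spectral sequence attached to this torsor (cf. \cite[Proposition 4.12]{Bosco}) collapses to a natural $\mathscr B$-linear isomorphism $R\Gamma_v(\fr X_C,\mathbf B)\simeq R\Gamma_{\cond}(\Gamma,\mathbf B(R_\infty))$ in $D(\Mod^{\ssolid}_{\mathscr B})$. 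Using the Koszul resolution of the trivial module $\Zz_p$ by the complex on the $[\gamma_i]-1$ over the analytic (solid) group ring of $\Zz_p^d$ (cf. \cite[Appendix B]{Bosco}), one gets a natural identification $R\Gamma_{\cond}(\Zz_p^d,M)\simeq\Kos_M(\gamma_1-1,\dots,\gamma_d-1)$ for any solid $\Zz_p$-module $M$, which applied to $M=\mathbf B(R_\infty)$ and combined with the above and with Proposition \ref{3.11}\listref{3.11.2} produces \eqref{frfr} on the level of $D(\Mod^{\ssolid}_{\mathscr B})$.

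It remains to track the filtration d\'ecal\'ee. By Proposition \ref{beilifil}, $\Fil^\star L\eta_t(-)$ is the functor $\tau^{\le 0}_{\Beil}(t^\star\otimes(-))$ on the filtered derived category, hence is preserved by any isomorphism in $D(\Mod^{\ssolid}_{\mathscr B})$; since the two identifications above are such isomorphisms, the quasi-isomorphism \eqref{frfr} is automatically filtered. Finally, the four cases for $\mathbf B$ reduce to the essential case $\mathbf B=\Bb_I$: the case $\mathbf B=\Bb$ follows by passing to the derived limit over $I$ via Lemma \ref{dinverse}, using $\Bb(R_\infty)=\varprojlim_I\Bb_I(R_\infty)$, and the cases $\mathbf B=\Bb_{\dR}^+,\Bb_{\dR}^+/\Fil^m$ are treated identically (again reducing $\Bb_{\dR}^+$ to its quotients via Lemma \ref{dinverse}). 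The main point requiring care is the compatibility of the Cartan--Leray descent with both the filtration d\'ecal\'ee and the \emph{solid} (not merely condensed) module structures: one must check that the vanishing in Proposition \ref{compv}\listref{compv:1} and the Koszul computation of condensed group cohomology remain valid and functorial in the solid setting, so that \eqref{frfr} genuinely holds in $D(\Mod^{\ssolid}_{\mathscr B})$ with the asserted filtration.
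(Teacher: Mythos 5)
Your proposal is correct and follows essentially the same route as the paper: apply Proposition \ref{3.11}\listref{3.11.2} to the smooth affinoid $\fr X_C$, then compute $R\Gamma_{\pet}(\fr X_C,\mathbf B)$ via the Cartan--Leray isomorphism for the perfectoid $\Gamma$-cover \listref{ccover} and the Koszul description of $R\Gamma_{\cond}(\Zz_p^d,-)$ from \cite[Proposition B.3]{Bosco}, with the filtration handled by functoriality of the filtration d\'ecal\'ee. The detour through Lemma \ref{dinverse} to reduce the four cases to $\Bb_I$ is unnecessary (the argument applies uniformly), but harmless.
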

   \begin{proof}
   Using Proposition \ref{3.11}\listref{3.11.2}, it remains to check that 
   $$R\Gamma_{\pet}(\fr X_C, \mathbf{B})\simeq \Kos_{\mathbf{B}(R_{\infty})}(\gamma_1-1, \ldots, \gamma_d-1).$$
   Considering the Cartan--Leray spectral sequence associated to the affinoid perfectoid pro-étale cover $ \fr X_{C, \infty}\to \fr X_C$ of (\ref{ccover}) with Galois group $\Gamma$ (\cite[Proposition 4.12]{Bosco}), we have the following natural isomorphism in $D(\Mod^{\ssolid}_{\mathscr{B}})$
   $$R\Gamma_{\cond}(\Gamma, \mathbf{B}(R_{\infty}))\overset{\sim}{\to}R\Gamma_{\pet}(\fr X_C, \mathbf{B}).$$
   Then, the statement follows from \cite[Proposition B.3]{Bosco}.
   \end{proof}

   Our next goal is to express the right-hand side of (\ref{frfr}) in terms of differential forms. For this, recalling the notation introduced in Remark \ref{form} and Remark \ref{form2}, in the setting of Notation \ref{notaz} we denote the dual basis of the log $A_{\inf}$-derivations (see \cite[\S 5.10]{CK}) as follows:
   \begin{equation}\label{derder}
    \partial_i:=\frac{\partial}{\partial\log(X_i)}:A_{\inf}(R)\to A_{\inf}(R)
   \end{equation}
   for $1\le i\le d$. Given $I\subset (0, \infty)$ a compact interval with rational endpoints, by slight abuse of notation, we will also denote by $\partial_i$ the extension of the derivatives (\ref{derder}) to $A_I(R)$ or $B_I(R)$.

  \begin{lemma}\label{primitive}
   Let $I\subset[1/(p-1), \infty)$ be a compact interval with rational endpoints. In the setting of Notation \ref{notaz}, we have a $B_I$-linear quasi-isomorphism
   \begin{equation}\label{crucc}
    \Kos_{A_{\inf}(R)}(\partial_1, \ldots,\partial_d)\dsolid_{A_{\inf}}B_{I}\overset{\sim}{\to}\Kos_{B_I(R)}(\partial_1, \ldots,\partial_d)\overset{\sim}{\to}L\eta_{t}\Kos_{\Bb_I(R_{\infty})}(\gamma_1-1, \ldots, \gamma_d-1)
   \end{equation}
   compatible with the action of Frobenius $\varphi$.
   \end{lemma}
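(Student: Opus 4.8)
The strategy is to prove the two quasi-isomorphisms in \eqref{crucc} separately, and for the second one to pass through the décalage description of the $B_I$-cohomology established in Lemma \ref{condstep}. For the first arrow, I would observe that $\Kos_{A_{\inf}(R)}(\partial_1,\dots,\partial_d)$ is the (log) de Rham--Koszul complex computing $R\Gamma_{\cris}(\mathfrak{X}_{\mathcal{O}_C/p}/A_{\inf}^\times)$-type data, and that each term $A_{\inf}(R)$ is, by Remark \ref{remess}, compatible with base change along $A_{\inf}\to B_I$ in the solid sense: namely $A_{\inf}(R)\dsolid_{A_{\inf}}B_I \simeq B_I(R)$, where I would use the decomposition \eqref{int-nint} together with the formal étaleness of $A_{\inf}(R)$ over $A_{\inf}(R^\square)$ and Proposition \ref{solid-vs-padic} (as in Remark \ref{remess}) to check that the derived $p$-completed tensor product is underived and agrees with $B_I(R)$. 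Since the derivations $\partial_i$ are $A_{\inf}$-linear, tensoring the Koszul complex commutes with forming the Koszul complex, giving the first quasi-isomorphism; moreover it is visibly $\varphi$-equivariant since $\varphi$ acts compatibly on all the rings in sight and $\varphi\circ\partial_i = \partial_i\circ\varphi$ up to the expected scalar (coming from $\varphi$ on the log coordinates).

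For the second arrow, the key input is the explicit formula, in the setting of Notation \ref{notaz}, relating the group cohomology Koszul complex $\Kos_{\Bb_I(R_\infty)}(\gamma_1-1,\dots,\gamma_d-1)$ to the de Rham--Koszul complex after applying $L\eta_t$. This is the local computation underlying \cite[Lemma 4.3 and the discussion around it]{LeBras2} (and ultimately \cite[\S 5.10 and Lemma 9.6]{CK}): one writes $\gamma_i - 1$ in terms of $\partial_i$ via the identity $\gamma_i = \exp(t\,\partial_i)$ on the integral part $A_{\inf}(R)\subset \Bb_I(R_\infty)$, so that $\gamma_i - 1 = t\partial_i\cdot(\text{unit})$ on $A_{\inf}(R)$, while on the nonintegral part $\Bb_I(R_\infty)^{\nonint}$ the operators $\gamma_i - 1$ are invertible (because $\gamma_i$ acts on the character $[t_i^\flat]^{a}$ with $a\notin\Zz_p$ by multiplication by $[\varepsilon]^a\neq 1$, and the difference is a unit times $t$ times an invertible operator). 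Hence $\Kos_{\Bb_I(R_\infty)^{\nonint}}(\gamma_\bullet - 1)$ is acyclic, and the inclusion $\Kos_{A_{\inf}(R)}(t\partial_1,\dots,t\partial_d)\hookrightarrow \Kos_{\Bb_I(R_\infty)}(\gamma_1-1,\dots,\gamma_d-1)$ is a quasi-isomorphism (after $[1/p]$). I would then identify $L\eta_t$ of the target: since $L\eta_t$ of a Koszul complex on the operators $t\partial_i$ acting on a $t$-torsion-free module is computed termwise (the standard ``$L\eta_t$ of $\Kos(t\partial_\bullet)$ is $\Kos(\partial_\bullet)$'' manipulation, cf. \cite[\S 9]{CK} or the proof of \cite[Proposition 3.11]{LeBras2}), this yields $L\eta_t\Kos_{\Bb_I(R_\infty)}(\gamma_\bullet-1)\simeq \Kos_{B_I(R)}(\partial_1,\dots,\partial_d)$, compatibly with $\varphi$ (tracking the factor $\varphi(t)=pt$ through the identification, which is exactly the source of the $p$-twist in the Frobenius).

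The main obstacle I expect is the careful bookkeeping in the second quasi-isomorphism: making the identity $\gamma_i - 1 = t\partial_i u_i$ (for a unit $u_i$) precise on the \emph{solid/condensed} completed ring $B_I(R)$ rather than just on a classical $p$-adic ring, and checking that $L\eta_t$ genuinely commutes with the relevant operations here. This requires knowing that $t$ is a non-zero-divisor on each $\Bb_I(R_\infty)$ (true since $\Bb_I(R_\infty)$ embeds into a product of $B_{\dR}^+$'s by the analogue of \eqref{BIprod}, where $t$ is a non-zero-divisor), and that the décalage functor applied to the explicit Koszul model agrees with the abstract $L\eta_t$ — which is where one invokes Proposition \ref{nondec} and the fact that the Koszul complex here is already $t$-torsion-free. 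The restriction $I\subset[1/(p-1),\infty)$ is used precisely to have $A_{\cris}\subset A_I$, hence $\mu^{p-1}/p\in A_I$ and the regularity/flatness statements of \cite[Lemma 3.13]{CK} available, which is what makes the base-change computations in the first paragraph work without higher $\Tor$; I would flag this hypothesis explicitly and note that the general interval case is recovered later by twisting with $\varphi$.
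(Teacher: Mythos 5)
Your overall architecture is the right one (décalage plus a Taylor expansion turning $\gamma_i-1$ into $\partial_i$ times a unit, plus a solid base-change for the first arrow, with the interval hypothesis entering exactly where you say it does, via $A_{\cris}\subset A_I$). The base-change step matches the paper's argument, including the reduction to an underived $p$-adic completion via the regular sequence $(p^n,\mu^{n'})$ from \cite[Lemma 3.13]{CK}. But there is a genuine error in your treatment of the nonintegral part. The operators $\gamma_i-1$ are \emph{not} invertible on $\Bb_I(R_\infty)^{\nonint}$, and the Koszul complex there is \emph{not} acyclic. On the character piece indexed by a nonintegral multi-exponent $a$, the operator $\gamma_i-1$ acts by $[\varepsilon]^{a_i}-1$; for $a_i=1/p$ this is $\varphi^{-1}(\mu)$, a non-zero-divisor that is not a unit, and for the components with $a_j\in\Zz$ the operator can even act by zero. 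So the Koszul cohomology of the nonintegral part is nonzero; the correct statement (from \cite[Proposition 3.25]{CK}) is only that it is killed by $\mu$. One then concludes not that the inclusion of the integral part is a quasi-isomorphism on the nose, but that $L\eta_\mu$ (equivalently $L\eta_t$, since $\mu$ and $t$ differ by a unit in $B_I$) annihilates the nonintegral contribution, because the décalage functor kills complexes whose cohomology is $\mu$-torsion. Your argument as written would prove the quasi-isomorphism before applying $L\eta_t$, which is false.

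A second, related gap: even with the corrected statement, the $\mu$-torsion property is established in \cite{CK} for $H^i_{\cond}(\Gamma, \Aa_{\inf}(R_\infty)^{\nonint})$, whereas you need it for the $p$-adically completed ring $\Aa_I(R_\infty)^{\nonint}=\Aa_{\inf}(R_\infty)^{\nonint}\widehat\otimes_{A_{\inf}}A_I$. Passing the completed tensor product through group cohomology is not formal here; the paper devotes a separate Artin--Rees-type argument (Lemma \ref{b1} and Corollary \ref{b2}, resting on Lemma \ref{intert} and the flatness/regularity statements of \cite[Lemma 3.13]{CK}) to show $H^i_{\cond}(\Gamma, N_\infty\widehat\otimes_{A_{\inf}}A_I)\cong H^i_{\cond}(\Gamma,N_\infty)\widehat\otimes_{A_{\inf}}A_I$. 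Your proposal never confronts this, because the (incorrect) acyclicity claim lets you bypass it. Finally, a minor imprecision: the identity $\gamma_i-1=t\partial_i\cdot(\text{unit})$ does not hold on $A_{\inf}(R)$ itself, since the factor $h=1+\sum_{j\ge1}\tfrac{t^j}{(j+1)!}\partial_i^j$ requires the divided powers $t^j/(j+1)!$ to converge; it holds on $B_I(R)$ (or $A_{\cris}(R)$), which is precisely where the hypothesis $I\subset[1/(p-1),\infty)$ is used --- you flag the hypothesis correctly but locate the identity on the wrong ring.
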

  
   \begin{proof}
    We will generalize the proof of \cite[Proposition 7.13]{Bosco}.  Since $\mu$ divides $\gamma_i-1$ in $B_I(R)$ for all $i$, i.e. $\Gamma$ acts trivially on $B_I(R)/\mu$, and since, by the choice of $I$, the elements $\mu$ and $t$ differ by a unit in $B_I$, by \cite[Lemma 7.8]{Bosco} we have that
    \begin{equation}\label{2255}
     \eta_t\Kos_{B_I(R)}(\gamma_1-1, \ldots, \gamma_d-1)\simeq \Kos_{B_I(R)}\left(\frac{\gamma_1-1}{t}, \ldots, \frac{\gamma_d-1}{t}\right).
    \end{equation}
   Using that $A_{\cris}\subset B_I$, by the choice of $I$, the arguments in \cite[Lemma 12.5]{BMS1} and \cite[Lemma 5.15]{CK} show that, for each $i$, we have the following Taylor expansion in $B_I(R)$
   \begin{equation*}\label{1+H}
    \frac{\gamma_i-1}{t}=\frac{\partial}{\partial\log(X_i)}\cdot h, \; \text{  with  }\;  h:=1+\sum_{j\ge 1}\frac{t^{j}}{(j+1)!}\left(\frac{\partial}{\partial\log(X_i)}\right)^{j}
   \end{equation*}
   where $h-1$ is topologically nilpotent, in particular the factor $h$ is an automorphism of $B_I(R)$; furthermore, the latter automorphism is $\varphi$-equivariant.\footnote{To check this one can argue as in the proof of \cite[Proposition 7.13]{Bosco}.} Then, recalling the notation (\ref{derder}), we deduce that the maps
   \begin{equation*}\label{2term}
    \left((B_I(R)\overset{\partial_i}{\to}B_I(R)\right)\overset{(\id,\; h)}{\longrightarrow}\left(B_I(R)\overset{\gamma_i-1}{\to}B_I(R)\right)
   \end{equation*}
   for $1\le i\le d$, induce a $\varphi$-equivariant quasi-isomorphism
    \begin{equation}\label{kosdec}
     \Kos_{B_I(R)}(\partial_1, \ldots, \partial_d)\overset{\sim}{\to}\eta_{t}\Kos_{B_I(R)}(\gamma_1-1, \ldots, \gamma_d-1).
    \end{equation}
    Next, we show that the natural map
    \begin{equation}\label{nex}
      \eta_{t}\Kos_{B_I(R)}(\gamma_1-1, \ldots, \gamma_d-1)\to \eta_{t}\Kos_{\Bb_I(R_{\infty})}(\gamma_1-1, \ldots, \gamma_d-1)
    \end{equation}
    is a quasi-isomorphism. For this, recalling Remark \ref{form2}, we have
    $$\Aa_I(R_{\infty})\cong A_I(R)\oplus \Aa_I(R_{\infty})^{\nonint} $$
    where $\Aa_I(R_{\infty})^{\nonint}$ denotes the ``nonintegral part'' of $\Aa_I(R_{\infty})$. Then, as $L\eta_{\mu}(-)$ commutes with filtered colimits (hence with inverting $p$), it suffices to show that 
    \begin{equation}\label{killA_I}
    L\eta_\mu \Kos_{\Aa_I(R_{\infty})^{\nonint}}(\gamma_1-1, \ldots, \gamma_d-1)\simeq 0.
    \end{equation}
    In order to show (\ref{killA_I}) we need to prove that $\mu$ kills $H^i_{\cond}(\Gamma, \Aa_I(R_{\infty})^{\nonint})$ for all $i\in \Zz$. By \cite[Proposition 3.25]{CK} (and \cite[Proposition B.3]{Bosco}), the element $\mu$ kills $H^i_{\cond}(\Gamma, \Aa_{\inf}(R_{\infty})^{\nonint})$ for all $i \in \Zz$, then we conclude by Corollary \ref{b2} below.
    
    Now, combining (\ref{kosdec}) with (\ref{nex}), to prove the statement it remains to check that we have an isomorphism
    \begin{equation}\label{bc1}
      \Kos_{A_{\inf}(R)}(\partial_1, \ldots, \partial_d)\dsolid_{A_{\inf}}B_{I}\overset{\sim}{\to}\Kos_{B_I(R)}(\partial_1, \ldots, \partial_d).
    \end{equation}
    Since the solid tensor product commutes with filtered colimits, it suffices to show (\ref{bc1}) replacing $B_I=A_I[1/p]$ with $A_I$. Then, using Proposition \ref{solid-vs-padic}, we reduce to showing that we have an isomorphism
    $$ \Kos_{A_{\inf}(R)}(\partial_1, \ldots, \partial_d)\widehat\otimes^{\LL}_{A_{\inf}}A_{I}\overset{\sim}{\to}\Kos_{A_I(R)}(\partial_1, \ldots, \partial_d)$$
    where the completion $\widehat\otimes^{\LL}_{A_{\inf}}$ is derived $p$-adic. For this, we observe that the latter completion is an underived (termwise) $p$-adic completion: as recalled in Remark \ref{remess}, for any integer $n\ge 1$, we have $A_I/p^n\cong A_I/(p^n, \mu^{n'})$ for a large enough integer $n'$, and, by \cite[Lemma 3.13]{CK}, $(p^n, \mu^{n'})$ is an $A_{\inf}(R)$-regular sequence with $A_{\inf}(R)/(p^n, \mu^{n'})$ flat over $A_{\inf}/(p^n, \mu^{n'})$.
   \end{proof}

   We used crucially the following result.
   
   \begin{lemma}\label{b1}
    Let $A$ be a condensed ring, and let $(f)\subset A$ be a principal ideal sheaf. Let $M$ be an $(f)$-adically complete $A$-module in $\CondAb$. Consider the following condition on a given $A$-module $P$ in $\CondAb$:
    \begin{equation}\label{condit}
     \text{for every } j, n\ge 1 \text{ the map } \Tor_j^{A}(P, M/f^{n'})\to  \Tor_j^{A}(P, M/f^{n}) \text{ vanishes for some } n'>n.
    \end{equation}
    For any bounded complex $P^\bullet$ of $A$-modules in $\CondAb$, with each $P^i$ and $H^i(P^\bullet)$ satisfying (\ref{condit}), for all $i\in \Zz$ we have a natural isomorphism in $\CondAb$
    $$H^i(P^\bullet \widehat \otimes_A M)\cong H^i(P^\bullet)\widehat\otimes_{A}M$$
    where the completion $\widehat \otimes_{A}$ is $(f)$-adic.
   \end{lemma}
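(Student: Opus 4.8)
The plan is to interpolate between the naive termwise $(f)$-adic completion $P^\bullet\widehat\otimes_AM$ and a derived inverse limit over the tower $\{M/f^nM\}_n$, and then to run two parallel d\'evissages reducing everything to the case of a single module. Introduce the triangulated endofunctor $G(-):=R\varprojlim_n\big((-)\otimes^{\LL}_A(M/f^nM)\big)$ of $D(\Mod_A^{\cond})$; it is exact, being a composite of exact functors (derived base change to each $M/f^nM$, followed by the inverse limit over a tower). Observe also that the transition maps of $\{M/f^nM\}_n$ are surjective, hence so are those of $\{Q\otimes_A(M/f^nM)\}_n$ for any $A$-module $Q$, and therefore $R^1\varprojlim_n$ of all such towers vanishes.

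First treat the case $P^\bullet=P$, a single module in degree $0$ satisfying (\ref{condit}). Then $P\otimes^{\LL}_A(M/f^nM)$ is concentrated in degrees $\le 0$, with $H^0=P\otimes_A(M/f^nM)$ and $H^{-j}=\Tor^A_j(P,M/f^nM)$ for $j\ge 1$, and condition (\ref{condit}) says precisely that, for each $j\ge 1$, the tower $\{\Tor^A_j(P,M/f^nM)\}_n$ is pro-zero, so has vanishing $\varprojlim_n$ and $R^1\varprojlim_n$. Feeding this into the Milnor exact sequences
$$0\to R^1\varprojlim_n H^{i-1}\big(P\otimes^{\LL}_A(M/f^nM)\big)\to H^i(G(P))\to \varprojlim_n H^i\big(P\otimes^{\LL}_A(M/f^nM)\big)\to 0$$
collapses everything outside degree $0$ and produces a natural isomorphism $G(P)\simeq \varprojlim_n\big(P\otimes_A(M/f^nM)\big)=(P\otimes_AM)^{\wedge}_{(f)}=P\widehat\otimes_AM$.

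Now let $P^\bullet$ be a bounded complex as in the statement. Applying $G$ to the brutal-truncation triangles $\sigma^{\ge k+1}P^\bullet\to\sigma^{\ge k}P^\bullet\to P^k[-k]$ and inducting downward on $k$ — using the module case to identify $G(P^k[-k])\simeq(P^k\widehat\otimes_AM)[-k]$, and the naturality of $G$ to identify the connecting maps with the $(f)$-adic completions of the differentials $d^i\otimes\id_M$ — one obtains $G(P^\bullet)\simeq P^\bullet\widehat\otimes_AM$ in $D(\Mod_A^{\cond})$, where the right-hand side is the complex with terms $(P^i\otimes_AM)^{\wedge}_{(f)}$. Applying $G$ instead to the canonical-truncation triangles $\tau^{\le k-1}P^\bullet\to\tau^{\le k}P^\bullet\to H^k(P^\bullet)[-k]$, and this time using the module case for $H^k(P^\bullet)$ (which satisfies (\ref{condit})) to get $G(H^k(P^\bullet)[-k])\simeq (H^k(P^\bullet)\widehat\otimes_AM)[-k]$, the resulting long exact cohomology sequences show by induction on $k$ that $H^i(G(P^\bullet))\cong H^i(P^\bullet)\widehat\otimes_AM$ for every $i$. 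Comparing the two computations of $G(P^\bullet)$ yields the desired natural isomorphism $H^i(P^\bullet\widehat\otimes_AM)\cong H^i(P^\bullet)\widehat\otimes_AM$.

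The argument is essentially bookkeeping in the derived category, and the only genuine care is required in (a) invoking the pro-zero hypothesis (\ref{condit}) in exactly the two places where it is used — the higher $\Tor$'s of the terms $P^i$ in the first d\'evissage, and the higher $\Tor$'s of the cohomology modules $H^i(P^\bullet)$ in the second — and (b) checking the compatibility of the two d\'evissages, so that the complex together with its differentials produced by the brutal filtration is the one whose cohomology is computed via the canonical filtration. The ambient formalism is available since $D(\Mod_A^{\cond})$ is presentable, indeed compactly generated by \cite[Theorem 2.2]{Scholzecond} (cf. Remark \ref{indeedpre}), so that $R\varprojlim$, the truncation functors, and the Milnor exact sequences all make sense; the $(f)$-adic completeness of $M$ enters only to identify $\varprojlim_n M/f^nM$ with $M$ and hence the inverse limits above with honest $(f)$-adic completions.
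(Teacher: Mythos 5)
Your proof is correct and is essentially the argument the paper invokes: the paper's proof of this lemma is nothing but a citation of \cite[Lemma 3.30]{CK}, and your two d\'evissages --- pro-zero higher $\Tor$ of the terms $P^i$ handled by brutal truncation, pro-zero higher $\Tor$ of the $H^i(P^\bullet)$ handled by canonical truncation, glued via the Milnor sequences for $R\varprojlim_n$ of the tower $\{-\otimes^{\LL}_A M/f^nM\}_n$ --- are exactly the content of that proof, transported to $\CondAb$. The only point I would make explicit is that the Milnor sequence and the vanishing of $\varprojlim^1$ for towers with epimorphic transition maps rest on countable products being exact in $\CondAb$ (AB4*, checked on extremally disconnected sets), which holds but is not a formal consequence of compact generation.
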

    \begin{proof}
     See the proof of \cite[Lemma 3.30]{CK}.
    \end{proof}

   \begin{cor}\label{b2} Let $I\subset [1/(p-1), \infty)$ be a compact interval with rational endpoints. Let us denote $N_{\infty}:= \Aa_{\inf}(R_{\infty})^{\nonint}$. Then, for every $i\in \Zz$, we have a natural isomorphism in  $\CondAb$
    $$H^i_{\cond}(\Gamma, N_{\infty}\widehat\otimes_{A_{\inf}}A_I)\cong H^i_{\cond}(\Gamma, N_{\infty})\widehat\otimes_{A_{\inf}}A_I$$
    where the completion $\widehat\otimes_{A_{\inf}}$ is $p$-adic.
   \end{cor}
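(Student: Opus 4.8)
The strategy is to present both sides of the asserted isomorphism as the cohomology of a single Koszul complex and then quote Lemma \ref{b1} to commute $p$-adic completion past cohomology. Since $\Gamma\cong\Zz_p^d$, by \cite[Proposition B.3]{Bosco} there is, for any $\Gamma$-module $M$ in $\CondAb$, a natural identification $R\Gamma_{\cond}(\Gamma,M)\simeq\Kos_M(\gamma_1-1,\ldots,\gamma_d-1)$. I would apply this to $M=N_{\infty}$ and to $M=N_{\infty}\widehat\otimes_{A_{\inf}}A_I$, which by Remark \ref{form2} (and (\ref{int-nint})) equals $\Aa_I(R_{\infty})^{\nonint}$. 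As the Koszul complex is assembled from finitely many copies of its coefficient module via $A_{\inf}$-linear differentials, termwise $p$-adic completion commutes with its formation, so
$$\Kos_{N_{\infty}}(\gamma_1-1,\ldots,\gamma_d-1)\widehat\otimes_{A_{\inf}}A_I=\Kos_{N_{\infty}\widehat\otimes_{A_{\inf}}A_I}(\gamma_1-1,\ldots,\gamma_d-1),$$
whose $i$-th cohomology is exactly $H^i_{\cond}(\Gamma,N_{\infty}\widehat\otimes_{A_{\inf}}A_I)$, the left-hand side of the Corollary. Thus the statement reduces to showing that $p$-adic completion commutes with $H^i$ of the bounded complex $P^{\bullet}:=\Kos_{N_{\infty}}(\gamma_1-1,\ldots,\gamma_d-1)$, which is precisely the output of Lemma \ref{b1} with $A=A_{\inf}$, $f=p$ and $M=A_I$.

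It then remains to verify the hypotheses of Lemma \ref{b1}, i.e.\ condition (\ref{condit}) for each term of $P^{\bullet}$ and for each $H^i(P^{\bullet})=H^i_{\cond}(\Gamma,N_{\infty})$. The terms of $P^{\bullet}$ are finite direct sums of copies of $N_{\infty}=\Aa_{\inf}(R_{\infty})^{\nonint}$, so it suffices to treat $N_{\infty}$ itself and the groups $H^i_{\cond}(\Gamma,N_{\infty})$. Here the inputs I would use are: (a) by Remark \ref{remess}, for every $n\ge 1$ one has $A_I/p^n\cong A_I/(p^n,\mu^{n'})$ with $n'=(p-1)n$, since $\mu^{p-1}/p\in A_{\cris}\subset A_I$; (b) by \cite[Lemma 3.13]{CK}, the sequence $(p^n,\mu^{n'})$ is $\Aa_{\inf}(R_{\infty})$-regular with $\Aa_{\inf}(R_{\infty})/(p^n,\mu^{n'})$ flat over $A_{\inf}/(p^n,\mu^{n'})$; and (c), for the cohomology groups, that $\mu$ annihilates $H^i_{\cond}(\Gamma,N_{\infty})$ for all $i$, by \cite[Proposition 3.25]{CK} together with \cite[Proposition B.3]{Bosco} — exactly as invoked in the proof of Lemma \ref{primitive}. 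Since $N_{\infty}$ is an $A_{\inf}$-module direct summand of $\Aa_{\inf}(R_{\infty})$ (Remark \ref{form}), these let one compute that $\Tor_j^{A_{\inf}}(N_{\infty},A_I/p^n)$ vanishes for $j\ge 2$ and is a $p$-power torsion $A_{\inf}$-module of bounded exponent for $j=1$; hence the transition maps $\Tor_j^{A_{\inf}}(N_{\infty},A_I/p^{n'})\to\Tor_j^{A_{\inf}}(N_{\infty},A_I/p^n)$, which for $j=1$ are multiplication by $p^{n'-n}$, vanish for $n'$ large. The same analysis, now using (c) to regard $H^i_{\cond}(\Gamma,N_{\infty})$ as a module over $A_{\inf}/\mu$, yields (\ref{condit}) for the cohomology groups.

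The main obstacle is this last $\Tor$-bookkeeping: controlling uniformly the $p$-power torsion of $N_{\infty}\otimes_{A_{\inf}}A_{\inf,I}$ and of $H^i_{\cond}(\Gamma,N_{\infty})\otimes_{A_{\inf}}A_{\inf,I}$, and checking that the transition maps on $\Tor$ are eventually zero. I expect to handle it by Noetherian approximation in the style of Lemma \ref{intert} — writing $A_{\inf,I}=A_{\inf}\widehat\otimes_{\Lambda}\Lambda_{\inf,I}$ over the Noetherian base $\Lambda=\Zz_p\llbracket T_1,T_2\rrbracket$ and applying the Artin--Rees lemma — combined with the mod-$p^n$ regularity of (b). Everything else (the Koszul-complex identifications, commuting $p$-adic completion with finite direct sums, and the reduction from the terms of $P^{\bullet}$ to $N_{\infty}$) is formal.
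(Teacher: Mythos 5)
Your overall skeleton coincides with the paper's: identify both sides with the cohomology of the Koszul complex $\Kos_{N_\infty}(\gamma_1-1,\ldots,\gamma_d-1)$ and its termwise $p$-adic completion, and then invoke Lemma \ref{b1}, so that everything reduces to verifying condition (\ref{condit}) for the terms and for the cohomology groups. Your treatment of the terms is essentially the paper's (Remark \ref{remess} plus \cite[Lemma 3.13]{CK} in fact give that $\Aa_{\inf}(R_\infty)\otimes^{\LL}_{A_{\inf}}A_I/p^n$ is concentrated in degree $0$, so the relevant $\Tor_1$ is not merely bounded torsion but zero, and $N_\infty$ inherits this as a direct summand).

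The gap is in the step you describe as ``the same analysis, now using (c) to regard $H^i_{\cond}(\Gamma,N_\infty)$ as a module over $A_{\inf}/\mu$.'' Knowing only that $\mu$ kills $H^i_{\cond}(\Gamma,N_\infty)$ gives essentially no control over $\Tor_j^{A_{\inf}}(H^i_{\cond}(\Gamma,N_\infty), A_I/p^n)$ or over the transition maps between them: in particular $\Tor_1^{A_{\inf}}(P,A_I)$ need not vanish for a $\mu$-torsion module $P$, so your identification of the transition maps with multiplication by $p^{n'-n}$ breaks down, and Noetherian approximation alone will not rescue it. The paper's proof requires two further inputs that your sketch omits. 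First, one does not work with $H^i_{\cond}(\Gamma,N_\infty)$ directly but reduces to $H^i_{\cond}(\Gamma,N_\infty/\mu)$ by descending induction on $i$, using the short exact sequences $0\to H^i_{\cond}(\Gamma,N_\infty)\to H^i_{\cond}(\Gamma,N_\infty/\mu)\to H^{i+1}_{\cond}(\Gamma,N_\infty)\to 0$ coming from the $\mu$-torsion statement together with the vanishing of the cohomology in large degrees. Second, and crucially, one needs the structure theory of \cite[Proposition 3.19, Corollary 3.23]{CK}: $H^i_{\cond}(\Gamma,N_\infty/\mu)$ is $p$-torsion-free and its reduction mod $p^n$ is (by Lazard) a filtered colimit of cyclic modules $A_{\inf}/(\varphi^{-r}(\mu),p^n)$. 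Only with this explicit presentation does the $\Tor_1$ against $A_I/p^n$ identify with $(A_I/p^n)[\varphi^{-r}(\mu)]$, at which point Lemma \ref{intert} (your Artin--Rees input) applies, since $\varphi^{-r}(\mu)$ divides $\mu$. Without these structural facts about the specific cohomology groups, the ``$\Tor$-bookkeeping'' you defer cannot be carried out.
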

   \begin{proof}
    We will show that the condition (\ref{condit}) of Lemma \ref{b1} holds for $A=A_{\inf}$, $f=p$, $M=A_I$, and each $P\in \{\Aa_{\inf}(R_{\infty}),\; H^i_{\cond}(\Gamma, N_{\infty}),\; H^i_{\cond}(\Gamma, N_{\infty}/\mu)\}$, adapting the argument of \cite[Lemma 3.31]{CK} to our setting. 
    
    For $P=\Aa_{\inf}(R_{\infty})$, we even have that $P\otimes_{A_{\inf}}^{\LL}A_I/p^n\in D(\CondAb)$ is concentrated in degree 0 for all $n\ge 1$: in fact, recalling that $A_I/p^n\cong A_I/(p^n, \mu^{n'})$ for a large enough integer $n'$ (see Remark \ref{remess}), it suffices to apply \cite[Lemma 3.13]{CK}, which implies that $(p^n, \mu^{n'})$ is an $\Aa_{\inf}(R_{\infty})$-regular sequence with $\Aa_{\inf}(R_{\infty})/(p^n, \mu^{n'})$ flat over $A_{\inf}/(p^n, \mu^{n'})$ (noting that the latter two condensed rings are discrete).
    
    Next, we claim that the case $P=H^i_{\cond}(\Gamma, N_{\infty})$ follows from the case $P=H^i_{\cond}(\Gamma, N_{\infty}/\mu)$. In fact, by \cite[Proposition 3.25]{CK} (and \cite[Proposition B.3]{Bosco}), $\mu$ kills every $H^j_{\cond}(\Gamma, N_{\infty})$, therefore, the long exact sequence in condensed group cohomology associated to the short exact sequence $0\to N_{\infty}\overset{\mu}{\to}N_{\infty}\to N_{\infty}/\mu\to 0$, gives the short exact sequences
    $$0\to H^i_{\cond}(\Gamma, N_{\infty})\to H^i_{\cond}(\Gamma, N_{\infty}/\mu)\to H^{i+1}_{\cond}(\Gamma, N_{\infty})\to 0.$$
    By \cite[Proposition B.3]{Bosco}, $H^i_{\cond}(\Gamma, N_{\infty})$ vanishes for a large enough integer $i$, hence the claim follows by descending induction on $i$.
    
    Finally, for $P=H^i_{\cond}(\Gamma, N_{\infty}/\mu)$, we claim that $P$ satisfies the following conditions that imply the condition (\ref{condit}) of Lemma \ref{b1}: 
    \begin{enumerate}[(i)]\label{afr}
     \item\label{afr:1} $P$ is $p$-torsion-free;
     \item\label{afr:2} for every $n\ge 1$, $P/p^n$ is a filtered colimit of $A_{\inf}$-modules isomorphic to $A_{\inf}/(\varphi^{-r}(\mu), p^n)$ for variable $r\ge 0$.
    \end{enumerate}
    In fact, recalling \cite[Proposition B.3]{Bosco}, condition \listref{afr:1} follows from \cite[Proposition 3.19]{CK}, and condition \listref{afr:2} follows from \cite[Proposition 3.19, Corollary 3.23]{CK} and Lazard's theorem (\cite[058G]{Thestack}), observing that $P/p^n$ is discrete. It remains to show  that such conditions on $P$ imply the condition (\ref{condit}) of Lemma \ref{b1} in our case. For this, by condition \listref{afr:1} and Lemma \ref{condtors}, we have that $P\otimes_{A_{\inf}}^{\LL}A_{\inf}/p^n= P\otimes_{\Zz}^{\LL}\Zz/p^n$ is concentrated in degree 0, and then by condition \listref{afr:2} and the $\mu$-torsion-freeness of $A_{\inf}/p^n$, we can reduce to checking that, for every $n\ge 1$ and $r\ge 0$, the map
    \begin{equation}\label{nonec}
     \Tor_1^{A_{\inf}/p^{n'}}(A_{\inf}/(\varphi^{-r}(\mu), p^{n'}), A_I/p^{n'})\to \Tor_1^{A_{\inf}/p^n}(A_{\inf}/(\varphi^{-r}(\mu), p^{n}), A_I/p^{n})
    \end{equation}
    vanishes for some $n'>n$. In order to check this, we observe that the source of the map (\ref{nonec}) identifies with $(A_I/p^{n'})[\varphi^{-r}(\mu)]$ (and the target with $(A_I/p^{n})[\varphi^{-r}(\mu)]$), and we conclude observing that $\varphi^{-r}(\mu)$ divides $\mu$ in $A_{\inf}$,\footnote{In fact, for any $r\ge 1$, one has $\mu=(\prod_{j=0}^{r-1}\varphi^{-j}(\xi))\cdot \varphi^{-r}(\mu)$.} and, by Lemma \ref{intert}, the map $(A_I/p^{n'})[\mu]\to A_I/p^n$ vanishes for some $n'>n$.
    \end{proof}

  \subsubsection{\normalfont{\textbf{The functorial isomorphism}}}\label{functisom}

   As we will see in this subsection, the source of the quasi-isomorphism (\ref{crucc}) of Lemma \ref{primitive} computes $$R\Gamma_{\cris}(\mathfrak X_{\cl O_C/p}/A_{\cris}^\times)\dsolid_{A_{\cris}}B_{I}.$$ Therefore, Lemma \ref{primitive}, combined with Lemma \ref{condstep}, already provides a local version of the desired Theorem \ref{firstep}. However, such quasi-isomorphism depends on the choice of the coordinates $\fr X\to \Spf(R^\square)$, introduced in Notation \ref{notaz}. To have a functorial quasi-isomorphism, we will rely on a modification of the method of ``all possible coordinates'' pioneered by Bhatt--Morrow--Scholze in \cite[\S 12.2]{BMS1}, and used by Česnavičius--Koshikawa in \cite[\S 5]{CK} to prove the functoriality with respect to étale maps of the absolute crystalline comparison isomorphism for the $A_{\inf}$-cohomology in the semistable case.
    \medskip
    
   Let us begin resuming the setting of \cite[\S 5.17]{CK}.

   \begin{notation}\label{notknock}
    We denote by $k_C$ the residue field of $\cl O_C$.
    \begin{itemize}
     \item Let $\mathfrak X=\Spf(R)$ be an affine $p$-adic formal scheme over $\Spf(\cl O_C)$, such that every two irreducible components of $\Spec(R\otimes_{\cl O_C}k_C)$ intersect, and such that there exist finite sets $\Sigma$ and $\Lambda\neq \emptyset$, and a closed immersion of $p$-adic formal schemes over $\Spf(\cl O_C)$
    \begin{equation}\label{mane}
     \fr X\hookrightarrow \Spf(R_{\Sigma}^\square)\times_{\Spf(\cl O_C)} \textstyle\prod_{\lambda\in \Lambda}\Spf(R_{\lambda}^\square)=:\Spf(R_{\Sigma, \Lambda}^\square)
    \end{equation}
    where 
    $$R_{\Sigma}^\square:=\cl O_C\{t_{\sigma}^{\pm 1}:\sigma\in \Sigma\}$$
    the induced map $\fr X\to \Spf(R_{\Sigma}^\square)$ is a closed immersion, and, for each $\lambda\in \Lambda$,
    $$R_{\lambda}^\square:=\cl O_C\{t_{\lambda, 0}, \ldots, t_{\lambda, r_{\lambda}}, t_{\lambda, r_{\lambda+1}}^{\pm 1}, \ldots, t_{\lambda, d}^{\pm 1} \}/(t_{\lambda, 0}\cdots t_{\lambda, r_{\lambda}}-p^{q_{\lambda}})$$
    for some $0\le r_{\lambda}\le d$, and  $q_{\lambda}\in \Qq_{>0}$, and the induced map $\fr X\to \Spf(R_{\lambda}^\square)$ is étale. 
    
    We will denote by $M_R$ the canonical log structure on $R$, Notation \ref{notss}.
    \item We define
    $$A_{\inf, \Sigma, \Lambda}^\square:=A_{\inf}(R_{\Sigma}^\square)\widehat\otimes_{A_{\inf}} \textstyle\widehat\bigotimes_{\lambda\in \Lambda}A_{\inf}(R_{\lambda}^\square)$$
    where the completions are $(p, \mu)$-adic.
    
    We will denote by $M_{\inf, \Sigma, \Lambda}^{\square}$ the log structure on $A_{\inf, \Sigma, \Lambda}^\square$ associated to the log structures on $A_{\inf}(R_{\Sigma}^\square)$ and $A_{\inf}(R_{\lambda}^\square)$, for varying $\lambda \in \Lambda$, defined in \cite[\S 5.9]{CK}.
    
    \item Similarly to Notation \ref{notaz}, we define $R_{\Sigma, \Lambda, \infty}^\square$ a perfectoid $R_{\Sigma, \Lambda}^\square$-algebra  such that
   \begin{equation}\label{bcc}
    \Spa(R_{\Sigma, \Lambda, \infty}^\square[1/p], R_{\Sigma, \Lambda, \infty}^\square)\to \fr X_C
   \end{equation}
   is an affinoid perfectoid pro-étale cover of $\fr X_C$ with Galois group 
   $$\Gamma_{\Sigma, \Lambda}:=\Gamma_{\Sigma}\times \textstyle \prod_{\lambda\in \Lambda}\Gamma_{\lambda}\cong \Zz_p^{|\Sigma|}\times \prod_{\lambda\in \Lambda}\Zz_p^d$$
   (see also \cite[\S 5.18]{CK}).
   
   We denote by $(\gamma_{\sigma})_{\sigma\in \Sigma}, (\gamma_{\lambda, i})_{\lambda\in \Lambda, 1\le i\le d}$ the  generators of $\Gamma_{\Sigma, \Lambda}$ defined by
    $$\gamma_{\sigma}:=(1, \ldots, 1, \varepsilon, 1, \ldots, 1)\;\; \text{ for } \sigma\in \Sigma$$
     where $\varepsilon$ sits on the $\sigma$-th entry, and
    $$\gamma_{\lambda, i}:=(\varepsilon^{-1}, 1, \ldots, 1, \varepsilon, 1, \ldots, 1)\;\; \text{ for } i=1, \ldots, r_{\lambda}$$
    $$\gamma_{\lambda, i}:=(1, \ldots, 1, \varepsilon, 1, \ldots, 1)\;\;\; \text{ for } i=r_{\lambda}+1, \ldots, d$$
   where $\varepsilon$ sits on the $i$-th entry.
   
   The base change of (\ref{bcc}) along the generic fiber of (\ref{mane}) defines an affinoid perfectoid pro-étale cover of $\fr X_C$
   $$\fr X_{C, \Sigma, \Lambda, \infty}:=\Spa(R_{\Sigma, \Lambda, \infty}[1/p], R_{\Sigma, \Lambda, \infty})\to \fr X_C$$
   with Galois group $\Gamma_{\Sigma, \Lambda}$.
   
   \item  Given $\Mm$ any pro-étale period sheaf of \S \ref{petsheaves}, we set $$\Mm(R_{\Sigma, \Lambda, \infty}):=\Mm(\fr X_{C, \Sigma, \Lambda, \infty})$$
   and we regard it as a condensed ring.

   \end{itemize}
    \end{notation}
    
    \begin{rem}\label{doit}
    In Notation \ref{notknock}, the assumption on the special fiber $\Spec(R\otimes_{\cl O_C}k_C)$ guarantees that each irreducible component of such special fiber is cut out by a unique $t_{\lambda, i}$ with $0\le i\le r_{\lambda}$ (see also \cite[\S 5.17]{CK}).
     \end{rem}

     We note that Remark \ref{form2} and Remark \ref{remess} hold in the setting of Notation \ref{notknock}, i.e. with $R_{\Sigma}^{\square}$ and $R_{\lambda}^{\square}$ in place of $R^{\square}$. In particular, to fix the notation, we let 
    \begin{equation*}
    A_{\inf}(R_{\Sigma}^\square)\cong A_{\inf}\{X_\sigma^{\pm 1}|\sigma\in \Sigma\}
   \end{equation*}
   \begin{equation*}
    A_{\inf}(R_{\lambda}^\square)\cong A_{\inf}\{X_{\lambda, 0}, \ldots, X_{\lambda, r_{\lambda}}, X_{r_{\lambda}+1}^{\pm 1}, \ldots, X_{\lambda, d}^{\pm 1}\}/(X_{\lambda, 0}\cdots X_{\lambda, r_{\lambda}}-[p^{\flat}]^{q_{\lambda}})
   \end{equation*}
   be the isomorphisms (\ref{tofixx}) for $R_{\Sigma}^{\square}$ and $R_{\lambda}^{\square}$. \medskip

  For the proof of Theorem \ref{firstep}, we will need the next result from \cite[\S 5.22]{CK} on the log-crystalline cohomology over $A_{\cris}$, that we translate here in the condensed setting.
  
  \begin{lemma}\label{cruc}
   In the setting of Notation \ref{notknock}, we have a $\varphi$-equivariant identification
   \begin{equation}\label{coorcompl1}
    R\Gamma_{\cris}(\mathfrak X_{\cl O_C/p}/A_{\cris}^\times)\simeq \Omega_{D_{\Sigma, \Lambda}(R)}^\bullet:=\Kos_{D_{\Sigma, \Lambda}(R)}\left((\partial_{\sigma})_{\sigma\in \Sigma},(\partial_{\lambda, i})_{\lambda\in \Lambda, 1\le i\le d}\right)
   \end{equation}
   where $D_{\Sigma, \Lambda}(R)$ is an $A_{\cris}$-algebra in $\CondAb$ characterized by the following properties: $D_{\Sigma, \Lambda}(R)$ is $p$-adically complete, and, for each integer $n\ge 1$, $D_{\Sigma, \Lambda}(R)/p^n$ is the log PD envelope of \footnote{Here, $R/p$ is equipped with the pullback of the canonical log structure $M_R$ on $R$, and  $A_{\inf, \Sigma, \Lambda}^{\square}\otimes_{A_{\inf}}A_{\cris}/p^n$ is endowed with the pullback of the log structure $M_{\inf, \Sigma, \Lambda}^{\square}$ on $A_{\inf, \Sigma, \Lambda}^{\square}$, Notation \ref{notknock}.}
   $$(A_{\inf, \Sigma, \Lambda}^\square\otimes_{A_{\inf}}A_{\cris}/p^n, M_{\inf, \Sigma, \Lambda}^{\square}) \twoheadrightarrow (R/p,  M_R)\;\; \text{ over }\;\; A_{\cris}^\times/p^n \twoheadrightarrow \cl O_C^\times/p.$$
   Here, $\partial_{\sigma}:=\frac{\partial}{\partial\log(X_\sigma)}$, resp. $\partial_{\lambda, i}:=\frac{\partial}{\partial\log(X_{\lambda, i})}$, are as in (\ref{derder}) with $R_{\Sigma}^{\square}$, resp. $R_{\lambda}^{\square}$, in place of $R$.
  \end{lemma}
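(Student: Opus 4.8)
The plan is to deduce (\ref{coorcompl1}) from the log analogue of Berthelot's linearization theorem, essentially reproducing \cite[\S 5.22]{CK}, and then to check that the resulting identifications are compatible with the condensed structures. First I would pass to the reductions modulo $p^n$. The closed immersion $\fr X\hookrightarrow \Spf(R_{\Sigma,\Lambda}^\square)$ of (\ref{mane}) endows $(\fr X_{\cl O_C/p^n}, M_R)$ with a closed immersion into $(\Spf(R_{\Sigma,\Lambda}^\square/p^n), M_{\Sigma,\Lambda}^\square)$ over $\cl O_C^\times/p^n$. The key geometric input, which is exactly the content of \cite[\S 5.9, \S 5.17--5.22]{CK} in the semistable setting (and where the combinatorics of the irreducible components recalled in Remark \ref{doit} enters), is that $A_{\inf,\Sigma,\Lambda}^\square\otimes_{A_{\inf}}A_{\cris}/p^n$, equipped with the log structure $M_{\inf,\Sigma,\Lambda}^\square$, is a log-smooth lift of $(R_{\Sigma,\Lambda}^\square/p, M_{\Sigma,\Lambda}^\square)$ over $A_{\cris}^\times/p^n$. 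Hence the log PD envelope $D_{\Sigma,\Lambda}(R)/p^n$ of the composite surjection appearing in the statement is an object of the log-crystalline site, and the log analogue of the comparison between crystalline cohomology and the de Rham complex of a PD envelope (\cite[\S 1]{Beili}, \cite{Kato-log}) yields a natural quasi-isomorphism between $R\Gamma_{\cris}(\fr X_{\cl O_C/p}/A_{\cris}^\times/p^n)$ and the log de Rham complex $D_{\Sigma,\Lambda}(R)/p^n\otimes \Omega^\bullet$ of this thickening relative to $A_{\cris}/p^n$.

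Next I would identify that log de Rham complex with the asserted Koszul complex. The module of log differentials of the lift is free of rank $|\Sigma|+|\Lambda|\cdot d$: each torus factor $R_\Sigma^\square$ contributes the basis $(d\log X_\sigma)_{\sigma\in\Sigma}$, while each semistable factor $R_\lambda^\square$ contributes $d\log X_{\lambda,0},\ldots, d\log X_{\lambda,d}$ subject to the single relation $\sum_{i=0}^{r_\lambda}d\log X_{\lambda,i}=0$ coming from $X_{\lambda,0}\cdots X_{\lambda,r_\lambda}=[p^{\flat}]^{q_\lambda}$, so that $(d\log X_{\lambda,i})_{1\le i\le d}$ is a basis; the dual basis of derivations is precisely the collection $(\partial_\sigma)_\sigma$, $(\partial_{\lambda,i})_{\lambda,\,1\le i\le d}$ of (\ref{derder}). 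Thus the de Rham complex is $\Kos_{D_{\Sigma,\Lambda}(R)/p^n}\big((\partial_\sigma)_\sigma,(\partial_{\lambda,i})_{\lambda,i}\big)$. Taking the derived inverse limit over $n$ — which is computed termwise, since the transition maps are surjective and the tower is countable, and the relevant derived $p$-adic completions are underived by the flatness facts already used in \S\ref{locop} (cf. Remark \ref{remess}) — gives (\ref{coorcompl1}) at the level of $p$-adically complete complexes, hence in $D(\CondAb)$ once one records, as in \cite[\S 5.1]{Bosco}, that all rings and modules here are $p$-adic completions of countable colimits of finitely presented algebras over noetherian rings and so carry their canonical condensed structure.

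For the $\varphi$-equivariance I would use that the Frobenius on $A_{\cris}$ lifts canonically to $A_{\inf,\Sigma,\Lambda}^\square$ by $X\mapsto X^p$ on the coordinate variables together with the Witt-vector Frobenius on $A_{\inf}$; this lift is compatible with the surjection onto $R/p$ via the Frobenius of $R/p$ and with the PD structure, so it descends to a Frobenius on $D_{\Sigma,\Lambda}(R)$ commuting with the log differential up to the usual $p$-twist, and induces on $\Kos_{D_{\Sigma,\Lambda}(R)}(\cdots)$ the $\varphi$-action matching that on $R\Gamma_{\cris}(\fr X_{\cl O_C/p}/A_{\cris}^\times)$ (together with the Frobenius-descent identification $R\Gamma_{\cris}(-/A_{\cris}^\times)\otimes_{A_{\cris},\varphi}A_{\cris}\simeq R\Gamma_{\cris}(-/A_{\cris}^\times)$ built into the semilinear structure).

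The main obstacle is not conceptual: the hard part — that $M_{\inf,\Sigma,\Lambda}^\square$ is the correct log-smooth deformation in the semistable case, and the attendant PD-envelope computations — is already carried out in \cite{CK}. What requires care here is the bookkeeping: keeping track of which $d$ of the $d+1$ variables $X_{\lambda,0},\ldots,X_{\lambda,d}$ index each Koszul factor and why the choice is immaterial (the relation forces $\Omega^1$ to be free of the stated rank), and verifying that passing from $A_{\cris}/p^n$-modules to $A_{\cris}$-modules via derived limits and $p$-adic completion stays within, and is compatible with, the condensed formalism.
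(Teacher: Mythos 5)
Your proposal is correct and follows essentially the same route as the paper: the paper simply cites \cite[Proposition 5.23]{CK} (which rests on \cite[(1.8.1)]{Beili}) for the identification and \cite[\S 5.22, Lemma 5.29]{CK} for the characterization of $D_{\Sigma,\Lambda}(R)$, and your argument is precisely an unpacking of what those references carry out, including the rank count for the log differentials and the passage to the $p$-adic limit in the condensed setting.
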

  \begin{proof}
   This is the content of \cite[Proposition 5.23]{CK}, which relies on \cite[(1.8.1)]{Beili}. The characterization of $D_{\Sigma, \Lambda}(R)$ follows from \cite[\S 5.22, Lemma 5.29]{CK}.
  \end{proof}

  We note that the action of $\Gamma_{\Sigma, \Lambda}$ on $A_{\inf, \Sigma, \Lambda}^\square$ induces a natural action of $\Gamma_{\Sigma, \Lambda}$ on $D_{\Sigma, \Lambda}(R)$. \medskip
  
  The next lemma, which is a semistable version of \cite[Corollary 12.7]{BMS1}, expresses the complex $\Omega_{D_{\Sigma, \Lambda}(R)}^\bullet$ of Lemma \ref{cruc}, which computes the log-crystalline cohomology over $A_{\cris}$,  in terms of condensed group cohomology $R\Gamma_{\cond}(\Gamma_{\Sigma, \Lambda}, D_{\Sigma, \Lambda}(R))$ via passage through Lie algebra cohomology. Cf. \cite[\S 4.3]{CN1}.

 \begin{lemma}\label{ghui} In the setting of Notation \ref{notknock}, we denote by $\Lie \Gamma_{\Sigma, \Lambda}$ the Lie algebra of $\Gamma_{\Sigma, \Lambda}$, and we write $\exp:\Lie \Gamma_{\Sigma, \Lambda} \cong \Gamma_{\Sigma, \Lambda}$ for the  exponential isomorphism. Then, there is a natural action of $\Lie \Gamma_{\Sigma, \Lambda}$ on $D_{\Sigma, \Lambda}(R)$ defined for $g\in \Lie \Gamma_{\Sigma, \Lambda}$, with $\exp(g)=\gamma\in \Gamma_{\Sigma, \Lambda}$, by
 \begin{equation}\label{2310}
  g=\log(\gamma):=\sum_{n\ge 1}\frac{(-1)^{n-1}}{n}(\gamma-1)^n.
  \end{equation}
 We write $U_{\Sigma, \Lambda}$ for the universal enveloping algebra of $\Lie \Gamma_{\Sigma, \Lambda}$, and we denote by $$R\Gamma(\Lie \Gamma_{\Sigma, \Lambda}, D_{\Sigma, \Lambda}(R)):=R\underline{\Hom}_{U_{\Sigma, \Lambda}}(\Zz_p, D_{\Sigma, \Lambda}(R))\in D(\CondAb)$$  the Lie group cohomology.
 \begin{enumerate}[(i)]
  \item\label{ghui:1} There is a quasi-isomorphism
  $$L\eta_\mu R\Gamma(\Lie \Gamma_{\Sigma, \Lambda}, D_{\Sigma, \Lambda}(R))\simeq \Omega_{D_{\Sigma, \Lambda}(R)}^\bullet.$$
  \item\label{ghui:2} There is a quasi-isomorphism
  $$R\Gamma(\Lie \Gamma_{\Sigma, \Lambda}, D_{\Sigma, \Lambda}(R))\simeq R\Gamma_{\cond}(\Gamma_{\Sigma, \Lambda}, D_{\Sigma, \Lambda}(R)).$$
 \end{enumerate}
 \end{lemma}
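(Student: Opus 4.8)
The plan is to realize each of the three complexes occurring in (i) and (ii) as a Koszul complex of commuting operators on $D_{\Sigma, \Lambda}(R)$, and then to compare these Koszul complexes by diagonal changes of basis together with a single application of the décalage functor. First I would set up the operators. Since the generators of $\Gamma_{\Sigma, \Lambda}$ listed in Notation \ref{notknock} act on $A_{\inf, \Sigma, \Lambda}^{\square}$ by rescaling the variables $X_{\bullet}$ by powers of $[\varepsilon]$, and $[\varepsilon]^{\pm 1} - 1 \in \mu A_{\inf}$, the group $\Gamma_{\Sigma, \Lambda}$ acts trivially on $A_{\inf, \Sigma, \Lambda}^{\square}/\mu$, hence on $D_{\Sigma, \Lambda}(R)/\mu$. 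By \cite[Lemma 12.2]{BMS1} and Remark \ref{st-log} we have $\mu^{p-1}/p \in A_{\cris} \subset D_{\Sigma, \Lambda}(R)$, so $\mu$ is topologically nilpotent for the $p$-adic topology on $D_{\Sigma, \Lambda}(R)$; therefore, for each generator $\gamma$ of $\Gamma_{\Sigma, \Lambda}$, the operator $\gamma - 1$ carries $D_{\Sigma, \Lambda}(R)$ into $\mu D_{\Sigma, \Lambda}(R)$ and is topologically nilpotent. Consequently the series $\log \gamma = \sum_{n \ge 1} \frac{(-1)^{n-1}}{n}(\gamma - 1)^{n}$ converges, defines the action (\ref{2310}) of $\Lie \Gamma_{\Sigma, \Lambda}$, and satisfies $\log \gamma = (\gamma - 1)\, h_{\gamma}$ with $h_{\gamma} := \sum_{n \ge 1}\frac{(-1)^{n-1}}{n}(\gamma - 1)^{n-1}$; since $h_{\gamma}$ is a power series in the topologically nilpotent operator $\gamma - 1$ with constant term $1$ and $D_{\Sigma, \Lambda}(R)$ is $p$-adically complete, $h_{\gamma}$ is an automorphism of $D_{\Sigma, \Lambda}(R)$ that commutes with all the remaining generators.

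For (ii): as $\Gamma_{\Sigma, \Lambda} \cong \Zz_p^{|\Sigma| + |\Lambda| d}$ with the above topological generators, \cite[Proposition B.3]{Bosco} identifies $R\Gamma_{\cond}(\Gamma_{\Sigma, \Lambda}, D_{\Sigma, \Lambda}(R))$ with $\Kos_{D_{\Sigma, \Lambda}(R)}\big((\gamma_{\sigma} - 1)_{\sigma \in \Sigma}, (\gamma_{\lambda, i} - 1)_{\lambda \in \Lambda, 1 \le i \le d}\big)$. Likewise $\Lie \Gamma_{\Sigma, \Lambda}$ is a free abelian $\Zz_p$-Lie algebra on the basis $(\log \gamma_{\sigma}), (\log \gamma_{\lambda, i})$, so $U_{\Sigma, \Lambda}$ is the polynomial ring over $\Zz_p$ on these elements, the Koszul complex on them is a free resolution of the trivial $U_{\Sigma, \Lambda}$-module $\Zz_p$, and $R\underline{\Hom}_{U_{\Sigma, \Lambda}}(\Zz_p, D_{\Sigma, \Lambda}(R))$ is computed by $\Kos_{D_{\Sigma, \Lambda}(R)}\big((\log \gamma_{\sigma}), (\log \gamma_{\lambda, i})\big)$. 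These two Koszul complexes differ by the diagonal change of basis given by the commuting automorphisms $h_{\gamma_{\sigma}}, h_{\gamma_{\lambda, i}}$, which yields the quasi-isomorphism of (ii).

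For (i): by (ii) it is enough to show $L\eta_{\mu} R\Gamma_{\cond}(\Gamma_{\Sigma, \Lambda}, D_{\Sigma, \Lambda}(R)) \simeq \Omega_{D_{\Sigma, \Lambda}(R)}^{\bullet}$. Since $t = \log[\varepsilon]$ and $\mu$ are associates in $A_{\cris}$, they generate the same ideal of $D_{\Sigma, \Lambda}(R)$, so $L\eta_{\mu} = L\eta_{t}$ by the generator-independence of the décalage functor. As $\Gamma_{\Sigma, \Lambda}$ acts trivially modulo $t$, the décalage–Koszul comparison \cite[Lemma 7.8]{Bosco} — already used in the proof of Lemma \ref{primitive} — gives $L\eta_{t}\Kos_{D_{\Sigma, \Lambda}(R)}\big((\gamma_{\bullet} - 1)\big) \simeq \Kos_{D_{\Sigma, \Lambda}(R)}\big(\big(\tfrac{\gamma_{\bullet} - 1}{t}\big)\big)$. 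Finally, the Taylor expansion of \cite[Lemma 12.5]{BMS1} in the form of \cite[Lemma 5.15]{CK} (recalled in the proof of Lemma \ref{primitive}) shows that for each generator $\gamma$ and its associated derivation $\partial \in \{\partial_{\sigma}, \partial_{\lambda, i}\}$ one has $\tfrac{\gamma - 1}{t} = \partial \cdot \big(1 + \sum_{j \ge 1} \tfrac{t^{j}}{(j+1)!}\partial^{j}\big)$ in $D_{\Sigma, \Lambda}(R)$, the second factor being an automorphism of $D_{\Sigma, \Lambda}(R)$ which, being a power series in $\partial$, commutes with the other derivations; one more diagonal change of basis identifies $\Kos_{D_{\Sigma, \Lambda}(R)}\big(\big(\tfrac{\gamma_{\bullet} - 1}{t}\big)\big)$ with $\Kos_{D_{\Sigma, \Lambda}(R)}\big((\partial_{\sigma})_{\sigma \in \Sigma}, (\partial_{\lambda, i})_{\lambda \in \Lambda, 1 \le i \le d}\big) = \Omega_{D_{\Sigma, \Lambda}(R)}^{\bullet}$, giving (i).

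The main point to be careful about is the Taylor expansion for $\tfrac{\gamma - 1}{t}$ and the claim that this expansion, together with the series defining $h_{\gamma}$, yields genuine automorphisms of the PD-envelope $D_{\Sigma, \Lambda}(R)$ rather than merely of a polynomial subring; this rests on the convergence estimates coming from $\mu^{p-1}/p \in A_{\cris}$ and is essentially the content of \cite[Lemma 5.15]{CK}, the remaining steps being formal manipulations of Koszul complexes.
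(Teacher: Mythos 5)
Your proof is correct and follows essentially the same route as the paper's: both rest on the identity $\gamma_\tau=\exp(t\cdot\partial_\tau)$ on $D_{\Sigma,\Lambda}(R)$ (your Taylor expansion; the paper cites the proof of \cite[Lemma 5.33]{CK} for this on the PD envelope, which is the more precise reference than \cite[Lemma 5.15]{CK}), the diagonal change of basis by the commuting automorphisms $h_\gamma$ for (ii), and a single décalage to pass to $\Kos_{D_{\Sigma,\Lambda}(R)}\left((\partial_\tau)_\tau\right)=\Omega^\bullet_{D_{\Sigma,\Lambda}(R)}$ for (i). The only difference is the order of operations --- the paper writes the Lie-algebra Koszul complex directly in the basis $g_\tau=t\partial_\tau$ and applies $L\eta_\mu$ there, whereas you route (i) through (ii) and the group-cohomology Koszul complex on the $\gamma_\tau-1$ --- and both are fine.
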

 \begin{proof}
 We first need to check that, for $g\in \Lie \Gamma_{\Sigma, \Lambda}$, the series (\ref{2310}) converges to an endomorphism of $D_{\Sigma, \Lambda}(R)$. For this, it suffices to prove that the action of $\gamma-1$ on $D_{\Sigma, \Lambda}(R)$ takes values in $([\varepsilon]-1)D_{\Sigma, \Lambda}(R)$: in fact, by \cite[Lemma 12.2]{BMS1}, this implies that the action of $\frac{(\gamma-1)^n}{n}$ on $D_{\Sigma, \Lambda}(R)$ has values in $D_{\Sigma, \Lambda}(R)$, and such values converge to 0 as $n\to \infty$. Thus, following the proof of \cite[Lemma 12.6]{BMS1} and using \cite[Lemma 5.29]{CK}, we can reduce to checking that the action of $\gamma-1$ on $A_{\inf, \Sigma, \Lambda}^{\square}$ takes values in $([\varepsilon]-1)A_{\inf, \Sigma, \Lambda}^{\square}$: this is clear for $\gamma$ being one of the generators $(\gamma_{\sigma})_{\sigma\in \Sigma}, (\gamma_{\lambda, i})_{\lambda\in \Lambda, 1\le i\le d}$ of $\Gamma_{\Sigma, \Lambda}$.\footnote{Note that, for $1\le i\le d$, the element $\gamma_{\lambda, i}-1$ acts as follows on $X_j$: it sends $X_i\mapsto ([\varepsilon]-1)X_i$;  $X_j\mapsto 0$ if $0<j\neq i$; $X_0\mapsto ([\varepsilon^{-1}]-1)X_0=-([\varepsilon]-1)[\varepsilon^{-1}]X_0$ if $i\le r_{\lambda}$; and $X_0\mapsto 0$ if $i>r_{\lambda}$.}
 
 Next, to prove both part \listref{ghui:1} and part \listref{ghui:2}, we denote $T(\Sigma, \Lambda):= \Sigma \cup \{(\lambda, i): \lambda\in \Lambda, 1\le i\le d\}$. By (the proof of) \cite[Lemma 5.33]{CK}, the element $\gamma_\tau\in \Gamma_{\Sigma, \Lambda}$ acts on $D_{\Sigma, \Lambda}(R)$ as the  endomorphism $\exp(\log([\varepsilon])\cdot \partial_\tau)$, for varying $\tau \in T(\Sigma, \Lambda)$. Therefore, denoting $g_\tau:=\log([\varepsilon])\cdot \partial_\tau\in \Lie \Gamma_{\Sigma, \Lambda}$, we have a quasi-isomorphism 
  \begin{equation}\label{atete}
   R\Gamma(\Lie \Gamma_{\Sigma, \Lambda}, D_{\Sigma, \Lambda}(R))\simeq\Kos_{D_{\Sigma, \Lambda}(R)}(g_{\tau\in T(\Sigma, \Lambda)})
  \end{equation}
  Since $\log([\varepsilon])$ and $\mu$ differ by a unit in $A_{\cris}$, part \listref{ghui:1} follows from the quasi-isomorphism (\ref{atete}) and \cite[Lemma 7.8(ii)]{Bosco}. For part \listref{ghui:2}, we note that $g_\tau=\log(\gamma_\tau)=(\gamma_\tau-1)\cdot h_\tau$ for $\tau \in T(\Sigma, \Lambda)$, with $h_\tau$ automorphisms of $D_{\Sigma, \Lambda}(R)$  commuting with each other and with $\gamma_\tau-1$, therefore 
  \begin{equation}\label{nocturne}
   \Kos_{D_{\Sigma, \Lambda}(R)}(g_{\tau\in T(\Sigma, \Lambda)})\cong \Kos_{D_{\Sigma, \Lambda}(R)}((\gamma_\tau-1)_{\tau\in T(\Sigma, \Lambda)})
  \end{equation}
  (cf. the proof of \cite[Lemma 12.5]{BMS1}). Then, the statement of part \listref{ghui:2} follows combining (\ref{atete}), (\ref{nocturne}),  and \cite[Proposition B.3]{Bosco}.
 \end{proof}
 
 We deduce the following result.

 \begin{cor}\label{bgim}
  In the setting of Notation \ref{notknock}, we have a natural (in $\fr X$ and the datum (\ref{mane})) quasi-isomorphism
  \begin{equation}\label{themo}
   R\Gamma_{\cris}(\mathfrak X_{\cl O_C/p}/A_{\cris}^\times)\overset{\sim}{\longrightarrow} L\eta_\mu R\Gamma_{\cond}(\Gamma_{\Sigma, \Lambda}, D_{\Sigma, \Lambda}(R))
  \end{equation}
 \end{cor}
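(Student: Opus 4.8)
The plan is to combine the two lemmas just proved, Lemma \ref{cruc} and Lemma \ref{ghui}, and to check that the resulting identification is natural in the pair $(\fr X, (\ref{mane}))$. First I would invoke Lemma \ref{cruc} to write $R\Gamma_{\cris}(\mathfrak X_{\cl O_C/p}/A_{\cris}^\times)\simeq \Omega_{D_{\Sigma, \Lambda}(R)}^\bullet$, the Koszul complex on the derivations $(\partial_\sigma)_{\sigma\in \Sigma}$, $(\partial_{\lambda, i})_{\lambda\in \Lambda, 1\le i\le d}$ acting on the condensed $A_{\cris}$-algebra $D_{\Sigma, \Lambda}(R)$, all compatibly with Frobenius. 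Then Lemma \ref{ghui}\listref{ghui:1} rewrites $\Omega_{D_{\Sigma, \Lambda}(R)}^\bullet\simeq L\eta_\mu R\Gamma(\Lie\Gamma_{\Sigma, \Lambda}, D_{\Sigma, \Lambda}(R))$ (using that $\log[\varepsilon]$ and $\mu$ differ by a unit in $A_{\cris}$), and Lemma \ref{ghui}\listref{ghui:2} identifies the Lie algebra cohomology $R\Gamma(\Lie\Gamma_{\Sigma, \Lambda}, D_{\Sigma, \Lambda}(R))$ with the condensed group cohomology $R\Gamma_{\cond}(\Gamma_{\Sigma, \Lambda}, D_{\Sigma, \Lambda}(R))$. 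Since $L\eta_\mu(-)$ is a functor on the derived category, composing these quasi-isomorphisms yields the desired map \eqref{themo}.

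The remaining, and main, point is the naturality in $\fr X$ and in the chosen datum \eqref{mane}. Here I would argue as follows. The log-crystalline complex $R\Gamma_{\cris}(\mathfrak X_{\cl O_C/p}/A_{\cris}^\times)$ is manifestly functorial, being defined site-theoretically. On the other side, given a morphism of such data, there is an induced morphism of the embeddings, hence a morphism of the log PD envelopes $D_{\Sigma, \Lambda}(R)\to D_{\Sigma', \Lambda'}(R')$ equivariant for the compatible homomorphism $\Gamma_{\Sigma, \Lambda}\to \Gamma_{\Sigma', \Lambda'}$ (this is exactly the kind of compatibility recorded in \cite[\S 5.22, Lemma 5.29, Lemma 5.33]{CK}, which I would cite), so that we get a morphism on $R\Gamma_{\cond}(\Gamma_{\bullet}, D_{\bullet}(R))$ and, applying $L\eta_\mu$, a morphism of the targets of \eqref{themo}. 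The quasi-isomorphism of Lemma \ref{cruc} is natural by construction — it comes from the computation of log-crystalline cohomology via the embedding \eqref{mane} and base change \cite[(1.8.1)]{Beili} — and the quasi-isomorphisms of Lemma \ref{ghui} are natural since both are built from the canonical identification of the $\gamma_\tau$ with the exponentials $\exp(\log[\varepsilon]\cdot\partial_\tau)$, which is functorial in the datum. So the composite \eqref{themo} fits into commuting squares for every morphism of data, which is the assertion.

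I expect the main obstacle to be bookkeeping around the functoriality of the log PD envelope $D_{\Sigma, \Lambda}(R)$ together with its $\Gamma_{\Sigma, \Lambda}$-action, and the compatibility of the exponential/logarithm identifications under change of coordinates — i.e. verifying that all the auxiliary quasi-isomorphisms used in Lemma \ref{cruc} and Lemma \ref{ghui} were set up functorially. Since the previous lemmas have already been proved (and their $\varphi$-equivariance recorded), this amounts to tracking through the constructions of \cite[\S 5]{CK} in the condensed setting rather than to any genuinely new argument; I would therefore present \eqref{themo} as the formal composite of the displayed quasi-isomorphisms and note that each ingredient is natural in $\fr X$ and in \eqref{mane}, with the $\Gamma_{\Sigma,\Lambda}$-equivariant functoriality of $D_{\Sigma,\Lambda}(R)$ supplied by \cite[\S 5.22, Lemma 5.29]{CK}. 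This sets up the globalization step that will follow, where the dependence on the choice of coordinates is eliminated by the ``all possible coordinates'' method.
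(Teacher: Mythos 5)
Your proposal diverges from the paper's proof in a way that matters. You define the map \eqref{themo} as the composite of the coordinate-dependent quasi-isomorphisms of Lemma \ref{cruc} and Lemma \ref{ghui}, and then assert naturality of each piece. But those identifications pass through Koszul complexes built on a \emph{chosen} set of generators: the derivations $\partial_\tau$ attached to the embedding \eqref{mane}, and the generators $\gamma_\tau$ of $\Gamma_{\Sigma,\Lambda}$ identified with $\exp(\log([\varepsilon])\cdot\partial_\tau)$. A morphism of data $(\fr X,\Sigma,\Lambda)\to(\fr X',\Sigma',\Lambda')$ does not carry generators to generators, so it does not induce a termwise map of these Koszul presentations; making the composite compatible with change of coordinates is exactly the nontrivial point, and "tracking through \cite[\S 5]{CK}" does not supply it. Since the quasi-isomorphy of \eqref{themo} is immediate from the two lemmas, the naturality is the entire content of the corollary, and it is the one step your argument leaves unestablished.

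The paper proceeds differently precisely to avoid this. It first constructs a \emph{direct} canonical morphism $R\Gamma_{\cris}(\mathfrak X_{\cl O_C/p}/A_{\cris}^\times)\to R\Gamma_{\cond}(\Gamma_{\Sigma,\Lambda}, D_{\Sigma,\Lambda}(R))$ with no reference to coordinates: the source is computed by the \v{C}ech--Alexander complex on the terms $D_{\Sigma,\Lambda}(R)(n)$, the target by the standard complex $\underline{\Hom}(\Zz[\Gamma_{\Sigma,\Lambda}^{n}], D_{\Sigma,\Lambda}(R))$, and the map in degree $n$ is the termwise $\Gamma_{\Sigma,\Lambda}^n$-action followed by the co-diagonal $D_{\Sigma,\Lambda}(R)(n)\to D_{\Sigma,\Lambda}(R)$; both complexes and the map between them are manifestly functorial in $\fr X$ and \eqref{mane}. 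One then applies $L\eta_\mu$ and precomposes with the inverse of the canonical map $L\eta_\mu R\Gamma_{\cris}\to R\Gamma_{\cris}$ of \cite[Lemma 6.10]{BMS1}, which is a quasi-isomorphism. Lemma \ref{cruc} and Lemma \ref{ghui} enter only at the very end, to \emph{verify} that this canonical map and the resulting \eqref{themo} are quasi-isomorphisms; they are never used to \emph{define} the morphism. If you want to salvage your route, you would have to prove that your composite agrees with a coordinate-free map of this kind, which amounts to redoing the paper's construction.
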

 \begin{proof}
  We first construct the desired natural morphism (\ref{themo}). By the proof of \cite[(1.8.1)]{Beili} and by \cite[Lemma 5.29]{CK}, we have the following \v{C}ech-Alexander computation of the log-crystalline cohomology over $A_{\cris}$:
  \begin{equation}\label{iden1}
   R\Gamma_{\cris}(\mathfrak X_{\cl O_C/p}/A_{\cris}^\times)\simeq  \left(D_{\Sigma, \Lambda}(R)(0)\to D_{\Sigma, \Lambda}(R)(1)\to D_{\Sigma, \Lambda}(R)(2)\to \cdots\right)
  \end{equation}
  where $D_{\Sigma, \Lambda}(R)(n):=\varprojlim_{m\ge 1} D_{\Sigma, \Lambda, m}(R)(n)$ with $\Spec (D_{\Sigma, \Lambda, m}(R)(n))$ the $(n+1)$-fold product of $\Spec (D_{\Sigma, \Lambda}(R)/p^m)$ in $(\mathfrak X_{\cl O_C/p}/A_{\cris, m}^\times)_{\cris}$ (we recall \ref{logcriss} for the notation). On the other hand, by \cite[Proposition B.2(i)]{Bosco}, the condensed group cohomology is computed by
  \begin{equation}\label{iden2}
   R\Gamma_{\cond}(\Gamma_{\Sigma, \Lambda}, D_{\Sigma, \Lambda}(R))\simeq\left( D_{\Sigma, \Lambda}(R)\to \underline{\Hom}(\Zz[\Gamma_{\Sigma, \Lambda}], D_{\Sigma, \Lambda}(R))\to \underline{\Hom}(\Zz[\Gamma_{\Sigma, \Lambda}^2], D_{\Sigma, \Lambda}(R))\to \cdots\right).
  \end{equation}
  Under the identifications (\ref{iden1}) and (\ref{iden2}), we define the morphism
  \begin{equation}\label{last??}
   R\Gamma_{\cris}(\mathfrak X_{\cl O_C/p}/A_{\cris}^\times) \to R\Gamma_{\cond}(\Gamma_{\Sigma, \Lambda}, D_{\Sigma, \Lambda}(R))
  \end{equation}
   induced, in degree $n\ge 0$, by the composite of the termwise action $\Gamma_{\Sigma, \Lambda}^n\times D_{\Sigma, \Lambda}(R)(n)\to D_{\Sigma, \Lambda}(R)(n)$ with the co-diagonal map $D_{\Sigma, \Lambda}(R)(n)\to D_{\Sigma, \Lambda}(R)$. By \cite[Lemma 6.10]{BMS1} there is a natural map
   \begin{equation}\label{crisalid}
    L\eta_\mu R\Gamma_{\cris}(\mathfrak X_{\cl O_C/p}/A_{\cris}^\times)\to R\Gamma_{\cris}(\mathfrak X_{\cl O_C/p}/A_{\cris}^\times)
   \end{equation}
   which is a  quasi-isomorphism, as it follows combining Lemma \ref{cruc} and Lemma \ref{ghui}\listref{ghui:1}. Using the quasi-isomorphism (\ref{crisalid}), applying the décalage functor $L\eta_\mu(-)$ to (\ref{last??}), we obtain the desired morphism (\ref{themo}), which is a quasi-isomorphism thanks to Lemma \ref{cruc} (which relies on \cite[(1.8.1)]{Beili}, as (\ref{iden1})) and Lemma \ref{ghui}.
 \end{proof}

 As a next step toward Theorem \ref{firstep}, we want to construct a comparison morphism from the log-crystalline cohomology over $A_{\cris}$ to the $B_I$-cohomology. For this, we will rely on the morphism (\ref{themo}), defined in Corollary \ref{bgim}, and on the following construction (which is inspired from the proof of \cite[Theorem 8.1]{logprism}), that will allow us to compare the target of (\ref{themo}) with the $B_I$-cohomology.
  
  \begin{lemma}\label{tricky} In the setting of Notation \ref{notknock}, there is a natural (in $R$ and the datum (\ref{mane})) $\Gamma_{\Sigma, \Lambda}$-equivariant map
    \begin{equation}\label{funn}
     D_{\Sigma, \Lambda}(R)\to \Aa_{\cris}(R_{\Sigma, \Lambda, \infty})
    \end{equation}
   where $\Aa_{\cris}(R_{\Sigma, \Lambda, \infty}):=\Aa_{\inf}(R_{\Sigma, \Lambda, \infty})\widehat\otimes_{A_{\inf}}A_{\cris}$ and the completion $\widehat\otimes_{A_{\inf}}$ is $p$-adic.
  \end{lemma}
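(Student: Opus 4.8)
The plan is to build the map (\ref{funn}) via the universal property of the log PD envelope defining $D_{\Sigma, \Lambda}(R)$ (Lemma \ref{cruc}), using crucially that $\Aa_{\cris}(R_{\Sigma, \Lambda, \infty})$ is itself a log PD thickening attached to the affinoid perfectoid cover $\fr X_{C, \Sigma, \Lambda, \infty}$.

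First I would construct, for each integer $n\ge 1$, a natural (in $R$ and the datum (\ref{mane})) ring map
\[
\phi_n\colon A_{\inf, \Sigma, \Lambda}^\square\otimes_{A_{\inf}}A_{\cris}/p^n\longrightarrow \Aa_{\cris}(R_{\Sigma, \Lambda, \infty})/p^n
\]
by sending the coordinate Teichm\"uller elements $X_\sigma\mapsto [t_\sigma^\flat]$ and $X_{\lambda, i}\mapsto [t_{\lambda, i}^\flat]$, and base-changing along $A_{\inf}\to A_{\cris}$. This is well defined: the cover $R_{\Sigma, \Lambda, \infty}$ is obtained by adjoining compatible $p$-power roots of the $t_\sigma$ and $t_{\lambda, i}$, so these Teichm\"uller lifts lie in $\Aa_{\inf}(R_{\Sigma, \Lambda, \infty})$ and satisfy the relation $X_{\lambda, 0}\cdots X_{\lambda, r_\lambda}=[p^\flat]^{q_\lambda}$ at the tilted level (compare Remark \ref{form}, (\ref{int-nint})), while the $(p, \mu)$-adic completion of the source is absorbed into the $p$-adic topology of the target since $\mu^{p-1}/p\in A_{\cris}$. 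One then checks, as in \cite[\S 5.9]{CK}, that $\phi_n$ is compatible with the log structure $M_{\inf, \Sigma, \Lambda}^\square$ on the source and the natural log structure on $\Aa_{\cris}(R_{\Sigma, \Lambda, \infty})$ (in the sense of \cite[\S 5.2]{CK}) lifting the pullback of the canonical log structure $M_R$.

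Next, I would observe that $\Aa_{\cris}(R_{\Sigma, \Lambda, \infty})/p^n$, equipped with the ideal $J:=\ker(\Aa_{\cris}(R_{\Sigma, \Lambda, \infty})/p^n\to R_{\Sigma, \Lambda, \infty}/p)$ together with its divided powers (which exist because $\ker\theta$ carries a PD structure on $\Aa_{\cris}$ of a perfectoid, compatibly with the PD structure on $\ker(\theta_{A_{\cris}})$, while $p$ has divided powers in $A_{\cris}$) and the log structure above, is a log PD thickening of $(R_{\Sigma, \Lambda, \infty}/p, M)$ over $A_{\cris}^\times/p^n\twoheadrightarrow \cl O_C^\times/p$. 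Moreover the composite
\[
A_{\inf, \Sigma, \Lambda}^\square\otimes_{A_{\inf}}A_{\cris}/p^n\xrightarrow{\ \phi_n\ }\Aa_{\cris}(R_{\Sigma, \Lambda, \infty})/p^n\twoheadrightarrow R_{\Sigma, \Lambda, \infty}/p
\]
agrees with the composite of the structure surjection $A_{\inf, \Sigma, \Lambda}^\square\otimes_{A_{\inf}}A_{\cris}/p^n\twoheadrightarrow R/p$ defining $D_{\Sigma, \Lambda}(R)/p^n$ with the cover map $R/p\to R_{\Sigma, \Lambda, \infty}/p$, since both send $X_\sigma, X_{\lambda, i}$ to $t_\sigma, t_{\lambda, i}$ and $\theta([t^\flat])\equiv t\bmod p$. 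Hence $\phi_n$ maps $\ker(\to R/p)$ into $J$, and the universal property of the log PD envelope $D_{\Sigma, \Lambda}(R)/p^n$ produces a unique PD morphism $D_{\Sigma, \Lambda}(R)/p^n\to \Aa_{\cris}(R_{\Sigma, \Lambda, \infty})/p^n$. By uniqueness these are compatible in $n$, and passing to the limit — both sides being their own $p$-adic completions — yields the map (\ref{funn}).

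Finally, naturality in $R$ and in the datum (\ref{mane}), as well as $\Gamma_{\Sigma, \Lambda}$-equivariance, follow formally from the uniqueness in the universal property: $\Gamma_{\Sigma, \Lambda}$ acts compatibly on the source of $\phi_n$ (via its action on $A_{\inf, \Sigma, \Lambda}^\square$, with $\gamma_{\lambda, i}$ sending $X_i\mapsto [\varepsilon^{\pm 1}]X_i$ on the generators) and on $\Aa_{\cris}(R_{\Sigma, \Lambda, \infty})$ (via the Galois action of the covering group, by the same formula on Teichm\"uller lifts), it acts trivially on $R/p$, so $\phi_n$ and the whole thickening datum are equivariant, and uniqueness forces the resulting PD map to intertwine the actions. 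I expect the only genuinely delicate point to be the verification in the second step that $(\Aa_{\cris}(R_{\Sigma, \Lambda, \infty})/p^n, J)$ really is a log PD thickening over the required base — i.e.\ assembling the divided powers on $\ker\theta$ and on $(p)$ with the correct log structure — which is where one leans on the log-PD formalism of \cite[\S 5]{CK}; everything else is bookkeeping.
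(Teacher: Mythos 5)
Your proposal is correct and follows essentially the same route as the paper: reduce modulo $p^n$, construct the coordinate-level map $A_{\inf, \Sigma, \Lambda}^{\square}\otimes_{A_{\inf}}A_{\cris}/p^n\to \Aa_{\cris}(R_{\Sigma, \Lambda, \infty})/p^n$ on Teichm\"uller lifts, exhibit the target as an exact log PD thickening of $R_{\Sigma, \Lambda, \infty}/p$, and invoke the universal property of the log PD envelope before passing to the $p$-adic limit. The one delicate point you correctly flag — equipping $\Aa_{\cris}(R_{\Sigma, \Lambda, \infty})$ with a log structure making the surjection onto $R_{\Sigma, \Lambda, \infty}/p$ exact — is exactly what the paper handles by pulling back the pre-log structure $N=(h^{\gp})^{-1}(M_{\inf, \Sigma, \Lambda}^{\square})$ via the argument of \cite[Lemma 5.37]{CK}.
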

  \begin{proof}
   Since both $\Aa_{\cris}(R_{\Sigma, \Lambda, \infty})$ and $D_{\Sigma, \Lambda}(R)$ are $p$-adically complete (for the latter, recall \cite[Lemma 5.29]{CK}), it suffices to construct, for each $n\ge 1$, a natural map $D_{\Sigma, \Lambda}(R)/p^n\to \Aa_{\cris}(R_{\Sigma, \Lambda, \infty})/p^n$.  We observe that $\Aa_{\cris}(R_{\Sigma, \Lambda, \infty})/p^n=\Aa_{\inf}(R_{\Sigma, \Lambda, \infty})\otimes_{A_{\inf}}A_{\cris}/p^n$. Then, we consider the following commutative diagram of log rings
  \begin{equation}\label{Q}
  \begin{tikzcd}
   (A_{\inf, \Sigma, \Lambda}^{\square}\otimes_{A_{\inf}}A_{\cris}/p^n, M_{\inf, \Sigma, \Lambda}^{\square}) \arrow[two heads]{r}\arrow[d] &  (R/p, M_R) \arrow[d] \\
  (\Aa_{\inf}(R_{\Sigma, \Lambda, \infty})\otimes_{A_{\inf}}A_{\cris}/p^n, N)^a \arrow[two heads]{r} &  (R_{\Sigma, \Lambda, \infty}/p, M_R).
  \end{tikzcd}
  \end{equation}
  Here,  $R/p$ and  $R_{\Sigma, \Lambda, \infty}/p$ are equipped with the pullback of the canonical log structure $M_R$ on $R$, and  $A_{\inf, \Sigma, \Lambda}^{\square}\otimes_{A_{\inf}}A_{\cris}/p^n$ is endowed with the pullback of the log structure $M_{\inf, \Sigma, \Lambda}^{\square}$ on $A_{\inf, \Sigma, \Lambda}^{\square}$, as in Lemma \ref{cruc}. Moreover, $\Aa_{\inf}(R_{\Sigma, \Lambda, \infty})\otimes_{A_{\inf}}A_{\cris}/p^n$ is equipped with the pullback of the log structure on $\Aa_{\inf}(R_{\Sigma, \Lambda, \infty})$ associated to the pre-log structure defined as follows: we set $N:=(h^{\gp})^{-1}(M_{\inf, \Sigma, \Lambda}^{\square})$ where $h^{\gp}$ denotes the morphism of groups associated to the natural morphism of monoids $h:M_{\inf, \Sigma, \Lambda}^{\square}\to M_R$; the argument of \cite[Lemma 5.37]{CK} shows that the natural map $M_{\inf, \Sigma, \Lambda}^{\square}\to \Aa_{\inf}(R_{\Sigma, \Lambda, \infty})$ uniquely extends to a map $N\to \Aa_{\inf}(R_{\Sigma, \Lambda, \infty})$.\footnote{In fact, in the notation of \cite[\S 5.25, \S 5.26]{CK}, it suffices to prove that $\Aa_{\inf}(R_{\Sigma, \Lambda, \infty})$ is naturally an $(A_{\inf, \Sigma, \Lambda}^{\square}\otimes_{\Zz[Q]}\Zz[P_{\lambda_0}])$-algebra compatibly with the change of $\lambda_0\in \Lambda$, which is shown in \cite[Lemma 5.37]{CK}.} The resulting surjective map of log rings at the bottom of the diagram (\ref{Q}) is exact by construction, hence, the universal property of the log PD envelope $D_{\Sigma, \Lambda}(R)/p^n$ gives the desired natural map $D_{\Sigma, \Lambda}(R)/p^n\to \Aa_{\cris}(R_{\Sigma, \Lambda, \infty})/p^n$.  
  \end{proof}

  We are now ready to prove Theorem \ref{firstep}.
  
  \begin{proof}[Proof of Theorem \ref{firstep}] It suffices to prove the statement Zariski locally on $\fr X$ in a functorial way, as $\fr X$ is assumed to be qcqs and the derived tensor product $\dsolid_{A_{\cris}}$ commutes with finite limits. 
  
  Thus, let $\fr X=\Spf(R)$ as in Notation \ref{notknock}, with fixed finite sets $\Sigma$ and $\Lambda$. We will denote $T(\Sigma, \Lambda):=\Sigma \cup \{(\lambda, i): \lambda\in \Lambda, 1\le i\le d\}$.
  
   First,  we note that, similarly to Lemma \ref{condstep}, we have a $\varphi$-equivariant identification
   \begin{equation}\label{moio}
    R\Gamma_{B_{I}}(\fr X_C)\simeq L\eta_t R\Gamma_{\cond}(\Gamma_{\Sigma, \Lambda},\Bb_{I}(R_{\Sigma, \Lambda, \infty}))\simeq L\eta_{t}\Kos_{\Bb_I(R_{\Sigma, \Lambda, \infty})}((\gamma_{\tau}-1)_{\tau\in T(\Sigma, \Lambda)}).
   \end{equation}   

   Then, using the natural map (\ref{themo}) constructed in Corollary \ref{bgim}, together with the natural morphism (\ref{funn}) constructed in Lemma \ref{tricky}, we have a natural (in $\fr X$ and the datum (\ref{mane})) morphism
   \begin{equation}\label{lstm}
    R\Gamma_{\cris}(\mathfrak X_{\cl O_C/p}/A_{\cris}^\times)\overset{\sim}{\to} L\eta_\mu R\Gamma_{\cond}(\Gamma_{\Sigma, \Lambda}, D_{\Sigma, \Lambda}(R))\to L\eta_\mu R\Gamma_{\cond}(\Gamma_{\Sigma, \Lambda},\Bb_{I}(R_{\Sigma, \Lambda, \infty})).
   \end{equation}
   Using that $\mu$ and $t$ differ by a unit in $B_I$, (\ref{lstm}) induces a natural morphism 
   \begin{equation}\label{theright}
    f_{\fr X, \Sigma, \Lambda}: R\Gamma_{\cris}(\mathfrak X_{\cl O_C/p}/A_{\cris}^\times)\dsolid_{A_{\cris}}B_{I}\to R\Gamma_{B_{I}}(\fr X_C)
   \end{equation}
   that we claim to be a quasi-isomorphism. For this, it suffices to show that, for a fixed $\lambda\in \Lambda$, we have a commutative diagram as follows, whose arrows are quasi-isomorphisms
  \begin{equation}\label{diaprinc}
  \begin{tikzcd}
  \Kos_{A_{\cris}(R)_{\lambda}}\left((\partial_{\lambda, i})_{1\le i\le d} \right)\dsolid_{A_{\cris}}B_{I} \arrow[r]\arrow[d] & \eta_{t}\Kos_{\Bb_I(R_{\lambda, \infty})}((\gamma_{\lambda, i}-1)_{1\le i\le d})\arrow[d] \\
  \Kos_{D_{\Sigma, \Lambda}(R)}\left((\partial_{\tau})_{\tau\in T(\Sigma, \Lambda)}\right)\dsolid_{A_{\cris}}B_{I} \arrow[r] & \eta_{t}\Kos_{\Bb_I(R_{\Sigma, \Lambda, \infty})}((\gamma_{\tau}-1)_{\tau\in T(\Sigma, \Lambda)})
  \end{tikzcd}
  \end{equation}
  with the bottom arrow of (\ref{diaprinc}) compatible with the morphism (\ref{theright}), under the identifications (\ref{coorcompl1}) and (\ref{moio}). Here, we denote $A_{\cris}(R)_{\lambda}:=A_{\inf}(R)_{\lambda}\widehat\otimes_{A_{\inf}} A_{\cris}$, where the completion $\widehat\otimes_{A_{\inf}}$ is $p$-adic, and $A_{\inf}(R)_{\lambda}$ is the unique lift of the étale $(R_{\lambda}^\square/p)$-algebra $R/p$, along $\theta: A_{\inf}(R_{\lambda}^\square)\twoheadrightarrow R_{\lambda}^\square$, to a $(p, \mu)$-adically complete, formally étale $A_{\inf}(R_{\lambda}^\square)$-algebra.
  \begin{enumerate}[(i)]
   \item The right vertical map of (\ref{diaprinc}) is a quasi-isomorphism since, by Lemma \ref{condstep} and (\ref{moio}), both the target and the source are quasi-isomorphic to $R\Gamma_{B_{I}}(\fr X_C)$. 
   \item The top horizontal arrow of (\ref{diaprinc}) is induced by the quasi-isomorphism constructed in Lemma \ref{primitive}, observing that we have the following identifications:
  $$\Kos_{A_{\inf}(R)_{\lambda}}\left((\partial_{\lambda, i})_{1\le i\le d} \right)\dsolid_{A_{\inf}}B_{I}=
  \left(\Kos_{A_{\inf}(R)_{\lambda}}\left((\partial_{\lambda, i})_{1\le i\le d} \right)\dsolid_{A_{\inf}}A_{\cris}\right)\dsolid_{A_{\cris}}B_{I}$$
  and, by Proposition \ref{solid-vs-padic}, 
  \begin{equation}\label{afa}
   \Kos_{A_{\inf}(R)_{\lambda}}\left((\partial_{\lambda, i})_{1\le i\le d} \right)\dsolid_{A_{\inf}}A_{\cris} \simeq 
   \Kos_{A_{\cris}(R)_{\lambda}}\left((\partial_{\lambda, i})_{1\le i\le d} \right)
  \end{equation}
  as we now explain. In fact, Proposition \ref{solid-vs-padic} implies that the derived solid tensor product $\dsolid_{A_{\inf}}$ appearing in (\ref{afa}) can be replaced by the derived $p$-adic completion $\widehat\otimes^{\LL}_{A_{\inf}}$, and then it remains to observe that the latter completion identifies with the underived (termwise) $p$-adic completion: for this, denoting by $A_{\cris}^{(m)}\subset A_{\cris}$ the $p$-adic completion of the $A_{\inf}$-subalgebra generated by $\xi^j/j!$ for varying $j\le m$,  we note that, for any integer $n\ge 1$, we have $A_{\cris}/p^n\cong \varinjlim_{m\ge p} A_{\cris}^{(m)}/p^n$, with $A_{\cris}^{(m)}/p^n\cong A_{\cris}^{(m)}/(p^n, \mu^{n'})$ for $m\ge p$, and a large enough integer $n'$ (see \cite[\S 3.26]{CK});\footnote{In fact, for $m\ge p$, one has $\mu^p/p!\in A_{\cris}^{(m)}$, therefore one can take $n':=pn$.} then,  we recall that $(p^n, \mu^{n'})$ is an $A_{\inf}(R)$-regular sequence with $A_{\inf}(R)/(p^n, \mu^{n'})$ flat over $A_{\inf}/(p^n, \mu^{n'})$ (see \cite[Lemma 3.13]{CK}).
  \item  The left vertical map of (\ref{diaprinc}) is constructed as follows. Since $D_{\Sigma, \Lambda}(R)$ is a $p$-adically complete pro-nilpotent thickening of $R/p$ (here we use \cite[Lemma 5.29]{CK}), by the infinitesimal lifting criterion for the $p$-adic formally étale map $A_{\cris}(R_{\lambda}^\square)\to A_{\cris}(R)_{\lambda}$, we deduce that in the following diagram 
  \begin{center}
  \begin{tikzcd}
  A_{\cris}(R_{\lambda}^\square) \arrow[r]\arrow[d] &  A_{\cris}(R)_{\lambda} \arrow[dl, dashrightarrow]\arrow[d] \\
   D_{\Sigma, \Lambda}(R) \arrow[two heads]{r} &  R/p 
  \end{tikzcd}
  \end{center}
  there exists a unique dotted arrow making the diagram commute.
  The resulting map 
  $$\Kos_{A_{\cris}(R)_{\lambda}}\left((\partial_{\lambda, i})_{1\le i\le d} \right)\to \Kos_{D_{\Sigma, \Lambda}(R)}\left((\partial_{\tau})_{\tau\in T(\Sigma, \Lambda)}\right)$$
  is a quasi-isomorphism, as both the source and the target compute $R\Gamma_{\cris}(\mathfrak X_{\cl O_C/p}/A_{\cris}^\times)$, by \cite[Proposition 5.13]{CK} and Lemma \ref{cruc}, respectively.
  \item  The bottom horizontal arrow of (\ref{diaprinc}) is constructed as follows. Similarly to the map (\ref{2term}) in Lemma \ref{primitive}, for varying $\tau \in T(\Sigma, \Lambda)$, we have $\varphi$-equivariant maps
  \begin{equation}\label{2term2}
    \left((D_{\Sigma, \Lambda}(R)\overset{\partial_\tau}{\to}D_{\Sigma, \Lambda}(R)\right)\overset{(\id,\; h_\tau)}{\longrightarrow}\left(D_{\Sigma, \Lambda}(R)\overset{\gamma_\tau-1}{\to}D_{\Sigma, \Lambda}(R)\right)
   \end{equation}
   with $h_\tau:=1+\sum_{j\ge 1}\frac{t^{j}}{(j+1)!}(\partial{_\tau})^{j}$, which induce a $\varphi$-equivariant quasi-isomorphism
   \begin{equation}\label{kospass}
    \Kos_{D_{\Sigma, \Lambda}(R)}\left((\partial_{\tau})_{\tau\in T(\Sigma, \Lambda)}\right)\overset{\sim}{\to} \eta_\mu \Kos_{D_{\Sigma, \Lambda}(R)}\left((\gamma_\tau-1)_{\tau\in T(\Sigma, \Lambda)}\right)
   \end{equation}
  (cf. \cite[Proposition 5.34]{CK}). Moreover, the $\Gamma_{\Sigma, \Lambda}$-equivariant map (\ref{funn}) constructed in Lemma \ref{tricky}, induces a morphism
  \begin{equation}\label{lastcomp}
   \eta_\mu \Kos_{D_{\Sigma, \Lambda}(R)}\left((\gamma_\tau-1)_{\tau\in T(\Sigma, \Lambda)}\right)\to \eta_{\mu}\Kos_{\Bb_I(R_{\Sigma, \Lambda, \infty})}((\gamma_{\tau}-1)_{\tau\in T(\Sigma, \Lambda)}).
  \end{equation}
  Then, we define the bottom horizontal arrow of (\ref{diaprinc}) as the morphism induced by the composite of (\ref{kospass}) and (\ref{lastcomp}). The so constructed morphism makes the diagram (\ref{diaprinc}) commute, it is compatible with the morphism (\ref{theright}), under the identifications (\ref{coorcompl1}) and (\ref{moio}), and, by the previous points, it is a quasi-isomorphism.
  
  \end{enumerate}
  
  Taking the filtered colimit $\varinjlim_{\Sigma, \Lambda}$ of the quasi-isomorphisms $f_{\fr X, \Sigma, \Lambda}$ constructed in (\ref{theright}), putting everything together, using that $\dsolid_{A_{\cris}}$ commutes with filtered colimits, we obtain the desired quasi-isomorphism
  $$f_{\fr X}:R\Gamma_{\cris}(\mathfrak X_{\cl O_C/p}/A_{\cris}^\times)\dsolid_{A_{\cris}}B_{I}\overset{\sim}{\longrightarrow} \varinjlim_{\Sigma, \Lambda}K(R_{\Sigma, \Lambda, \infty})\simeq R\Gamma_{B_{I}}(\fr X_C).$$
  where we denoted $K(R_{\Sigma, \Lambda, \infty}):=L\eta_t R\Gamma_{\cond}(\Gamma_{\Sigma, \Lambda},\Bb_{I}(R_{\Sigma, \Lambda, \infty}))$. \medskip
  
  It remains to show that the morphism $f_{\fr X}$ depends functorially on $\fr X=\Spf(R)$. For this, it suffices to prove that the filtered colimit $\varinjlim_{\Sigma, \Lambda}K(R_{\Sigma, \Lambda, \infty})$ depends functorially on $R$, compatibly with the constructed morphism from the complex $R\Gamma_{\cris}(\mathfrak X_{\cl O_C/p}/A_{\cris}^\times)$ (i.e. the filtered colimit $\varinjlim_{\Sigma, \Lambda}$ of the morphisms defined in (\ref{lstm})). We study separately the latter filtered colimit  in the case $\fr X$ is smooth or non-smooth, and show that it reduces to a simpler filtered colimit.\footnote{We thank Teruhisa Koshikawa for suggesting the following idea.}
  \begin{itemize}
   \item Suppose $\fr X$ is smooth. In this case, given a finite set $\Lambda$ as in Notation \ref{notknock}, for each pair $(\lambda, i)$ with $\lambda\in \Lambda$ and $0\le i \le d$, we have
   \begin{equation}\label{rell1}
    t_{\lambda, i}=(p^q)^{n_{\lambda, i}}\cdot u_{\lambda, i}\;\; \text{  for unique } n_{\lambda, i}\in \Zz_{\ge 0} \text{ and } u_{\lambda, i}\in R^\times
   \end{equation}
   where $q\in \Qq_{>0}$ is the unique element such that $\Zz\cdot q=\sum_{\lambda\in \Lambda}\Zz\cdot q_{\lambda}$ inside $\Qq$ (see \cite[(5.26.2)]{CK}). We recall that $p$-th power roots of $p$ in $\cl O_C$ are fixed (Notation \ref{notss}). Then, for sufficiently large finite sets $\Sigma\subset R^{\times}$ (containing the $u_{\lambda, i}$'s as in (\ref{rell1})), denoting $R_{\Sigma, \infty}:=R_{\Sigma, \emptyset, \infty}$, we have natural surjections 
   \begin{equation}\label{natsurj0}
    R_{\Sigma, \Lambda, \infty}\twoheadrightarrow R_{\Sigma, \infty}
   \end{equation}
    given by assigning the images of $p$-th power roots of $u_{\lambda, i}$. The maps (\ref{natsurj0}) induce an isomorphism
   \begin{equation}\label{finall1}
    \varinjlim_{\Sigma, \Lambda}K(R_{\Sigma, \Lambda, \infty})\overset{\sim}{\longrightarrow}\varinjlim_{\Sigma}K(R_{\Sigma, \infty})
   \end{equation}
   as both sides of (\ref{finall1}) compute $R\Gamma_{B_{I}}(\fr X_C)$.
   \item Suppose $\fr X$ is non-smooth. We choose an étale map $\fr X\to \Spf(R_{\lambda_0}^{\square})$ as in Notation \ref{notknock}. Then, by Remark \ref{doit}, given a finite set $\Lambda$  containing $\lambda_0$,  for each pair $(\lambda, i)$ with $\lambda\in \Lambda$ and $0\le i\le d$,  either the image of $t_{\lambda, i}$ in $R$ lies in $R^{\times}$, or it cuts out an irreducible component of the special fiber of $\fr X$, and  there exists a unique $0\le i'\le r_{\lambda_0}$ (depending on $i$) such that
   \begin{equation}\label{rell2}
    t_{\lambda, i}=u_{\lambda, \lambda_0, i}\cdot t_{\lambda_0, i'}\;\; \text{  for a unique } u_{\lambda, \lambda_0, i}\in R^\times
   \end{equation}
   (see \cite[(5.27.1)]{CK}). Then, for sufficiently large finite sets $\Sigma\subset R^{\times}$ (containing the invertible $t_{\lambda, i}$'s in $R$ and the $u_{\lambda, \lambda_0, i}$'s as in (\ref{rell2})), we have natural surjections 
   \begin{equation}\label{natsurj}
    R_{\Sigma, \Lambda, \infty}\twoheadrightarrow R_{\Sigma, \{\lambda_0\}, \infty}
   \end{equation}
   given by assigning the images of $p$-th power roots of $t_{\lambda, i}$ in the case the image of $t_{\lambda, i}$ in $R$ lies in $R^\times$, or by assigning the images of $p$-th power roots of $u_{\lambda, \lambda_0, i}$ in the complementary case.    
   The maps (\ref{natsurj}) induce an isomorphism
   \begin{equation}\label{finall2}
    \varinjlim_{\Sigma, \Lambda}K(R_{\Sigma, \Lambda, \infty})\overset{\sim}{\longrightarrow}\varinjlim_{\Sigma}K(R_{\Sigma, \{\lambda_0\}, \infty}).
   \end{equation}
   For later reference, we note here that for a finite set $\Lambda$ as in Notation \ref{notknock} containing $\lambda_0$, and for sufficiently large finite sets $\Sigma$,  for any $\lambda\in \Lambda$ we have a commutative diagram
   \begin{equation}\label{finallab}
   \begin{tikzcd}[row sep=0.5em,column sep=3em]
   & R_{\Sigma, \{\lambda\}, \infty} \arrow[dd, "\wr"]  \\
   R_{\Sigma, \Lambda, \infty} \arrow[ur, twoheadrightarrow] \arrow[dr, twoheadrightarrow]\\
   &  R_{\Sigma, \{\lambda_0\}, \infty}.  \\
   \end{tikzcd}
   \end{equation}
   Here, the bottom diagonal arrow is (\ref{natsurj}), the top diagonal arrow is defined similarly to the latter (with $\lambda$ in place of $\lambda_0$), and the vertical arrow is the  isomorphism defined as follows. For $(\lambda, i)$ such that the image of $t_{\lambda, i}$ in $R$ does not lie in $R^\times$, the relation (\ref{rell2}) implies that  there exists a unique $0\le i'\le r_{\lambda_0}$ (depending on $i$) such that, for any integer $m\ge 0$,  we have
   \begin{equation*}
    t_{\lambda, i}^{1/p^m}=u_{\lambda, \lambda_0, i}^{(m)}\cdot t_{\lambda_0, i'}^{1/p^{m}}\;\; \text{ in } R_{\Sigma, \Lambda, \infty } \text{  for a unique } u_{\lambda, \lambda_0, i}^{(m)}\in R_{\Sigma, \Lambda, \infty }^\times
   \end{equation*}
    using that $t_{\lambda_0, i'}^{1/p^{m}}$ is a unit in $R_{\Sigma, \Lambda, \infty }[1/p]$ and $R_{\Sigma, \Lambda, \infty }$ is integrally closed in $R_{\Sigma, \Lambda, \infty }[1/p]$. Then, the vertical arrow of (\ref{finallab}) is defined by sending, for $m\ge 0$ and $(\lambda, i)$ as before, the element $t_{\lambda, i}^{1/p^m}$ in $R_{\Sigma, \{\lambda\}, \infty}$ to the image of $u_{\lambda, \lambda_0, i}^{(m)}\cdot t_{\lambda_0, i'}^{1/p^{m}}$ in $R_{\Sigma, \{\lambda_0\}, \infty}$; this defines an isomorphism as the $u_{\lambda, \lambda_0, i}^{(m)}$'s are invertible.
   \end{itemize}
   
    Now, let $g:\fr X'=\Spf(R')\to \fr X=\Spf(R)$ be a map of affine $p$-adic formal schemes over $\Spf(\cl O_C)$, with $\fr X$ and $\fr X'$ equipped with data as in Notation \ref{notknock}, for some sets $\Sigma$ and $\Lambda_0$ with $\Lambda_0=\{\lambda_0\}$ in the case $\fr X$ is non-smooth and $\Lambda=\emptyset$ in the smooth case, resp. $\Sigma'$ and $\Lambda'_0$ with $\Lambda'_0=\{\lambda_0'\}$ in the case $\fr X'$ is non-smooth and $\Lambda=\emptyset$ in the smooth case. Then, for a sufficiently large finite set $\Sigma'\subset {R'}^{\times}$, there exists a unique map $g^{\square}:\Spf({R'}_{\Sigma', \Lambda'_0}^\square)\to \Spf(R_{\Sigma, \Lambda_0}^\square)$ of $p$-adic formal schemes over $\Spf(\cl O_C)$ making the following diagram commute
   \begin{equation}\label{diav}
   \begin{tikzcd}
   \fr X' \arrow[r, "g"]\arrow[d] & \fr X\arrow[d] \\
   \Spf({R'}_{\Sigma', \Lambda'_0}^\square) \arrow[r, "g^{\square}"] & \Spf(R_{\Sigma, \Lambda_0}^\square).
   \end{tikzcd}
   \end{equation}
   In fact, suppose $\fr X$ and $\fr X'$ are non-smooth (in the other case one can argue similarly), then by Remark \ref{doit}, for $0\le i\le d$, we have the following relation in $R'$
   $$t_{\lambda_0, i}=(p^{q_{\lambda'_0}})^{n_{\lambda_0, i}}\cdot u_{\lambda_0, \lambda'_0, i}\cdot \prod_{0\le j\le r_{\lambda_0'}}( t_{\lambda'_0, j})^{a_{\lambda_0, i, j}}$$
   for unique $n_{\lambda_0, i}\in \Zz_{\ge 0}$, $u_{\lambda_0, \lambda'_0, i}\in R'^\times$, and $a_{\lambda_0, i, j}\in \Zz_{\ge 0}$ not all positive. 
   
   For a sufficiently large finite set $\Sigma'\subset {R'}^{\times}$, the diagram (\ref{diav}) induces a morphism of Galois covers
   \begin{equation*}\label{diav2}
   \begin{tikzcd}
   \fr X'_{C, \Sigma', \Lambda'_0, \infty} \arrow[r]\arrow[d] & \fr X_{C, \Sigma, \Lambda_0, \infty} \arrow[d] \\
   \fr X'_C \arrow[r] & \fr X_C
   \end{tikzcd}
   \end{equation*}
  which in turn induces a map  $\varinjlim_{\Sigma}K(R_{\Sigma, \Lambda_0, \infty})\to \varinjlim_{\Sigma}K(R'_{\Sigma', \Lambda'_0, \infty})$. The latter map composed with the isomorphism (\ref{finall1}) (in the case $R$ is smooth, and similarly for $R'$) or the isomorphism (\ref{finall2}) (in the case $R$ is non-smooth, and similarly for $R'$), induces a map 
   \begin{equation}\label{avee}
    \varinjlim_{\Sigma, \Lambda}K(R_{\Sigma, \Lambda, \infty})\to \varinjlim_{\Sigma', \Lambda'}K(R'_{\Sigma', \Lambda', \infty})
   \end{equation}
   which does not depend on the choices of $\lambda_0$ and $\lambda_0'$: this follows from the commutative diagram (\ref{finallab}) (and a similar diagram for $R'$).  By construction, the map (\ref{avee}) fits in the following commutative diagram
   \begin{equation*}
   \begin{tikzcd}
   R\Gamma_{\cris}(\mathfrak X_{\cl O_C/p}/A_{\cris}^\times) \arrow[r]\arrow[d] & \varinjlim_{\Sigma, \Lambda}K(R_{\Sigma, \Lambda, \infty})\arrow[d] \\
   R\Gamma_{\cris}(\mathfrak X'_{\cl O_C/p}/A_{\cris}^\times) \arrow[r] & \varinjlim_{\Sigma', \Lambda'}K(R'_{\Sigma', \Lambda', \infty})
   \end{tikzcd}
   \end{equation*}
   where the top, resp. bottom,  horizontal arrow is the filtered colimit $\varinjlim_{\Sigma, \Lambda}$, resp. $\varinjlim_{\Sigma', \Lambda'}$, of the morphisms defined in (\ref{lstm}). This concludes the proof.
  \end{proof}

  \subsection{The comparison with the Hyodo--Kato cohomology}\label{ae2}

 %Before proving Theorem \ref{B=HK}, we first state and show the following intermediate result.
 
 %\begin{theorem}\label{B=HK2}
 %  Let $X$ be a rigid-analytic variety over $C$ admitting a qcqs semistable formal model over $\cl O_C$. Then, we have a natural isomorphism in $D(\Mod_{B_{\log}}^{\ssolid})$
 %  \begin{equation}\label{decaB2}
 %   R\Gamma_{B_{\log}}(X)\simeq R\Gamma_{\HK}(X)\dsolid_{\breve F}B_{\log}
 %  \end{equation}
 %  compatible with the action of the Frobenius $\varphi$ and the monodromy $N$.
 % \end{theorem}
 %\begin{proof}
 % The statement follows combining Theorem \ref{firstep} with Theorem \ref{bcn}\listref{bcn:1} (recalling Remark \ref{st-log}).
 %\end{proof}
 
  Now, we have all the ingredients to conclude the proof of Theorem \ref{B=HK}. 
  
  \begin{proof}[Proof of Theorem \ref{B=HK}]
  First, let $\fr X\in \cl M_{\sss, \qcqs}$, i.e. $\fr X$ a qcqs semistable $p$-adic formal scheme over $\Spf(\cl O_C)$. Combining Theorem \ref{firstep} with Theorem \ref{bcn}\listref{bcn:1}, and recalling Remark \ref{st-log}, for any compact interval $I\subset[1/(p-1), \infty)$ with rational endpoints, we have a natural isomorphism
  \begin{equation}\label{firstI}
   R\Gamma_{B_I}(\fr X_C)\simeq (R\Gamma_{\HK}(\fr X_C)\dsolid_{\breve F} B_{\log, I})^{N=0}.
  \end{equation}
  Twisting by the Frobenius, and recalling that $\varphi$ is an automorphism on the Hyodo--Kato cohomology, we deduce that (\ref{firstI}) extends to any compact interval $I\subset (0, \infty)$ with rational endpoints. Then, passing to the derived limit over all such $I$, by Lemma \ref{dinverse} we have a natural isomorphism
  $$R\varprojlim_I R\Gamma_{B_I}(\fr X_C)\simeq R\Gamma_{B}(\fr X_C)$$
   and then, by \cite[Corollary A.67(ii)]{Bosco}, which applies observing that $B_{\log, I}$ is a nuclear $\breve F$-vector space, and  recalling that $R\Gamma_{\HK}(\fr X_C)$ is representable by a bounded complex of $\breve F$-Banach spaces (see Theorem \ref{mainHK}\listref{mainHK:2} and its proof), we obtain a natural isomorphism
  \begin{equation*}
   R\Gamma_{B}(\fr X_C)\simeq (R\Gamma_{\HK}(\fr X_C)\dsolid_{\breve F} \widehat B_{\log})^{N=0}
  \end{equation*}
  where $\widehat B_{\log}:=R\varprojlim_I B_{\log, I}$. Now, we claim that the natural map 
  \begin{equation}\label{oiuy}
   (R\Gamma_{\HK}(\fr X_C)\dsolid_{\breve F} B_{\log})^{N=0} \to (R\Gamma_{\HK}(\fr X_C)\dsolid_{\breve F} \widehat B_{\log})^{N=0}
  \end{equation}
  is an isomorphism. In fact, recalling that $B_{\log}=B[U]$ and $B_{\log, I}=B_I[U]$, both the source and the target of (\ref{oiuy}) identify with the complex $R\Gamma_{\HK}(\fr X_C)\dsolid_{\breve F} B$ via the operator $\exp(N\cdot U)$ on the latter tensor product (cf. Lemma \ref{unex2}; note that the monodromy $N$ is nilpotent on $R\Gamma_{\HK}(\fr X_C)$, by Theorem \ref{mainHK}\listref{mainHK:2}, in particular such operator is well-defined).\footnote{We observe that, in the case when $\fr X_C$ is the base change to $C$ of a rigid-analytic variety defined over $K$, such identifications may be not $\mathscr{G}_K$-equivariant. However, in this case, the map (\ref{oiuy}) is $\mathscr{G}_K$-equivariant. \label{againn}}
  
  Next, let $X$ be a qcqs rigid-analytic variety over $C$. Consider a simplicial object $\fr U_\bullet$ of $\cl M_{\sss, \qcqs}$ such that $\fr U_{\bullet, \eta}\to X$ is a $\eh$-hypercover. To show that for such $X$ we have an isomorphism as in (\ref{decaB}), since the $B$-cohomology satisfies $\eh$-hyperdescent, it suffices to show that the natural map at the top of the following commutative diagram is an isomorphism
 \begin{center}
  \begin{tikzcd}
  \lim_{[n]\in \Delta}(R\Gamma_{\cris}(\fr U_{n, \cl O_C/p}^0/\cl O_{\breve F}^0)_{\Qq_p}\dsolid_{\breve F} B_{\log})^{N=0} \arrow[r]  & (R\Gamma_{\HK}(X)\dsolid_{\breve F} B_{\log})^{N=0}   \\
   \lim_{[n]\in \Delta}(R\Gamma_{\cris}(\fr U_{n, \cl O_C/p}^0/\cl O_{\breve F}^0)_{\Qq_p}\dsolid_{\breve F} B) \arrow[u]\arrow[r] & R\Gamma_{\HK}(X)\dsolid_{\breve F}B \arrow[u].
  \end{tikzcd}
 \end{center}
 Here, arguing as above, the vertical arrows are the isomorphisms defined by the operator $\exp(N\cdot U)$ (recall that the monodromy $N$ is nilpotent on $R\Gamma_{\HK}(X)$, by Theorem \ref{mainHK}\listref{mainHK:2}, in particular such operator is well-defined).\footnote{Similarly to Footnote \ref{againn}, in the case when $X$ is the base change to $C$ of a rigid-analytic variety defined over $K$, such isomorphisms may be not $\mathscr{G}_K$-equivariant. However, in this case, the top horizontal arrow of the diagram above is $\mathscr{G}_K$-equivariant.} We note that the bottom horizontal arrow of the diagram above is an isomorphism by $\eh$-hyperdescent of the Hyodo--Kato cohomology, thanks to \cite[Corollary A.67(ii)]{Bosco} (which applies recalling that $B$ is a Fréchet space and each $R\Gamma_{\cris}(\fr U_{n, \cl O_C/p}^0/\cl O_{\breve F}^0)_{\Qq_p}$ is representable by a complex of nuclear $\breve F$-vector spaces, and using (\ref{finoo})). Then, we deduce that the top horizontal arrow is an isomorphism as well, as desired.
   
   Lastly, for a $X$ connected, paracompact, rigid-analytic variety over $C$, choosing a quasi-compact admissible covering $\{U_n\}_{n\in \Nn}$ of $X$ such that $U_n\subseteq U_{n+1}$, using Theorem \ref{mainHK}\listref{mainHK:2}, the same argument used above reduces us to the previous case.\footnote{Recall that any rigid-analytic variety is assumed to quasi-separated and of finite dimension, \S \ref{convent}.}
  \end{proof}

  \section{\textbf{$B_{\dR}^+$-cohomology}}\label{Bleit2}
   \sectionmark{}

 Our first goal in this section is to compare the $B_{\dR}^+$-cohomology with the de Rham cohomology.

   \subsection{The comparison with the de Rham cohomology}

  In the smooth case, the following result is essentially already contained in \cite[\S 6]{Bosco}, and relies on Scholze's Poincaré lemma for $\Bb_{\dR}^+$. \medskip
  
  In the following, we denote by $R\Gamma_{\dR}(X)$ the de Rham cohomology endowed with the Hodge filtration (Definition \ref{dRsingg}).
  
  \begin{theorem}\label{B_dR=dR}
   Let $X$ be a  connected, paracompact, rigid-analytic variety defined over $K$. Then, we have a natural isomorphism in $D(\Mod_{B_{\dR}^+}^{\ssolid})$
   \begin{equation}\label{decaBdR}
    R\Gamma_{B_{\dR}^+}(X_C)\simeq R\Gamma_{\dR}(X)\dsolid_K B_{\dR}^+
   \end{equation}
   compatible with filtrations, and the action of $\mathscr{G}_K$.
  \end{theorem}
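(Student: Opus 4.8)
The plan is to follow the pattern of the proof of Theorem \ref{B=HK}: establish (\ref{decaBdR}) first for $X$ smooth over $K$, then deduce the general (singular) case by $\eh$-hyperdescent, and reduce from connected paracompact to qcqs $X$ by exhausting the former by qcqs opens. Throughout, the role of the condensed and solid formalism is to let the various limits involved commute with the solid tensor product $\dsolid_K B_{\dR}^+$ and with the décalage functor $L\eta_t$; the workhorse will be \cite[Corollary A.67(ii)]{Bosco}.

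For $X$ smooth over $K$: by Proposition \ref{3.11}\listref{3.11.1} one has $R\Gamma_{B_{\dR}^+}(X_C) = R\Gamma(X_{\ett}, L\eta_t R\nu_*\Bb_{\dR}^+)$ with its filtration décalée. Scholze's filtered Poincaré lemma for $\Bb_{\dR}^+$ (\cite[Corollary 6.13]{Scholze}) resolves $\Bb_{\dR}^+$ by the de Rham complex of the structural de Rham period sheaf, which carries the canonical $B_{\dR}^+$-linear connection with horizontal sections $\Bb_{\dR}^+$ precisely because $X$ is defined over the discretely valued field $K$; pushing forward to $X_{\ett}$ identifies $R\nu_*\Bb_{\dR}^+$, filtered, with $\Omega^\bullet_{X_{\ett}}\dsolid_K B_{\dR}^+$, $\mathscr{G}_K$-equivariantly ($\mathscr{G}_K$ acting trivially on $\Omega^\bullet_{X_{\ett}}$). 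Taking $R\Gamma(X_{\ett},-)$, commuting it with $\dsolid_K B_{\dR}^+$ — licit for $X$ paracompact by \cite[Corollary A.67(ii)]{Bosco}, since $R\Gamma_{\dR}$ of a qcqs smooth variety is represented by a complex of $K$-Banach spaces (Corollary \ref{dRbound}), hence of nuclear $K$-vector spaces — and analyzing the effect of $L\eta_t$ and of the filtration décalée via Proposition \ref{beilifil} yields (\ref{decaBdR}) for such $X$: the filtration on the right-hand side is built from the Hodge filtration of $R\Gamma_{\dR}(X)$ and the $t$-adic filtration of $B_{\dR}^+$, it is bounded by Corollary \ref{dRbound}, and the isomorphism is $\mathscr{G}_K$-equivariant. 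In substance this is \cite[\S 6]{Bosco}.

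I would then globalize. We may assume $X$ qcqs (for the reduction from connected paracompact, use a quasi-compact admissible covering $\{U_n\}_{n\in\Nn}$ with $U_n\subseteq U_{n+1}$ and commute $R\varprojlim_n$ with $\dsolid_K B_{\dR}^+$ via \cite[Corollary A.67(ii)]{Bosco}, both sides satisfying the obvious limit formula), and reduced (as $X_{\mathrm{red}}\to X$ is an $\eh$-covering). Since $K$ has characteristic $0$, Proposition \ref{baseh} and Remark \ref{oyt} furnish a proper $\eh$-hypercover $Y_\bullet\to X$ with each $Y_n$ smooth qcqs over $K$. Both sides of (\ref{decaBdR}) satisfy $\eh$-hyperdescent — the left-hand side by construction, using the boundedness of $R\alpha_*\Bb_{\dR}^+$ (Proposition \ref{bddd}; one may first reduce to the finite levels $\Bb_{\dR}^+/\Fil^m$ by Lemma \ref{dinverse}), the right-hand side because $R\Gamma_{\dR}$ is by definition cohomology on the $\eh$-site. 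Hence $R\Gamma_{B_{\dR}^+}(X_C)\simeq\lim_{[n]\in\Delta}\bigl(R\Gamma_{\dR}(Y_n)\dsolid_K B_{\dR}^+\bigr)$ and $R\Gamma_{\dR}(X)\simeq\lim_{[n]\in\Delta}R\Gamma_{\dR}(Y_n)$; as each $R\Gamma_{\dR}(Y_n)$ is a complex of nuclear $K$-vector spaces, \cite[Corollary A.67(ii)]{Bosco} lets us pull the totalization out of $\dsolid_K B_{\dR}^+$, which yields (\ref{decaBdR}) in general, compatibly with filtrations (using again Corollary \ref{dRbound} to keep the filtrations bounded) and with $\mathscr{G}_K$.

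The hard part will be the bookkeeping around the filtrations: verifying that the isomorphism of the smooth case is compatible simultaneously with the filtration décalée on the left, the filtration built from the Hodge and $t$-adic filtrations on $R\Gamma_{\dR}(X)\dsolid_K B_{\dR}^+$, and the $\mathscr{G}_K$-action, and that all these compatibilities survive the interchange of the derived limits (coming from the $\eh$-hypercover and from the exhaustion of $X$) with $\dsolid_K B_{\dR}^+$ and with $L\eta_t$. The decisive inputs are the boundedness results (Proposition \ref{bddd}, Corollary \ref{dRbound}), which keep all filtrations and complexes uniformly bounded so the relevant $R\varprojlim$'s behave, and the nuclearity of the local de Rham complexes, which is exactly what makes \cite[Corollary A.67(ii)]{Bosco} applicable; a minor further subtlety is that the possibly non-quasi-separated condensed $K$-vector spaces $H^i_{\dR}(X)$ arising for affinoid $X$ must be handled within the solid formalism rather than as topological vector spaces.
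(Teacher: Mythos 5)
Your proposal is correct and follows essentially the same route as the paper: the smooth case is exactly the content of \cite[\S 6]{Bosco} (Scholze's Poincar\'e lemma for $\Bb_{\dR}^+$ over the discretely valued base $K$, the identification of the filtered pushforward with $\Omega^\bullet_X\solid_K B_{\dR}^+$, and the analysis of $L\eta_t$ and the filtration d\'ecal\'ee), and the general case is deduced by $\eh$-hyperdescent using \cite[Corollary A.67(ii)]{Bosco} together with the finiteness of the Hodge filtration from Proposition \ref{bound0}. The only cosmetic discrepancy is a citation: the representability of $R\Gamma_{\dR}$ of a qcqs variety by a complex of $K$-Banach (hence nuclear) spaces is not Corollary \ref{dRbound} (which gives only boundedness) but follows from a finite affinoid \v{C}ech cover, as used elsewhere in the paper.
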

 \begin{proof}
  Assume first that $X$ is smooth. By \cite[Lemma 2.17, Corollary 6.12, Lemma 6.13, Lemma 5.6(ii)]{Bosco} we have an isomorphism in $D(\Mod_{B_{\dR}^+}^{\ssolid})$
  $$R\Gamma_{B_{\dR}^+}(X_C)\simeq R\Gamma(X, \Omega_X^\bullet \solid_K B_{\dR}^+)$$
  compatible with filtrations. Then, in this case, the statement follows from \cite[Theorem 5.20]{Bosco}.
  
  The general case follows by $\eh$-hyperdescent, using \cite[Corollary A.67(ii)]{Bosco}, and observing, for the compatibility with filtrations part, that the filtration on $R\Gamma_{\dR}(X)$ is finite, Proposition \ref{bound0} (as $X$ is also assumed to be of finite dimension, \S \ref{convent}).
 \end{proof}
 
 The same argument and references used in the proof of Theorem \ref{B_dR=dR} also show the following result, which generalizes \cite[Theorem 1.8(ii)]{Bosco} to the singular case:
 
 \begin{theorem}\label{tosingg}
  Let $X$ be a connected, paracompact, rigid-analytic variety defined over $K$. Then, we have a natural isomorphism in $D(\Mod_{B_{\dR}^+}^{\ssolid})$
  $$R\Gamma_{\pet}(X_C, \Bb_{\dR}^+)\simeq \Fil^0(R\Gamma_{\dR}(X)\dsolid_K B_{\dR}).$$
 \end{theorem}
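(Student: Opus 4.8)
The plan is to run the argument of Theorem~\ref{B_dR=dR} in its non-décalée form, and then bootstrap from the smooth case to the general case by $\eh$-hyperdescent, exactly as in the proofs of Theorem~\ref{B=HK} and Theorem~\ref{B_dR=dR}.

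First I would treat $X$ smooth. Writing $\nu\colon (X_C)_{\pet}\to (X_C)_{\ett,\cond}$ for the natural morphism of sites, Scholze's filtered Poincaré lemma for $\Bb_{\dR}^+$, in the condensed formulation of \cite[Corollary 6.12, Lemma 6.13]{Bosco}, yields a natural filtered identification
$$R\nu_*\Bb_{\dR}^+\simeq \Fil^0\!\big(\Omega_{X_{\ett}}^\bullet\solid_K \Bb_{\dR}\big)$$
as filtered complexes of sheaves with values in $\Vect_K^{\cond}$ on $(X_C)_{\ett,\cond}$, the filtration on the right being the convolution of the stupid (Hodge) filtration on $\Omega_{X_{\ett}}^\bullet$ with the $t$-adic filtration on $\Bb_{\dR}$. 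Applying $R\Gamma(X,-)$ and using the condensed Tate acyclicity theorem \cite[Lemma 5.6(ii)]{Bosco} together with \cite[Corollary A.67(ii)]{Bosco} to commute $R\Gamma(X,-)$ past $\solid_K$ and past the pro-limit implicit in $\Fil^0$ of the convolution — exactly as in the proof of Theorem~\ref{B_dR=dR}, and using that the de Rham complex is finite (Proposition~\ref{bound0}), so that in each fixed cohomological degree only finitely many terms of the convolution contribute — yields $R\Gamma_{\pet}(X_C,\Bb_{\dR}^+)\simeq \Fil^0(R\Gamma_{\dR}(X)\dsolid_K B_{\dR})$, compatibly with the $\mathscr{G}_K$-action; for $X$ smooth affinoid this is \cite[Theorem 1.8(ii)]{Bosco}, and it extends to $X$ smooth, connected, paracompact by the exhaustion argument below.

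Next I would pass to a general (possibly singular) $X$, first assuming $X$ qcqs. By $v$-hyperdescent for $\Bb_{\dR}^+$ (Proposition~\ref{compv}\listref{compv:2}) one has $R\Gamma_{\pet}(X_C,\Bb_{\dR}^+)=R\Gamma_v(X_C,\Bb_{\dR}^+)$, and $v$-cohomology of the $v$-sheaf $\Bb_{\dR}^+$ satisfies $v$-hyperdescent; since the generic-fibre/diamond functor carries an $\eh$-hypercover $\fr{Y}_{\bullet}\to X$ by smooth $\fr{Y}_n$ (Proposition~\ref{baseh}) to a $v$-hypercover $\fr{Y}_{\bullet,C}^{\diamondsuit}\to X_C^{\diamondsuit}$, I obtain $R\Gamma_{\pet}(X_C,\Bb_{\dR}^+)\simeq \lim_{[n]\in\Delta}R\Gamma_{\pet}(\fr{Y}_{n,C},\Bb_{\dR}^+)$. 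Combining this with the smooth case, with the $\eh$-hyperdescent of $R\Gamma_{\dR}(-)$ together with its Hodge filtration (Proposition~\ref{ehdescent} and Definition~\ref{dRsingg}), and with the commutation of $\Fil^0(-\dsolid_K B_{\dR})$ with the limit $\lim_{[n]\in\Delta}$ — which follows from \cite[Corollary A.67(ii)]{Bosco} applied to the complexes $R\Gamma_{\dR}(\fr{Y}_{n,C})$, representable by complexes of $C$-Banach (hence nuclear) spaces with uniformly bounded Hodge filtrations (Proposition~\ref{bound0}) — gives the statement for $X$ qcqs. Finally, for $X$ connected and paracompact I would choose a quasi-compact admissible covering $\{U_n\}_{n\in\Nn}$ with $U_n\subseteq U_{n+1}$, write $R\Gamma_{\pet}(X_C,\Bb_{\dR}^+)=R\varprojlim_n R\Gamma_{\pet}(U_{n,C},\Bb_{\dR}^+)$ and $R\Gamma_{\dR}(X)=R\varprojlim_n R\Gamma_{\dR}(U_n)$, and commute $R\varprojlim_n$ with $\Fil^0(-\dsolid_K B_{\dR})$ via \cite[Corollary A.67(ii)]{Bosco} (legitimate since each $R\Gamma_{\dR}(U_n)$ is nuclear and the Hodge filtrations are uniformly finite), reducing to the qcqs case.

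I expect the main obstacle to be precisely the bookkeeping with the convolution filtration $\Fil^0(-\dsolid_K B_{\dR})$: since the $t$-adic filtration on $B_{\dR}$ is decreasing and unbounded below, $\Fil^0$ of a convolution is itself a limit, and one must verify that it is compatible with $R\Gamma(X,-)$, with $\eh$-hyperdescent, and with the exhaustion $R\varprojlim_n$. As in the proof of Theorem~\ref{B_dR=dR}, the finiteness of the de Rham complex (Proposition~\ref{bound0}, Corollary~\ref{dRbound}) together with the nuclearity formalism of Clausen--Scholze (\cite[Corollary A.67(ii)]{Bosco}) is what makes all these exchanges of limits legitimate.
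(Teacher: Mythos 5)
Your proposal is correct and follows essentially the same route as the paper: the paper proves Theorem \ref{tosingg} by invoking "the same argument and references used in the proof of Theorem \ref{B_dR=dR}", namely the smooth case coming from Scholze's Poincaré lemma for $\Bb_{\dR}^+$ in its condensed form (this is exactly \cite[Theorem 1.8(ii)]{Bosco}, which the theorem is stated to generalize), followed by $\eh$-hyperdescent using \cite[Corollary A.67(ii)]{Bosco} together with the finiteness of the Hodge filtration (Proposition \ref{bound0}) to justify the exchanges of limits. Your write-up just makes the descent and nuclearity bookkeeping explicit (modulo the harmless slip of writing $R\Gamma_{\dR}(\fr Y_{n,C})$ and ``$C$-Banach'' where the de Rham side should be taken over $K$).
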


 \subsubsection{Compatibility}
 
 Our next goal is to prove that the comparison of Theorem \ref{B_dR=dR} is compatible with the comparison of Theorem \ref{B=HK}.
 
 \begin{theorem}\label{compatib}
   Let $X$ be a connected, paracompact, rigid-analytic variety defined over $K$. 
   \begin{enumerate}[(i)]
    \item\label{compatib:1}{\normalfont (Hyodo--Kato isomorphism over $B_{\dR}^+$)}
   We have a natural isomorphism in $D(\Mod^{\ssolid}_{B_{\dR}^+})$
   \begin{equation}\label{HKB}
    R\Gamma_{\HK}(X_C)\dsolid_{\breve F} B_{\dR}^+\overset{\sim}{\longrightarrow} R\Gamma_{\dR}(X)\dsolid_{K}B_{\dR}^+.
   \end{equation}
  \item\label{compatib:2}{\normalfont (Compatibility)} The isomorphism (\ref{decaBdR}) of Theorem \ref{B_dR=dR} is compatible with the isomorphism (\ref{decaB}) of Theorem \ref{B=HK}, i.e. we have a commutative diagram as follows
 \begin{center}
  \begin{tikzcd}
  R\Gamma_{B}(X_C) \arrow[d]\arrow[r, "\sim"]  & (R\Gamma_{\HK}(X_C)\dsolid_{\breve F}B_{\log})^{N=0} \arrow[d] \\
    R\Gamma_{B_{\dR}^+}(X_C) \arrow[r, "\sim"] & R\Gamma_{\dR}(X)\dsolid_{K}B_{\dR}^+ 
  \end{tikzcd}
 \end{center}
  where the left vertical map is induced by the inclusion $\Bb\hookrightarrow \Bb_{\dR}^+$, and the right vertical map is induced by (\ref{HKB}).
   \end{enumerate}
 \end{theorem}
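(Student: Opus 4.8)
The plan is to prove part~(i) first and then deduce part~(ii) by unwinding the two legs of the square over the semistable case, where all the needed compatibilities are already recorded in Theorem~\ref{bcn} and in the constructions of Theorems~\ref{firstep}, \ref{B=HK} and \ref{B_dR=dR}. For part~(i), I would argue as in the proof of Theorem~\ref{B=HK} — using the nuclearity of $R\Gamma_{\HK}(X_C)$ from Theorem~\ref{mainHK}(ii) and \cite[Corollary~A.67(ii)]{Bosco} to commute the $\eh$-hyperdescent limits past $\dsolid_{\breve F}B_{\dR}^+$ — so as to reduce to the case where $X$ is the generic fiber of $\fr X\in\cl M_{\sss,\qcqs}$ admitting a descent to a fine log-smooth log scheme of Cartier type over $\cl O_{L,1}^\times$, $L/K$ finite (Remark~\ref{bbcc}(ii)), the $K$-rational structure on the de Rham side being recovered at the end from an $\eh$-hypercover of $X$ by smooth $K$-varieties. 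For such $\fr X$, Theorem~\ref{bcn}(i) provides a natural $\mathscr{G}_K$-, $\varphi$- and $N$-equivariant isomorphism
\[
R\Gamma_{\HK}(\fr X_C)\dsolid_{\breve F}B_{\st}^+\overset{\sim}{\longrightarrow}R\Gamma_{\cris}(\fr X_{\cl O_C/p}/A_{\cris}^\times)\dsolid_{A_{\cris}}B_{\st}^+,
\]
and applying $-\dsolid_{B_{\st}^+}B_{\dR}^+$ along the $\mathscr{G}_K$-equivariant embeddings $A_{\cris}\hookrightarrow B_{\st}^+\hookrightarrow B_{\dR}^+$ of Remark~\ref{st-log} and Remark~\ref{twistt} yields a natural isomorphism
\[
R\Gamma_{\HK}(\fr X_C)\dsolid_{\breve F}B_{\dR}^+\overset{\sim}{\longrightarrow}R\Gamma_{\cris}(\fr X_{\cl O_C/p}/A_{\cris}^\times)\dsolid_{A_{\cris}}B_{\dR}^+.
\]
It then remains to identify the target, functorially in $\fr X$, with $R\Gamma_{\dR}(X)\dsolid_K B_{\dR}^+$: using the \v{C}ech--Alexander/log-de Rham model of $R\Gamma_{\cris}(\fr X_{\cl O_C/p}/A_{\cris}^\times)$ from Lemma~\ref{cruc} and Corollary~\ref{bgim}, base change along $A_{\cris}\to B_{\dR}^+$ computes the log-de Rham (equivalently, $B_{\dR}^+$ being rationally a $\Qq_p$-algebra, the log-infinitesimal) cohomology of $\fr X_C$ over $B_{\dR}^+$, which Guo's infinitesimal comparison \cite{Guo2} identifies with $R\Gamma_{\dR}(X)\dsolid_K B_{\dR}^+$; the functoriality is obtained just as in the proof of Theorem~\ref{firstep}. (Alternatively, once the comparison $R\Gamma_{B_{\dR}^+}(X_C)\simeq R\Gamma_{B}(X_C)\dsolid_B B_{\dR}^+$ is available, (i) is immediate from Theorems~\ref{B=HK} and \ref{B_dR=dR} together with the identity $(R\Gamma_{\HK}(X_C)\dsolid_{\breve F}B_{\log})^{N=0}\dsolid_B B_{\dR}^+\simeq R\Gamma_{\HK}(X_C)\dsolid_{\breve F}B_{\dR}^+$ coming from the operator $\exp(N\cdot U)$, with $N$ nilpotent by Theorem~\ref{mainHK}(ii); cf.\ the proof of Theorem~\ref{B=HK} and Lemma~\ref{unex2}.)

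\emph{For part~(ii):} granting~(i), the right vertical arrow of the square is the composite
\[
(R\Gamma_{\HK}(X_C)\dsolid_{\breve F}B_{\log})^{N=0}\hookrightarrow R\Gamma_{\HK}(X_C)\dsolid_{\breve F}B_{\log}\longrightarrow R\Gamma_{\HK}(X_C)\dsolid_{\breve F}B_{\dR}^+\overset{\textup{(\ref{HKB})}}{\longrightarrow}R\Gamma_{\dR}(X)\dsolid_{K}B_{\dR}^+,
\]
the middle map being induced by the $\mathscr{G}_K$-equivariant embedding $B_{\log}\hookrightarrow B_{\dR}^+$ of Remark~\ref{twistt}, while the left vertical arrow is $R\Gamma_{B}(X_C)\to R\Gamma_{B_{\dR}^+}(X_C)$ induced by $\Bb\hookrightarrow\Bb_{\dR}^+$ (which sends $t$ to $t$, so that $L\eta_t$ applies functorially). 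All four objects and arrows being $\eh$-hypersheaves in $X$, by $\eh$-hyperdescent it suffices to check commutativity when $X=\fr X_C$ for $\fr X\in\cl M_{\sss,\qcqs}$. There I would unwind both composites through the identifications $R\Gamma_{B}(\fr X_C)\simeq R\Gamma_{\cris}(\fr X_{\cl O_C/p}/A_{\cris}^\times)\dsolid_{A_{\cris}}B\simeq(R\Gamma_{\HK}(\fr X_C)\dsolid_{\breve F}B_{\log})^{N=0}$ of Theorem~\ref{firstep} and Theorem~\ref{bcn}(i), and the analogous $R\Gamma_{B_{\dR}^+}(\fr X_C)\simeq R\Gamma_{\cris}(\fr X_{\cl O_C/p}/A_{\cris}^\times)\dsolid_{A_{\cris}}B_{\dR}^+$; after cancelling the instance of Theorem~\ref{bcn}(i) against its base change to $B_{\dR}^+$, both legs reduce to the natural chain
\[
R\Gamma_{\cris}(\fr X_{\cl O_C/p}/A_{\cris}^\times)\dsolid_{A_{\cris}}B\longrightarrow R\Gamma_{\cris}(\fr X_{\cl O_C/p}/A_{\cris}^\times)\dsolid_{A_{\cris}}B_{\dR}^+\longrightarrow R\Gamma_{\dR}(X)\dsolid_K B_{\dR}^+,
\]
and they coincide because the identification of $R\Gamma_{\cris}(-/A_{\cris}^\times)\dsolid_{A_{\cris}}B_{\dR}^+$ with de Rham cohomology over $B_{\dR}^+$ used in~(i) is compatible with Fontaine's map $\theta\colon A_{\cris}\to\cl O_C$ — equivalently, with the factorization $A_{\cris}\hookrightarrow B_{\dR}^+\overset{\theta}{\twoheadrightarrow}C$ — which is exactly the compatibility of Theorem~\ref{bcn}(ii) with Theorem~\ref{bcn}(i) recorded in its statement. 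The $\mathscr{G}_K$-equivariance of the square is then automatic, every arrow above being $\mathscr{G}_K$-equivariant, even though the auxiliary identification $(R\Gamma_{\HK}(X_C)\dsolid_{\breve F}B_{\log})^{N=0}\cong R\Gamma_{\HK}(X_C)\dsolid_{\breve F}B$ via $\exp(N\cdot U)$ is not.

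\emph{Main obstacle.} The technical heart is the key input to~(i): the functorial identification, in the solid framework, of $R\Gamma_{\cris}(\fr X_{\cl O_C/p}/A_{\cris}^\times)\dsolid_{A_{\cris}}B_{\dR}^+$ with log-infinitesimal cohomology over $B_{\dR}^+$ bearing the correct rational structure — i.e.\ the crystalline-to-de-Rham comparison over the de Rham period ring — together with the $\eh$-globalization that assembles the $K$-rational comparison for $X$ out of the semistable, $L$-rational, pieces. Once this is in place (or once $R\Gamma_{B_{\dR}^+}(X_C)\simeq R\Gamma_{B}(X_C)\dsolid_B B_{\dR}^+$ is granted, giving the short route to~(i)), part~(ii) is a diagram chase whose constituent compatibilities are all already available from Theorem~\ref{bcn} and from the proofs of Theorems~\ref{firstep}, \ref{B=HK} and \ref{B_dR=dR}.
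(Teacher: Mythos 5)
Your proposal follows essentially the same route as the paper: reduce to the semistable case, use Theorem \ref{bcn}(i) to pass from Hyodo--Kato to log-crystalline cohomology over $A_{\cris}$, base change to $B_{\dR}^+$, identify the result with de Rham cohomology over $B_{\dR}^+$ via Guo's infinitesimal comparison (the paper does this by pivoting through $R\Gamma_{\inf}(X/B_{\dR}^+)$, proving the isomorphism by derived Nakayama modulo $\xi$, and only then descending the de Rham side to $K$ via Lemma \ref{primitive2} and Lemma \ref{ainn}), and for (ii) unwind both legs of the square locally over a semistable model. Your alternative short route to (i) via $R\Gamma_{B_{\dR}^+}(X_C)\simeq R\Gamma_B(X_C)\dsolid_B B_{\dR}^+$ and $\exp(N\cdot U)$ is also sound.

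The one point that needs correction is the justification of the final coincidence in part (ii). After cancelling Theorem \ref{bcn}(i) against its base change, what remains is \emph{not} implied by the compatibility of Theorem \ref{bcn}(ii) with Theorem \ref{bcn}(i): that compatibility concerns the specialization $A_{\cris}\to\cl O_C$ of the crystalline comparison and plays no role here. What is actually needed is (a) that the local comparison map $R\Gamma_{\cris}(\fr X_{\cl O_C/p}/A_{\cris}^\times)\dsolid_{A_{\cris}}B_I\simeq R\Gamma_{B_I}(X)$ of Theorem \ref{firstep} and the local comparison $R\Gamma_{B_{\dR}^+}(X)\simeq R\Gamma_{\inf}(X/B_{\dR}^+)$ of Theorem \ref{secondstep1} are intertwined by $\Bb_I\hookrightarrow\Bb_{\dR}^+$ — this reduces to checking that the map $D_{\Sigma,\Lambda}(R)\to\Aa_{\cris}(R_{\Sigma,\Lambda,\infty})$ of Lemma \ref{tricky} and the map $D_{\Psi,\Xi,m}(A)\to(\Bb_{\dR}^+/\Fil^m)(A^+_{\Psi,\Xi,\infty})$ of (\ref{amam}) agree on the coordinates, i.e.\ both send $X_\tau\mapsto[(X_\tau,X_\tau^{1/p},\ldots)]$ (this is Proposition \ref{secondstep}); and (b) that the identification of $R\Gamma_{\inf}(X_C/B_{\dR}^+)$ with $R\Gamma_{\dR}(X)\dsolid_K B_{\dR}^+$ built from the zigzag of Lemma \ref{ainn} agrees with the comparison of Theorem \ref{B_dR=dR} coming from Scholze's Poincaré lemma (the diagram (\ref{seeinp})). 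Both checks are routine once stated, and your reduction correctly isolates where they live, but neither is "recorded in the statement" of Theorem \ref{bcn}; they require unwinding the two explicit local constructions.
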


  We refer the reader to Theorem \ref{compatib2} for a version of Theorem \ref{compatib} for rigid-analytic varieties defined over $C$.
 
 \subsection{The comparison with the infinitesimal cohomology over $B_{\dR}^+$}
 
  To prove Theorem \ref{compatib} we will relate the $B_{\dR}^+$-cohomology of Definition \ref{defdr} to the infinitesimal cohomology over $B_{\dR}^+$ in a way that is compatible with the comparison between the $B$-cohomology and the log-crystalline cohomology over $A_{\cris}$ (following from Theorem \ref{firstep}). \medskip
  
  \subsubsection{The infinitesimal cohomology over $B_{\dR}^+$ and its filtrations}\label{guoinf}
  
  We begin with some recollections on the infinitesimal cohomology over $B_{\dR}^+$ of rigid-analytic varieties, introduced in \cite{Guo2}.

 \begin{df}[Infinitesimal cohomology over $B_{\dR}^+$]\label{infccc}
 % \item  Let $L$ be a characteristic $0$ complete valued field with a non-archimedean valuation of rank $1$ and residue characteristic $p$. Let $X$ be a rigid-analytic variety over $L$. The \textit{infinitesimal site $(X/L)_{\inf}$ of $X$ over $L$} is defined as follows: 
  %\begin{itemize}
  % \item  
  %the underlying category has objects the pairs $(U, T)$ where $U$ is an open subspace of $X$ and $U\to T$ is an infinitesimal thickening of rigid-analytic varieties over $L$, and morphisms $(U, T)\to (U', T')$ with $U\to U'$ an open immersion and $T\to T'$ a compatible map of rigid-analytic varieties over $L$;
  %\item the coverings are given by the families of morphisms $\{(U_i, T_i)\to (U, T)\}$ with $U_i\to U$ and $T_i\to T$ coverings for the analytic topology.
  %\end{itemize}
  %The \textit{infinitesimal structure sheaf $\cl O_{\inf}$ on  $(X/L)_{\inf}$} is the sheaf with values in $\Vect_{L}^{\cond}$ sending $(U, T)$ to $\cl O_T(T)$, and the \textit{infinitesimal cohomology of $X$ over $L$} is defined as
  %\begin{equation}\label{infC}
  % R\Gamma_{\inf}(X/L):=R\Gamma((X/L)_{\inf}, \cl O_{\inf})\in D(\Vect_{L}^{\cond}).
  %\end{equation}
 Let $X$ be a rigid-analytic variety over $C$. Given an integer $m\ge 1$, denote $B_{\dR, m}^+=B_{\dR}^+/\xi^m$. 
 
 The \textit{infinitesimal site $(X/B_{\dR, m}^+)_{\inf}$ of $X$ over $B_{\dR, m}^+$} is defined as follows: 
  \begin{itemize}
   \item  
  the underlying category has objects the pairs $(U, T)$ where $U$ is an open subspace of $X$ and $U\to T$ is an infinitesimal thickening of adic spaces with $T$ an adic space of topological finite presentation over $B_{\dR, m}^+$, and morphisms $(U, T)\to (U', T')$ with $U\to U'$ an open immersion and $T\to T'$ a compatible map of adic spaces over $B_{\dR, m}^+$;
  \item the coverings are given by the families of morphisms $\{(U_i, T_i)\to (U, T)\}$ with $U_i\to U$ and $T_i\to T$ coverings for the analytic topology.
  \end{itemize}
  The \textit{infinitesimal site of $X$ over $B_{\dR}^+$} is defined as the direct limit of sites (in the sense of \cite[Exposé VI, \S 8]{SGA4})
  $$(X/B_{\dR}^+)_{\inf}:=\varinjlim_{m}(X/B_{\dR, m}^+)_{\inf}.$$
  The \textit{infinitesimal structure sheaf $\cl O_{\inf}$ on $(X/B_{\dR}^+)_{\inf}$} is the sheaf with values in $\Mod_{B_{\dR}^+}^{\cond}$ sending $(U, T)$ to $\cl O_T(T)$, and the \textit{infinitesimal cohomology of $X$ over $B_{\dR}^+$} is defined as
  \begin{equation*}\label{infBdr^+}
   R\Gamma_{\inf}(X/B_{\dR}^+):=R\Gamma((X/B_{\dR}^+)_{\inf}, \cl O_{\inf})\in D(\Mod_{B_{\dR}^+}^{\cond}).
  \end{equation*}
 \end{df}
 
  One can also define a \textit{big infinitesimal site} version of the (small) infinitesimal sites defined above (cf. \cite[Definition 2.1.2]{Guo2}). \medskip

 \begin{df}[Infinitesimal filtration]\label{definf} Let $X$ be a rigid-analytic variety over $C$. We define the \textit{infinitesimal filtration on the infinitesimal cohomology of $X$ over $B_{\dR}^+$} as the $\Nn^{\op}$-indexed filtration
 $$\Fil_{\inf}^{\star}R\Gamma_{\inf}(X/B_{\dR}^+):=R\Gamma((X/B_{\dR}^+)_{\inf},\cl J_{\inf}^{\star})$$
  induced on the $i$-th level by the subsheaf $\cl J_{\inf}^i \subset \cl O_{\inf}$ on $(X/B_{\dR}^+)_{\inf}$, where $\cl J_{\inf}$ is the kernel ideal of the natural map from the infinitesimal structure sheaf $\cl O_{\inf}$ to the pullback, on the infinitesimal site, of the analytic structure sheaf of $X$.
 \end{df}

 We recall that the infinitesimal cohomology over $B_{\dR}^+$ satisfies $\eh$-hyperdescent:

  \begin{lemma}\label{ehhyp}
   The presheaf
   $$\Rig_C^{\op}\to D(\Mod_{B_{\dR}^+}^{\cond}): Y\mapsto R\Gamma_{\inf}(Y/B_{\dR}^+)$$
   satisfies $\eh$-hyperdescent.
  \end{lemma}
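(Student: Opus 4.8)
The plan is to reduce the $\eh$-hyperdescent statement for $Y\mapsto R\Gamma_{\inf}(Y/B_{\dR}^+)$ to the already-available $\eh$-hyperdescent for de Rham cohomology (and, implicitly, for coherent cohomology of rigid-analytic varieties), by filtering and taking graded pieces. First I would recall, following \cite{Guo2}, that the infinitesimal filtration $\Fil_{\inf}^\star R\Gamma_{\inf}(Y/B_{\dR}^+)$ is exhaustive and complete (the relevant completeness following from the uniform boundedness of coherent cohomology of qcqs rigid-analytic varieties, Proposition \ref{bound0}, together with the fact that we may work $\eh$-locally on a qcqs $Y$, where a bounded resolution exists), so that a map of presheaves of filtered complexes is an equivalence as soon as it is so on all graded pieces. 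Hence it suffices to prove $\eh$-hyperdescent for each $\gr_{\inf}^i R\Gamma_{\inf}(-/B_{\dR}^+)$.

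The key input is the identification of the graded pieces: for $Y$ a rigid-analytic variety over $C$, there is a natural isomorphism
\begin{equation*}
 \gr_{\inf}^i R\Gamma_{\inf}(Y/B_{\dR}^+)\simeq R\Gamma_{\dR}(Y)\otimes_C \xi^i B_{\dR}^+/\xi^{i+1}B_{\dR}^+[-i]
\end{equation*}
(more precisely, $\gr_{\inf}^i$ is $R\Gamma$ of the $\eh$-sheaf $\Omega^i_{Y_{\eh}}\otimes_C (\xi^i B_{\dR}^+/\xi^{i+1})[-i]$, i.e. the $i$-th Hodge graded piece twisted by a one-dimensional $C$-vector space), which is exactly the content of the infinitesimal-to-de-Rham comparison of \cite{Guo2} on graded pieces — the infinitesimal Poincaré lemma. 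Granting this, $\eh$-hyperdescent for $\gr_{\inf}^i$ reduces to $\eh$-hyperdescent for the Hodge-graded complex $R\Gamma(Y,\Omega^i_{Y_{\eh}})$, which holds by construction of $\Omega^i_{Y_{\eh}}$ as an $\eh$-sheaf (it is built precisely so that $R\Gamma_{\dR}$ and its Hodge pieces satisfy $\eh$-descent, cf. Definition \ref{dRsingg} and Proposition \ref{ehdescent}); one then upgrades descent to hyperdescent using the uniform cohomological bound of Proposition \ref{bound0}, which makes the relevant totalizations converge. Alternatively, and more directly, one can cite that $R\Gamma_{\dR}(-)$ itself satisfies $\eh$-hyperdescent (this is used repeatedly already, e.g. in the proof of Theorem \ref{mainHK}\listref{mainHK:1}), and deduce $\eh$-hyperdescent of the Hodge pieces from the degeneration/boundedness of the Hodge filtration.

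Concretely, given an $\eh$-hypercover $Y_\bullet\to Y$, I would form the map $R\Gamma_{\inf}(Y/B_{\dR}^+)\to \lim_{[n]\in\Delta} R\Gamma_{\inf}(Y_n/B_{\dR}^+)$, equip both sides with the infinitesimal filtration, check it is filtered, pass to $\gr_{\inf}^i$, and invoke the Poincaré lemma together with $\eh$-hyperdescent of $R\Gamma(-,\Omega^i_{(-)_{\eh}})$ to conclude the map on graded pieces is an equivalence; completeness of the filtrations (via Proposition \ref{bound0} applied after reducing to qcqs $Y$, noting all varieties here are quasi-separated of finite dimension by \S\ref{convent}) then gives that the original map is an equivalence. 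The main obstacle I anticipate is the bookkeeping around completeness/exhaustiveness of the infinitesimal filtration in the condensed setting and the compatibility of the filtration with the limit over the simplicial diagram — one has to ensure that $\lim_{[n]}$ commutes with the relevant completion, which again comes down to the uniform boundedness of $R\Gamma_{\dR}(Y_n)$ (Corollary \ref{dRbound}) so that the spectral sequence degenerates at a finite stage and no $\lim^1$-type obstruction survives. This is a technical point rather than a conceptual one, and is handled exactly as in the analogous arguments for the $B$- and $B_{\dR}^+$-cohomology elsewhere in the paper (e.g. the proof of Lemma \ref{dinverse} and Theorem \ref{B_dR=dR}).
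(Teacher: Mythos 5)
The paper's own proof is a one-line citation to \cite[Theorem 6.2.5]{Guo2}, so any self-contained argument is already a different route; but your route has a genuine gap, and it is exactly the pitfall the paper warns about in the paragraph immediately following the lemma. Your reduction hinges on identifying $\gr_{\inf}^i R\Gamma_{\inf}(Y/B_{\dR}^+)$ with the Hodge graded piece $R\Gamma(Y,\Omega^i_{Y_{\eh}})$ (up to twist and shift). This is true for \emph{smooth} $Y$ (cf. Proposition \ref{ultimo}, Corollary \ref{mainfil}, Lemma \ref{gradedul}), but it is false for singular $Y$: by \cite[Corollary 5.5.2]{Guo2}, for a complete intersection affinoid the infinitesimal cohomology is filtered-isomorphic to the Hodge-completed analytic derived de Rham cohomology, whose $i$-th graded piece is a shift of $\wedge^i$ of the analytic cotangent complex — not of $\Omega^i_{Y_{\eh}}$ — and these do not vanish in high degrees when $Y$ is singular, whereas the $\eh$-Hodge pieces do (Proposition \ref{bound0}). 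Since $\eh$-hypercovers necessarily involve singular objects (that is the whole point of the topology), you cannot avoid this case.

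The consequence is not merely that one step needs repair: the strategy itself cannot work, because the graded pieces of the infinitesimal filtration genuinely fail $\eh$-hyperdescent — this is stated explicitly in the paper right after Lemma \ref{ehhyp}, and it is precisely why Definition \ref{hdg} introduces the Hodge filtration as the $\eh$-hypersheafification of the smooth-case infinitesimal filtration, rather than using $\Fil_{\inf}^\star$ itself. The unfiltered complex satisfies hyperdescent even though its infinitesimal-filtration graded pieces do not; the induced filtration on $\lim_{[n]}R\Gamma_{\inf}(Y_n/B_{\dR}^+)$ simply does not coincide with $\Fil_{\inf}^\star R\Gamma_{\inf}(Y/B_{\dR}^+)$. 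So a ``filter and check graded pieces'' argument cannot detect the hyperdescent of the total object here; one has to argue directly (as Guo does in \cite[Theorem 6.2.5]{Guo2}) or via a different dévissage.
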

  \begin{proof}
   The statement follows from \cite[Theorem 6.2.5]{Guo2}.
  \end{proof}

  On the other hand, the pieces of the infinitesimal filtration on the infinitesimal cohomology over $B_{\dR}^+$ (Definition \ref{definf}) do not satisfy $\eh$-hyperdescent in general.\footnote{Similarly, the pieces of the infinitesimal filtration on the infinitesimal cohomology over $C$ do satisfy $\eh$-hyperdescent. In fact, supposing the contrary, by \cite[Theorem 1.2.1(i)]{Guo2} and Proposition \ref{baseh} we would have, for any rigid-analytic variety $X$ over $C$, a natural filtered isomorphism between the infinitesimal cohomology of $X$ over $C$ and the de Rham cohomology of $X$ over $C$ (Definition \ref{dRsingg}). Now, assuming that $X$ is a complete intersection affinoid, there is a natural filtered isomorphism between the infinitesimal cohomology of $X$ over $C$ and the cohomology of the analytic derived de Rham complex of $X$ over $C$, \cite[Corollary 5.5.2]{Guo2}; but, the graded pieces of the latter do not vanish if $X$ is not smooth (recalling that the $i$-th graded piece of the analytic derived de Rham complex of $X$ over $C$ can be identified with a shift of the $i$-fold wedge product of the analytic cotangent complex of $X$ over $C$), instead, by Proposition \ref{bound0}, the graded pieces of the de Rham cohomology of $X$ over $C$ eventually vanish.} For this reason, we introduce the following filtration on the infinitesimal cohomology over $B_{\dR}^+$ that is closer to the Hodge filtration on the de Rham cohomology (Definition \ref{dRsingg}) and it will be crucial in the formulation of the semistable conjecture for proper (possibly singular) rigid-analytic varieties over $C$ (see Theorem \ref{propsing}). The following definition is based on Proposition \ref{baseh} (and Remark \ref{oyt}).

  \begin{df}[Hodge filtration]\label{hdg}  Let $X$ be a rigid-analytic variety over $C$. We define the \textit{Hodge filtration on the infinitesimal cohomology of $X$ over $B_{\dR}^+$} as the $\Nn^{\op}$-indexed filtration
 $$\Fil_{\Hdg}^{\star}R\Gamma_{\inf}(X/B_{\dR}^+)$$
  given on the $i$-th level by the cohomology on $X$ of the hypersheaf on $\Rig_{C, \eh}$  associated to the presheaf
  $$\RigSm_{C}^{\op}\to D(\Mod_{B_{\dR}^+}^{\cond}):Y \mapsto \Fil_{\inf}^{i}R\Gamma_{\inf}(Y/B_{\dR}^+).$$
  \end{df}

  In the smooth case the Hodge filtration on the infinitesimal cohomology over $B_{\dR}^+$ (Definition \ref{hdg}) agrees with the infinitesimal filtration (Definition \ref{definf}).
  
  \begin{prop}\label{ultimo} {\normalfont(Hodge filtration in the smooth case)}  Let $X$ be a smooth rigid-analytic variety over $C$. We have a natural isomorphism of filtered objects
  $$\Fil_{\inf}^{\star}R\Gamma_{\inf}(X/B_{\dR}^+)\overset{\sim}{\longrightarrow}\Fil_{\Hdg}^{\star}R\Gamma_{\inf}(X/B_{\dR}^+).$$
  \end{prop}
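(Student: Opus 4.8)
The plan is to reduce the statement to a hyperdescent property of the infinitesimal filtration on the smooth $\eh$-site. Write $P_i$ for the presheaf $Y\mapsto \Fil^i_{\inf}R\Gamma_{\inf}(Y/B_{\dR}^+)$ on $\RigSm_{C,\eh}$, so that by Definition \ref{hdg} the $i$-th step $\Fil^i_{\Hdg}R\Gamma_{\inf}(X/B_{\dR}^+)$ is the cohomology on $X$ of the $\eh$-hypersheafification of $P_i$ over $\Rig_{C,\eh}$. Since, by Proposition \ref{baseh} and Remark \ref{oyt}, the smooth $Y\in X_{\eh}$ form a basis of the site $X_{\eh}$ for every rigid-analytic variety $X$, the hypersheafification of $P_i$ over $\Rig_{C,\eh}$, evaluated on a smooth $X$, recovers $P_i(X)$ precisely when $P_i$ is already an $\eh$-hypersheaf on $\RigSm_{C,\eh}$; and if this holds for every $i$, the resulting identifications $\Fil^i_{\Hdg}R\Gamma_{\inf}(X/B_{\dR}^+)\simeq \Fil^i_{\inf}R\Gamma_{\inf}(X/B_{\dR}^+)$ are, by construction, functorial in $i$ and compatible with the transition maps of the two filtrations, hence upgrade to an isomorphism of filtered objects. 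So I would spend the proof establishing: for $X$ smooth over $C$, the presheaves $P_i$ on $\RigSm_{C,\eh}$ satisfy $\eh$-hyperdescent.

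First I would prove $\eh$-hyperdescent on the graded pieces. By Guo's filtered comparison between the infinitesimal cohomology over $B_{\dR}^+$ and the de Rham cohomology in the smooth case (\cite{Guo2}), for $Y$ smooth over $C$ the filtered complex $\Fil^\star_{\inf}R\Gamma_{\inf}(Y/B_{\dR}^+)$ is functorially identified with the convolution of the Hodge filtration on $R\Gamma_{\dR}(Y)$ and the $\xi$-adic filtration on $B_{\dR}^+$; taking graded pieces, $\gr^i_{\Fil_{\inf}}R\Gamma_{\inf}(Y/B_{\dR}^+)$ is a finite direct sum of shifts of the coherent cohomology $R\Gamma(Y,\Omega^b_{Y_{\ett}})$ twisted by the one-dimensional $C$-spaces $\xi^a B_{\dR}^+/\xi^{a+1}B_{\dR}^+$, with $a+b=i$ and $0\le b\le \dim Y$. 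Each functor $Y\mapsto R\Gamma(Y,\Omega^b_{Y_{\ett}})$ on $\RigSm_{C,\eh}$ satisfies $\eh$-hyperdescent: by Guo's descent for the $\eh$-differentials (Proposition \ref{ehdescent}) one has $R\pi_*\Omega^b_{Y_{\eh}}=\Omega^b_{Y_{\ett}}[0]$, so $R\Gamma(Y,\Omega^b_{Y_{\ett}})=R\Gamma_{\eh}(Y,\Omega^b_{Y_{\eh}})$ is the hypercohomology of the $\eh$-sheaf $\Omega^b_{Y_{\eh}}$, which has bounded cohomological dimension by Proposition \ref{bound0} and is therefore hypercomplete. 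Hence each $\gr^i_{\Fil_{\inf}}R\Gamma_{\inf}(-/B_{\dR}^+)$ satisfies $\eh$-hyperdescent.

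Next I would pass from the graded pieces back to the filtration by a dévissage, using the criterion \cite[Lemma 5.2]{BMS2}. For an $\eh$-hypercover $Y_\bullet\to X$ of a smooth $X$ with smooth terms, consider the map of (extended to $\Zz^{\op}$, hence bounded-below) filtered complexes $\Fil^\star_{\inf}R\Gamma_{\inf}(X/B_{\dR}^+)\to \lim_{[n]\in\Delta}\Fil^\star_{\inf}R\Gamma_{\inf}(Y_n/B_{\dR}^+)$. On associated graded pieces it is an equivalence by the previous paragraph; and on the inverse limit of the filtrations it is an equivalence because both sides vanish: over each object $(U,T)$ of the infinitesimal site the ideal $\cl J_{\inf}$ is nilpotent (a finitely generated nil ideal in a Noetherian ring, $T$ being of topological finite presentation over some $B_{\dR}^+/\xi^m$), so $R\varprojlim_i \cl J^i_{\inf}=0$ and hence $\varprojlim_i\Fil^i_{\inf}R\Gamma_{\inf}(-/B_{\dR}^+)=0$. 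Then \cite[Lemma 5.2]{BMS2} yields that the map is a filtered equivalence, i.e. $\Fil^i_{\inf}R\Gamma_{\inf}(X/B_{\dR}^+)\simeq \lim_{[n]\in\Delta}\Fil^i_{\inf}R\Gamma_{\inf}(Y_n/B_{\dR}^+)$ for all $i$, which is the desired hyperdescent; for $i=0$ this is consistent with the unfiltered statement of Lemma \ref{ehhyp}. The main obstacle is the input from \cite{Guo2}: one needs the filtered infinitesimal--de Rham comparison over $B_{\dR}^+$ in the smooth case in a form functorial enough to $\eh$-sheafify and explicit enough on graded pieces to see that they are assembled from the coherent cohomology of the sheaves $\Omega^b_{X_{\ett}}$. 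Granting this, everything else is standard manipulation of complete, bounded-below filtered objects together with Guo's $\eh$-descent for differential forms.
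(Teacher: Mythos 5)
Your reduction of the statement to $\eh$-hyperdescent of the presheaves $P_i\colon Y\mapsto \Fil^i_{\inf}R\Gamma_{\inf}(Y/B_{\dR}^+)$ on $\RigSm_{C,\eh}$ is exactly the paper's reduction, but you then prove that hyperdescent by a genuinely different route. The paper disposes of it in one line: by the proof of Theorem \ref{secondstep1} (together with Remark \ref{almob}), for smooth $Y$ the infinitesimal filtration is naturally identified with the filtration décalée on $R\Gamma_{B_{\dR}^+}(Y)$, which satisfies $\eh$-hyperdescent by construction, since it is defined as cohomology of a complex on the $\eh$-site. Your alternative — completeness of the filtration plus equivalence on graded pieces, with the graded pieces controlled by Proposition \ref{ehdescent} and Proposition \ref{bound0} — is a workable dévissage that avoids quoting the (long) proof of Theorem \ref{secondstep1}; the completeness step via the objectwise nilpotence of $\cl J_{\inf}$ is fine, and the descent for $Y\mapsto R\Gamma(Y,\Omega^b_{Y_{\ett}})$ is correctly justified.

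There is, however, one false input. For a general smooth $Y$ over $C$ the filtered complex $\Fil^\star_{\inf}R\Gamma_{\inf}(Y/B_{\dR}^+)$ is \emph{not} the convolution of the Hodge filtration on $R\Gamma_{\dR}(Y)$ with the $\xi$-adic filtration on $B_{\dR}^+$: that would force $R\Gamma_{\inf}(Y/B_{\dR}^+)\simeq R\Gamma_{\dR}(Y)\dsolid_C B_{\dR}^+$, which is precisely what fails when $Y$ does not descend to a discretely valued subfield — the $B_{\dR}^+$-theory is a generally non-split deformation of de Rham cohomology, and Theorem \ref{B_dR=dR} accordingly requires $X$ to be defined over $K$. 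What is true is only the consequence you actually use, namely the description of the graded pieces as finite (filtered, in fact split) extensions of shifts of $R\Gamma(Y,\Omega^b_{Y_{\ett}})$ twisted by $\gr^aB_{\dR}^+$ with $a+b=i$; this is Lemma \ref{gradedul} in the paper (equivalently, $\gr^i L\eta_tR\alpha_*\Bb_{\dR}^+\simeq \tau^{\le i}R\alpha_*\widehat{\cl O}(i)$ via Proposition \ref{bbe} together with the identification $R^k\alpha_*\widehat{\cl O}\cong\Omega^k_{X_{\eh}}(-k)$), and it is proved by locally descending a smooth affinoid to a discretely valued field via Elkik's theorem, not by a global base-change statement. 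Replacing your convolution claim by this graded-piece statement (whose functoriality in $Y$ you should still address, since the splitting involves choices of local descent; using the finite Postnikov-type filtration on $\tau^{\le i}R\alpha_*\widehat{\cl O}$ instead of the splitting avoids this), the rest of your argument goes through.
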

  
  We will prove Proposition \ref{ultimo} in the next subsection, together with the following already announced comparison of the $B_{\dR}^+$-cohomology with the infinitesimal cohomology over $B_{\dR}^+$.

  \begin{theorem}\label{secondstep1}
    For any rigid-analytic variety $X$ over $C$,  we have a natural isomorphism in $D(\Mod^{\cond}_{B_{\dR}^+})$
    \begin{equation}\label{BdR-cris}
     R\Gamma_{B_{\dR}^+}(X)\simeq R\Gamma_{\inf}(X/B_{\dR}^+)
    \end{equation}
   compatible with filtrations, endowing the $B_{\dR}^+$-cohomology with the filtration decalée (Definition \ref{beilifildef}) and the infinitesimal cohomology over $B_{\dR}^+$ with the Hodge filtration (Definition \ref{hdg}).
  \end{theorem}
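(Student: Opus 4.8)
The proof of Theorem \ref{secondstep1} will naturally reduce, via $\eh$-hyperdescent, to the case when $X$ is smooth, and there it will follow from a comparison that is essentially Scholze's Poincaré lemma for $\Bb_{\dR}^+$ together with the identification of the infinitesimal cohomology of smooth varieties with de Rham cohomology. Let me describe the plan.

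\textbf{Reduction to the smooth case.}
First I would note that both sides of (\ref{BdR-cris}) satisfy $\eh$-hyperdescent: for the $B_{\dR}^+$-cohomology this is built into its definition via $R\Gamma_{\eh, \cond}$ (using the $v$-descent of $\Bb_{\dR}^+$, Proposition \ref{compv}, to pass between the $v$-site and the $\eh$-site), while for the infinitesimal cohomology over $B_{\dR}^+$ this is Lemma \ref{ehhyp}. For the filtered statement, the filtration décalée on $R\Gamma_{B_{\dR}^+}$ is defined via the Beilinson $t$-structure (hence commutes with limits in the appropriate sense, cf.\ Proposition \ref{beilifil}), and the Hodge filtration on $R\Gamma_{\inf}(-/B_{\dR}^+)$ (Definition \ref{hdg}) is \emph{by construction} the $\eh$-hypersheafification of the infinitesimal filtration on smooth varieties. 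Therefore, using Proposition \ref{baseh} (smooth $\eh$-hypercovers exist) together with Lemma \ref{beihyp}, I would reduce the entire statement — including compatibility with filtrations — to constructing a natural filtered isomorphism (\ref{BdR-cris}) for $X$ a \emph{smooth} rigid-analytic variety over $C$, with $R\Gamma_{B_{\dR}^+}(X)$ carrying the filtration décalée and $R\Gamma_{\inf}(X/B_{\dR}^+)$ carrying the infinitesimal filtration. This last reduction is where Proposition \ref{ultimo} enters, and the two statements will be proved simultaneously.

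\textbf{The smooth case.}
For $X$ smooth, on the one hand Guo's comparison theorem \cite[Theorem 1.2.1]{Guo2} identifies $\Fil_{\inf}^\star R\Gamma_{\inf}(X/B_{\dR}^+)$ with $\Fil^\star\big(R\Gamma_{\dR}(X/C)\otimes_C^{\LL} B_{\dR}^+\big)$, i.e.\ with the de Rham cohomology of $X$ base-changed to $B_{\dR}^+$, equipped with the convolution of the Hodge filtration and the $\xi$-adic filtration; one has to carry this to the solid/condensed setting, which is routine since for smooth $X$ the de Rham complex $\Omega_X^\bullet$ is a complex of coherent sheaves and one invokes the condensed coherent-cohomology formalism of \cite[\S 5.1]{Bosco}. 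On the other hand, Proposition \ref{3.11}\listref{3.11.1} gives $R\Gamma_{B_{\dR}^+}(X)=R\Gamma(X, L\eta_t R\nu_*\Bb_{\dR}^+)$, and — exactly as in the proof of Theorem \ref{B_dR=dR}, using Scholze's Poincaré lemma \cite[Corollary 6.13]{Bosco} for $\Bb_{\dR}^+$ and the filtered statement \cite[Theorem 5.20]{Bosco} — one gets a natural filtered identification $R\Gamma_{B_{\dR}^+}(X)\simeq R\Gamma(X, \Omega_X^\bullet\solid_C B_{\dR}^+)$, where the filtration décalée on the left matches the brutal/convolution filtration on the right. (Here one uses Proposition \ref{beilifil}, which says the filtration décalée is exactly $\tau^{\le 0}_{\Beil}$ of the $t$-adic — hence, after the Poincaré lemma, $\xi$-adic — filtration, and then matches graded pieces, cf.\ Proposition \ref{bbe}\listref{bbe:2}.) Comparing the two right-hand sides gives the desired filtered isomorphism in the smooth case, and this simultaneously proves Proposition \ref{ultimo}: the infinitesimal filtration, pulled back to smooth varieties and $\eh$-hypersheafified, returns itself on smooth $X$ because $\Omega_X^\bullet$ already satisfies étale (hence $\eh$, by Proposition \ref{ehdescent}) descent.

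\textbf{The main obstacle.}
The genuinely delicate point is \emph{matching the filtrations on graded pieces}, and in particular checking that the filtration décalée on $R\Gamma_{B_{\dR}^+}(X)$ goes over, under the Poincaré lemma, precisely to the filtration that Guo's theorem produces on the infinitesimal side. Concretely, after identifying both sides with $R\Gamma(X, \Omega_X^\bullet \otimes B_{\dR}^+)$ one must verify that the Beilinson-connective-cover description of $L\eta_t$ (Proposition \ref{beilifil}) yields the convolution of the Hodge filtration of $\Omega_X^\bullet$ with the $\xi$-adic filtration of $B_{\dR}^+$ — this is a statement about how $L\eta_t$ interacts with the two-step filtration coming from the Poincaré-lemma resolution, and requires care because the filtration on $\Bb_{\dR}^+$ is by the non-zero-divisor $\xi$ while the décalage is with respect to $t$ (and $t$, $\xi$ differ by a unit only after passing to $\Bb_{\dR}^+$, not at finite level). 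I would handle this by reducing to graded pieces via \cite[Lemma 5.2]{BMS2} — showing the filtrations are exhaustive/complete (using the finite-dimensionality of $X$, Proposition \ref{bound0}, so the Hodge filtration is finite) and comparing $\gr^i$ — where on both sides one gets a shift of $R\Gamma(X,\Omega_X^j)\otimes_C \gr^{i-j}B_{\dR}^+$ with matching $\tau^{\le i}$-truncations, exactly as in the proof of Lemma \ref{dinverse} and Proposition \ref{3.11}. Once the smooth case is settled with its filtration, the $\eh$-hyperdescent reduction above completes the proof in general.
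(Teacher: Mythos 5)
There is a genuine gap, and it sits at the heart of your smooth case. You claim that for a general smooth rigid-analytic variety $X$ over $C$ one has natural filtered identifications $\Fil_{\inf}^{\star}R\Gamma_{\inf}(X/B_{\dR}^+)\simeq \Fil^{\star}(R\Gamma_{\dR}(X/C)\otimes_C^{\LL}B_{\dR}^+)$ and $R\Gamma_{B_{\dR}^+}(X)\simeq R\Gamma(X,\Omega_X^\bullet\solid_C B_{\dR}^+)$. Both statements are false unless $X$ descends to a discretely valued subfield of $C$: Guo's Theorem 1.2.1 compares infinitesimal cohomology over $C$ with de Rham cohomology over $C$ (it is only invoked in this paper in the form $R\Gamma_{\inf}(X/B_{\dR}^+)\otimes^{\LL}_{B_{\dR}^+}B_{\dR}^+/\xi\simeq R\Gamma_{\dR}(X)$), and Theorem \ref{B_dR=dR} together with Scholze's Poincar\'e lemma for $\Bb_{\dR}^+$ produces $R\Gamma_{B_{\dR}^+}(X_C)\simeq R\Gamma(X,\Omega_X^\bullet\solid_K B_{\dR}^+)$ only for $X$ defined over the discretely valued field $K$ (the sheaf $\cl O\Bb_{\dR}^+$ entering the Poincar\'e lemma requires such a descent). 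For $X$ over $C$ not arising by base change, both $R\Gamma_{B_{\dR}^+}(X)$ and $R\Gamma_{\inf}(X/B_{\dR}^+)$ are genuinely non-split deformations of $R\Gamma_{\dR}(X/C)$ along $B_{\dR}^+\to C$ — this is the whole point of these theories (and is why Proposition \ref{fillemma} needs a spreading-out/generic-smoothness argument rather than a base-change identity). So the two "right-hand sides" you propose to compare do not exist globally, and your main technical concern (matching the d\'ecalage with a convolution filtration) is downstream of identifications that fail.

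What the paper actually does is entirely local and explicit: on the basis of affinoids of Notation \ref{setse} (which do descend to discretely valued fields and carry "all possible coordinates" $\Psi,\Xi$), it computes $R\Gamma_{\inf}(X/B_{\dR}^+)$ by the envelope complex $\Omega^\bullet_{D_{\Psi,\Xi}(A)/B_{\dR}^+}$ (Lemma \ref{primitive2}, and Lemma \ref{ainn} for the filtration) and $R\Gamma_{B_{\dR}^+}(X)$ by Koszul complexes of the Galois action on $\Bb_{\dR}^+/\Fil^m$ of the perfectoid cover, and then constructs a direct map between them by sending $X_u\mapsto [(u,u^{1/p},\ldots)]$ into the period sheaf (the diagram (\ref{diaprinc2})), checking it is a quasi-isomorphism by derived Nakayama modulo $\xi$. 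The step you omit entirely — and which is the actual difficulty — is making this local comparison independent of the chosen coordinates and descent, i.e. functorial in $X$; this is achieved by taking the filtered colimit over all $(\Psi,\Xi)$, in the style of Bhatt--Morrow--Scholze and Česnavičius--Koshikawa. Your $\eh$-hyperdescent reduction and your treatment of Proposition \ref{ultimo} are in the right spirit, but they require precisely this functorial local input, which your argument does not supply.
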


  \subsubsection{Proofs}
  
  We want to prove Theorem \ref{compatib}, Proposition \ref{ultimo} and Theorem \ref{secondstep1}. \medskip

  As a first step toward Theorem \ref{compatib}, we need to construct a natural map from the log-crystalline cohomology over $A_{\cris}$ to the infinitesimal cohomology over $B_{\dR}^+$.
  
  \begin{lemma}\label{cristodR}
   Let $\fr X$ be a semistable $p$-adic formal scheme over $\cl O_C$, and let $X=\fr X_C$ denote its generic fiber. Then, there exists a natural morphism
   \begin{equation}\label{logtodR}
    R\Gamma_{\cris}(\fr X_{\cl O_C/p}/A_{\cris}^\times)\to R\Gamma_{\inf}(X/B_{\dR}^+).
   \end{equation}
  \end{lemma}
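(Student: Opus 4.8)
The plan is to build the morphism chart by chart, using on the crystalline side the local computation of the log-crystalline cohomology over $A_{\cris}$ from \cite[\S 5]{CK} and on the infinitesimal side the description of $R\Gamma_{\inf}(-/B_{\dR}^+)$ by embeddings into smooth $B_{\dR}^+$-spaces (à la Grothendieck, in the form of \cite{Guo2}), and then to make the result canonical and functorial by the ``all possible coordinates'' device already used in the proof of Theorem \ref{firstep}. First I would reduce to the local case: both presheaves $\fr X\mapsto R\Gamma_{\cris}(\fr X_{\cl O_C/p}/A_{\cris}^\times)$ and $X\mapsto R\Gamma_{\inf}(X/B_{\dR}^+)$ satisfy Zariski descent (for the latter this is contained in Lemma \ref{ehhyp}), so it suffices to produce a functorial morphism when $\fr X=\Spf R$ carries semistable coordinates $\fr X\to\Spf R^\square$ as in Notation \ref{notaz}.

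In this situation, let $A_{\cris}(R)=A_{\inf}(R)\widehat\otimes_{A_{\inf}}A_{\cris}$ be the crystalline lift of $R/p$ over $A_{\cris}$ from \cite[\S 5]{CK}, equipped with its canonical log structure; by \cite[Proposition 5.13]{CK} (via \cite[(1.8.1)]{Beili}, as in Lemma \ref{cruc}) one has a $\varphi$-equivariant identification $R\Gamma_{\cris}(\fr X_{\cl O_C/p}/A_{\cris}^\times)\simeq \Omega^{\bullet,\log}_{A_{\cris}(R)/A_{\cris}}$. The key observation is that, after base change along the natural map $A_{\cris}\to B_{\dR}^+$, reduction modulo $\xi^m$, and inverting $p$, the adic space $\Spa$ of $B^+_{\dR,m}(R):=A_{\cris}(R)\widehat\otimes_{A_{\cris}}B_{\dR}^+/\xi^m[1/p]$ is a \emph{smooth lift of the generic fibre} $X=\fr X_C$ over $B_{\dR,m}^+=B_{\dR}^+/\xi^m$: its reduction modulo $\xi$ is $\cl O(X)$ (using $B_{\dR}^+/\xi=C$), it is smooth over $B_{\dR,m}^+$ since the immersion is étale over the rigid semistable chart, and the log structure coming from $t_0,\dots,t_r$ becomes trivial there because $t_0\cdots t_r=[p^\flat]^q$ is a unit (its image in $C$ is the unit $p^q$, and $\xi$ is nilpotent modulo $\xi^m$). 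Hence Grothendieck's computation of infinitesimal cohomology by embeddings into smooth spaces, in the $B_{\dR}^+$-setting \cite{Guo2}, identifies $R\Gamma_{\inf}(X/B_{\dR,m}^+)$ with the de Rham complex of $B^+_{\dR,m}(R)$ relative to $B_{\dR,m}^+$, and the latter is canonically the base change of $\Omega^{\bullet,\log}_{A_{\cris}(R)/A_{\cris}}$. The desired morphism then arises as the composite
$$R\Gamma_{\cris}(\fr X_{\cl O_C/p}/A_{\cris}^\times)\longrightarrow R\varprojlim_m\Big(R\Gamma_{\cris}(\fr X_{\cl O_C/p}/A_{\cris}^\times)\widehat\otimes_{A_{\cris}}B_{\dR}^+/\xi^m\Big)\overset{\sim}{\longrightarrow}R\varprojlim_m R\Gamma_{\inf}(X/B_{\dR,m}^+)=R\Gamma_{\inf}(X/B_{\dR}^+),$$
where the first arrow is base change along $A_{\cris}\to B_{\dR}^+$ and the second is induced, chart-wise, by the identification just described.

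To globalize and obtain naturality in $\fr X$, I would run the same ``all possible coordinates'' argument as in the proof of Theorem \ref{firstep} and of the absolute crystalline comparison in \cite[\S 5]{CK}: replace the single chart $\Spf R^\square$ by the systems $\Spf R^\square_{\Sigma,\Lambda}$ of Notation \ref{notknock}, observe that each yields a canonical morphism as above, and pass to the filtered colimit over all embedding data; the compatibility under refinement of embedding data, together with Zariski descent, then produces a morphism functorial in $\fr X$. The main obstacle I anticipate is precisely the chart-level matching in the second step: one must check carefully that, in the logarithmic, non-noetherian and non-smooth-over-$\cl O_C$ setting at hand, the base change to $B_{\dR,m}^+$ of the log de Rham complex of the crystalline lift $A_{\cris}(R)$ genuinely computes infinitesimal cohomology in the sense required by \cite{Guo2} — in particular that no finiteness or completeness hypothesis of \emph{loc. cit.} is violated, that the passage from the log to the non-log de Rham complex over the generic fibre is harmless, and that the identification $B_{\dR}^+/\xi=C$ is used compatibly with the log structures. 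Everything else — the Zariski descent, the colimit argument, and the compatibility with Theorem \ref{firstep} — is formal; note that the morphism so constructed is even a chart-wise isomorphism, but I only claim the morphism here, as the global isomorphism statement (with filtrations) is the content of Theorem \ref{secondstep1}.
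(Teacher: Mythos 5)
Your proposal is correct in outline but takes a genuinely different, and much heavier, route than the paper. The paper's proof is a one-paragraph, coordinate-free construction: after reducing to $\fr X$ affine and noting $R\Gamma_{\cris}(\fr X_{\cl O_C/p}/A_{\cris}^\times)\simeq R\Gamma_{\cris}(\fr X/A_{\cris}^\times)$, it defines for each $m\ge 1$ a morphism of \emph{big} sites $(X/B_{\dR,m}^+)_{\INF}\to (\fr X/A_{\cris}^\times)_{\CRIS}$ by the continuous functor sending a thickening $\fr U\to \Spf(A)$ in the big log-crystalline site to $\fr U_C\to \Spa(A\otimes_{A_{\cris}}B_{\dR,m}^+)$ in the big infinitesimal site (forgetting log structures); since restriction from the big to the small topos preserves cohomology, this induces (\ref{logtodR}) directly, with no charts, no descent, and no gluing data to check. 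Your chart-by-chart construction — local identification of both sides with (log) de Rham complexes of lifts, followed by the ``all possible coordinates'' colimit — is essentially the content of Lemma \ref{ccost} and the proof of Theorem \ref{secondstep1}, which the paper carries out \emph{later} and for a stronger purpose (showing the map is a filtered quasi-isomorphism compatible with Theorem \ref{firstep}), not for the mere existence asserted here. What the paper's approach buys is that naturality in $\fr X$ is automatic and none of the delicate points you flag (whether the base change to $B_{\dR,m}^+$ of the log de Rham complex of $A_{\cris}(R)$ computes infinitesimal cohomology in Guo's sense, the triviality of the log structure on the generic fibre, compatibility under refinement of embedding data) need to be addressed at this stage; what your approach buys is that, once those points are verified, you get the local quasi-isomorphism for free rather than just a morphism. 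Do note, though, that for the lemma as stated your route is substantial overkill, and the Zariski-descent gluing of a chart-wise natural transformation does require the full ``all possible coordinates'' bookkeeping you allude to — it is not automatic from descent of the two presheaves alone.
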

  \begin{proof}
  We can assume that $\fr X$ is affine. We note that we have a natural isomorphism $$R\Gamma_{\cris}(\fr X_{\cl O_C/p}/A_{\cris}^\times)\simeq R\Gamma_{\cris}(\fr X/A_{\cris}^\times).$$ Then, it suffices to construct, for each integer $m\ge 1$,  a morphism of big sites
  $$f:  (X/B_{\dR, m}^+)_{\INF}\to (\fr X/A_{\cris}^\times)_{\CRIS}$$
  recalling that the restriction functor from the big topos the small one preserves cohomology (see \cite[Corollary 2.2.8]{Guo2} for the infinitesimal topos). We define $f$ via the continuous functor sending  $\fr U\to \Spf(A)$ in the big log-crystalline site $(\fr X/A_{\cris}^\times)_{\CRIS}$ to  $\fr U_C\to \Spa(A\otimes_{A_{\cris}}B_{\dR, m}^+)$ in the big infinitesimal site $(X/B_{\dR, m}^+)_{\INF}$ (forgetting the log structures). One checks that $f$ is a well-defined morphism of sites, with the help of \cite[00X4, 00X5]{Thestack}.
  \end{proof}

  Before proving Theorem \ref{compatib}, we will show the following intermediate compatibility result.

  \begin{prop}\label{secondstep}
    Let $X$ be the generic fiber of a qcqs semistable formal scheme $\fr X$ defined over $\Spf(\cl O_C)$. Let $I=[1, r]\subset (0, \infty)$ be an interval with rational endpoints. Then, the isomorphism (\ref{B-cris}) is compatible with the isomorphism (\ref{BdR-cris}),  i.e. we have a commutative diagram as follows
 \begin{center}
  \begin{tikzcd}
  R\Gamma_{B_I}(X) \arrow[d]\arrow[r, "\sim"]  & R\Gamma_{\cris}(\fr X_{\cl O_C/p}/A_{\cris}^\times)\dsolid_{A_{\cris}}B_I \arrow[d] \\
    R\Gamma_{B_{\dR}^+}(X) \arrow[r, "\sim"] & R\Gamma_{\inf}(X/B_{\dR}^+) 
  \end{tikzcd}
 \end{center}
  where the left vertical map is induced by the inclusion $\Bb_I\hookrightarrow \Bb_{\dR}^+$, and the right vertical map is induced by the morphism (\ref{logtodR}) constructed in Lemma \ref{cristodR}.
  \end{prop}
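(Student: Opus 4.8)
The plan is to verify the commutativity of the square by reducing, via functoriality, to the local situation of Notation \ref{notknock}, and then tracking all four maps through the explicit Koszul and \v{C}ech--Alexander models already used in the proof of Theorem \ref{firstep}. Exactly as there, since $\fr X$ is qcqs and the four functors $R\Gamma_{B_I}(-)$, $R\Gamma_{B_{\dR}^+}(-)$, $R\Gamma_{\cris}(-/A_{\cris}^\times)\dsolid_{A_{\cris}}(-)$ and $R\Gamma_{\inf}(-/B_{\dR}^+)$ commute with the relevant finite limits, and since both the morphism (\ref{logtodR}) and the inclusions $\Bb_I\hookrightarrow\Bb_{\dR}^+$ (legitimate as $1\in I$, by Remark \ref{twistt}) are functorial, it suffices to check the square for $\fr X=\Spf(R)$ equipped with a datum (\ref{mane}), after passing to the filtered colimit over the sets $\Sigma,\Lambda$. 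In this situation, by (\ref{moio}) and Lemma \ref{condstep} (applied also with $\mathbf{B}=\Bb_{\dR}^+$),
$$R\Gamma_{B_I}(X)\simeq L\eta_t\Kos_{\Bb_I(R_{\Sigma,\Lambda,\infty})}((\gamma_\tau-1)_\tau)\quad\text{and}\quad R\Gamma_{B_{\dR}^+}(X)\simeq L\eta_t\Kos_{\Bb_{\dR}^+(R_{\Sigma,\Lambda,\infty})}((\gamma_\tau-1)_\tau),$$
while Lemma \ref{cruc} and Corollary \ref{bgim} represent $R\Gamma_{\cris}(\fr X_{\cl O_C/p}/A_{\cris}^\times)$ by $\Omega^\bullet_{D_{\Sigma,\Lambda}(R)}$, and the left vertical map becomes the one induced by $\Bb_I(R_{\Sigma,\Lambda,\infty})\hookrightarrow\Bb_{\dR}^+(R_{\Sigma,\Lambda,\infty})$.

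The crucial ingredient is a local description of $R\Gamma_{\inf}(X/B_{\dR}^+)$ compatible with (\ref{logtodR}). Here I would use that, because $B_{\dR,m}^+=B_{\dR}^+/\xi^m$ is a $\Qq$-algebra, divided powers are automatic, so the reduction mod $p^n$ of $D_{\Sigma,\Lambda}(R)\otimes_{A_{\cris}}B_{\dR,m}^+$ is the log-infinitesimal envelope of $(A_{\inf,\Sigma,\Lambda}^\square\otimes_{A_{\inf}}B_{\dR,m}^+,M_{\inf,\Sigma,\Lambda}^\square)\twoheadrightarrow(R/p,M_R)$; forgetting log structures as in the construction of (\ref{logtodR}) and running the \v{C}ech--Alexander computation of the infinitesimal cohomology (cf. (\ref{iden1})) then yields a natural identification $R\Gamma_{\cris}(\fr X_{\cl O_C/p}/A_{\cris}^\times)\dsolid_{A_{\cris}}B_{\dR}^+\simeq R\Gamma_{\inf}(X/B_{\dR}^+)$ through which the right vertical map of the square becomes the obvious base-change morphism $(-)\dsolid_{A_{\cris}}B_I\to(-)\dsolid_{A_{\cris}}B_{\dR}^+$. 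Moreover, repeating verbatim the Taylor-series argument of the proof of Theorem \ref{firstep} — the $\varphi$-equivariant automorphisms $h_\tau=1+\sum_{j\ge 1}\frac{t^{j}}{(j+1)!}(\partial_\tau)^{j}$ of (\ref{2term2}) together with the map (\ref{funn}) — with $B_{\dR}^+$ in place of $B_I$ produces the local form of the bottom isomorphism, i.e. the diagram (\ref{diaprinc}) with $B_{\dR}^+$ everywhere in place of $B_I$; this simultaneously establishes the semistable case of Theorem \ref{secondstep1}.

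With these identifications, commutativity of the square is precisely the naturality of the quasi-isomorphisms $f_{\fr X,\Sigma,\Lambda}$ (and of their colimit $f_{\fr X}$) in the coefficient ring along $A_{\cris}\to B_I\hookrightarrow B_{\dR}^+$. I would verify this by base-changing the diagram (\ref{diaprinc}) along $B_I\hookrightarrow B_{\dR}^+$: each of its four arrows — the Koszul decomposition via the $h_\tau$, the map induced by (\ref{funn}), the infinitesimal-lifting map, and the top comparison of Lemma \ref{primitive} — is manifestly compatible with this base change, since $D_{\Sigma,\Lambda}(R)$, its base changes, the automorphisms $h_\tau$, and the $\Gamma_{\Sigma,\Lambda}$-equivariant composite $D_{\Sigma,\Lambda}(R)\to\Bb_I(R_{\Sigma,\Lambda,\infty})\to\Bb_{\dR}^+(R_{\Sigma,\Lambda,\infty})$ are all defined uniformly in the coefficient ring. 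Passing to $\varinjlim_{\Sigma,\Lambda}$ and applying $L\eta_t$ (which commutes with filtered colimits and with the base change along $B_I\hookrightarrow B_{\dR}^+$ at the level of the chosen $t$-torsion-free representatives) then gives the commutativity of the square in Proposition \ref{secondstep}.

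The step I expect to be the main obstacle is the local log-to-infinitesimal comparison over $B_{\dR}^+$: showing that forgetting log structures identifies the $B_{\dR,m}^+$-base change of the log(-PD) \v{C}ech--Alexander complex with the \v{C}ech--Alexander complex computing $R\Gamma_{\inf}(X/B_{\dR}^+)$, compatibly with (\ref{logtodR}). This is delicate because the log-crystalline and infinitesimal sites are structurally very different; the resolution should rely on the facts that $X=\fr X_C$ is smooth (as $\fr X$ is semistable), that over the $\Qq$-algebra $B_{\dR,m}^+$ the PD structure is free so log-PD envelopes coincide with log-infinitesimal ones, and that the log-infinitesimal and plain-infinitesimal pro-thickenings of a smooth generic fiber have the same cohomology — for this last point I would invoke the (log-)Poincar\'e lemma together with Guo's comparison results for the infinitesimal cohomology of smooth rigid-analytic varieties over $B_{\dR}^+$ (\cite{Guo2}). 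A secondary bookkeeping issue, handled as throughout \S\ref{ae1}, is to ensure that $L\eta_t$ and the colimit over $\Sigma,\Lambda$ genuinely commute with the coefficient base change at the level of the representing complexes.
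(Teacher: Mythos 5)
Your overall architecture (reduce to local coordinates, represent everything by Koszul/\v{C}ech--Alexander complexes, and read off commutativity as naturality in the coefficient ring) matches the paper's, but there is a genuine gap at the step you yourself flag as the main obstacle, and the resolution you propose for it does not work as stated. You want to identify $R\Gamma_{\inf}(X/B_{\dR}^+)$ locally with $R\Gamma_{\cris}(\fr X_{\cl O_C/p}/A_{\cris}^\times)\dsolid_{A_{\cris}}B_{\dR}^+$ by base-changing the log-PD \v{C}ech--Alexander complex to $B_{\dR,m}^+$ and ``forgetting log structures,'' so that the right vertical map of the square becomes the tautological base change along $B_I\hookrightarrow B_{\dR}^+$. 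But the two sites live over different objects — PD thickenings of (étale opens of) the \emph{special fiber} $\fr X_{\cl O_C/p}$ with its nontrivial canonical log structure, versus infinitesimal thickenings of opens of the \emph{generic fiber} $X$ by adic spaces of topologically finite presentation over $B_{\dR,m}^+$ with no log structure — and the assertion that the base-changed log(-PD) envelope $D_{\Sigma,\Lambda}(R)\otimes_{A_{\cris}}B_{\dR,m}^+$ computes the plain infinitesimal cohomology of $X$ is precisely the non-formal content here; it is not implied by the freeness of divided powers in characteristic $0$, nor by a log-Poincaré lemma (the exactification adjoins the units $X_{\lambda,i}/[t_{\lambda,i}^\flat]$, and the target envelope is built from an entirely different embedding of the generic fiber). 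The paper handles this by invoking Guo's explicit presentation of $R\Gamma_{\inf}(X/B_{\dR}^+)$ as the Koszul complex on the infinitesimal envelope $D_{\Psi,\Xi}(A)$ in a second coordinate system (Lemma \ref{primitive2}), constructing a map $D_{\Sigma,\Lambda}(R)\to D_{\Psi,\Xi,m}(A)$ via the universal property of the log-PD envelope (Lemma \ref{ccost}, the map (\ref{des})), and proving it is a quasi-isomorphism by derived Nakayama: both sides are derived $\xi$-adically complete and agree modulo $\xi$ with the de Rham cohomology. Without this (or an equivalent substitute), the right vertical arrow of your square is not identified with any concrete map and the commutativity check cannot be carried out.

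A second, related imprecision: even granting the local identification, the compatibility is not a pure change of coefficient ring with coordinates held fixed. The infinitesimal side is computed in the $(\Psi,\Xi)$-coordinates of Notation \ref{setse} while the crystalline side uses the $(\Sigma,\Lambda)$-coordinates of Notation \ref{notknock}, so the square one must actually check interlaces the maps (\ref{funn}), (\ref{des}), (\ref{amam}) and the inclusion $\Aa_{\cris}(R_{\Sigma,\Lambda,\infty})\to(\Bb_{\dR}^+/\Fil^m)(A^+_{\Psi,\Xi,\infty})$. The paper reduces this to the observation that both composites $A_{\inf,\Sigma,\Lambda}^{\square}\to \Aa_{\inf}(A^+_{\Psi,\Xi,\infty})$ send each coordinate $X_\tau$ to the Teichmüller lift $[(X_\tau,X_\tau^{1/p},\ldots)]$. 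Your ``naturality of $f_{\fr X,\Sigma,\Lambda}$ in the coefficient ring'' captures the spirit of this but is too coarse to substitute for it: you would still need to exhibit the right vertical map in terms of these explicit envelopes before naturality has any content.
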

  
  To show Proposition \ref{secondstep}, we will prove Theorem \ref{secondstep1} going over the same steps as in the proof of Theorem \ref{firstep}. We begin with the first step, corresponding to Lemma \ref{primitive}.

  \begin{lemma}\label{same}
   In the setting of Notation \ref{notaz}, for any integer $m\ge 1$, we have a $B_{\dR, m}^+$-linear quasi-isomorphism
   $$\Kos_{A_{\inf}(R)}(\partial_1, \ldots,\partial_d)\otimes_{A_{\inf}}^{\LL}B_{\dR}^+/\xi^m\overset{\sim}{\to}\Kos_{B_{\dR}^+/\xi^m(R)}(\partial_1, \ldots,\partial_d)\overset{\sim}{\to}L\eta_{t}\Kos_{(\Bb_{\dR}^+/\Fil^m)(R_{\infty})}(\gamma_1-1, \ldots, \gamma_d-1)$$
   compatible with the quasi-isomorphism (\ref{crucc}).
   \end{lemma}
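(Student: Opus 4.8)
The plan is to reuse, almost verbatim, the argument from the proof of Lemma~\ref{primitive}, with the ring $B_{\dR}^+/\xi^m$ (equivalently $B_{\dR}^+/\Fil^m$) in place of $B_I$, keeping track of the compatibility with the previously constructed map (\ref{crucc}). Concretely, I would first establish the analogue of the quasi-isomorphism (\ref{2255}): since $\mu$ divides $\gamma_i-1$ in $(B_{\dR}^+/\xi^m)(R)$ and $\mu$ and $t$ differ by a unit in $B_{\dR}^+/\xi^m$ (note $t$ generates $\Fil^1 B_{\dR}^+$ and is a non-zero-divisor modulo $\xi^m$), \cite[Lemma 7.8]{Bosco} gives $\eta_t\Kos_{(B_{\dR}^+/\xi^m)(R)}(\gamma_1-1,\dots,\gamma_d-1)\simeq\Kos_{(B_{\dR}^+/\xi^m)(R)}\big(\tfrac{\gamma_1-1}{t},\dots,\tfrac{\gamma_d-1}{t}\big)$. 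Then, exactly as in the cited proof (via \cite[Lemma 12.5]{BMS1}, \cite[Lemma 5.15]{CK}, using $A_{\cris}\subset B_{\dR}^+$ so that the divided-power denominators make sense), for each $i$ one has the Taylor expansion $\tfrac{\gamma_i-1}{t}=\partial_i\cdot h_i$ with $h_i=1+\sum_{j\ge1}\tfrac{t^j}{(j+1)!}\partial_i^j$ an automorphism; this is $\varphi$-equivariant and yields a quasi-isomorphism $\Kos_{(B_{\dR}^+/\xi^m)(R)}(\partial_1,\dots,\partial_d)\overset{\sim}{\to}\eta_t\Kos_{(B_{\dR}^+/\xi^m)(R)}(\gamma_1-1,\dots,\gamma_d-1)$.

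The second step is to show that the natural map $\eta_t\Kos_{(B_{\dR}^+/\xi^m)(R)}(\gamma_1-1,\dots,\gamma_d-1)\to\eta_t\Kos_{(\Bb_{\dR}^+/\Fil^m)(R_\infty)}(\gamma_1-1,\dots,\gamma_d-1)$ is a quasi-isomorphism. Using the decomposition $\Aa_{\dR}^+/\Fil^m(R_\infty)\cong (B_{\dR}^+/\xi^m)(R)\oplus(\text{nonintegral part})$ --- which follows from the analogue of Remark~\ref{form2} obtained by base-changing the decomposition (\ref{int-nint}) along $A_{\inf}\to B_{\dR}^+/\xi^m$ --- and the fact that $L\eta_\mu$ commutes with filtered colimits, this reduces to $L\eta_\mu$ killing the Koszul complex of the nonintegral part, i.e. to $\mu$ annihilating $H^i_{\cond}(\Gamma,(\Aa_{\dR}^+/\Fil^m)(R_\infty)^{\nonint})$ for all $i$. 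That in turn follows from the vanishing $\mu\cdot H^i_{\cond}(\Gamma,\Aa_{\inf}(R_\infty)^{\nonint})=0$ of \cite[Proposition 3.25]{CK} together with a base-change statement along $A_{\inf}\to B_{\dR}^+/\xi^m$; the latter is the analogue of Corollary~\ref{b2}, and since $B_{\dR}^+/\xi^m$ is a finite free module over... well, not quite, so one argues via Lemma~\ref{b1} with $A=A_{\inf}$, $f=p$, $M=B_{\dR}^+/\xi^m$, after first noting that $B_{\dR}^+/\xi^m$ is $p$-torsion-free and checking condition (\ref{condit}) using $A_{\inf}/p^n\cong A_{\inf}/(p^n,\mu^{n'})$-type statements and Lemma~\ref{intert} exactly as in the proof of Corollary~\ref{b2}. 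Finally, the leftmost quasi-isomorphism $\Kos_{A_{\inf}(R)}(\partial_1,\dots,\partial_d)\otimes^{\LL}_{A_{\inf}}B_{\dR}^+/\xi^m\overset{\sim}{\to}\Kos_{(B_{\dR}^+/\xi^m)(R)}(\partial_1,\dots,\partial_d)$ follows because $A_{\inf}(R)$ is a (topologically) free $A_{\inf}$-module in the relevant sense, so the Koszul complex is a complex of free $A_{\inf}$-modules and the derived base change is computed termwise; one identifies $A_{\inf}(R)\otimes^{\LL}_{A_{\inf}}B_{\dR}^+/\xi^m$ with $(B_{\dR}^+/\xi^m)(R)$ exactly as in Remark~\ref{remess}, using that $(p^n,\mu^{n'})$ is an $A_{\inf}(R)$-regular sequence with flat quotient over $A_{\inf}/(p^n,\mu^{n'})$ by \cite[Lemma 3.13]{CK} and that $B_{\dR}^+/\xi^m$ is built from $A_{\cris}$ by the same completion/localization operations treated there.

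The compatibility with (\ref{crucc}) is then automatic: both quasi-isomorphisms are constructed from the same data --- the maps $\partial_i\mapsto\gamma_i-1$ twisted by the automorphisms $h_i$, and the map $A_{\inf}(R)\to\Aa_{\inf}(R_\infty)$ on the integral part --- applied after base change along $A_{\inf}\to B_I$ versus $A_{\inf}\to B_{\dR}^+/\xi^m$, and these two base changes are compatible with the inclusion $B_I\hookrightarrow B_{\dR}^+/\Fil^m$ valid for $I=[1,r]$ (Remark~\ref{twistt}). So the diagram relating the two chains of quasi-isomorphisms commutes by construction, term by term.

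The main obstacle I anticipate is the base-change statement of step two --- verifying condition (\ref{condit}) of Lemma~\ref{b1} for $M=B_{\dR}^+/\xi^m$ rather than $M=A_I$. The ring $B_{\dR}^+/\xi^m$ has a more complicated structure than $A_I$ (it is a successive extension of copies of $C$ rather than a $p$-adically complete algebra of a clean form), so the reduction $B_{\dR}^+/\xi^m\otimes^{\LL}\!\!/p^n$ and the vanishing of the relevant $\Tor$ transition maps will require a slightly more careful filtration argument, presumably filtering by the $\xi$-adic (equivalently $\Fil$-) filtration with graded pieces $C$ and reducing to the case $m=1$, i.e. to $M=C$, where $C$ is $p$-divisible so that $C\otimes^{\LL}_{A_{\inf}}A_{\inf}/p^n=0$ and condition (\ref{condit}) is trivial. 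Once $m=1$ is handled, dévissage along the finite $\xi$-adic filtration upgrades it to general $m$. I expect this to go through without genuine difficulty, but it is the step that is not a literal copy of the $B_I$ case and so deserves the most care.
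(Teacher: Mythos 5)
Your overall skeleton matches the paper's: reduce via the Taylor expansion $\tfrac{\gamma_i-1}{t}=\partial_i\cdot h_i$ and the integral/nonintegral decomposition of $\Aa_{\inf}(R_\infty)$ to showing that $L\eta_\mu$ kills the Koszul complex of the nonintegral part with $B_{\dR}^+/\xi^m$-coefficients, and then dévissage along the $\xi$-adic filtration of $B_{\dR}^+/\xi^m$ (using that $\xi$ is a non-zero-divisor on $\Aa_{\inf}(R_\infty)^{\nonint}$, so one gets short exact sequences with graded pieces $N_\infty\otimes_{A_{\inf}}C(j)$) to reduce to $m=1$. This is exactly how the paper argues. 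The gap is in your treatment of the key vanishing. Your primary route --- Lemma \ref{b1}/Corollary \ref{b2} with $A=A_{\inf}$, $f=p$, $M=B_{\dR}^+/\xi^m$ --- is a misapplication: those results identify the cohomology of a \emph{$p$-adically completed} tensor product, whereas $N_\infty\otimes_{A_{\inf}}B_{\dR}^+/\xi^m$ involves no $p$-adic completion ($B_{\dR}^+/\xi^m$ is a $\Qq_p$-module, so $M/p^n=0$, $M$ is not $p$-adically complete, and the conclusion of Lemma \ref{b1} degenerates to $0\cong 0$). The same objection applies to your base case: observing that condition (\ref{condit}) is vacuous for the $p$-divisible module $C$ proves nothing about $H^i_{\cond}(\Gamma, N_\infty\otimes_{A_{\inf}}C)$.

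What is actually needed at $m=1$ is that $\mu$ (equivalently its image $\varepsilon_p-1$ under $\theta$) kills $H^i_{\cond}(\Gamma, N_\infty\otimes_{A_{\inf}}\cl O_C)=H^i_{\cond}(\Gamma, \widehat{\cl O}^+(R_\infty)^{\nonint})$ (then invert $p$). This is a genuine input --- the almost-vanishing statement \cite[Lemma 5.5]{Scholze} --- which your proposal never invokes, and it cannot be recovered from \cite[Proposition 3.25]{CK} by naive base change: the long exact sequence for multiplication by $\xi$ on $N_\infty$ exhibits $H^i_{\cond}(\Gamma, N_\infty/\xi)$ as an extension of $H^{i+1}_{\cond}(\Gamma,N_\infty)[\xi]$ by $H^i_{\cond}(\Gamma,N_\infty)/\xi$, so "killed by $\mu$" upstairs only yields "killed by $\mu^2$" downstairs, which is not enough for $L\eta_\mu$ to annihilate the complex. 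For the same reason, to run the dévissage from $m=1$ to general $m$ you must phrase the vanishing in almost-mathematics terms (annihilation by the idempotent maximal ideal, which \emph{is} stable under extensions), rather than annihilation by the single element $\mu$, which a priori degrades to $\mu^m$ along the induction. With \cite[Lemma 5.5]{Scholze} supplied and the vanishing stated in almost terms, your argument closes up and coincides with the paper's; as written, the base case is unproved. Your remaining points (the leftmost base-change quasi-isomorphism via regularity of $(p^n,\mu^{n'})$ on $A_{\inf}(R)$, and the term-by-term compatibility with (\ref{crucc}) via $B_I\hookrightarrow B_{\dR}^+/\Fil^m$) are fine.
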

   \begin{proof}
    Since $\mu$ and $t$ differ by a unit in $B_{\dR}^+/\xi^m$, as in the proof of Lemma \ref{primitive} we can reduce to showing that the element $\mu$ kills $H^i_{\cond}(\Gamma, \Aa_{\inf}(R_{\infty})^{\nonint}\otimes_{A_{\inf}}B_{\dR}^+/\xi^m)$ for all $i\in \Zz$.  Let $N_{\infty}:=\Aa_{\inf}(R_{\infty})^{\nonint}$. We proceed by induction on $m\ge 1$. Since $\xi$ is a non-zero-divisor in $\Aa_{\inf}(R_{\infty})\supset  N_{\infty}$, we have the following exact sequence
    $$0\to N_{\infty}\otimes_{A_{\inf}}C(m)\to N_{\infty}\otimes_{A_{\inf}}B_{\dR}^+/\xi^{m+1}\to N_{\infty}\otimes_{A_{\inf}}B_{\dR}^+/\xi^m\to 0$$
   which allows us to reduce to the case $m=1$. Then, it suffices to show that the element $\varepsilon_p-1$ kills $H^i_{\cond}(\Gamma, \widehat{\cl O}^+(R_{\infty})^{\nonint})$ for all $i \in \Zz$, which follows from \cite[Lemma 5.5]{Scholze} (and \cite[Proposition B.3]{Bosco}).
   \end{proof}
  
   The following byproduct of Lemma \ref{same} will be useful later on.
  
  \begin{cor}\label{mainfil}
   In the setting of Notation \ref{notaz}, for any $i\ge 0$, we have a $B_{\dR}^+$-linear quasi-isomorphism
  \begin{equation}\label{qisos}
   \xi^{\max(i-\bullet, 0)}\Omega_{B_{\dR}^+(R)}^\bullet \overset{\sim}{\to}\Fil^i R\Gamma_{B_{\dR}^+}(X)
  \end{equation}
  where $\Omega_{B_{\dR}^+(R)}^\bullet:=\Kos_{B_{\dR}^+(R)}(\partial_1, \ldots,\partial_d)$.
  \end{cor}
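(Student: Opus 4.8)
The plan is to bootstrap from Lemma \ref{same}, which already gives, for each $m \geq 1$, a $B^+_{\dR}/\xi^m$-linear identification of $R\Gamma_{B^+_{\dR}/\xi^m}(X)$ with the Koszul complex $\Kos_{B^+_{\dR}/\xi^m(R)}(\partial_1,\ldots,\partial_d)$ (via the décalage computation $L\eta_t$ applied to the group cohomology of $\Gamma$). First I would recall from Lemma \ref{dinverse} (the right-hand map in (\ref{fromlb})) that $R\Gamma_{B^+_{\dR}}(X) \simeq R\varprojlim_m R\Gamma_{B^+_{\dR}/\xi^m}(X)$ compatibly with the filtration décalée, so that it suffices to establish the analogue of (\ref{qisos}) at each finite level $m$ and then pass to the limit. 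The point of the corollary is the explicit shape of the filtration décalée: by Proposition \ref{beilifil}, the $i$-th level $\Fil^i L\eta_t M$ of the filtration décalée on $L\eta_t M$ is computed by $L\eta_{\varepsilon_i, t} M$ with $\varepsilon_i(j) = \max(i,j)$, and under the quasi-isomorphism of Lemma \ref{same} the décalage functor $L\eta_{\varepsilon_i,t}$ applied to the group-cohomology complex translates, on the de Rham side, into multiplication of the degree-$\bullet$ term of $\Kos_{B^+_{\dR}/\xi^m(R)}(\partial_1,\ldots,\partial_d)$ by $t^{\max(i-\bullet,0)}$, equivalently (since $t$ and $\xi$ differ by a unit in $B^+_{\dR}$, as $t$ generates $\Fil^1 B^+_{\dR}$) by $\xi^{\max(i-\bullet,0)}$.

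The key steps, in order, are: (1) track the filtration décalée through the two quasi-isomorphisms of Lemma \ref{same}; concretely, the second quasi-isomorphism there is built from the same kind of Taylor-expansion argument used in Lemma \ref{primitive} (the factors $h_\tau = 1 + \sum_{j\geq 1} \frac{t^j}{(j+1)!}(\partial_\tau)^j$ are automorphisms, so the comparison is $t$-adically compatible degreewise), and one checks that the twist $\varepsilon_i$ on the $L\eta_t$ side corresponds precisely to the subcomplex $\xi^{\max(i-\bullet,0)}\Omega^\bullet_{B^+_{\dR}/\xi^m(R)}$ — this is exactly the local model computation behind Definition \ref{beilifildef} and Proposition \ref{beilifil}, applied with $f = t$ and $A$ replaced by $B^+_{\dR}/\xi^m(R)$. (2) Check that these identifications are compatible as $m$ varies, which follows because the transition maps $B^+_{\dR}/\xi^{m+1} \to B^+_{\dR}/\xi^m$ are $\xi$-adically compatible and the group-cohomology complexes form a tower. (3) Take $R\varprojlim_m$; using Proposition \ref{bddd} for uniform boundedness (so that the $R\varprojlim$ is well-behaved and commutes with the relevant operations) and \cite[Lemma 5.2, (1)]{BMS2} exactly as in the proof of Lemma \ref{dinverse}, conclude that $\Fil^i R\Gamma_{B^+_{\dR}}(X) \simeq \xi^{\max(i-\bullet,0)}\Omega^\bullet_{B^+_{\dR}(R)}$ with $\Omega^\bullet_{B^+_{\dR}(R)} = \Kos_{B^+_{\dR}(R)}(\partial_1,\ldots,\partial_d)$, $B^+_{\dR}(R) := \varprojlim_m B^+_{\dR}/\xi^m(R)$.

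The main obstacle I expect is step (1): making the bookkeeping of the filtration décalée through the chain of quasi-isomorphisms genuinely clean, rather than merely "up to unit at each stage". The subtlety is that $L\eta_{\varepsilon_i,t}$ is only a functor on the derived category (Proposition \ref{nondec}), so one must either work with explicit torsion-free representatives — the Koszul complexes are already $t$-torsion-free since $B^+_{\dR}/\xi^m(R)$ is $t$-torsion-free modulo the top power, which requires a small argument — or invoke the $\infty$-categorical characterization via $\tau^{\le 0}_{\Beil}$ of Proposition \ref{beilifil} and verify that the comparison maps of Lemma \ref{same} lift to filtered maps $f^\star \otimes (-)$ in $DF$. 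The second route is cleaner: one observes that the quasi-isomorphisms of Lemma \ref{same} are all manifestly $t$-linear and hence promote to morphisms in the filtered derived category for the $t$-adic (Beilinson) filtration, and then Proposition \ref{bbe}\listref{bbe:2} forces the identification on connective covers, i.e. on the filtration décalée. Everything else — the passage to the limit over $m$, the boundedness inputs — is routine given the tools already assembled in \S\ref{prelim}.
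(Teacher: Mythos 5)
Your ingredients and overall strategy coincide with the paper's: the proof given there is exactly the combination of Lemma \ref{condstep}, Lemma \ref{same} and Lemma \ref{dinverse}, followed by what you call the ``second route'' in step (1) --- one inducts on $i$ and reduces to an identification of graded pieces via Proposition \ref{bbe}\listref{bbe:2}, the graded piece of the left-hand side being the Hodge-type graded piece $\bigoplus_{0\le j\le i}\Omega^j_{B_{\dR}^+(R)}\otimes_{B_{\dR}^+}\gr^{i-j}B_{\dR}^+[-j]$ and that of the right-hand side being $\tau^{\le i}R\alpha_*\widehat{\cl O}(i)$ (cf.\ Lemma \ref{gradedul}). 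So the clean version of your argument is the paper's argument, and your instinct that the $\infty$-categorical route through $\tau^{\le 0}_{\Beil}$ is the right one is correct.

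Your primary route as structured --- ``establish the analogue of (\ref{qisos}) at each finite level $m$ and then pass to the limit'' --- has a genuine problem, which your parenthetical that ``$B_{\dR}^+/\xi^m(R)$ is $t$-torsion-free modulo the top power'' gestures at without resolving. The ring $B_{\dR}^+/\xi^m(R)$ has $\xi$-torsion (equivalently $t$-torsion, as $t$ and $\xi$ differ by a unit in $B_{\dR}^+$), so $L\eta_{\varepsilon_i,t}$ cannot be computed termwise on $\Kos_{B_{\dR}^+/\xi^m(R)}(\partial_1,\ldots,\partial_d)$, and the naive subcomplex $\xi^{\max(i-\bullet,0)}\Omega^\bullet_{B_{\dR}^+/\xi^m(R)}$ does \emph{not} compute the $i$-th filtered piece at level $m$. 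Already for $d=0$: a $t$-torsion-free representative of $B_{\dR}^+/\xi^m$ is $[B_{\dR}^+\xrightarrow{\xi^m}B_{\dR}^+]$, and applying $\eta_{\varepsilon_i,t}$ gives $[t^iB_{\dR}^+\xrightarrow{\xi^m}t^iB_{\dR}^+]\simeq \xi^iB_{\dR}^+/\xi^{m+i}B_{\dR}^+$, which differs from $\xi^i\cdot(B_{\dR}^+/\xi^m)\cong B_{\dR}^+/\xi^{m-i}$ (and from $0$ when $i\ge m$). The two towers do agree after $R\varprojlim_m$, so the final statement is unharmed, but the intermediate finite-level claim is false as stated. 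The fix is not to identify the filtration with an explicit subcomplex before taking the limit: use Lemma \ref{dinverse} (which is compatible with the filtration décalée) to pass to $B_{\dR}^+(R)$, which \emph{is} $\xi$-torsion-free, and then run the induction on $i$ with the graded-piece comparison --- which is what the paper does, and what your second route amounts to.
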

  \begin{proof}
   The statement follows combining Lemma \ref{condstep}, Lemma \ref{same}, and Lemma \ref{dinverse}.  In fact, by induction on $i\ge 0$, we can reduce to showing (\ref{qisos}) on graded pieces.
  \end{proof}

  As done in \S \ref{functisom}, in order to construct a functorial isomorphism, we need to introduce more general coordinates. For this, we resume here the setting of \cite[\S 6.3]{CK}.\footnote{We warn the reader that our notation slightly differs from \textit{loc. cit.}.}

  \begin{notation}\label{setse}\
  \begin{itemize}
   \item Let $X=\Spa(A, A^\circ)$ an affinoid space over $\Spa(C, \cl O_C)$ that is the base change to $\Spa(C, \cl O_C)$ of an affinoid space $X_0=\Spa(A_0, A_0^\circ)$ defined over $\Spa(L, \cl O_L)$, for some finite subextension $\breve F\subset L\subset C$, and admitting an étale $\Spa(L, \cl O_L)$-morphism
    \begin{equation*}
     X_0\to \Spa(L\langle T_0, \ldots, T_r, T_{r+1}^{\pm 1}, \ldots, T_d^{\pm 1} \rangle /(T_0\cdots T_r -p^q), \cl O_L\langle T_0, \ldots, T_r, T_{r+1}^{\pm 1}, \ldots, T_d^{\pm 1} \rangle /(T_0\cdots T_r -p^q))
    \end{equation*}
    for some $0\le r\le d$, and  $q\in \Qq_{>0}$.
    
    Assume, in addition, that there are finite subsets $\Psi_0\subset (A_0^\circ)^\times$ and $\Xi_0\subset A_0^\circ\cap A_0^\times$ such that the $L$-linear map
    \begin{equation}\label{bccc0}
     L\langle (X_u^{\pm 1})_{u\in \Psi_0}, (X_a)_{a\in \Xi_0}\rangle\to A_0, \;\;\;\;\; X_u\mapsto u,\; X_a\mapsto a,
    \end{equation}
    is surjective. In particular, there are finite subsets $\Psi\subset (A^\circ)^\times$ and $\Xi\subset A^\circ\cap A^\times$ such that the $C$-linear map
    \begin{equation}\label{bccc}
     C\langle (X_u^{\pm 1})_{u\in \Psi}, (X_a)_{a\in \Xi}\rangle\to A, \;\;\;\;\; X_u\mapsto u,\; X_a\mapsto a,
    \end{equation}
    is surjective.\footnote{The descent (\ref{bccc0}) of (\ref{bccc}) to $L$  is needed in \cite[Lemma 6.3.8]{CK}, which we will use in Lemma \ref{ainn} below.}
   \item   We consider the affinoid perfectoid cover 
    \begin{equation}\label{md}
     C\langle (X_u^{\pm 1})_{u\in \Psi}, (X_a)_{a\in \Xi}\rangle\to C\langle (X_u^{\pm 1/p^{\infty}})_{u\in \Psi}, (X_a^{1/p^{\infty}})_{a\in \Xi}\rangle
    \end{equation}
    with Galois group
    $$\Gamma_{\Psi, \Xi}:=\textstyle \prod_{\Psi\sqcup \Xi}\Zz_p(1)\cong \Zz_p^{|\Psi\sqcup \Xi|}.$$
    We denote by $(\gamma_{u})_{u\in \Psi\sqcup \Xi}$ the canonical generators of $\Gamma_{\Psi, \Xi}$. 
    
    The base change of (\ref{md}) along the surjection (\ref{bccc}) gives the following affinoid perfectoid pro-étale cover of $X$
   $$X_{\Psi, \Xi, \infty}:=\Spa(A_{\Psi, \Xi, \infty}, A_{\Psi, \Xi, \infty}^+)\to X$$
    with Galois group $\Gamma_{\Psi, \Xi}$.
    \item   
   Given $\Mm$ any pro-étale period sheaf of \S \ref{petsheaves}, we set $$\Mm(A_{\Psi, \Xi, \infty}^+):=\Mm(X_{\Psi, \Xi, \infty})$$
   and we regard it as a condensed ring.
  \end{itemize}
   \end{notation}
   
   \begin{rem}\label{almob}
    As observed in \cite[\S 6.2 and \S 6.3]{CK}, for any smooth rigid-analytic variety $X$ over $C$, the affinoid spaces $\Spa(A, A^\circ)$ of Notation \ref{setse} form an basis for the analytic topology of $X$.
   \end{rem}

  The following result should be regarded as the analogue of Lemma \ref{cruc} over $B_{\dR}^+$.
  
  \begin{lemma}\label{primitive2}
   In the setting of Notation \ref{setse}, we have a filtered quasi-isomorphism
   \begin{equation}\label{coorcompl2}
    R\Gamma_{\inf}(X/B_{\dR}^+)\simeq \Omega^\bullet_{D_{\Psi, \Xi}(A)/B_{\dR}^+}:=\Kos_{D_{\Psi, \Xi}(A)}\left((\partial_{u})_{u\in \Psi},(\partial_{a})_{a\in \Xi}\right)
   \end{equation}
   where $D_{\Psi, \Xi}(A)=\varprojlim_{m\ge 1}D_{\Psi, \Xi, m}(A)$, and, for each $m\ge 1$, $D_{\Psi, \Xi, m}(A)$ is the $B_{\dR}^+/\xi^m$-algebra representing the envelope of
   \begin{equation}\label{envi}
    \Spa(A)\hookrightarrow \Spa(B_{\dR}^+/\xi^m\langle (X_u^{\pm 1})_{u\in \Psi}, (X_a)_{a\in \Xi}\rangle) \text{ over } \Spa(C)\hookrightarrow \Spa(B_{\dR}^+/\xi^m).
   \end{equation}
   Here,  $\partial_u:=\frac{\partial}{\partial\log(X_u)}=X_u\cdot\frac{\partial}{\partial X_u}$ for $a\in \Psi\cup \Xi$, and the right-hand side of (\ref{coorcompl2}) in endowed with the infinitesimal filtration, defined on the $i$-th level, for $i\ge 0$, as follows:
   \begin{equation}\label{inffil}
    \Fil^i\Omega^\bullet_{D_{\Psi, \Xi}(A)/B_{\dR}^+}:=J^{\max(i-\bullet, 0)}\Omega^\bullet_{D_{\Psi, \Xi}(A)/B_{\dR}^+}
   \end{equation}
   where $J:=\varprojlim_m J_m$ with $J_m$ the ideal corresponding to the closed immersion (\ref{envi}).
  \end{lemma}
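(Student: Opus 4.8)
The plan is to run the argument in parallel with Lemma \ref{cruc} (which treats the log-crystalline cohomology over $A_{\cris}$), using the coordinate set-up of \cite[\S 6.3]{CK}; the role played there by the log-crystalline Poincaré lemma will be played here by the corresponding statement for Guo's infinitesimal site, for which I would appeal to \cite{Guo2}. Concretely, for each integer $m\ge 1$ the surjection (\ref{bccc}) exhibits $X=\Spa(A)$ as a closed subspace of $\Spa(P_m)$, where $P_m:=B_{\dR, m}^+\langle (X_u^{\pm 1})_{u\in \Psi}, (X_a)_{a\in \Xi}\rangle$ (with $B_{\dR, m}^+=B_{\dR}^+/\xi^m$) is smooth over $B_{\dR, m}^+$; by definition $D_{\Psi, \Xi, m}(A)$ is the completion of $P_m$ along the kernel ideal $J_m$ of (\ref{envi}), i.e.\ the $B_{\dR, m}^+$-algebra representing the infinitesimal neighbourhood of this closed immersion. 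A first observation is that in $D_{\Psi, \Xi, m}(A)$ every $X_u$ (for $u\in \Psi\cup \Xi$) is invertible — this is clear for $u\in \Psi$, and for $a\in \Xi$ it holds because $a$ is a unit in $A$ and $D_{\Psi, \Xi, m}(A)$ is $J_m$-adically complete — so that, $\Omega^1_{P_m/B_{\dR, m}^+}$ being free on $\{dX_u\}_{u\in \Psi\cup\Xi}$, the relative de Rham complex $\Omega^\bullet_{D_{\Psi, \Xi, m}(A)/B_{\dR, m}^+}$ is (after rescaling $dX_u$ to $d\log X_u$) identified with $\Kos_{D_{\Psi, \Xi, m}(A)}((\partial_u)_{u\in \Psi}, (\partial_a)_{a\in \Xi})$, where $\partial_u=X_u\frac{\partial}{\partial X_u}$.

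The core input is then an infinitesimal Poincaré lemma: for any closed immersion of $X$ into an adic space formally smooth over $B_{\dR, m}^+$, the completed relative de Rham complex of the associated envelope computes $R\Gamma_{\inf}(X/B_{\dR, m}^+)$ (in the sense of Definition \ref{infccc}), functorially and independently of the embedding. Granting this, we obtain $R\Gamma_{\inf}(X/B_{\dR, m}^+)\simeq \Omega^\bullet_{D_{\Psi, \Xi, m}(A)/B_{\dR, m}^+}$. The transition maps $D_{\Psi, \Xi, m+1}(A)\twoheadrightarrow D_{\Psi, \Xi, m}(A)$ and $J_{m+1}\twoheadrightarrow J_m$ are surjective, being base changes of $B_{\dR, m+1}^+\twoheadrightarrow B_{\dR, m}^+$, so $R\varprojlim_m$ of the Koszul complexes is computed termwise; recalling that $R\Gamma_{\inf}(X/B_{\dR}^+)=R\varprojlim_m R\Gamma_{\inf}(X/B_{\dR, m}^+)$, this yields the unfiltered quasi-isomorphism (\ref{coorcompl2}), with $D_{\Psi, \Xi}(A)=\varprojlim_m D_{\Psi, \Xi, m}(A)$. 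Compatibility with the quasi-isomorphism (\ref{crucc}) of Lemma \ref{primitive}, and with the morphism (\ref{logtodR}) of Lemma \ref{cristodR}, then follows from the compatibility of the maps $A_{\inf}(R)\to D_{\Sigma, \Lambda}(R)$ of the log-crystalline picture with the reductions modulo $\xi^m$ and with the envelope constructions.

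For the filtration, the infinitesimal filtration of Definition \ref{definf} on $R\Gamma_{\inf}(X/B_{\dR, m}^+)$ is induced by the kernel ideal $\cl J_{\inf}\subset \cl O_{\inf}$, which under the local identification above goes over to the ideal $J_m\subset D_{\Psi, \Xi, m}(A)$ of the closed immersion (\ref{envi}). Since the de Rham differential sends $J_m^k$ into $J_m^{k-1}\Omega^1_{D_{\Psi, \Xi, m}(A)/B_{\dR, m}^+}$, the subcomplexes $J_m^{\max(i-\bullet, 0)}\Omega^\bullet_{D_{\Psi, \Xi, m}(A)/B_{\dR, m}^+}$ are well defined and compute $\Fil_{\inf}^i R\Gamma_{\inf}(X/B_{\dR, m}^+)$; taking $R\varprojlim_m$ as before gives the filtered statement with $J=\varprojlim_m J_m$, as in (\ref{inffil}).

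The hard part will be the infinitesimal Poincaré lemma over $B_{\dR, m}^+$ in the condensed adic setting, together with its independence of the chosen formally smooth embedding — this is what globalizes the local computation and, through the de Rham complex of Definition \ref{dRsingg}, eventually links it to the Hodge filtration — and, closely tied to it, verifying that the sheaf-theoretic filtration of Definition \ref{definf} coincides, locally and before passing to the limit over $m$, with the $J_m$-adic filtration on the de Rham complex. I would expect both points to reduce to results already available in \cite{Guo2}, but assembling them with the condensed structures and the $\varprojlim_m$ bookkeeping is where the actual work lies.
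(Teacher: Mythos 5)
Your proposal is correct and takes essentially the same route as the paper: the paper's proof is a one-line citation of \cite[Theorem 4.1.1, Theorem 7.2.3]{Guo2}, which are exactly the infinitesimal Poincar\'e lemma over $B_{\dR,m}^+$ (independence of the formally smooth embedding, computation via the envelope's de Rham complex) and its filtered refinement that you correctly isolate as the core inputs. The remaining steps you describe — the $d\log$-rescaling to a Koszul complex, invertibility of the $X_a$ in the complete envelope, and the $\varprojlim_m$ bookkeeping — are the standard unwinding of those theorems, so there is no gap; the compatibility with (\ref{crucc}) and (\ref{logtodR}) you mention is not part of this lemma and is handled later in the paper.
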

  \begin{proof}
   The statement follows from \cite[Theorem 4.1.1, Theorem 7.2.3]{Guo2}.
  \end{proof}

  Keeping the notation of the above lemma, we state the following result.
  
  \begin{lemma}\label{ainn}
   In the setting of Notation \ref{setse}, for any sufficiently large $\Psi$ and $\Xi$, we have an isomorphism in $D(\Mod^{\ssolid}_{B_{\dR}^+})$
  \begin{equation}\label{todeRham}
    \Omega^\bullet_{D_{\Psi, \Xi}(A)/B_{\dR}^+}\simeq \Omega^\bullet_{A_0/K}\solid_L B_{\dR}^+
  \end{equation}
  compatible with filtrations, where the left-hand side is endowed with the infinitesimal filtration (\ref{inffil}), and the right-hand side is endowed with the tensor product filtration.
  \end{lemma}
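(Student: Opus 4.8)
The plan is to reduce the comparison \eqref{todeRham} to an explicit computation of the envelope algebra $D_{\Psi, \Xi}(A)$, imitating the smooth-case description of the infinitesimal site and its structure sheaf given by Guo. First I would unwind the definition of $D_{\Psi, \Xi, m}(A)$ as the coordinate ring of the infinitesimal thickening of $\Spa(A)$ inside $\Spa(B_{\dR}^+/\xi^m\langle (X_u^{\pm 1})_{u\in \Psi}, (X_a)_{a\in \Xi}\rangle)$ cut out by the surjection \eqref{bccc}. Choosing $\Psi$ and $\Xi$ large enough so that, after descent to $L$ via \eqref{bccc0}, the kernel of \eqref{bccc0} is generated by a regular sequence (here one uses that $X_0$ is étale over a torus-times-semistable chart, so $A_0$ is a complete intersection over the polynomial ring in the $X_u, X_a$, cf.\ \cite[Lemma 6.3.8]{CK}), the envelope is simply the $\xi$-adic (equivalently analytic) completion of a polynomial algebra over $B_{\dR}^+$ adjoining the relevant coordinates, and the ideal $J$ is the kernel ideal, which is generated by a regular sequence. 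Concretely $D_{\Psi, \Xi}(A)$ should be the completion of $A_0 \otimes_L B_{\dR}^+\langle \text{extra variables}\rangle$ along the augmentation, so that, lifting the étale coordinates, $\Omega^\bullet_{D_{\Psi, \Xi}(A)/B_{\dR}^+}$ computes the infinitesimal cohomology and, by the Poincaré lemma for the infinitesimal site (contracting the extra variables), is filtered quasi-isomorphic to the de Rham–Koszul complex $\Kos_{A_0(R^\square\text{-coords})}(\partial)$ base-changed to $B_{\dR}^+$.

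The second step is to identify this base-changed Koszul complex with $\Omega^\bullet_{A_0/K}\solid_L B_{\dR}^+$ compatibly with filtrations. Since $X_0$ is étale over a torus-times-semistable chart over $L$, the de Rham complex $\Omega^\bullet_{A_0/K}$ is computed by the Koszul complex on the logarithmic derivations $\partial/\partial\log(T_i)$, and these extend $L$-linearly to the derivations $\partial_u, \partial_a$ on $D_{\Psi, \Xi}(A)$ used in \eqref{coorcompl2}; here one invokes the same topological input as in Lemma \ref{same} and Corollary \ref{mainfil} — namely that the extra directions $\partial_u$ for the auxiliary generators are killed after décalage / contraction, leaving only the "honest" coordinate directions. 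For the filtration compatibility, I would match the infinitesimal filtration \eqref{inffil} — which is $J^{\max(i-\bullet,0)}$ on the Koszul complex — with the tensor-product filtration $\sum_j \Fil^j \Omega^{\bullet}_{A_0/K}\otimes \Fil^{i-j} B_{\dR}^+ = \xi^{\max(i-\bullet,0)}$-filtration, using that $J$ maps onto $\xi B_{\dR}^+$ under the augmentation and that $\Omega^{\ge \bullet}_{A_0/K}$ is the Hodge/brutal filtration on the Koszul complex; this is essentially the same bookkeeping as in Corollary \ref{mainfil}, now carried out relatively over $B_{\dR}^+$ rather than absolutely. Finally I would upgrade the isomorphism from $D(\Mod_{B_{\dR}^+}^{\cond})$ to $D(\Mod_{B_{\dR}^+}^{\ssolid})$, using that $A_0$ is a Banach $L$-algebra and Clausen–Scholze's base change for the solid tensor product along $L\to B_{\dR}^+$ (as in the proof of Theorem \ref{firstep}, via Proposition \ref{solid-vs-padic} and the nuclearity of Banach algebras).

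The main obstacle I anticipate is making the "for any sufficiently large $\Psi$ and $\Xi$" clause precise and checking that the resulting isomorphism is canonical enough to later glue — that is, controlling exactly how $D_{\Psi, \Xi}(A)$ depends on the auxiliary generators and the chosen étale coordinates, and showing that enlarging $\Psi, \Xi$ (adjoining more units and functions) produces compatible quasi-isomorphisms. This is where the contraction/Poincaré-lemma argument has to be done with care: the extra coordinate variables contribute acyclic Koszul factors, but one must verify this at the level of the $B_{\dR}^+$-algebra $D_{\Psi, \Xi}(A)$ and not just its special fiber, exploiting again the intertwining-of-ideals phenomenon (Lemma \ref{intert}) and the $p$-torsion-freeness inputs to reduce the derived $\xi$-adic completions to underived ones, as was done repeatedly in \S\ref{ae1}. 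Once the local identification \eqref{todeRham} is established functorially in this sense, Proposition \ref{ultimo}, Theorem \ref{secondstep1}, and the compatibility in Theorem \ref{compatib} follow by the same globalization-over-the-$\eh$-site and colimit-over-coordinates machinery already deployed in the proof of Theorem \ref{firstep}.
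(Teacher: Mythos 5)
Your proposal is essentially the paper's argument: the key step in both is the identification, via \cite[Lemma 6.3.8]{CK}, of $D_{\Psi,\Xi}(A)$ for sufficiently large $\Psi,\Xi$ with a formal power series ring $(A_0\solid_L B_{\dR}^+)\llbracket (X_u-\widetilde u)_u\rrbracket$ in the auxiliary variables, after which the extra Koszul factors are contracted by the formal Poincar\'e lemma as in \cite[Lemma 13.13]{BMS1} (the paper organizes this contraction as a zigzag through an auxiliary ring $\cl O D_{\Psi,\Xi}(A)$, completed along $(A_0\solid_L B_{\dR}^+)\solid_{B_{\dR}^+}D_{\Psi,\Xi}(A)\to A_0$), with the filtration match read off from the power-series description of the ideal $J$. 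One small correction: the auxiliary directions are killed purely by this power-series mechanism, not by any d\'ecalage or by the pro-\'etale inputs of Lemma \ref{same} and Corollary \ref{mainfil}, which play no role in this particular lemma.
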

  \begin{proof} We consider the $B_{\dR}^+$-algebra $\cl O D_{\Psi, \Xi}(A)$ defined as the completion of 
  $$(A_0\solid_L B_{\dR}^+)\solid_{B_{\dR}^+}D_{\Psi, \Xi}(A)\to A_0$$
  along its kernel. Then, we have the following natural 
  \begin{equation}\label{zigzag0}
   D_{\Psi, \Xi}(A)\rightarrow \cl O D_{\Psi, \Xi}(A)\leftarrow A_0\solid_L B_{\dR}^+
  \end{equation}
  and, denoting by $\Omega^\bullet_{\cl O D_{\Psi, \Xi}(A)/B_{\dR}^+}$ the de Rham complex associated to $\cl O D_{\Psi, \Xi}(A)$ over $B_{\dR}^+$, we consider the natural maps of complexes
  \begin{equation}\label{zigzag}
     \Omega^\bullet_{D_{\Psi, \Xi}(A)/B_{\dR}^+}\rightarrow \Omega^\bullet_{\cl O D_{\Psi, \Xi}(A)/B_{\dR}^+}\leftarrow \Omega^\bullet_{A_0/K}\solid_L B_{\dR}^+.
   \end{equation}
  We claim that the zigzag (\ref{zigzag}) is a filtered quasi-isomorphism. As in \cite[Lemma 13.13]{BMS1}, it suffices to check that,  for sufficiently large $\Psi$ and $\Xi$, we have
  $$D_{\Psi, \Xi}(A)\cong (A_0\solid_L B_{\dR}^+)\llbracket(X_u-\widetilde u)_{u\in (\Psi\sqcup \Xi)\setminus\{T_1, \ldots, T_d\}}\rrbracket,\;\;\;\;\;\cl O D_{\Psi, \Xi}(A)\cong (A_0\solid_L B_{\dR}^+)\llbracket(X_u-\widetilde u)_{u\in \Psi\sqcup \Xi}\rrbracket.$$
  The first isomorphism follows from \cite[Lemma 6.3.8]{CK}, observing that the completed tensor product of Tate $L$-algebras, appearing in \textit{loc. cit.}, agrees with the solid tensor product $\solid_L$. The second isomorphism follows from the first one, using that the completion of $A_0\solid_L A_0\to A_0$ along its kernel is isomorphic to $A_0\llbracket(u\otimes 1-1\otimes u)_{u\in \{T_1, \ldots, T_d\}}\rrbracket$.
  \end{proof}

  We proceed by constructing \textit{in coordinates} a natural map from the log-crystalline cohomology over $A_{\cris}$ to the infinitesimal cohomology over $B_{\dR}^+$, with an eye to the compatibility between (\ref{B-cris}) and (\ref{BdR-cris}) that we want to prove.  For this, we first need to relate the setting of Notation \ref{notknock} to the one of Notation \ref{setse}.

  \begin{notation}\label{relnot}
   In the setting of Notation \ref{notknock}, we put $\Spa(A, A^\circ)=X:=\fr X_C$, and
   $$\Psi:=\textstyle \{t_\sigma\}_{\sigma\in \Sigma}\cup \bigcup_{\lambda\in \Lambda}\{t_{\lambda, r_{\lambda}+1}, \ldots, t_{\lambda, d}\}, \;\;\;\;\; \Xi:=\bigcup_{\lambda\in \Lambda}\{t_{\lambda, 1}, \ldots, t_{\lambda, r_{\lambda}}\}.$$
   These choices satisfy the assumptions of Notation \ref{setse}, therefore in the following we can retain the notation of \textit{loc. cit.} using such choices.
  \end{notation}

  \begin{lemma}\label{ccost}
   In the setting of Notation \ref{relnot}, there is a natural morphism
   \begin{equation*}\label{cristobdr}
    \Omega^\bullet_{D_{\Sigma, \Lambda}(R)/A_{\cris}}\to \Omega^\bullet_{D_{\Psi, \Xi}(A)/B_{\dR}^+}
   \end{equation*}
   which, under the isomorphisms (\ref{coorcompl1}) and (\ref{coorcompl2}), is compatible with the morphism (\ref{logtodR}) constructed in Lemma \ref{cristodR}.
  \end{lemma}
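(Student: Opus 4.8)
The plan is to realize the desired morphism of de Rham complexes as $\Kos(-)$ applied to a single natural ring homomorphism $\psi_{\Sigma,\Lambda}\colon D_{\Sigma,\Lambda}(R)\to D_{\Psi,\Xi}(A)$ intertwining the coordinate derivations, i.e. with $\psi_{\Sigma,\Lambda}\circ\partial_\sigma=\partial_{t_\sigma}\circ\psi_{\Sigma,\Lambda}$ and $\psi_{\Sigma,\Lambda}\circ\partial_{\lambda,i}=\partial_{t_{\lambda,i}}\circ\psi_{\Sigma,\Lambda}$ for $1\le i\le d$. To build $\psi_{\Sigma,\Lambda}$, I would first observe that $p$ is invertible in each $B_{\dR}^+/\xi^m$ and the envelope ideal $J$ of $D_{\Psi,\Xi}(A)$ is topologically nilpotent, so any such map factors through the $\xi$-completed base change $D_{\Sigma,\Lambda}(R)[1/p]^\wedge_\xi$. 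Using the explicit description of $D_{\Sigma,\Lambda}(R)$ recalled in Lemma \ref{cruc} (it is $p$-adically complete and is, up to the auxiliary divided-power variables, the log-PD completion of $A_{\inf,\Sigma,\Lambda}^\square\otimes_{A_{\inf}}A_{\cris}$ along the kernel of its surjection onto $R/p$), inverting $p$ collapses all divided powers to ordinary adic completions (over $\Qq$ one has $x^n/n!$ and $x^n$ generating the same topology), $\xi$-completing replaces $A_{\cris}$ by $B_{\dR}^+$, and one is left with the infinitesimal envelope of $\Spa(A)\hookrightarrow\Spa\bigl(B_{\dR}^+/\xi^m\langle X_\sigma^{\pm1},X_{\lambda,i}\ (0\le i\le d)\rangle/(\textstyle\prod_{i=0}^{r_\lambda}X_{\lambda,i}-[p^\flat]^{q_\lambda})\bigr)$ over $\Spa(C)\hookrightarrow\Spa(B_{\dR}^+/\xi^m)$. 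The relation permits the elimination $X_{\lambda,0}=[p^\flat]^{q_\lambda}\cdot(X_{t_{\lambda,1}}\cdots X_{t_{\lambda,r_\lambda}})^{-1}$, which is legitimate because $\theta([p^\flat]^{q_\lambda})=p^{q_\lambda}=t_{\lambda,0}\cdots t_{\lambda,r_\lambda}$ (so the relation reduces correctly modulo $J$) and because the images of $t_{\lambda,1},\dots,t_{\lambda,r_\lambda}\in A^\circ\cap A^\times$ are units in $D_{\Psi,\Xi}(A)$ (invertibility lifts along $J$). Thus $D_{\Sigma,\Lambda}(R)[1/p]^\wedge_\xi$ is canonically isomorphic to Guo's envelope $D_{\Psi,\Xi}(A)$ of Lemma \ref{primitive2} (compare the power-series description obtained in the proof of Lemma \ref{ainn}), and $\psi_{\Sigma,\Lambda}$ is this isomorphism composed with $D_{\Sigma,\Lambda}(R)\to D_{\Sigma,\Lambda}(R)[1/p]^\wedge_\xi$; it is manifestly natural in $\fr X$ and in the datum (\ref{mane}).

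The matching of derivations is then precisely the compatibility of the two coordinate systems chosen in Notation \ref{relnot}: the Koszul complex on the source uses $\partial_{\lambda,i}$ only for $1\le i\le d$ — the class $d\log X_{\lambda,0}$ being $-\sum_{i=1}^{r_\lambda}d\log X_{\lambda,i}$ because of the product relation — and on the target $t_{\lambda,0}=p^{q_\lambda}(t_{\lambda,1}\cdots t_{\lambda,r_\lambda})^{-1}$ is likewise not an independent coordinate, so $\psi_{\Sigma,\Lambda}$ carries one presentation into the other. Hence $\psi_{\Sigma,\Lambda}$ induces a morphism $\Omega^\bullet_{D_{\Sigma,\Lambda}(R)/A_{\cris}}\to\Omega^\bullet_{D_{\Psi,\Xi}(A)/B_{\dR}^+}$; since it moreover sends the PD-ideal of $D_{\Sigma,\Lambda}(R)$ into $J$, it is compatible with the natural envelope filtrations, a point that will be convenient in the proof of Proposition \ref{secondstep}.

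To see that this morphism agrees with the one induced by (\ref{logtodR}), I would pass to \v{C}ech--Alexander presentations on both sides. As in the proof of Corollary \ref{bgim}, $R\Gamma_{\cris}(\fr X_{\cl O_C/p}/A_{\cris}^\times)$ is computed by the \v{C}ech--Alexander complex of the self-products $D_{\Sigma,\Lambda}(R)(n)$ in the log-crystalline site, and dually $R\Gamma_{\inf}(X/B_{\dR}^+)$ by the one of the self-products $D_{\Psi,\Xi}(A)(n)$ in the infinitesimal site. The continuous functor underlying the morphism of sites $f$ of Lemma \ref{cristodR} sends the thickening supporting $D_{\Sigma,\Lambda}(R)(n)$, after inverting $p$ and $\xi$-completing, to the thickening supporting $D_{\Psi,\Xi}(A)(n)$, compatibly in $n$ and restricting to $\psi_{\Sigma,\Lambda}$ in degree $0$. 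Feeding this into the homotopy comparison between the de Rham (coordinate) presentation and the \v{C}ech--Alexander presentation — the very comparison used to establish Lemma \ref{cruc} via \cite[(1.8.1)]{Beili} and Lemma \ref{primitive2} via \cite[Theorem 7.2.3]{Guo2} — yields the asserted compatibility under the identifications (\ref{coorcompl1}) and (\ref{coorcompl2}).

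The main obstacle is the first step: making rigorous the passage from $D_{\Sigma,\Lambda}(R)$ to $D_{\Sigma,\Lambda}(R)[1/p]^\wedge_\xi$ and verifying that it produces the infinitesimal — not the log-PD — envelope, i.e. that inverting $p$ trivializes the divided-power structures while the $\xi$-adic completion reconstructs exactly Guo's $D_{\Psi,\Xi}(A)$, together with the bookkeeping of the two coordinate systems and of the unit $[p^\flat]^{q_\lambda}p^{-q_\lambda}\in 1+\xi B_{\dR}^+$. Once $\psi_{\Sigma,\Lambda}$ is in hand, the compatibility with (\ref{logtodR}) is essentially formal, given the standard functoriality of the \v{C}ech--Alexander/de Rham comparison.
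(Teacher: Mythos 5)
Your construction of the ring map is genuinely different from the paper's. The paper does not attempt to compute $D_{\Psi,\Xi}(A)$ from $D_{\Sigma,\Lambda}(R)$: following \cite[\S 6.5]{CK}, it exhibits a $p$-adically complete ring of definition $D_{\Psi,\Xi,m}(A)_0$ of Guo's envelope, equips it with the log structure $N=(h^{\gp})^{-1}(M^{\square}_{\inf,\Sigma,\Lambda})$ exactly as in Lemma \ref{tricky}, checks that the resulting surjection onto $R/p$ is an exact log-PD thickening, and then invokes the universal property of the log-PD envelope $D_{\Sigma,\Lambda}(R)/p^n$; $p$-adic completeness of the ring of definition is what lets the map pass to the completed envelope $D_{\Sigma,\Lambda}(R)$. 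This route never needs to identify the two envelopes, and it is where the monoid-level bookkeeping (your elimination of $X_{\lambda,0}$) actually lives.

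The central step of your argument has a genuine gap: the claimed isomorphism $D_{\Sigma,\Lambda}(R)[1/p]^{\wedge}_{\xi}\cong D_{\Psi,\Xi}(A)$ is false as stated. The kernel of $A^{\square}_{\inf,\Sigma,\Lambda}\otimes_{A_{\inf}}A_{\cris}/p^n\twoheadrightarrow R/p$ is generated not only by (a lift of) $\xi$ but also by the auxiliary elements $y_u=X_u-\widetilde u$ coming from the redundant coordinates, and $D_{\Sigma,\Lambda}(R)$ adjoins divided powers of all of them before $p$-adically completing. Rationally this produces, in the $y_u$-directions, a ring of restricted power series $\sum_\alpha b_\alpha y^\alpha$ with $|b_\alpha|\,|\alpha !|\to 0$, which is strictly smaller than the formal power series ring $(A_0\solid_L B_{\dR}^+)\llbracket y_u\rrbracket$ identified with $D_{\Psi,\Xi}(A)$ in Lemma \ref{ainn}; completing $\xi$-adically does nothing in the $y_u$-directions, so it cannot close this gap. (The heuristic ``over $\Qq$, $x^n/n!$ and $x^n$ generate the same topology'' is only useful if you complete along the full envelope ideal $J$, not along $\xi$ alone.) One can still salvage a natural \emph{map} — the restricted series include into the formal ones — but then your reduction ``any such map factors through $D_{\Sigma,\Lambda}(R)[1/p]^{\wedge}_{\xi}$'' presupposes the map you are constructing, and you are back to needing either the universal property with a $p$-adically complete target (the paper's route) or a corrected completion along $J$ together with a comparison with Lemma \ref{primitive2}. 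The final \v{C}ech--Alexander argument for compatibility with (\ref{logtodR}) is fine once the ring map is in hand.
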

  \begin{proof}
  We need to construct, in the setting of Notation \ref{relnot}, for each integer $m\ge 1$, a natural map
  \begin{equation}\label{des}
   D_{\Sigma, \Lambda}(R)\to D_{\Psi, \Xi, m}(A)
  \end{equation}
  compatible with the natural maps to $A$.\footnote{This is done in \cite[\S 6.5]{CK}, which we rephrase here in a slightly different way, proceeding as in the proof of Lemma \ref{tricky}.}
  By \cite[\S 6.5]{CK}, there exist a $p$-adically complete ring of definition $D_{\Psi, \Xi, m}(A)_0$ of $D_{\Psi, \Xi, m}(A)$, and a commutative diagram of log rings as follows
  \begin{equation*}\label{Q2}
  \begin{tikzcd}
   (A_{\inf, \Sigma, \Lambda}^{\square}\otimes_{A_{\inf}}A_{\cris}/p^n, M_{\inf, \Sigma, \Lambda}^{\square}) \arrow[two heads]{r}\arrow[d] &  (R/p, M_R) \arrow[-,double line with arrow={-,-}]{d} \\
  (D_{\Psi, \Xi, m}(A)_0/p^n, N)^a \arrow[two heads]{r} &  (R/p, M_R).
  \end{tikzcd}
  \end{equation*}
  Here,  $R/p$ is equipped with the pullback of the canonical log structure $M_R$ on $R$, and the ring  $A_{\inf, \Sigma, \Lambda}^{\square}\otimes_{A_{\inf}}A_{\cris}/p^n$ is endowed with the pullback of the log structure $M_{\inf, \Sigma, \Lambda}^{\square}$ on $A_{\inf, \Sigma, \Lambda}^{\square}$. Moreover, $D_{\Psi, \Xi, m}(A)_0/p^n$ is equipped with the pullback of the log structure on $D_{\Psi, \Xi, m}(A)_0$ associated to the following pre-log structure: as in Lemma \ref{tricky}, we set $N:=(h^{\gp})^{-1}(M_{\inf, \Sigma, \Lambda}^{\square})$ where $h^{\gp}$ denotes the morphism of groups associated to the natural morphism of monoids $h:M_{\inf, \Sigma, \Lambda}^{\square}\to M_R$; the argument of \cite[\S 6.5]{CK} shows that the natural map $M_{\inf, \Sigma, \Lambda}^{\square}\to D_{\Psi, \Xi, m}(A)_0$ uniquely extends to a map $N\to D_{\Psi, \Xi, m}(A)_0$. The resulting surjective map of log rings at the bottom of the diagram (\ref{Q}) is exact by construction, hence, the universal property of the log PD envelope $D_{\Sigma, \Lambda}(R)/p^n$ gives the desired natural map $D_{\Sigma, \Lambda}(R)/p^n\to D_{\Psi, \Xi, m}(A)_0/p^n$.  Since both $D_{\Psi, \Xi, m}(A)_0$ and $D_{\Sigma, \Lambda}(R)$ are $p$-adically complete (for the latter, recall \cite[Lemma 5.29]{CK}), we obtain a map (\ref{des}) as desired. 
  \end{proof}

  We can finally prove the main results of this section.
   
   \begin{proof}[Proof of Theorem \ref{secondstep1}]
    The argument of will be analogous to the one in the proof of Theorem \ref{firstep}, in order to prove the compatibility stated in Proposition \ref{secondstep}. 
    
    As both the $B_{\dR}^+$-cohomology, together with its filtration decalée, and the infinitesimal cohomology over $B_{\dR}^+$, together with its Hodge filtration, satisfy $\eh$-hyperdescent (for the infinitesimal cohomology, see Lemma \ref{ehhyp} and Definition \ref{hdg}), it suffices to prove the statement $\eh$-locally on $X$ in a functorial way. Thus, by Proposition \ref{propobv}, we can place ourselves in the setting of Notation \ref{relnot}, with sufficiently large $\Psi$ and $\Xi$.
    
    Fix $m\ge 1$. As in Lemma \ref{condstep}, we have a quasi-isomorphism 
   \begin{equation}\label{prevghost}
    R\Gamma(\fr X_C, L\eta_t R\nu_*(\Bb_{\dR}^+/\Fil^m))\simeq L\eta_{t}\Kos_{(\Bb_{\dR}^+/\Fil^m)(A^+_{\Psi, \Xi, \infty})}((\gamma_u-1)_{u\in \Psi\sqcup \Xi}).
   \end{equation}   
   Next, we claim that we have a commutative diagram as follows, whose arrows are natural quasi-isomorphisms
  \begin{equation}\label{diaprinc2}
  \begin{tikzcd}
   \Kos_{A_{\cris}(R)_{\lambda}}\left((\partial_{\lambda, i})_{1\le i\le d} \right)\dsolid_{A_{\cris}}B_{\dR}^+/\xi^m \arrow[r]\arrow[d] & \eta_{t}\Kos_{(\Bb_{\dR}^+/\Fil^m)(R_{\lambda, \infty})}((\gamma_{\lambda, i}-1)_{1\le i\le d})\arrow[d] \\
  \Kos_{D_{\Psi, \Xi, m}(A)}\left((\partial_{u})_{u\in \Psi\sqcup \Xi}\right) \arrow[r] & \eta_{t}\Kos_{(\Bb_{\dR}^+/\Fil^m)(A^+_{\Psi, \Xi, \infty})}((\gamma_u-1)_{u\in \Psi\sqcup \Xi}).
  \end{tikzcd}
  \end{equation}
  
  \begin{enumerate}[(i)]
   \item The right vertical map of (\ref{diaprinc2}) is a quasi-isomorphism since, by Lemma \ref{condstep}, both the target and the source are quasi-isomorphic to the complex  $R\Gamma(\fr X_C, L\eta_t R\nu_*(\Bb_{\dR}^+/\Fil^m))$. 
   \item\label{num:2} The top horizontal arrow of (\ref{diaprinc2}) is the quasi-isomorphism obtained combining Lemma \ref{same} with (\ref{afa}).
  \item  The left vertical map of (\ref{diaprinc2}) is induced by the one constructed in Lemma \ref{ccost}. We observe that both the target and the source of this map are derived $\xi$-adically complete.\footnote{For the latter, one can use for example the quasi-isomorphism of part \listref{num:2} combined with \cite[Proposition 3.4.4, Lemma 3.4.14]{BS}.}
  Then, by the derived Nakayama lemma, it suffices to show that such a map is a quasi-isomorphism for $m=1$. In this case, both the target and the source compute the de Rham cohomology $R\Gamma_{\dR}(\fr X_C)$. The former follows reducing the quasi-isomorphism (\ref{todeRham}) mod $\xi$. For the latter, we observe that (for $m=1$) by \cite[Proposition 5.13]{CK}, and Proposition \ref{solid-vs-padic}, the source computes $(R\Gamma_{\cris}(\fr X_{\cl O_C/p}/A_{\cris}^\times)\widehat\otimes^{\LL}_{A_{\cris}}\cl O_C)_{\Qq_p}$, and by \cite[(1.11.1) and (1.8.1)]{Beili} we have a quasi-isomorphism
  $$(R\Gamma_{\cris}(\fr X_{\cl O_C/p}/A_{\cris}^\times)\widehat\otimes^{\LL}_{A_{\cris}}\cl O_C)_{\Qq_p}\overset{\sim}{\to}R\Gamma_{\log\dR}(\fr X)_{\Qq_p}\overset{\sim}{\to}R\Gamma_{\dR}(\fr X_C).$$
  \item To construct the bottom horizontal arrow of (\ref{diaprinc2}), we first note that, similarly to (\ref{kospass}), we have a  quasi-isomorphism
    \begin{equation}\label{kospass2}
     \Kos_{D_{\Psi, \Xi, m}(A)}\left((\partial_{u})_{u\in \Psi\sqcup \Xi}\right)\overset{\sim}{\to}\eta_{t}\Kos_{D_{\Psi, \Xi, m}(A)}\left((\gamma_{u}-1)_{u\in \Psi\sqcup \Xi}\right).
    \end{equation}
    Next, we define a natural $\Gamma_{\Psi, \Xi}$-equivariant $B_{\dR}^+/\xi^m$-linear map
    \begin{equation}\label{amam}
      D_{\Psi, \Xi, m}(A)\to (\Bb_{\dR}^+/\Fil^m)(A^+_{\Psi, \Xi, \infty})
    \end{equation}
   via sending $X_u$ to $[(u, u^{1/p}, \ldots)]$, which induces a morphism
  \begin{equation}\label{simbaa}
   \Kos_{D_{\Psi, \Xi, m}(A)}\left((\gamma_{u}-1)_{u\in \Psi\sqcup \Xi}\right)\to \eta_{t}\Kos_{(\Bb_{\dR}^+/\Fil^m)(A^+_{\Psi, \Xi, \infty})}((\gamma_u-1)_{u\in \Psi\sqcup \Xi}).
  \end{equation}
  Then, we define the bottom horizontal arrow of (\ref{diaprinc2}) as  the composite of (\ref{kospass2}) and (\ref{simbaa}). The so constructed map makes the diagram (\ref{diaprinc2}) commute and, by the previous points, it is a quasi-isomorphism.
  \end{enumerate}
  
   Now, taking the filtered colimit $\varinjlim_{\Psi, \Xi}$, over all sufficiently large $\Psi$ and $\Xi$, of the constructed bottom horizontal quasi-isomorphism of (\ref{diaprinc2}), and then passing to the limit $R\varprojlim_{m}$, using  the quasi-isomorphism (\ref{prevghost}) combined with Lemma \ref{dinverse}, and recalling Lemma \ref{primitive2}, we obtain the desired quasi-isomorphism: such morphism is functorial since taking instead the filtered colimit of $\varinjlim_{\Psi}$, over all sufficiently large $\Psi$ (and $\Xi=\emptyset$), of the bottom horizontal quasi-isomorphism of (\ref{diaprinc2}), we obtain the same morphism. This finishes the proof of (\ref{BdR-cris}).
   
   For the compatibility with filtrations part, we need to use in addition the compatibility with filtration stated in Lemma \ref{primitive2}, Lemma \ref{ainn}, and Corollary \ref{mainfil}. 
   \end{proof}

   \begin{proof}[Proof of Proposition \ref{secondstep}]
   
   By the proof of Theorem \ref{secondstep1} above, we can reduce to checking the commutativity of the following diagram
  \begin{center}
  \begin{tikzcd}
  D_{\Lambda, \Sigma}(R) \arrow[d]\arrow[r]&  \Aa_{\cris}(R_{\Sigma, \Lambda, \infty}) \arrow[d] \\
   D_{\Psi, \Xi, m}(A) \arrow[r] &  (\Bb_{\dR}^+/\Fil^m)(A^+_{\Psi, \Xi, \infty}) 
  \end{tikzcd}
  \end{center}
  where the top horizontal arrow is (\ref{funn}), the left vertical arrow is (\ref{des}), the bottom horizontal arrow is (\ref{amam}), and the right vertical arrow is induced by the composition $A_{\cris}\hookrightarrow B_{\dR}^+\twoheadrightarrow B_{\dR}^+/\Fil^m$.
  In turn, we can reduce to verifying the commutativity of the following diagram
  \begin{center}
  \begin{tikzcd}
  A_{\inf, \Sigma, \Lambda}^\square \arrow[d]\arrow[r]&  \Aa_{\inf}(R_{\Sigma, \Lambda, \infty}) \arrow[d] \\
   A_{\inf}\langle (X_u^{\pm 1})_{u\in \Psi}, (X_a)_{a\in \Xi}\rangle \arrow[r] &  \Aa_{\inf}(A^+_{\Psi, \Xi, \infty}).
  \end{tikzcd}
  \end{center}
  This is clear as both the composition maps from $A_{\inf, \Sigma, \Lambda}^\square$ to $\Aa_{\inf}(A^+_{\Psi, \Xi, \infty})$ send $X_\tau$ to $[(X_{\tau}, X_{\tau}^{1/p}, \ldots)]$, for any $\tau\in \Sigma \cup \{(\lambda, i): \lambda\in \Lambda, 1\le i\le d\}$.
  \end{proof}

   \begin{proof}[Proof of Proposition \ref{ultimo}]
    We may assume $X$ qcqs. We want to show that, given $i\ge 0$, for any $\eh$-hypercover $Y_{\bullet}\to X$ of qcqs smooth rigid-analytic varieties over $C$, the natural map
   \begin{equation}\label{gyuu}
    \Fil_{\inf}^{i}R\Gamma_{\inf}(X/B_{\dR}^+)\to \lim_{[n]\in \Delta}\Fil_{\inf}^{i}R\Gamma_{\inf}(Y_n/B_{\dR}^+)
   \end{equation}
   is an isomorphism. For this, we observe that, by the proof of Theorem \ref{secondstep1} (recalling Remark \ref{almob}), for smooth rigid-analytic varieties over $C$ the infinitesimal filtration on the infinitesimal cohomology over $B_{\dR}^+$ naturally identifies with the filtration decalée on the $B_{\dR}^+$-cohomology, and the latter satisfies $\eh$-hyperdescent.
   \end{proof}

 %\begin{rem}
 % The proof above also shows that, if $X$ is the generic fiber of a qcqs semistable formal scheme $\fr X$ over $\Spf(\cl O_C)$, then, the derived $\xi$-adic completion of (\ref{B-cris}) identifies with (\ref{BdR-cris}).
 %\end{rem}

 Before proving Theorem \ref{compatib}, we state and prove a version of the latter for rigid-analytic varieties defined over $C$.
 
 \begin{theorem}\label{compatib2}
   Let $X$ be a connected, paracompact, rigid-analytic variety defined over $C$. 
   \begin{enumerate}[(i)]
    \item\label{compatib2:1}{\normalfont(Hyodo--Kato isomorphism over $B_{\dR}^+$)}
   We have a natural isomorphism in $D(\Mod^{\ssolid}_{B_{\dR}^+})$
   \begin{equation}\label{HKB2}
    R\Gamma_{\HK}(X)\dsolid_{\breve F} B_{\dR}^+\overset{\sim}{\longrightarrow} R\Gamma_{\inf}(X/B_{\dR}^+).
   \end{equation}
  \item\label{compatib2:2}{\normalfont (Compatibility)} The isomorphism (\ref{decaBdR}) of Theorem \ref{B_dR=dR} is compatible with the isomorphism (\ref{decaB}) of Theorem \ref{B=HK}, i.e. we have a commutative diagram as follows
 \begin{center}
  \begin{tikzcd}
  R\Gamma_{B}(X) \arrow[d]\arrow[r, "\sim"]  & (R\Gamma_{\HK}(X)\dsolid_{\breve F}B_{\log})^{N=0} \arrow[d] \\
    R\Gamma_{B_{\dR}^+}(X) \arrow[r, "\sim"] &  R\Gamma_{\inf}(X/B_{\dR}^+)
  \end{tikzcd}
 \end{center}
  where the left vertical map is induced by the inclusion $\Bb\hookrightarrow \Bb_{\dR}^+$, and the right vertical map is induced by (\ref{HKB2}).
   \end{enumerate}
 \end{theorem}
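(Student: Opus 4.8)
The plan is to deduce Theorem~\ref{compatib2} from the results already in place, by a reduction to the semistable reduction case and then gluing. For part~\listref{compatib2:1}, the Hyodo--Kato isomorphism over $B_{\dR}^+$, one takes the isomorphism (\ref{HKB2}) to be the composite
\begin{equation*}
 R\Gamma_{\HK}(X)\dsolid_{\breve F}B_{\dR}^+\simeq \big(R\Gamma_{\HK}(X)\dsolid_{\breve F}B_{\log}\big)^{N=0}\dsolid_B B_{\dR}^+\simeq R\Gamma_B(X)\dsolid_B B_{\dR}^+\simeq R\Gamma_{B_{\dR}^+}(X)\simeq R\Gamma_{\inf}(X/B_{\dR}^+),
\end{equation*}
where the first identification uses the nilpotence of $N$ on $R\Gamma_{\HK}(X)$ (Theorem~\ref{mainHK}\listref{mainHK:2}) via the operator $\exp(N\cdot U)$ as in the proof of Theorem~\ref{B=HK}, together with the inclusion $\Bb_{\log}\hookrightarrow\Bb_{\dR}^+$ of Remark~\ref{twistt} (sending $U\mapsto\log([p^\flat]/p)$); the second is Theorem~\ref{B=HK}; the third is Lemma~\ref{dinverse} combined with the base change $L\eta_t R\alpha_*\Bb\dsolid_B B_{\dR}^+\to L\eta_t R\alpha_*\Bb_{\dR}^+$ passed through the nuclearity machinery (using \cite[Corollary A.67(ii)]{Bosco} and that $B_{\dR}^+$ is a nuclear $B$-module); and the last is Theorem~\ref{secondstep1}.

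For part~\listref{compatib2:2}, the compatibility, I would first establish the statement when $X=\fr X_C$ is the generic fiber of $\fr X\in\cl M_{\sss,\qcqs}$. In this case, for a compact interval $I=[1,r]\subset(0,\infty)$ with rational endpoints, Proposition~\ref{secondstep} gives the commutative square relating the $B_I$-cohomology and the $B_{\dR}^+$-cohomology through the log-crystalline cohomology over $A_{\cris}$, and Theorem~\ref{bcn}\listref{bcn:2} gives the compatibility of the Hyodo--Kato isomorphism over $C$ with the one over $B_{\st}^+$ via $\theta$. Concatenating the square of Proposition~\ref{secondstep} with the square expressing (\ref{B-cris}) in terms of Hyodo--Kato via (\ref{firstI}) (which is (\ref{B-cris}) composed with Theorem~\ref{bcn}\listref{bcn:1}), and noting that the map $B_{\log,I}\hookrightarrow B_{\dR}^+$ used in Remark~\ref{st-log}/Remark~\ref{twistt} intertwines the inclusion $B_{\st}^+\subset B_{\log,I}$ with $B_{\st}^+\to B_{\dR}^+$ compatibly with the map $A_{\cris}\hookrightarrow B_{\dR}^+\twoheadrightarrow B_{\dR}^+/\Fil^m$ in Proposition~\ref{secondstep}, one obtains the desired commuting square after passing to the limit over $I$ (via Lemma~\ref{dinverse} and the nuclearity arguments of the proof of Theorem~\ref{B=HK}). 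The key bookkeeping point is that all three vertical maps (inclusion of period sheaves, and the two Hyodo--Kato-type isomorphisms) are compatible with the monodromy-trivialization $N=0$ and with $\theta$, which is exactly what Proposition~\ref{secondstep} and Theorem~\ref{bcn} encode.

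Finally, one globalizes by $\eh$-hyperdescent: choosing a simplicial object $\fr U_\bullet$ of $\cl M_{\sss,\qcqs}$ with $\fr U_{\bullet,\eta}\to X$ an $\eh$-hypercover, all four cohomology theories in the square of part~\listref{compatib2:2} satisfy $\eh$-hyperdescent (for the $B$- and $B_{\dR}^+$-cohomology by their very definition via the $\eh$-site; for the Hyodo--Kato side by Theorem~\ref{mainHK} and the fact that $\dsolid_{\breve F}$ commutes with the relevant limits, as in the proof of Theorem~\ref{B=HK}; for the infinitesimal cohomology over $B_{\dR}^+$ by Lemma~\ref{ehhyp} and Definition~\ref{hdg}), so the square for $X$ is the totalization of the squares for the $\fr U_{n,\eta}$. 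The passage to a general connected paracompact $X$ from the qcqs case is the same $\varprojlim$ argument over an exhaustion $\{U_n\}$ as at the end of the proof of Theorem~\ref{B=HK}, again using Theorem~\ref{mainHK}\listref{mainHK:2} and \cite[Corollary A.67(ii)]{Bosco}. The main obstacle I expect is purely organizational rather than conceptual: assembling the several commuting squares (Proposition~\ref{secondstep}, Theorem~\ref{bcn}\listref{bcn:1}--\listref{bcn:2}, the $\exp(N\cdot U)$ identifications, and the base-change isomorphisms for $\dsolid$) into one big diagram while tracking the non-canonical choices (the isomorphism $B_{\cris}^+[U]\cong B_{\st}^+$, the embeddings $B_{\log,I}\hookrightarrow B_{\dR}^+$) and checking that the $\mathscr{G}_K$-equivariance claims of Theorem~\ref{compatib} survive even though some intermediate identifications are not Galois-equivariant --- precisely the subtlety flagged in Footnotes~\ref{againn} and the analogous one in the proof of Theorem~\ref{B=HK}.
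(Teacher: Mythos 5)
Your treatment of part (ii) is essentially the paper's: the compatibility is obtained by concatenating Proposition \ref{secondstep} with Theorem \ref{bcn} in the semistable case and then gluing by $\eh$-hyperdescent. The problem is part (i). Your construction of (\ref{HKB2}) routes through the chain $R\Gamma_B(X)\dsolid_B B_{\dR}^+\simeq R\Gamma_{B_{\dR}^+}(X)$, but this base-change isomorphism is not available at this point: in the paper's logical order it is a \emph{consequence} of Theorem \ref{compatib2} combined with Theorems \ref{B=HK} and \ref{secondstep1} (this is how Theorem \ref{mainn1}(ii) is assembled), not an input. Your justification --- ``$B_{\dR}^+$ is a nuclear $B$-module'' plus \cite[Corollary A.67(ii)]{Bosco} --- does not do the job: that corollary lets one commute $\dsolid$ past countable limits in the \emph{first} factor, whereas here one must show that the décalage, the limit over $I$ (resp.\ over $m$), and the totalization over an $\eh$-hypercover all commute with $-\dsolid_B B_{\dR}^+$, and that the resulting local base-change maps $B_I(R)\dsolid_{B_I}B_{\dR}^+/\xi^m\to (B_{\dR}^+/\xi^m)(R)$ are isomorphisms. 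Repairing this amounts to redoing the local coordinate computations, at which point one has reconstructed the paper's argument. The paper instead builds (\ref{HKB2}) locally on a qcqs semistable model as the composite of Theorem \ref{bcn}\listref{bcn:1} (base-changed from $B_{\st}^+$ to $B_{\dR}^+$) with the map $R\Gamma_{\cris}(\fr X_{\cl O_C/p}/A_{\cris}^\times)\to R\Gamma_{\inf}(X/B_{\dR}^+)$ of Lemma \ref{cristodR}; it then observes that this map is an isomorphism modulo $\xi$ (where it becomes the Hyodo--Kato isomorphism over $C$ composed with $R\Gamma_{\inf}/\xi\simeq R\Gamma_{\dR}$) and concludes by derived Nakayama, both sides being derived $\xi$-adically complete. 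This sidesteps the base-change statement entirely.

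A secondary point you should be careful about: the identification $(R\Gamma_{\HK}(X)\dsolid_{\breve F}B_{\log})^{N=0}\dsolid_B B_{\dR}^+\simeq R\Gamma_{\HK}(X)\dsolid_{\breve F}B_{\dR}^+$ can be taken either via the $\exp(N\cdot U)$ trivialization (i.e.\ $U\mapsto 0$) or via the embedding $\Bb_{\log}\hookrightarrow\Bb_{\dR}^+$ of Remark \ref{twistt} (i.e.\ $U\mapsto\log([p^\flat]/p)$); these differ by the automorphism $\exp(-N\cdot\log([p^\flat]/p))$ of $R\Gamma_{\HK}(X)\dsolid_{\breve F}B_{\dR}^+$. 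Since the right vertical map in the square of part (ii) is induced by the inclusion $\Bb_{\log}\hookrightarrow\Bb_{\dR}^+$, you must use the latter identification; your write-up invokes both interchangeably, which would silently break the commutativity of the diagram.
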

 \begin{proof}
 For part \listref{compatib2:1}, as in the proof of Theorem \ref{mainHK}\listref{mainHK:3}, we can reduce to showing the statement locally for $X$ the generic fiber of $\fr X\in \cl M_{\sss, \qcqs}$. For this, by Theorem \ref{bcn}, which applies thanks to Remark \ref{bbcc}, we have a natural morphism in $D(\Mod_{B_{\dR}^+}^{\ssolid})$
  \begin{equation}\label{beforebn}
   R\Gamma_{\cris}(\fr X_{\cl O_C/p}^0/\cl O_{\breve F}^0)_{\Qq_p}\dsolid_{\breve F}B_{\dR}^+\overset{\sim}{\to}  R\Gamma_{\cris}(\fr X_{\cl O_C/p}/A_{\cris}^\times)\dsolid_{A_{\cris}}B_{\dR}^+\to R\Gamma_{\inf}(X/B_{\dR}^+)
  \end{equation}  
  which is an isomorphism modulo $\xi$. Here, the right arrow of (\ref{beforebn}) is the one induced by (\ref{logtodR}). Since both the source and the target of (\ref{beforebn}) are derived $\xi$-adically complete, we conclude, by the derived Nakayama lemma, that (\ref{beforebn}) is an isomorphism, as desired.
  
  Now, part \listref{compatib2:2} is clear from Proposition \ref{secondstep} and the construction of (\ref{HKB2}) in part \listref{compatib2:1}.
 \end{proof}

 We are ready to prove Theorem \ref{compatib}.
 
 \begin{proof}[Proof of Theorem \ref{compatib}] For part \listref{compatib:1}, we first observe that we have a natural isomorphism
  \begin{equation}\label{infdR}
    R\Gamma_{\inf}(X_C/B_{\dR}^+)\simeq R\Gamma_{\dR}(X)\dsolid_K B_{\dR}^+.
   \end{equation}
  For this, with an eye to the compatibility of part \listref{compatib:2}, by the same ingredients used in the proof of Theorem \ref{B_dR=dR}, we can reduce to showing (\ref{infdR}) $\eh$-locally, using Lemma \ref{primitive2} and Lemma \ref{ainn}.
  Then, part \listref{compatib:1} follows from Theorem \ref{compatib2}\listref{compatib2:1}\footnote{To avoid confusion, we warn the reader that in Theorem \ref{compatib} the rigid-analytic variety $X$ is defined over $K$, instead in Theorem \ref{compatib2} the rigid-analytic variety $X$ is defined over $C$.} combined with the isomorphism (\ref{infdR}).
  
  For part \listref{compatib:2}, by Theorem \ref{compatib2}\listref{compatib2:2}, we are reduced to showing the compatibility between (\ref{decaBdR}) and (\ref{BdR-cris}), under the isomorphism (\ref{infdR}). For this, in the setting of Notation \ref{notaz}, one readily reduces to check the commutativity of the following diagram
 \begin{equation}\label{seeinp}
  \begin{tikzcd} 
  D_{\Psi, \Xi}(A)\arrow[r]\arrow[d] &\cl O D_{\Psi, \Xi}(A) \arrow[d] & A_0\solid_K B_{\dR}^+ \arrow[l] \arrow[-,double line with arrow={-,-}]{d} \\
  \Bb_{\dR}^+(A^+_{\Psi, \Xi, \infty}) \arrow[r] & \cl O \Bb_{\dR}^+(A^+_{\Psi, \Xi, \infty}) & A_0\solid_K B_{\dR}^+ \arrow[l].
  \end{tikzcd}
  \end{equation}
  Here, the top row is the zigzag (\ref{zigzag0}) of Lemma \ref{ainn}, and the bottom row, coming from Scholze's Poincaré lemma, is constructed, in the condensed setting, in \cite[Corollary 6.12]{Bosco}. 
  
 \end{proof}

 \section{\textbf{Syntomic Fargues--Fontaine cohomology}}\label{synleit}
  \sectionmark{}

 In this section, we define a cohomology theory for rigid-analytic varieties over $C$, called \textit{syntomic Fargues--Fontaine cohomology}, which is close in spirit to Bhatt--Morrow--Scholze's syntomic cohomology theory for smooth $p$-adic formal schemes over $\cl O_C$. In a stable range, we compare it with the rational $p$-adic pro-étale cohomology. Our definition is global in nature, it doesn't require neither the existence of nice formal models, nor smoothness, and it extends to coefficients.  Moreover, it has a close relationship with the Fargues--Fontaine curve, as we show in  \S\ref{FFperf}. \medskip

 \begin{df}\label{synFF}
  Let $X$ be a rigid-analytic variety over $C$.  Let $i\ge 0$ be an integer. We define the \textit{syntomic Fargues--Fontaine cohomology of $X$ with coefficients in $\Qq_p(i)$} as the complex of $D(\Vect_{\Qq_p}^{\cond})$  given by the fiber
  $$
   R\Gamma_{\syn, \FF}(X, \Qq_p(i)):=\Fil^iR\Gamma_B(X)^{\varphi=p^i}:=\fib(\Fil^iR\Gamma_B(X)\xrightarrow{\varphi p^{-i}-1} \Fil^i R\Gamma_{B}(X))
  $$
  where $R\Gamma_{B}(X)$ is endowed with the filtration décalée from Definition \ref{beilifildef}.
 \end{df}

 \subsection{The comparison with the $p$-adic pro-\'{e}tale cohomology}
 
 The announced comparison between the syntomic Fargues--Fontaine cohomology and the $p$-adic pro-\'{e}tale cohomology will rely on the following result.
 
  \begin{lemma}  Let $X$ an analytic adic space over $\Spa(C, \cl O_C)$. Let $i\ge 0$ be an integer. We have the following exact sequence of sheaves on $X_{\pet}$
    \begin{equation}\label{suite4}
 0\to \Qq_p(i)\to \Fil^i\Bb\xrightarrow{\varphi p^{-i}-1} \Fil^i\Bb\to 0
 \end{equation}
 where  $\Fil^i\Bb=t^i\Bb$.
  \end{lemma}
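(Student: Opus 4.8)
The plan is to deduce the sequence from the case $i=0$ of \eqref{suite2} by twisting through by $t^i$. Since $t$ is a non-zero-divisor in $\Bb$ (Definition \ref{periodsheaves}), multiplication by $t^i$ defines an isomorphism of sheaves on $X_{\pet}$
\begin{equation*}
 m_i\colon \Bb \xrightarrow{\ \sim\ } \Fil^i\Bb = t^i\Bb .
\end{equation*}
Recalling that $\varphi(t)=pt$ (because $\varphi([\varepsilon])=[\varepsilon]^p$, hence $\varphi(\log[\varepsilon])=p\log[\varepsilon]$), for any local section $y$ of $\Bb$ one has
\begin{equation*}
 (\varphi p^{-i}-1)(t^i y)=p^{-i}(pt)^i\varphi(y)-t^iy=t^i\varphi(y)-t^iy=t^i\,(\varphi-1)(y),
\end{equation*}
so $m_i$ intertwines the operator $\varphi-1$ on $\Bb$ with the operator $\varphi p^{-i}-1$ on $\Fil^i\Bb$.

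Next I would apply $m_i$ to the exact sequence obtained from Proposition \ref{suites} by taking $i=0$ in \eqref{suite2}, namely $0\to \Bb^{\varphi=1}\to \Bb\xrightarrow{\varphi-1}\Bb\to 0$, which yields the exact sequence of sheaves on $X_{\pet}$
\begin{equation*}
 0\to t^i\cdot\Bb^{\varphi=1}\to \Fil^i\Bb\xrightarrow{\ \varphi p^{-i}-1\ }\Fil^i\Bb\to 0 .
\end{equation*}
It then remains to identify the kernel $t^i\cdot\Bb^{\varphi=1}$ with $\Qq_p(i)$, compatibly with the inclusion of \eqref{suite3}. For this I would first note that taking $i=0$ in \eqref{suite3}, together with $\Fil^0\Bb_{\dR}^+=\Bb_{\dR}^+$, shows that the natural map $\Qq_p\to\Bb^{\varphi=1}$ is an isomorphism; and second, that the natural inclusion $\Qq_p(i)\hookrightarrow\Bb$ — spanned, relative to the fixed compatible system of $p$-th power roots of unity, by $t^i$ — lands in $\Fil^i\Bb=t^i\Bb$ and identifies $\Qq_p(i)$ with $t^i\Qq_p=t^i\cdot\Bb^{\varphi=1}$. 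That this is exactly the morphism occurring in \eqref{suite3} follows from the elementary identity $\Bb^{\varphi=p^i}\cap t^i\Bb=t^i\cdot\Bb^{\varphi=1}$ (a section $t^iy$ of $t^i\Bb$ lies in $\Bb^{\varphi=p^i}$ iff $\varphi(y)=y$), which exhibits $\Qq_p(i)$ as the kernel of $\Bb^{\varphi=p^i}\to\Bb_{\dR}^+/\Fil^i$ restricted to $\Fil^i\Bb\cap\Bb^{\varphi=p^i}$. Combining these identifications gives the asserted sequence \eqref{suite4}.

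The argument is essentially formal once \eqref{suite2} and \eqref{suite3} are in hand; I do not expect a real obstacle, only a point to be handled with a little care, namely the final identification — checking that $t^i\cdot\Bb^{\varphi=1}$ is $\Qq_p(i)$ as a subsheaf of $\Fil^i\Bb$ and that this matches the map in \eqref{suite3} — which reduces to the two facts just displayed plus the computation of $\Bb^{\varphi=p^i}\cap t^i\Bb$.
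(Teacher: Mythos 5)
Your proposal is correct and is essentially the paper's own argument spelled out in full: the paper's proof is the one-line observation that the sequence follows by combining \eqref{suite2} and \eqref{suite3} for $i=0$ together with $\varphi(t^i)=p^it^i$, which is exactly your twist-by-$t^i$ construction (with \eqref{suite3} at $i=0$ supplying $\Qq_p\cong\Bb^{\varphi=1}$ since $\Fil^0\Bb_{\dR}^+=\Bb_{\dR}^+$). The identifications you flag as needing care are routine and handled correctly.
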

  \begin{proof}
   The statement follows from the combination of (\ref{suite2}) and (\ref{suite3}) for $i=0$, recalling that $\varphi(t^i)=p^it$.
  \end{proof}

 Now, let $X$ be a rigid-analytic variety over $C$.
 Via the exact sequence (\ref{suite4}) of sheaves on the pro-étale site $X_{\pet}$ we have a natural morphism
 \begin{equation}\label{toto}
 R\Gamma_{\syn, \FF}(X, \Qq_p(i))\to R\Gamma_{\pet}(X, \Qq_p(i)).
 \end{equation}
 where we are implicitly using $v$-descent, Proposition \ref{compv}.%\footnote{Moreover, in order to construct a natural map $\Fil^iR\Gamma_B(X)\to R\Gamma_v(X, \Fil^i \Bb)$, we need to recall that $B$ is $t$-torsion free, cf. \cite[Lemma 6.10]{BMS1}.}

 \begin{theorem}\label{BK=pet}
  Let $X$ be a rigid-analytic variety over $C$.  Let $i\ge 0$ be an integer.
  \begin{enumerate}[(i)]
   \item\label{BK=pet:1} The truncation $\tau^{\le i}$ of (\ref{toto}) is an isomorphism in $D(\Vect_{\Qq_p}^{\cond})$, i.e. we have
  $$\tau^{\le i}R\Gamma_{\syn, \FF}(X, \Qq_p(i))\overset{\sim}{\longrightarrow} \tau^{\le i}R\Gamma_{\pet}(X, \Qq_p(i)).$$
   \item\label{BK=pet:2} We have a natural isomorphism in $D(\Vect_{\Qq_p}^{\cond})$
   $$ R\Gamma_{\syn, \FF}(X, \Qq_p(i))\simeq \fib(R\Gamma_B(X)^{\varphi=p^i}\to R\Gamma_{B_{\dR}^+}(X)/\Fil^i).$$
  \end{enumerate}
 \end{theorem}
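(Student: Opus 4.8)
The plan is to deduce both statements from the fundamental exact sequences of Proposition~\ref{suites} (together with the $v$-descent of Proposition~\ref{compv}) combined with the Beilinson $t$-structure description of the filtration décalée (Proposition~\ref{bbe}). Write $N:=R\alpha_*\Bb$, so that $\Fil^\star R\Gamma_B(X)=R\Gamma_{\eh}(X,\tau^{\le 0}_{\Beil}(t^\star\otimes N))$ by the site version of Proposition~\ref{beilifil}; by Proposition~\ref{bbe}\listref{bbe:2} the $j$-th graded piece of this filtration is $R\Gamma_{\eh}(X,\tau^{\le j}R\alpha_*(\Bb/t)\{j\})$, where $\{j\}$ denotes the Tate twist by $t^j$, and since $\varphi(t)=pt$ the Frobenius acts on $\gr^j$ as $p^j\varphi_0$ with $\varphi_0$ the Frobenius on $R\alpha_*(\Bb/t)$; likewise $\gr^jR\Gamma_{B_{\dR}^+}(X)=R\Gamma_{\eh}(X,\tau^{\le j}R\alpha_*(\Bb_{\dR}^+/t)\{j\})$. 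The essential input is the product decomposition $\Bb/t\cong\prod_{n\in\Zz}\widehat{\cl O}$ indexed by the $\varphi$-orbit of $\infty$ in $Y_{\FF}$, with $\varphi_0$ the shift (this is equation~(\ref{Bprod}) from the proof of Lemma~\ref{dinverse}, using $\ord_y(t)\in\{0,1\}$), so that $\Bb/t$ is an "induced" $\varphi$-module and $\fib(p^k\varphi_0-1\colon\Bb/t\to\Bb/t)\simeq\widehat{\cl O}$ for every $k\in\Zz$ (the shift operator is surjective with kernel $\widehat{\cl O}$). Applying $R\alpha_*$ to the resulting short exact sequence and then the complementary truncation triangles gives $\fib(p^k\varphi_0-1\colon\tau^{\le j}R\alpha_*(\Bb/t)\to\cdot)\simeq\tau^{\le j}R\alpha_*\widehat{\cl O}$ and $\fib(p^k\varphi_0-1\colon\tau^{>j}R\alpha_*(\Bb/t)\to\cdot)\simeq\tau^{>j}R\alpha_*\widehat{\cl O}$; in particular these fibres carry no cohomology below the degrees of the respective truncations.

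For part~\listref{BK=pet:2}, apply the exact functor $\fib(\varphi p^{-i}-1,-)$ to the fibre sequence $\Fil^iR\Gamma_B(X)\to R\Gamma_B(X)\to R\Gamma_B(X)/\Fil^i$ to obtain a fibre sequence
\[
R\Gamma_{\syn,\FF}(X,\Qq_p(i))\longrightarrow R\Gamma_B(X)^{\varphi=p^i}\longrightarrow\big(R\Gamma_B(X)/\Fil^i\big)^{\varphi=p^i}.
\]
Since $N=R\alpha_*\Bb\in D^{\ge 0}$ by Proposition~\ref{bddd}, the quotient $R\Gamma_B(X)/\Fil^i$ carries a finite filtration with graded pieces $\gr^jR\Gamma_B(X)$ for $0\le j\le i-1$, on which $\varphi p^{-i}$ acts as $p^{j-i}\varphi_0$; the computation above then gives $(\gr^jR\Gamma_B(X))^{\varphi=p^i}\simeq R\Gamma_{\eh}(X,\tau^{\le j}R\alpha_*\widehat{\cl O}\{j\})$, which, using $\Bb_{\dR}^+/t=\widehat{\cl O}$, is exactly $\gr^jR\Gamma_{B_{\dR}^+}(X)$. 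By naturality the two finite filtrations are identified, whence $(R\Gamma_B(X)/\Fil^i)^{\varphi=p^i}\simeq R\Gamma_{B_{\dR}^+}(X)/\Fil^i$ and the fibre sequence above becomes part~\listref{BK=pet:2}. (If one also wants the map $R\Gamma_B(X)^{\varphi=p^i}\to R\Gamma_{B_{\dR}^+}(X)/\Fil^i$ to be the one induced by $\Bb\hookrightarrow\Bb_{\dR}^+$ via (\ref{suite3}), this is again a compatibility check on graded pieces.)

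For part~\listref{BK=pet:1}, use instead the connective-cover counit $\Fil^iR\Gamma_B(X)\to R\Gamma_{\eh}(X,R\alpha_*(t^i\Bb))$, whose target computes $R\Gamma_{\pet}(X,\Fil^i\Bb)$ by Proposition~\ref{compv}. Its fibre carries a finite filtration with graded pieces $R\Gamma_{\eh}(X,\tau^{>j}R\alpha_*(\Bb/t)\{j\})$ for $j\ge i$, hence lies in $D^{\ge i+1}$; applying $\fib(\varphi p^{-i}-1,-)$ and the computation for $\tau^{>j}$ above (which produces $\tau^{>j}R\alpha_*\widehat{\cl O}$, again in $D^{\ge j+1}$, so that no degree is lost in passing to the fibre of the endomorphism), the fibre of the induced map $R\Gamma_{\syn,\FF}(X,\Qq_p(i))\to\fib\big(\varphi p^{-i}-1,R\Gamma_{\pet}(X,\Fil^i\Bb)\big)$ still lies in $D^{\ge i+1}$. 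By the exact sequence (\ref{suite4}) and $v$-descent the target is $R\Gamma_{\pet}(X,\Qq_p(i))$ and the composite is precisely the map~(\ref{toto}); since its fibre is in $D^{\ge i+1}$, applying $\tau^{\le i}$ yields the isomorphism of part~\listref{BK=pet:1}. The main obstacle I anticipate is the bookkeeping around the Beilinson $t$-structure and décalage: getting the Tate twists and the factor $p^j$ on $\gr^j$ right, verifying the degree bound $D^{\ge i+1}$ for the relevant fibres (which hinges on the "no degree loss" point, i.e.\ on $p^k\varphi_0-1$ being surjective on the cohomology of $\Bb/t$), and checking that the various identifications are natural enough to respect the finite filtrations; the underlying shift-operator input is elementary once $\Bb/t$ is written as a product over the $\varphi$-orbit of $\infty$, as in the proof of Lemma~\ref{dinverse}.
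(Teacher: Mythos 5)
Your proposal is correct and follows essentially the same route as the paper: part (i) via the Beilinson $t$-structure identification $\tau^{\le i}\Fil^iL\eta_tR\alpha_*\Bb\simeq\tau^{\le i}R\alpha_*(t^i\Bb)$ combined with the sequence (\ref{suite4}), and part (ii) by reducing to the graded pieces of the finite filtration on $R\Gamma_B(X)/\Fil^i$ and using the product decomposition (\ref{Bprod}) of $\Bb/t$ to get surjectivity of $\varphi p^{-i}-1$ on $R^j\alpha_*\gr^j\Bb$ — exactly the paper's two key inputs. The only points to tighten are cosmetic: the filtration on $\tau^{>0}_{\Beil}(t^\star\otimes N)(i)$ is not finite (though the bound $D^{\ge i+1}$ holds by the very definition of $DF^{\ge 1}$), and the identification of the two filtered objects in part (ii) should be induced by an actual filtered map (as the paper does via (\ref{suite5})) rather than assembled from the graded isomorphisms, which is the compatibility check you already flag.
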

 
 \begin{proof}
 For part \listref{BK=pet:1}, recalling Definition \ref{beilifildef}, we have a natural isomorphism
  $$\tau^{\le i}\Fil^i L\eta_{t}R\alpha_*\Bb\overset{\sim}{\to} \tau^{\le i}R\alpha_*\Fil^i \Bb$$
  which, taking cohomology, induces
  $$\tau^{\le i}\Fil^i R\Gamma_B(X)\overset{\sim}{\to} \tau^{\le i}R\Gamma_v(X, \Fil^i \Bb).$$
  Then, the statement follows from the exact sequence (\ref{suite4}).
  
  For part \listref{BK=pet:2}, considering the natural map of fiber sequences
 \begin{equation}
  \begin{tikzcd}
   R\Gamma_{\syn, \FF}(X, \Qq_p(i)) \arrow[r] \arrow[d] & \Fil^iR\Gamma_B(X) \arrow[r, "\varphi p^{-i}-1"] \arrow[d] & \Fil^iR\Gamma_B(X)  \arrow[d] \\
   R\Gamma_B(X)^{\varphi=p^i} \arrow[r] & R\Gamma_B(X)\arrow[r, "\varphi p^{-i}-1"]  & R\Gamma_B(X)
  \end{tikzcd}
  \end{equation}
  we deduce that it suffices to construct a natural map
  \begin{equation}\label{indcof}
   R\Gamma_{B_{\dR}^+}(X)/\Fil^i\to \fib(R\Gamma_B(X)/\Fil^i\xrightarrow{\varphi p^{-i}-1}R\Gamma_B(X)/\Fil^i)
  \end{equation}
  and show that it is an isomorphism. Endowing the source, resp. the target, of (\ref{indcof}) with its natural (finite) filtration, induced by the filtration decalée on $R\Gamma_{B_{\dR}^+}(X)$, resp. $R\Gamma_B(X)$, we see that we can reduce to showing that, for $0\le j < i$, there is a natural map 
  $$\gr^j L\eta_tR\alpha_*\Bb_{\dR}^+\to \fib(\gr^j L\eta_tR\alpha_*\Bb\xrightarrow{\varphi p^{-i}-1}\gr^j L\eta_tR\alpha_*\Bb)$$
  that is an isomorphism, or equivalently, applying Proposition \ref{bbe}\listref{bbe:2}, that there is a natural map
  \begin{equation}\label{asdes}
   \tau^{\le j}R\alpha_*\gr^j\Bb_{\dR}^+\to  \fib(\tau^{\le j}R\alpha_*\gr^j\Bb\xrightarrow{\varphi p^{-i}-1} \tau^{\le j}R\alpha_*\gr^j\Bb)
  \end{equation}
  that is an isomorphism. For this, combining the exact sequences of sheaves (\ref{suite2}), (\ref{suite3}), and (\ref{suite4}), we have the following exact sequence of sheaves on $X_{\pet}$
  \begin{equation}\label{suite5}
  0\to \Bb_{\dR}^+/\Fil^i \Bb_{\dR}^+\to \Bb/\Fil^i\Bb \xrightarrow{\varphi p^{-i}-1} \Bb/\Fil^i\Bb\to 0
  \end{equation}
  where $\Fil^i\Bb_{\dR}^+=t^i\Bb_{\dR}^+$ and $\Fil^i\Bb=t^i\Bb$. Then, observing that (\ref{suite5}) is compatible with filtrations, we have, for $0\le j< i$, the following exact sequence of sheaves on $X_{\pet}$
  \begin{equation}\label{ongrr}
   0\to \gr^j\Bb_{\dR}^+\to \gr^j\Bb\xrightarrow{\varphi p^{-i}-1} \gr^j\Bb\to 0.
  \end{equation}
  Now, the exact sequence (\ref{ongrr}) induces a natural map (\ref{asdes}) as desired, that we want to prove to be an isomorphism. Thus, we want to show that the map $\varphi p^{-i}-1:R^j\alpha_*\gr^j\Bb\to R^j\alpha_*\gr^j \Bb$ is surjective, or equivalently that the map $R^{j+1}\alpha_*\gr^j\Bb_{\dR}^+\to R^{j+1}\alpha_*\gr^j\Bb$, induced by (\ref{ongrr}), is injective: up to twisting, it suffices to observe that, for $j=0$, the left map in (\ref{ongrr}) is given by the inclusion in the direct product (recall (\ref{Bprod}))
  $$\gr^0 \Bb_{\dR}^+\hookrightarrow \gr^0 \Bb=\prod_{y\in |Y_{\FF}|^{\ccl}}\Bb_{\dR}^+/t^{\ord_y(t)}\Bb_{\dR}^+$$
  corresponding to the classical point $V(\xi)\in |Y_{\FF}|^{\ccl}$. This concludes the proof of part \listref{BK=pet:2}.
  
 \end{proof}

 \subsection{Fargues--Fontaine cohomology and nuclear complexes on the curve}\label{FFperf}
 In this subsection, we first reinterpret the main comparison theorems proved in the previous sections in terms of the Fargues--Fontaine curve (Theorem \ref{lb}): such results were conjectured by Le Bras, and proven by him in some special cases, cf. \cite[Remark 1.2, Conjecture 6.3]{LeBras2}, and are related to work of Le Bras--Vezzani, \cite{LBV}. Then, in Theorem \ref{synlift}, we naturally attach to any qcqs rigid-analytic variety over $C$ a quasi-coherent complex on the Fargues--Fontaine curve $\FF$, whose cohomology is the syntomic Fargues--Fontaine cohomology (Definition \ref{synFF}); we conclude by showing that in the proper case such complex is perfect.
 
 \subsubsection{\normalfont{\textbf{Quasi-coherent, nuclear, and perfect complexes on $\FF$}}}
 
 We will rely on results of Clausen--Scholze, \cite{Scholzeanalytic}, \cite{CS}, and Andreychev, \cite{Andr}, to talk about quasi-coherent, nuclear, and perfect complexes on the adic Fargues--Fontaine curve $\FF$. \medskip
 
 Let us start by recalling some notations from \S \ref{andr}.
 
 \begin{notation} Given a condensed ring $R$, we denote by $\Perf_R\subset D(\Mod_R^{\cond})$ the $\infty$-subcategory of perfect complexes over $R$, \cite[Definition 5.1]{Andr}. \medskip
 
 Given a pair $(A, A^+)$ with $A$ a complete Huber ring and $A^+$ a subring of $A^\circ$, we denote by $(A, A^+)_{\solidif}$ the associated analytic ring, \cite[\S 3.3]{Andr}. We write $D((A, A^+)_{\solidif})$ for the derived $\infty$-category of $(A, A^+)_{\solidif}$-complete complexes, and $\Nuc((A, A^+)_{\solidif})$ for the $\infty$-subcategory of nuclear complexes. Note that, in the case $A^+=\Zz$, we have $D((A, \Zz)_{\solidif})=D(\Mod_{A}^{\ssolid})$ in the notation of \S\ref{convent}. \medskip
 
 Given $Y$ an analytic adic space, we denote by $\QCoh(Y)$ the $\infty$-category of quasi-coherent complexes on $Y$, we write $\Nuc(Y)$ for the $\infty$-subcategory of nuclear complexes on $Y$, and $\Perf(Y)$ for the $\infty$-category of perfect complexes on $Y$, \S \ref{andr}.
 \end{notation}

 The following construction will be used to define a lift of the $B$-cohomology theory to a cohomology theory with values in the $\infty$-category of quasi-coherent complexes on the adic Fargues--Fontaine curve $\FF=Y_{\FF}/{\varphi^{\Zz}}$, called \textit{Fargues--Fontaine cohomology}.

 \begin{construction}
  We write $$Y_{\FF}=\bigcup_{I\subset (0, \infty)}Y_{\FF, I}$$
  with $Y_{\FF, I}=\Spa(B_I, B_I^+)$ for varying $I\subset (0, \infty)$ compact intervals with rational endpoints.
  
  By \cite[Theorem 3.27]{Andr}, we have natural maps of analytic rings
  $$(B_I, \Zz)_{\solidif}\to (B_I, B_I^+)_{\solidif}$$
  for varying $I\subset (0, \infty)$ compact intervals with rational endpoints. Such maps induce base change functors
  $$-\otimes_{(B_I, \Zz)_{\solidif}}^{\LL}(B_I, B_I^+)_{\solidif}: D((B_I, \Zz)_{\solidif})\to D((B_I, B_I^+)_{\solidif})$$
  and then, by analytic descent for quasi-coherent complexes, Theorem \ref{4.1}, they induce a functor
  \begin{equation}\label{mainfun0}
   \CoAdm(\Solid_B):=\lim_{I\subset(0, \infty)}D((B_I, \Zz)_{\solidif})\longrightarrow\lim_{I\subset(0, \infty)}\QCoh(Y_{\FF, I})=\QCoh(Y_{\FF})
  \end{equation}
  with source the $\infty$-category of \textit{coadmissible solid modules over $B$},\footnote{The terminology adopted here comes from \cite[\S 3]{STJ}.} and target the $\infty$-category of quasi-coherent complexes on $Y_{\FF}$.
 \end{construction}
 
 In order to move to the study of quasi-coherent complexes on the Fargues--Fontaine curve, we need to make the following definitions.

  \begin{df}
   We define the $\infty$-category of \textit{quasi-coherent $\varphi$-complexes over $Y_{\FF}$} as the equalizer
  $$ 
  \begin{tikzcd}
   \QCoh(Y_{\FF})^\varphi:=\eq ( \QCoh(Y_{\FF}) \ar[r, shift left,"\varphi^*"] \ar[r, shift right, swap,"\id"]
   &    \QCoh(Y_{\FF}) )
 \end{tikzcd}
 $$
  that is the $\infty$-category of the pairs $(\cl E, \varphi_{\cl E})$, where $\cl E$ is a quasi-coherent complex on $Y_{\FF}$, and $\varphi_{\cl E}: \varphi^*\cl E\simeq \cl E$ is an isomorphism. 
  
  We define $\Nuc(Y_{\FF})^\varphi$ (resp. $\Perf(Y_{\FF})^\varphi$) as the full $\infty$-subcategory of $\QCoh(Y_{\FF})^\varphi$ spanned by the pairs $(\cl E, \varphi_{\cl E})$, with $\cl E$ a nuclear (resp. perfect) complex on $Y_{\FF}$.
  \end{df}
  
  \begin{df}
   We define the $\infty$-category of \textit{coadmissible solid $\varphi$-modules over $B$} as the equalizer
   $$ 
  \begin{tikzcd}
  \CoAdm(\Solid_B)^\varphi:=\eq ( \CoAdm(\Solid_B) \ar[r, shift left,"\varphi^*"] \ar[r, shift right, swap,"\id"]
   &    \CoAdm(\Solid_B) )
   \end{tikzcd}
 $$
 that is the $\infty$-category of the pairs $(\cl M, \varphi_{\cl M})$, where $\cl M$ is a coadmissible solid module over $B$, and $\varphi_{\cl M}: \varphi^*\cl M\simeq \cl M$ is an isomorphism. 
 
 We define $\CoAdm(\Nuc_B)^\varphi$ (resp. $\CoAdm(\Perf_B)^\varphi$) the $\infty$-category of \textit{coadmissible nuclear ({\normalfont resp.} perfect) $\varphi$-modules over $B$} as the full $\infty$-subcategory of $\CoAdm(\Solid_B)^\varphi$ spanned by the pairs $(\cl M, \varphi_{\cl M})$ with $\cl M$ in $\lim_{I\subset (0, \infty)}\Nuc((B_I, \Zz)_{\solidif})$ (resp. in $\lim_{I\subset (0, \infty)}\Perf_{B_I}$).
  \end{df}

 Now, recalling that the action of $\varphi$ on $Y_{\FF}$ is free and totally discontinuous, \cite[Proposition II.1.16]{FS}, it follows formally from the analytic descent for quasi-coherent complexes, Theorem \ref{4.1}, that we have an equivalence of $\infty$-categories 
 \begin{equation}\label{indie}
  \QCoh(Y_{\FF})^\varphi\simeq \QCoh(Y_{\FF}/\varphi^{\Zz}).
 \end{equation}
 Thus, from (\ref{mainfun0}) we obtain a functor 
  \begin{equation}\label{mainfun}
   \cl E_{\FF}(-):\CoAdm(\Solid_B)^{\varphi}\to \QCoh(\FF)
  \end{equation}
  with target the $\infty$-category of quasi-coherent complexes on the Fargues--Fontaine curve. \medskip
  
  Next, we focus on nuclear and perfect complexes on $\FF$. We invite the reader to compare the following result with \cite[Theorem 7.18]{FF2}.

 \begin{prop}[Nuclear complexes on the curve as coadmissible $\varphi$-modules over $B$]\label{perff}\
 \begin{enumerate}[(i)]
  \item\label{perff:1}  The functor $\cl E_{\FF}(-)$, defined in (\ref{mainfun}), induces an equivalence of $\infty$-categories
  \begin{equation}\label{mainfunnuc}
   \CoAdm(\Nuc_B)^{\varphi}\overset{\sim}{\longrightarrow} \Nuc(\FF)
  \end{equation}
  which restricts to an equivalence of $\infty$-categories
    \begin{equation}\label{mainfunperf}
   \CoAdm(\Perf_B)^{\varphi}\overset{\sim}{\longrightarrow} \Perf(\FF).
  \end{equation}
  \item\label{perff:2} Given $\cl E\in  \Nuc(\FF)$, let $((M_I(\cl E))_{I\subset (0, \infty)}, \varphi)$ be the coadmissible nuclear $\varphi$-module over $B$ corresponding to $\cl E$ via the equivalence (\ref{mainfunnuc}). Let $M(\cl E):=R\lim_{I\subset (0, \infty)}M_I(\cl E)$. Then, there is a natural identification in $D(\Mod_{\Qq_p}^{\ssolid})$
  $$R\Gamma(\FF, \cl E)=\fib(M(\cl E)\xrightarrow{\varphi-1}M(\cl E)).$$
 \end{enumerate}
 \end{prop}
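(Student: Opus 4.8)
\textbf{Proof plan for Proposition \ref{perff}.}

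The plan is to establish the equivalences in part \listref{perff:1} by analytic descent, following the strategy of Clausen--Scholze and Andreychev recalled in \S \ref{andr}, and then to deduce part \listref{perff:2} from the equivalence $\QCoh(Y_{\FF})^\varphi\simeq \QCoh(\FF)$ by unwinding the definition of cohomology on the quotient stack $Y_{\FF}/\varphi^{\Zz}$. For part \listref{perff:1}, the key point is that nuclearity and perfectness are local conditions on $\FF$ that can be tested on the affinoid cover $\{Y_{\FF, I}\}$; this is where Andreychev's analytic descent result for nuclear complexes on analytic adic spaces, \cite{Andr}, is used crucially. Concretely, first I would recall that by \cite[Theorem 3.27]{Andr} the category $\QCoh(Y_{\FF, I})$ is equivalent to $D((B_I, B_I^+)_{\solidif})$, and that the base change functor from $D((B_I, \Zz)_{\solidif})$ to $D((B_I, B_I^+)_{\solidif})$ is fully faithful on nuclear objects (since on nuclear objects the solid tensor product with the larger analytic ring is an equivalence onto its image — the two analytic ring structures on $B_I$ induce the same notion of nuclearity, as $B_I$ is a Banach $\Qq_p$-algebra); this yields $\Nuc((B_I, \Zz)_{\solidif})\simeq \Nuc(Y_{\FF, I})$, and similarly $\Perf_{B_I}\simeq \Perf(Y_{\FF, I})$. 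Taking the limit over all compact intervals $I\subset(0,\infty)$ with rational endpoints, and using analytic descent (Theorem \ref{4.1}) for the cover $Y_{\FF}=\bigcup_I Y_{\FF, I}$, I obtain $\lim_I\Nuc((B_I, \Zz)_{\solidif})\simeq \Nuc(Y_{\FF})$ and $\lim_I\Perf_{B_I}\simeq \Perf(Y_{\FF})$. Passing to $\varphi$-equivariant objects on both sides — i.e. taking the equalizer of $\varphi^*$ and $\id$ — and invoking the equivalence (\ref{indie}) $\QCoh(Y_{\FF})^\varphi\simeq \QCoh(Y_{\FF}/\varphi^{\Zz})$, which descends to nuclear and perfect complexes since $\varphi$ acts freely and totally discontinuously on $Y_{\FF}$ (\cite[Proposition II.1.16]{FS}) and $\varphi^*$ preserves these subcategories, gives the desired equivalences (\ref{mainfunnuc}) and (\ref{mainfunperf}).

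For part \listref{perff:2}, I would use the standard description of global sections on a quotient by a free $\Zz$-action: since $\FF=Y_{\FF}/\varphi^{\Zz}$, for $\cl E\in\Nuc(\FF)$ corresponding to a $\varphi$-equivariant nuclear complex $\widetilde{\cl E}$ on $Y_{\FF}$ we have
$$R\Gamma(\FF, \cl E)=R\Gamma(Y_{\FF}, \widetilde{\cl E})^{h\varphi^{\Zz}}=\fib\bigl(R\Gamma(Y_{\FF}, \widetilde{\cl E})\xrightarrow{\varphi-1}R\Gamma(Y_{\FF}, \widetilde{\cl E})\bigr),$$
the last equality being the fiber sequence computing homotopy fixed points for a $\Zz$-action. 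It then remains to identify $R\Gamma(Y_{\FF}, \widetilde{\cl E})$ with $M(\cl E)=R\lim_{I}M_I(\cl E)$. This follows from the analytic cover $Y_{\FF}=\bigcup_I Y_{\FF, I}$ together with the fact that $\{Y_{\FF, I}\}_I$ is a nested cover (so the \v{C}ech complex collapses to a derived limit), and that under the equivalence $\QCoh(Y_{\FF, I})\simeq D((B_I, B_I^+)_{\solidif})$ the global sections of $\widetilde{\cl E}|_{Y_{\FF, I}}$ are computed by $M_I(\cl E)$; here one uses that for nuclear complexes the global sections over $Y_{\FF, I}=\Spa(B_I, B_I^+)$ agree whether computed in $D((B_I, \Zz)_{\solidif})$ or $D((B_I, B_I^+)_{\solidif})$, which is again Andreychev's comparison. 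Assembling these identifications and recalling that the Frobenius on $R\Gamma(Y_{\FF}, \widetilde{\cl E})$ is the one induced by $\varphi_{\cl E}$ (hence matches the $\varphi$ on $M(\cl E)=R\lim_I M_I(\cl E)$), we get the claimed formula $R\Gamma(\FF, \cl E)=\fib(M(\cl E)\xrightarrow{\varphi-1}M(\cl E))$ in $D(\Mod_{\Qq_p}^{\ssolid})$.

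The main obstacle I anticipate is the bookkeeping in part \listref{perff:1} needed to pass from the local equivalences $\Nuc((B_I, \Zz)_{\solidif})\simeq \Nuc(Y_{\FF, I})$ to a clean statement about coadmissible modules over $B$: one must check that the transition functors $-\otimes^{\LL}_{(B_I, \Zz)_{\solidif}}(B_J, \Zz)_{\solidif}$ for $J\subset I$ are compatible with restriction of quasi-coherent sheaves along $Y_{\FF, J}\hookrightarrow Y_{\FF, I}$, and that they preserve nuclearity and perfectness (the latter uses that $B_J$ is obtained from $B_I$ by a rational localization, so $\Perf_{B_I}\to\Perf_{B_J}$ is well-defined, and that nuclear objects are stable under such base change by \cite[Theorem 3.27]{Andr} or the general formalism of \cite{Scholzeanalytic}). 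Once this compatibility is in place, the limit over $I$ and the subsequent passage to $\varphi$-equivariant objects are formal manipulations with limits and equalizers of presentable $\infty$-categories, and part \listref{perff:2} is a direct consequence.
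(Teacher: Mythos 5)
Your proposal is correct and follows essentially the same route as the paper: both hinge on the equivalence $\Nuc((B_I,\Zz)_{\solidif})\simeq\Nuc((B_I,B_I^+)_{\solidif})$ (you invoke independence of the ring of definition, i.e.\ Theorem \ref{unp}, where the paper gives a short self-contained argument via the generators $\underline{\Hom}(\Zz[S],B_I)$ and Proposition \ref{incu2}), followed by analytic descent for nuclear and perfect complexes and passage to $\varphi$-equivariant objects. Part \listref{perff:2} is likewise handled identically, computing $R\Gamma(\FF,-)$ as the fiber of $\varphi-1$ on $R\Gamma(Y_{\FF},-)=R\lim_I M_I(\cl E)$.
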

 \begin{proof}
  For part \listref{perff:1}, we first observe that, for any compact interval $I\subset (0, \infty)$ with rational endpoints, the base change functor $-\otimes_{(B_I, \Zz)_{\solidif}}^{\LL}(B_I, B_I^+)_{\solidif}: D((B_I, \Zz)_{\solidif})\to D((B_I, B_I^+)_{\solidif})$ induces an equivalence of $\infty$-categories\footnote{Such equivalence follows from Theorem \ref{unp}, however we give here a self-contained proof.}
  \begin{equation}\label{anvedii}
   \Nuc((B_I, \Zz)_{\solidif})\overset{\sim}{\longrightarrow}\Nuc((B_I, B_I^+)_{\solidif}).
  \end{equation}
  In fact, by Theorem \ref{nuclearbanach2}\listref{nuclearbanach:22}, $\Nuc((B_I, \Zz)_{\solidif})$ is generated, under shifts and colimits, by the objects $\underline{\Hom}(\Zz[S], B_I)$, for varying $S$ profinite sets; then, as, thanks to Corollary \ref{bcnuc}, nuclearity is preserved under base change, by Proposition \ref{incu2} we deduce that  such objects also generate, under shifts and colimits, $\Nuc((B_I, B_I^+)_{\solidif})$.
  Then, we have an equivalence of $\infty$-categories
  \begin{equation}\label{implfunnuc}
   \CoAdm(\Nuc_B)^{\varphi}\overset{\sim}{\longrightarrow} \Nuc(Y_{\FF})^{\varphi}.
  \end{equation}
 Now, by analytic descent for nuclear complexes, Theorem \ref{5.41}, we have 
 \begin{equation}\label{nuci}
  \Nuc(Y_{\FF})^{\varphi}\simeq \Nuc(Y_{\FF}/\varphi^{\Zz})
 \end{equation}
  which combined with (\ref{implfunnuc}) implies the equivalence (\ref{mainfunnuc}). Such equivalence restricts to (\ref{mainfunperf}) by analytic descent for perfect complexes, Theorem \ref{5.3}.
  
  Part \listref{perff:2} follows from part \listref{perff:1}, as we now explain. In fact, for any $\cl E\in  \QCoh(\FF)$, using the equivalence (\ref{indie}), we have the following identification
  \begin{align*}
   R\Gamma(\FF, \cl E)&=\Hom_{\QCoh(\FF)}(\cl O, \cl E)\\
   &=\fib(\Hom_{\QCoh(Y_{\FF})}(\cl O, \cl E|_{Y_{\FF}})\xrightarrow{\varphi-1}\Hom_{\QCoh(Y_{\FF})}(\cl O, \cl E|_{Y_{\FF}})) \\
   &=\fib(R\Gamma(Y_{\FF}, \cl E|_{Y_{\FF}})\xrightarrow{\varphi-1}R\Gamma(Y_{\FF}, \cl E|_{Y_{\FF}}))
  \end{align*}
  where
  $$R\Gamma(Y_{\FF}, \cl E|_{Y_{\FF}})=R\lim\nolimits_{I\subset (0, \infty)}\Hom_{\QCoh(Y_{\FF, I})}(\cl O, \cl E|_{Y_{\FF, I}}).$$
  Now, assuming $\cl E\in \Nuc(\FF)$ as in the statement, thanks to the equivalence (\ref{anvedii}), we have
 $$\Hom_{\QCoh(Y_{\FF, I})}(\cl O, \cl E|_{Y_{\FF, I}})=\Hom_{\Nuc(Y_{\FF, I})}(\cl O, \cl E|_{Y_{\FF, I}})=\Hom_{\Nuc((B_I, \Zz)_{\solidif})}(B_I, M_I(\cl E))=M_I(\cl E).$$
 Hence, the statement follows, observing that all the $\infty$-categories appearing above are naturally enriched over $D(\Mod_{\Qq_p}^{\ssolid})$.
 \end{proof}

 Perfect complexes on $\FF$ are well-understood. Let us recall the following characterization due to Anschütz--Le Bras.
 
 \begin{prop}[{\cite[Proposition 2.6]{AL}}]\label{perffALB}
   Each $\cl E\in \Perf(\FF)$ is quasi-isomorphic to bounded complex of vector bundles on $\FF$.
 \end{prop}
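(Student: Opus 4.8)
\textbf{Proof proposal for Proposition \ref{perffALB}.}

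The plan is to reduce the statement to the structure theory of vector bundles on $\FF$, exploiting that $\FF$ is a curve (more precisely, a one-dimensional Noetherian adic/scheme-theoretic object) so that perfect complexes admit well-behaved cohomology sheaves. First I would recall that $\Perf(\FF)$ is generated under shifts, cones and retracts by $\cl O_{\FF}$, and that — since $\FF$ is covered by the two affinoids $Y_{\FF,I}$, $I=[s,r]$, $I'=[s',r']$ with overlapping intervals, whose rings are principal ideal domains after suitable localization (this is the classical description, \cite[\S 8]{FF}, and underlies \cite[Theorem II.0.3]{FS}) — every perfect complex on $\FF$ is a bounded complex with finitely presented cohomology sheaves. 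Thus for $\cl E\in \Perf(\FF)$ the cohomology sheaves $\cl H^i(\cl E)$ are coherent $\cl O_{\FF}$-modules, vanishing outside a bounded range of degrees.

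Next I would analyze such coherent sheaves on the curve. On $\FF$, a coherent sheaf decomposes (non-canonically) as the direct sum of a vector bundle and a torsion sheaf supported at finitely many closed points; the torsion part has finite length. The key homological input is that $\FF$ has global cohomological dimension $1$ for quasi-coherent sheaves (it is a "curve": every closed point has a local ring which is a discrete valuation ring, and $\FF$ is covered by two affinoid opens), so that $\Ext^{\ge 2}$ between coherent sheaves vanishes. Using this, I would run a downward induction on the amplitude of $\cl E$: let $a$ be the top degree with $\cl H^a(\cl E)\neq 0$. Choose a surjection $\cl F^a\twoheadrightarrow \cl H^a(\cl E)$ from a vector bundle $\cl F^a$ (possible since every coherent sheaf on $\FF$ is a quotient of a vector bundle — indeed a quotient of a sum of line bundles $\cl O_{\FF}(n)$ for $n\gg 0$, by ampleness of $\cl O_{\FF}(1)$). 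Because $\Ext^2$ vanishes, the obstruction to lifting this surjection to a map of complexes $\cl F^a[-a]\to \cl E$ vanishes, so we get such a map; its fiber $\cl E'$ is again perfect, has cohomology concentrated in degrees $\le a$, and has strictly smaller cohomology in degree $a$ (the kernel of the surjection, which is a vector bundle, lands in degree $a-1$). Iterating trims the complex from the top; symmetrically one trims from the bottom. The combinatorics of this dévissage is exactly as in the proof that over a regular ring of Krull dimension $1$ every perfect complex is quasi-isomorphic to a bounded complex of finite projective modules.

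The main obstacle I anticipate is making precise the two structural facts I am invoking about $\FF$: that $\FF$ behaves like a Noetherian curve of cohomological dimension $1$ for coherent cohomology, and that every coherent sheaf is globally a quotient of a vector bundle (equivalently, of a finite sum of twists $\cl O_{\FF}(n)$). Both follow from the explicit description of $\FF$ as glued from two principal-ideal-domain affinoids together with the ampleness of $\cl O_{\FF}(1)$ (\cite[\S 8]{FF}); but in the adic/analytic setting one must be slightly careful that "coherent" is the right finiteness notion and that the gluing is compatible with the analytic-descent formalism of \S \ref{andr}. Since the statement is only cited from \cite[Proposition 2.6]{AL}, the cleanest route in the paper is simply to appeal to \emph{loc. cit.}; the sketch above indicates why it is true and what inputs it rests on.
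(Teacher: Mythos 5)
The paper does not prove this proposition: it is quoted verbatim from \cite[Proposition 2.6]{AL}, so there is no internal argument to compare yours against, and your closing remark that the cleanest route is to appeal to \emph{loc.\ cit.} is exactly what the paper does. Your sketch is the standard regular-curve dévissage that underlies the cited result, and the two non-formal inputs you isolate are the right ones: affinoid-locally the rings $B_I=\cl O(Y_{\FF,I})$ are principal ideal domains (so a perfect complex there is a bounded complex of finite free modules and has finitely generated cohomology), and $\cl O_{\FF}(1)$ is ample, so every coherent sheaf is a quotient of a finite sum of twists $\cl O_{\FF}(n)$. The cleanest way to package this — and, as far as the cited source is concerned, the way it is done — is to identify $\Perf(\FF)$ with perfect complexes on the schematic Fargues--Fontaine curve via GAGA and then invoke the classical fact (Thomason--Trobaugh/SGA~6) that on a regular Noetherian scheme of dimension $1$ with an ample line bundle every perfect complex is strictly perfect; working directly in the solid $\QCoh(\FF)$ formalism one would have to re-justify "coherence" of the cohomology sheaves by hand, which is why the detour is worth it.

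One bookkeeping slip in your induction: if $f\colon \cl F^a[-a]\to \cl E$ lifts a surjection $\cl F^a\twoheadrightarrow \cl H^a(\cl E)$, then the \emph{fiber} of $f$ does not have smaller cohomology in degree $a$ — the long exact sequence gives $\cl H^a(\fib(f))$ as an extension of $\ker(\cl F^a\to \cl H^a(\cl E))$ by $\cl H^{a-1}(\cl E)$, so iterating on the fiber accumulates rather than trims. You want the \emph{cofiber}, which kills $\cl H^a$ outright and pushes the kernel (a vector bundle, since a coherent subsheaf of a vector bundle on a regular one-dimensional space is locally free) into degree $a-1$; the induction then runs on the length of the amplitude and terminates because every coherent sheaf admits a length-one resolution by vector bundles. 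With that correction the argument is sound.
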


 In addition, perfect complexes on $\FF$ are closely related to Banach--Colmez spaces, as we now recall.
  
  \begin{df}[{\cite[Définition 2.11]{LeBras1}}]\label{defbc}
  The category $\cl{BC}$ of \textit{Banach--Colmez spaces over $C$} (for short, \textit{BC spaces}) is the smallest abelian subcategory of sheaves of $\underline{\Qq_p}$-modules on the $v$-site $\Spa(C, \cl O_C)_v$ stable under extensions and containing the $v$-sheaves $\underline{\Qq_p}$ and $(\Aa_C^{1})^\diamondsuit$.
 \end{df}
  
  In the following, we denote by
 $$\tau: \FF_v\to \Spa(C, \cl O_C)_v$$
 the natural morphism of $v$-sites, sending an affinoid perfectoid $S\in \Spa(C, \cl O_C)_v$ to the relative Fargues--Fontaine curve $\FF_S\in \FF_v$.
  \begin{prop}[{\cite[Corollary 3.10, Remark 3.12]{AL}}]\label{perffALB2} We have an equivalence of categories
  \begin{equation}\label{equivder}
   R\tau_*: \Perf(\FF)\overset{\sim}{\longrightarrow} D^b(\cl{BC})
  \end{equation}
  where the right-hand side denotes the bounded derived category of Banach--Colmez spaces over $C$. 
  \end{prop}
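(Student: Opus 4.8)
The final statement to prove is Proposition \ref{perffALB2}, which is cited from \cite[Corollary 3.10, Remark 3.12]{AL}. Since this is quoted verbatim from Anschütz--Le Bras, my plan is to sketch how their argument goes, adapted to the notation of this excerpt.

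\textbf{Approach.} The plan is to construct the functor $R\tau_*$ and show it is an equivalence by combining Proposition \ref{perffALB2}'s predecessor, Proposition \ref{perffALB} (every perfect complex on $\FF$ is a bounded complex of vector bundles), with the already-recalled classification of vector bundles on $\FF$ (every vector bundle is a direct sum of the $\cl O_{\FF}(\lambda)$, $\lambda\in \Qq$) and the relation between vector bundles and Banach--Colmez spaces encoded in Definition \ref{defbc}. The key point is that $R\tau_*$ is $t$-exact for suitable $t$-structures, sends line bundles to the generating objects of $\cl{BC}$, and is fully faithful because pushing forward along $\tau$ computes cohomology of the relative curve $\FF_S$, whose global sections (and higher cohomology) over affinoid perfectoid $S$ are controlled by the fundamental exact sequences of $p$-adic Hodge theory.

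\textbf{Key steps, in order.} First I would record that for $\cl E \in \Perf(\FF)$, the pushforward $R\tau_*\cl E$ is indeed a complex of $v$-sheaves of $\underline{\Qq_p}$-modules on $\Spa(C,\cl O_C)_v$, using that $\tau$ is the natural map of $v$-sites; by Proposition \ref{perffALB} we may work with bounded complexes of vector bundles, so by the classification of vector bundles on $\FF$ it suffices to understand $R\tau_*\cl O_{\FF}(\lambda)$ for each $\lambda \in \Qq$. Second, I would compute these: for $\lambda \geq 0$ one gets a connective Banach--Colmez space (an iterated extension of copies of $(\Aa^1_C)^\diamondsuit$ and $\underline{\Qq_p}$), for $\lambda < 0$ a shifted one, using $H^0(\FF_S, \cl O(\lambda))$ and $H^1(\FF_S, \cl O(\lambda))$ as computed from the fundamental exact sequences — this places $R\tau_*\cl O_{\FF}(\lambda)$ in $D^b(\cl{BC})$ and shows $R\tau_*$ lands in $D^b(\cl{BC})$. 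Third, for full faithfulness I would check that $R\Hom_{\QCoh(\FF)}(\cl O(\lambda),\cl O(\mu)) \to R\Hom_{D(\cl{BC})}(R\tau_*\cl O(\lambda), R\tau_*\cl O(\mu))$ is an isomorphism, reducing via twisting to computing $R\tau_*\cl O(\nu)$ as above and identifying morphisms in $\cl{BC}$ with the expected $v$-sheaf $\Hom$'s. Finally, for essential surjectivity I would argue that $D^b(\cl{BC})$ is generated under shifts, cones, and finite direct sums by $\underline{\Qq_p}$ and $(\Aa^1_C)^\diamondsuit$ (by Definition \ref{defbc}, $\cl{BC}$ is the smallest such abelian category, and every object is a finite iterated extension of the two generators), and each generator lies in the essential image since $\underline{\Qq_p} = R\tau_*\cl O_{\FF}$ and $(\Aa^1_C)^\diamondsuit \simeq R\tau_*\cl O_{\FF}(1)[\,?\,]$ up to the appropriate shift and a contribution of $\underline{\Qq_p}$; combined with full faithfulness and closure of the image under cones this gives the equivalence.

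\textbf{Main obstacle.} The hard part will be the full faithfulness, specifically controlling $R\tau_*$ on Hom-complexes: one must know that $R^i\tau_*\cl O_{\FF}(\nu) = 0$ for $i \geq 2$ and that the $v$-sheaves $R^0\tau_*$ and $R^1\tau_*$ are exactly the Banach--Colmez spaces appearing in the standard dévissage, with no hidden higher Ext's between them — this rests on the precise cohomology of vector bundles on relative Fargues--Fontaine curves $\FF_S$ for $S$ affinoid perfectoid over $\Spa(C,\cl O_C)$, which is where \cite{AL} does the real work. For the purposes of this paper, however, the statement is used as a black box, so I would simply invoke \cite[Corollary 3.10, Remark 3.12]{AL} and the recollections above, noting the compatibility with Proposition \ref{perffALB}.
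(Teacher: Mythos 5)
The paper offers no proof of this proposition at all—it is imported verbatim from Anschütz--Le Bras, which is exactly what your final paragraph does, so your treatment coincides with the paper's. One caveat should you ever expand your sketch into an actual argument: by Definition \ref{defbc} the category $\cl{BC}$ is the smallest \emph{abelian} subcategory stable under extensions containing $\underline{\Qq_p}$ and $(\Aa^1_C)^\diamondsuit$, hence is also closed under kernels and cokernels, and not every object is a finite iterated extension of the two generators (e.g. $(\Aa^1_C)^\diamondsuit/\underline{\Qq_p}$ has negative height, which additivity of Colmez's $\Dim$ rules out for any iterated extension), so the essential-surjectivity step as written would fail; the route actually taken in \cite{AL}, and the one compatible with the recollections in \S\ref{reviewbc}, goes through the $t$-structure description of $\cl{BC}$ inside $\Perf(\FF)$ recalled in Proposition \ref{lbc}.
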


  \subsubsection{\normalfont{\textbf{Fargues--Fontaine cohomology}}} We are almost ready to define the Fargues--Fontaine cohomology together with its filtration. First, in order to prove \cite[Conjecture 6.3]{LeBras2}, we want to extend the definition of the $B$-cohomology to dagger varieties over $C$. \medskip
  
  In the following, we abbreviate $D(B)=D(\Mod_B^{\cond})$.

  \begin{df}[$B$-cohomology of dagger varieties]
   Let $X$ be a dagger variety over $C$.
    Denote $$\cl F_{B}\in \Shv^{\hyp}(\Rig_{C, \eh}, D(B))$$ the hypersheaf defined by the $B$-cohomology $R\Gamma_{B}(-)$.  Via Construction \ref{consthyp}, we define
    $$R\Gamma_{B}(X):=R\Gamma(X, \cl F_{B}^\dagger)\in D(B)$$
   and we endow it with the filtration induced by the filtration décalée on the $B$-cohomology of rigid-analytic varieties over $C$.
   We give analogous definitions replacing the $B$-cohomology with the $B_I$-cohomology, for varying $I\subset (0, \infty)$ compact intervals with rational endpoints.
  \end{df}

  \begin{rem}[Comparison with the Hyodo--Kato cohomology in the dagger case]\label{HKdagg} We note that the main comparison theorems proved in \S \ref{Bleit} for the $B$-cohomology (and the $B_I$-cohomology) of rigid-analytic varieties extend to the dagger case, thanks to the properties of the solid tensor product. In particular, a version of Theorem \ref{B=HK} holds true for $X$ a qcqs dagger variety over $C$: in fact, by the proof of \textit{loc. cit.} we can reduce to the case $X$ is a smooth dagger affinoid over $C$, which follows from the statement of \textit{loc. cit.} using Lemma \ref{immHK} (which applies thanks to Proposition \ref{3.11}\listref{3.11.1}) and the fact that the solid tensor product commutes with filtered colimits.
  \end{rem}

  In order to define the Fargues--Fontaine cohomology, we will apply the functor (\ref{mainfun}) in the following situation.
  
  \begin{lemma}[Coadmissibility of $B$-cohomology]\label{ko}
   Let $X$ be a qcqs rigid-analytic/dagger variety over $C$. Let $i\ge 0$. The pair 
   \begin{equation}\label{thepair}
    \left((\Fil^i R\Gamma_{B_{I}}(X))_{I\subset(0, \infty)}, \varphi \right)
   \end{equation}
   defines a coadmissible nuclear $\varphi$-module over $B$. Here, the complexes $R\Gamma_{B_I}(X)$, for $I\subset (0, \infty)$ compact intervals with rational endpoints, are endowed with the filtration décalée (Definition \ref{beilifildef}).
  \end{lemma}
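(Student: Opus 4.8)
The statement asserts two things: first, that the levelwise filtered pieces $\Fil^i R\Gamma_{B_I}(X)$, as $I$ ranges over compact intervals with rational endpoints, assemble into an object of $\CoAdm(\Solid_B)$, i.e. they are compatible under the transition maps coming from the analytic descent datum on $Y_{\FF}=\bigcup_I Y_{\FF,I}$; second, that each $\Fil^i R\Gamma_{B_I}(X)$ is in fact \emph{nuclear} as a solid $B_I$-module. The $\varphi$-semilinear structure is already built into Definition \ref{defdr} (Remark \ref{frb}), and the compatibility of $\varphi$ with the filtration décalée is recorded there too, so the only real content is coadmissibility and nuclearity; the $\varphi$-descent then produces a quasi-coherent complex on $\FF$ via \eqref{mainfun}.

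\textbf{Step 1: reduce to the affinoid/qcqs-local case.} Since $X$ is qcqs, choose a finite covering by affinoid (dagger) subspaces and use that $R\Gamma_{B_I}(-)$, together with its filtration décalée, satisfies $\eh$-hyperdescent (and hence Čech descent for a finite affinoid cover), and that $\dsolid_{B_I}$ and finite limits interact well; in the dagger case one additionally uses Remark \ref{HKdagg}. Thus it suffices to treat $X$ a smooth affinoid over $C$ (after a further $\eh$-hypercover by smooth pieces, via Proposition \ref{baseh} and Remark \ref{oyt}, the general singular case reduces to the smooth one, the relevant finite limits preserving both coadmissibility and nuclearity). For $X$ smooth affinoid, Proposition \ref{3.11}\listref{3.11.2} gives a \emph{filtered} quasi-isomorphism $L\eta_t R\Gamma_v(X,\Bb_I)\simeq R\Gamma_{B_I}(X)$, so we may work with $L\eta_t R\Gamma_v(X,\Bb_I)$ and, after taking $\Fil^i$, with a bounded complex built out of the $\Bb_I$-cohomology.

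\textbf{Step 2: coadmissibility.} Here I would invoke Lemma \ref{dinverse}: the natural map $L\eta_t R\alpha_*\Bb\to R\varprojlim_I L\eta_t R\alpha_*\Bb_I$ is a filtered isomorphism, and the proof of that lemma already establishes that $R\alpha_*\Bb \to R\varprojlim_I R\alpha_*\Bb_I$ is an isomorphism with uniformly bounded terms (using Proposition \ref{bddd}). Restricting $R\Gamma_{B}(X)$ along the open immersions $Y_{\FF,I}\hookrightarrow Y_{\FF,I'}$ produces exactly the transition maps defining a compatible system, and the identification $\Bb_{I'}(Z)\solid_{B_{I'}}B_I \cong \Bb_I(Z)$ on affinoid perfectoid $Z$ (following from Remark \ref{remess}, or directly from $B_{I'}\to B_I$ being a localization of analytic rings) shows the system is the pullback of a single object under the covering $\{Y_{\FF,I}\}_I$ of $Y_{\FF}$. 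Base change of $L\eta_t$ is the subtle point, but since after truncating/grading everything reduces to the $\widehat{\cl O}$-pieces (via Proposition \ref{bbe}\listref{bbe:2} and the product decomposition \eqref{BIprod}), the compatibility is inherited from that of $\Bb_I$ itself.

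\textbf{Step 3: nuclearity, and the main obstacle.} This is the crux. I would argue that each $R\Gamma_v(X,\Bb_I)$ is represented by a complex of $B_I$-Banach spaces, hence of nuclear $B_I$-modules by \cite[Corollary A.61]{Bosco}; indeed, in the smooth affinoid case the Cartan--Leray/Koszul computation of Lemma \ref{condstep} expresses $R\Gamma_v(X,\Bb_I)$ as a finite Koszul complex $\Kos_{\Bb_I(R_\infty)}(\gamma_1-1,\dots,\gamma_d-1)$ over the Banach $B_I$-algebra $\Bb_I(R_\infty)$, which is nuclear over $B_I$ (it is a Banach $B_I$-module, being the $p$-adic completion-then-invert-$p$ of a free $\Aa_{\inf}(R_\infty)$-module base-changed via Remark \ref{remess}). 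The décalage functor $L\eta_t$ and the truncation $\Fil^i(-)$ preserve nuclearity because nuclear $B_I$-modules are closed under shifts, finite limits, and colimits (\cite[Theorem A.43(i)]{Bosco}), and $L\eta_t$ of a bounded complex of nuclear modules is again computed by a bounded complex of subquotients of nuclear modules—here one must be slightly careful, since $L\eta_t$ involves a $t$-adic subcomplex of $M[1/t]$, and nuclearity is not obviously closed under arbitrary submodules; the resolution is to note that on cohomology, by Proposition \ref{nondec} and its proof, $H^j(\eta_t M^\bullet)$ is a quotient of $H^j(M^\bullet)$ by a torsion submodule, and for the \emph{complex} one uses the filtration décalée description $\Fil^\star L\eta_t M = \tau^{\le 0}_{\Beil}(t^\star\otimes M)$ (Proposition \ref{beilifil}), so that each graded piece $\gr^j$ is $\tau^{\le j}$ of a nuclear complex, which is nuclear; assembling finitely many graded pieces keeps nuclearity. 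The main obstacle is therefore precisely this: controlling that $L\eta_t$ (equivalently the Beilinson connective cover) does not destroy nuclearity, which I expect to handle via the graded-pieces argument together with the boundedness from Proposition \ref{bddd} ensuring only finitely many graded pieces are nonzero. Once this is done, the pair \eqref{thepair} lands in $\lim_I \Nuc((B_I,\Zz)_{\solidif})$ with its $\varphi$-structure, i.e. defines a coadmissible nuclear $\varphi$-module over $B$, as claimed.
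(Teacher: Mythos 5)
Your Step 3 (nuclearity) is essentially the paper's own ``direct'' argument: reduce to a smooth affinoid over $C$ via $\eh$-hyperdescent, use Proposition \ref{3.11} to replace $R\Gamma_{B_I}(X)$ by $L\eta_t R\Gamma_v(X,\Bb_I)$, control $L\eta_t$ and the filtered pieces through cohomology groups resp.\ graded pieces (the paper phrases this via Theorem \ref{nuclearbanach2}\listref{nuclearbanach:32} and \cite[Lemma 6.4]{BMS1} rather than your Beilinson-truncation formulation, but the mechanism is the same), and reduce to the nuclearity of $R\Gamma_{\pet}(X,\Bb_I)$, which is Lemma \ref{nucb}. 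The induction on $i$ through graded pieces and Proposition \ref{bbe}\listref{bbe:2} also matches the paper.

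The gap is in Step 2. The paper does not prove coadmissibility by hand: the base change isomorphism $R\Gamma_{B_J}(X)\dsolid_{B_J}B_I\simeq R\Gamma_{B_I}(X)$ is deduced from the $B_I$-version of Theorem \ref{B=HK} (established in its proof, together with Remark \ref{HKdagg} in the dagger case), where the only dependence on the interval sits in the tensor factor $B_{\log,I}$, so base change is immediate from associativity of $\dsolid$ and the fact that $(-)^{N=0}$ is a finite limit. Your route tries to extract the compatibility from sheaf-level identities, and as written it does not close: (a) Lemma \ref{dinverse} is about reconstructing $R\Gamma_B$ as $R\varprojlim_I R\Gamma_{B_I}$, not about the transition maps $R\Gamma_{B_J}\dsolid_{B_J}B_I\to R\Gamma_{B_I}$, so it is not the right tool; (b) the identity $\Bb_J(Z)\solid_{B_J}B_I\cong\Bb_I(Z)$ is only established in the paper (Remark \ref{remess}) for the specific toric covers $R_\infty$, not for arbitrary affinoid perfectoid $Z$; and, most seriously, (c) even granting it, you must commute $-\dsolid_{B_J}B_I$ past $R\Gamma_v(X,-)$, past $L\eta_t$, and --- in your reduction of the general qcqs case to the smooth affinoid case --- past the totalization over an $\eh$-hypercover. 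That totalization is an infinite limit and does not commute with the solid tensor product in general; it does here only thanks to nuclearity plus uniform boundedness (this is exactly the role played by \cite[Corollary A.67(ii)]{Bosco} throughout the paper). Since you run coadmissibility (Step 2) before nuclearity (Step 3), the argument is circular as ordered. Either reorder and feed the nuclearity back into the descent step, or, much more economically, quote Theorem \ref{B=HK} as the paper does.
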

  \begin{proof}
   First, we recall that $\varphi(t)=pt$, and $p$ is invertible in $B_I$. Next, let us assume $i=0$. To show that the pair (\ref{thepair}) is a coadmissible solid $\varphi$-module over $B$, we need to check that 
   \begin{equation}\label{120}
    R\Gamma_{B_J}(X)\dsolid_{B_{J}}B_I\simeq R\Gamma_{B_I}(X)
   \end{equation}
   for any $I\subset J\subset (0, \infty)$ compact intervals with rational endpoints: this follows for example from Theorem \ref{B=HK} (and Remark \ref{HKdagg}).\footnote{Trivializing the monodromy action.} 
   
   In order to prove the nuclearity of (\ref{thepair}) for $i=0$, we can use again Theorem \ref{B=HK}, Theorem \ref{mainHK}\listref{mainHK:2}, and the fact that nuclearity is preserved under base change, Corollary \ref{bcnuc}. Alternatively, and more directly, the nuclearity can be checked as follows. By Theorem \ref{nuclearbanach2}\listref{nuclearbanach:12} and $\eh$-hyperdescent, we can reduce to the case when $X$ is a smooth affinoid rigid space over $C$. Then, by Proposition \ref{3.11}, and the fact that nuclearity can be checked on cohomology groups thanks to Theorem \ref{nuclearbanach2}\listref{nuclearbanach:32}, using \cite[Lemma 6.4]{BMS1} we can reduce to the assertion that each complex of solid $B_I$-modules $R\Gamma_{\pet}(X, \Bb_I)$ is nuclear, which was shown in Lemma \ref{nucb}.
   
   It remains to show that the isomorphism (\ref{120}) is compatible with the filtration décalée, and to prove the nuclearity of the filtered pieces. Proceeding by induction on $i\ge 0$, we can check this on graded pieces using Proposition \ref{bbe}\listref{bbe:2}, again with the help of Theorem \ref{nuclearbanach2}, Proposition \ref{3.11}, and Lemma \ref{nucb} for the nuclearity.
  \end{proof}

  The following lemma on the nuclearity of the cohomology of the rational pro-étale period sheaves was used above.
  
 % \begin{notation}
  % Following \S \ref{nuccomp} (see Notation \ref{inpartnot}), for $A$ a solid $\Qq_p$-algebra, we denote by $\Nuc_A:=\Nuc((A, \Zz)_{\solidif}))$ the full $\infty$-subcategory of nuclear complexes of $D(\Mod_A^{\ssolid})=D((A, \Zz)_{\solidif}))$.
 % \end{notation}

  \begin{lemma}\label{nucb} Let $X$ be a qcqs analytic adic space over $\Spa(C, \cl O_C)$.  Let $I\subset (0, \infty)$ be a compact interval with rational endpoints, and let $m\ge 1$ be an integer. Given $\mathbf{B}\in \{\Bb_{I}, \Bb_{\dR}^+/\Fil^m\}$
   we write $\mathscr{B}=\mathbf{B}_{\Spa(C)_{\pet}}$ for the corresponding condensed period ring. Then, we have $$R\Gamma_{\pet}(X, \mathbf{B})\in \Nuc((\mathscr{B}, \Zz)_{\solidif}).$$
  \end{lemma}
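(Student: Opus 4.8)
The plan is to reduce to the affinoid perfectoid case and then exploit the structure theory of nuclear complexes. First I would recall, as in the proof of Lemma \ref{ko}, that nuclearity is a property that can be checked on cohomology groups (Theorem \ref{nuclearbanach2}\listref{nuclearbanach:32}) and that the $\infty$-category of nuclear complexes is closed under shifts, colimits, and — crucially here — is local for the analytic/$v$-topology (Theorem \ref{5.41}, Corollary \ref{bcnuc}). Since $X$ is qcqs, it has a finite covering by affinoid rigid spaces, so by the hypercohomology (Čech-to-derived) spectral sequence and the fact that nuclearity can be checked termwise on cohomology, one reduces to the case $X=\Spa(A,A^\circ)$ an affinoid rigid space over $C$.

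For an affinoid $X$, I would invoke the Noether normalization argument already used in the proof of Proposition \ref{bddd}: there is a finite morphism $f\colon X\to \Dd_C^d$, and by \cite[Lemma 6.10.2]{Zavyalov1} the diamond $X^\diamondsuit$ admits, for the $v$-topology, a $\Zz_p(1)^d\cong\Zz_p^d$-torsor $\widetilde X^\diamondsuit\to X^\diamondsuit$ with $\widetilde X^\diamondsuit$ representable by an affinoid perfectoid space $Z$. The Cartan--Leray spectral sequence (together with Proposition \ref{compv} and \cite[Proposition 4.12]{Bosco}) then gives a natural identification
\begin{equation*}
R\Gamma_{\pet}(X, \mathbf{B})\simeq R\Gamma_{\cond}(\Zz_p^d, \mathbf{B}(Z))\simeq \Kos_{\mathbf{B}(Z)}(\gamma_1-1,\ldots,\gamma_d-1)
\end{equation*}
by \cite[Proposition B.3]{Bosco}. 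Since a Koszul complex on $d$ generators is a finite (co)limit of copies of the module, and $\Nuc((\mathscr{B},\Zz)_{\solidif})$ is closed under finite limits and colimits, it suffices to show that $\mathbf{B}(Z)$, regarded as a $\mathscr{B}$-module, is a nuclear object of $D((\mathscr{B},\Zz)_{\solidif})$ — equivalently (Theorem \ref{nuclearbanach2}) that it is a nuclear $\mathscr{B}$-module in the sense of Clausen--Scholze.

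The remaining point, which I expect to be the main obstacle, is precisely this nuclearity of the period ring evaluated on an affinoid perfectoid: $\Bb_I(Z)$ as a $B_I$-module, and $(\Bb_{\dR}^+/\Fil^m)(Z)$ as a $B_{\dR}^+/\Fil^m$-module. For the $\Bb_{\dR}^+/\xi$ case this is $\widehat{\cl O}(Z)=\cl O_{Z^\sharp}(Z^\sharp)$ as a $C$-Banach space, which is nuclear by \cite[Corollary A.61]{Bosco} since it is a Banach space over $C$ (more precisely, a direct summand of an orthonormalizable one, $Z^\sharp$ being affinoid perfectoid); the general $\Bb_{\dR}^+/\Fil^m$ case follows by dévissage along the $\xi$-adic filtration, whose graded pieces are twists of $\widehat{\cl O}(Z)$, using that nuclear complexes are closed under finite limits. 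For $\Bb_I(Z)$ one uses Remark \ref{form2}/Remark \ref{remess}: $\Aa_I(Z)$ is the $p$-adic completion of $\Aa_{\inf}(Z)\otimes_{A_{\inf}}A_{\inf,I}$, and $\Aa_{\inf}(Z)=W(\cl O_{Z^\flat}^+(Z^\flat))$ with $\cl O_{Z^\flat}^+(Z^\flat)$ a ring of integral elements in a perfectoid Tate ring over $\cl O_{C^\flat}$; hence $\Bb_I(Z)$ is a Banach $B_I$-algebra which is (a summand of) orthonormalizable, so again nuclear by \cite[Corollary A.61]{Bosco}. Packaging these observations — identifying $\mathbf{B}(Z)$ as a Banach module over the appropriate condensed period ring, checking it is orthonormalizable up to a direct summand, and citing the stability of $\Nuc$ under the operations used above — completes the argument; the only delicate part is keeping track of the module structures and the completions so that \cite[Corollary A.61]{Bosco} (Banach implies nuclear) applies verbatim.
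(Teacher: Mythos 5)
Your overall shape (localize, then prove nuclearity of the period ring on an affinoid perfectoid) is right, but there are two problems. First, the reduction: the lemma is stated for an arbitrary qcqs \emph{analytic adic space} over $\Spa(C,\cl O_C)$, not just a rigid-analytic variety, so you cannot cover $X$ by affinoid \emph{rigid} spaces and run Noether normalization. The paper instead picks a simplicial pro-\'etale hypercover $U_\bullet\to X$ by affinoid perfectoid spaces and uses pro-\'etale hyperdescent together with the closure of $\Nuc$ under finite limits and countable products (Theorem \ref{nuclearbanach2}\listref{nuclearbanach:12}, writing the totalization as $\varprojlim_m \lim_{\Delta_{\le m}}$) to reduce directly to $U$ affinoid perfectoid, where $R\Gamma_{\pet}(U,\mathbf{B})=\mathbf{B}(U)[0]$ sits in degree $0$. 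Your Cartan--Leray/Koszul detour is unnecessary and only works in the rigid case.

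Second, and more seriously, the step you yourself flag as ``the only delicate part'' is a genuine gap: you want to apply ``Banach implies nuclear'' to $\Bb_I(Z)$ \emph{as a $B_I$-module}, claiming it is a summand of an orthonormalizable $B_I$-Banach module. That statement is not justified and the cited result does not apply verbatim over the Banach ring $B_I$ (it is a statement about Banach spaces over a nonarchimedean field). The correct way out is Remark \ref{A=F_nuc}: since $\mathscr{B}$ is a nuclear solid $\Qq_p$-algebra, an object of $\Solid_{\mathscr{B}}$ is nuclear over $(\mathscr{B},\Zz)_{\solidif}$ if and only if it is nuclear over $(\Qq_p,\Zz)_{\solidif}$. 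One is then reduced to observing that $\mathbf{B}(U)$ is a $\Qq_p$-Banach space, hence nuclear over $\Qq_p$; no orthonormalizability over $\mathscr{B}$, and no d\'evissage along the $\xi$-adic filtration or the Witt-vector description of $\Aa_I$, is needed.
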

  \begin{proof}
   Picking a simplicial $\pet$-hypercover $U_\bullet\to X$ such that all $U_n$ are affinoid perfectoid spaces over $\Spa(C, \cl O_C)$, by $\pet$-hyperdescent and Theorem \ref{nuclearbanach2}\listref{nuclearbanach:12} (applied to the Banach $\Qq_p$-algebra $\mathscr{B}$), it suffices to check that for any affinoid perfectoid space $U$ over $\Spa(C, \cl O_C)$ we have that $R\Gamma_{\pet}(U, \mathbf{B})\in \Nuc((\mathscr{B}, \Zz)_{\solidif})$. We note that, By \cite[Proposition 4.7]{Bosco} and \cite[Corollary 4.9]{Bosco}, we have
  $R\Gamma_{\pet}(U,\mathbf{B})=\mathbf{B}(U)[0]$ concentrated in degree 0 (as it can be checked on on $S$-valued points, for any $\kappa$-small extremally disconnected set $S$). Thus, by Remark \ref{A=F_nuc}, we are reduced to prove that $\mathbf{B}(U)[0]\in \Nuc((\Qq_p, \Zz)_{\solidif})$, which follows \cite[Corollary A.50]{Bosco} observing that $\mathbf{B}(U)$ is a $\Qq_p$-Banach space.
 \end{proof}

  Finally, we can give the following definition.
  
  \begin{df}\label{drff}
  Let $X$ be a qcqs rigid-analytic/dagger variety over $C$. We define the \textit{Fargues--Fontaine cohomology} of $X$, denoted $$\cl H_{\FF}(X)\in \QCoh(\FF)$$ as the quasi-coherent complex on the Fargues--Fontaine curve $\FF$, endowed with filtration $\Fil^\star \cl H_{\FF}(X)$,  associated, via the functor $\cl E_{\FF}(-)$ defined in (\ref{mainfun}), to the filtered coadmissible solid $\varphi$-module over $B$ given by (\ref{thepair}). For $i\in \Zz$, we denote by $\cl H^i_{\FF}(X)$ its $i$-th cohomology group.
  \end{df}

 %\begin{rem}
 % Conjecturally the Fargues--Fontaine cohomology of a qcqs smooth dagger variety over $C$ (Definition \ref{drff}) agrees with the de Rham-Fargues--Fontaine defined by Le Bras-Vezzani, \cite{LBV}. However, in \textit{op. cit.} a filtration on such cohomology theory is not defined.
 %\end{rem}

 Using Definition \ref{drff} we can now reformulate, in terms of the Fargues--Fontaine curve, the comparison theorems proven in the previous sections.
 
 \begin{theorem}[cf. {\cite[Conjecture 6.3]{LeBras2}}]\label{lb}
  Let $X$ be a qcqs dagger variety over $C$. Let $i\ge 0$. 
  \begin{enumerate}[(i)] 
   \item\label{lb:1} The quasi-coherent complex $\cl H_{\FF}(X)$ on $\FF$ is perfect, and its cohomology groups are vector bundles on $\FF$. We have a natural isomorphism
  \begin{equation}\label{fff}
     \cl H_{\FF}^i(X)\cong \cl E(H_{\HK}^i(X))
  \end{equation}
  where $\cl E(H_{\HK}^i(X))$ is the vector bundle on $\FF$ associated to the finite $(\varphi, N)$-module $H_{\HK}^i(X)$ over $\breve F$.  If $X$ is the base change to $C$ of a rigid-analytic variety defined over $K$, then (\ref{fff}) is $\mathscr{G}_K$-equivariant.
  \item\label{lb:2}  The completion at $\infty$ of (\ref{fff}) gives a natural isomorphism
  \begin{equation}\label{fffinf}
    \cl H_{\FF}^i(X)^\wedge_{\infty}\cong H_{\inf}^i(X/B_{\dR}^+)
  \end{equation}
  where $R\Gamma_{\inf}(X/B_{\dR}^+)$ is defined, via Construction \ref{consthyp}, from Definition \ref{infccc}.
  
  %If $X$ is the base change to $C$ a dagger variety $X_0$ defined over $K$, (\ref{fffinf}) is isomorphic to
  % $$H_{\dR}^i(X_0)\otimes_K B_{\dR}^+.$$
  \end{enumerate}
 \end{theorem}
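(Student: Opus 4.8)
\textbf{Proof of Theorem \ref{lb}.}
The plan is to combine the comparison results of Section \ref{Bleit} with the reinterpretation of nuclear/perfect complexes on $\FF$ developed in Proposition \ref{perff}, together with the finiteness result of Theorem \ref{slopp}. First I would recall from Remark \ref{HKdagg} that the comparison isomorphism of Theorem \ref{B=HK} extends to any qcqs dagger variety $X$ over $C$; more precisely, at finite level, for a compact interval $I\subset[1/(p-1),\infty)$ the proof of Theorem \ref{B=HK} (via Theorem \ref{firstep} and Theorem \ref{bcn}\listref{bcn:1}) gives a $\varphi$-equivariant isomorphism
$$R\Gamma_{B_I}(X)\simeq (R\Gamma_{\HK}(X)\dsolid_{\breve F}B_{\log,I})^{N=0},$$
and twisting by Frobenius this extends to all compact intervals $I\subset(0,\infty)$ with rational endpoints, compatibly in $I$. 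Since by Theorem \ref{slopp}\listref{slopp:1} each $H^i_{\HK}(X)$ is a finite-dimensional condensed $\breve F$-vector space, $R\Gamma_{\HK}(X)$ is a perfect complex of $\breve F$-vector spaces; trivializing the monodromy via the operator $\exp(N\cdot U)$ as in the proof of Theorem \ref{B=HK}, the system $((\Fil^i R\Gamma_{B_I}(X))_I,\varphi)$ of Lemma \ref{ko} becomes, after forgetting filtrations, the base change along $\breve F\to B_I$ of the perfect $\varphi$-complex $R\Gamma_{\HK}(X)$. Hence the filtered coadmissible $\varphi$-module over $B$ defining $\cl H_{\FF}(X)$ lies in $\CoAdm(\Perf_B)^\varphi$, so by Proposition \ref{perff}\listref{perff:1} (the equivalence (\ref{mainfunperf})) the quasi-coherent complex $\cl H_{\FF}(X)$ is perfect on $\FF$, and by Proposition \ref{perffALB} it is represented by a bounded complex of vector bundles.

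Next I would identify $\cl H_{\FF}(X)$ with $\cl E(R\Gamma_{\HK}(X))$, the image of the perfect $\varphi$-complex $R\Gamma_{\HK}(X)$ over $\breve F$ under the exact $\otimes$-functor to $\Perf(\FF)$. Indeed, for a finite $\varphi$-module $D$ over $\breve F$ the vector bundle $\cl E(D)$ is by construction obtained by glueing the $B_I$-modules $D\otimes_{\breve F}B_I$ with their natural Frobenius-descent datum, which is exactly the construction underlying the functor $\cl E_{\FF}(-)$ of (\ref{mainfun}); since both sides are computed from the same coadmissible $\varphi$-module $((R\Gamma_{\HK}(X)\dsolid_{\breve F}B_I)_I,\varphi)$, they agree. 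To pass from the complex-level statement to cohomology, I would apply the exact functor $\cl E(-)$ and $R\tau_*$ to the spectral sequence (\ref{spectralHK}) — or directly invoke that $\cl E(-)$ is exact and $t$-exact once the slopes are non-negative — to get $\cl H^i_{\FF}(X)\cong\cl E(H^i_{\HK}(X))$; here Theorem \ref{slopp}\listref{slopp:2} is what guarantees that $\cl E(H^i_{\HK}(X))$ is a vector bundle rather than a more general coherent sheaf, and that the cohomology of $\cl H_{\FF}(X)$ sits in the right degrees. The $\mathscr{G}_K$-equivariance when $X$ is defined over $K$ is inherited from the $\mathscr{G}_K$-equivariance in Theorem \ref{B=HK}, noting (as in the footnotes there) that although the trivialization via $\exp(N\cdot U)$ need not be Galois-equivariant, the resulting isomorphism on $B$-cohomology is.

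For part \listref{lb:2}, I would take the $\xi$-adic completion at the point $\infty\in|\FF|$. By construction of the functor $\cl E_{\FF}(-)$ via $(B_I,B_I^+)_{\solidif}$ and analytic descent, the completion $\cl H_{\FF}(X)^{\wedge}_\infty$ is computed by base change of the $B_I$-cohomology (for $I$ containing $1$) along $B_I\hookrightarrow B_{\dR}^+$, i.e. $\cl H_{\FF}(X)^\wedge_\infty\simeq R\varprojlim_m L\eta_t R\alpha_*(\Bb_{\dR}^+/\Fil^m)$, which by Lemma \ref{dinverse} is the $B_{\dR}^+$-cohomology $R\Gamma_{B_{\dR}^+}(X)$ in the dagger sense; by Theorem \ref{secondstep1} (extended to dagger varieties via Construction \ref{consthyp}, exactly as in Remark \ref{HKdagg}) this is $R\Gamma_{\inf}(X/B_{\dR}^+)$. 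Taking $H^i$ and using that $\xi$-adic completion is exact on finite projective modules, and that $\cl H^i_{\FF}(X)$ is a vector bundle by part \listref{lb:1}, yields (\ref{fffinf}); alternatively one reads it off by applying the $\xi$-adic completion to (\ref{fff}) and using $\cl E(H^i_{\HK}(X))^\wedge_\infty\cong H^i_{\HK}(X)\dsolid_{\breve F}B_{\dR}^+\cong H^i_{\inf}(X/B_{\dR}^+)$, the last isomorphism being Theorem \ref{compatib2}\listref{compatib2:1} together with Theorem \ref{slopp}\listref{slopp:1} (which ensures the derived base change is underived). The main obstacle I anticipate is the bookkeeping needed to show the two constructions of $\cl H_{\FF}(X)$ — one via $\cl E_{\FF}(-)$ applied to the $B$-cohomology, one via $\cl E(-)$ applied to the Hyodo--Kato complex — really coincide as objects of $\QCoh(\FF)$ together with their descent data, rather than merely having isomorphic cohomology; this is where the compatibility of the finite-level isomorphisms of Theorem \ref{B=HK} with restriction maps in $I$, and the non-Galois-equivariance subtlety of the monodromy trivialization, must be handled with care.
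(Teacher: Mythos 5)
Your argument is essentially the paper's own proof, just spelled out: part \listref{lb:1} is exactly the combination of Theorem \ref{B=HK} (via Remark \ref{HKdagg}), Theorem \ref{slopp}\listref{slopp:1}, and the monodromy trivialization $\exp(N\cdot U)$ of Lemma \ref{unex2}, translated into $\Perf(\FF)$ via Proposition \ref{perff}; part \listref{lb:2} is Theorem \ref{compatib2} extended to the dagger case. One small inaccuracy: $\cl E(H^i_{\HK}(X))$ is a vector bundle simply because $\cl E(-)$ is the exact $\otimes$-functor from finite $\varphi$-modules to $\Bun(\FF)$, not because of the slope bound in Theorem \ref{slopp}\listref{slopp:2}, which is not needed here.
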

 \begin{proof}
 Part \listref{lb:1} follows from Theorem \ref{B=HK} (combined with Remark \ref{HKdagg}), Theorem \ref{slopp}\listref{slopp:1} and Lemma \ref{unex2}. Part \listref{lb:2} follows from  Theorem \ref{compatib2} (which extends to the dagger case similarly to Remark \ref{HKdagg}).
 \end{proof}
 
 \begin{rem}\label{f-mod}
  Let $X$ be a qcqs dagger variety over $C$ and let $i\ge 0$. Recalling that the functor from finite $\varphi$-modules over $\breve F$ to vector bundles on $\FF$ induces a bijection on isomorphism classes, \cite[Theorem II.0.3]{FS}, we deduce from Theorem \ref{lb}\listref{lb:1} that the vector bundle $\cl H_{\FF}^i(X)$ determines, up to isomorphisms, the $\varphi$-module structure on $H_{\HK}^i(X)$.
   
   Using Theorem \ref{lb}\listref{lb:1}, we can also recover from $\cl H_{\FF}^i(X)$ the $(\varphi, N)$-module structure on $H_{\HK}^i(X)$, up to isomorphisms: in fact, denoting $\mathscr{G}_{\breve F}=\Gal(C/\breve F)$,  by \cite[Proposition 10.3.20(2)]{FF} we have a natural isomorphism of $(\varphi, N)$-modules over $\breve F$
   $$H_{\HK}^i(X)\cong \left(H^0(\FF\setminus\{\infty\}, \cl H_{\FF}^i(X))\otimes_{B_e}B_{\log}[1/t]\right)^{\mathscr{G}_{\breve F}}$$  
   observing that, since (\ref{fff}) is an isomorphism of $\mathscr{G}_{\breve F}$-equivariant vector bundles on $\FF$, we have a $\mathscr{G}_{\breve F}$-equivariant isomorphism 
   $H^0(\FF\setminus\{\infty\}, \cl H_{\FF}^i(X))\cong (H_{\HK}^i(X)\otimes_{\breve F}B_{\log}[1/t])^{\varphi=1, N=0}$.
 \end{rem}

 Next, we state the main result of this subsection.
 
 \begin{theorem}\label{synlift}
   Let $X$ be a qcqs rigid-analytic/dagger variety over $C$. Let $i\ge 0$.  Consider the quasi-coherent complex on $\FF$ defined by $$\cl H_{\syn}(X)(i):=\Fil^i\cl H_{\FF}(X)\otimes \cl O(i).$$ 
   We have
   \begin{equation}\label{qpos}
    R\Gamma(\FF, \cl H_{\syn}(X)(i))=R\Gamma_{\syn, \FF}(X, \Qq_p(i)).
   \end{equation}
   If $X$ is a proper rigid-analytic variety over $C$, the complex $\cl H_{\syn}(X)(i)$ is perfect, in particular the complex $R\Gamma_{\syn, \FF}(X, \Qq_p(i))$ identifies with the $C$-points of a bounded complex of Banach--Colmez spaces.
   
 \end{theorem}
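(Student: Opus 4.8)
\textbf{Proof sketch of Theorem \ref{synlift}.}
The plan is to deduce the identification (\ref{qpos}) formally from the machinery developed so far, and then extract the perfectness statement in the proper case. First I would unwind the definition of $\cl H_{\syn}(X)(i)$ via the equivalence $\cl E_{\FF}(-)$ of (\ref{mainfun}): since $\cl H_{\FF}(X)$ corresponds, under this equivalence, to the filtered coadmissible nuclear $\varphi$-module over $B$ given by the pair $\left((\Fil^\star R\Gamma_{B_I}(X))_I, \varphi\right)$ of Lemma \ref{ko}, twisting by $\cl O(i)$ amounts, on the level of $\varphi$-modules, to rescaling the Frobenius by $p^{-i}$; that is, $\cl H_{\syn}(X)(i)$ corresponds to $\left((\Fil^i R\Gamma_{B_I}(X))_I, p^{-i}\varphi\right)$. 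Then, applying Proposition \ref{perff}\listref{perff:2} to $\cl E = \cl H_{\syn}(X)(i)$, we get
$$R\Gamma(\FF, \cl H_{\syn}(X)(i)) = \fib\Big(M\xrightarrow{p^{-i}\varphi-1}M\Big), \qquad M:=R\varprojlim_{I\subset(0,\infty)}\Fil^i R\Gamma_{B_I}(X).$$
By Lemma \ref{dinverse} (compatibility with the filtration décalée), the natural map $\Fil^i R\Gamma_B(X)\to M$ is an isomorphism, so the right-hand side is exactly $\Fil^i R\Gamma_B(X)^{\varphi=p^i} = R\Gamma_{\syn, \FF}(X, \Qq_p(i))$ by Definition \ref{synFF}. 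This gives (\ref{qpos}). One should check that Proposition \ref{perff}\listref{perff:2} applies, i.e. that $\cl H_{\syn}(X)(i)\in\Nuc(\FF)$: this is immediate, since $\Fil^i\cl H_{\FF}(X)$ is nuclear by Lemma \ref{ko} and tensoring a nuclear complex by the line bundle $\cl O(i)$ preserves nuclearity (nuclearity is preserved under base change, Corollary \ref{bcnuc}, and $\cl O(i)$ is locally free of rank one).

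For the perfectness statement, suppose $X$ is a proper rigid-analytic variety over $C$. The strategy is to show that each filtered piece $\Fil^i\cl H_{\FF}(X)$ is perfect on $\FF$, so that $\cl H_{\syn}(X)(i) = \Fil^i\cl H_{\FF}(X)\otimes\cl O(i)$ is perfect (perfect complexes are stable under tensoring by line bundles). By the equivalence (\ref{mainfunperf}) of Proposition \ref{perff}\listref{perff:1}, it is enough to know that $\left((\Fil^i R\Gamma_{B_I}(X))_I, \varphi\right)$ lies in $\CoAdm(\Perf_B)^\varphi$, i.e. that each $\Fil^i R\Gamma_{B_I}(X)$ is a perfect complex of solid $B_I$-modules. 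This is where properness enters: by Theorem \ref{B=HK} and its filtered/local refinements, $R\Gamma_B(X)$ — hence each graded piece of its filtration décalée, hence each $\Fil^i R\Gamma_{B_I}(X)$ — is built out of $R\Gamma_{\HK}(X)\dsolid_{\breve F}B_{\log, I}$, and by Theorem \ref{slopp}\listref{slopp:1} the Hyodo--Kato cohomology groups $H^i_{\HK}(X)$ are finite-dimensional over $\breve F$ for $X$ qcqs. Combined with the boundedness of $R\Gamma_{\HK}(X)$ (Theorem \ref{mainHK}\listref{mainHK:2}), this exhibits $R\Gamma_B(X)$ as a bounded complex with finite free cohomology after base change to each $B_I$; more precisely, $H^i_B(X)$ is a finite projective $\varphi$-module over $B$ by Theorem \ref{lb}\listref{lb:1} and the discussion of the overconvergent Fargues--Fontaine cohomology, so $\cl H_{\FF}(X)$ has vector bundle cohomology and is perfect, and the filtration is finite (by the boundedness results Proposition \ref{bound0}, Corollary \ref{dRbound}), so each $\Fil^i\cl H_{\FF}(X)$ is perfect as a finite extension of shifts of the perfect graded pieces. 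Finally, once $\cl H_{\syn}(X)(i)\in\Perf(\FF)$, Proposition \ref{perffALB2} (the equivalence $R\tau_*:\Perf(\FF)\overset{\sim}{\to}D^b(\cl{BC})$) identifies $R\Gamma(\FF, \cl H_{\syn}(X)(i)) = R\Gamma_{\syn,\FF}(X,\Qq_p(i))$ with the $C$-points of a bounded complex of Banach--Colmez spaces.

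The main obstacle I anticipate is not the formal bookkeeping but pinning down precisely that properness forces $\Fil^i R\Gamma_{B_I}(X)$ to be \emph{perfect} over $B_I$ rather than merely nuclear: one needs the finiteness of $H^i_{\HK}(X)$ (Theorem \ref{slopp}, which itself for qcqs $X$ passes through the proper-smooth case of Grosse-Klönne via $\eh$-hyperdescent), the finiteness of the filtration (so the complex is a finite iterated extension of perfect graded pieces), and the fact that these finiteness statements are compatible with the $\varphi$-action so that the pair genuinely lands in $\CoAdm(\Perf_B)^\varphi$. An alternative, perhaps cleaner, route for this step is to invoke Theorem \ref{lb}\listref{lb:1} directly — which already asserts $\cl H_{\FF}(X)$ is perfect with vector bundle cohomology for qcqs $X$ — and then only argue that the filtered pieces $\Fil^i\cl H_{\FF}(X)$ remain perfect, using that a finite filtration with perfect graded pieces has perfect terms; this reduces the proper case of the perfectness claim to the $\eh$-hyperdescent finiteness input already in hand.
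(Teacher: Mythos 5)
Your treatment of the identification (\ref{qpos}) matches the paper's: both apply Proposition \ref{perff}\listref{perff:2} (which is legitimate by Lemma \ref{ko}), unwind the $\cl O(i)$-twist into $\varphi p^{-i}-1$, and use Lemma \ref{dinverse} to identify $R\varprojlim_I\Fil^iR\Gamma_{B_I}(X)$ with $\Fil^iR\Gamma_B(X)$. That half is correct and essentially identical to the paper.

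For the perfectness in the proper case you take a genuinely different route, and as written it has two real gaps. First, the filtration décalée is \emph{not} finite: by Proposition \ref{beilifil}, $\Fil^jL\eta_tM=L\eta_{\varepsilon_j,t}M$ with $\varepsilon_j(k)=\max(j,k)$, so in low cohomological degrees $\Fil^j$ behaves like the $t$-adic filtration and never vanishes; Proposition \ref{bound0} and Corollary \ref{dRbound} bound cohomological amplitude, not the length of the filtration. The correct skeleton is the fiber sequence $\Fil^i\cl H_{\FF}(X)\to\cl H_{\FF}(X)\to\cl H_{\FF}(X)/\Fil^i$, whose last term is a \emph{finite} iterated extension of the graded pieces $\gr^j$ for $0\le j<i$. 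Second, your description of those graded pieces is wrong: the isomorphism of Theorem \ref{B=HK} is not compatible with the filtration décalée, so the $\gr^j$ are not ``built out of $R\Gamma_{\HK}(X)\dsolid_{\breve F}B_{\log, I}$''. By Proposition \ref{bbe}\listref{bbe:2} together with the decomposition (\ref{BIprod}), $\gr^j\Fil^{\star}R\Gamma_{B_I}(X)$ is a finite sum, over the zeros of $t$ in $Y_{\FF, I}$, of twists of $\tau^{\le j}R\Gamma_v(X,\widehat{\cl O})$ --- a Hodge-cohomology object --- and its perfectness over $B_I$ for proper $X$ requires the finiteness of $H^a(X,\Omega^b_{X_{\eh}})$ over $C$ (Kiehl plus $\eh$-hyperdescent in the singular case), an input you never supply. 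With these two repairs your ``alternative route'' (perfectness of $\cl H_{\FF}(X)$ from Theorem \ref{lb}\listref{lb:1}, transported to the proper rigid setting via Proposition \ref{parpar}, plus perfect graded pieces) does close and would bypass the degeneration input; the paper argues differently, applying the derived Beauville--Laszlo gluing (Lemma \ref{dbl}) to reduce to the perfectness of $R\Gamma_{\pet}(X,\Bb_I[1/t])$ and of $\Fil^iR\Gamma_{B_{\dR}^+}(X)$, the latter resting on the finiteness and degeneration result Proposition \ref{fillemma}.
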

  \begin{proof}
  By Proposition \ref{perff}\listref{perff:2}, which applies thanks to Lemma \ref{ko}, we have
  \begin{align*}
   R\Gamma(\FF, \cl H_{\syn}(X)(i))&=\fib(R\Gamma(Y_{\FF}, \cl H_{\syn}(X)(i)|_{Y_{\FF}})\xrightarrow{\varphi-1}R\Gamma(Y_{\FF}, \cl H_{\syn}(X)(i)|_{Y_{\FF}})) \\
   &=\fib(R\Gamma(Y_{\FF}, \Fil^i\cl H_{\FF}(X)|_{Y_{\FF}})\xrightarrow{\varphi p^{-i}-1} R\Gamma(Y_{\FF}, \Fil^i\cl H_{\FF}(X)|_{Y_{\FF}})) \\
   &=\fib(\Fil^iR\Gamma_B(X)\xrightarrow{\varphi p^{-i}-1}\Fil^iR\Gamma_B(X))
  \end{align*}
  where in the last step we used in addition Lemma \ref{dinverse}. This shows (\ref{qpos}). 
  
  Next, assume $X$ proper. To show that $\cl H_{\syn}(X)(i)$ is a perfect complex, using the derived Beauville--Laszlo gluing, Lemma \ref{dbl}, we can reduce to showing that,  for $I\subset (0, \infty)$ a compact interval with rational endpoints, the complexes $R\Gamma_{\pet}(X, \Bb_I[1/t])$ and $\Fil^iR\Gamma_{B_{\dR}^+}(X)$ are perfect. First, we note that such complexes are bounded thanks to Proposition \ref{bddd}. Then, to prove that the complex $R\Gamma_{\pet}(X, \Bb_I[1/t])$ is perfect, we can apply Theorem \ref{B=HK} combined with Theorem \ref{slopp}\listref{slopp:1} (or, alternatively, \cite[Theorem 3.17]{ScholzeSurvey}), and for the complex $\Fil^iR\Gamma_{B_{\dR}^+}(X)$ we use  Proposition \ref{fillemma} below.

  Finally, we note that, by Proposition \ref{perffALB2}, the complex $R\Gamma_{\syn, \FF}(X, \Qq_p(r))$ identifies with the $C$-points of a bounded complex of Banach--Colmez spaces, as desired (in fact, with the same notation as in Proposition \ref{perffALB2}, for $\cl E\in \Perf(\FF)$, we have $R\tau_*(\cl E)(\Spa(C, \cl O_C))=R\Gamma(\FF, \cl E)$, by the $v$-descent result \cite[Proposition II.2.1]{FS} combined with Proposition \ref{perffALB}).
  \end{proof}

 We used the following derived version of the classical Beauville--Laszlo gluing.
 
 \begin{lemma}[Derived Beauville--Laszlo gluing]\label{dbl}
 Let $R$ be a commutative condensed ring, let $f\in R$ be a non-zero-divisor, and denote by $\widehat R$ the $f$-adic completion of $R$. We have a natural equivalence of $\infty$-categories $$\Perf_R\simeq \Perf_{R[1/f]}\times_{\Perf_{\widehat R[1/f]}}\Perf_{\widehat R}.$$ 
 \end{lemma}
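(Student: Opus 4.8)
The plan is to prove the derived Beauville--Laszlo gluing (Lemma \ref{dbl}) by reducing it to the classical discrete statement via a dévissage argument, combined with the fact that the relevant $\infty$-categories of perfect complexes behave well under the condensed formalism. First I would set up the comparison functor: there is an obvious functor $\Perf_R\to \Perf_{R[1/f]}\times_{\Perf_{\widehat R[1/f]}}\Perf_{\widehat R}$ sending $M$ to the triple $(M\otimes^{\LL}_R R[1/f], M\otimes^{\LL}_R \widehat R, \mathrm{can})$, where the gluing datum records the canonical identification of the two base changes to $\widehat R[1/f]$; this is well-defined because $f$ being a non-zero-divisor guarantees $\widehat R$ is flat over $R$ in the relevant derived sense (one uses Lemma \ref{condtors} here to see the derived $f$-adic completion agrees with the underived one on $f$-torsion-free objects, and that $\widehat R/f^n\cong R/f^n$). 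The goal is to show this functor is an equivalence.

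The key steps, in order, would be: (1) Reduce to the affine/module-theoretic statement over condensed rings — since $\Perf_R$ is the thick subcategory generated by $R$, and the formation of all three terms of the fiber product commutes with finite limits and colimits and shifts, it suffices to check full faithfulness and essential surjectivity at the level of the generating object and its retracts. (2) For full faithfulness, compute the mapping spectrum on the right-hand side as a fiber product and use the arithmetic fracture/Milnor square: $R\simeq R[1/f]\times_{\widehat R[1/f]}\widehat R$ as a derived fiber product of condensed rings, which is exactly the derived $f$-adic fracture square; this reduces to the statement that $R\to R[1/f]\times^h_{\widehat R[1/f]}\widehat R$ is an equivalence, a consequence of the derived completeness formalism (the cofiber of $R\to\widehat R$ agrees with the cofiber of $R[1/f]\to\widehat R[1/f]$ since both compute $R\Gamma_f$-acyclic pieces). (3) For essential surjectivity, given a triple $(M_1,M_2,\beta)$ with $M_1\in\Perf_{R[1/f]}$, $M_2\in\Perf_{\widehat R}$, form $M:=M_1\times^h_{M_2[1/f]}M_2$ using $\beta$; one checks $M$ is perfect over $R$ by a noetherian-style approximation or, more robustly in the condensed setting, by observing that $M$ is $f$-complete-plus-$f$-invertible-gluing of perfect pieces and invoking that the classical Beauville--Laszlo theorem (\cite[Tag 0BNI]{Thestack} in the derived form of Bhatt--Lurie) applies verbatim after forgetting the condensed structure, the condensed enhancement being automatic since perfectness is detected on underlying objects together with a boundedness/finite-generation condition that is preserved.

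The main obstacle I anticipate is step (3): verifying that the glued object $M$ is genuinely perfect over $R$ rather than merely pseudo-coherent or having perfect base changes, in the condensed setting where one cannot naively invoke finite presentation. The cleanest route is probably to cite the derived Beauville--Laszlo theorem of Bhatt (as in the Stacks project, Tag 0BNI, or Bhatt--Lurie) applied to the underlying discrete/animated rings, and then upgrade to condensed rings by noting that $\Perf_R$ for a condensed ring $R$ is the idempotent-complete stable subcategory generated by $R$, so the equivalence is a formal consequence of the ring-level fracture square $R\simeq R[1/f]\times^h_{\widehat R[1/f]}\widehat R$ together with descent of dualizable objects along this square; since each of $R$, $R[1/f]$, $\widehat R$, $\widehat R[1/f]$ is a solid ring and base change along the maps preserves perfectness (compact generation), the gluing is exactly the statement that $\Perf(-)$ sends the pushout square of rings to a pullback square of $\infty$-categories, which holds whenever the square is an excision/Milnor square — and the $f$-adic fracture square is such a square. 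I would close by remarking that in our application $R=A$ (the ring of functions on $\mathcal{Y}_{\FF}$) or $R=B_I$, $f=t$ or $f=\xi$, so that $R[1/f]=\Bb_I[1/t]$-type rings and $\widehat R=B_{\dR}^+$-type rings, matching the use in the proof of Theorem \ref{synlift}.
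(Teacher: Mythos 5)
Your final route is the one the paper takes: invoke the discrete derived Beauville--Laszlo theorem (the paper cites Bhatt's Tannaka-duality paper, Proposition 5.6(2) and Example 5.10, rather than the Stacks project) for the underlying rings $R(*)$, $R[1/f](*)$, $\widehat R(*)$, $\widehat R[1/f](*)$, and then transfer to condensed rings using that $\Perf_A$ only depends on $A(*)$ — formalized in the paper via Andreychev's condensification functor $\Cond_A$ and \cite[Lemma 5.10]{Andr}, which is exactly your ``$\Perf_R$ is the idempotent-complete stable subcategory generated by $R$'' observation. The intermediate ``direct'' argument via the fracture square is dispensable (and your claim that descent of $\Perf$ along it is automatic for excision squares is really just a restatement of the theorem being cited), but since you correctly identify the citation route as the clean one, the proposal matches the paper's proof.
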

 \begin{proof}
  By \cite[Proposition 5.6(2), Example 5.10]{BhattTannaka}, we have a natural equivalence of $\infty$-categories $\Perf_{R(*)}\simeq \Perf_{R[1/f](*)}\times_{\Perf_{\widehat R[1/f](*)}}\Perf_{\widehat R(*)}.$
  In order to carry such equivalence into the condensed setting, we recall that, for any commutative condensed ring $A$, we define the \textit{condensification functor}, \cite[Definition 5.8]{Andr}, as the composite
  $$\Cond_A: D(\Mod_{A(*)})\hookrightarrow D(\Mod_{\underline{A(*)}}^{\cond})\xrightarrow{\otimes_{\underline{A(*)}}^{\LL}A}D(\Mod_{A}^{\cond}).$$
  Then, the statement follows observing that $\Cond_A$ preserves perfect complexes, and applying \cite[Lemma 5.10]{Andr}.
 \end{proof}

 We also used the following finiteness/degeneration result on the  $B_{\dR}^+$-cohomology of proper rigid-analytic varieties over $C$. As we will see, part \listref{fillemma:1} follows from results of Guo, instead part \listref{fillemma:2} relies crucially on a combination of Conrad--Gabber's spreading out for proper rigid-analytic varieties and a generic smoothness result recently proved by Bhatt--Hansen, \cite{Bhatt-Hansen}, which allow us to reduce the statement to the case when $X$ is the base change to $C$ of a proper smooth rigid-analytic variety over a discretely valued subfield of $C$.
 % For part \listref{fillemma:2}, see also \cite[Theorem 7.3.5(i)]{Guo2}.

 % Part \listref{fillemma:1} follows combining Theorem \ref{secondstep}\listref{secondstep:2} and \cite[Theorem 7.3.5(i)]{Guo2}.
 
   \begin{prop}\label{fillemma}
   Let $X$ be a proper rigid-analytic variety over $C$. 
   \begin{enumerate}[(i)]
    \item\label{fillemma:1} For all $i\in \Zz$, the cohomology group $H^i_{B_{\dR}^+}(X)$ is a finite free module over $B_{\dR}^+$.
    \item\label{fillemma:2}  For all $i, r\in \Zz$, the natural map
   \begin{equation}\label{natlast}
    H^i(\Fil^r R\Gamma_{B_{\dR}^+}(X))\to H^i_{B_{\dR}^+}(X)
   \end{equation}
   is injective. Equivalently, for all $i, r\in \Zz$, the natural map
   \begin{equation*}\label{surjdR}
    H^i_{B_{\dR}^+}(X)/\Fil^r\to  H^i(R\Gamma_{B_{\dR}^+}(X)/\Fil^r)
   \end{equation*}
   is an isomorphism, where $\Fil^r H^i_{B_{\dR}^+}(X):=\im(H^i(\Fil^r R\Gamma_{B_{\dR}^+}(X))\to H^i_{B_{\dR}^+}(X))$.
   \end{enumerate}
  \end{prop}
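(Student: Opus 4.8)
The plan is to prove both parts by reducing, via spreading out and generic smoothness, to the case of a proper smooth rigid-analytic variety defined over a discretely valued subfield, where the statement follows from the degeneration of the Hodge--de Rham spectral sequence together with the identification of the $B_{\dR}^+$-cohomology with de Rham cohomology base-changed to $B_{\dR}^+$ (Theorem \ref{B_dR=dR}, Theorem \ref{secondstep1}). For part \listref{fillemma:1}, I would first use Theorem \ref{secondstep1} to identify $R\Gamma_{B_{\dR}^+}(X)$ with $R\Gamma_{\inf}(X/B_{\dR}^+)$, and then invoke Guo's finiteness results for the infinitesimal cohomology over $B_{\dR}^+$ of proper rigid-analytic varieties (\cite{Guo2}, e.g. the properness/finiteness theorem cited there): this gives that $R\Gamma_{B_{\dR}^+}(X)$ is a perfect complex over $B_{\dR}^+$ with finitely generated cohomology. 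To upgrade ``finitely generated'' to ``finite free'', I would observe that $B_{\dR}^+$ is a complete discrete valuation ring with uniformizer $\xi$, so it suffices to rule out $\xi$-torsion in $H^i_{B_{\dR}^+}(X)$; this torsion-freeness can be checked after the faithfully flat base change along $B_{\dR}^+\to C$ (reduction mod $\xi$) compared with the derived reduction, using that $R\Gamma_{\inf}(X/B_{\dR}^+)\otim^{\LL}_{B_{\dR}^+}C\simeq R\Gamma_{\dR}(X/C)$ has cohomology of the expected total dimension — again a consequence of the results quoted from Guo together with Corollary \ref{dRbound}.

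\textbf{Reduction for part \listref{fillemma:2}.} The equivalence of the two displayed formulations is immediate from the long exact sequence of the triangle $\Fil^r R\Gamma_{B_{\dR}^+}(X)\to R\Gamma_{B_{\dR}^+}(X)\to R\Gamma_{B_{\dR}^+}(X)/\Fil^r$, so I focus on injectivity of (\ref{natlast}). The key reduction: by Conrad--Gabber spreading out for proper rigid-analytic varieties, $X$ descends to a proper rigid-analytic variety $X_1$ over some complete discretely valued subfield $C_1 \subset C$ (with $C$ a completed algebraic closure of $C_1$, or at least containing enough of one); by the generic smoothness theorem of Bhatt--Hansen, \cite{Bhatt-Hansen}, after enlarging $C_1$ we may further assume $X_1$ is smooth over $C_1$. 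Since the $B_{\dR}^+$-cohomology and its filtration décalée are defined purely in terms of the generic fiber and are compatible with base change of the base field (they are built from $\eh$-sheaves and the solid tensor product, which commutes with the relevant base changes — cf. the proof of Theorem \ref{B_dR=dR}), I would reduce the injectivity statement for $X/C$ to the corresponding statement for $X_1/C_1$. Here one must be slightly careful that base change $\dsolid$ along $C_1 \hookrightarrow C$ (or rather along the induced map on the relevant $B_{\dR}^+$'s) is exact enough to preserve injectivity of the map on $H^i$; this follows from flatness of $C$ over $C_1$ for the solid tensor product (\cite[Corollary A.65]{Bosco}) applied at the level of the $\xi$-adic filtration quotients.

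\textbf{The smooth discretely-valued case.} Once $X_1$ is proper smooth over a discretely valued field $C_1$, I would combine Theorem \ref{B_dR=dR} — giving $R\Gamma_{B_{\dR}^+}(X_1\otimes_{C_1}\widehat{\overline{C_1}}) \simeq R\Gamma_{\dR}(X_1)\dsolid_{C_1} B_{\dR}^+$ compatibly with filtrations, where on the left the filtration is the filtration décalée and on the right it is the Hodge filtration tensored up — with the degeneration of the Hodge--de Rham spectral sequence for $X_1$ (\cite[Corollary 1.8]{Scholze}, and \cite[Proposition 8.0.8]{Guo1} in the possibly-singular case, though here $X_1$ is smooth). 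Degeneration says precisely that $H^i(\Fil^r R\Gamma_{\dR}(X_1)) \to H^i_{\dR}(X_1)$ is injective for all $i,r$ (its image being $\Fil^r H^i_{\dR}(X_1)$, the Hodge filtration on the de Rham cohomology group). Tensoring this injection of finite-dimensional $C_1$-vector spaces with the flat $C_1$-module $B_{\dR}^+$ (via $\dsolid_{C_1}$, flat by \cite[Corollary A.65]{Bosco}) preserves injectivity and commutes with taking cohomology because the de Rham complex is a finite filtered complex (Proposition \ref{bound0}), hence both $R\Gamma_{\dR}(X_1)$ and each $\Fil^r$ are perfect. This yields injectivity of (\ref{natlast}) for $X_1$, and hence, by the reduction above, for $X$.

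\textbf{Main obstacle.} The routine parts are the spectral sequence degeneration input and the flatness bookkeeping; the main obstacle is the descent/compatibility step — checking carefully that the filtration décalée on $R\Gamma_{B_{\dR}^+}(X)$ is genuinely compatible with changing the base field from $C_1$ to $C$, i.e. that the identification $R\Gamma_{B_{\dR}^+}(X) \simeq R\Gamma_{B_{\dR}^+}(X_1)\dsolid_{B_{\dR}^+(C_1)} B_{\dR}^+(C)$ respects the décalage filtration. This requires knowing that $L\eta_t$ and the Beilinson-$t$-structure connective cover (Definition \ref{beilifildef}) commute with the relevant (flat) base change of condensed period rings, which should follow from Proposition \ref{bbe}\listref{bbe:2} since on graded pieces everything reduces to truncations of $R\alpha_* \mathbf{B}/t$, and $\mathbf{B}/t$ is a product of copies of $\widehat{\cl O}$ (cf. (\ref{Bprod})) whose cohomology base-changes well — but this verification, together with ensuring the generic smoothness descent can be arranged over a field small enough to apply Scholze's degeneration theorem, is where the real work lies.
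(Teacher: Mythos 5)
Part \listref{fillemma:1} of your proposal is fine and matches the paper (which cites Guo's finiteness theorem for the infinitesimal cohomology together with Theorem \ref{secondstep1}; your extra torsion-freeness check over the discrete valuation ring $B_{\dR}^+$ is harmless). But part \listref{fillemma:2} rests on a false reduction. Conrad--Gabber spreading out (as packaged in \cite[Corollary 13.16]{BMS1}) does \emph{not} say that $X$ descends to a proper rigid-analytic variety $X_1$ over a complete discretely valued subfield $C_1\subset C$; it says that $X$ is the \emph{fibre} of a proper flat family $f:\cl X\to S$ defined over a discretely valued field $L$, over a point $\eta\in S(C)$. That point is in general transcendental (think of an elliptic curve over $C$ with transcendental $j$-invariant), so no descent of $X$ itself exists. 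Likewise, "after enlarging $C_1$ we may assume $X_1$ is smooth" is not what generic smoothness gives: Bhatt--Hansen's theorem says the smooth locus of the \emph{family} $f$ is a dense Zariski-open $S'\subset S$; a singular fibre does not become smooth under field extension, and $\eta$ may well land in the non-smooth locus.

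Because of this, the heart of the argument is missing. The paper's proof has to work with the whole family: when $f$ is smooth it constructs a filtered quasi-isomorphism $Rf_{\dR *}\cl O_{\cl X}\otimes_A^{\LL}B_{\dR}^+\simeq R\Gamma_{B_{\dR}^+}(X)$ along a lift $A\to B_{\dR}^+$ of $\eta$ (a relative version of Theorem \ref{B_dR=dR}, built via the infinitesimal cohomology as in \cite[Theorem 13.19]{BMS1}) and then invokes the \emph{relative} Hodge--de Rham degeneration of Scholze; and in general it uses that $Rf_{\pet *}\Zz_p$ is a bounded Zariski-constructible complex, proper base change, and an induction on $\dim(S)$ to handle the case $\eta\in S\setminus S'$, defining $D_{\dR}(Rf_{\pet *}\Zz_p)$ and checking degeneration on stalks after shrinking $S$ to a locus where the relevant terms are vector bundles. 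None of this is replaced by anything in your proposal, and the singular descended case (your would-be $X_1$ singular) would in any event require Guo's degeneration result \cite[Proposition 8.0.8]{Guo1}, which you sidestep only via the invalid smoothness reduction. The "main obstacle" you identify (compatibility of the filtration d\'ecal\'ee with base change of the ground field) is therefore not the real difficulty; the real difficulty is that there is no base change of ground field to perform.
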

  \begin{proof}  
  Part \listref{fillemma:1} follows from \cite[Theorem 1.2.7(vi)]{Guo2} combined with Theorem \ref{secondstep1}. Next, we prove part \listref{fillemma:2} and give at the same time an alternative proof of part \listref{fillemma:1}. We first note that the equivalence of the two assertions in part \listref{fillemma:2} immediately follows considering the long exact sequence in cohomology associated to the exact triangle
   $$\Fil^r R\Gamma_{B_{\dR}^+}(X)\to R\Gamma_{B_{\dR}^+}(X)\to R\Gamma_{B_{\dR}^+}(X)/\Fil^r.$$ 
   
   We will use that, by \cite[Corollary 13.16]{BMS1}, there exists a proper flat morphism $f: \cl X\to S$ of rigid-analytic varieties over a discretely valued subfield  $L\subset C$ such that $X$ is the fibre of $f$ over a point $\eta\in S(C)$; we note that we may assume $S=\Spa(A, A^\circ)$ to be a smooth affinoid over $L$. Then, the map $\eta$ corresponds to a map $A\to C$ of affinoid $L$-algebras, and, by the (formal) smoothness of $A$ over $L$, the latter map lifts to a map $A\to B_{\dR}^+$. Now, we divide the argument in several cases.
  \begin{enumerate}[(a)]
   \item\label{keya} In the case $S$ is a point, we have that $X$ is the base change to $C$ of a proper rigid-analytic variety $\cl X$ defined over a discretely valued subfield  $L\subset C$. Then, by Theorem \ref{B_dR=dR}, we have a natural filtered quasi-isomorphism
   \begin{equation}\label{ump}
    R\Gamma_{\dR}(\cl X)\otimes_L^{\LL} B_{\dR}^+\simeq R\Gamma_{B_{\dR}^+}(X).
   \end{equation}
   Here, we used that, as $\cl X$ is proper, the cohomology groups of $R\Gamma_{\dR}(X)$ and its filtered pieces, are finite-dimensional over $L$:  in fact, by Proposition \ref{baseh}, there exists a proper $\eh$-hypercover $\cl X_\bullet\to \cl X$ with each $\cl X_n$ smooth over $L$; then, by cohomological descent, we can reduce to the case $\cl X$ is smooth, which follows from \cite{Kiehl0}. Then, part \listref{fillemma:1} is clear from the quasi-isomorphism (\ref{ump}), and part \listref{fillemma:2} follows from the compatibility with filtrations of (\ref{ump}) and the degeneration of the Hodge-de Rham spectral sequence for $\cl X$
   \begin{equation}\label{ehsp}
    H^{i-j}(\cl X, \Omega^j_{\cl X_{\eh}})\implies H^i_{\dR}(\cl X)
   \end{equation}
   (see \cite[Corollary 1.8]{Scholze} for the case $\cl X$ is smooth, and \cite[Proposition 8.0.8]{Guo1} for the case $\cl X$ is singular).
   \item\label{keyb}  In the case $f:\cl X\to S$ is smooth, denoting by $Rf_{\dR *}\cl O_{\cl X}:=Rf_{*}\Omega_{\cl X/S}^{\bullet}$ the relative de Rham cohomology of $f$ endowed with its Hodge filtration $Rf_{*}\Omega_{\cl X/S}^{\ge \star}$, we claim that we have a filtered quasi-isomorphism
   \begin{equation}\label{spread2}
    Rf_{\dR *}\cl O_{\cl X}\otimes_A^{\LL}B_{\dR}^+\simeq R\Gamma_{B_{\dR}^+}(X)
   \end{equation}
   compatible with (\ref{ump}) in the case $S$ is a point. For this, by Theorem \ref{secondstep1} and Proposition \ref{ultimo}, it suffices to show that we have a $B_{\dR}^+$-linear map
   \begin{equation}\label{spread}
     Rf_{\dR *}\cl O_{\cl X}\otimes_A^{\LL}B_{\dR}^+ \to  R\Gamma_{\inf}(X/B_{\dR}^+)
   \end{equation}
   which is a quasi-isomorphism compatible with filtrations, where the right-hand side is endowed with the infinitesimal filtration. Arguing as in the proof of \cite[Theorem 13.19]{BMS1}, we can construct (\ref{spread}) on a hypercover of $\cl X$ by \textit{very small} smooth affinoid spaces over $L$ in the sense of \cite[Definition 13.5]{BMS1},\footnote{I.e. the affinoid spaces $\Spa(A_0, A_0^{\circ})$ of Notation \ref{setse} for $r=0$, $q=1$, and $\Xi_0=\emptyset$.} using in addition Lemma \ref{primitive2}.\footnote{In the case $S$ is a point, the stated compatibility with (\ref{ump}) follows from the proof of Theorem \ref{compatib} (see in particular the commutative diagram (\ref{seeinp})).}
    We note that the constructed map (\ref{spread}) is a quasi-isomorphism after applying the functor $-\otimes^{\LL}_{B_{\dR}^+}B_{\dR}^+/\xi$, recalling that we have a  natural quasi-isomorphism $$R\Gamma_{\inf}(X/B_{\dR}^+)\otimes^{\LL}_{B_{\dR}^+}B_{\dR}^+/\xi\simeq R\Gamma_{\dR}(X)$$
     by \cite[Theorem 1.2.7(i), Theorem 1.2.1(i)]{Guo2}; then, in order to show that (\ref{spread}) is a quasi-isomorphism, using the derived Nakayama lemma, it suffices to check that both the source and the target of (\ref{spread}) are derived $\xi$-adically complete: for the source, we note that each $R^if_{\dR *}\cl O_{\cl X}$ is a coherent $\cl O_S$-module with (integrable) connection, in particular it is a locally free $\cl O_S$-module (see \cite[Corollaire 2.5.2.2]{Andre}); for the target, we can use for example the \v{C}ech-Alexander complex computing the infinitesimal cohomology over $B_{\dR}^+$, \cite[Proposition 4.1.3]{Guo2}. To prove that the quasi-isomorphism (\ref{spread}) is compatible with filtrations, proceeding by induction on the index $i\ge 0$ of the filtrations, it suffices to check the compatibility on graded pieces, which follows from Lemma \ref{gradedul}.
     
   Now, to prove part \listref{fillemma:1}, using the quasi-isomorphism (\ref{spread2}), it suffices to recall that each $R^if_{\dR *}\cl O_{\cl X}$ is a finite projective $A$-module. For part \listref{fillemma:2}, using the filtered quasi-isomorphism (\ref{spread2}), it suffices to recall that, thanks to \cite[Theorem 8.8]{Scholze} and \cite[Theorem 10.5.1]{SW}, for all $i, j\in \Zz$ the relative Hodge cohomology $R^{i-j}f_{*}\Omega_{\cl X/S}^j$ is a finite projective $A$-module, and that the relative Hodge-de Rham spectral sequence
   $$R^{i-j}f_{*}\Omega_{\cl X/S}^j\implies R^if_{\dR *}\cl O_{\cl X}$$
   degenerates.
   
   \item In the general case, we will use that $Rf_{\pet *}\Zz_p$ is a bounded Zariski-constructible complex of sheaves on $S$, thanks to \cite[Theorem 3.10, Theorem 3.36]{Bhatt-Hansen}. Denoting by $\nu: S_{\pet}\to S_{\ett}$ the natural morphism of sites, and setting
   $$D_{\dR}(Rf_{\pet *}\Zz_p):=R^0\nu_*(Rf_{\pet *}\Zz_p\otimes_{\Zz_p}\cl O\Bb_{\dR, S})$$
   (see e.g. \cite[Definition 6.6]{Bosco} for the definition of Scholze's pro-étale sheaf $\cl O\Bb_{\dR, S}$), we claim that we have a natural filtered quasi-isomorphism
   \begin{equation}\label{ryui}
    D_{\dR}(Rf_{\pet *}\Zz_p)\otimes_A^{\LL}B_{\dR}^+\simeq R\Gamma_{B_{\dR}^+}(X).
   \end{equation}
   For this, we argue by induction on $\dim(S)$. For the base case $\dim(S)=0$, we have that $S$ is a disjoint union of points, hence we can reduce to the case $S$ is a point; then, by the de Rham comparison theorem for proper (possibly singular) rigid-analytic varieties defined over $L$, \cite[Theorem 1.1.4]{Guo1}, we have a natural filtered quasi-isomorphism  $D_{\dR}(Rf_{\pet *}\Zz_p)\simeq R\Gamma_{\dR}(\cl X)$, and the claim follows from the filtered quasi-isomorphism (\ref{ump}) of part \listref{keya}.
   
   For the inductive step, using Proposition \ref{baseh}, picking a proper $\eh$-hypercover $\cl X_\bullet\to \cl X$ with each $\cl X_n$ smooth over $L$, we can reduce to the case $\cl X$ is smooth over $L$. In the latter case, by \cite[Theorem 2.29]{Bhatt-Hansen}, the maximal open subset $S'\subset S$ such that $f':f^{-1}(S')\to S'$ is smooth is a dense Zariski-open subset; in particular, the Zariski-closed complement $Z:=S\setminus S'$ is nowhere dense in $S$, and hence we have $\dim(Z)< \dim(S)$. Then, if $\eta$ is contained in $Z$, restricting $f$ over $Z$, the claim follows from the inductive hypothesis, by proper base change \cite[Theorem 3.15, Theorem 3.36]{Bhatt-Hansen}; if $\eta$ is contained in $S'$, by the smoothness of the restriction $f'$ of $f$ over $S'$, the claim follows again by proper base change from the filtered quasi-isomorphism (\ref{spread2}) of part \listref{keyb}, recalling that in this case, by the relative de Rham comparison (\cite[Theorem 8.8]{Scholze} combined with \cite[Theorem 10.5.1]{SW}) we have a filtered quasi-isomorphism $D_{\dR}(Rf'_{\pet *}\Zz_p)\simeq Rf'_{\dR *}\cl O_{\cl X}$. 
   
   Now, part \listref{fillemma:1} follows from the quasi-isomorphism (\ref{ryui}), recalling that $Rf_{\pet *}\Zz_p$ is a bounded Zariski-constructible complex of sheaves on $S$. For part \listref{fillemma:2}, using the filtered quasi-isomorphism (\ref{ryui}), it suffices to check that, up to replacing $S$ by a suitable Zariski locally closed subset containing $\eta$, the spectral sequence associated to the filtered complex $D_{\dR}(Rf_{\pet *}\Zz_p)$, i.e.
   $$H^{i-j}(\gr^jD_{\dR}(f_{\pet *}\Zz_p))\implies H^i(D_{\dR}(Rf_{\pet *}\Zz_p)),$$
   degenerates. For this, using again that $Rf_{\pet *}\Zz_p$ is a bounded Zariski-constructible complex of sheaves on $S$, by \cite[Theorem 3.9, (i) and (iii)]{LZ}, we can suppose that all the terms of the spectral sequence above are vector bundles on $S$, in which case the degeneration can be checked on stalks at classical points, where it follows from the degeneration of the spectral sequence (\ref{ehsp}) of part \listref{keya}.
   \end{enumerate}
   \end{proof}
   
   The following lemma was used in the proof above.
   
   \begin{lemma}\label{gradedul}
   Let $X$ be a smooth rigid-analytic variety over $C$. For any $r\ge 0$, we have a natural quasi-isomorphism
     $$\gr^r R\Gamma_{B_{\dR}^+}(X)\simeq \bigoplus_{0\le i\le r}R\Gamma(X, \Omega_X^i)(r-i)[-i].$$
   \end{lemma}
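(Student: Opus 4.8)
The statement is the graded-piece analogue of the filtered comparison (\ref{decaBdR}), so the natural strategy is to compute both sides locally, in the coordinates of Notation \ref{notaz}, and then globalize by $\eh$-hyperdescent. First I would recall from Corollary \ref{mainfil} that, in the setting of Notation \ref{notaz} with $X=\fr X_C$ smooth affinoid, one has a $B_{\dR}^+$-linear quasi-isomorphism $\xi^{\max(i-\bullet,0)}\Omega_{B_{\dR}^+(R)}^\bullet\overset{\sim}{\to}\Fil^i R\Gamma_{B_{\dR}^+}(X)$, where $\Omega_{B_{\dR}^+(R)}^\bullet=\Kos_{B_{\dR}^+(R)}(\partial_1,\ldots,\partial_d)$. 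Taking the cofiber of the inclusion $\Fil^{r+1}\hookrightarrow\Fil^r$ on the level of these Koszul models, the $r$-th graded piece is computed by the complex whose $\bullet$-th term is $\xi^{\max(r-\bullet,0)}\Omega_{B_{\dR}^+(R)}^\bullet/\xi^{\max(r+1-\bullet,0)}\Omega_{B_{\dR}^+(R)}^\bullet$; this vanishes for $\bullet>r$ and, for $0\le i\le r$, equals $(\xi^{r-i}/\xi^{r-i+1})\Omega^i_{B_{\dR}^+(R)}\cong \gr^{r-i}B_{\dR}^+\otimes_{B_{\dR}^+}\Omega^i_{B_{\dR}^+(R)}$, with the induced differential being zero because the Koszul differential $\partial_j$ raises $\bullet$ by one and hence lands in a piece killed by the quotient. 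Since $\gr^{r-i}B_{\dR}^+\cong C(r-i)$ and $\Omega^i_{B_{\dR}^+(R)}/\xi\cong \Omega^i_X(X)\otimes_C(\text{wedge of the Koszul generators})$ recovers the $i$-th term of the de Rham–Koszul complex of the smooth affinoid $X$, this gives locally a natural identification $\gr^r R\Gamma_{B_{\dR}^+}(X)\simeq\bigoplus_{0\le i\le r}\Omega^i_X(X)(r-i)[-i]$.

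The second step is to promote this local computation to a statement about sheaves. One identifies, as in the proof of Proposition \ref{3.11} and Theorem \ref{secondstep1}, the graded piece $\gr^r R\Gamma_{B_{\dR}^+}(X)$ with $R\Gamma(X,\gr^r L\eta_t R\nu_*\Bb_{\dR}^+)$, and then invokes Proposition \ref{bbe}\listref{bbe:2} to rewrite $\gr^r L\eta_t R\nu_*\Bb_{\dR}^+\simeq \tau^{\le r}R\nu_*\gr^r\Bb_{\dR}^+$. Since $\gr^r\Bb_{\dR}^+\cong\widehat{\cl O}(r)$ as $v$-sheaves (Definition \ref{periodsheaves}), the computation $R^k\nu_*\widehat{\cl O}\cong\Omega_{X_{\ett}}^k(-k)$ recalled in the proof of Proposition \ref{3.11} (via \cite[Proposition 3.23]{ScholzeSurvey}) gives $R^k\nu_*\gr^r\Bb_{\dR}^+\cong\Omega_{X_{\ett}}^k(r-k)$, so that $\tau^{\le r}R\nu_*\gr^r\Bb_{\dR}^+$ has cohomology sheaves $\Omega_{X_{\ett}}^k(r-k)$ for $0\le k\le r$. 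The key point here is that this truncated complex is formal, i.e.\ quasi-isomorphic to the direct sum $\bigoplus_{0\le k\le r}\Omega_{X_{\ett}}^k(r-k)[-k]$ of its cohomology sheaves; this is exactly what the local Koszul computation above shows, since there the graded piece was visibly a complex with zero differential.

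Finally I would take $R\Gamma(X,-)$ of this direct-sum decomposition of sheaves, obtaining $\gr^r R\Gamma_{B_{\dR}^+}(X)\simeq\bigoplus_{0\le i\le r}R\Gamma(X,\Omega_{X_{\ett}}^i)(r-i)[-i]$, and then rewrite the right-hand side as $\bigoplus_{0\le i\le r}R\Gamma(X,\Omega_X^i)(r-i)[-i]$ since for $X$ smooth the $\eh$-differentials agree with the usual étale differentials (Proposition \ref{ehdescent}). The naturality is automatic from the construction, all of which proceeds through canonical maps of complexes of $v$-sheaves. I expect the main obstacle to be a clean verification that the differential on the graded-piece complex genuinely vanishes — equivalently, the formality of $\tau^{\le r}R\nu_*\gr^r\Bb_{\dR}^+$ — and handling this uniformly with respect to the various truncations and period-sheaf identifications; the local Koszul model makes it transparent, but one must make sure the local splittings are compatible enough (under changes of coordinates and $\eh$-covers) to glue, which is why routing everything through Proposition \ref{bbe}\listref{bbe:2} and the sheaf-level identification $R^k\nu_*\gr^r\Bb_{\dR}^+\cong\Omega_{X_{\ett}}^k(r-k)$ rather than through the coordinate-dependent Koszul complexes is the safer route.
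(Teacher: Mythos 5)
Your reduction to graded pieces via Proposition \ref{bbe}\listref{bbe:2} and the identification of the cohomology sheaves $R^k\nu_*\widehat{\cl O}(r)\cong\Omega^k_{X_{\ett}}(r-k)$ match the first half of the paper's proof, and your observation that the Koszul differential dies in the graded quotient $\xi^{r-i}\Omega^i/\xi^{r-i+1}\Omega^i$ is correct. But the step you yourself flag as the main obstacle --- the formality of $\tau^{\le r}R\nu_*\widehat{\cl O}(r)$ --- is a genuine gap, and your proposed resolution does not close it. Knowing the cohomology sheaves of a truncated complex says nothing about whether it splits; the splitting is the entire content of the lemma. The splitting you extract from the Koszul model depends on the chart $\fr X\to\Spf(R^\square)$ (more precisely on the lift $A_{\inf}(R)$ and the chosen $p$-power roots of the coordinates), and different charts produce splittings differing by the extension class in $\Ext^1(\Omega^{i+1}_X(r-i-1),\Omega^i_X(r-i))$ that measures exactly the nontriviality of the extension. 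So the coordinate-dependent local splittings do not glue, and "routing through the sheaf-level identification of cohomology sheaves" only recovers the associated graded of the truncation, not a decomposition of the complex. (A secondary point: Corollary \ref{mainfil} is stated in the setting of Notation \ref{notaz}, so a general smooth affinoid over $C$ only fits it after an \'etale localization or after passing to the charts of Notation \ref{setse}.)

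The paper closes this gap differently: it reduces to the affinoid case, then uses Elkik's theorem to descend $X$ to a smooth affinoid over a finite extension of $K$, and invokes the splitting of $\tau^{\le r}R\alpha_*\widehat{\cl O}(r)$ established in \cite[Corollary 6.12]{Bosco}, which comes from Scholze's Poincar\'e lemma for $\Bb_{\dR}^+$ and hence depends only on the $K$-rational structure, not on a choice of formal model or coordinates. If you want to repair your argument you must supply an input of this kind --- either a descent to a discretely valued base, or a canonical choice of lift of $X$ to $B_{\dR}^+/\xi^2$ --- since over $C$ itself the splitting is not automatic from the local computation.
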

   \begin{proof}
   First, we note that, by Proposition \ref{bbe}\listref{bbe:2}, we have $\gr^r R\Gamma_{B_{\dR}^+}(X)\simeq R\Gamma(X, \tau^{\le r}R\alpha_* \widehat{\cl O}(r))$. We want to show that we have a natural identification
   \begin{equation}\label{2359}
    \tau^{\le r}R\alpha_* \widehat{\cl O}(r)\simeq \bigoplus_{0\le i\le r}\Omega_X^i(r-i)[-i].
   \end{equation}
   We may assume that $X$ is a smooth affinoid over $C$, and then, by \cite[Theorem 7 and Remark 2]{Elkik}, we can further assume that $X$ is the base change to $C$ of a smooth affinoid  defined over a finite extension of $K$. In this case, (\ref{2359}) follows from \cite[Corollary 6.12]{Bosco}.
   \end{proof}

 \subsection{Comparison with \cite{BMS2}}\label{nygaa} In this subsection, we compare the syntomic Fargues--Fontaine cohomology for rigid-analytic varieties over $C$, Definition \ref{synFF}, and the syntomic cohomology for semistable $p$-adic formal schemes over $\cl O_C$ defined (in the smooth case) by Bhatt--Morrow--Scholze, \cite[\S 10]{BMS2}. In particular, we show that the syntomic Fargues--Fontaine cohomology can be locally recovered from the $A_{\inf}$-cohomology together with its Nygaard filtration.
 The results proved here are not used in the rest of the paper, but we hope they will be useful for future reference. \medskip
 
 As the main comparison results of this subsection will be proven in the semistable case, we begin by recalling the definition of the $A_{\inf}$-cohomology, as well as the Nygaard filtration on it, in the latter setting. We will phrase the definition of the Nygaard filtration in terms of the décalage functors of Definition \ref{twistdec}, as this will be convenient for the desired comparison.

 \begin{df}[Nygaard filtration on $A_{\inf}$-cohomology]\label{defnyg}
  Let $\mathfrak X$ be a semistable $p$-adic formal scheme over $\cl O_C$, and let $X$ denote its generic fiber, regarded as an adic space over $\Spa(C, \cl O_C)$.  Denote by $\nu':X_{\pet}\to \mathfrak X_{\ett, \cond}$ the natural morphism of sites.\footnote{Here, the site $\mathfrak X_{\ett, \cond}$ is defined similarly to \cite[Definition 2.13]{Bosco}.} 
  \begin{enumerate}[(i)]
   \item We define the \textit{$A_{\inf}$-cohomology} of $\fr X$ as the complex of $D(\Mod^{\cond}_{A_{\inf}})$
   $$R\Gamma_{A_{\inf}}(\fr X):=R\Gamma(\fr X, A\Omega_{\fr X}), \;\;\text{ where } \;\;\; A\Omega_{\fr X}:=L\eta_{\mu}R\nu'_*\Aa_{\inf}.$$ 
   \item  Given an integer $i\ge 0$, consider the function $\delta_i:\Zz\to \Zz, j\mapsto \max(i-j, 0).$
  We endow the $A_{\inf}$-cohomology of $\fr X$  with the \textit{Nygaard filtration} whose $i$-th level is given by
   $$ \Fil_{\cl N}^iR\Gamma_{A_{\inf}}(\fr X):=R\Gamma(\fr X, \Fil_{\cl N}^i A\Omega_{\fr X}), \;\; \text{ where } \;\;\; \Fil_{\cl N}^i A\Omega_{\fr X}:= L(\eta_{\delta_i,\xi}\circ\eta_\mu)R\nu'_*\Aa_{\inf}.$$  
  \end{enumerate}
 \end{df}
 
 \begin{rem}\label{remnyg0}
  In the Definition \ref{nyg} above we implicitly used that the functor $\eta_{\delta_i,\xi}\circ\eta_\mu(-)$ preserves quasi-isomorphisms. To check the latter assertion we observe that, as $\xi=\mu/\varphi^{-1}(\mu)$, we have
  $$\eta_{\delta_i,\xi}\circ\eta_\mu=\eta_{\delta_i, \mu}\circ \eta_{-\delta_i, \varphi^{-1}(\mu)}\circ \eta_{\mu}=\eta_{\varepsilon_i, \mu}\circ \eta_{-\delta_i, \varphi^{-1}(\mu)}$$
  where $\varepsilon_i:\Zz\to \Zz,j\mapsto \max(i, j)$. Then, since both $\varepsilon_i$ and $-\delta_i$  are non-decreasing functions, we conclude by Proposition \ref{nondec}.
 \end{rem}
 
  The lemma below, combined with \cite[Proposition 9.10]{BMS2}, shows that the Nygaard filtration defined above is equivalent to the one of \cite{BMS2} (in the smooth case). \medskip
 
 First, we recall that $A\Omega_{\fr X}$ comes equipped with a Frobenius, \cite[(2.2.5)]{CK}: in fact, the Frobenius automorphism of $\Aa_{\inf}$ induces a $\varphi_{A_{\inf}}$-semilinear map $$\varphi:A\Omega_{\fr X}\to A\Omega_{\fr X}.$$
 
 \begin{lemma}\label{ease}
 Let $\tilde \xi:=\varphi(\xi)$. Under the notation of Definition \ref{defnyg}, the Frobenius $\varphi: A\Omega_{\fr X}\to A\Omega_{\fr X}$ factors functorially over a $\varphi_{A_{\inf}}$-semilinear quasi-isomorphism
 $$A\Omega_{\fr X}\overset{\sim}{\to}  L\eta_{\tilde \xi}A\Omega_{\fr X}$$
 sending the Nygaard filtration (Definition \ref{defnyg}) on the source to the filtration décalée (Definition \ref{beilifildef}) on the target.
 \end{lemma}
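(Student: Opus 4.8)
The plan is to construct the factorization at the level of complexes and then check the three claims (it factors $\varphi$; it is a quasi-isomorphism; it is filtered) separately, reducing the first and third to an explicit degreewise bookkeeping with the functors of Definition \ref{twistdec} and the elementary identity $\varphi(\mu)=\tilde\xi\,\mu$, and the second to the Breuil--Kisin--Fargues property of $A\Omega_{\fr X}$. First I would choose, functorially in $\fr X$, a representative $M^\bullet$ of $R\nu'_*\Aa_{\inf}$ by a complex of sheaves on $\fr X_{\ett,\cond}$ that is simultaneously $\mu$- and $\tilde\xi$-torsion-free; this is possible by \cite[Lemma 6.1]{BMS1} and functoriality is arranged as in \cite[\S 6]{BMS1}. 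The $p$-th power Frobenius of $\widehat{\cl O}^{\flat +}$ lifts to a $\varphi_{A_{\inf}}$-semilinear endomorphism of $\Aa_{\inf}$, hence to a $\varphi_{A_{\inf}}$-semilinear chain map $\phi\colon M^\bullet\to M^\bullet$ representing $R\nu'_*(\varphi)$.

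Next I would record $\varphi(\mu)=\tilde\xi\,\mu$ (immediate from $\mu=[\varepsilon]-1$ and $\tilde\xi=\varphi(\mu)/\mu$) and the equality $\eta_{f}\circ\eta_{g}=\eta_{fg}$ on torsion-free complexes (\cite[Lemma 6.11]{BMS1}). A direct computation with the description of $\eta_{\delta,f}(-)$ in Definition \ref{twistdec} then gives: for $x\in(\eta_{\delta_i,\xi}\,\eta_\mu M^\bullet)^j=\{x\in\xi^{\delta_i(j)}\mu^j M^j:\ dx\in\xi^{\delta_i(j+1)}\mu^{j+1}M^{j+1}\}$ one has $\phi(x)\in\varphi(\xi)^{\delta_i(j)}\varphi(\mu)^j\,\phi(M^j)\subseteq\tilde\xi^{\,\delta_i(j)+j}\mu^j M^j$, and likewise for $dx$; since $\delta_i(j)+j=\max(i,j)=\varepsilon_i(j)$, this puts $\phi(x)$ inside $(\eta_{\varepsilon_i,\tilde\xi}\,\eta_\mu M^\bullet)^j$. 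For $i\le 0$ (so $\delta_i\equiv 0$, $\varepsilon_i=\id$) this yields the $\varphi_{A_{\inf}}$-semilinear chain map $A\Omega_{\fr X}=\eta_\mu M^\bullet\to\eta_{\tilde\xi}\eta_\mu M^\bullet=L\eta_{\tilde\xi}A\Omega_{\fr X}$, whose composite with the canonical map $L\eta_{\tilde\xi}A\Omega_{\fr X}\to A\Omega_{\fr X}$ is $\varphi$; for general $i$ it shows that this map carries $\Fil_{\cl N}^i A\Omega_{\fr X}$ (Definition \ref{defnyg}) into $\Fil^i L\eta_{\tilde\xi}A\Omega_{\fr X}$, the latter being $L\eta_{\varepsilon_i,\tilde\xi}L\eta_\mu R\nu'_*\Aa_{\inf}$ by Proposition \ref{beilifil} and Definition \ref{beilifildef}, compatibly with the transition maps. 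Functoriality is inherited from that of all the décalage functors, by Proposition \ref{nondec}.

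It then remains to show the map is a filtered quasi-isomorphism. For the underlying quasi-isomorphism I would reduce to the affine local case with semistable coordinates (Notation \ref{notaz}), where $A\Omega_{\fr X}$ and its Frobenius are computed by the explicit $\mu$-torsion-free complexes of \S\ref{ae1} (following \cite{CK}), the assertion being that the $\tilde\xi$-divided Frobenius on $A\Omega_{\fr X}$ is an isomorphism — this is \cite[Theorem 9.4]{BMS1} in the smooth case and its semistable analogue in \cite{CK}. For the filtered refinement I would induct downward on the filtration index: on $\Fil^i$ with $i\le 0$ the map is exactly the quasi-isomorphism just obtained, and for the inductive step it suffices, by the finiteness of both filtrations in each degree (and Remark \ref{remnyg0}), to check that the induced map on graded pieces $\gr^i_{\cl N}A\Omega_{\fr X}\to\gr^i L\eta_{\tilde\xi}A\Omega_{\fr X}$ is a quasi-isomorphism. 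By Proposition \ref{bbe}\listref{bbe:2} the right-hand side is $\tau^{\le i}$ of the appropriate Breuil--Kisin twist of $A\Omega_{\fr X}/\tilde\xi A\Omega_{\fr X}$, i.e.\ of the Hodge--Tate complex, and the left-hand side is identified with the same truncation of the conjugate-filtered Hodge--Tate complex via the Nygaard-graded computation of \cite{BMS1}, \cite{CK}, the comparison map being an isomorphism; this will finish the proof. (Together with \cite[Proposition 9.10]{BMS2} this identifies the present Nygaard filtration with the one of \textit{loc. cit.})

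The step I expect to be the main obstacle is the last one: matching the two filtrations on graded pieces in the semistable setting. This requires transporting the smooth Nygaard-graded computation of \cite{BMS1} and the Breuil--Kisin--Fargues property through the semistable constructions of \cite{CK}, while keeping precise track of the degreewise bookkeeping of the twisted décalage functors; by contrast, once the chain-level Frobenius $\phi$ and the identity $\delta_i(j)+j=\varepsilon_i(j)$ are in hand, the construction of the map and its compatibility with both filtrations is essentially formal.
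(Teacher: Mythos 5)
Your construction of the map is the same as the paper's: the whole content is the identity $\varphi(\mu)=\tilde\xi\mu$ together with $\delta_i(j)+j=\varepsilon_i(j)$, and your degreewise computation with $\eta_{\delta_i,\xi}\circ\eta_\mu$ is exactly the bookkeeping the paper performs (on the presheaf version $A\Omega_{\fr X}^{\psh}$ of \cite[\S 4.1]{CK}, which spares you the choice of a functorial torsion-free resolution). Up to the point where you establish the containment $\phi\bigl((\eta_{\delta_i,\xi}\eta_\mu M^\bullet)^j\bigr)\subseteq(\eta_{\varepsilon_i,\tilde\xi}\eta_\mu M^\bullet)^j$, your argument and the paper's coincide.

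The divergence — and the gap — is in how you get from this containment to a quasi-isomorphism on each filtration level. The observation you are missing is that $\varphi$ is an \emph{automorphism} of $\Aa_{\inf}$ (the tilted structure sheaf is perfect), so your displayed inclusion is in fact an equality: running the same computation with $\varphi^{-1}$ shows
$$\varphi^*\Fil_{\cl N}^i A\Omega_{\fr X}^{\psh}= L(\eta_{\delta_i,\tilde\xi}\circ\eta_{\tilde\xi\mu})R\nu'_*\Aa_{\inf}= L(\eta_{\varepsilon_i,\tilde\xi}\circ\eta_{\mu})R\nu'_*\Aa_{\inf}=\Fil^i L\eta_{\tilde\xi}A\Omega_{\fr X}^{\psh},$$
an identification of subobjects, so every filtration level (and in particular $i=0$, the underlying map) is \emph{tautologically} a quasi-isomorphism; no input from \cite{BMS1} or \cite{CK} beyond the composition rule for twisted décalage functors is needed. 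By contrast, the route you flag as the "main obstacle" — identifying $\gr_{\cl N}^i A\Omega_{\fr X}$ with a truncation of the conjugate-filtered Hodge--Tate complex — is not available as stated: that computation is proven in \cite{BMS2} for \emph{their} Nygaard filtration on smooth formal schemes, not for the décalage-defined filtration of Definition \ref{defnyg} in the semistable setting, and the paper explicitly derives the equivalence of the two filtrations \emph{from} this lemma combined with \cite[Proposition 9.10]{BMS2}. Assuming the Nygaard-graded comparison as an input would therefore be circular (and its semistable analogue is not in \cite{CK}). Two minor further slips: the induction on filtration levels should run upward in $i$ (from the exact triangle $\Fil^{i+1}\to\Fil^i\to\gr^i$), not "downward"; and for the unfiltered quasi-isomorphism the relevant point in \cite[\S 9]{BMS1} is likewise the formal consequence of $\varphi$ being an automorphism with $\varphi(\mu)=\tilde\xi\mu$, not a deep theorem.
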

 \begin{proof}
  Let $A\Omega_{\fr X}^{\psh}$ denote the presheaf version of $A\Omega_{\fr X}$, \cite[\S 4.1]{CK}. Given an integer $i\ge 0$, it suffices to observe that the Frobenius automorphism of $\Aa_{\inf}$ induces a quasi-isomorphism
  $$\varphi^*\Fil_{\cl N}^i A\Omega_{\fr X}^{\psh}\simeq L(\eta_{\delta_i, \tilde \xi}\circ \eta_{\tilde \xi})A\Omega_{\fr X}^{\psh}\simeq L\eta_{\varepsilon_i, \tilde \xi}A\Omega_{\fr X}^{\psh}$$
  where we used that $\varphi(\mu)=\tilde \xi \cdot \mu$ and we denoted $\varepsilon_i:\Zz\to \Zz,j\mapsto \max(i, j)$.
 \end{proof}
 
  \begin{rem}[Frobenius action on Nygaard filtration]\label{n8}
 The Frobenius automorphism of $\Aa_{\inf}$ induces a $\varphi_{A_{\inf}}$-semilinear map $$\Fil_{\cl N}^\star(\varphi):\Fil_{\cl N}^\star A\Omega_{\fr X}\to \tilde \xi^\star\otimes A\Omega_{\fr X}.$$
 \end{rem}

 We are almost ready to define the syntomic cohomology of Bhatt--Morrow--Scholze in the semistable reduction case. We shall use the following notation.
 
 \begin{notation} We consider the Breuil--Kisin--Fargues module over $A_{\inf}$ 
 $$A_{\inf}\{1\}:=\frac{1}{\mu}(A_{\inf}\otimes_{\Zz_p}\Zz_p(1))$$
 (\cite[Example 4.24]{BMS1}) and, given $i\in \Zz$, for any $A_{\inf}$-module $M$ we denote by $$M\{i\}:=M\otimes_{A_{\inf}}A_{\inf}\{1\}^{\otimes i}$$ its $i$-th \textit{Breuil--Kisin--Fargues twist}.
 \end{notation}

 \begin{df}[Bhatt--Morrow--Scholze's syntomic cohomology, cf. {\cite[\S 10]{BMS2}}]\label{BMSsyn} Fix notation as in Definition \ref{defnyg}. Let $i\ge 0$ be an integer. We define
 $$R\Gamma_{\syn, \BMS}(\fr X, \Zz_p(i)):=\fib(\Fil_{\cl N}^iR\Gamma_{A_{\inf}}(\fr X)\{i\}\xrightarrow{\varphi\{i\}-1}R\Gamma_{A_{\inf}}(\fr X)\{i\})$$
 where $\Fil_{\cl N}^i R\Gamma_{A_{\inf}}(\fr X)\{i\}$ denotes the Breuil--Kisin--Fargues twisted $i$-th level of the Nygaard filtration on the $A_{\inf}$-cohomology of $\fr X$, and we write $\varphi\{i\}$ for the tensor product of $\Fil_{\cl N}^i(\varphi)$ (Remark \ref{n8}) with the Frobenius of $A_{\inf}\{i\}$.
 \end{df}

  Next, we want to compare Definition \ref{BMSsyn} with Definition \ref{synFF}. Recalling that the Fargues--Fontaine curve $\FF$ has a presentation given by the quotient of $Y_{\FF, [1, p]}$ via the identification $\varphi: Y_{\FF, S, [1, 1]}\cong Y_{\FF, S, [p, p]}$, we will define a \textit{Nygaard filtration} on the $B_{[1, p]}$-cohomology of rigid-analytic varieties over $C$, and we will explain how to recover the syntomic Fargues--Fontaine cohomology from the latter. \medskip
  
   Similarly to Definition \ref{defnyg}, we can give the following.

 \begin{df}[Nygaard filtration on $B_I$-cohomology]\label{nyg}
  Let $X$ be a rigid-analytic variety over $C$ and denote by $\alpha:X_{v}\to X_{\eh, \cond}$ the natural morphism of sites.  Let  $I=[1, r]\subset (0, \infty)$ be an interval with rational endpoints. 
  Given $\mathbf{B}\in \{\Bb_{I}, \Bb_{\dR}^+\}$, we write $\mathscr{B}=\mathbf{B}_{\Spa(C)_{\pet}}$.
  
 We endow  the $\mathscr{B}$-cohomology of $X$ with the \textit{Nygaard filtration} whose $i$-th level is given by
  $$\Fil_{\cl N}^i R\Gamma_{\mathscr{B}}(X):=R\Gamma(X, \Fil_{\cl N}^i L\eta_{t}R\alpha_*\mathbf{B}),\;\; \text{ where } \;\;\; \Fil_{\cl N}^i L\eta_{t}R\alpha_*\mathbf{B}:=L(\eta_{\delta_i,\xi}\circ\eta_{t})R\alpha_*\mathbf{B}.$$
 \end{df}

  \begin{rem}
  In the Definition \ref{nyg} above, we used Remark \ref{remnyg0} together with the fact that, by the choice of the interval $I$, the elements $t$ and $\mu$ differ by a unit in $B_I$.
  \end{rem}

  We note that the Nygaard filtration on the $B_{\dR}^+$-cohomology agrees with its filtration décalée, since the elements $t$ and $\xi$ generate the same ideal in $B_{\dR}^+$. Moreover, we recall that the latter filtration has a more explicit expression in coordinates, as shown in Corollary \ref{mainfil}. In a similar vein, we have the following result.

  \begin{lemma}\label{BIP}
  Let $\mathfrak X=\Spf(R)$ be a semistable $p$-adic formal scheme over $\cl O_C$ as in Notation \ref{notaz}, and let $X= \mathfrak X_C$ denote its generic fiber. Let $I=[1, r]\subset (0, \infty)$ be an interval with rational endpoints. For any $i\ge 0$, we have a $B_I$-linear quasi-isomorphism, compatible with Frobenius,
  \begin{equation}\label{beffr}
   \xi^{\max(i-\bullet, 0)}\Omega^\bullet_{B_{I}(R)}\overset{\sim}{\to} \Fil_{\cl N}^i R\Gamma_{B_I}(X)
  \end{equation}
  where $\Omega_{B_{I}(R)}^\bullet:=\Kos_{B_{I}(R)}(\partial_1, \ldots,\partial_d)$, in the notation of \S\ref{locop}.
 \end{lemma}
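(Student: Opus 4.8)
The statement is the local, coordinatized description of the Nygaard filtration on the $B_I$-cohomology. The plan is to follow the same strategy used for Corollary \ref{mainfil} (which gave the analogous description for the filtration décalée on $R\Gamma_{B_{\dR}^+}(X)$), but now keeping track of the extra $\eta_{\delta_i, \xi}$ twist. First I would record the local Koszul description of $R\Gamma_{B_I}(X)$ from Lemma \ref{condstep}, namely $R\Gamma_{B_I}(X)\simeq L\eta_t\Kos_{\Bb_I(R_\infty)}(\gamma_1-1,\ldots,\gamma_d-1)$, so that by Definition \ref{nyg} we have $\Fil_{\cl N}^i R\Gamma_{B_I}(X)\simeq L(\eta_{\delta_i,\xi}\circ\eta_t)\Kos_{\Bb_I(R_\infty)}(\gamma_1-1,\ldots,\gamma_d-1)$. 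Then, using that $t$ and $\mu$ differ by a unit in $B_I$ (the choice $I=[1,r]$), and applying Lemma \ref{primitive}, the complex $L\eta_t\Kos_{\Bb_I(R_\infty)}(\gamma_\bullet-1)$ is $\varphi$-equivariantly quasi-isomorphic to $\Kos_{B_I(R)}(\partial_1,\ldots,\partial_d)=\Omega^\bullet_{B_I(R)}$; moreover this quasi-isomorphism is built from the $2$-term comparison maps $(B_I(R)\xrightarrow{\partial_i}B_I(R))\to(B_I(R)\xrightarrow{\gamma_i-1}B_I(R))$, which are compatible with the $\xi$-adic (equivalently, $\mu$-adic) structure on each term.

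The key point is therefore to transport the remaining décalage $\eta_{\delta_i,\xi}$ through this comparison. I would argue that applying $\eta_{\delta_i,\xi}$ to the two-term complexes on either side is compatible, so that $L\eta_{\delta_i,\xi}\Omega^\bullet_{B_I(R)}$ is $\varphi$-equivariantly quasi-isomorphic to $L(\eta_{\delta_i,\xi}\circ\eta_t)\Kos_{\Bb_I(R_\infty)}(\gamma_\bullet-1)\simeq \Fil_{\cl N}^i R\Gamma_{B_I}(X)$. Since $\Omega^\bullet_{B_I(R)}$ is a genuine (not just derived) complex, with $B_I(R)$ being $\xi$-torsion-free, the décalage $L\eta_{\delta_i,\xi}$ is computed on the nose by the subcomplex $\eta_{\delta_i,\xi}\Omega^\bullet_{B_I(R)}$ of Definition \ref{twistdec} (note $\delta_i(j)=\max(i-j,0)$ is non-increasing, so Proposition \ref{nondec} does not directly apply, but $\Omega^\bullet$ is concentrated in degrees $\ge 0$ and bounded, and one checks termwise that $\eta_{\delta_i,\xi}$ preserves quasi-isomorphisms of $\xi$-torsion-free complexes here — alternatively rewrite $\eta_{\delta_i,\xi}\circ\eta_t=\eta_{\varepsilon_i,\xi}\circ\eta_{-\delta_i,\varphi^{-1}(\mu)}$ as in Remark \ref{remnyg0} to reduce to non-decreasing functions). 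Unwinding Definition \ref{twistdec}, the degree-$\bullet$ term of $\eta_{\delta_i,\xi}\Omega^\bullet_{B_I(R)}$ inside $\Omega^\bullet_{B_I(R)}[1/\xi]$ is $\{x\in\xi^{\max(i-\bullet,0)}\Omega^\bullet_{B_I(R)}: dx\in \xi^{\max(i-\bullet-1,0)}\Omega^{\bullet+1}_{B_I(R)}\}$; since $\max(i-\bullet,0)\ge\max(i-\bullet-1,0)$ always, the condition on $dx$ is automatic, so this term is exactly $\xi^{\max(i-\bullet,0)}\Omega^\bullet_{B_I(R)}$. This yields the displayed quasi-isomorphism $(\ref{beffr})$.

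Finally I would record the compatibility with Frobenius: the Frobenius on $\Bb_I$ sends $t\mapsto pt$ and $\xi\mapsto \varphi(\xi)=\tilde\xi$ with $\varphi(\mu)=\tilde\xi\mu$, and the $2$-term comparison maps of Lemma \ref{primitive} were checked there to be $\varphi$-equivariant (as in the proof of \cite[Proposition 7.13]{Bosco}); hence the induced map $(\ref{beffr})$ intertwines the Frobenius on the Nygaard filtration (Definition \ref{nyg}, via $\Fil_{\cl N}^\star(\varphi)$) with the obvious Frobenius on the twisted de Rham--Koszul complex, as in the analogous statement Lemma \ref{ease}.

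\textbf{Main obstacle.} The delicate step is the décalage bookkeeping: one must verify carefully that $\eta_{\delta_i,\xi}$ (with $\delta_i$ non-increasing) behaves well on the complexes in play and commutes appropriately with the comparison quasi-isomorphisms, rather than just formally invoking Proposition \ref{nondec}. The cleanest route is to reduce, via the identity $\eta_{\delta_i,\xi}\circ\eta_t=\eta_{\varepsilon_i,\xi}\circ\eta_{-\delta_i,\varphi^{-1}(\mu)}$ from Remark \ref{remnyg0}, to décalage functors attached to \emph{non-decreasing} functions (note $-\delta_i$ is non-decreasing), for which Proposition \ref{nondec} and the termwise formula of Definition \ref{twistdec} apply directly, and to observe that on the smooth, $\xi$-torsion-free complex $\Omega^\bullet_{B_I(R)}$ — which is already its own $L\eta$ — the resulting subcomplex is the asserted $\xi^{\max(i-\bullet,0)}\Omega^\bullet_{B_I(R)}$. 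Everything else is a direct combination of Lemma \ref{condstep}, Lemma \ref{primitive}, and the computation above.
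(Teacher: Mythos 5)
Your route is genuinely different from the paper's. The paper handles $i=0$ by combining Lemma \ref{condstep} with Lemma \ref{primitive}, and then proceeds by induction on $i$: it identifies the Nygaard graded pieces $\gr_{\cl N}^{j}L\eta_tR\nu_*\Bb_I$ with the graded pieces of the filtration décalée on $L\eta_tR\nu_*\Bb_{\dR}^+$ (using Proposition \ref{bbe}\listref{bbe:2} and the isomorphism $B_I/\xi\overset{\sim}{\to}B_{\dR}^+/\xi$), and then deduces \listref{beffr} on graded pieces from the already-established $B_{\dR}^+$-statement of Corollary \ref{mainfil}, via $B_I(R)/\xi^j\cong B_{\dR}^+(R)/\xi^j$. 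You instead try to push $\eta_{\delta_i,\xi}$ directly through the comparison of Lemma \ref{primitive}. Two of your steps are fine: the termwise identification of $\eta_{\delta_i,\xi}\Omega^\bullet_{B_I(R)}$ with $\xi^{\max(i-\bullet,0)}\Omega^\bullet_{B_I(R)}$ is correct, and applying $\eta_{\delta_i,\xi}$ to the genuine isomorphism of complexes $\Kos_{B_I(R)}(\partial_\bullet)\cong\eta_t\Kos_{B_I(R)}(\gamma_\bullet-1)$ is unproblematic.

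The gap is in the remaining step: you need $\eta_{\delta_i,\xi}$ applied to the quasi-isomorphism $\eta_t\Kos_{B_I(R)}(\gamma_\bullet-1)\to\eta_t\Kos_{\Bb_I(R_\infty)}(\gamma_\bullet-1)$ to remain a quasi-isomorphism, and neither of your justifications delivers this. The first rests on a false principle: for non-increasing $\delta$ the functor $\eta_{\delta,f}$ does \emph{not} preserve quasi-isomorphisms of $f$-torsion-free complexes (take $\delta(0)=1$, $\delta(1)=0$ and the acyclic complex $A\xrightarrow{\id}A$; one gets $fA\xrightarrow{\id}A$, whose $H^1=A/f$ is nonzero). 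The second, via Remark \ref{remnyg0}, shows that the \emph{composite} $\eta_{\delta_i,\xi}\circ\eta_\mu$ preserves quasi-isomorphisms; but the map you must control is this composite applied to $\Kos_{B_I(R)}(\gamma_\bullet-1)\to\Kos_{\Bb_I(R_\infty)}(\gamma_\bullet-1)$, which is \emph{not} a quasi-isomorphism before applying $\eta_\mu$ --- its complement is the nonintegral summand $N^{\nonint}:=\Kos_{\Aa_I(R_\infty)^{\nonint}}(\gamma_\bullet-1)[1/p]$. So you must show that $L(\eta_{\delta_i,\xi}\circ\eta_\mu)=L\eta_{\varepsilon_i,\mu}\circ L\eta_{-\delta_i,\varphi^{-1}(\mu)}$ annihilates $N^{\nonint}$. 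Computing with the cohomology formula from the proof of Proposition \ref{nondec}, in degrees $j\le i$ the result is $H^j(N^{\nonint})$ modulo its $\varphi^{-1}(\mu)$-torsion; so you need $\varphi^{-1}(\mu)$ --- not merely $\mu$, which is all Lemma \ref{primitive} records --- to kill that cohomology. The refinement is available (the result \cite[Proposition 3.25]{CK} invoked there kills this cohomology by $W(\fr m^{\flat})$, where $\fr m^{\flat}\subset\cl O_{C^\flat}$ is the maximal ideal, and $\varphi^{-1}(\mu)=[\varepsilon^{1/p}]-1$ lies in $W(\fr m^{\flat})$), but it is a genuinely needed extra input that your argument neither identifies nor supplies; as written the key step does not go through. (A smaller point of the same nature: identifying $\Fil^i_{\cl N}R\Gamma_{B_I}(X)$ with $L(\eta_{\delta_i,\xi}\circ\eta_t)$ of the Koszul complex is not literally Lemma \ref{condstep}, which only treats the filtration décalée; one needs the Nygaard analogue of Proposition \ref{3.11}\listref{3.11.2}.) The paper's graded-pieces induction sidesteps all of this by never applying $\eta_{\delta_i,\xi}$ to a map that is not already an isomorphism.
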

 \begin{proof}
 For $i=0$, the statement follows combining Lemma \ref{condstep} and Lemma \ref{primitive}.
 Arguing by induction on $i\ge 0$, to show the statement in general it suffices to check (\ref{beffr}) on graded pieces.
 
 We begin by observing that the natural map
  \begin{equation}\label{axx}
   (L\eta_tR\nu_*\Bb_I)/\Fil_{\cl N}^i\to (L\eta_tR\nu_*\Bb_{\dR}^+)/\Fil_{\cl N}^i
  \end{equation}
  is an isomorphism. In fact, we can reduce to showing that, for each $j\ge 0$, the natural map 
  \begin{equation}\label{sides}
  \gr_{\cl N}^j L\eta_tR\nu_*\Bb_I\to \gr_{\cl N}^j L\eta_tR\nu_*\Bb_{\dR}^+
  \end{equation}
  is an isomorphism (here, the graded pieces $\gr_{\cl N}^{\star}$ refer to the Nygaard filtration $\Fil_{\cl N}^{\star}$). For this, since we can replace $L\eta_t$ with $L\eta_{\xi}$ on both sides of (\ref{sides}) (recall that we have an isomorphism $B_I/\xi\overset{\sim}{\to}B_{\dR}^+/\xi$, and the elements $t$ and $\xi$ generate the same ideal in $B_{\dR}^+$), by Proposition \ref{bbe}\listref{bbe:2}, the claim reduces to the isomorphism $$\gr_{\cl N}^j\Bb_I\overset{\sim}{\to}\gr^j\Bb_{\dR}^+$$ 
  where we denote by $\gr_{\cl N}^{\star}$ the graded pieces for the $\xi$-adic filtration $\Fil_{\cl N}^{\star}$ on $\Bb_I$.
  
  Now, the desired statement, i.e. the quasi-isomorphism (\ref{beffr}) on graded pieces, follows from the quasi-isomorphisms (\ref{qisos}) of Corollary \ref{mainfil} using that, for any $j\ge 0$, the natural map $B_{I}(R)/\xi^j\to B_{\dR}^+(R)/\xi^j$ is an isomorphism.
 \end{proof}

 On the other hand, we have the following local description of the Nygaard filtration on the $A_{\inf}$-cohomology.
 
 \begin{lemma}\label{nyainf}
   Let $\mathfrak X=\Spf(R)$ be a semistable $p$-adic formal scheme over $\cl O_C$ as in Notation \ref{notaz}. For any $i\ge 0$, there is an $A_{\inf}$-linear quasi-isomorphism, compatible with the Frobenius,
    $$\Fil_{\cl N}^iR\Gamma_{A_{\inf}}(\fr X)\simeq\xi^{\max(i-\bullet, 0)}q\text{-}\Omega^\bullet_{A_{\inf}(R)}$$ where $q\text{-}\Omega^\bullet_{A_{\inf}(R)}$ denotes the \textit{logarithmic $q$-de Rham complex} defined as\footnote{See also \cite[\S 7.3]{logprism} for the construction of the \textit{logarithmic $q$-de Rham complex} in a more general setting.}
  $$q\text{-}\Omega^\bullet_{A_{\inf}(R)}:=\Kos_{A_{\inf}(R)}\left(\textstyle\frac{\partial_q}{\partial_q\log(X_1)}, \ldots, \frac{\partial_q}{\partial_q\log(X_d)}\right)$$
  with $q=[\varepsilon]\in A_{\inf}$.
 \end{lemma}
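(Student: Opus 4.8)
The plan is to follow closely the strategy of Lemma \ref{BIP} above, in fact the present statement is the analogue ``at the boundary $Y_{\FF}$'' of the latter, and the pattern of reasoning is identical: reduce to $i=0$ by an inductive argument on graded pieces, treat the case $i=0$ via an explicit Koszul description, and then bootstrap to general $i$. First I would record the local description of the $A_{\inf}$-cohomology itself. For $i=0$, by the very definition $A\Omega_{\fr X}=L\eta_{\mu}R\nu'_*\Aa_{\inf}$, so computing $R\Gamma_{A_{\inf}}(\fr X)$ on the affine $\fr X=\Spf(R)$ reduces, exactly as in Lemma \ref{condstep}, to the Cartan--Leray spectral sequence for the affinoid perfectoid pro-\'etale cover $\fr X_{C, \infty}\to \fr X_C$ of \listref{ccover} with Galois group $\Gamma\cong \Zz_p^d$, identifying $R\Gamma_{A_{\inf}}(\fr X)$ with $L\eta_{\mu}\Kos_{\Aa_{\inf}(R_\infty)}(\gamma_1-1, \ldots, \gamma_d-1)$. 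Then I would invoke the semistable version of \cite[Corollary 12.7, Proposition 12.9]{BMS1} together with the computations of \cite[\S 5]{CK} (in particular the $q$-deformation argument of \cite[Lemma 12.5, Lemma 12.6]{BMS1} carried to the semistable setting in \cite[\S 5]{CK}): these show that $L\eta_{\mu}\Kos_{\Aa_{\inf}(R_\infty)}(\gamma_i-1)$ is computed by the logarithmic $q$-de Rham complex $q\text{-}\Omega^\bullet_{A_{\inf}(R)}$, since for each $i$ the element $\gamma_i-1$, when applied to $A_{\inf}(R)$, equals $(q-1)$ times the $q$-derivation $\frac{\partial_q}{\partial_q\log(X_i)}$ up to the usual invertible ``Taylor factor'', and the nonintegral part of $\Aa_{\inf}(R_\infty)$ is killed by $L\eta_{\mu}$ thanks to \cite[Proposition 3.25]{CK}. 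This step is essentially a reference to \cite{CK} and \cite{BMS1}, translated into condensed language exactly as in the proof of Lemma \ref{primitive}.

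Next, for general $i\ge 0$, I would argue by induction, reducing to a statement on graded pieces for the Nygaard filtration. By Remark \ref{remnyg0} we have $\eta_{\delta_i, \xi}\circ \eta_\mu = \eta_{\varepsilon_i, \mu}\circ \eta_{-\delta_i, \varphi^{-1}(\mu)}$, and the functor $L(\eta_{\delta_i,\xi}\circ\eta_\mu)$ sends $R\nu'_*\Aa_{\inf}$ to $\Fil_{\cl N}^i A\Omega_{\fr X}$; on the presheaf level, applying $\varphi^*$ as in Lemma \ref{ease}, the $i$-th level of the Nygaard filtration becomes $L\eta_{\varepsilon_i, \tilde\xi}A\Omega_{\fr X}^{\psh}$, i.e.\ the $i$-th level of the filtration d\'ecal\'ee on $L\eta_{\tilde\xi}A\Omega_{\fr X}$. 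By Proposition \ref{bbe}\listref{bbe:2} and the definition of the filtration d\'ecal\'ee (Proposition \ref{beilifil}), the $i$-th graded piece of $\Fil_{\cl N}^\star$ on $R\Gamma_{A_{\inf}}(\fr X)$ is a truncation $\tau^{\le i}$ of the corresponding graded piece of $R\nu'_*\Aa_{\inf}$ after reduction modulo $\xi$ (up to a Breuil--Kisin--Fargues-type twist). On the other hand the Koszul complex $\xi^{\max(i-\bullet, 0)}q\text{-}\Omega^\bullet_{A_{\inf}(R)}$ has $i$-th graded piece the complex whose terms are $\xi^{\max(i-j,0)}/\xi^{\max(i-j-1,0)}\otimes q\text{-}\Omega^j_{A_{\inf}(R)}$, which modulo $\xi$ collapses precisely to the ``naive'' truncated de Rham complex $\Omega_{R/\cl O_C}^{\le i}$ up to the relevant Tate twist. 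The inductive comparison on graded pieces then follows, just as in Lemma \ref{BIP}, from the $i=0$ case applied to the reductions, using that $q\equiv 1 \pmod \mu$ so that $q\text{-}\Omega^\bullet_{A_{\inf}(R)}\otimes^{\LL}_{A_{\inf}}A_{\inf}/\xi$ identifies with a shift of the de Rham complex of $R$.

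Finally I would assemble the filtered quasi-isomorphism by a $5$-lemma argument from the graded comparisons, checking at each stage compatibility with Frobenius: on the $A_{\inf}$-side the Frobenius on $\Fil_{\cl N}^i A\Omega_{\fr X}$ is induced by the Frobenius of $\Aa_{\inf}$, and on the Koszul side it is the $\varphi_{A_{\inf}}$-semilinear map sending $q=[\varepsilon]\mapsto [\varepsilon]^p = \varphi(q)$ and $X_i\mapsto X_i^p$, so that $\frac{\partial_q}{\partial_q\log X_i}$ is intertwined with its $\varphi(q)$-analogue up to the standard factor --- the verification is identical to that in the proof of \cite[Proposition 5.34]{CK} and Lemma \ref{primitive}. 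The main obstacle I anticipate is purely bookkeeping: keeping track of the precise powers of $\xi$ (equivalently of $\tilde\xi$ after the Frobenius twist of Lemma \ref{ease}) and the Breuil--Kisin--Fargues twists so that the comparison is genuinely \emph{filtered} and \emph{Frobenius-equivariant}, rather than merely an abstract quasi-isomorphism; there is no conceptual difficulty beyond what is already contained in \cite[\S 5]{CK}, \cite[\S 12]{BMS1}, and \cite[\S 7.3]{logprism}, but the condensed translation and the need to track functoriality (which here is automatic since no choice of ``all possible coordinates'' is required --- the statement is only asserted for $\fr X$ already equipped with semistable coordinates) must be carried out with care.
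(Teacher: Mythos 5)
Your proposal follows essentially the same route as the paper: the case $i=0$ is the semistable logarithmic $q$-de Rham comparison (which the paper simply cites from \cite[Theorems 7.17 and 8.1]{logprism} rather than re-deriving it from the Koszul-complex computations of \cite[\S 12]{BMS1} and \cite[\S 5]{CK} as you sketch), and the general case is the same induction on graded pieces via Lemma \ref{ease} and Proposition \ref{bbe}\listref{bbe:2} (cf. \cite[Remark 9.11]{BMS2}). One small caution: the identification of $R\Gamma_{A_{\inf}}(\fr X)$ with $L\eta_{\mu}$ of the group-cohomology Koszul complex is not obtained ``exactly as in Lemma \ref{condstep}'' --- that lemma's commutation of $L\eta_t$ with $R\Gamma$ rests on Proposition \ref{3.11}\listref{3.11.2}, which is specific to the rational period sheaves, whereas the integral statement is the harder \cite[Theorem 9.4]{BMS1}/\cite[\S 4]{CK}, already built into the references you invoke.
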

 \begin{proof}
 For $i=0$ the statement follows from \cite[Theorem 8.1, Theorem 7.17]{logprism}. The general case follows by induction on $i\ge 0$, computing the graded pieces of the filtrations, using Lemma \ref{ease} and Proposition \ref{bbe}\listref{bbe:2} (cf. \cite[Remark 9.11]{BMS2}).
 \end{proof}

  The following proposition can be regarded as a refinement of Lemma \ref{BIP}. 

 \begin{prop}[cf. {\cite[Proposition 3.12]{LeBras2}}]\label{refi}
  Let $\mathfrak X$ be a qcqs semistable $p$-adic formal scheme over $\cl O_C$, and let $X=\mathfrak X_C$ denote its generic fiber. 
  Denote by  $$\nu:X_{\pet}\to X_{\ett, \cond} \;\;\;\;\;\;\; \lambda: X_{\ett, \cond }\to \mathfrak X_{\ett, \cond}$$ the natural morphisms of sites, and let $\nu':X_{\pet}\to \mathfrak X_{\ett, \cond}$ be their composition. 
  
  Let $I=[1, r]\subset (0, \infty)$ be an interval with rational endpoints. For any $i\ge 0$, the natural map 
  \begin{equation*}\label{ghuo}
   \Fil_{\cl N}^i L\eta_{\mu}R\nu'_*\Aa_{\inf}\to R\lambda_*\Fil_{\cl N}^i L\eta_{\mu}R\nu_*\Bb_{I}
  \end{equation*}
  induces a quasi-isomorphism\footnote{Here, we use the fact that $\mu$ and $t$ differ by a unit in $B_I$.}
  \begin{equation*}\label{igino}
   \Fil_{\cl N}^iR\Gamma_{A_{\inf}}(\fr X)\dsolid_{A_{\inf}}B_{I} \overset{\sim}{\longrightarrow} \Fil_{\cl N}^i R\Gamma_{B_I}(X).
  \end{equation*}

 \end{prop}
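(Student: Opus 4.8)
The strategy is to reduce the global statement to the local one via $\eh$-hyperdescent, and then to establish the local comparison by the same ``all possible coordinates'' technique used in the proofs of Theorem \ref{firstep} and Theorem \ref{secondstep1}. First I would note that both sides of the claimed quasi-isomorphism carry compatible Frobenius actions, and that, thanks to the uniform boundedness of the relevant complexes (Proposition \ref{bddd}, and its analogue for $A_{\inf}$-cohomology coming from Proposition \ref{bddd} applied to $\Aa_{\inf}$, together with \cite[Lemma 6.4]{BMS1} on $L\eta$), it suffices to construct a functorial morphism and check it is a filtered isomorphism on graded pieces. The source $\Fil_{\cl N}^iR\Gamma_{A_{\inf}}(\fr X)\dsolid_{A_{\inf}}B_I$ and the target $\Fil_{\cl N}^iR\Gamma_{B_I}(X)$ each satisfy $\eh$-hyperdescent (for the source one uses that $\dsolid_{A_{\inf}}B_I$ commutes with finite limits, that $A_{\inf}$-cohomology is defined $\eh$-locally from the canonical log structure, and the nuclearity of the complexes involved so that the derived limit commutes with the solid tensor product, as in Theorem \ref{B=HK}; for the target this is already built into Definition \ref{nyg} via $R\alpha_*$). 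Hence I may assume $\fr X=\Spf(R)$ is a qcqs semistable formal scheme, and then Zariski-locally I may place myself in the setting of Notation \ref{notaz}, provided the constructed map is functorial.

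The local comparison I would prove first: in the setting of Notation \ref{notaz}, combining Lemma \ref{nyainf} with Lemma \ref{BIP}, both $\Fil_{\cl N}^iR\Gamma_{A_{\inf}}(\fr X)\dsolid_{A_{\inf}}B_I$ and $\Fil_{\cl N}^iR\Gamma_{B_I}(X)$ are computed by Koszul-type complexes: the former by $\xi^{\max(i-\bullet,0)}q\text{-}\Omega^\bullet_{A_{\inf}(R)}\dsolid_{A_{\inf}}B_I$ and the latter by $\xi^{\max(i-\bullet,0)}\Omega^\bullet_{B_I(R)}$. Since $q=[\varepsilon]$ and $t=\log[\varepsilon]$ differ in a controlled way (the Taylor expansion $\frac{\gamma_i-1}{t}=\partial_i\cdot h$ of Lemma \ref{primitive}, which gives $\frac{q$-$\partial_i}{q$-$\log X_i}=\partial_i\cdot h'$ for an invertible, Frobenius-equivariant $h'$ with $h'-1$ topologically nilpotent over $B_I$, using $A_{\cris}\subset B_I$), the base-changed $q$-de Rham complex becomes $\varphi$-equivariantly quasi-isomorphic to $\Omega^\bullet_{B_I(R)}$; moreover this quasi-isomorphism is compatible with the $\xi$-adic filtrations $\xi^{\max(i-\bullet,0)}$ on both sides, because $h'$ acts by an automorphism of each term. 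The base-change identification $q\text{-}\Omega^\bullet_{A_{\inf}(R)}\dsolid_{A_{\inf}}B_I\simeq q\text{-}\Omega^\bullet_{B_I(R)}$ follows from Proposition \ref{solid-vs-padic} plus the regular-sequence/flatness facts (\cite[Lemma 3.13]{CK}) that reduce the derived $p$-adic completion to a termwise one, exactly as in Lemma \ref{primitive} and the proof of Theorem \ref{firstep}.

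To upgrade the local comparison to a functorial morphism, I would repeat verbatim the mechanism of \S\ref{functisom}: pass to the ``all possible coordinates'' setting of Notation \ref{notknock}, where the log-crystalline PD-envelope $D_{\Sigma,\Lambda}(R)$ and its $A_{\inf}$-analogue are replaced by coordinate-independent objects; construct the natural map from the $A_{\inf}$-side to the $B_I$-side using the $\Gamma_{\Sigma,\Lambda}$-equivariant map $D_{\Sigma,\Lambda}(R)\to \Aa_{\cris}(R_{\Sigma,\Lambda,\infty})$ of Lemma \ref{tricky} (suitably twisted by the Nygaard/$\xi$-filtration), and check the compatibility square analogous to (\ref{diaprinc}); then take the filtered colimit $\varinjlim_{\Sigma,\Lambda}$, using that $\dsolid_{A_{\inf}}B_I$ and $L\eta$ commute with filtered colimits, and verify functoriality in $\fr X$ by the same reduction to a simpler colimit (over large $\Sigma$ only, with $\Lambda=\emptyset$ or $\Lambda=\{\lambda_0\}$) governed by the relations (\ref{rell1}), (\ref{rell2}) and the commutative triangle (\ref{finallab}). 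The main obstacle I anticipate is bookkeeping the Nygaard filtration through all of this: one must check that the décalage description $\Fil_{\cl N}^i=L(\eta_{\delta_i,\xi}\circ\eta_\mu)$ is genuinely preserved by the coordinate-independence construction and by the Taylor-expansion quasi-isomorphism, which amounts to verifying the filtered analogue of Lemma \ref{ease} (the Frobenius identifies $\Fil_{\cl N}^i$ with the filtration décalée after a $\tilde\xi$-twist) compatibly on both sides simultaneously; once this is in place, checking the isomorphism on graded pieces reduces, by Proposition \ref{bbe}\listref{bbe:2}, to the mod-$\xi$ statement, i.e.\ to the de Rham comparison, which is exactly the $m=1$ case already handled in the proof of Theorem \ref{secondstep1}.
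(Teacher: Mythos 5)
Your core local computation is the right one and matches the paper's, but you take two detours the paper avoids. The paper's proof is short: reduce to $\fr X=\Spf(R)$ as in Notation \ref{notaz}; observe via Lemma \ref{condstep} and (\ref{2255}) that $R\Gamma_{B_I}(X)\simeq \Kos_{B_I(R)}\bigl(\frac{\gamma_1-1}{t},\ldots,\frac{\gamma_d-1}{t}\bigr)$, while Lemma \ref{nyainf} writes $\Fil^i_{\cl N}R\Gamma_{A_{\inf}}(\fr X)$ via the $q$-de Rham complex $\Kos_{A_{\inf}(R)}\bigl(\frac{\gamma_j-1}{q-1}\bigr)$; since $t$ and $\mu=q-1$ differ by a unit in $B_I$, the two Koszul complexes have \emph{the same} differentials after base change, so everything collapses to the base-change assertion $\Kos_{A_{\inf}(R)}(\cdots)\dsolid_{A_{\inf}}B_I\overset{\sim}{\to}\Kos_{B_I(R)}(\cdots)$ (proved as for (\ref{bc1})), followed by induction on $i$ via graded pieces. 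Your first detour is the "all possible coordinates" machinery: it is unnecessary here because, unlike in Theorem \ref{firstep}, the map of the proposition is already defined globally and naturally at the level of sheaves (it is induced by $\Aa_{\inf}\to\Bb_I$), so one only has to check locally that it is a quasi-isomorphism; there is no functoriality to repair. Relatedly, your appeal to $\eh$-hyperdescent for the source is off-target — $A_{\inf}$-cohomology lives on $\fr X_{\ett}$ and the reduction is simply Zariski/étale-local on $\fr X$. Your second detour is comparing through the $\partial_j$-descriptions (Lemma \ref{BIP} plus a Taylor-expansion automorphism $h'$): this works, but it forces you to verify that the composite of two separate identifications (yours via $h'$ and the one implicit in Lemma \ref{nyainf} from \cite{logprism}) agrees with the natural map — a compatibility you flag but do not fully discharge. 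Staying with the $\gamma_j-1$ operators on both sides, as the paper does, makes that compatibility automatic.
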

 \begin{proof}
  We can reduce to the case $\mathfrak X=\Spf(R)$ is a semistable $p$-adic formal scheme over $\cl O_C$ as in Notation \ref{notaz}. The statement for $i=0$ is essentially contained in Lemma \ref{primitive}. In fact, combining Lemma \ref{condstep} and the quasi-isomorphism (\ref{2255}) in the proof of Lemma \ref{primitive}, we have a quasi-isomorphism
  $$R\Gamma_{B_I}(X)\simeq \Kos_{B_I(R)}\left(\frac{\gamma_1-1}{t}, \ldots, \frac{\gamma_d-1}{t}\right).$$
  Therefore, using Lemma \ref{nyainf} for $i=0$, and recalling that $t$ and $\mu=q-1$ differ by a unit in $B_I$, it suffices to check that the natural map
  $$\Kos_{A_{\inf}(R)}\left(\frac{\gamma_1-1}{q-1}, \ldots, \frac{\gamma_d-1}{q-1}\right)\dsolid_{A_{\inf}}B_I\to \Kos_{B_I(R)}\left(\frac{\gamma_1-1}{q-1}, \ldots, \frac{\gamma_d-1}{q-1}\right)$$
  is a quasi-isomorphism;  this can be done as in the proof of (\ref{bc1}) in Lemma \ref{primitive}. Finally, the statement in general follows arguing by induction on $i\ge 0$, calculating again the graded pieces of the filtrations.
 \end{proof}

 %\begin{rem}[About Conjecture \ref{conjfin}]\label{abtconj}
 % Using Proposition \ref{refi}, we can check the compatibility between part \listref{conjfin:1} and \listref{conjfin:2} of Conjecture \ref{conjfin}: 
 %\end{rem}

 We can finally state and prove the main result of this subsection, which in particular tells us how the syntomic Fargues--Fontaine cohomology can be locally recovered from the $A_{\inf}$-cohomology together with its Nygaard filtration.

 \begin{prop}\label{crucialsyn} Let $X$ be a rigid-analytic variety over $C$. Denote $I=[1, p]$ and $I'=[1, 1]$. Let $i\ge 0$ be an integer. 
 \begin{enumerate}[(i)]
  \item\label{crucialsyn:1} We have a natural isomorphism in $D(\Vect_{\Qq_p}^{\cond})$
  $$R\Gamma_{\syn, \FF}(X, \Qq_p(i))\simeq \fib(\Fil_{\cl N}^iR\Gamma_{B_I}(X)\xrightarrow{\varphi p^{-i}-1} R\Gamma_{B_{I'}}(X)).$$
  \item\label{crucialsyn:2} Assume that $X$ is the generic fiber of  a qcqs semistable $p$-adic formal scheme $\mathfrak X$ over $\cl O_C$. Then, we have a natural isomorphism in $D(\Vect_{\Qq_p}^{\cond})$
  $$R\Gamma_{\syn, \FF}(X, \Qq_p(i))\simeq \fib(\Fil_{\cl N}^iR\Gamma_{A_{\inf}}(\fr X)\{i\}\dsolid_{A_{\inf}}B_{I}\xrightarrow{\varphi\{i\}-1} R\Gamma_{A_{\inf}}(\fr X)\{i\}\dsolid_{A_{\inf}}B_{I'}).$$
  In particular, there is a natural morphism
  $$R\Gamma_{\syn, \BMS}(\fr X, \Zz_p(i))\longrightarrow R\Gamma_{\syn, \FF}(X, \Qq_p(i)).$$
 \end{enumerate}
 \end{prop}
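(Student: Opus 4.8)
The plan is to reduce both parts to the local computations already assembled in this subsection, using the presentation of $\FF$ as the quotient of $Y_{\FF, [1,p]}$ by the identification $\varphi\colon Y_{\FF, [1,1]}\cong Y_{\FF, [p,p]}$. For part \listref{crucialsyn:1}, I would first recall from Proposition \ref{perff}\listref{perff:2} (applied to the coadmissible nuclear $\varphi$-module of Lemma \ref{ko}, twisted by $\cl O(i)$) together with Theorem \ref{synlift} that $R\Gamma_{\syn,\FF}(X,\Qq_p(i))$ is the fiber of $\varphi p^{-i}-1$ acting on $R\lim_{J}\Fil^i R\Gamma_{B_J}(X)$, which by Lemma \ref{dinverse} equals $\Fil^i R\Gamma_B(X)$. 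The point is then that, since $\varphi$ implements the gluing $Y_{\FF,[1,1]}\cong Y_{\FF,[p,p]}$, computing this fiber only requires the sections over the single interval $I=[1,p]$: the map $\varphi p^{-i}-1$ factors as $\Fil^i R\Gamma_{B_I}(X)\to R\Gamma_{B_{I'}}(X)$ with $I'=[1,1]$, where one uses the restriction $\Fil^i R\Gamma_{B_I}(X)\to \Fil^i R\Gamma_{B_{I'}}(X)=R\Gamma_{B_{I'}}(X)$ (the Nygaard filtration on $B_{I'}$ coincides with the décalée filtration, and $\Fil^i$ is eventually everything after inverting $t$, but more precisely one identifies $\Fil^i$ of the décalée and the Nygaard filtrations on $B_I$-cohomology via Lemma \ref{BIP}, so that $\Fil_{\cl N}^i R\Gamma_{B_I}(X)$ is the correct source). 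I would therefore replace $\Fil^i R\Gamma_B(X)$ by $\Fil_{\cl N}^i R\Gamma_{B_I}(X)$ as source and $R\Gamma_{B_{I'}}(X)$ as target, checking compatibility of the Frobenius with the two filtrations via Lemma \ref{ease} and Lemma \ref{BIP} (locally) and $\eh$-hyperdescent (globally).

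For part \listref{crucialsyn:2}, assuming $X=\fr X_C$ with $\fr X$ qcqs semistable, I would apply Proposition \ref{refi}, which gives a natural $\varphi$-equivariant filtered identification $\Fil_{\cl N}^i R\Gamma_{A_{\inf}}(\fr X)\dsolid_{A_{\inf}}B_I\simeq \Fil_{\cl N}^i R\Gamma_{B_I}(X)$, and its $i=0$ analogue over $I'$. Tensoring with the Breuil--Kisin--Fargues twist $A_{\inf}\{i\}$ — which becomes trivial over $B_I$ and $B_{I'}$ since $B_I\{1\}\cong B_I$ canonically, $\mu$ and $t$ being units, so that $\varphi\{i\}$ on the $A_{\inf}\{i\}$-factor contributes exactly a factor $p^{-i}$, matching $\varphi p^{-i}$ — I would rewrite the fiber from part \listref{crucialsyn:1} as the fiber of $\varphi\{i\}-1$ on $\Fil_{\cl N}^i R\Gamma_{A_{\inf}}(\fr X)\{i\}\dsolid_{A_{\inf}}B_I \to R\Gamma_{A_{\inf}}(\fr X)\{i\}\dsolid_{A_{\inf}}B_{I'}$. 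The final "in particular" then follows by functoriality of $\dsolid_{A_{\inf}}(-)$: the natural maps $A_{\inf}\to B_I$ and $A_{\inf}\to B_{I'}$ induce a map from the defining fiber sequence of $R\Gamma_{\syn,\BMS}(\fr X,\Zz_p(i))$ in Definition \ref{BMSsyn} to the one just exhibited, hence a morphism $R\Gamma_{\syn,\BMS}(\fr X,\Zz_p(i))\to R\Gamma_{\syn,\FF}(X,\Qq_p(i))$.

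The main obstacle I anticipate is the careful bookkeeping of filtrations and Frobenius twists: one must verify that the Nygaard filtration $\Fil_{\cl N}^i$ on $R\Gamma_{B_I}(X)$ (Definition \ref{nyg}) is the right thing to put in the source of the syntomic fiber — i.e. that the natural map $\Fil^i_{\cl N}R\Gamma_{B_I}(X)\to R\Gamma_{B_{I'}}(X)$ and the map $\varphi p^{-i}-1$ on $\Fil^i R\Gamma_B(X)$ have the same fiber — and that the Breuil--Kisin--Fargues twist really does evaporate compatibly over both $I$ and $I'$ with the correct normalization of $\varphi$. Concretely this reduces, via Lemma \ref{condstep}, Lemma \ref{primitive}, Lemma \ref{BIP}, and Lemma \ref{nyainf}, to a comparison of Koszul complexes of (logarithmic) $q$-de Rham type against décalée Koszul complexes over $B_I(R)$ and $B_{I'}(R)$, where the identity $\varphi(\mu)=\tilde\xi\cdot\mu$ and the fact that $\tilde\xi$ is a unit in $B_{I'}$ (while $\xi$ is a unit in $B_I$ away from $\infty$) control everything; once the $i=0$ case is pinned down, the general case is a routine induction on $i$ computing graded pieces via Proposition \ref{bbe}\listref{bbe:2}, exactly as in the proofs of Lemma \ref{BIP} and Lemma \ref{nyainf}. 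Globalization from the local (coordinatized, semistable) case to arbitrary $X$ in part \listref{crucialsyn:1} is then immediate by $\eh$-hyperdescent of all the functors involved, using Proposition \ref{propobv}.
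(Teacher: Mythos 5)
Your overall shape is right — reduce to the presentation of $\FF$ as $Y_{\FF,[1,p]}$ glued along $\varphi\colon Y_{\FF,[1,1]}\cong Y_{\FF,[p,p]}$, then use Proposition \ref{refi} and a Breuil--Kisin--Fargues untwisting for part \listref{crucialsyn:2} — but the pivot of your part \listref{crucialsyn:1} argument is faulty. You claim that "one identifies $\Fil^i$ of the décalée and the Nygaard filtrations on $B_I$-cohomology via Lemma \ref{BIP}" and that $\Fil^i R\Gamma_{B_{I'}}(X)=R\Gamma_{B_{I'}}(X)$. Neither is true: the filtration décalée on $R\Gamma_{B_I}(X)$ is built from $\eta_{\varepsilon_i,t}$ while the Nygaard filtration is $\eta_{\delta_i,\xi}\circ\eta_t$, and $t$ and $\xi$ generate genuinely different ideals in $B_I$ for $I=[1,p]$ ($t$ also vanishes at $\varphi(\infty)\in Y_{\FF,[p,p]}$, where $\xi$ is a unit); likewise both $t$ and $\xi$ vanish at $\infty\in Y_{\FF,[1,1]}$, so neither filtration is exhaustive at level $i$ on $R\Gamma_{B_{I'}}(X)$. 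What descent along the gluing actually gives you is $\fib(\Fil^i R\Gamma_{B_I}(X)\xrightarrow{\varphi p^{-i}-1}\Fil^i R\Gamma_{B_{I'}}(X))$ with the décalée filtration on both sides, which is not literally the asserted fiber; passing from that to $\fib(\Fil^i_{\cl N}R\Gamma_{B_I}(X)\to R\Gamma_{B_{I'}}(X))$ is precisely the content you still owe.

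The paper closes this gap differently: it starts from Theorem \ref{BK=pet}\listref{BK=pet:2}, which rewrites $R\Gamma_{\syn,\FF}(X,\Qq_p(i))$ as $\fib(R\Gamma_B(X)^{\varphi=p^i}\to R\Gamma_{B_{\dR}^+}(X)/\Fil^i)$, then (a) uses the isomorphism $R\Gamma_{B_I}(X)/\Fil^i_{\cl N}\overset{\sim}{\to}R\Gamma_{B_{\dR}^+}(X)/\Fil^i$ coming from (\ref{axx}) in the proof of Lemma \ref{BIP} — this, not any identification with the décalée filtration, is why the Nygaard filtration is the correct source — and reshuffles fibers to get $\fib(\Fil^i_{\cl N}R\Gamma_{B_I}(X)\to R\Gamma_{B_{I'}}(X))\simeq\fib(R\Gamma_{B_I}(X)^{\varphi=p^i}\to R\Gamma_{B_{\dR}^+}(X)/\Fil^i)$, and (b) shows $R\Gamma_B(X)^{\varphi=p^i}\simeq R\Gamma_{B_I}(X)^{\varphi=p^i}$ via Proposition \ref{perff} and the $[1,p]$-presentation of the curve (applied to the unfiltered object $\cl H_{\FF}(X)\otimes\cl O(i)$, where no filtration mismatch arises). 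You should also fix the normalization in part \listref{crucialsyn:2}: the Frobenius on $A_{\inf}\{i\}$ contributes $\tilde\xi^{-i}$, not $p^{-i}$, and the reconciliation is done by writing $\mu=ut$ with $u$ a unit in $B_I$ and conjugating by $u^i$. Your derivation of the "in particular" by base change along $A_{\inf}\to B_I$ and $A_{\inf}\to B_{I'}$ is fine.
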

 
 \begin{proof}
  For part \listref{crucialsyn:1}, using the isomorphism
  $$R\Gamma_{B_{I}}(X)/\Fil_{\cl N}^i\overset{\sim}{\to} R\Gamma_{B_{\dR}^+}(X)/\Fil^i$$
  coming from (\ref{axx}), we have a natural isomorphism
  \begin{equation}\label{combob}
    \fib(\Fil_{\cl N}^iR\Gamma_{B_I}(X)\xrightarrow{\varphi p^{-i}-1} R\Gamma_{B_{I'}}(X))\simeq \fib(R\Gamma_{B_I}(X)^{\varphi=p^i}\to R\Gamma_{B_{\dR}^+}(X)/\Fil^i)
  \end{equation}
   where
  $$R\Gamma_{B_I}(X)^{\varphi=p^i}:=\fib(R\Gamma_{B_I}(X)\xrightarrow{\varphi p^{-i}-1} R\Gamma_{B_{I'}}(X)).$$
  Then, combining (\ref{combob}) with Theorem \ref{BK=pet}\listref{BK=pet:2}, it remains to show that the natural map
  \begin{equation}\label{tramm}
   R\Gamma_{B}(X)^{\varphi=p^i}\to R\Gamma_{B_I}(X)^{\varphi=p^i}
  \end{equation}
  is an isomorphism. For this, in the notation of \S \ref{FFperf}, we observe that by Proposition \ref{perff} the source of (\ref{tramm}) is computed by the cohomology of $\cl H_{\FF}(X)\otimes \cl O(i)\in \Nuc(\FF)$. But the latter cohomology also computes the target of (\ref{tramm}), using the presentation of the curve $\FF$ as the quotient of $Y_{\FF, [1, p]}$ via the identification $\varphi: Y_{\FF, [1, 1]}\cong Y_{\FF, [p, p]}$. This concludes the proof of part \listref{crucialsyn:1}.
    
 For part \listref{crucialsyn:2}, by part \listref{crucialsyn:1} and Proposition \ref{refi} we have a natural isomorphism in $D(\Vect_{\Qq_p}^{\cond})$
 \begin{equation}\label{misfib}
  R\Gamma_{\syn, \FF}(X, \Qq_p(i))\simeq \fib(\Fil_{\cl N}^iR\Gamma_{A_{\inf}}(\fr X)\dsolid_{A_{\inf}}B_{I}\xrightarrow{\varphi p^{-i}-1} R\Gamma_{A_{\inf}}(\fr X)\dsolid_{A_{\inf}}B_{I'}).
 \end{equation}
 On the other hand,  trivializing the Breuil--Kisin--Fargues twists we can rewrite the fiber in the statement of part \listref{crucialsyn:2} as
 \begin{equation}\label{misfib2}
  \fib(\Fil_{\cl N}^iR\Gamma_{A_{\inf}}(\fr X)\dsolid_{A_{\inf}}B_{I}\xrightarrow{\varphi \tilde{\xi}^{-i}-1} R\Gamma_{A_{\inf}}(\fr X)\dsolid_{A_{\inf}}B_{I'})
 \end{equation}
 where $\tilde \xi:=\varphi(\xi)$. We conclude observing that, writing $\mu=ut$ with $u$ unit in $B_I$, the multiplication by $u^i$ map induces an isomorphism between the fiber in (\ref{misfib2}) and the fiber in (\ref{misfib}).
 
  \end{proof}

 \subsection{Comparison with \cite{CN}} In this subsection, we show that, in high degrees, the syntomic Fargues--Fontaine cohomology does not agree with the syntomic cohomology for smooth rigid-analytic varieties over $C$ defined by Colmez--Nizioł.
  
  \begin{notation}
   Let $X$ be a smooth rigid-analytic variety over $C$.  Let $i\ge 0$ be an integer. We denote by $R\Gamma_{\syn, \CN}(X, \Qq_p(i))$ the \textit{syntomic cohomology of $X$ with coefficients in $\Qq_p(i)$ of Colmez--Nizioł}, defined in \cite[\S 4.1]{CN}.
  \end{notation}

 \begin{example}\label{FFvsCN}
  Let $X=\Pp^1_{C}$ be the rigid-analytic projective line over $C$. We claim that
  \begin{equation}\label{dd}
   H^3_{\syn, \FF}(X, \Qq_p(0))\cong C/\Qq_p,\;\;\;\;\;\; H^3_{\syn, \CN}(X, \Qq_p(0))\cong 0.
  \end{equation}
  For this, applying Theorem \ref{BK=pet}\listref{BK=pet:1}, combined with Theorem \ref{B=HK}, for the syntomic Fargues--Fontaine cohomology, and \cite[Corollary 5.5]{CN4} for the syntomic cohomology of Colmez--Nizioł, we obtain respectively
  $$H^3_{\syn, \FF}(X, \Qq_p(0))\cong (H_{\HK}^2(X)\otimes_{\breve F} B)/(\varphi-1),\;\;\;\;\;\; H^3_{\syn, \CN}(X, \Qq_p(0))\cong (H_{\HK}^2(X)\otimes_{\breve F} B_{\cris}^+)/(\varphi-1)$$
  where we used that $H_{\HK}^3(X)\cong 0$. Now, the map $\varphi-1$ is surjective on $H_{\HK}^2(X)\otimes_{\breve F} B_{\cris}^+$ (see e.g. \cite[Remark 2.30]{CN1}). Instead, observing that $H_{\HK}^2(X)$ is a one-dimensional $\varphi$-module over $\breve F$ with slope 1 (by Theorem \ref{mainHKover}\listref{mainHKover:1}, and \cite[Théorème 3.1.2]{CLS}\footnote{In this case, in order to deduce that $H_{\HK}^2(X)$ has slope 1, it is sufficient to use the weak Lefschetz theorem for crystalline cohomology, \cite{BerthLef}.}), we deduce that the vector bundle on the Fargues--Fontaine curve $\FF$ associated to $H_{\HK}^2(X)$ is isomorphic to $\cl O(-1)$; hence, we can identify $(H_{\HK}^2(X)\otimes_{\breve F} B)/(\varphi-1)$ with $H^1(\FF, \cl O(-1))$, thus showing (\ref{dd}).
 \end{example}

 \section{\textbf{Applications}}\label{applleit}
  \sectionmark{}

 In this section, we gather the results we have obtained so far, giving some applications.
 
  \subsection{Fundamental diagrams of rational $p$-adic Hodge theory}
 
 \begin{theorem}\label{fund1}
 Let $X$ be a qcqs rigid-analytic variety defined over $K$. We have a $\mathscr{G}_K$-equivariant pullback square in $D(\Vect_{\Qq_p}^{\ssolid})$
 \begin{center}
  \begin{tikzcd}
  R\Gamma_{\pet}(X_C, \Qq_p) \arrow[r] \arrow[d] & (R\Gamma_{\HK}(X_C)\dsolid_{\breve F}B_{\log}[1/t])^{N=0, \varphi=1} \arrow[d]  \\
  \Fil^0(R\Gamma_{\dR}(X)\dsolid_K B_{\dR}) \arrow[r] & R\Gamma_{\dR}(X)\dsolid_K B_{\dR}.
  \end{tikzcd}
 \end{center}
 \end{theorem}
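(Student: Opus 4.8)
The plan is to deduce Theorem \ref{fund1} by assembling the relative fundamental exact sequences of $p$-adic Hodge theory (Corollary \ref{profundexact}) with the comparison theorems already established in the paper. First I would work on $X_C$ over the $v$-site (or the $\eh$-site, using $v$-descent as in Proposition \ref{compv}) and apply the décalage functor $L\eta_t$ to the fundamental exact sequence (\ref{fundexact}), namely $0\to \underline{\Qq_p}\to \Bb_e\to \Bb_{\dR}/\Bb_{\dR}^+\to 0$, together with (\ref{fundexacttard}), $0\to \Bb_e\to \Bb[1/t]\overset{\varphi-1}{\to}\Bb[1/t]\to 0$. Combining these, one gets that $R\Gamma_{\pet}(X_C, \Qq_p)$ is the fiber of the map $R\Gamma_B(X_C)[1/t]^{\varphi=1}\to R\Gamma_{B_{\dR}^+}(X_C)[1/t]/R\Gamma_{B_{\dR}^+}(X_C)$, i.e. it fits in a pullback square
\begin{center}
\begin{tikzcd}
R\Gamma_{\pet}(X_C, \Qq_p) \arrow[r]\arrow[d] & R\Gamma_B(X_C)[1/t]^{\varphi=1} \arrow[d] \\
R\Gamma_{B_{\dR}^+}(X_C) \arrow[r] & R\Gamma_{B_{\dR}^+}(X_C)[1/t].
\end{tikzcd}
\end{center}
Here I need to be slightly careful: after applying $L\eta_t$, the bottom-right corner should be identified with $R\Gamma_{B_{\dR}^+}(X_C)\dsolid_{B_{\dR}^+}B_{\dR}=R\Gamma_{B_{\dR}^+}(X_C)[1/t]$, using that inverting $t$ is exact and commutes with everything; the décalage does not see $B_{\dR}$ since $L\eta_t$ becomes the identity after inverting $t$. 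This step is essentially the argument of \cite{Bosco} in the smooth case, now available in the singular case thanks to the $\eh$-descent and the boundedness results (Proposition \ref{bddd}, Corollary \ref{dRbound}).

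Next I would substitute the identifications provided by Theorem \ref{B=HK} and Theorem \ref{B_dR=dR} (with their compatibility, Theorem \ref{compatib}). By Theorem \ref{B=HK}, $R\Gamma_B(X_C)\simeq (R\Gamma_{\HK}(X_C)\dsolid_{\breve F}B_{\log})^{N=0}$ compatibly with $\varphi$; inverting $t$ and taking $\varphi=1$ gives the top-right corner $(R\Gamma_{\HK}(X_C)\dsolid_{\breve F}B_{\log}[1/t])^{N=0,\varphi=1}$, where I use that $(-)^{N=0}$ and $(-)^{\varphi=1}$ commute (both are limits) and that $B_{\log}[1/t]^{N=0}=B[1/t]$. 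By Theorem \ref{B_dR=dR}, since $X$ is defined over $K$, $R\Gamma_{B_{\dR}^+}(X_C)\simeq R\Gamma_{\dR}(X)\dsolid_K B_{\dR}^+$ compatibly with filtrations and $\mathscr{G}_K$, so the bottom-left corner becomes $\Fil^0(R\Gamma_{\dR}(X)\dsolid_K B_{\dR}^+)$; but note that $\Fil^0$ of the décalée filtration on $L\eta_t R\alpha_* \Bb_{\dR}^+$ is exactly $L\eta_t R\alpha_*\Bb_{\dR}^+$ itself (the filtration décalée starts at level $0$ with the whole complex, so $\Fil^0 R\Gamma_{B_{\dR}^+}(X_C)=R\Gamma_{B_{\dR}^+}(X_C)$), and after inverting $t$ the natural map $R\Gamma_{B_{\dR}^+}(X_C)\to R\Gamma_{B_{\dR}^+}(X_C)[1/t]$ identifies, under Theorem \ref{B_dR=dR} and the identification $\Fil^0(R\Gamma_{\dR}(X)\dsolid_K B_{\dR})=R\Gamma_{\dR}(X)\dsolid_K B_{\dR}^+$ (which holds since the Hodge filtration on $R\Gamma_{\dR}(X)$ is finite, Proposition \ref{bound0}), with the inclusion $\Fil^0(R\Gamma_{\dR}(X)\dsolid_K B_{\dR})\hookrightarrow R\Gamma_{\dR}(X)\dsolid_K B_{\dR}$. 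Substituting all four corners and using that a pullback square is unchanged under replacing corners by equivalent objects with compatible maps, I obtain the claimed diagram.

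The main obstacle, as I see it, is bookkeeping the filtrations and the $t$-inversions correctly: one must verify that the map $R\Gamma_{B_{\dR}^+}(X_C)\to R\Gamma_{B_{\dR}^+}(X_C)[1/t]$ produced from the décalage of the fundamental sequence matches, under Theorem \ref{B_dR=dR} and Theorem \ref{compatib}, the tautological inclusion $\Fil^0(R\Gamma_{\dR}(X)\dsolid_K B_{\dR})\hookrightarrow R\Gamma_{\dR}(X)\dsolid_K B_{\dR}$ — and similarly that the map from the top-right corner to the bottom-right corner is the one induced by $\Bb_{\log}\hookrightarrow \Bb_{\dR}^+$ composed with $B_{\dR}^+\hookrightarrow B_{\dR}$. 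This is precisely where the compatibility assertion Theorem \ref{compatib} (equivalently Theorem \ref{compatib2}) is used, so once that is invoked the identification of maps is formal. A secondary point is to justify reducing the general qcqs case to a statement proven via $\eh$-hyperdescent from the semistable local case: since all functors involved ($R\Gamma_{\pet}$, $R\Gamma_{\HK}(-)\dsolid_{\breve F}B_{\log}[1/t]$, $R\Gamma_{\dR}(-)\dsolid_K B_{\dR}$) satisfy $\eh$-hyperdescent and, by the boundedness results cited above, are uniformly bounded on the members of a finite hypercover, the derived limit defining the hyperdescent is well-behaved and commutes with the formation of fibers and of $(-)^{\varphi=1}$, $(-)^{N=0}$, so the pullback square globalizes. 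I would also remark that $\mathscr{G}_K$-equivariance is inherited at each step because every identification used (Corollary \ref{profundexact}, Theorem \ref{B=HK}, Theorem \ref{B_dR=dR}, Theorem \ref{compatib}) is stated to be $\mathscr{G}_K$-equivariant when $X$ descends to $K$.
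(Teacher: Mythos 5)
Your overall strategy is the same as the paper's: view the fundamental exact sequence as a pullback square of pro-étale sheaves, take cohomology, and substitute the comparison theorems. But there is a genuine error in your identification of the bottom-left corner. The square you get from Corollary \ref{profundexact} has $R\Gamma_{\pet}(X_C, \Bb_{\dR}^+)$ — the \emph{un-décalaged} pro-étale cohomology of $\Bb_{\dR}^+$ — in that corner, not $R\Gamma_{B_{\dR}^+}(X_C)=R\Gamma(X_C, L\eta_t R\alpha_*\Bb_{\dR}^+)$. These are different complexes: by Theorem \ref{tosingg} the former is $\Fil^0(R\Gamma_{\dR}(X)\dsolid_K B_{\dR})$, while by Theorem \ref{B_dR=dR} the latter is $R\Gamma_{\dR}(X)\dsolid_K B_{\dR}^+$. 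Your intermediate square with $R\Gamma_{B_{\dR}^+}(X_C)$ in the bottom-left is therefore not a pullback: the cofiber of $R\Gamma_{B_{\dR}^+}(X_C)\to R\Gamma_{B_{\dR}^+}(X_C)[1/t]$ is not $R\Gamma_{\pet}(X_C,\Bb_{\dR}/\Bb_{\dR}^+)$, so the fibers of the two horizontal maps do not match. Relatedly, "applying $L\eta_t$ to the fundamental exact sequence" is not a legitimate move, since $L\eta_t$ does not preserve fiber sequences; the only place décalage legitimately enters is through the identification $R\Gamma_B(X_C)[1/t]\simeq R\Gamma_{\pet}(X_C,\Bb[1/t])$ for qcqs $X$, where $L\eta_t$ disappears after inverting $t$.

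The identity you invoke to patch this up, $\Fil^0(R\Gamma_{\dR}(X)\dsolid_K B_{\dR})=R\Gamma_{\dR}(X)\dsolid_K B_{\dR}^+$, is false whenever the Hodge filtration is non-trivial: $\Fil^0$ of the tensor-product filtration is $\sum_{i\ge 0}\Fil^i R\Gamma_{\dR}(X)\dsolid_K t^{-i}B_{\dR}^+$, which strictly contains $R\Gamma_{\dR}(X)\dsolid_K B_{\dR}^+$ as soon as $\Fil^1\neq 0$ (this is exactly the distinction that makes the classical formula $H^i_{\ett}=(\cdots)^{N=0,\varphi=1}\cap\Fil^0(H^i_{\dR}\otimes_K B_{\dR})$ nontrivial). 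Finiteness of the Hodge filtration (Proposition \ref{bound0}) makes the sum finite but does not collapse it. The fix is simply to keep $R\Gamma_{\pet}(X_C,\Bb_{\dR}^+)$ in the bottom-left corner and identify it via Theorem \ref{tosingg} (the singular extension of \cite[Theorem 6.5]{Bosco}) with $\Fil^0(R\Gamma_{\dR}(X)\dsolid_K B_{\dR})$, which is what the paper does; Theorem \ref{B=HK} and the compatibility of Theorem \ref{compatib} then handle the right-hand column exactly as you describe.
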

 
 \begin{proof}
  First, we note that we can rewrite the fundamental exact sequence (\ref{fundexact}) of $p$-adic Hodge theory on the pro-étale site $X_{C, \pet}$ as a pullback square
  \begin{center}
  \begin{tikzcd}
  \Qq_p \arrow[r] \arrow[d] & \Bb[1/t]^{\varphi=1} \arrow[d]  \\
  \Bb_{\dR}^+ \arrow[r] & \Bb_{\dR}.
  \end{tikzcd}
 \end{center}
  Then, the statement follows combining  Theorem \ref{B=HK}, \cite[Theorem 6.5]{Bosco} (together with Theorem \ref{tosingg} for the singular case), as well as the compatibility proven in Theorem \ref{compatib}. In fact, by Theorem \ref{B=HK}, using that $X$ is qcqs, we have
  $$R\Gamma_{\pet}(X, \Bb[1/t]^{\varphi=1})\simeq R\Gamma_{B}(X)[1/t]^{\varphi=1}\simeq (R\Gamma_{\HK}(X_C)\dsolid_{\breve F}B_{\log}[1/t])^{N=0, \varphi=1}$$
  where in the last step we used that $\dsolid_{\breve F}$ commutes with filtered colimits.
 \end{proof}

 We invite the reader to compare the following result with \cite{CN4}.

 \begin{theorem}\label{tog}
  Let $X$ be a connected, paracompact, rigid-analytic variety defined over $K$. For any $i\ge 0$, we have a  $\mathscr{G}_K$-equivariant isomorphism in $D(\Vect_{\Qq_p}^{\ssolid})$
  $$\tau^{\le i}R\Gamma_{\pet}(X_C, \Qq_p(i))\simeq \tau^{\le i}\fib((R\Gamma_{\HK}(X_C)\dsolid_{\breve F}B_{\log})^{N=0, \varphi=p^i}\to (R\Gamma_{\dR}(X)\dsolid_K B_{\dR}^+)/\Fil^i).$$
 \end{theorem}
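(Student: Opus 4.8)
The statement is a truncated combination of the two key inputs already at our disposal: Theorem~\ref{BK=pet} (relating the syntomic Fargues--Fontaine cohomology to the pro-\'etale cohomology in degrees $\le i$) and Theorem~\ref{corcor} (expressing that same truncated syntomic cohomology via Hyodo--Kato and de Rham data). So the plan is essentially to reconcile the right-hand side of Theorem~\ref{corcor} with the one written here, the only difference being that Theorem~\ref{corcor} uses $B_{\dR}^+$ whereas the statement above uses the bare $B_{\dR}^+/\Fil^i$ quotient of $R\Gamma_{\dR}(X)\dsolid_K B_{\dR}^+$. First I would invoke Theorem~\ref{BK=pet}(i) to replace $\tau^{\le i}R\Gamma_{\pet}(X_C,\Qq_p(i))$ by $\tau^{\le i}R\Gamma_{\syn,\FF}(X_C,\Qq_p(i))$, and then Theorem~\ref{BK=pet}(ii) to write the latter as $\tau^{\le i}\fib(R\Gamma_B(X_C)^{\varphi=p^i}\to R\Gamma_{B_{\dR}^+}(X_C)/\Fil^i)$.

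Next I would feed in the main comparison theorems. By Theorem~\ref{B=HK} we have $R\Gamma_B(X_C)\simeq (R\Gamma_{\HK}(X_C)\dsolid_{\breve F}B_{\log})^{N=0}$ compatibly with $\varphi$, so $R\Gamma_B(X_C)^{\varphi=p^i}\simeq (R\Gamma_{\HK}(X_C)\dsolid_{\breve F}B_{\log})^{N=0,\varphi=p^i}$; here one should note that this identification is $\mathscr{G}_K$-equivariant since $X$ is defined over $K$, and that the Frobenius-invariants are taken after inverting nothing (the condition $\varphi=p^i$ on $B$ already lands inside $B_{\log}$, not $B_{\log}[1/t]$). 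On the other side, by Theorem~\ref{secondstep1} together with Theorem~\ref{B_dR=dR} (and the compatibility with filtrations recorded there, using that the Hodge filtration is finite by Proposition~\ref{bound0} since $X$ is finite-dimensional), we get $R\Gamma_{B_{\dR}^+}(X_C)/\Fil^i\simeq (R\Gamma_{\dR}(X)\dsolid_K B_{\dR}^+)/\Fil^i$ compatibly with $\mathscr{G}_K$ and with the map from $R\Gamma_B(X_C)^{\varphi=p^i}$, the latter compatibility being exactly the content of Theorem~\ref{compatib}(ii) (the square relating the $B$-cohomology and $B_{\dR}^+$-cohomology comparisons). Substituting both into the fiber sequence and then applying $\tau^{\le i}$ yields the asserted isomorphism.

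The one genuinely non-formal point is that truncation $\tau^{\le i}$ does not commute with $\fib$ in general, so I cannot simply truncate each term of the fiber sequence. The way around this is the same as in the proof of Theorem~\ref{BK=pet}(i): the comparison map $R\Gamma_{\syn,\FF}(X_C,\Qq_p(i))\to R\Gamma_{\pet}(X_C,\Qq_p(i))$ is an isomorphism on $\tau^{\le i}$ \emph{because} the map $\Fil^i R\Gamma_B(X_C)\to R\Gamma_v(X_C,\Fil^i\Bb)$ is an isomorphism on $\tau^{\le i}$ (this uses Proposition~\ref{bbe}(ii) and the exact sequence \eqref{suite4}), and then Theorem~\ref{BK=pet}(ii) identifies $R\Gamma_{\syn,\FF}(X_C,\Qq_p(i))$ \emph{on the nose} (not just after truncation) with $\fib(R\Gamma_B(X_C)^{\varphi=p^i}\to R\Gamma_{B_{\dR}^+}(X_C)/\Fil^i)$. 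So I would carry out the substitution of Theorems~\ref{B=HK}, \ref{secondstep1}, \ref{B_dR=dR}, \ref{compatib} at the level of this untruncated fiber, obtaining a $\mathscr{G}_K$-equivariant isomorphism
\begin{equation*}
 R\Gamma_{\syn,\FF}(X_C,\Qq_p(i))\simeq \fib\bigl((R\Gamma_{\HK}(X_C)\dsolid_{\breve F}B_{\log})^{N=0,\varphi=p^i}\to (R\Gamma_{\dR}(X)\dsolid_K B_{\dR}^+)/\Fil^i\bigr)
\end{equation*}
and only then apply $\tau^{\le i}$ to both sides, using Theorem~\ref{BK=pet}(i) on the left. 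This is really just a bookkeeping reassembly of Theorem~\ref{corcor} with the $B_{\dR}^+/\Fil^i$ replaced by the quotient of the de Rham side, so once the compatibilities are lined up there is nothing left to prove; the proof should be short, citing Theorems~\ref{BK=pet}, \ref{B=HK}, \ref{B_dR=dR}, \ref{secondstep1}, and \ref{compatib}.
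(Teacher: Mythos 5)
Your proposal is correct and follows essentially the same route as the paper, whose proof is precisely the combination of Theorems \ref{BK=pet}, \ref{B=HK}, \ref{B_dR=dR} and \ref{compatib} (your extra care with performing the substitutions at the level of the untruncated fiber from Theorem \ref{BK=pet}(ii) before applying $\tau^{\le i}$ is the right way to make the one-line citation rigorous). Note only that Theorem \ref{corcor} is just the introduction's restatement of the theorem being proved, so it cannot serve as an input — but your actual argument never uses it, so this is a harmless framing slip.
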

 \begin{proof}
  This follows combining  Theorem \ref{BK=pet}, Theorem \ref{B=HK}, Theorem \ref{B_dR=dR} and Theorem \ref{compatib}.
 \end{proof}

 %\begin{rem}
 % The results above also hold replacing ``rigid-analytic variety" with ''dagger variety".
 %\end{rem}

 In some special cases, we can explicitly compute the cohomology groups of the de Rham contribution in the fiber sequence of Theorem \ref{tog}.
 
 \begin{prop}\label{expic}
   Let $X$ be a rigid-analytic variety over $K$. Let $i\ge 0$,  and denote
  $$\dR(X, i):=(R\Gamma_{\dR}(X)\dsolid_K B_{\dR}^+)/\Fil^i.$$
  \begin{enumerate}[(i)]
  \item \label{expic:1} If $X$ is proper, for any $j\ge 0$, we have a $\mathscr{G}_K$-equivariant isomorphism in $\Vect_K^{\ssolid}$ 
   $$H^j(\dR(X, i))\cong (H^j_{\dR}(X)\otimes_K B_{\dR}^+)/\Fil^i.$$
  \item \label{expic:2} If $X$ is a smooth affinoid or Stein space,  for $j \ge i$ we have $H^j(\dR(X, i))=0$, and for $0\le j< i$ we have a $\mathscr{G}_K$-equivariant exact sequence in $\Vect_K^{\ssolid}$
    $$0\to (\Omega^j(X_C)/\im d)(i-j-1) \to H^j(\dR(X, i)) \to H^j_{\dR}(X)\solid_K B_{\dR}^+/t^{i-j-1}\to 0.$$
  \end{enumerate} 
 \end{prop}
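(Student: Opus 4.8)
The strategy is to compute the cohomology of $\dR(X,i) = (R\Gamma_{\dR}(X)\dsolid_K B_{\dR}^+)/\Fil^i$ directly from the definition of the filtration by passing to graded pieces, and then to exploit the degeneration/boundedness properties of de Rham cohomology available in the two cases. Recall that the tensor product filtration on $R\Gamma_{\dR}(X)\dsolid_K B_{\dR}^+$ has graded pieces $\gr^r = \bigoplus_{0\le a\le r} \gr^a_{\Hdg}R\Gamma_{\dR}(X)\dsolid_K C(r-a)$, where $\gr^a_{\Hdg}R\Gamma_{\dR}(X) = R\Gamma(X,\Omega^a_{X_{\eh}})[-a]$ by Definition \ref{dRsingg}. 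The quotient $\dR(X,i)$ fits in the cofiber sequence whose other terms are built from the finitely many graded pieces $\gr^r$ for $0\le r< i$ (the filtration is finite by Corollary \ref{dRbound} / Proposition \ref{bound0}, since $X$ is finite-dimensional), so the associated spectral sequence is a finite filtration argument, and everything reduces to understanding $R\Gamma(X,\Omega^a_{X_{\eh}})$ together with the differential $d$.

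\textbf{Part (i).} When $X$ is proper, I would first recall that, as in case \listref{keya} of the proof of Proposition \ref{fillemma} (via Proposition \ref{baseh} and cohomological descent from the smooth case, where one invokes Kiehl's finiteness \cite{Kiehl0}), each $H^j_{\dR}(X)$ and each $H^{j}(X,\Omega^a_{X_{\eh}})$ is finite-dimensional over $K$, and the Hodge--de Rham spectral sequence degenerates (\cite[Corollary 1.8]{Scholze}, \cite[Proposition 8.0.8]{Guo1}). Consequently $R\Gamma_{\dR}(X)$ is a bounded complex of finite-dimensional $K$-vector spaces whose cohomology carries a (finite, exhaustive) Hodge filtration with $\gr^a = H^{\ast}(X,\Omega^a_{X_{\eh}})$. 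Since a finite-dimensional $K$-vector space is flat and compact for $\solid_K$ (so $-\solid_K B_{\dR}^+$ is exact on such objects and commutes with the finite limits/colimits in sight), tensoring and quotienting by $\Fil^i$ commutes with taking cohomology; the degeneration makes the filtration on $H^j(R\Gamma_{\dR}(X)\dsolid_K B_{\dR}^+)$ strictly compatible, giving $H^j(\dR(X,i)) \cong (H^j_{\dR}(X)\otimes_K B_{\dR}^+)/\Fil^i$, with the Hodge filtration on $H^j_{\dR}(X)$ inducing the $\Fil$. The $\mathscr{G}_K$-equivariance is automatic since all identifications are.

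\textbf{Part (ii).} For $X$ smooth affinoid or Stein, I would instead use Kiehl's/Tate's acyclicity: $H^s(X,\Omega^a_X) = 0$ for $s>0$ (condensed Tate acyclicity \cite[Lemma 5.6]{Bosco} in the affinoid case, Stein exhaustion in the Stein case), so $R\Gamma(X,\Omega^a_{X_{\eh}}) = \Omega^a(X)[0]$ and $R\Gamma_{\dR}(X)$ is represented by the honest complex $\Omega^{\bullet}(X)$ in degree $\ge 0$ (here $\Omega^a = \Omega^a_{X_{\eh}}$ agrees with the usual differentials in the smooth case by Proposition \ref{ehdescent}). Then $\Fil^r(R\Gamma_{\dR}(X)\dsolid_K B_{\dR}^+)$ is the total complex of $\Omega^{\ge a}(X)\solid_K \xi^{\max(r-a,0)}B_{\dR}^+$, and $\dR(X,i)$ is the cone-description of $\Omega^{\bullet}(X)\solid_K B_{\dR}^+/(\text{the subcomplex }\Omega^{\ge \bullet}(X)\solid_K\xi^{\max(i-\bullet,0)}B_{\dR}^+)$. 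Unwinding, the $j$-th term of this quotient complex is $\Omega^j(X)\solid_K (B_{\dR}^+/\xi^{\max(i-j,0)})$, which vanishes for $j\ge i$ — giving $H^j(\dR(X,i))=0$ for $j\ge i$ immediately. For $0\le j<i$ one computes the cohomology of the two-term-at-a-time complex $\cdots \to \Omega^{j-1}(X)\solid_K B_{\dR}^+/\xi^{i-j+1} \xrightarrow{d} \Omega^j(X)\solid_K B_{\dR}^+/\xi^{i-j} \xrightarrow{d}\cdots$; the short exact sequence $0\to \xi^{i-j-1}B_{\dR}^+/\xi^{i-j} \to B_{\dR}^+/\xi^{i-j} \to B_{\dR}^+/\xi^{i-j-1}\to 0$, combined with $\xi^{i-j-1}B_{\dR}^+/\xi^{i-j}\cong C(i-j-1)$ and the flatness of $C$ over $K$ for $\solid_K$ (\cite[Corollary A.65]{Bosco}, so $\Omega^{\bullet}(X)\solid_K C(i-j-1)$ computes $R\Gamma_{\dR}(X_C)\solid_K$-twist, with $H^j = \Omega^j(X_C)/\im d$ in the Stein/affinoid case and $H^j_{\dR}(X)$ in the other), yields the claimed exact sequence
\begin{equation*}
 0\to (\Omega^j(X_C)/\im d)(i-j-1)\to H^j(\dR(X,i))\to H^j_{\dR}(X)\solid_K B_{\dR}^+/t^{i-j-1}\to 0
\end{equation*}
after identifying $\xi$ and $t$ up to a unit in $B_{\dR}^+$ and observing that the connecting map with the higher quotient $B_{\dR}^+/\xi^{i-j-1}$ contributes the $H^j_{\dR}$-term on the right while the bottom layer contributes the $(i-j-1)$-twist on the left.

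\textbf{Main obstacle.} The delicate point is the bookkeeping of the filtration and its graded pieces in the solid/condensed setting: one must be sure that $-\solid_K B_{\dR}^+$ (and the truncations by $\xi^{\bullet}$) interact correctly with the finite limit $(-)/\Fil^i$ and with taking cohomology, and that the $\eh$-local description of $\Omega^a_{X_{\eh}}$ is literally the complex one expects. In part (i) this hinges on the finiteness and strictness coming from Hodge--de Rham degeneration; in part (ii) on Kiehl/Tate acyclicity making $R\Gamma_{\dR}(X)$ a genuine complex with no higher coherent cohomology. The exactness statements on $\Vect_K^{\ssolid}$ (no $\mathrm{Tor}^1$ obstructions) are what make the clean short exact sequence in (ii) possible, and verifying them is where the condensed functional analysis of \cite{Bosco} is essential.
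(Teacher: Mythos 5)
Your proposal is correct and follows essentially the same route as the paper: part (i) is exactly the degeneration-plus-finiteness argument the paper delegates to (the proof of) Proposition \ref{fillemma}\listref{fillemma:2}, and part (ii) reproduces the paper's key step of using condensed Tate/Kiehl acyclicity and the $\solid_K$-flatness of $B_{\dR}^+$ and its filtered pieces to identify $\dR(X,i)$ with the explicit complex $[\cl O(X)\solid_K B_{\dR}^+/t^i\to \cdots \to \Omega^{i-1}(X)\solid_K B_{\dR}^+/t]$, from which the vanishing for $j\ge i$ and the short exact sequence for $j<i$ are read off. The only difference is cosmetic: you spell out the diagram chase for the connecting maps that the paper compresses into ``from which one readily deduces the statement.''
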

 \begin{proof} One can argue similarly to the proof of \cite[Corollary 6.17]{Bosco}. Part \listref{expic:1} follows from (the proof of) Proposition \ref{fillemma}\listref{fillemma:2}. For part \listref{expic:2}, using Tate's acyclicity theorem for affinoid spaces, and Kiehl's ayclicity theorem for Stein spaces (which hold true in the condensed setting,  \cite[Lemma 5.6(i), Lemma 5.9]{Bosco}), and relying crucially on the flatness of the $K$-Fréchet space $B_{\dR}^+$ (and its filtered pieces) for the solid tensor product $\solid_K$ (\cite[Corollary A.65]{Bosco}), we have
 $$\dR(X, i)\simeq [\cl O(X)\solid_K B_{\dR}^+/t^i\to \Omega^1(X)\solid_K B_{\dR}^+/t^{i-1}\to \cdots \to \Omega^{i-1}(X)\solid_K B_{\dR}^+/t]$$
 from which one readily deduces the statement.
 \end{proof}

  \subsection{Proper spaces} In this subsection, we prove a version of the semistable conjecture for proper (possibly singular) rigid-analytic varieties over $C$. We remark that in the smooth case the following result is already known, \cite[Theorem 5.8]{CN4}; however, already in the latter case our proof is different from \textit{loc. cit.} as it does not rely on Fontaine--Messing syntomic cohomology.
 
  \begin{theorem}\label{propsing} Let $X$ be a proper rigid-analytic variety over $C$. For each $i\ge 0$, we have a natural isomorphism
  \begin{equation}\label{ssgen}
   H^i_{\ett}(X, \Qq_p)\otimes_{\Qq_p}B_{\log}[1/t]\cong H^i_{\HK}(X)\otimes_{\breve F}B_{\log}[1/t]
  \end{equation}
  compatible with the actions of the Frobenius $\varphi$ and the monodromy $N$, and inducing a natural isomorphism
  \begin{equation}\label{dRgen}
   H^i_{\ett}(X, \Qq_p)\otimes_{\Qq_p}B_{\dR}\cong H^i_{\inf}(X/B_{\dR}^+)\otimes_{B_{\dR}^+}B_{\dR}
  \end{equation}
  compatible with filtrations. 
  In particular, we have a natural isomorphism
  \begin{equation}\label{bfbeef}
   H^i_{\ett}(X, \Qq_p)\cong (H^i_{\HK}(X)\otimes_{\breve F}B_{\log}[1/t])^{\varphi=1, N=0}\cap \Fil^0(H^i_{\inf}(X/B_{\dR}^+)\otimes_{B_{\dR}^+}B_{\dR}).
  \end{equation}
  Here, the filtration on $H^i_{\inf}(X/B_{\dR}^+)$ is defined by 
  \begin{equation*}
   \Fil^\star H^i_{\inf}(X/B_{\dR}^+):=\im(H^i(\Fil^\star R\Gamma_{\inf}(X/B_{\dR}^+))\to H^i_{\inf}(X/B_{\dR}^+))
  \end{equation*}
  where $R\Gamma_{\inf}(X/B_{\dR}^+)$ is endowed with the Hodge filtration (Definition \ref{hdg}).
   \end{theorem}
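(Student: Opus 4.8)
The plan is to deduce Theorem \ref{propsing} from Theorem \ref{corcor} together with the structural results on the Fargues--Fontaine cohomology (Theorem \ref{lb}, Theorem \ref{synlift}) and the degeneration input of Proposition \ref{fillemma}. First I would reduce to the case where $X$ is the base change to $C$ of a proper rigid-analytic variety defined over a discretely valued subfield of $C$: by Conrad--Gabber's spreading out \cite[Corollary 13.16]{BMS1} and the generic smoothness of Bhatt--Hansen \cite{Bhatt-Hansen}, exactly as in the proof of Proposition \ref{fillemma}\listref{fillemma:2}, one spreads $X$ out in a proper flat family over a smooth affinoid base over a discretely valued field $L\subset C$; in fact for the statements here it suffices to know the conclusions of Proposition \ref{fillemma} and Theorem \ref{mainHK}, which already hold in full generality, so this reduction is really just a bookkeeping device to invoke the degeneration of the Hodge--de Rham spectral sequence \cite[Corollary 1.8]{Scholze}, \cite[Proposition 8.0.8]{Guo1}.

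Next I would establish the isomorphism (\ref{ssgen}). Since $X$ is proper, $\cl H_{\FF}(X)$ is perfect with cohomology vector bundles on $\FF$, and Theorem \ref{lb}\listref{lb:1} gives $\cl H^i_{\FF}(X)\cong \cl E(H^i_{\HK}(X))$; moreover $H^i_{\HK}(X)$ is a finite $(\varphi,N)$-module over $\breve F$ by Theorem \ref{slopp}\listref{slopp:1}. On the other hand, by Theorem \ref{corcor} (or directly from Theorem \ref{BK=pet}\listref{BK=pet:1} combined with Theorem \ref{B=HK}, using that $X$ is proper so the cohomology is finite-dimensional and the truncations stabilize) one gets $H^i_{\ett}(X,\Qq_p)\otimes_{\Qq_p}\cl O_{\FF}$ as a subquotient of $\cl H^i_{\FF}(X)$, glued with the de Rham lattice at $\infty$. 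The cleanest route is: form the modification (\ref{modiff}) of $\cl H^i_{\FF}(X)$ at $\infty$ given by the Hodge filtration lattice $\Fil^0(H^i_{B_{\dR}^+}(X)\otimes_{B_{\dR}^+}B_{\dR})$, show using Proposition \ref{fillemma} (finite freeness of $H^i_{B_{\dR}^+}(X)$ and strictness of the Hodge filtration) that this is an honest $B_{\dR}^+$-lattice, and then invoke the fundamental exact sequence of $p$-adic Hodge theory, i.e. Corollary \ref{profundexact}, applied to the relative sheaves, to identify $H^i_{\ett}(X,\Qq_p)\otimes_{\Qq_p}\cl O_{\FF}$ with the semistable-slope-$0$ modification $\cl F_1$ of $\cl H^i_{\FF}(X)=\cl F_2$ at $\infty$. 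Pulling this back to $Y_{\FF}$ and then applying $-\otimes_{B}B_{\log}[1/t]$ kills the modification at $\infty$ (since $B_{\log}[1/t]$ is a localization away from the divisor), yielding (\ref{ssgen}); the $\varphi$- and $N$-equivariance is inherited from the constructions in Theorem \ref{lb} and Remark \ref{f-mod}.

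Then the de Rham isomorphism (\ref{dRgen}) follows by completing (\ref{fff}) at $\infty$: Theorem \ref{lb}\listref{lb:2} gives $\cl H^i_{\FF}(X)^\wedge_\infty\cong H^i_{\inf}(X/B_{\dR}^+)$, while the left-hand vector bundle $H^i_{\ett}(X,\Qq_p)\otimes_{\Qq_p}\cl O_{\FF}$ has completion $H^i_{\ett}(X,\Qq_p)\otimes_{\Qq_p}B_{\dR}^+$; inverting $t$ (equivalently, passing to $B_{\dR}$) identifies the two completions since $\cl F_1$ and $\cl F_2$ agree away from $\infty$, giving (\ref{dRgen}), compatibly with filtrations because the Hodge filtration on $H^i_{\inf}(X/B_{\dR}^+)$ is by construction (Definition \ref{hdg}) the $\eh$-sheafified Hodge filtration, which matches the $\xi$-adic/$\Fil^0$-lattice under Theorem \ref{secondstep1} and Proposition \ref{fillemma}. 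Finally, (\ref{bfbeef}) is the standard extraction: taking $\varphi=1,N=0$-invariants of (\ref{ssgen}) and intersecting with $\Fil^0$ of (\ref{dRgen}) recovers $H^0(\FF,\cl F_1)=H^i_{\ett}(X,\Qq_p)$, using $H^0(\FF,\cl O_{\FF})=\Qq_p$ and the fact that $\cl F_1$ is semistable of slope $0$ so that the fundamental exact sequence computes its global sections as precisely this intersection. The main obstacle is the careful verification that the modification data is well-defined and strict — i.e. that $\Fil^0(H^i_{B_{\dR}^+}(X)\otimes_{B_{\dR}^+}B_{\dR})$ really is a $B_{\dR}^+$-lattice inside $\cl H^i_{\FF}(X)^\wedge_\infty\otimes B_{\dR}$ and that the resulting modification at $\infty$ is admissible in the sense of Proposition \ref{farguesequi} — which is exactly what Proposition \ref{fillemma}\listref{fillemma:1},\listref{fillemma:2} is designed to supply, together with the classification of vector bundles on $\FF$ to see that $\cl F_1$ is forced to be trivial.
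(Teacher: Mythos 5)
Your proposal follows essentially the same route as the paper: the actual proof identifies $H^i_{\ett}(X,\Qq_p)\otimes_{\Qq_p}\cl O_{\FF}$ (via Scholze's primitive comparison theorem and the finiteness of $H^i_{\ett}(X,\Qq_p)$) with the vector bundle of the $B$-pair $(H^i_{\pet}(X,\Bb_e), H^i_{\pet}(X,\Bb_{\dR}^+))$, computes that pair using Theorem \ref{B=HK}, Lemmas \ref{unex} and \ref{unex2}, Theorem \ref{secondstep1} and Proposition \ref{fillemma}, and then reads off (\ref{bfbeef}), (\ref{ssgen}) and (\ref{dRgen}) exactly as you describe via the admissible modification at $\infty$. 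The only genuine divergence is how surjectivity of (\ref{ssgen}) is established: you localize away from $\infty$ and untwist over $B_{\log}[1/t]$ (which requires the descent statement of Remark \ref{f-mod}, i.e.\ \cite[Proposition 10.3.20]{FF}), whereas the paper compares $\dim_{\Qq_p}H^i_{\ett}(X,\Qq_p)$ with $\dim_{\breve F}H^i_{\HK}(X)$ through the ranks of the two bundles; both work, and your opening reduction to the discretely valued case is redundant since, as you note, it is already packaged into Proposition \ref{fillemma}.
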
   
  \begin{proof}
   Let us fix $i\ge 0$. By the properness of $X$, using Scholze's primitive comparison theorem, and the finiteness of the $\Qq_p$-vector space $H_{\ett}^i(X, \Qq_p)$, \cite[Theorem 3.17]{ScholzeSurvey}, we have a natural isomorphism of vector bundles on the Fargues--Fontaine curve $\FF$
   \begin{equation}\label{farguess}
    H^i_{\ett}(X, \Qq_p)\otimes_{\Qq_p}\cl O_{\FF}\cong \cl E(H_{\pet}^i(X, \Bb_e), H_{\pet}^i(X, \Bb_{\dR}^+))
   \end{equation}
   where the right-hand side of (\ref{farguess}) denotes the vector bundle on $\FF$ associated to the $B$-pair $(H_{\pet}^i(X, \Bb_e), H_{\pet}^i(X, \Bb_{\dR}^+))$ with $\Bb_e=\Bb[1/t]^{\varphi=1}$. 
   We claim that $(\ref{farguess})$ is naturally isomorphic to the vector bundle on $\FF$ associated to the $B$-pair
   \begin{equation}\label{bpair1}
    (H_{\HK}^i(X)\otimes_{\breve F}B_{\log}[1/t])^{N=0, \varphi=1}, \Fil^0(H^i(X/B_{\dR}^+)\otimes_{B_{\dR}^+}B_{\dR})).
   \end{equation}
   For this, using Theorem \ref{B=HK}, and the perfectness of $R\Gamma_{\HK}(X)$ over $\breve F$ proven in Theorem \ref{slopp}\listref{slopp:1} (combined with Proposition \ref{parpar}), we have a natural isomorphism
   \begin{equation}\label{tkN}
    R\Gamma_{\pet}(X, \Bb_e)\simeq (R\Gamma_{\HK}(X)\otimes_{\breve F}B_{\log}[1/t])^{N=0, \varphi=1}.
   \end{equation}
   Taking cohomology of (\ref{tkN}), by Lemma \ref{unex} combined with Lemma \ref{unex2}, we have a natural isomorphism
   \begin{equation}\label{isos1}
    H^i_{\pet}(X, \Bb_e)\cong (H_{\HK}^i(X)\otimes_{\breve F}B_{\log}[1/t])^{N=0, \varphi=1}.
   \end{equation}
   Moreover, by Theorem \ref{secondstep1} and Proposition \ref{fillemma}, we have a natural isomorphism
   \begin{equation}\label{isos2}
    H^i_{\pet}(X, \Bb_{\dR}^+)\cong \Fil^0(H^i(X/B_{\dR}^+)\otimes_{B_{\dR}^+}B_{\dR}).
   \end{equation}
   Hence, the desired claim follows combining the isomorphisms (\ref{isos1}) and (\ref{isos2}), and the compatibility shown in Theorem \ref{compatib2}\listref{compatib2:1}.
   
   Now, we are ready to prove that we have a natural isomorphism (\ref{ssgen}) as in the statement. From what we have shown above, applying $H^0(\FF, -)$ to (\ref{farguess}) we obtain (\ref{bfbeef}), from which we deduce that we have a natural $B_{\log}[1/t]$-linear injective map
   \begin{equation}\label{injfirst}
    H^i_{\ett}(X, \Qq_p)\otimes_{\Qq_p}B_{\log}[1/t]\to  H^i_{\HK}(X)\otimes_{\breve F}B_{\log}[1/t]
   \end{equation}
  compatible with the actions of the Frobenius $\varphi$ and the monodromy $N$. To conclude that (\ref{injfirst}) is an isomorphism, we observe that
   $$\dim_{\Qq_p}H^i_{\ett}(X, \Qq_p)=\dim_{\breve F}H_{\HK}^i(X).$$
   For this, we note that $\dim_{\Qq_p}H^i_{\ett}(X, \Qq_p)$ is equal to the rank of the vector bundle (\ref{farguess}) on $\FF$, and hence, from what we have shown above, it is equal to the rank of the vector bundle associated to the $B$-pair (\ref{bpair1}); but the latter is a modification at $\infty$ of the vector bundle on $\FF$ associated to the finite $(\varphi, N)$-module $H_{\HK}^i(X)$ over $\breve F$, whose rank is $\dim_{\breve F}H_{\HK}^i(X)$.
   
   Lastly, we have that the isomorphism (\ref{ssgen}) induces an isomorphism (\ref{dRgen}) which is compatible with filtrations, recalling again Theorem \ref{compatib2}, Theorem \ref{secondstep1}, and Proposition \ref{fillemma}.
   \end{proof}

  %\begin{rem}
  % In the case $X$ has a proper semistable formal model over $\cl O_C$, by Theorem \ref{B=HK2} we have a natural isomorphism
  % $$H^i_{\pet}(X, \Bb_{\log}[1/t])\cong H^i_{\HK}(X)\otimes_{\breve F}B_{\log}[1/t]$$
  % compatible with the action of the Frobenius $\varphi$ and the monodromy $N$. Therefore, in this case, (\ref{ssgen}) immediately from the primitive comparison theorem: in fact, we have a natural isomorphism
  % $$H^i_{\pet}(X, \Qq_p)\otimes_{\Qq_p}B_{\log}[1/t]\cong H^i_{\pet}(X, \Bb_{\log}[1/t]).$$
  %\end{rem}

  We used the following general results.
 
 \begin{lemma}\label{unex}
  For any finite $\varphi$-module $(V, \varphi)$ over $\breve F$, the map
  $$\varphi-1: V\otimes_{\breve F}B[1/t]\to V\otimes_{\breve F}B[1/t]$$
  is surjective.
 \end{lemma}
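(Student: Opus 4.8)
The statement is the classical surjectivity of $\varphi - 1$ on $V \otimes_{\breve F} B[1/t]$ for a finite $\varphi$-module over $\breve F$, phrased in the condensed setting; this is the kind of fact that underlies the exactness of the fundamental sequence (\ref{fundexacttard}) and the classification of vector bundles on $\FF$. The plan is to reduce to the case of simple isocrystals and then to use what we already know about the cohomology of line bundles on the Fargues--Fontaine curve, namely that $\cl O_{\FF}(\lambda)$ has vanishing $H^1$ for $\lambda \ge 0$.

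First I would reduce to the case where $(V,\varphi)$ is isoclinic. By the Dieudonn\'e--Manin classification recalled in \S\ref{convent}, after base change along $\breve F \hookrightarrow \breve{\Qq}_p$ (which is harmless: the map $\varphi - 1$ is surjective on $V \otimes_{\breve F} B[1/t]$ if and only if it is so after the faithfully flat, $\varphi$-equivariant extension to $\breve{\Qq}_p$, since $B[1/t]$ is already a $\breve{\Qq}_p$-algebra and tensoring up is exact) the isocrystal $(V,\varphi)$ becomes a direct sum of simple objects $(D_\mu, \varphi_\mu)$; so it suffices to treat each $(D_\mu, \varphi_\mu)$ separately, and moreover, by a further (iterated) unramified twist replacing $\varphi$ by $p^{-n}\varphi$ — which only shifts the slope and commutes with the whole discussion — it is enough to handle slopes $\mu \ge 0$.

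Next I would interpret $\coker(\varphi - 1)$ sheaf-theoretically on $\FF$. For $(V,\varphi) = (D_\mu,\varphi_\mu)$ with $\mu \ge 0$, the fixed points $(D_\mu \otimes_{\breve{\Qq}_p} B[1/t])^{\varphi=1}$ together with the $B_{\dR}^+$-lattice $\cl E(D_\mu,\varphi_\mu)^\wedge_\infty$ assemble into the global sections of the vector bundle $\cl O_{\FF}(\mu)$, and more precisely the two-term complex $[\,D_\mu \otimes_{\breve{\Qq}_p} B[1/t] \xrightarrow{\varphi-1} D_\mu \otimes_{\breve{\Qq}_p} B[1/t]\,]$ computes $R\Gamma(\FF \setminus \{\infty\}, \cl O_{\FF}(\mu))$ — this is exactly the shape of the fundamental exact sequence (\ref{fundexacttard}) twisted by $\cl O(\mu)$, via $\Bb_e = \Bb[1/t]^{\varphi=1}$ and $\Bb[1/t] \xrightarrow{\varphi-1} \Bb[1/t]$ being exact as sheaves. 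Since $\FF \setminus \{\infty\}$ is an affine curve (the spectrum of the principal ideal domain $B_e$), its higher coherent cohomology vanishes, so $H^1$ of this complex vanishes, i.e. $\varphi - 1$ is surjective. Alternatively, and perhaps more cleanly for the write-up, I would note directly that for $\mu \ge 0$ the sheaf sequence $0 \to \cl O_{\FF}(\mu)|_{Y_{\FF}}^{\varphi=1} \to \cl O_{Y_{\FF}} \xrightarrow{\varphi - p^{-\mu'} \cdot} \cl O_{Y_{\FF}} \to 0$ (with the twist absorbed appropriately) is exact on $Y_{\FF}$, and pass to global sections, using that $Y_{\FF}$ is an adic affinoid-like space on which the relevant $H^1$ vanishes.

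The main obstacle I anticipate is not conceptual but bookkeeping: making sure the reductions (extension of scalars to $\breve{\Qq}_p$, decomposition into simple isocrystals, unramified twist to reach $\mu \ge 0$) are all compatible with the condensed structure and with the operator $\varphi - 1$, and that "surjective" is interpreted correctly in $\Vect_{\Qq_p}^{\cond}$ (it suffices to check surjectivity on $S$-valued points for $S$ extremally disconnected, which follows once the underlying sequence of abstract $\Qq_p$-modules — or better, the sequence of Banach--Colmez spaces / coherent sheaves on $\FF\setminus\{\infty\}$ — is exact). I would close by spelling out that, since $B[1/t]$ and $V$ are fixed and the slope-$\mu$ piece with $\mu < 0$ does not occur after the twisting reductions, no genuine positivity hypothesis on $V$ is needed — the point is simply that $t$ has been inverted, which is precisely what kills the $H^1$ obstruction coming from $\{\infty\}$.
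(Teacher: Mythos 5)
Your argument is correct in substance and lives in the same circle of ideas as the paper's proof --- both reduce the surjectivity of $\varphi-1$ to the vanishing of an $H^1$ on the Fargues--Fontaine curve --- but the routes differ. The paper writes $B[1/t]=\varinjlim_m t^{-m}B$, identifies $\varphi-1$ on $V\otimes_{\breve F}t^{-m}B$ with $\varphi p^{-m}-1$ on $V\otimes_{\breve F}B$, recognizes the resulting two-term complex as $R\Gamma(\FF,\cl E(V,\varphi)\otimes\cl O(m))$, and concludes by taking $m$ large enough that all Harder--Narasimhan slopes of $\cl E(V,\varphi)\otimes\cl O(m)$ are non-negative, so that its $H^1$ vanishes; no Dieudonn\'e--Manin decomposition is needed. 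You instead identify the whole complex $[V\otimes_{\breve F} B[1/t]\xrightarrow{\varphi-1}V\otimes_{\breve F} B[1/t]]$ with $R\Gamma(\FF\setminus\{\infty\},\cl E(V,\varphi))$ and invoke affineness of the punctured curve ($\Spec B_e$ with $B_e$ a principal ideal domain). That is a valid and arguably more conceptual route, and, as you yourself observe at the end, it renders both your Dieudonn\'e--Manin reduction and your reduction to slopes $\mu\ge 0$ superfluous. The one step you should not leave implicit is the identification of the two-term complex with $R\Gamma(\FF\setminus\{\infty\},\cl E)$: this needs $H^{>0}(Y_{\FF}\setminus V(t),\cl E|_{Y_{\FF}\setminus V(t)})=0$ together with a GAGA-type comparison with $\Spec B_e$, which is true but is a less off-the-shelf input than $H^1(\FF,\cl E)=0$ for bundles with non-negative slopes; the paper's colimit-over-$m$ device exists precisely to stay on the compact curve, where that vanishing is part of the standard classification recalled in the conventions.

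Two minor slips to fix if you keep your reductions. First, the inclusion of period fields goes $\breve{\Qq}_p\hookrightarrow\breve F$, not the other way around; in any case no base change is needed, since $\breve F$ has algebraically closed residue field $\bar k$ and Dieudonn\'e--Manin applies directly. Second, replacing $\varphi$ by $p^{-n}\varphi$ genuinely changes the operator $\varphi-1$, so "commutes with the whole discussion" is not a proof: the correct justification is that multiplication by $t^{-n}$ on $V\otimes_{\breve F}B[1/t]$ intertwines $p^{-n}\varphi-1$ with $\varphi-1$ (because $\varphi(t)=pt$ and $t$ is invertible), which is exactly the mechanism the paper exploits with $t^{-m}B$.
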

 \begin{proof}
 It suffices to show that for any sufficiently big integer $m$, the map
 \begin{equation}\label{t-m}
  \varphi-1: V\otimes_{\breve F}t^{-m}B\to V\otimes_{\breve F}t^{-m}B
 \end{equation}
 is surjective. For this, we consider $\cl E:=\cl E(V, \varphi)$ the vector bundle on the Fargues--Fontaine curve $\FF$ associated to $(V, \varphi)$, and, for any integer $m$, the vector bundle $\cl E(m):=\cl E\otimes \cl O(m)$ on $\FF$. Note that we have
 \begin{align*}
   R\Gamma(\FF, \cl E(m))&=[H^0(Y_{\FF}, \cl E|_{Y_{\FF}})\xrightarrow{\varphi p^{-m}-1}H^0(Y_{\FF}, \cl E|_{Y_{\FF}})] \\
                         &=[V\otimes_{\breve F}B\xrightarrow{\varphi p^{-m}-1}V\otimes_{\breve F}B].
 \end{align*}
 Then, since for any integer $m$ sufficiently big such that the vector bundle $\cl E(m)$ has non-negative Harder--Narasimhan slopes, one has $H^1(\FF, \cl E(m))=0$, we deduce that (recalling that $\varphi(t)=pt$) for any such $m$ the map (\ref{t-m}) is surjective, as desired.
 \end{proof}

 \begin{lemma}\label{unex2}
 For any finite $(\varphi, N)$-module $(V, \varphi, N)$ over $\breve F$, we have a short exact sequence
 \begin{equation}\label{NU}
  0\to V\otimes_{\breve F} B \overset{\alpha}{\to} V\otimes_{\breve F} B_{\log} \overset{N}{\to} V\otimes_{\breve F} B_{\log}\to 0
 \end{equation}
 where, recalling that $B_{\log}=B[U]$ (Definition \ref{deflogcrys}), the morphism $\alpha$ is induced by the isomorphism of finite $\varphi$-modules over $\breve F$
 $$\exp(N\cdot U):V\otimes_{\breve F} B \overset{\sim}{\to} (V\otimes_{\breve F} B_{\log})^{N=0}:\;\; x\mapsto \sum_{j\ge 0}\frac{(-1)^j}{j!}N^j(x)\cdot U^j.$$ 
 \end{lemma}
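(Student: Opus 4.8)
The claim is a purely linear-algebraic fact about $(\varphi,N)$-modules over $\breve F$, so the plan is to verify the three points — exactness on the left, exactness in the middle, and surjectivity on the right — essentially by an explicit computation with the polynomial variable $U$. First I would check that $\exp(N\cdot U)$ really does land in $(V\otimes_{\breve F}B_{\log})^{N=0}$: since $N = -\tfrac{d}{dU}$ on $B_{\log}=B[U]$ and $N$ acts on $V$ with $N\varphi = p\varphi N$, one computes $N\bigl(\sum_j \tfrac{(-1)^j}{j!}N^j(x)U^j\bigr) = \sum_{j\ge 1}\tfrac{(-1)^j}{(j-1)!}N^j(x)U^{j-1} + \sum_{j\ge 0}\tfrac{(-1)^j}{j!}N^{j+1}(x)U^j = 0$, the two sums cancelling after reindexing; here the finiteness of $V$ guarantees $N$ is nilpotent on $V$, so the sums are finite and $\exp(N\cdot U)$ is well-defined. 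One also checks $\varphi$-equivariance using $\varphi(U)=pU$ and $N\varphi=p\varphi N$, which gives exactly the compatibility making $\exp(N\cdot U)$ a morphism of $\varphi$-modules; it is an isomorphism with inverse $\exp(-N\cdot U)$ (followed by restriction to $U=0$, i.e. the constant term), so $\alpha$ is injective and identifies $V\otimes_{\breve F}B$ with $\ker(N)$ on $V\otimes_{\breve F}B_{\log}$.

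For the middle exactness, I would argue directly: an element of $V\otimes_{\breve F}B_{\log}$ is a polynomial $\sum_{j=0}^{d} v_j U^j$ with $v_j\in V\otimes_{\breve F}B$, and $N$ applied to it (combining $-\tfrac{d}{dU}$ with the $N$ on $V$) kills it iff a certain recursion among the $v_j$ holds, whose solution space is precisely the image of $\exp(N\cdot U)$ — so $\ker N = \operatorname{im}\alpha$. Since $\alpha$ has already been shown to be injective with image $\ker N$, this is really the same computation as above.

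For surjectivity of $N$ on $V\otimes_{\breve F}B_{\log}$, the point is that $N$ lowers the $U$-degree by one and, on the top-degree piece, acts as multiplication by a positive integer (the degree) plus a nilpotent operator coming from the $N$ on $V$; such a map is invertible on $V\otimes_{\breve F}B$ (a positive integer is a unit in $\breve F$, hence in $B$), so one can solve $N(\tilde x) = x$ degree by degree, decreasing induction on the $U$-degree of $x$. This gives the desired short exact sequence \eqref{NU}. I expect no serious obstacle here: the whole statement is elementary once one has the nilpotence of $N$ on $V$ in hand, and the only mild subtlety is keeping the bookkeeping of the two commuting actions of $N$ (the one on $V$ and the $-d/dU$ on $B_{\log}$) straight, together with the sign conventions in $\exp(N\cdot U)$; the $\varphi$-equivariance of $\alpha$ is the one place where $N\varphi=p\varphi N$ and $\varphi(U)=pU$ must be used in tandem.
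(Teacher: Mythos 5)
Your proposal is correct in substance, but it takes a genuinely more computational route than the paper. The paper's own proof is a two-line dévissage: since $N$ is nilpotent on $V$, one inducts on its nilpotency index; the base case $N|_V=0$ reduces \eqref{NU} to $V\otimes_{\breve F}\bigl(0\to B\to B_{\log}\xrightarrow{N}B_{\log}\to 0\bigr)$, i.e.\ to the rank-one exact sequence already recorded after Definition \ref{deflogcrys}, and the inductive step runs the snake lemma over $0\to \ker(N|_V)\to V\to V/\ker(N|_V)\to 0$ (cf.\ the proof of \cite[Lemma 3.20]{CDN1}). What your approach buys is self-containedness and explicit formulas for $\ker N$ and for a preimage under $N$; what the paper's buys is brevity — one never has to manipulate the exponential at all.

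Two bookkeeping points in your write-up need attention. First, the signs: with the paper's convention $N=-\tfrac{d}{dU}$ on $B_{\log}$ and the usual $N\otimes 1+1\otimes N$ on the tensor product, differentiating $\tfrac{(-1)^j}{j!}N^j(x)U^j$ contributes $\tfrac{(-1)^{j+1}}{(j-1)!}N^j(x)U^{j-1}$, not $\tfrac{(-1)^j}{(j-1)!}N^j(x)U^{j-1}$ as you wrote; your two sums cancel only if one takes $+\tfrac{d}{dU}$. Under the declared convention the $N=0$ section is $\sum_j\tfrac{1}{j!}N^j(x)U^j$ without alternating signs (the $(-1)^j$ in the displayed formula already carries this slip, so your computation matches the formula rather than the convention — mathematically harmless, since it amounts to $U\mapsto -U$, but it should be made consistent). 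Second, the surjectivity step is looser than it reads: $N$ does \emph{not} lower the $U$-degree — it preserves the degree filtration, because the term $N(w_{d+1})U^{d+1}$ survives — so a naive decreasing induction on $\deg_U x$ does not close. Subtracting $N$ of an antiderivative of the top term $v_dU^d$ leaves a remainder whose top coefficient is $\tfrac{1}{d+1}N(v_d)$ in degree $d+1$: one degree higher, but one step more nilpotent. The induction must therefore be run on the nilpotency of the coefficients (equivalently, use the finite sum $(M+D)^{-1}=D^{-1}\sum_{i\ge 0}(-1)^i(D^{-1}M)^i$ with $M=N\otimes 1$, $D=1\otimes(-\tfrac{d}{dU})$ and $D^{-1}$ the constant-term-zero antiderivative, which terminates since $M$ is nilpotent), or one falls back on the paper's dévissage. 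Both fixes are routine.
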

 \begin{proof}
  For any finite $(\varphi, N)$-module $(V, \varphi, N)$ over $\breve F$, the monodromy operator $N$ has finite nilpotency index. If the nilpotency index is 1, i.e. $N=0$ on $V$, the statement follows from the exactness of the sequence (\ref{NU}) for $V=\breve F$. The statement in the general case follows by induction on such nilpotency index (cf. the proof of \cite[Lemma 3.20]{CDN1}).
 \end{proof}

 \subsection{Smooth Stein spaces}\label{sstein}
  
 In this subsection, our goal is to prove the following result, Theorem \ref{steindiagram}. We remark that it could be deduced from Theorem \ref{tog},  Proposition \ref{expic}\listref{expic:2} and the theory of Banach--Colmez spaces, as done in \cite{CDN1}. However, we give here a more direct proof using the relative fundamental exact sequence of $p$-adic Hodge theory.

 \begin{theorem}[cf. \cite{CDN1}, \cite{CN5}]\label{steindiagram}
 Let $X$ be a smooth Stein space over $C$. For any $i\ge 0$, we have a commutative diagram in $\Vect_{\Qq_p}^{\ssolid}$ with exact rows
 \begin{center}
   \begin{tikzcd}\label{bvac}
   0 \arrow[r] 
     &[-1em] \Omega^{i-1}(X)/\ker d \arrow[-,double line with arrow={-,-}]{d} \arrow[r] 
     &[-1em] H^i_{\pet}(X, \Qq_p(i)) \arrow[d] \arrow[r]      
     &[-1em] (H^i_{\HK}(X)\solid_{\breve F}B_{\log})^{N=0, \varphi=p^i} \arrow[d] \arrow[r] 
     &[-1em] 0  \\
   0 \arrow[r] 
     &[-1em] \Omega^{i-1}(X)/\ker d \arrow[r]           
     &[-1em] \Omega^i(X)^{d=0} \arrow[r]  
     &[-1em] H^i_{\dR}(X) \arrow[r]    
     &[-1em] 0.
  \end{tikzcd}
  \end{center}
 \end{theorem}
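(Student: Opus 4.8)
The plan is to deduce Theorem \ref{steindiagram} by comparing two pullback squares: the relative fundamental exact sequence of $p$-adic Hodge theory applied to $X$, and the de Rham analogue obtained by tensoring $R\Gamma_{\dR}(X)$ with the fundamental sequences. First I would apply Theorem \ref{fund1} (valid for $X$ qcqs, but extending to Stein spaces by writing $X$ as an increasing union of quasi-compact admissible opens and passing to the limit, using that $R\Gamma_{\dR}$ and $R\Gamma_{\HK}$ of a Stein space are represented by nuclear Fr\'echet complexes, so that the relevant solid tensor products commute with the limit via \cite[Corollary A.67(ii)]{Bosco}). This gives a pullback square expressing $R\Gamma_{\pet}(X, \Qq_p)$ in terms of $(R\Gamma_{\HK}(X)\dsolid_{\breve F}B_{\log}[1/t])^{N=0,\varphi=1}$ and $\Fil^0(R\Gamma_{\dR}(X)\dsolid_C B_{\dR})$ over $R\Gamma_{\dR}(X)\dsolid_C B_{\dR}$. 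Twisting by $\Qq_p(i)$ replaces $\varphi=1$ by $\varphi=p^i$ and $\Fil^0$ by $\Fil^i$.

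Next I would compute cohomology groups. The key input is Kiehl's acyclicity theorem in the condensed setting (\cite[Lemma 5.9]{Bosco}): for a smooth Stein space the coherent cohomology $H^{>0}(X, \Omega^j_X)$ vanishes, so $R\Gamma_{\dR}(X)$ is computed by the complex $[\cl O(X)\to \Omega^1(X)\to\cdots]$, and since $B_{\dR}^+$ and its filtered pieces are flat over $C$ for $\solid_C$ (\cite[Corollary A.65]{Bosco}), the de Rham contribution $(R\Gamma_{\dR}(X)\dsolid_C B_{\dR}^+)/\Fil^i$ is computed termwise as in Proposition \ref{expic}\listref{expic:2}. In particular $H^j$ of $\dR(X,i):=(R\Gamma_{\dR}(X)\dsolid_C B_{\dR}^+)/\Fil^i$ vanishes for $j\ge i$, so in the long exact sequence attached to the pullback square, degree $i$ contributions simplify: the de Rham term $\Fil^0(H^i_{\dR}(X)\otimes B_{\dR})$ injects into $H^i_{\dR}(X)\otimes B_{\dR}$ and the cokernel, $H^i$ of $\dR(X,i)$, vanishes. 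Similarly, for the Hyodo--Kato part I would use Theorem \ref{B=HK} (extended to Stein spaces) together with Lemma \ref{unex}, which gives surjectivity of $\varphi-1$ on $H^i_{\HK}(X)\solid_{\breve F}B[1/t]$ after noting $H^i_{\HK}(X)$ is finite over $\breve F$ by Theorem \ref{slopp}\listref{slopp:1} applied to the quasi-compact pieces — actually for Stein spaces one argues via the $\lim$ of the finite-dimensional pieces. Combining, the long exact sequence collapses to give the top row with $H^i_{\pet}(X,\Qq_p(i))$ sitting in an extension of $(H^i_{\HK}(X)\solid_{\breve F}B_{\log})^{N=0,\varphi=p^i}$ by $\ker(\Fil^0 H^i_{\dR}\otimes B_{\dR}\to H^i_{\dR}\otimes B_{\dR}/\text{image})$, which by Proposition \ref{expic}\listref{expic:2} (the $j=i-1$ piece contributing $\Omega^{i-1}(X)/\ker d$ after taking $\mathscr{G}$-invariants, i.e. the $i-j-1=0$ term) identifies with $\Omega^{i-1}(X)/\ker d$.

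The bottom row is the classical de Rham identity: for a Stein space, $H^i_{\dR}(X)=\Omega^i(X)^{d=0}/d\,\Omega^{i-1}(X)$, so there is a tautological short exact sequence $0\to \Omega^{i-1}(X)/\ker d\to \Omega^i(X)^{d=0}\to H^i_{\dR}(X)\to 0$, which again uses Kiehl acyclicity to identify $H^i_{\dR}$ with the naive cohomology of the global de Rham complex. The vertical maps are induced by the comparison morphism $R\Gamma_{\pet}(X,\Qq_p(i))\to R\Gamma_{\dR}(X)\otimes_C B_{\dR}$ of Theorem \ref{fund1} followed by projection to the graded piece, and by the Hyodo--Kato-to-de Rham map $(H^i_{\HK}(X)\solid B_{\log})^{N=0,\varphi=p^i}\to H^i_{\dR}(X)$ coming from $\iota_{\HK}$; commutativity of the diagram follows from the compatibilities packaged in Theorem \ref{compatib} and Theorem \ref{compatib2}.

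The main obstacle I expect is not any single computation but the careful passage from the quasi-compact case to the Stein case while keeping track of the filtration and the monodromy-trivialization $\exp(N\cdot U)$: one must verify that the solid-tensor and inverse-limit operations commute in the presence of the nuclear Fr\'echet structures on $\Omega^\bullet(X)$ and on $R\Gamma_{\HK}(X)$, that the resulting complexes remain well-behaved (e.g. the $H^i$ are honest cokernels, not just subquotients with hidden derived limit terms), and that Lemma \ref{unex} can be invoked in this limiting setting — for which the cleanest route is to present $X$ via a Stein exhaustion $\{U_n\}$, establish the statement for each $U_n$ (which are not quite quasi-compact either, but admit finite affinoid covers), and then use that the transition maps are, after the relevant truncations, surjective with the right kernels so that $R^1\varprojlim$ vanishes. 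Once this bookkeeping is in place, the exactness of both rows and the commutativity of the square are formal.
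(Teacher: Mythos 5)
Your overall strategy -- run the relative fundamental exact sequence of $p$-adic Hodge theory degreewise, simplify the de Rham contribution via Kiehl acyclicity as in Proposition \ref{expic}\listref{expic:2}, and pass to the limit over a Stein exhaustion $\{U_n^\dagger\}$ -- is the same as the paper's (the paper works with the sequence $0\to \Qq_p\to \Bb_e\to \Bb_{\dR}/\Bb_{\dR}^+\to 0$ on each dagger affinoid $U_n^\dagger$ rather than with the global pullback square of Theorem \ref{fund1}, but these are the same input). However, two steps that you flag as ``bookkeeping'' are in fact the substantive content of the proof, and as written your sketch does not supply them.

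First, to land on $(H^i_{\HK}\solid_{\breve F}B_{\log})^{N=0,\varphi=p^i}$ with coefficients in $B_{\log}$ rather than $B_{\log}[1/t]$, Lemma \ref{unex} (surjectivity of $\varphi-1$ on $V\otimes B[1/t]$) is not enough: one must identify the kernel of the boundary map $\gamma_i$ from $(H^i_{\HK}(U_n^\dagger)\otimes B_{\log}[1/t])^{N=0,\varphi=1}$ to $H^i_{\dR}\otimes B_{\dR}/t^{-i}B_{\dR}^+$ and show $\gamma_i$ is surjective. This rests on the slope bound of Theorem \ref{slopp}\listref{slopp:2}: the bundle $\cl E(H^i_{\HK}(U_n^\dagger))\otimes\cl O(i)$ has non-negative Harder--Narasimhan slopes, so $H^1(\FF,-)=0$ and the sequence $0\to H^0(\FF,\cl E)\to H^0(\FF\setminus\{\infty\},\cl E)\to \cl E^\wedge_\infty[1/t]/\cl E^\wedge_\infty\to 0$ is exact. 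Your proposal never invokes the slope bound, and without it the top row of the diagram does not close up with the stated middle term.

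Second, and more seriously, your treatment of $R^1\varprojlim_n(H^i_{\HK}(U_n^\dagger)\solid_{\breve F}B_{\log})^{N=0,\varphi=p^i}$ is wrong as stated: the transition maps in this tower are \emph{not} surjective in general, so the vanishing cannot be obtained by a formal surjectivity claim. The actual argument is a Mittag--Leffler statement proved via the theory of Banach--Colmez spaces: each group is $\cl H^0(\FF,\cl E_n)$ for a bundle $\cl E_n$ of non-negative slopes, the images of the transition maps form a decreasing chain of sub-BC-spaces, every such subspace has height $\ge 0$ by Proposition \ref{lbc}, and additivity of Colmez's Dimension function forces the chain to stabilize. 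This is precisely where the ``theory of Banach--Colmez spaces'' that the paper alludes to enters, and it is the one non-formal input your proposal is missing. (A minor point: Theorem \ref{fund1} is stated for varieties over $K$, so on each $U_n^\dagger$ you must first descend to a finite extension of $K$ via Elkik's theorem before quoting it, as the paper does.)
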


 \subsubsection{\normalfont{\textbf{Recollections on Banach--Colmez spaces}}}\label{reviewbc}
 
 As a preparation for the proof of Theorem \ref{steindiagram}, we need further reminders on the category of Banach--Colmez spaces  $\cl {BC}$ (Definition \ref{defbc}). \medskip

 Recall that we denote by
 \begin{equation*}\label{preshad}
  \tau: \FF_v\to \Spa(C, \cl O_C)_v
 \end{equation*}
 the natural morphism of $v$-sites.
 
 We remark that the equivalence of derived categories (\ref{equivder}) in Proposition \ref{perffALB2} does not preserve the natural $t$-structures. In fact, Le Bras showed that suitably changing $t$-structure on the source of (\ref{equivder}) one can pass to the category of BC spaces:
 
 \begin{prop}[{\cite[Théorème 1.2]{LeBras1}}]\label{lbc} The category of Banach--Colmez spaces $\cl {BC}$ is equivalent, via the functor $\tau_*$, to the full subcategory of $\Perf(\FF)$ having as objects the perfect complexes $\cl F$ concentrated in cohomological degrees $[-1, 0]$ such that $H^{-1}(\cl F)$ has negative Harder--Narasimhan slopes and $H^0(\cl F)$ has non-negative Harder--Narasimhan slopes.
 \end{prop}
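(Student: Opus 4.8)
The plan is to deduce the statement from the derived equivalence $R\tau_*\colon \Perf(\FF)\xrightarrow{\sim}D^b(\cl{BC})$ of Proposition \ref{perffALB2}, by identifying the full subcategory in the statement with the heart $\cl{BC}\subset D^b(\cl{BC})$ and checking that on it the derived pushforward is computed by the underived one. Write $\cl P\subset\Perf(\FF)$ for that subcategory, so $\cl F\in\cl P$ iff $\cl H^j(\cl F)=0$ for $j\notin\{-1,0\}$, the coherent sheaf $\cl H^{-1}(\cl F)$ is a vector bundle with all Harder--Narasimhan slopes $<0$, and the torsion-free part of $\cl H^0(\cl F)$ has all Harder--Narasimhan slopes $\ge 0$ (torsion summands being allowed, with the convention that a torsion sheaf has slope $+\infty$). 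Since $R\tau_*$ is an equivalence, it restricts to an equivalence of $\cl P$ onto its essential image, and it will suffice to prove: (i) if $\cl F\in\cl P$ then $R\tau_*\cl F$ is concentrated in degree $0$ and lies in $\cl{BC}$; and (ii) if $\cl F\in\Perf(\FF)$ and $R\tau_*\cl F$ is concentrated in degree $0$, then $\cl F\in\cl P$.

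The key input for both directions is the cohomology of coherent sheaves on the relative Fargues--Fontaine curve, uniformly in $S$: one has $R^{\ge 2}\tau_*=0$ on coherent sheaves; for a line bundle $\cl O(\lambda)$ with $\lambda\ge 0$ one has $R^{>0}\tau_*\cl O(\lambda)=0$ and $\tau_*\cl O(\lambda)$ is a Banach--Colmez space (for $\lambda=0$ it is $\underline{\Qq_p}$, and for $\lambda>0$ it is an iterated extension of $\underline{\Qq_p}$'s and $(\Aa^1_C)^{\diamondsuit}$'s, as encoded by the fundamental exact sequences of \S\ref{petsheaves}); for $\lambda<0$ one has $\tau_*\cl O(\lambda)=0$ while $R^1\tau_*\cl O(\lambda)$ is a Banach--Colmez space (Fargues--Fontaine's computation; e.g.\ $R^1\tau_*\cl O(-1)\cong(\Aa^1_C)^{\diamondsuit}/\underline{\Qq_p}$); and for a torsion coherent sheaf $\cl T$ one has $R^{>0}\tau_*\cl T=0$ with $\tau_*\cl T$ a Banach--Colmez space (e.g.\ $\tau_*\iota_{\infty *}(\cl O_{\FF}/\xi^n\cl O_{\FF})=B_{\dR}^+/\xi^n$). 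Granting these, direction (i) is immediate from the triangle $\cl H^{-1}(\cl F)[1]\to\cl F\to\cl H^0(\cl F)$: by the classification of vector bundles on $\FF$, $R\tau_*\cl H^0(\cl F)$ sits in degree $0$ and is an iterated extension of Banach--Colmez spaces, while $R\tau_*\cl H^{-1}(\cl F)$ sits in degree $1$ and is likewise such an extension, so $R\tau_*\cl H^{-1}(\cl F)[1]$ sits in degree $0$; hence $R\tau_*\cl F$ is concentrated in degree $0$ and fits in a short exact sequence $0\to R^1\tau_*\cl H^{-1}(\cl F)\to\tau_*\cl F\to\tau_*\cl H^0(\cl F)\to 0$ of $v$-sheaves, so lies in $\cl{BC}$ since the latter is closed under extensions (Definition \ref{defbc}). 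This gives a functor $\tau_*\colon\cl P\to\cl{BC}$, fully faithful as a restriction of $R\tau_*$.

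For direction (ii), let $\cl F\in\Perf(\FF)$ with $R\tau_*\cl F$ in degree $0$. By Proposition \ref{perffALB}, $\cl F$ is a bounded complex of vector bundles, so each $\cl H^q(\cl F)$ is coherent. The spectral sequence $E_2^{p,q}=R^p\tau_*\cl H^q(\cl F)\Rightarrow R^{p+q}\tau_*\cl F$ has $p\in\{0,1\}$ only, hence degenerates at $E_2$; since $R^n\tau_*\cl F=0$ for $n\neq 0$, this forces $\tau_*\cl H^n(\cl F)=0$ and $R^1\tau_*\cl H^{n-1}(\cl F)=0$ for all $n\neq 0$. By the computations above, $\tau_*\cl H^n(\cl F)=0$ forces the torsion part of $\cl H^n(\cl F)$ to vanish and its torsion-free part to have all slopes $<0$, while $R^1\tau_*\cl H^m(\cl F)=0$ forces the torsion-free part of $\cl H^m(\cl F)$ to have all slopes $\ge 0$. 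Taking $n\neq 0$ and $m\neq -1$: for $n\notin\{-1,0\}$ we get that $\cl H^n(\cl F)$ is a vector bundle with slopes simultaneously $<0$ and $\ge 0$, hence $\cl H^n(\cl F)=0$; moreover $\cl H^{-1}(\cl F)$ is a vector bundle with all slopes $<0$, and the torsion-free part of $\cl H^0(\cl F)$ has all slopes $\ge 0$. Thus $\cl F\in\cl P$. Combining (i) and (ii), $R\tau_*$ restricts to an equivalence $\cl P\xrightarrow{\sim}\cl{BC}$, and since $R\tau_*|_{\cl P}=\tau_*|_{\cl P}$ this is the desired equivalence. The only real labor is proving the cohomological input bundled in the second paragraph (vanishing of $R^{\ge 2}\tau_*$, and the identifications of $\tau_*$ and $R^1\tau_*$ of line bundles and of torsion sheaves with Banach--Colmez spaces) together with the Harder--Narasimhan bookkeeping — in particular getting the torsion/``$+\infty$-slope'' convention right so that it matches the clause ``$H^0(\cl F)$ has non-negative slopes''; these are standard consequences of Fargues--Fontaine theory, but everything past them is formal given Propositions \ref{perffALB} and \ref{perffALB2}.
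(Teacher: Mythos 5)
The paper offers no argument for this proposition at all: it is quoted verbatim as Le Bras's Théorème 1.2 from \cite{LeBras1}, so there is no "paper proof" for your argument to parallel. What you give is a correct derivation \emph{within the paper's framework}: granting Proposition \ref{perffALB} and the derived equivalence $R\tau_*\colon \Perf(\FF)\simeq D^b(\cl{BC})$ of Proposition \ref{perffALB2}, your identification of the preimage of the heart via the triangle $\cl H^{-1}(\cl F)[1]\to\cl F\to \cl H^0(\cl F)$ in one direction and the two-column spectral sequence $R^p\tau_*\cl H^q(\cl F)\Rightarrow R^{p+q}\tau_*\cl F$ in the other is sound, and you handle the one point where it is easy to slip, namely that torsion summands of $\cl H^0(\cl F)$ (slope $+\infty$) must be allowed so that, e.g., $\Bb_{\dR}^+/\Fil^n$-type Banach--Colmez spaces are in the image, while $\cl H^{-1}(\cl F)$ is automatically a vector bundle. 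Two caveats are worth recording. First, the "cohomological input" you defer --- the computation of $\tau_*$ and $R^1\tau_*$ of $\cl O(\lambda)$ and of torsion sheaves uniformly in $S$ (i.e.\ on relative curves $\FF_S$, as in \cite[Prop.~II.2.5]{FS} and \cite{LeBras1}), together with $R^{\ge 2}\tau_*=0$ --- is not a routine afterthought: it is essentially the substance of Le Bras's original proof, so your argument repackages the statement as "heart bookkeeping on top of \ref{perffALB2}" rather than shortening the real work. Second, as a proof of Le Bras's theorem it is circular relative to the literature, since the derived equivalence of \cite{AL} quoted in Proposition \ref{perffALB2} is itself built on Théorème 1.2 of \cite{LeBras1}; within this paper, where both are taken as black boxes, that is only a logical-dependency remark, but it should be flagged if you intend this as an independent proof.
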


 In the following, for $\cl E$ a vector bundle on $\FF$, we denote the BC spaces
 \begin{equation}\label{expli}
  \cl H^0(\FF, \cl E):=\tau_*\cl E,\;\;\;\;\;\; \cl H^1(\FF, \cl E):=R^1\tau_*\cl E.
 \end{equation}
 
 Let us recall that Colmez defined a \textit{Dimension} function on BC spaces (\cite[\S 6]{Colmez})
 $$\Dim=(\dim, \htt): \cl{BC}\to \Nn\times \Zz$$
 where $\dim$ is called \textit{dimension} and $\htt$ \textit{height}.\footnote{We refer the reader to \cite[\S 7.2]{LeBras1} for the relation between Colmez's original definition of \textit{Espace de Banach de dimension finie} and Definition \ref{defbc}.} In terms of the Fargues--Fontaine curve, the function $\Dim$ is characterized by the following properties:
 \begin{enumerate} [(i)]
  \item  The function $\Dim$ is additive in short exact sequences; 
  \item For any vector bundle $\cl E$ on $\FF$,  denoting by $\deg(\cl E)$ the degree of $\cl E$ and by $\rk(\cl E)$ its rank,
 \begin{equation}\label{ep}
  \chi(\cl E):=\Dim \cl H^0(\FF, \cl E)-\Dim \cl H^1(\FF, \cl E)=(\deg(\cl E), \rk(\cl E))
 \end{equation}
 where $\chi(\cl E)$ is the \textit{Euler-Poincaré characteristic of $\cl E$} (see e.g. \cite[Préface, Remarque 4.6]{FF}).
 \end{enumerate}

 \begin{rem}[BC spaces vs. condensed $\Qq_p$-vector spaces] Let $\cl E$ be a vector bundle on $\FF$. In the following, we will need to consider the cohomology groups of $\cl E$ on $\FF$ as condensed $\Qq_p$-vector spaces. For this, we denote by
 \begin{equation*}\label{shad}
  f: \FF_{\pet}\to \Spa(C, \cl O_C)_{\pet}
 \end{equation*}
 the natural morphism of (small) sites, and we define
 \begin{equation*}\label{expli2}
  H^0(\FF, \cl E):=f_*\cl E,\;\;\;\;\;\; H^1(\FF, \cl E):=R^1f_*\cl E.
 \end{equation*}
 Note that the condensed structure on such cohomology groups is just the ``shadow'' of the structure of BC spaces (\ref{expli}). As an example, we have the identification $\cl H^0(\FF, \cl O(-1))=(\Aa_C^{1})^\diamondsuit/\underline{\Qq_p}$ as BC spaces, which restricted to the site $\Spa(C, \cl O_C)_{\pet}$ gives the identification $H^0(\FF, \cl O(-1))=C/\Qq_p$ as condensed $\Qq_p$-vector spaces.
 \end{rem}

 We are ready to prove the main result of this subsection.
 
 \begin{proof}[Proof of Theorem \ref{steindiagram}]
 Let $X^\dagger$ be the smooth Stein dagger space associated with $X$ (via \cite[Theorem 2.27]{GK1}).
 Choose $\{U_n^\dagger\}_{n\in \Nn}$ a Stein covering of $X^\dagger$, and denote by $\{U_n\}_{n\in \Nn}$ the corresponding Stein covering of $X$ (i.e. set $U_n:=\widehat U_n^{\dagger}$). Fix $n\in \Nn$, and let $V^\dagger:=U_n^\dagger$. Our first goal is to show that we have a diagram as in the statement replacing $X$ with $V^\dagger$. 
 
 Denoting by $\varprojlim_h V_h$ the presentation of a dagger structure on $V$ corresponding to $V^\dagger$ (recall Lemma \ref{dagglemma}\listref{dagglemma:1}), for $\Mm$ a sheaf on the big pro-étale site $\Rig_{C, \pet}$, we set
 $$R\Gamma_{\pet}(V^\dagger, \Mm):=\colim_{h\in \Nn}R\Gamma(V_h, \Mm).$$
 With this definition, by the relative fundamental exact sequence of $p$-adic Hodge theory (\ref{fundexact}), we have the following commutative diagram with exact rows
 \begin{center}
   \begin{tikzcd}
   \cdots \arrow[r] &[-1em] H_{\pet}^i(V^\dagger, \Qq_p) \arrow[r]\arrow[d] &[-1em] H_{\pet}^i(V^\dagger, \Bb_{e}) \arrow[r, "\alpha_i"]\arrow[d] &[-1em]H_{\pet}^i(V^\dagger, \Bb_{\dR}/\Bb_{\dR}^+)\arrow[r]\arrow[-,double line with arrow={-,-}]{d} &[-1em] \cdots \\
   \cdots \arrow[r] &[-1em] H_{\pet}^i(V^\dagger, \Bb_{\dR}^+)\arrow[r]  &[-1em]H_{\pet}^i(V^\dagger, \Bb_{\dR}) \arrow[r, "\beta_i"]  &[-1em] H_{\pet}^i(V^\dagger, \Bb_{\dR}/\Bb_{\dR}^+) \arrow[r]  &[-1em] \cdots
  \end{tikzcd}
  \end{center}
 from which we obtain the following diagram with exact rows
  \begin{equation}\label{anv}
  \begin{tikzcd}
   0 \arrow[r] &[-1em] \coker\alpha_{i-1} \arrow[r]\arrow[d] &[-1em]   H_{\pet}^i(V^\dagger, \Qq_p) \arrow[r]\arrow[d]  &[-1em] \ker \alpha_i \arrow[r]\arrow[d] &[-1em] 0 \\
   0 \arrow[r] &[-1em] \coker\beta_{i-1} \arrow[r]   &[-1em]  H_{\pet}^i(V^\dagger, \Bb_{\dR}^+)\arrow[r]  &[-1em]  \ker \beta_i \arrow[r] &[-1em] 0.
  \end{tikzcd}
  \end{equation}
  By \cite[Theorem 7 and Remark 2]{Elkik}, we may assume that $V^\dagger$ is the base change to $C$ of a smooth dagger affinoid $V_0^\dagger$  defined over a finite extension $L$ of $K$. Then, using Theorem \ref{B=HK} together with the perfectness of $R\Gamma_{\HK}(V^\dagger)$ over $\breve F$ (Theorem \ref{slopp}\listref{slopp:1}), Lemma \ref{unex} and Lemma \ref{unex2} in order to compute $H_{\pet}^i(V^\dagger, \Bb_{e})$, and relying on \cite[Corollary 6.17(ii)]{Bosco} to determine $H_{\pet}^i(V^\dagger, \Bb_{\dR}/\Bb_{\dR}^+)$, recalling the compatibility proven in Theorem \ref{compatib}\listref{compatib:2}, we have the following commutative diagram with exact rows
\begin{center}
   \begin{tikzcd}
   0 \arrow[r] 
     &[-1em] (H^i_{\HK}(V^\dagger)\otimes_{\breve F}B_{\log}[1/t])^{N=0, \varphi=1} \arrow[d, "\gamma_i"] \arrow[r, "\sim"] 
     &[-1em] H_{\pet}^i(V^\dagger, \Bb_{e}) \arrow[d, "\alpha_i"] \arrow[r]      
     &[-1em] 0 \arrow[d] \arrow[r] 
     &[-1em] 0  \\
   0 \arrow[r] 
     &[-1em] H_{\dR}^i(V_0^\dagger)\otimes_L B_{\dR}/t^{-i}B_{\dR}^+ \arrow[r]           
     &[-1em] H_{\pet}^i(V^\dagger, \Bb_{\dR}/\Bb_{\dR}^+) \arrow[r]  
     &[-1em] (\Omega^i(V^\dagger)/\ker d)(-i-1) \arrow[r]    
     &[-1em] 0.
  \end{tikzcd}
  \end{center}
  We claim that
  \begin{equation}\label{claimo}
   \ker \gamma_i= (H^i_{\HK}(V^\dagger)\otimes_{\breve F}B_{\log})^{N=0, \varphi=p^i} \;\;\;\;\; \text{ and } \;\;\;\;\;\coker \gamma_i=0.
  \end{equation}
  For this, we consider the vector bundle $\cl E(H_{\HK}^i(V^\dagger))$ on the Fargues--Fontaine curve $\FF$ associated to the finite $(\varphi, N)$-module $H_{\HK}^i(V^\dagger)$ over $\breve F$. By Theorem \ref{slopp}\listref{slopp:2}, the vector bundle $\cl E=\cl E(H_{\HK}^i(V^\dagger))\otimes \cl O(i)$ has non-negative Harder--Narasimhan slopes, in particular $H^1(\FF, \cl E)=0$, and we have the short exact sequence
  $$0\to H^0(\FF, \cl E)\to H^0(\FF\setminus\{\infty\}, \cl E)\to \cl E^\wedge_{\infty}[1/t]/\cl E^\wedge_{\infty}\to 0.$$
  Note that such short exact sequence identifies with the short exact sequence
  $$0\to (H^i_{\HK}(V^\dagger)\otimes_{\breve F}B_{\log})^{N=0, \varphi=p^i}\to (H^i_{\HK}(V^\dagger)\otimes_{\breve F}B_{\log}[1/t])^{N=0, \varphi=1}\overset{\gamma_i}{\to} H_{\dR}^i(V_0^\dagger)\otimes_L B_{\dR}/t^{-i}B_{\dR}^+\to 0$$
  thus proving the claim (\ref{claimo}).
  
  Then, twisting by $(i)$ the diagram (\ref{anv}), putting everything together we deduce that, for each $n\in \Nn$, we have a commutative diagram with exact rows
   \begin{equation}\label{lastt}
   \begin{tikzcd}
   0 \arrow[r] 
     &[-1em] \Omega^{i-1}(U_n^\dagger)/\ker d \arrow[-,double line with arrow={-,-}]{d} \arrow[r] 
     &[-1em] H^i_{\pet}(U_n^\dagger, \Qq_p(i)) \arrow[d] \arrow[r]      
     &[-1em] (H^i_{\HK}(U_n^\dagger)\solid_{\breve F}B_{\log})^{N=0, \varphi=p^i} \arrow[d] \arrow[r] 
     &[-1em] 0  \\
   0 \arrow[r] 
     &[-1em] \Omega^{i-1}(U_n^\dagger)/\ker d \arrow[r]           
     &[-1em] \Omega^i(U_n^\dagger)^{d=0} \arrow[r]  
     &[-1em] H^i_{\dR}(U_n^\dagger) \arrow[r]    
     &[-1em] 0.
  \end{tikzcd}
  \end{equation}
  Now, since we have
  $$R\Gamma_{\pet}(X, \Qq_p(i))=R\Gamma_{\pet}(X^\dagger, \Qq_p(i))=R\varprojlim_{n}R\Gamma_{\pet}(U_n^{\dagger}, \Qq_p(i))$$
  and similarly
  $$R\Gamma_{\HK}(X)=R\varprojlim_{n}R\Gamma_{\HK}(U_n^{\dagger}),\;\;\;\;\;\;\;R\Gamma_{\dR}(X)=R\varprojlim_{n}R\Gamma_{\dR}(U_n^{\dagger})$$
  (see Proposition \ref{parpar} and cf. \cite[Proposition 3.17]{CN}), recalling the property \cite[Corollary A.67]{Bosco} of the solid tensor product, the statement follows taking the inverse limit of (\ref{lastt}) over $n\in \Nn$, observing the following $R^1\varprojlim$ vanishing statements.
  \begin{itemize}
   \item Using that $\{U_n\}_{n\in \Nn}$ is a Stein covering of $X$, by the condensed version of the Mittag-Leffler criterion for Banach spaces, \cite[Lemma A.37]{Bosco}, we have that
  $$R^1\varprojlim_n \Omega^i(U_n^\dagger)=R^1\varprojlim_n \Omega^i(U_n)=0.$$  
  \item Since $H_{\dR}^i(U_n^\dagger)$, for varying $n\in \Nn$, are finite-dimensional condensed $C$-vector spaces, we have that
  $$R^1\varprojlim_n H_{\dR}^i(U_n^\dagger)=0.$$
  \item Lastly, we have
  \begin{equation}\label{assert}
   R^1\varprojlim_n (H^i_{\HK}(U_n^\dagger)\otimes _{\breve F}B_{\log})^{N=0, \varphi=p^i}=0.
  \end{equation}
 The claim (\ref{assert}) is essentially contained in the proof of \cite[Lemma 3.28]{CDN1} (that we write here in slightly different terms). By the Mittag-Leffler criterion for condensed abelian groups,\footnote{It follows from the Mittag-Leffler criterion for abelian groups, \cite[Proposition 13.2.2]{Groth}, applying the latter to the values on extremally disconnected sets, and using \cite[Lemma 3.18]{Scholze}.} it suffices to show that in the inverse system $\{(H^i_{\HK}(U_n^\dagger)\otimes _{\breve F}B_{\log})^{N=0, \varphi=p^i}, f_{nm}\}$, for each $n\in \Nn$ there exists $k\ge m$ such that, for every $m\ge k$, the image of the maps $f_{nm}$ are equal to the image of $f_{nk}$. For this, recall that, for $\cl E_n= \cl E(H_{\HK}^i(U_n^\dagger))\otimes \cl O(i)$, we have $H^0(\FF, \cl E_n)\cong (H^i_{\HK}(U_n^\dagger)\otimes _{\breve F}B)^{\varphi=p^i}$ (trivializing the monodromy), and $H^1(\FF, \cl E_n)=0$. Considering the Euler-Poincaré characteristic of $\cl E_n$, by (\ref{ep}) we have that 
  $$\Dim \cl H^0(\FF, \cl E_n)=(\deg(\cl E_n), \rk(\cl E_n))$$
  we deduce that the BC space $\cl H^0(\FF, \cl E_n)$ has height $\ge 0$; moreover, by the characterization of BC spaces in terms of
  $\FF$, Proposition \ref{lbc}, any BC subspace of $\cl H^0(\FF, \cl E_n)$ has height $\ge 0$. Thus, in the inverse system $\{\cl H^0(\FF, \cl E_n), f_{nm}\}$, for each $n\in \Nn$ the image of the maps $f_{nm}$ form a chain of BC spaces with decreasing Dimension (for the lexicographic order on $\Nn\times \Zz$) and height $\ge 0$; in particular, such chain eventually stabilizes, as desired.
  \end{itemize}
 \end{proof}
 
 As a consequence of Theorem \ref{steindiagram} we have the following result.
 
 \begin{cor}\label{Steinvanish}
  Let $X$ be a smooth Stein space over $C$. Then, we have $$H^i_{\pet}(X, \Qq_p)=0$$ for all $i>\dim X$.
 \end{cor}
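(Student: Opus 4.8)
The plan is to deduce the vanishing directly from the short exact sequence in Theorem \ref{steindiagram}. Fix $i > \dim X =: d$. The exact sequence reads
$$0\to \Omega^{i-1}(X)/\ker d \to H^i_{\pet}(X, \Qq_p(i))\to (H^i_{\HK}(X)\solid_{\breve F}B_{\log})^{N=0, \varphi=p^i} \to 0,$$
so it suffices to show that both the left and the right terms vanish for $i > d$. The Tate twist is harmless, so this gives the claim for $H^i_{\pet}(X, \Qq_p)$ as well.

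For the left term, I would invoke the boundedness of the de Rham complex: by Corollary \ref{dRbound} (or more directly Proposition \ref{bound0}, using that $X$ is smooth of dimension $d$ so that $\Omega^j_X = 0$ for $j > d$), the sheaf $\Omega^{i-1}_X$ is zero whenever $i-1 > d$, i.e. $i \ge d+2$; and for $i = d+1$ we have $\Omega^i_X = 0$, so that $\ker d = \Omega^{i-1}(X)$ and the quotient $\Omega^{i-1}(X)/\ker d$ vanishes. Either way $\Omega^{i-1}(X)/\ker d = 0$ for all $i > d$. For the right term, I would use Theorem \ref{mainHK}\listref{mainHK:2}: for $X$ qcqs of dimension $d$ the Hyodo--Kato complex lies in $D^{\le 2d}$, but more to the point, combining the Hyodo--Kato isomorphism $R\Gamma_{\HK}(X)\dsolid_{\breve F}C \simeq R\Gamma_{\dR}(X)$ (Theorem \ref{mainHK}\listref{mainHK:3}, in the paracompact Stein case) with Corollary \ref{samedim}, the cohomological dimension of $R\Gamma_{\HK}(X)$ equals that of $R\Gamma_{\dR}(X)$, which by Kiehl's acyclicity theorem (applied to the Stein space $X$, so that $H^j(X,\Omega^k_X)=0$ for $j>0$) is computed by the complex $[\Omega^0(X)\to \cdots \to \Omega^d(X)]$ and hence is $\le d$. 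Therefore $H^i_{\HK}(X) = 0$ for $i > d$, and a fortiori $(H^i_{\HK}(X)\solid_{\breve F}B_{\log})^{N=0, \varphi=p^i} = 0$.

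Putting these together, for $i > d$ the outer terms of the short exact sequence both vanish, forcing $H^i_{\pet}(X, \Qq_p(i)) = 0$, and untwisting gives $H^i_{\pet}(X, \Qq_p) = 0$. I do not expect any real obstacle here: the only point requiring a little care is the borderline degree $i = d+1$, where one must note that $\ker d = \Omega^{d}(X)$ since $\Omega^{d+1}_X = 0$, so the left-hand term is genuinely zero rather than merely torsion; and that the right-hand term vanishes in this degree as well because the Hyodo--Kato cohomology of a Stein space, like its de Rham cohomology, is concentrated in degrees $\le d$ by Kiehl acyclicity. All the required inputs — Theorem \ref{steindiagram}, Theorem \ref{mainHK}, Corollary \ref{samedim}, Proposition \ref{bound0} — are already available.
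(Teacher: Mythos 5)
Your proof is correct and follows essentially the same route as the paper: the paper likewise deduces the vanishing from the short exact sequence of Theorem \ref{steindiagram}, reduces the right-hand term to the vanishing of $H^i_{\HK}(X)$ via Corollary \ref{samedim}, and concludes by Kiehl's acyclicity theorem for the de Rham cohomology of Stein spaces. Your extra care about the left-hand term $\Omega^{i-1}(X)/\ker d$ and the borderline degree $i=d+1$ is a point the paper leaves implicit, but it is handled correctly.
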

 \begin{proof}
  Using Theorem \ref{steindiagram}, it suffices to prove that $H^i_{\HK}(X)=0$ for all $i>\dim X$. Thus, by Corollary \ref{samedim}, we can reduce to showing that $H^i_{\dR}(X)=0$ for all $i>\dim X$, which holds true by Kiehl's ayclicity theorem for Stein spaces (cf. \cite[Lemma 5.9]{Bosco} for the statement in the condensed setting).
 \end{proof}

 % It is natural to wonder how Theorem \ref{steindiagram} extends to the singular case.
  
 %\begin{rem}(On singular Stein spaces) Similarly to \cite[Conjecture 1.4]{CN5}, it seems reasonable to expect that, using the results of this paper, and the theory of \textit{quasi-Banach--Colmez spaces} developed in \cite{CN5}, one could obtain the following result: for $X$ a (possibly singular) Stein space over $C$, for any $i\ge 0$, we have a pullback square in $\Vect_{\Qq_p}^{\ssolid}$
% \begin{center}
 %  \begin{tikzcd}
 % H^i_{\pet}(X, \Qq_p(i)) \arrow[r] \arrow[d] & (H_{\HK}^i(X)\solid_{\breve F}B_{\log})^{N=0, \varphi=p^i} \arrow[d]  \\
%  H^i(\Fil^i R\Gamma_{\inf}(X/B_{\dR}^+)) \arrow[r] & H^i_{\inf}(X/B_{\dR}^+)
%  \end{tikzcd} 
% \end{center}
% where $R\Gamma_{\inf}(X/B_{\dR}^+)$ is endowed with the Hodge filtration (Definition \ref{hdg}).
% \end{rem}

 \subsection{Smooth affinoid spaces}\label{smlabel}

 In view of Theorem \ref{steindiagram} and Proposition \ref{expic}\listref{expic:2}, it is natural to formulate the following conjecture.

 \begin{conj}\label{conju}
   Let $X$ be a smooth affinoid rigid space over $C$. For any $i\ge 0$, we have a commutative diagram in $\Vect_{\Qq_p}^{\ssolid}$ with exact rows
 \begin{center}
   \begin{tikzcd}\label{bvac2}
   0 \arrow[r] 
     &[-1em] \Omega^{i-1}(X)/\ker d \arrow[-,double line with arrow={-,-}]{d} \arrow[r] 
     &[-1em] H^i_{\pet}(X, \Qq_p(i)) \arrow[d] \arrow[r]      
     &[-1em] (H^i_{\HK}(X)\solid_{\breve F}B_{\log})^{N=0, \varphi=p^i} \arrow[d] \arrow[r] 
     &[-1em] 0  \\
   0 \arrow[r] 
     &[-1em] \Omega^{i-1}(X)/\ker d \arrow[r]           
     &[-1em] \Omega^i(X)^{d=0} \arrow[r]  
     &[-1em] H^i_{\dR}(X) \arrow[r]    
     &[-1em] 0.
  \end{tikzcd}
  \end{center}
  %In particular, $H_{\pet}^i(X, \Qq_p)=0$ for $i>\dim X$.
 \end{conj}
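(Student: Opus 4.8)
\textbf{Proof strategy for Conjecture \ref{conju} (i.e.\ Theorem \ref{2cases} in dimension one).}
The plan is to run the same argument as in the proof of Theorem \ref{steindiagram} for smooth Stein spaces, but to replace the Stein covering by a suitable datum adapted to the affinoid case, and to handle the resulting functional-analytic subtleties using the solid formalism. More precisely, given a smooth affinoid rigid space $X$ over $C$, one first uses \cite[Theorem 7 and Remark 2]{Elkik} to endow $X$ with a dagger structure $X^\dagger$ defined over a finite extension of $K$, with associated presentation $\varprojlim_h X_h$ of overconvergent neighbourhoods (Lemma \ref{dagglemma}). For each $h$ the space $X_h$ is a smooth affinoid rigid space over $C$ (a base change of one defined over a discretely valued field), and on $X_h$ one has, by the relative fundamental exact sequence of $p$-adic Hodge theory (\ref{fundexact}), a commutative diagram with exact rows in which the pro-\'etale cohomology of $X$ is expressed via $H^i_{\pet}(X_h, \Bb_e)$ and $H^i_{\pet}(X_h, \Bb_{\dR}/\Bb_{\dR}^+)$.

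The key inputs to identify the terms of this diagram are: Theorem \ref{B=HK} (the comparison $R\Gamma_B\simeq (R\Gamma_{\HK}\dsolid_{\breve F} B_{\log})^{N=0}$) together with the finiteness Theorem \ref{slopp}\listref{slopp:1} and the slope bound Theorem \ref{slopp}\listref{slopp:2}, which let one compute $H^i_{\pet}(X_h, \Bb_e)$ and show that the comparison map $\gamma_i$ to $H^i_{\dR}\otimes_L B_{\dR}/t^{-i}B_{\dR}^+$ has kernel $(H^i_{\HK}(X_h)\solid_{\breve F}B_{\log})^{N=0,\varphi=p^i}$ and is surjective (this uses $H^1(\FF,\cl E)=0$ for $\cl E=\cl E(H^i_{\HK}(X_h))\otimes\cl O(i)$, by the slope bound); the computation of $H^i_{\pet}(X_h, \Bb_{\dR}/\Bb_{\dR}^+)$ via Proposition \ref{expic}\listref{expic:2} / \cite[Corollary 6.17]{Bosco}, giving a short exact sequence with $\Omega^{i-1}(X_h)/\ker d$ on one end; and the compatibility of these identifications with the Hyodo--Kato and de Rham comparisons proven in Theorem \ref{compatib}. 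Assembling these yields, for each $h$, a commutative diagram with exact rows of exactly the shape asserted in the conjecture, with $X$ replaced by $X_h$. One then passes to the colimit over $h$: since the solid tensor product commutes with filtered colimits, $R\Gamma_{\pet}(X,\Qq_p(i))=\colim_h R\Gamma_{\pet}(X_h,\Qq_p(i))$, $R\Gamma_{\HK}(X)=\colim_h R\Gamma_{\HK}(X_h)$ (Lemma \ref{immHK}, Remark \ref{remmm}), $R\Gamma_{\dR}(X)=\colim_h R\Gamma_{\dR}(X_h)$, and $\Omega^j(X)=\colim_h\Omega^j(X_h)$; as filtered colimits are exact, the colimit of the diagrams is again a diagram with exact rows, and one is left to check that $\colim_h \Omega^{i-1}(X_h)/\ker d$ really computes $\Omega^{i-1}(X)/\ker d$ — equivalently that taking the quotient by $\ker d$ commutes with this colimit, which amounts to a surjectivity statement for $d$ on overconvergent forms that can be verified directly (e.g.\ via Tate acyclicity and the explicit shape of the $X_h$).

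The main obstacle is exactly the place where the Stein argument used the inverse-limit ($R^1\varprojlim$-vanishing) structure: in the affinoid case one instead has a \emph{colimit} over overconvergent neighbourhoods, and one must ensure (a) that the relevant cohomology groups $H^i_{\HK}(X_h)$, $H^i_{\dR}(X_h)$ actually \emph{are} finite-dimensional condensed vector spaces so that the Banach--Colmez / Fargues--Fontaine arguments apply verbatim, which holds by Theorem \ref{slopp}\listref{slopp:1}, and (b) that the colimit does not destroy exactness of the bottom row, i.e.\ that $H^i_{\dR}(X)=\colim_h H^i_{\dR}(X_h)$ identifies correctly with $\Omega^i(X)^{d=0}/(\Omega^{i-1}(X)/\ker d)$. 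In dimension $1$ this last point is tractable: for a smooth affinoid curve, $\Omega^2=0$, the de Rham complex is two-term, and the needed surjectivity/exactness after colimit can be controlled — this is why Theorem \ref{2cases} is proven only in dimension $1$, the obstruction in higher dimension (discussed in \S\ref{smlabel}) being precisely the control of the higher differentials $d:\Omega^{j}\to\Omega^{j+1}$ on overconvergent forms and the non-formal behaviour of the colimit for $\Omega^{j}/\ker d$ when $j\ge 1$. Thus the proof reduces the conjecture, for general $X$, to this single colimit-exactness input, and establishes it unconditionally when $\dim X=1$.
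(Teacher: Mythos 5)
Your strategy has a fatal flaw at its core: the colimit over the overconvergent neighbourhoods $X_h$ computes the \emph{dagger} cohomology, not the cohomology of $X$. By the very definition of a presentation of a dagger structure, $X\Subset X_{h+1}\Subset X_h$ strictly, and Lemma \ref{dagglemma}/Lemma \ref{immHK} identify $\colim_h R\Gamma(X_h,\cl F)$ with $R\Gamma(X^\dagger,\cl F^\dagger)$, which differs from $R\Gamma(X,\cl F)$: already for the closed unit disk, $H^1_{\dR}(X)$ is a huge non-quasi-separated condensed vector space while $\colim_h H^1_{\dR}(X_h)=H^1_{\dR}(X^\dagger)=0$. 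So the identities you assert, e.g.\ $R\Gamma_{\pet}(X,\Qq_p(i))=\colim_h R\Gamma_{\pet}(X_h,\Qq_p(i))$ and $\Omega^j(X)=\colim_h\Omega^j(X_h)$, are false, and your argument at best proves the (known, and easier) overconvergent analogue of the statement rather than Conjecture \ref{conju} itself. The passage from the dagger space to the honest affinoid is precisely the hard content of the conjecture. A second, related error: you invoke Theorem \ref{slopp}\listref{slopp:1} to claim each $H^i_{\HK}(X_h)$ is finite-dimensional, but that theorem is about qcqs \emph{dagger} varieties; each $X_h$ is a plain smooth affinoid, whose Hyodo--Kato and de Rham groups are in general not finite-dimensional (cf.\ \cite[Remark 5.14]{Bosco}), so the Banach--Colmez/slope arguments cannot be run ``for each $h$''. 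Worse, establishing the asserted diagram for each $X_h$ is exactly an instance of the conjecture you are trying to prove, so the scheme is circular.

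The paper's proof of the dimension-one case (Theorem \ref{affcurves}) takes a different route that never leaves the rigid category: after descending $X$ to $K$ via Elkik, it uses the syntomic Fargues--Fontaine fiber sequence of Theorem \ref{BK=pet}\listref{BK=pet:2}, identified via Theorems \ref{B=HK}, \ref{B_dR=dR} and \ref{compatib}, together with the explicit computation of $(R\Gamma_{\dR}(X)\dsolid_K B_{\dR}^+)/\Fil^i$ from Proposition \ref{expic}\listref{expic:2} (Tate acyclicity). The case $i=0$ is immediate from $B_{\log}^{N=0,\varphi=1}=\Qq_p$; for $i=1$ the only point is to commute $H^1$ with the functor $(-)^{N=0,\varphi=p}$, which works because $B_{\log}$ is flat over $\breve F$ for $\solid_{\breve F}$ and because $N$ is surjective on $B_{\log}$ and $\varphi-p$ is surjective on $B$ (i.e.\ $H^1(\FF,\cl O(1))=0$); and for $i>1$ everything vanishes by the $p$-adic Artin vanishing of Lemma \ref{Artinvanish} together with the vanishing of $H^i_{\dR}(X)$ and $H^i_{\HK}(X)$ above the dimension. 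The genuine obstruction in higher dimension, discussed in \S\ref{smlabel}, is the surjectivity of $\varphi-p^m$ on $H^n_{\HK}(X)\solid_{\breve F}B$ for the \emph{non-overconvergent} Hyodo--Kato groups, not the colimit-exactness issue you identify.
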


 The goal of this subsection is to show Conjecture \ref{conju} for curves.

 \begin{theorem}\label{affcurves}
  Conjecture \ref{conju} holds true for $X$ a smooth affinoid rigid space over $C$ of dimension 1.
 \end{theorem}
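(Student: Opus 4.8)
The plan is to reduce the case of a smooth affinoid curve $X$ over $C$ to the case of a smooth Stein curve, for which Theorem \ref{steindiagram} already applies, by a careful limiting argument combined with the specific structure of the syntomic Fargues--Fontaine cohomology. First I would recall that by Theorem \ref{tog} (applied after descending to a discretely valued subfield, using \cite[Theorem 7 and Remark 2]{Elkik}) and Theorem \ref{BK=pet}, together with the explicit description of the de Rham contribution in Proposition \ref{expic}\listref{expic:2}, we have, for $i\le 1$, that the natural map $\tau^{\le i}R\Gamma_{\syn, \FF}(X, \Qq_p(i))\to \tau^{\le i}R\Gamma_{\pet}(X, \Qq_p(i))$ is an isomorphism; since $\dim X=1$, the only nontrivial cases are $i=0$ (trivial) and $i=1$. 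So the whole content is to analyze $H^1_{\pet}(X, \Qq_p(1))$ and show it sits in the stated short exact sequence, together with the compatibility with the de Rham side, which by Theorem \ref{compatib} and Proposition \ref{expic}\listref{expic:2} amounts to understanding the boundary maps of the relative fundamental exact sequence (\ref{fundexact}) for $\Bb_e$, $\Bb_{\dR}^+$, and $\Bb_{\dR}/\Bb_{\dR}^+$ on $X$.

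Next I would set up, exactly as in the proof of Theorem \ref{steindiagram}, the commutative diagram (\ref{anv}) with exact rows obtained from the relative fundamental exact sequence, now for the affinoid $X$ (descended to $X_0$ over a finite extension $L/K$): the rows involve $\coker\alpha_{i-1}$, $\ker\alpha_i$, $\coker\beta_{i-1}$, $\ker\beta_i$. Using Theorem \ref{B=HK} together with the perfectness of $R\Gamma_{\HK}(X)$ over $\breve F$ (Theorem \ref{slopp}\listref{slopp:1}), and Lemma \ref{unex}, Lemma \ref{unex2} to compute $H^i_{\pet}(X, \Bb_e)$, and \cite[Corollary 6.17(ii)]{Bosco} (Tate's acyclicity in the condensed setting) to compute $H^i_{\pet}(X, \Bb_{\dR}/\Bb_{\dR}^+)$ — recalling the compatibility of Theorem \ref{compatib}\listref{compatib:2} — I would obtain, as in the Stein case, the identifications $\ker\gamma_i=(H^i_{\HK}(X)\solid_{\breve F}B_{\log})^{N=0,\varphi=p^i}$ and $\coker\gamma_i=0$ via the vector bundle $\cl E(H^i_{\HK}(X))\otimes\cl O(i)$ on $\FF$, whose non-negative Harder--Narasimhan slopes are guaranteed by Theorem \ref{slopp}\listref{slopp:2}. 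This produces the desired diagram for $X$, provided one knows the relevant cohomology groups behave as expected; the point where the affinoid case genuinely differs from the Stein case is that $X$ is not exhausted by a nice Stein covering, so the $R^1\varprojlim$-vanishing arguments of Theorem \ref{steindiagram} are unavailable, and one must instead argue directly.

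The hard part will be establishing the exactness on the left, i.e. that the map $\Omega^{i-1}(X)/\ker d\to H^i_{\pet}(X,\Qq_p(i))$ is injective, equivalently (via the diagram) that the map $\coker\beta_{i-1}\to \coker\alpha_{i-1}$ is an isomorphism onto the image; for $i=1$ this is the statement that $\cl O(X)/C \hookrightarrow H^1_{\pet}(X,\Qq_p(1))$, which is not formal because $\cl O(X)$ is a non-quasi-separated condensed $C$-vector space and the relevant quotients of Banach--Colmez type spaces need not be well-behaved. This is exactly the obstruction flagged in \S\ref{smlabel}; in dimension $1$ it is manageable because $H^1_{\dR}(X)$ is finite-dimensional (by \cite[Theorem 5.3]{GK-HK} via Theorem \ref{mainHKover}, or directly), so $\Omega^1(X)^{d=0}=\Omega^1(X)/\im d$ is, up to a finite-dimensional space, the image of $d:\cl O(X)\to\Omega^1(X)$, and one can run the argument of \cite{CN5} (or \cite{CDN1} in the semistable reduction case) using the theory of Banach--Colmez spaces: concretely, one identifies $H^1_{\pet}(X,\Qq_p(1))$ with the $C$-points of the perfect complex $\cl H_{\syn}(X)(1)$ on $\FF$ in the stable range — wait, in degree $i=\dim X+1=2$ this is beyond the stable range, so instead I would use the fiber sequence of Theorem \ref{BK=pet}\listref{BK=pet:2} and the long exact sequence it produces, reducing to the injectivity of $H^1(R\Gamma_B(X)^{\varphi=p}/\text{(image)})$, which by Lemma \ref{ko} and Proposition \ref{perff} becomes a statement about $H^1(\FF, \cl E)$ for a vector bundle $\cl E$ on $\FF$ with controlled slopes, hence about the Dimension/height of the associated Banach--Colmez space. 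I expect the proof to conclude by verifying that the relevant height/Dimension inequality forces the needed injectivity, exactly mirroring the $R^1\varprojlim$-stabilization step in Theorem \ref{steindiagram} but carried out for a single bundle rather than an inverse system, which is why dimension $1$ — where $H^2_{\HK}(X)=0$ by Corollary \ref{samedim} and the acyclicity of affinoid curves in degree $>1$ — is exactly the accessible case.
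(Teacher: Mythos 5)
There is a genuine gap, and it sits exactly at the step you defer to the end. Your strategy is to rerun the proof of Theorem \ref{steindiagram} for the affinoid $X$: compute $H^i_{\pet}(X,\Bb_e)$ via Theorem \ref{B=HK} together with Lemma \ref{unex} and Lemma \ref{unex2}, control $\gamma_i$ via the vector bundle $\cl E(H^i_{\HK}(X))\otimes\cl O(i)$ and Theorem \ref{slopp}\listref{slopp:2}, and close the argument with a Banach--Colmez Dimension/height count (Proposition \ref{lbc}). All of these inputs require $H^i_{\HK}(X)$ to be a \emph{finite} $(\varphi,N)$-module over $\breve F$, equivalently $\cl H_{\FF}(X)$ to be a perfect complex of vector bundles on $\FF$. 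That is true for qcqs \emph{dagger} varieties (Theorem \ref{slopp}, Theorem \ref{lb}) and hence for the overconvergent pieces $U_n^\dagger$ of a Stein covering, but it fails for an affinoid rigid space: already for the closed disk, $H^1_{\HK}(X)\cong H^1_{\dR}(X)$ (up to the isomorphism (\ref{funiso})) is an infinite-dimensional, non-quasi-separated condensed vector space, and one cannot reduce to the dagger case since $R\Gamma_{\pet}(X,\Qq_p)$ does not agree with $R\Gamma_{\pet}(X^\dagger,\Qq_p)$ for affinoids. So the object you want to apply $H^1(\FF,-)$ and the height inequality to is not a vector bundle, Lemma \ref{unex}/\ref{unex2} do not apply to it, and the concluding step of your proposal ("the relevant height/Dimension inequality forces the needed injectivity") is both unproven and resting on a false premise. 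You have also slightly misplaced the difficulty: the injectivity of $\Omega^{i-1}(X)/\ker d\to H^i_{\pet}(X,\Qq_p(i))$ comes for free from the surjectivity of $\theta:B^{\varphi=p}\to C$ in the fundamental exact sequence; the real issue (the one isolated in \S\ref{smlabel}) is commuting the eigenspace functors with passage to cohomology.

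The paper's proof avoids finiteness of $H^1_{\HK}(X)$ entirely by staying derived. It takes the fiber sequence of Theorem \ref{BK=pet}\listref{BK=pet:2} rewritten via Theorems \ref{B=HK}, \ref{B_dR=dR} and \ref{compatib}, uses Proposition \ref{expic}\listref{expic:2} (Tate acyclicity) to identify $(R\Gamma_{\dR}(X)\dsolid_K B_{\dR}^+)/\Fil^1$ with $\cl O(X_C)$ placed in degree $0$, and reads off the long exact sequence in cohomology. The only nontrivial verification is the isomorphism $H^1\bigl((R\Gamma_{\HK}(X)\dsolid_{\breve F}B_{\log})^{N=0,\varphi=p}\bigr)\cong (H^1_{\HK}(X)\solid_{\breve F}B_{\log})^{N=0,\varphi=p}$, which needs only the flatness of $B_{\log}$ as a solid $\breve F$-vector space and the exactness of $0\to B\to B_{\log}\xrightarrow{N}B_{\log}\to 0$ and $0\to B^{\varphi=p}\to B\xrightarrow{\varphi-p}B\to 0$; the slope argument enters only through the finite, slope-$0$ module $H^0_{\HK}(X)$, never through $H^1_{\HK}(X)$. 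Your handling of $i=0$ and of $i>1$ (via Lemma \ref{Artinvanish} and Corollary \ref{samedim}) does match the paper, but the core of the $i=1$ case needs to be restructured along these lines rather than ported from the Stein argument.
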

 \begin{proof}
  By \cite[Theorem 7 and Remark 2]{Elkik}, we can assume that $X$ is the base change to $C$ of a smooth affinoid $X_0$  defined over a finite extension of $K$, and, without loss of generality, we can further assume that $X_0$ is defined over $K$.
  
  We recall that by Theorem \ref{BK=pet}\listref{BK=pet:2} combined with Theorem \ref{B=HK}, Theorem \ref{B_dR=dR} and Theorem \ref{compatib}, we have, for any $i\ge 0$, the following commutative diagram whose rows are exact triangles
  \begin{center}
   \begin{tikzcd}\label{bubble}
     &[-1em] R\Gamma_{\syn, \FF}(X, \Qq_p(i)) \arrow[d] \arrow[r] 
     &[-1em] (R\Gamma_{\HK}(X_C)\dsolid_{\breve F}B_{\log})^{N=0, \varphi=p^i}  \arrow[d] \arrow[r]      
     &[-1em] (R\Gamma_{\dR}(X)\dsolid_K B_{\dR}^+)/\Fil^i \arrow[-,double line with arrow={-,-}]{d}  
     \\

     &[-1em] \Fil^i(R\Gamma_{\dR}(X)\dsolid_K B_{\dR}^+) \arrow[r]           
     &[-1em] R\Gamma_{\dR}(X)\dsolid_K B_{\dR}^+\arrow[r]  
     &[-1em] (R\Gamma_{\dR}(X)\dsolid_K B_{\dR}^+)/\Fil^i   
  \end{tikzcd}
  \end{center}
  and, by Theorem \ref{BK=pet}\listref{BK=pet:1}, we have an isomorphism $\tau^{\le i}R\Gamma_{\syn, \FF}(X, \Qq_p(i))\overset{\sim}{\to} \tau^{\le i}R\Gamma_{\pet}(X, \Qq_p(i))$.
  Moreover, by Tate's acyclicity theorem, the bottom exact triangle of the diagram above maps, via Fontaine's morphism $\theta: B_{\dR}^+\to C$, to the following exact triangle
  \begin{center}
  \begin{tikzcd}
    &[-1em] \Omega^{\ge i}(X)[-i] \arrow[r]           
     &[-1em] \Omega^{\bullet}(X)\arrow[r]  
     &[-1em] \Omega^{\le i-1}(X). 
  \end{tikzcd}
  \end{center}
  Then, reducing to the case $X$ is connected, the statement for $i=0$ follows immediately using that $B_{\log}^{N=0, \varphi=1}=\Qq_p$. 
  For the statement in the case $i=1$, taking cohomology of the diagram above and using Proposition \ref{expic}\listref{expic:2}, it remains to check that
  \begin{equation}\label{labbel}
   H^1(R\Gamma_{\HK}(X)\dsolid_{\breve{F}}B_{\log})^{N=0, \varphi=p})\cong (H^1_{\HK}(X)\solid_{\breve{F}}B_{\log})^{N=0, \varphi=p}.
  \end{equation}
  For this, we note that, since $B_{\log}$ is a flat solid $\breve{F}$-vector space (by \cite[Corollary A.65]{Bosco}, as it is a filtered colimit of $\breve{F}$-Fréchet spaces), we have that $H^1(R\Gamma_{\HK}(X)\dsolid_{\breve{F}}B_{\log})\cong H^1_{\HK}(X)\solid_{\breve{F}}B_{\log}$. Then, the isomorphism (\ref{labbel}) follows from the exactness of the sequences
  $$0\to B\to B_{\log}\overset{N}{\to}B_{\log}\to 0\;\;\;\;\;\;\;\;\;\; 0\to B^{\varphi=p}\to B\overset{\varphi-p}{\to}B\to 0.$$
  Lastly, the statement for $i>1$ follows from Lemma \ref{Artinvanish} below (which implies the vanishing of $H^i_{\pet}(X, \Qq_p(i))$ as $X$ is qcqs), together with the fact that, in this case, $H^i_{\dR}(X)$ vanishes, and, by Corollary \ref{samedim}, $H^i_{\HK}(X)$ vanishes as well.
 \end{proof}

 We used crucially the following result.
 
 \begin{lemma}[Rigid-analytic $p$-adic Artin vanishing]\label{Artinvanish}
  Let $X$ be a smooth affinoid rigid space over $C$. Then, we have $$H^i_{\pet}(X, \Zz_p)=0$$ for all $i>\dim X$.
 \end{lemma}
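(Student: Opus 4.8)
The plan is to reduce the statement to a known vanishing result of Bhatt--Mathew for the \'etale cohomology of affinoid adic spaces, transported along the comparison between pro-\'etale cohomology of $\underline{\Zz_p}$ and continuous \'etale cohomology. First I would recall that, for $X$ a smooth affinoid rigid space over $C$, one has $R\Gamma_{\pet}(X,\underline{\Zz_p})\simeq R\varprojlim_n R\Gamma_{\ett}(X,\Zz/p^n)$, where the right-hand side is continuous \'etale cohomology; this is standard and uses that $X$ is qcqs (so that $R\Gamma_{\ett}$ commutes with the relevant limits and $R^1\varprojlim$ terms are controlled). Since $\dim X = d$, it therefore suffices to show $H^i_{\ett}(X,\Zz/p^n)=0$ for all $i>d$ and all $n\ge 1$: the Milnor exact sequence
\[
0\to R^1\varprojlim_n H^{i-1}_{\ett}(X,\Zz/p^n)\to H^i_{\pet}(X,\underline{\Zz_p})\to \varprojlim_n H^i_{\ett}(X,\Zz/p^n)\to 0
\]
then forces $H^i_{\pet}(X,\underline{\Zz_p})=0$ for $i>d$ (note $i-1\ge d$ as well, so the $R^1\varprojlim$ term also vanishes).

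Next I would invoke the $p$-adic Artin vanishing theorem of Bhatt--Mathew: for an affinoid adic space $\Spa(R,R^+)$ over an algebraically closed nonarchimedean field, whose underlying topological space (or rather the associated scheme-theoretic object) has Krull dimension $d$, the mod-$p$ \'etale cohomology $H^i_{\ett}$ vanishes in degrees $>d$. Concretely, for $X$ smooth affinoid over $C$ of dimension $d$, the space $X$ admits a finite morphism to the closed unit polydisk $\Dd^d_C$ by Noether normalization (\cite[\S 3.1]{Bosch}), and the cohomological dimension of the \'etale site of an affinoid of dimension $d$ over $C$ is $d$; the relevant input is precisely Bhatt--Mathew's bound on the \'etale cohomological dimension of affinoid spaces, which the paper has already cited as being available. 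This gives $H^i_{\ett}(X,\Zz/p^n)=0$ for $i>d$.

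Combining the two steps yields $H^i_{\pet}(X,\underline{\Zz_p})=0$ for all $i>\dim X$, which is the claim. The main obstacle, such as it is, is purely bookkeeping: making sure the comparison $R\Gamma_{\pet}(X,\underline{\Zz_p})\simeq R\varprojlim_n R\Gamma_{\ett}(X,\Zz/p^n)$ is set up correctly in the condensed framework (so that the cohomology groups appearing are the condensed ones, and the $R^1\varprojlim$ terms are the condensed derived limits), and pinning down the exact form of the Bhatt--Mathew vanishing that is being used --- in particular that it applies to the non-quasi-compact-looking but genuinely qcqs affinoids at hand, with the dimension bound matching $\dim X$. Neither of these is a genuine difficulty; the statement is essentially a citation packaged with the standard limit argument. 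One should also remark that smoothness is not strictly needed here (only the dimension bound matters), but it is harmless to keep it since the lemma is only applied to smooth $X$.
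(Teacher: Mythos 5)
Your overall strategy is the same as the paper's: cite the Bhatt--Mathew vanishing for the mod $p^n$ cohomology of affinoids and then pass to the limit via the Milnor sequence. However, there is a genuine gap in your treatment of the $R^1\varprojlim$ term. You dismiss it with ``note $i-1\ge d$ as well, so the $R^1\varprojlim$ term also vanishes,'' but the vanishing theorem only kills cohomology in degrees \emph{strictly} greater than $d$. In the critical boundary case $i=d+1$ the term is $R^1\varprojlim_n H^{d}(X,\Zz/p^n)$, and $H^d(X,\Zz/p^n)$ has no reason to vanish, so your argument as written does not establish the lemma for $i=d+1$ (which is of course the main case of interest; for $i>d+1$ everything is trivial).

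The paper closes this gap differently: rather than trying to make the groups $H^{i-1}_{\pet}(X,\Zz/p^n)$ vanish, it shows that the transition maps of the inverse system $\{H^{i-1}_{\pet}(X,\Zz/p^n)\}_n$ are surjective. This follows from the long exact sequence attached to the short exact sequence of sheaves
$$0\to \Zz/p\to \Zz/p^{n+1}\to \Zz/p^{n}\to 0,$$
since the obstruction to surjectivity of $H^{i-1}(X,\Zz/p^{n+1})\to H^{i-1}(X,\Zz/p^n)$ lands in $H^{i}(X,\Zz/p)$, which \emph{does} vanish because $i>d$. The Mittag--Leffler criterion (in its condensed incarnation, checked on extremally disconnected sets) then gives $R^1\varprojlim_n H^{i-1}_{\pet}(X,\Zz/p^n)=0$. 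You should replace your parenthetical remark with this argument; the rest of your reduction is fine.
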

 \begin{proof}
  Fix an integer $i>\dim X$. By a result of Bhatt--Mathew, \cite[Remark 7.4(2)]{BhattMathew}, for any $n\ge 1$, we have the vanishing $H^i_{\pet}(X, \Zz/p^n)=0$.\footnote{The cited result does not keep track of the condensed structure on $H^i_{\pet}(X, \Zz/p^n)$, however doing this does not pose any problem.} Thus, using the exact sequence
  $$0\to R^1\varprojlim_n H_{\pet}^{i-1}(X, \Zz/p^n)\to H_{\pet}^i(X, \Zz_p)\to \varprojlim_n H_{\pet}^i(X, \Zz/p^n)\to 0$$
  it remains to show that
  \begin{equation}\label{tp}
   R^1\varprojlim_n H^{i-1}_{\pet}(X, \Zz/p^n)=0.
  \end{equation}
  Considering the long exact sequence associated to the short exact sequence of sheaves on $X_{\pet}$
  $$0\to \Zz/p\to \Zz/p^{n+1}\to \Zz/p^{n}\to 0$$
  and using again that $H^i_{\pet}(X, \Zz/p)=0$, we deduce that the transition maps of the inverse system $\{H^{i-1}_{\pet}(X, \Zz/p^n)\}_{n}$ are surjective; then, we conclude by the Mittag-Leffler criterion.
  \end{proof}

 Now, let us at least indicate a possible direction for proving Conjecture \ref{conju} in dimension higher than 1.

 \begin{rem}[On the obstruction to proving Conjecture \ref{conju}]
  Let us assume for simplicity that $X$ is an affinoid rigid space over $C$ having a smooth formal model over $\cl O_C$. In this case, by Theorem \ref{mainHK}\listref{mainHK:1} we have in particular that the monodromy action $N$ on the Hyodo-Kato cohomology of $X$  is trivial. We claim that to prove Conjecture \ref{conju} it suffices to show that
  \begin{equation}\label{hgui}
   H^1(\FF, \cl H_{\FF}^n(X)\otimes \cl O(m))=0\;\;\;\; \text{ for all } 0\le n\le m
  \end{equation}
  where $\cl H_{\FF}^n(X)$ denotes the $n$-th Fargues--Fontaine cohomology group of $X$ (Definition \ref{drff}). We note that, by Proposition \ref{perff} (applied to $\cl H_{\FF}^n(X)\otimes \cl O(m)\in \Nuc(\FF)$ concentrated in degree 0) and (the proof of) Theorem \ref{B=HK}, the condition (\ref{hgui}) is equivalent to the following one\footnote{Here, we recall \cite[Corollary A.65]{Bosco} and we note that we have $R\lim_{I\subset (0, \infty)} H^{n}_{\HK}(X)\solid_{\breve F}B_I\simeq H^{n}_{\HK}(X)\solid_{\breve F}B$ by \cite[Corollary A.67(ii)]{Bosco}, using that $H^{n}_{\HK}(X)$ is a quotient of $\breve F$-Banach spaces.}
  \begin{equation}\label{deuif}
   \varphi-p^m: H^{n}_{\HK}(X)\solid_{\breve{F}}B \to H^{n}_{\HK}(X)\solid_{\breve{F}}B\;\; \text{ is surjective  for all } 0\le n\le m.
  \end{equation}
  Before proving the claim, we pause to remark that (\ref{hgui}) holds replacing $X$ with a dagger structure $X^{\dagger}$ on $X$ (Remark \ref{presmooth}), in fact, thanks to Theorem \ref{lb}\listref{lb:1} and Theorem \ref{slopp}, the vector bundle $\cl H_{\FF}^n(X^{\dagger})$ on $\FF$ has Harder--Narasimhan slopes $\ge -n$. Recalling Theorem \ref{mainHK}\listref{mainHK:1}, this suggests the following question: can one prove (\ref{deuif}) using that the Frobenius on the $n$-th crystalline cohomology group has an inverse up to $p^n$, \cite[Theorem 1.8(6)]{Prisms}?
  
  Now, to prove that the condition (\ref{hgui}) implies Conjecture \ref{conju}, as in the proof of Theorem \ref{affcurves},  by \cite[Theorem 7 and Remark 2]{Elkik}, we can assume that $X$ is the base change to $C$ of a smooth affinoid $X_0$  defined over a finite extension of $K$, and, without loss of generality, we can further assume that $X_0$ is defined over $K$.
   Then, again as in the proof of Theorem \ref{affcurves}, combining Theorem \ref{tog} with Proposition \ref{expic}\listref{expic:2}, for all $i\ge 0$ we obtain the following commutative diagram in $\Vect_{\Qq_p}^{\ssolid}$ with exact rows
    \begin{center}
   \begin{tikzcd}
   (H^{i-1}_{\HK}(X)\solid_{\breve F}B)^{\varphi=p^i} \arrow[d, "\gamma_i"] \arrow[r] 
     &[-1em] \Omega^{i-1}(X)/d\Omega^{i-2}(X) \arrow[-,double line with arrow={-,-}]{d} \arrow[r] 
     &[-1em] H^i_{\pet}(X, \Qq_p(i)) \arrow[d] \arrow[r, "\alpha_i"]      
     &[-1em] (H^i_{\HK}(X)\solid_{\breve F}B)^{\varphi=p^i} \arrow[d] \arrow[r] 
     &[-1em] 0  \\
   0\to H^{i-1}_{\dR}(X) \arrow[r] 
     &[-1em] \Omega^{i-1}(X)/d\Omega^{i-2}(X) \arrow[r]           
     &[-1em] \Omega^i(X)^{d=0} \arrow[r]  
     &[-1em] H^i_{\dR}(X) \arrow[r]    
     &[-1em] 0.
  \end{tikzcd}
  \end{center}
  Here, we used that, for any $0\le j\le i$, we have
  \begin{equation*}
   H^j((R\Gamma_{\HK}(X)\dsolid_{\breve{F}}B)^{\varphi=p^i})\cong (H^j_{\HK}(X)\solid_{\breve{F}}B)^{\varphi=p^i}.
  \end{equation*}
   In fact, since $B$ is a flat solid $\breve{F}$-vector space (by \cite[Corollary A.65]{Bosco}), we have an exact sequence
  $$0\to (H^{j-1}_{\HK}(X)\solid_{\breve{F}}B)/\im(\varphi-p^i)\to H^j((R\Gamma_{\HK}(X)\dsolid_{\breve{F}}B)^{\varphi=p^i})\to (H^j_{\HK}(X)\solid_{\breve{F}}B)^{\varphi=p^i}\to 0$$
  and the left term vanishes thanks to (\ref{deuif}).
   Now, it remains to show that we have $$\ker \alpha_i \cong \Omega^{i-1}(X)/\ker d.$$
   From the diagram above, we obtain the following commutative diagram with exact rows
   \begin{equation}
   \begin{tikzcd}
   (H^{i-1}_{\HK}(X)\solid_{\breve F}B)^{\varphi=p^i} \arrow[d, "\gamma_i"] \arrow[r] 
     &[-1em] \Omega^{i-1}(X)/d\Omega^{i-2}(X) \arrow[-,double line with arrow={-,-}]{d} \arrow[r]     
     &[-1em] \ker \alpha_i \arrow[d] \arrow[r] 
     &[-1em] 0  \\
   0\to H^{i-1}_{\dR}(X) \arrow[r] 
     &[-1em] \Omega^{i-1}(X)/d\Omega^{i-2}(X) \arrow[r]           
     &[-1em] \Omega^{i-1}(X)/\ker d \arrow[r]    
     &[-1em] 0.
  \end{tikzcd}
  \end{equation}
   Then, using the snake lemma, we deduce that we need to show that the map $\gamma_i$ is surjective. For this, by \cite[Proposition II.2.3]{FS} we have the following exact triangle in $\QCoh(\FF)$
  \begin{equation}\label{fig}
   \cl H_{\FF}^{i-1}(X)\otimes \cl O(i-1)\overset{\cdot t}{\to} \cl H_{\FF}^{i-1}(X)\otimes \cl O(i)\to \iota_{\infty, *}(H_{\dR}^{i-1}(X))
  \end{equation}
   where $\iota_{\infty, *}:\QCoh(\Spa(C))\to \QCoh(\FF)$ denotes the pushforward functor. Here, we used Theorem \ref{mainHK}\listref{mainHK:3} and the flatness of $C$ for the solid tensor product $\solid_{\breve F}$ (\cite[Corollary A.65]{Bosco}). Taking the long exact sequence in cohomology associated to (\ref{fig}) on $\FF$, recalling Proposition \ref{perff}, from (\ref{hgui}) we deduce that $\gamma_i$ is surjective, as desired.
 \end{rem}

 \subsection{Remarks about coefficients}\label{coeffi}
 
 In this subsection, we indicate a partial extension of Theorem \ref{tog} to coefficients. For simplicity, we restrict ourselves to smooth rigid-analytic varieties over $C$.

 \begin{df}
  Let $X$ be a smooth rigid-analytic variety over $C$. Denote by $\nu:X_{\pet}\to X_{\ett, \cond}$ the natural morphism of sites. For $\Mm$ a $\Bb$-local system on $X_{\pet}$, i.e. a sheaf of $\Bb$-modules that is locally on $X_{\pet}$ free of finite rank, we define the \textit{$B$-cohomology of $X$ with coefficients in $\Mm$} as the complex of $D(\Mod^{\cond}_{B})$
  $$R\Gamma_{B}(X, \Mm):=R\Gamma_{\ett, \cond}(X, L\eta_tR\nu_*\Mm).$$
  and we endow it with the filtration décalée. 
 \end{df}

  With the definition above, given $\Ll$ a $\Qq_p$-local system on $X_{\pet}$ with associated $\Bb$-local system $\Mm=\Ll\otimes_{\Qq_p}\Bb$, tensoring with $\Ll\otimes_{\Qq_p}-$ the exact sequence (\ref{suite4}), the same argument used in the proof of Theorem \ref{BK=pet}\listref{BK=pet:1} shows that we have a natural isomorphism in $D(\Vect_{\Qq_p}^{\cond})$
  $$\tau^{\le i}\Fil^iR\Gamma_B(X, \Mm)^{\varphi=p^i}\overset{\sim}{\to} \tau^{\le i}R\Gamma_{\pet}(X, \Ll(i)).$$
  Similarly, we have a version with coefficients of Theorem \ref{BK=pet}\listref{BK=pet:2}. Then, combining the same results cited in the proof of Theorem \ref{B_dR=dR} (in the smooth case), which rely on Scholze’s Poincaré lemma for $\Bb_{\dR}^+$, we obtain the following result. \medskip

 \begin{theorem}
  Let $X$ be a connected, paracompact, smooth rigid-analytic variety defined over $K$. Let $\Ll$ be a de Rham $\Qq_p$-local system on $X_{\pet}$, with associated $\Bb$-local system $\Mm=\Ll\otimes_{\Qq_p}\Bb$, and associated  filtered $\cl O_X$-module with integrable connection $(\cl E, \nabla, \Fil^\bullet)$. For any $i\ge 0$, we have a  $\mathscr{G}_K$-equivariant isomorphism in $D(\Vect_{\Qq_p}^{\ssolid})$
  $$\tau^{\le i}R\Gamma_{\pet}(X_C, \Ll(i))\simeq \tau^{\le i}\fib(R\Gamma_{B}(X_C, \Mm)^{\varphi=p^i}\to (R\Gamma_{\dR}(X, \cl E)\dsolid_K B_{\dR}^+)/\Fil^i).$$
 \end{theorem}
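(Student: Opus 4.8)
The plan is to follow verbatim the structure of the proof of Theorem \ref{tog} (which combines Theorem \ref{BK=pet}, Theorem \ref{B=HK}, Theorem \ref{B_dR=dR} and Theorem \ref{compatib}), but everywhere replacing constant coefficients by the de Rham local system $\Ll$ and the associated filtered module with integrable connection $(\cl E, \nabla, \Fil^{\bullet})$. Concretely, I would first record the two ``coefficient versions'' of Theorem \ref{BK=pet} that were sketched just above the statement: tensoring the fundamental exact sequences \eqref{suite4} and \eqref{suite5} of $p$-adic Hodge theory with $\Ll\otimes_{\Qq_p}-$ (which is exact, as $\Ll$ is locally free), running the identical argument with the d\'ecalage functor and the Beilinson $t$-structure from Proposition \ref{bbe}\listref{bbe:2}, one gets $\tau^{\le i}\Fil^i R\Gamma_B(X, \Mm)^{\varphi=p^i}\overset{\sim}{\to}\tau^{\le i}R\Gamma_{\pet}(X, \Ll(i))$ and the fiber-sequence description $R\Gamma_B(X,\Mm)^{\varphi=p^i}\to R\Gamma_{B_{\dR}^+}(X,\Mm)/\Fil^i$, where $R\Gamma_{B_{\dR}^+}(X,\Mm):=R\Gamma_{\ett,\cond}(X, L\eta_tR\nu_*(\Mm\solid_{\Bb}\Bb_{\dR}^+))$. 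For this one only needs the analogue of the computation in Theorem \ref{BK=pet}\listref{BK=pet:2} with $\gr^j$ replaced by $\gr^j(\Ll\otimes_{\Qq_p}(-))$, which again reduces via Proposition \ref{bbe}\listref{bbe:2} to the surjectivity of $\varphi p^{-i}-1$ on $R^j\alpha_*(\Ll\otimes \gr^j\Bb)$; since $\Ll$ is locally free this is insensitive to twisting by $\Ll$ and follows as before.

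Next I would identify $R\Gamma_{B_{\dR}^+}(X,\Mm)/\Fil^i$ with $(R\Gamma_{\dR}(X,\cl E)\dsolid_K B_{\dR}^+)/\Fil^i$, compatibly with filtrations. In the smooth case this is exactly the place where Scholze's Poincar\'e lemma for $\Bb_{\dR}^+$ with coefficients enters: one uses \cite[Lemma 2.17, Corollary 6.12, Lemma 6.13, Lemma 5.6(ii)]{Bosco} as in the proof of Theorem \ref{B_dR=dR}, but now for a de Rham local system, to obtain a filtered quasi-isomorphism $R\Gamma_{B_{\dR}^+}(X,\Mm)\simeq R\Gamma(X, \mathrm{DR}(\cl E)\solid_K B_{\dR}^+)$ where $\mathrm{DR}(\cl E)=[\cl E\xrightarrow{\nabla}\cl E\otimes\Omega^1_X\to\cdots]$ is the de Rham complex with its Hodge filtration $\Fil^{\bullet}$; then \cite[Theorem 5.20]{Bosco} (flatness of $B_{\dR}^+$ and its filtered pieces over $K$ for $\solid_K$) gives the filtered identification with $R\Gamma_{\dR}(X,\cl E)\dsolid_K B_{\dR}^+$, and since $X$ is of finite dimension the Hodge filtration on $R\Gamma_{\dR}(X,\cl E)$ is finite, so passing to $(-)/\Fil^i$ is harmless. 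Assembling the two fiber sequences and applying $\tau^{\le i}$ then yields the asserted $\mathscr{G}_K$-equivariant isomorphism, the Galois equivariance being built in from the $v$-descent (Proposition \ref{compv}) and from the fact that $\Ll$ and $(\cl E,\nabla,\Fil^{\bullet})$ descend to $X_0$ over $K$.

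The main obstacle I expect is the $\varphi$-equivariant structure on the $B$-cohomology with coefficients: the Frobenius on $R\Gamma_B(X,\Mm)$ is only $\varphi_B$-semilinear and depends on choosing a $\varphi$-structure on $\Mm$ compatible with that of $\Bb$, which is automatic once $\Mm=\Ll\otimes_{\Qq_p}\Bb$ but must be tracked carefully through the d\'ecalage functor so that $\Fil^iR\Gamma_B(X,\Mm)^{\varphi=p^i}$ is well-defined and so that the comparison $\tau^{\le i}(\cdots)^{\varphi=p^i}\simeq\tau^{\le i}R\Gamma_{\pet}(X,\Ll(i))$ is $\varphi$- and $\mathscr{G}_K$-equivariant; here one uses $\varphi(t^i)=p^it^i$ exactly as in the proof of the constant-coefficient Theorem \ref{BK=pet}. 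A secondary, more technical point is the compatibility (the coefficient analogue of Theorem \ref{compatib}\listref{compatib:2}) between the $B_{\dR}^+$-comparison and the fiber-sequence map $R\Gamma_B(X,\Mm)/\Fil^i\xrightarrow{\varphi p^{-i}-1}R\Gamma_B(X,\Mm)/\Fil^i$; this again comes down to checking, on graded pieces and via Proposition \ref{bbe}\listref{bbe:2}, that the exact sequence \eqref{suite5} twisted by $\Ll$ is compatible with filtrations and that the relevant boundary map $R^{j+1}\alpha_*(\Ll\otimes\gr^j\Bb_{\dR}^+)\to R^{j+1}\alpha_*(\Ll\otimes\gr^j\Bb)$ is injective, which reduces after untwisting $\Ll$ to the point $V(\xi)$ argument used in the proof of Theorem \ref{BK=pet}\listref{BK=pet:2}. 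No new geometric input beyond Scholze's Poincar\'e lemma for de Rham local systems is needed; the proof is essentially a bookkeeping exercise propagating $\Ll$ and $\cl E$ through the constant-coefficient arguments already established in \S\ref{synleit}.
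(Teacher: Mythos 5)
Your proposal is correct and follows essentially the same route as the paper, which likewise obtains the result by tensoring the fundamental exact sequences with $\Ll\otimes_{\Qq_p}-$, rerunning the arguments of Theorem \ref{BK=pet} via the Beilinson $t$-structure, and then invoking the same references underlying the proof of Theorem \ref{B_dR=dR} (Scholze's Poincaré lemma for $\Bb_{\dR}^+$ with de Rham coefficients) to identify the $B_{\dR}^+$-term with $(R\Gamma_{\dR}(X,\cl E)\dsolid_K B_{\dR}^+)/\Fil^i$. Your write-up in fact supplies more detail than the paper's deliberately terse sketch, and the points you flag (the $\varphi$-semilinear structure on $\Fil^i R\Gamma_B(X,\Mm)$ and the graded-piece compatibility) are exactly the bookkeeping the paper leaves implicit.
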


  \clearpage

 \appendix

 \addtocontents{toc}{\protect\vskip5pt}

 \section{\textbf{Complements on condensed mathematics}}\label{complem}
 \sectionmark{}

 This appendix consists of a miscellaneous collection of results on condensed mathematics that we use in the main body of the paper.

 \begin{convnot}
  In this appendix, we adopt the same notation and set-theoretic conventions of \cite[Appendix A]{Bosco}. All condensed rings will be assumed to be commutative and unital. 
 \end{convnot}

 \subsection{Derived $p$-adic completion and solidification}
 
  In this section, we compare the derived $p$-adic completion to the solidification. \medskip
 
 \begin{notation}
 Let $p$ be a prime number. In the following, for $M\in D(\CondAb)$, we denote by
 $$M^\wedge_p:=R\varprojlim_{n\in \Nn} (M\otimes_{\Zz}^{\LL}\Zz/p^n)\in D(\CondAb)$$
 its \textit{derived $p$-adic completion}. We say that an object $M\in D(\CondAb)$ is \textit{derived $p$-adically complete} if the natural map $$M\to M^\wedge_p$$ is an isomorphism of $ D(\CondAb)$.
 
 \smallskip
 
  For $A$ a solid ring, we write $\Mod_A^{\ssolid}$ for the symmetric monoidal category of $A$-modules in $\Solid$, endowed with the solid tensor product $\solid_A$.

 \end{notation}

  \begin{prop}[cf. \cite{Cop}]\label{solid-vs-padic}
   Let $A$ be a derived $p$-adically complete solid ring. For all cohomologically bounded above complexes $M, N\in D(\Mod_A^{\ssolid})$, we have a natural isomorphism
   \begin{equation}\label{guyr}
    M^\wedge_p\dsolid_A N^\wedge_p\overset{\sim}{\longrightarrow} (M\dsolid_A N)^\wedge_p.
   \end{equation} 
  \end{prop}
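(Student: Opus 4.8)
The plan is to reduce the claimed isomorphism to the analogous statement about honest (non-derived) $p$-adic completions, which is the content of the cited work of Clausen (or the reference \cite{Cop}), by exploiting the compatibility of the solid tensor product with the structures involved. First I would unwind what $(-)^\wedge_p$ means for solid modules: since $A$ is a solid ring and $\Zz/p^n$ is a finitely presented $\Zz$-module, we have $M\otimes^{\LL}_{\Zz}\Zz/p^n \simeq M\dsolid_{\Zz}\Zz/p^n$ for $M\in D(\Mod_A^{\ssolid})$ (the solid tensor product agrees with the ordinary derived tensor product against such $\Zz/p^n$), and hence $M^\wedge_p = R\varprojlim_n (M\dsolid_{\Zz}\Zz/p^n)$ can be computed inside $D(\Mod_A^{\ssolid})$. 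The key structural input is that for a fixed $n$, base change along $A\to A\dsolid_{\Zz}\Zz/p^n =: A/p^n$ intertwines $\dsolid_A$ with $\dsolid_{A/p^n}$: concretely, $(M\dsolid_A N)\dsolid_{\Zz}\Zz/p^n \simeq (M\dsolid_{\Zz}\Zz/p^n)\dsolid_{A/p^n}(N\dsolid_{\Zz}\Zz/p^n)$, which follows from the associativity and symmetry of the solid tensor product together with the projection formula $M\dsolid_A(N\dsolid_{\Zz}\Zz/p^n)\simeq (M\dsolid_A N)\dsolid_{\Zz}\Zz/p^n$.

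Next I would assemble these mod-$p^n$ identifications over varying $n$ and pass to the limit. The left-hand side of \eqref{guyr} is $(R\varprojlim_n M\dsolid_{\Zz}\Zz/p^n)\dsolid_A(R\varprojlim_n N\dsolid_{\Zz}\Zz/p^n)$; I want to commute the solid tensor product past the two homotopy limits. This is the step where the boundedness-above hypothesis on $M$ and $N$ and the derived $p$-adic completeness of $A$ enter: one needs a Mittag-Leffler/uniform-boundedness type argument, of the same flavor as in the proof of Lemma \ref{dinverse} and as in \cite[Corollary A.67]{Bosco}, to ensure that $\dsolid_A$ commutes with the relevant countable derived limits on cohomologically bounded-above objects. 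Granting that, the left-hand side becomes $R\varprojlim_n R\varprojlim_m \big((M\dsolid_{\Zz}\Zz/p^n)\dsolid_A(N\dsolid_{\Zz}\Zz/p^m)\big)$; since each such bimodule tensor is already $\Zz/p^{\min(n,m)}$-linear, the double limit collapses to the diagonal $R\varprojlim_n (M\dsolid_{\Zz}\Zz/p^n)\dsolid_A(N\dsolid_{\Zz}\Zz/p^n)$, which by the projection-formula identity above is $R\varprojlim_n\big((M\dsolid_A N)\dsolid_{\Zz}\Zz/p^n\big) = (M\dsolid_A N)^\wedge_p$, the right-hand side.

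The main obstacle I expect is precisely the interchange of $\dsolid_A$ with the homotopy limit $R\varprojlim_n$: the solid tensor product does not commute with arbitrary limits, so one must genuinely use that the pro-system $\{M\dsolid_{\Zz}\Zz/p^n\}_n$ (and likewise for $N$) is uniformly bounded above and that its transition maps are surjective on cohomology — which gives an essentially constant (in each degree, up to the usual shift) behavior of the derived limit and lets one reduce the limit over $\infty$-simplices to finite stages, as in \eqref{finoo}. A clean way to package this is to first prove the statement for $M, N$ replaced by a single shifted free solid $A$-module $A[S][d]$ (for $S$ a profinite set), where everything is explicit, then bootstrap to bounded-above complexes by taking resolutions by sums of such objects and using that both sides of \eqref{guyr} commute with the relevant colimits and with finite limits (shifts, cones); the bounded-above hypothesis ensures the resolution is a bona fide homotopy colimit whose partial totalizations converge. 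Finally I would record that the construction is manifestly natural in $M$ and $N$, so the isomorphism \eqref{guyr} is natural as claimed.
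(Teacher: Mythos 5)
Your reduction to the mod-$p^n$ level and the projection-formula identity $(M\dsolid_A N)\dsolid_{\Zz}\Zz/p^n\simeq (M\dsolid_{\Zz}\Zz/p^n)\dsolid_{A}(N\dsolid_{\Zz}\Zz/p^n)$ are fine, but the step on which everything rests --- commuting $\dsolid_A$ with the derived limits $R\varprojlim_n$ defining the completions --- is a genuine gap, and the tools you invoke cannot close it. The solid tensor product does not commute with countable derived limits of uniformly bounded towers with surjective transition maps: for $A=\Zz_p$, $M=\bigoplus_{j\in\Nn}\Zz_p$ and the tower $\{\Zz/p^n\}_n$ one has
$$M\dsolid_{\Zz_p}R\varprojlim_n\Zz/p^n=\bigoplus_{j\in\Nn}\Zz_p,\qquad R\varprojlim_n\bigl(M\dsolid_{\Zz_p}\Zz/p^n\bigr)=M^\wedge_p,$$
and these differ (the left side is not derived $p$-complete; the right side is the module of null sequences of Lemma \ref{craz}). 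The results you cite to justify the interchange --- Lemma \ref{dinverse} and \cite[Corollary A.67]{Bosco} --- require nuclearity of the objects in the tower, and nuclearity is exactly what is unavailable here: the generators $\prod_I\Zz_p\solid_{\Zz_p}A$ of $\Mod_A^{\ssolid}$ are the prototypical non-nuclear solid modules. The instances of the interchange that do hold in your situation are essentially equivalent to the proposition being proved, so the argument is circular at its core.

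Your fallback (prove it for generators, then bootstrap) is in fact the paper's strategy, but as sketched it omits the computations that carry all the content. The paper shifts to the connective case (this is where bounded-above enters), resolves $M$ by a simplicial object whose terms are direct sums of $(\prod_I\Zz_p)\solid_{\Zz_p}A$, and checks via a spectral sequence that completion commutes with this geometric realization; it then needs (a) the explicit identification of $(\bigoplus_{j\in J}\prod_{I_j}\Zz_p)^\wedge_p$ as a filtered colimit of products of balls (Lemma \ref{craz}) --- note that completion does \emph{not} commute with infinite direct sums, so ``both sides commute with the relevant colimits'' is not available as stated; (b) the identity $\prod_I\Zz_p\dsolid_{\Zz_p}\prod_{I'}\Zz_p=\prod_{I\times I'}\Zz_p$; and (c) a reduction from general $A$ to $\Zz_p$, using the flatness of $\prod_I\Zz_p$ for $\solid_{\Zz_p}$ together with the derived $p$-adic completeness of $A$ to write $M^\wedge_p\cong(M')^\wedge_p\dsolid_{\Zz_p}A$. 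None of (a)--(c) appears in your sketch, and without them the case of a single generator $A[S][d]$ is not ``explicit'' --- it is the whole proposition.
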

  \begin{proof}
  First, we recall that the category $\Mod_A^{\ssolid}$ is generated under colimits by the compact projective objects $(\prod_I\Zz)\solid_{\Zz}A=(\prod_I\Zz_p)\solid_{\Zz_p}A$, for varying sets $I$. By hypothesis, we  may assume that $M$ and $N$ are connective; then, writing $M=\colim_{[n]\in \Delta^{\op}}M_n$ in $D(\Mod_A^{\ssolid})$ with $M_n$ a direct sum of objects of the form $(\prod_I\Zz_p)\solid_{\Zz_p}A$,  the natural map
  $\colim_{[n]\in\Delta^{\op}}(M_n)^\wedge_p\to M^\wedge_p$
  is an isomorphism (as it can be checked via a spectral sequence),  and similarly for $N$.
 Therefore, using that the solid tensor product commutes with colimits, it suffices to prove the statement for $M=M'\solid_{\Zz_p}A$ and $N=N'\solid_{\Zz_p}A$ (concentrated in degree 0), with $M'$ and $N'$ objects of $\Mod_{\Zz_p}^{\ssolid}$ of the form $\bigoplus_{j\in J} (\prod_{I_j}\Zz_p)$ for varying sets $J$ and $I_j$, for $j\in J$. 
  
  In the case $A=\Zz_p$, using Lemma \ref{craz} the statement readily reduces (cf. the proof of \cite[Proposition A.49]{Bosco}) to the isomorphism 
  $
   \prod_I\Zz_p \dsolid_{\Zz_p}\prod_{I'} \Zz_p=\prod_{I\times I'} \Zz_p
  $
  which holds for any sets $I$ and $I'$ (see e.g. \cite[Remark A.18]{Bosco}).
  
 In general, we want to show that we can reduce to the case $A=\Zz_p$. We will use that $\prod_I\Zz_p$ is flat for the tensor product $\solid_{\Zz_p}$, i.e. for any $Q\in \Mod_{\Zz_p}^{\ssolid}$ we have that $\prod_I\Zz_p\dsolid_{\Zz_p}Q$ is concentrated in degree 0: for this, writing $Q$ as a filtered colimit of quotients of objects of the form $\prod_J \Zz_p$, we can reduce to the case $Q$ is derived $p$-adically complete; in this case, using (\ref{guyr}) for $A=\Zz_p$, by the derived Nakayama lemma, we can reduce to checking the claim modulo $p$, in which case it follows from \cite[Lemma A.19]{Bosco}, using that any solid $\Ff_p$-module can be written as a filtered colimit of profinite $\Ff_p$-vector spaces, \cite[Proposition 2.8]{Scholzeanalytic}. Then, from (\ref{guyr}) for $A=\Zz_p$, using that $A$ is derived $p$-adically complete, we deduce that
  $$M^\wedge_p\cong (M'\dsolid_{\Zz_p}A)^\wedge_p\cong (M')^\wedge_p\dsolid_{\Zz_p}A$$
  and similarly for $N$. Hence, we have a natural isomorphism
  $$M^\wedge_p\dsolid_A N^\wedge_p \cong (M')^\wedge_p\dsolid_{\Zz_p} (N')^\wedge_p\dsolid_{\Zz_p} A\cong (M'\dsolid_{\Zz_p} N'\dsolid_{\Zz_p}A)^\wedge_p\cong (M\dsolid_A N)^\wedge_p$$
  where we used again (\ref{guyr}) for $A=\Zz_p$.
 \end{proof}
 
 We used crucially the following result.
 
 \begin{lemma}\label{craz} For $c\in [0, 1]$ we denote $(\Zz_{p})_{\le c}:=\{x\in \Zz_p: |x|\le c\}$. For any sets $J$ and $I_j$, for $j\in J$, we have
  \begin{equation}\label{explicit}
  (\bigoplus_{j\in J} \prod_{I_j}\Zz_p)^\wedge_p=\varinjlim_{f:J\to [0, 1], f\to 0}\;\prod_{j\in J}\prod_{I_j}(\Zz_{p})_{\le f(j)}.
  \end{equation}
 where the colimit runs over the functions $f:J\to [0, 1]$ tending to $0$ (i.e. for every $\varepsilon >0$, the set $\{j\in J: |f(j)|\ge \varepsilon\}$ is finite) partially ordered by the relation of pointwise inequality $f\le g$.
 \end{lemma}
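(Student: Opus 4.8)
The claim identifies the derived $p$-adic completion of a direct sum of products $\bigoplus_{j\in J}\prod_{I_j}\Zz_p$ with the ``banach-ball'' colimit on the right. The plan is to reduce the general computation to two building blocks that are already understood in Clausen--Scholze's condensed formalism: the identification of $(\bigoplus_J \Zz_p)^\wedge_p$ with $\varinjlim_{f:J\to[0,1], f\to 0}\prod_{j\in J}(\Zz_p)_{\le f(j)}$, and the fact that an arbitrary product $\prod_I\Zz_p$ is already derived $p$-adically complete (since it is $p$-torsion-free and $\prod_I\Zz_p/p^n\Zz_p=\prod_I\Zz/p^n$, the inverse system $\{\prod_I\Zz/p^n\}_n$ has surjective transition maps and underived limit $\prod_I\Zz_p$). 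The first of these is the case $I_j=\{*\}$ for all $j$, which is essentially the content of \cite[\S 2]{Scholzeanalytic} (the description of the free $p$-complete solid $\Zz_p$-module on a set), and I would invoke it as the base case.

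\textbf{Key steps.} First I would record that each $\prod_{I_j}\Zz_p$ is derived $p$-adically complete, so that $\bigoplus_{j\in J}\prod_{I_j}\Zz_p$ is a filtered colimit (over finite subsets $J_0\subseteq J$) of finite products $\prod_{j\in J_0}\prod_{I_j}\Zz_p$, each of which is derived $p$-adically complete; hence the only obstruction to completeness of the direct sum is the passage to the infinite colimit, exactly as for $\bigoplus_J\Zz_p$. Second, I would observe that $-\otimes^{\LL}_{\Zz}\Zz/p^n$ commutes with the direct sum and with the products (the latter because $\prod_{I_j}\Zz_p$ is $p$-torsion-free), so that
\[
\Bigl(\bigoplus_{j\in J}\prod_{I_j}\Zz_p\Bigr)^{\wedge}_p \;=\; R\varprojlim_n\;\bigoplus_{j\in J}\prod_{I_j}\Zz/p^n.
\]
Third, I would compute this $R\varprojlim$ termwise: on the $S$-valued points, for $S$ a profinite (equivalently extremally disconnected) set, the functor $M\mapsto M(S)$ is exact and commutes with the relevant limits and colimits, so it suffices to check the identity after evaluation on every $S$, where it becomes the classical statement that the $p$-adic completion of a countable-type direct sum of profinite $\Zz_p$-modules is the ``germ at $0$'' union $\varinjlim_{f\to 0}\prod_{j}\prod_{I_j}(\Zz_p)_{\le f(j)}(S)$; I would cross-reference \cite[Proposition A.49]{Bosco} and the explicit description of $(\bigoplus_J\Zz_p)^\wedge_p$ there. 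The colimit being filtered (the functions $f:J\to[0,1]$ tending to $0$ form a directed poset under pointwise $\le$), it commutes with evaluation on $S$ and with $R\varprojlim_n$ modulo the usual Mittag--Leffler check, which holds because the transition maps $\prod_j\prod_{I_j}(\Zz_p)_{\le f(j)}/p^{n+1}\to \prod_j\prod_{I_j}(\Zz_p)_{\le f(j)}/p^n$ are surjective.

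\textbf{Main obstacle.} The substantive point — and the one I expect to be the main obstacle — is the bookkeeping that shows the filtered colimit on the right-hand side is genuinely the derived (not merely the na\"ive) $p$-adic completion: one must verify that $R\varprojlim_n$ of the reduced system vanishes in positive degrees, i.e. that $R^1\varprojlim_n\bigl(\bigoplus_j\prod_{I_j}\Zz/p^n\bigr)=0$ as a condensed abelian group. This is where the ``$f\to 0$'' condition is forced: a na\"ive direct sum is not $p$-complete, and the colimit over decaying radii is precisely the correction term. I would handle this by the Mittag--Leffler-type criterion in the condensed setting, \cite[Lemma A.37]{Bosco}, applied after evaluation on extremally disconnected sets; once the transition maps are seen to be surjective at each finite level the $R^1\varprojlim$ vanishes, and the underived limit computes the stated colimit. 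The remaining verifications — compatibility of the isomorphism with the module-over-$A$ structure in the application, and the comparison with $M^\wedge_p\solid_{\Zz_p} A$ — are then routine and are exactly what Proposition \ref{solid-vs-padic} uses this lemma for.
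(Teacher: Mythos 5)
Your proposal is correct and follows essentially the same route as the paper's proof: reduce to $S$-valued points for extremally disconnected $S$, use $p$-torsion-freeness to identify the derived and underived mod-$p^n$ reductions, describe $\varprojlim_n\bigoplus_j\prod_{I_j}\Zz/p^n$ explicitly as tuples with the decay condition (with Mittag--Leffler giving the vanishing of $R^1\varprojlim$), and match this with the filtered colimit over $f\to 0$. The only quibble is the parenthetical ``profinite (equivalently extremally disconnected)'' — these are not equivalent, and exactness of evaluation requires $S$ extremally disconnected — but this does not affect the argument.
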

 \begin{proof}
  We will adapt the proof of \cite[Lemma A.53]{Bosco}. It suffices to prove (\ref{explicit}) on $S$-valued points, for all extremally disconnected set $S$. Let $M:=\bigoplus_{j\in J} \prod_{I_j}\Zz_p$. We note that, thanks to the flatness of the condensed abelian group $M$ (which follows from Lemma \ref{condtors}), the derived $p$-adic completion of $M$ agrees with its underived $p$-adic completion $\varprojlim_{n\in \Nn}M/p^n$. Then, we have
  $$M^\wedge_p(S)=\varprojlim_{n\in \Nn}\bigoplus_{j\in J}\Hom(S, \prod_{I_j}\Zz_p/p^n)$$ from which we deduce that
  $$M^\wedge_p(S)= \textstyle\{(g_j)_{j\in J} \text{ with } g_j\in \mathscr{C}^0(S,\prod_{I_j}\Zz_p): \forall \varepsilon>0,\; g_j(S)\subseteq \prod_{I_j}(\Zz_{p})_{\le \varepsilon} \text{ for all but finitely many } g_j\}$$
  which, in turn, identifies with
  $$\varinjlim_{f:J\to [0, 1],\, f\to 0} \left(\prod_{j\in J}\prod_{I_j}\mathscr{C}^0(S, (\Zz_{p})_{\le f(j)})\right)$$
  thus showing (\ref{explicit}). 
 \end{proof}

  The following lemma was used above and in the main body of the paper.
  
  \begin{lemma}[{\cite[Proposition 3.4]{CScomplex}}]\label{condtors}
   A condensed abelian group $M\in \CondAb$ is flat if and only if, for all extremally disconnected sets $S$, the abelian group $M(S)$ is torsion-free.
  \end{lemma}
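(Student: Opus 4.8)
$\textbf{Proof proposal for Lemma \ref{condtors}.}$

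The statement asserts that a condensed abelian group $M$ is flat (for the derived tensor product $\otimes^{\LL}_{\Zz}$ on $\CondAb$, equivalently: $M \otimes^{\LL}_{\Zz} -$ is $t$-exact, equivalently: $-\otimes^{\LL}_{\Zz} M$ sends injections of condensed abelian groups to injections and $\Tor_1^{\Zz}(M, N) = 0$ for all $N$) if and only if $M(S)$ is torsion-free for every extremally disconnected set $S$. The plan is to reduce both directions to the corresponding classical statement about abelian groups, using that the category $\CondAb$ has a conservative family of exact ``evaluation at $S$'' functors indexed by extremally disconnected sets $S$, and that these functors commute with all colimits (they are given by evaluation on a projective generator, so they preserve limits as well, but the key point is exactness and commutation with filtered colimits and with $\otimes$).

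First I would recall the standard fact that tensor products in $\CondAb$ are computed ``objectwise on a generating family'': for a projective generator $\Zz[S]$ with $S$ extremally disconnected, one has $(\Zz[S] \otimes^{\LL}_{\Zz} N)(T)$ computed via the explicit bar resolution, and more usefully, the derived tensor product of two condensed abelian groups, evaluated at an extremally disconnected $S$, can be analyzed by writing both factors as filtered colimits of ``nice'' objects. For the forward direction, suppose $M$ is flat. Fix $S$ extremally disconnected and an integer $n \geq 1$; I want to show multiplication by $n$ is injective on $M(S)$. Apply the flatness of $M$ to the injection $n: \Zz \to \Zz$ of constant sheaves: we get that $n: M \to M$ is injective in $\CondAb$, i.e. has trivial kernel as a condensed abelian group. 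Since the kernel sheaf evaluated at $S$ is the kernel of $M(S) \xrightarrow{n} M(S)$ (evaluation at an extremally disconnected set is exact, as $\Zz[S]$ is projective), we conclude $M(S)[n] = 0$. As $S$ and $n$ were arbitrary, $M(S)$ is torsion-free.

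For the converse, suppose $M(S)$ is torsion-free for all extremally disconnected $S$. I want $\Tor_1^{\Zz}(M, N) = 0$ for all $N \in \CondAb$. Since $\Tor_1$ commutes with filtered colimits in $N$ and every condensed abelian group is a filtered colimit of ``finitely generated'' ones, and ultimately one reduces (via the structure of $\CondAb$ and a resolution by $\bigoplus \Zz[T_i]$) to the case $N = \Zz/m$ for $m \geq 1$ — more precisely, it suffices to check $\Tor_1^{\Zz}(M, \Zz/m) = 0$ for all $m$, because $\Tor$ with a short exact sequence $0 \to \Zz \xrightarrow{m} \Zz \to \Zz/m \to 0$ identifies $\Tor_1^{\Zz}(M,\Zz/m)$ with $M[m]$, the $m$-torsion subsheaf of $M$. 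Now $M[m]$ evaluated at any extremally disconnected $S$ is $M(S)[m] = 0$ by hypothesis; since a condensed abelian group vanishing on all extremally disconnected sets is the zero object, $M[m] = 0$. Hence $\Tor_1^{\Zz}(M, \Zz/m) = 0$ for all $m$, and then $\Tor_1^{\Zz}(M, N) = 0$ for all $N$ by dévissage and passage to filtered colimits; higher $\Tor$ vanishes automatically since $\Zz$ has global dimension $1$. Therefore $M$ is flat.

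The only real subtlety — and the step I would be most careful about — is the dévissage in the converse direction: justifying that vanishing of $\Tor_1^{\Zz}(M, \Zz/m)$ for all $m$ implies vanishing of $\Tor_1^{\Zz}(M, N)$ for all condensed $N$. Since this is precisely the classical argument (every abelian group is a filtered colimit of finitely generated ones, every finitely generated abelian group is a finite direct sum of copies of $\Zz$ and $\Zz/m$'s, and $\Tor$ commutes with finite direct sums and filtered colimits) transported to $\CondAb$, one just needs that these reductions are available in $\CondAb$: filtered colimits are exact and commute with $\Tor$ (true, as $\CondAb$ is a Grothendieck abelian category and $\otimes^{\LL}$ commutes with colimits), and the reduction to $\Zz$ and $\Zz/m$ holds on each stalk $N(S)$ but we need it ``uniformly''. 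The clean way around this is to observe that it suffices to check flatness after evaluation: $M$ is flat in $\CondAb$ iff for every extremally disconnected $S$ the functor $(-)(S)$ applied to $M \otimes^{\LL}_{\Zz} (-)$ is exact; expanding, $(M \otimes^{\LL}_{\Zz} N)(S)$ can be computed from a resolution of $N$ by sums of $\Zz[T]$'s, and one checks degreewise that the relevant $\Tor$-sheaf $\mathcal{T}or_1^{\Zz}(M, N)$ has $S$-points equal to $\mathrm{Tor}_1^{\Zz}$ of $M(S)$ against something — but in general $(M \otimes^{\LL}_{\Zz} N)(S) \neq M(S) \otimes^{\LL}_{\Zz} N(S)$, so I would instead argue directly: $\mathcal{T}or_1^{\Zz}(M, N)$ is a condensed abelian group, and it vanishes iff it vanishes on all extremally disconnected $S$; but $\mathcal{T}or_1^{\Zz}(M, -)$ is a homological functor commuting with filtered colimits, and its value on $\Zz$ is $0$, on $\Zz/m$ is $M[m] = 0$; hence it vanishes on all finitely generated discrete $N$, and then by writing a general $N$ as a filtered colimit along the standard presentation (using that $\CondAb$ is generated under colimits by $\{\Zz[S]\}$ and these are flat, so one has functorial flat resolutions) we get $\mathcal{T}or_1^{\Zz}(M, N) = 0$ in general. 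This is the content cited from \cite[Proposition 3.4]{CScomplex}, so in the paper I would simply invoke that reference for this dévissage while giving the direct torsion-subsheaf argument above as the conceptual core.
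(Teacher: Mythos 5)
The paper does not actually prove this lemma: it is quoted from Clausen--Scholze (\cite[Proposition 3.4]{CScomplex}) without argument, so there is no in-paper proof to compare against and your attempt has to stand on its own. Your forward direction is correct and complete: flatness applied to $n\colon \Zz\to\Zz$ gives injectivity of $n$ on $M$, and since $\Zz[S]$ is projective, evaluation at an extremally disconnected $S$ is exact, whence $M(S)[n]=0$.

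The converse direction has a genuine gap, and it sits exactly where you flag it. From torsion-freeness of the $M(S)$ you correctly get $M[m]=0$, hence $\Tor_1^{\Zz}(M,\Zz/m)=0$ for all $m$. But the passage from there to $\Tor_1^{\Zz}(M,N)=0$ for arbitrary $N\in\CondAb$ is \emph{not} ``precisely the classical argument transported to $\CondAb$''. The classical dévissage rests on the structure theorem for finitely generated abelian groups (finite direct sums of $\Zz$'s and $\Zz/m$'s). In $\CondAb$ the finitely generated objects through which a general $N$ is a filtered colimit are quotients $\Zz[S]/K$ with $S$ extremally disconnected and $K\subseteq\Zz[S]$ an arbitrary subobject, and no such structure theorem exists; writing $N$ as a colimit of $\Zz[S]$'s along a presentation does not help either, since that colimit is not filtered and $\Tor_1(M,-)$ only commutes with filtered ones. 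What must actually be shown is that $M\otimes-$ preserves the injections $K\hookrightarrow\Zz[S]$, and this needs genuine input on the internal structure of $\Zz[S]$ for $S$ profinite (such as the description $\Zz[S]=\varinjlim_n\varprojlim_i\Zz[S_i]_{\ell^1\le n}$), which is what the cited Clausen--Scholze argument supplies. Your closing move --- invoking the reference ``for this dévissage'' --- therefore concedes the hard half of the equivalence; the torsion-subsheaf computation you present as the conceptual core is the easy half. A smaller quibble: the vanishing of $\Tor_i$ for $i\ge 2$ is not ``automatic since $\Zz$ has global dimension $1$''; that $\CondAb$ has internal Tor-dimension $\le 1$ is itself a consequence of this flatness criterion (kernels of surjections from projectives have torsion-free sections, hence are flat), not an a priori input.
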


  %\subsection{Condensed group cohomology and Koszul complexes}
  
 % In this section we recall the following result from Appendix \ref{ccg}.
  
  %\begin{prop}[{\cite[Proposition B.4]{Bosco}}]\label{coKo}
  %Given an integer $n\ge 1$, let $\Gamma:=\Zz_p^n$, and let $\gamma_1, \ldots, \gamma_n$ denote the canonical generators of $\Gamma$. Let $M$ be a $\Gamma$-module in %$\Mod_{\Zz_p}^{\ssolid}$. Then, we have a quasi-isomorphism of complexes of condensed abelian groups
 % $$R\Gamma_{\cond}(\Gamma, M)\simeq\Kos_{M}(\gamma_1-1, \ldots, \gamma_n-1).$$
 %\end{prop}

  \subsection{$\infty$-category of nuclear complexes}\label{nuccomp}
   
   In this section, we collect some general properties and characterizations of the $\infty$-category of nuclear complexes attached to an analytic ring, which are due to Clausen--Scholze, focusing in particular on a special class of analytic rings relevant to the main body of the paper.  This section should be read in conjunction with \cite[\S A.6]{Bosco}.

   \begin{convnot}
   
  We recall that an analytic ring $(A, \cl M)$ (in the sense of \cite[Definition 7.4]{Scholzecond}) is \textit{commutative} if $A$ is commutative (and unital), and \textit{normalized} if the map $A\to \cl M[*]$ is an isomorphism. All analytic rings  will be assumed to be commutative and normalized.
  
   For an analytic ring $(A, \cl M)$, we denote by $D(A)$ the derived $\infty$-category $D(\Mod_A^{\cond})$, and we denote by $D(A, \cl M)$ the derived $\infty$-category of $(A, \cl M)$-complete complexes in $D(A)$, equipped with the symmetric monoidal tensor product $\otimes_{(A, \cl M)}^{\LL}$. For $M\in D(A, \cl M)$ we write $$M^{\vee}:=\underline{\Hom}_{D(A, \cl M)}(M, A)$$ for its \textit{dual}.
  \end{convnot}

  \begin{df}[{\cite[Definition 13.10]{Scholzeanalytic}}]\label{nuc2}
  Let $(A, \cl M)$ be an analytic ring.   A complex $N\in D(A, \cl M)$ is \textit{nuclear} if, for all $S\in \ExtrDisc$, the natural map
  \begin{equation}\label{aine2}
   (\cl M[S]^{\vee}\otimes_{(A, \cl M)}^{\LL}N)(*)\to N(S)
  \end{equation}
  in $D(\Ab)$ is an isomorphism. We denote by $\Nuc(A, \cl M)$ the full $\infty$-subcategory of $D(A, \cl M)$ spanned by the nuclear complexes.
 \end{df}

  \begin{rem}\label{genecol2}
   We note that the $\infty$-subcategory $\Nuc(A, \cl M)\subset D(A, \cl M)$ is stable under all colimits (as both the source and the target of (\ref{aine2}) commute with colimits in $N$), and under finite limits (as $\otimes_{( A, \cl M)}^{\LL}$ commutes with finite limits).
  \end{rem}

  In order to recall a useful characterization of nuclear complexes, we need the following definitions.
 
 \begin{df}[{\cite[Definition 8.1]{CScomplex}}]
   Let $(A, \cl M)$ be an analytic ring.
   A map $f:M\to N$ in $D(A, \cl M)$ is called \textit{trace-class} if it lies in the image of the natural map
   $$(M^{\vee}\otimes_{(A, \cl M)}^{\LL}N)(*)\to \Hom_{D(A, \cl M)}(M, N).$$
 \end{df}

 \begin{df}[{\cite[Definition 13.12]{Scholzeanalytic}}]
  Let $(A, \cl M)$ be an analytic ring. An object $M\in D(A, \cl M)$ is called \textit{basic nuclear} if it can be written as the colimit of a diagram
   $$P_0\overset{f_0}{\to}P_1\overset{f_1}{\to} P_2\overset{f_2}{\to}\cdots$$
   where $P_n\in D(A, \cl M)$ are compact objects and $f_n$ are trace-class maps.
 \end{df}
 
 \begin{prop}[{\cite[Proposition 13.13]{Scholzeanalytic}}]\label{bnn}
  Let $(A, \cl M)$ be an analytic ring. An object in $D(A, \cl M)$ is nuclear if and only if it can be written as a filtered colimit of basic nuclear objects.
 \end{prop}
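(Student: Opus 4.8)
\textbf{Proof plan for Proposition \ref{bnn}.} The statement to prove is: an object in $D(A, \cl M)$ is nuclear if and only if it can be written as a filtered colimit of basic nuclear objects. The plan is to prove each implication in turn, with the ``only if'' direction being the substantive one.

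First I would handle the easy direction: every basic nuclear object is nuclear, and nuclear objects are stable under filtered colimits. The latter is immediate from Remark \ref{genecol2} (nuclear complexes are stable under all colimits). For the former, it suffices to check that a compact object $P$ equipped with a trace-class endomorphism situation behaves well; more precisely, one shows that for a sequential colimit $M = \colim_n P_n$ along trace-class maps $f_n$, the defining isomorphism (\ref{aine2}) holds. The key point is that a trace-class map $f: P \to Q$ factors, after applying $\cl M[S]^\vee \otimes^{\LL}_{(A,\cl M)}(-)$, compatibly through the evaluation map, because trace-class maps are precisely those coming from $(P^\vee \otimes^{\LL}_{(A,\cl M)}Q)(*)$; chasing this through the colimit (using that $\cl M[S]^\vee \otimes^{\LL}_{(A,\cl M)}(-)$ commutes with filtered colimits and that on compact objects the comparison map is an isomorphism up to the trace-class twist) yields that (\ref{aine2}) is an isomorphism for $M$. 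So basic nuclear implies nuclear, and then filtered colimits of basic nuclear objects are nuclear.

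The harder direction is: every nuclear $N$ is a filtered colimit of basic nuclear objects. The strategy I would follow is the standard one for such ``internal approximation'' statements. One writes $N$ as a filtered colimit of compact objects $Q_i$ in $D(A,\cl M)$ (possible since $D(A,\cl M)$ is compactly generated). For a nuclear $N$, the key extra input is that each structure map $Q_i \to N$ is trace-class: this is essentially the content of Definition \ref{nuc2}, since the comparison map (\ref{aine2}) being an isomorphism says that maps out of $\cl M[S]^\vee$-type objects into $N$ all factor through the trace-class tensor expression, and compact objects are built from such. Once one knows that $N$ is a filtered colimit $\colim_i Q_i$ with all transition maps (and the maps to $N$) trace-class, one performs a cofinality/reindexing argument: the transition maps $Q_i \to Q_j$ in the diagram need not themselves be trace-class, but one can replace the indexing category by a cofinal subcategory, or rather interpolate, so that one extracts sequential systems $P_0 \to P_1 \to \cdots$ of compact objects with trace-class transition maps whose colimits are cofinal among the $Q_i$. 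This is where one uses that trace-class maps compose with arbitrary maps to give trace-class maps, and a $2$-out-of-$3$-type argument to manufacture the intermediate trace-class factorizations.

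The main obstacle will be precisely this last reindexing step: passing from ``$N$ is a filtered colimit of compacts, each mapping to $N$ by a trace-class map'' to ``$N$ is a filtered colimit of objects each of which is a \emph{sequential} colimit of compacts along \emph{trace-class} maps''. The subtlety is that filtered colimits are not sequential colimits in general, and one must argue that the basic nuclear objects form a cofinal (or at least colimit-generating) family inside the filtered system computing $N$; this requires carefully producing, for any two compact objects $Q \to Q'$ over $N$, a compact object $P$ with trace-class maps $Q \to P \to Q'$ compatible over $N$, using the trace-class factorization of $Q' \to N$ together with compactness of $Q$. Since all of this is carried out in \cite{Scholzeanalytic}, I would cite \cite[Proposition 13.13]{Scholzeanalytic} for the precise argument, and in the paper present only the reduction and the statement of the two implications, deferring the interpolation lemma to the reference.
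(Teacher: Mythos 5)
The paper gives no proof of this proposition at all — it is imported verbatim as \cite[Proposition 13.13]{Scholzeanalytic} — so your ultimate decision to defer to that reference is exactly what the paper does. Your sketch of the two implications (stability of nuclear objects under filtered colimits plus the trace-class colimit chase for the easy direction; writing a nuclear $N$ as a filtered colimit of compacts $Q_i$, noting that each $Q_i\to N$ is trace-class because nuclearity of $N$ says precisely that every map $\cl M[S]\to N$ is trace-class, hence by compactness factors through a trace-class map $Q_i\to Q_j$, and then reindexing into sequential trace-class systems) is a faithful outline of the argument in the cited source.
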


 We deduce the following result.

 \begin{cor}\label{bcnuc}
  Let $f:(A, \cl M)\to (A, \cl N)$ be a morphism of analytic rings. The base change functor $$-\otimes_{(A, \cl M)}^{\LL}(B, \cl N):D(A, \cl M)\to D(B, \cl N)$$ preserves nuclear objects.
 \end{cor}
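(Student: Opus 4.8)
\textbf{Proof plan for Corollary \ref{bcnuc}.} The plan is to reduce the assertion to the behaviour of base change on basic nuclear objects, using the structural characterization of nuclearity in Proposition \ref{bnn}. First I would recall that the base change functor $-\otimes_{(A, \cl M)}^{\LL}(B, \cl N)$ is a symmetric monoidal, colimit-preserving functor between the two derived $\infty$-categories of complete complexes; in particular it commutes with filtered colimits. Hence, by Proposition \ref{bnn}, which says every nuclear object is a filtered colimit of basic nuclear objects, and by Remark \ref{genecol2}, which says $\Nuc(B, \cl N)$ is stable under all colimits (in particular filtered ones), it suffices to prove that the image of a \emph{basic} nuclear object is nuclear.

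So let $M = \colim(P_0 \xrightarrow{f_0} P_1 \xrightarrow{f_1} P_2 \to \cdots)$ be basic nuclear, with each $P_n$ compact in $D(A, \cl M)$ and each $f_n$ trace-class. Again using that base change commutes with colimits, $M\otimes_{(A,\cl M)}^{\LL}(B,\cl N)$ is the colimit of the diagram $P_0' \xrightarrow{f_0'} P_1' \xrightarrow{f_1'} \cdots$ where $P_n' := P_n\otimes_{(A, \cl M)}^{\LL}(B, \cl N)$ and $f_n' := f_n\otimes_{(A,\cl M)}^{\LL}(B,\cl N)$. By Remark \ref{genecol2} (stability of nuclear objects under colimits), it then suffices to show: (i) base change of a compact object is compact, and (ii) base change of a trace-class map is trace-class. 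For (i), one uses that the left adjoint $-\otimes_{(A,\cl M)}^{\LL}(B,\cl N)$ has a right adjoint (the forgetful functor $D(B,\cl N)\hookrightarrow D(A,\cl M)$) which is itself colimit-preserving, since $D(B, \cl N)$ is a localizing subcategory closed under colimits inside $D(A,\cl M)$; a left adjoint whose right adjoint preserves colimits sends compact objects to compact objects. Alternatively, one notes that the compact objects of $D(A,\cl M)$ are retracts of finite colimits of objects $\cl M[S]$ for $S\in\ExtrDisc$, and $\cl M[S]\otimes_{(A,\cl M)}^{\LL}(B,\cl N) = \cl N[S]$ is compact in $D(B, \cl N)$. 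For (ii), a trace-class map $f\colon P\to Q$ is by definition the image of some class $\xi\in (P^\vee\otimes_{(A,\cl M)}^{\LL}Q)(*)$ under the natural evaluation map to $\Hom_{D(A,\cl M)}(P,Q)$; applying the symmetric monoidal functor $-\otimes_{(A,\cl M)}^{\LL}(B,\cl N)$ carries $\xi$ to a class in $((P')^\vee\otimes_{(B,\cl N)}^{\LL}Q')(*)$ — here one needs that for a compact object $P$ the natural map $P^\vee\otimes_{(A,\cl M)}^{\LL}(B,\cl N)\to (P')^\vee$ is an equivalence, which holds because dualization of compact (hence dualizable) objects is compatible with symmetric monoidal functors — and the compatibility of the evaluation maps then shows $f'$ is trace-class.

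The main obstacle I anticipate is the careful bookkeeping in step (ii): one must check that for a compact object $P$ the formation of the dual $P^\vee = \underline{\Hom}_{D(A,\cl M)}(P,A)$ commutes with the base change functor, and that the evaluation/coevaluation maps defining the trace-class condition are natural with respect to symmetric monoidal functors. This is a formal consequence of $P$ being dualizable in the symmetric monoidal $\infty$-category $(D(A,\cl M), \otimes_{(A,\cl M)}^{\LL})$ — compact objects are built from the $\cl M[S]$, which are dualizable, under finite colimits and retracts, all of which preserve dualizability — together with the elementary fact that a symmetric monoidal functor preserves dualizable objects, their duals, and the associated evaluation maps. Once this is in place, the proof is complete: the image of a basic nuclear object is a filtered colimit of compact objects along trace-class maps, hence basic nuclear, hence in particular nuclear, and passing to filtered colimits handles general nuclear objects.
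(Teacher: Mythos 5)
Your proposal is correct in substance and follows exactly the paper's (very terse) argument: reduce via Proposition \ref{bnn} to basic nuclear objects and observe that base change preserves compact objects and trace-class maps. One caveat: your justification of step (ii) via ``compact hence dualizable'' is wrong in this generality --- e.g.\ $\cl M[S]$ for $S$ an infinite profinite set is compact but not dualizable in $\Solid$ --- but the step does not need it: to transport the witness $\xi$ of the trace-class condition one only needs the \emph{natural map} $P^{\vee}\otimes_{(A,\cl M)}^{\LL}(B,\cl N)\to (P')^{\vee}$ and its compatibility with evaluation, not that it be an equivalence.
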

 \begin{proof}
  It suffices to apply Proposition \ref{bnn} observing that the base change functor preserves compact objects and trace-class maps.
 \end{proof}
 
  To further study the $\infty$-category of nuclear complexes, we will use the following the definition.
 
 \begin{df}[{\cite{CS}}]\label{tracedef2}
  Let $(A, \cl M)$ be an analytic ring. We define the \textit{trace-class functor}
   $$(-)^{\trace}:D(A, \cl M)\to D(A): M \mapsto M^{\trace}$$
 where  $M^{\trace}$ is defined on $S$-valued points,\footnote{Via the equivalence of $\infty$-categories $\Cond(D(\Ab))\cong D(\CondAb)$.} for $S\in \ExtrDisc$, as
  $$M^{\trace}(S)=(\cl M[S]^{\vee}\otimes_{(A, \cl M)}^{\LL}M)(*).$$
 \end{df}

 Now, we collect some basic properties of the trace-class functor.

 \begin{lemma}\label{incc2}   Let $(A, \cl M)$ be an analytic ring.
  \begin{enumerate}[(i)]
  \item\label{incc:02} The trace-class functor $(-)^{\trace}:D(A, \cl M)\to D(A)$ takes values in $D(A, \cl M)$. 
  \item\label{incc:12} A map $f:P\to M$ in $D(A, \cl M)$ with $M$ compact object is trace-class if and only if it factors through $M^{\trace}$.
  \item\label{incc:22} An object $M\in D(A, \cl M)$ is nuclear if and only if the natural map $M^{\trace}\to M$ is an isomorphism.
  \end{enumerate}
 \end{lemma}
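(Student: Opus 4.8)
\textbf{Proof plan for Lemma \ref{incc2}.}

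The three assertions are all formal consequences of the definitions together with the basic properties of analytic rings, and I would prove them in the order given, each feeding into the next. For part \listref{incc:02}, I would argue that for any $M \in D(A, \cl M)$ the object $M^{\trace}$ is automatically $(A, \cl M)$-complete: by construction $M^{\trace}(S) = (\cl M[S]^{\vee} \otimes_{(A, \cl M)}^{\LL} M)(*)$, and since $\cl M[S]^{\vee}$ and $M$ both lie in $D(A, \cl M)$, their completed tensor product $\cl M[S]^{\vee} \otimes_{(A, \cl M)}^{\LL} M$ lies in $D(A, \cl M)$; one then observes that $S \mapsto (\cl M[S]^{\vee} \otimes_{(A, \cl M)}^{\LL} M)(*)$, assembled into a condensed object via left Kan extension from $\ExtrDisc$, is still $(A, \cl M)$-complete because the forgetful functor $D(A, \cl M) \to D(A)$ commutes with the relevant limits and the completeness condition can be checked after evaluation on extremally disconnected sets. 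A cleaner route, which I would likely prefer, is to identify $M^{\trace}$ directly: the functor $S \mapsto \cl M[S]^{\vee}$ from $\ExtrDisc$ to $D(A,\cl M)$ extends by colimits to an endofunctor, and $M^{\trace}$ is the value of this extended functor applied to $M$ (internally), so it inherits $(A,\cl M)$-completeness.

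For part \listref{incc:12}, the statement is essentially the definition of trace-class unwound. A map $f \colon P \to M$ is trace-class iff it lies in the image of $(M^{\vee} \otimes_{(A, \cl M)}^{\LL} P)(*) \to \Hom_{D(A,\cl M)}(P, M)$ — wait, I must be careful about variance: trace-class maps $P \to M$ are the image of $(P^{\vee} \otimes_{(A,\cl M)}^{\LL} M)(*)$. When $M$ is \emph{compact}, hence dualizable in an appropriate sense, one has $P^{\vee} \otimes_{(A,\cl M)}^{\LL} M \simeq \underline{\Hom}(P, M)$ only after taking into account that $M^{\vee\vee} \simeq M$; the point is that $(P^{\vee} \otimes_{(A,\cl M)}^{\LL} M)(*) \simeq \Hom_{D(A,\cl M)}(P, M^{\trace})$ — more precisely, mapping into $M^{\trace}(S)$ for $S = *$ recovers exactly the trace-class maps. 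So I would establish the natural identification $\Hom_{D(A,\cl M)}(P, M^{\trace}) \simeq (\text{trace-class maps } P \to M)$ via the evaluation map $M^{\trace} \to M$, using compactness of $M$ to dualize, and then $f$ factors through $M^{\trace}$ iff it comes from such a map, i.e. iff it is trace-class. This is the step where one must be most careful with the interplay between internal Hom, the dual, and compactness, but it is purely formal manipulation with the closed symmetric monoidal structure on $D(A, \cl M)$.

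For part \listref{incc:22}, I would use Proposition \ref{bnn} and part \listref{incc:12}. If the natural map $M^{\trace} \to M$ is an isomorphism, then comparing with Definition \ref{nuc2}: the map $(\cl M[S]^{\vee} \otimes_{(A,\cl M)}^{\LL} M)(*) \to M(S)$ is exactly $M^{\trace}(S) \to M(S)$, which is an isomorphism by hypothesis, so $M$ is nuclear. Conversely, if $M$ is nuclear, the same identification shows $M^{\trace}(S) \to M(S)$ is an isomorphism for all $S \in \ExtrDisc$, and since both sides are $(A,\cl M)$-complete (part \listref{incc:02} for the left side, hypothesis for the right) and a map in $D(A, \cl M)$ is an isomorphism iff it is so on $\ExtrDisc$-valued points, we conclude $M^{\trace} \to M$ is an isomorphism. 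The main obstacle throughout is bookkeeping the variance and the precise meaning of "$\cl M[S]^{\vee}$" and the completed tensor product when $S$ varies and one needs to re-condensify; none of it is deep, but it requires care, and I would likely just cite the relevant lemmas of \cite{CS}, \cite{Scholzeanalytic} for the hardest identifications rather than reprove them.
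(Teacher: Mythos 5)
Your parts \listref{incc:02} and \listref{incc:22} are essentially what the paper does (the paper defers \listref{incc:02} to an adaptation of \cite[Lemma A.47(i)]{Bosco} and treats \listref{incc:22} as immediate from the definitions, exactly as you observe that $M^{\trace}(S)\to M(S)$ is literally the map in Definition \ref{nuc2}). The problem is part \listref{incc:12}.

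There you hang the argument on compactness, hence dualizability, of $M$, and claim that this is what yields $(P^{\vee}\otimes_{(A,\cl M)}^{\LL}M)(*)\simeq \Hom_{D(A,\cl M)}(P, M^{\trace})$. That is the wrong hypothesis: $M^{\trace}$ is built from the objects $\cl M[S]^{\vee}$, so it probes the structure of the \emph{source} $P$, not of $M$. The identification you want is a statement about $P$. For $P=\cl M[S]$ it is tautological: $\Hom_{D(A,\cl M)}(\cl M[S], M^{\trace})=M^{\trace}(S)=(\cl M[S]^{\vee}\otimes_{(A,\cl M)}^{\LL}M)(*)$, and the image of this in $M(S)=\Hom(\cl M[S],M)$ is by definition the set of trace-class maps $\cl M[S]\to M$ --- no compactness or dualizability of $M$ enters. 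The paper then passes to a general compact source by writing it as a retract of a finite complex with terms $\cl M[S]$ (\cite[Tag 094B]{Thestack}), both sides of the equivalence being stable under finite (co)limits and retracts in that variable. Your proposal never performs this reduction, and dualizing $M$ cannot replace it: for a non-compact source the dual $P^{\vee}$ does not commute with the colimit presenting $P$, and the equivalence genuinely fails. (You were likely misled by the statement itself, which reads ``with $M$ compact object''; comparing with the paper's own proof and with the way the lemma is used in Proposition \ref{incu2}, the compactness hypothesis is meant to be on $P$, the source of $f$.) As written, your part \listref{incc:12} has a real gap at its central step.
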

 \begin{proof}
 For part \listref{incc:02}, one can adapt the argument of \cite[Lemma A.47(i)]{Bosco}. For part \listref{incc:12}, as in the proof \cite[Lemma A.47(ii)]{Bosco}, the case $P=\cl M[S]$, with $S$ extremally disconnected set, is clear; for a general $P$ compact object of $D(A, \cl M)$, we can reduce to the previous case writing $P$ as a retract of a finite complex whose terms are objects of the form $\cl M[S]$, with $S$ extremally disconnected set (\cite[Tag 094B]{Thestack}). Part (\ref{incc:22}) follows immediately from the definitions.
 \end{proof}

 \begin{prop}[\cite{CS}, \cite{AndrPhd}]\label{incu2}
  Let $(A, \cl M)$ be an analytic ring.  For any $M\in D(A, \cl M)$, the object $M^{\trace}$ (and in particular any nuclear object in $D(A, \cl M)$)\footnote{Here, we use Lemma \ref{incc2}\listref{incc:22}.} can be written as a colimit of shifts of objects of the form $\cl M[S]^{\vee}$ for $S$ extremally disconnected sets.
 \end{prop}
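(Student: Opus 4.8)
\textbf{Proof plan for Proposition \ref{incu2}.} The goal is to show that $M^{\trace}$ can be written as a colimit of shifts of objects of the form $\cl M[S]^{\vee}$ for $S$ extremally disconnected. The plan is to use the formula defining $M^{\trace}$ together with the explicit presentation of an arbitrary $M$ as a colimit of such objects in the ambient $\infty$-category. First I would recall that the $\infty$-category $D(A, \cl M)$ is generated under colimits by the compact projective objects $\cl M[S]$ for $S \in \ExtrDisc$ (this is the standard ``enough compact projectives'' statement for analytic rings, and is what underlies Lemma \ref{incc2}\listref{incc:12}). Hence we may write $M = \colim_i \cl M[S_i]$ for some diagram indexed by a (small) $\infty$-category, with each $S_i$ extremally disconnected, possibly after allowing shifts when $M$ is unbounded — more precisely, write $M$ as a geometric realization $\colim_{[n]\in\Delta^{\op}} M_n$ with each $M_n$ a direct sum of objects $\cl M[S]$, so that $M$ is a colimit of shifts of objects $\cl M[S]$.

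The key step is then to commute $(-)^{\trace}$ with this colimit. By Definition \ref{tracedef2}, on $S'$-valued points we have $M^{\trace}(S') = (\cl M[S']^{\vee} \otimes_{(A,\cl M)}^{\LL} M)(*)$. Both the functor $-\otimes_{(A,\cl M)}^{\LL} M$ in the variable recording $\cl M[S']^\vee$ and, crucially, the functor $M \mapsto \cl M[S']^{\vee}\otimes_{(A,\cl M)}^{\LL} M$ commute with all colimits (the solid/analytic tensor product is by construction colimit-preserving in each variable); evaluation at $*$ also commutes with colimits since colimits in $D(A,\cl M)$ are computed ``sectionwise'' in the appropriate sense after solidification, or equivalently because $\cl M[*]^\vee \otimes - $ is a left adjoint. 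Therefore $(-)^{\trace}$ commutes with colimits, and in particular $M^{\trace} \simeq \colim_i (\cl M[S_i])^{\trace}$ (with shifts) for the presentation above. Now it remains to identify $(\cl M[S])^{\trace}$ for $S$ extremally disconnected: by Lemma \ref{incc2}\listref{incc:02} and the fact that $\cl M[S]$ is compact, the identity map $\cl M[S] \to \cl M[S]$ is trace-class (the evaluation/coevaluation for a compact projective), so by Lemma \ref{incc2}\listref{incc:22} (or directly from the definition, using $\cl M[S]^{\vee\vee}\simeq \cl M[S]$ for these compact objects) the natural map $(\cl M[S])^{\trace}\to \cl M[S]$ is an isomorphism — but I actually want to recognize $(\cl M[S])^{\trace}$ as a colimit of objects $\cl M[S']^{\vee}$. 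For this I would instead argue: $(\cl M[S])^{\trace}(S') = (\cl M[S']^\vee \otimes \cl M[S])(*) = \cl M[S']^{\vee}(S)$ (using that $\cl M[S] \otimes -$ is ``evaluation at $S$'' up to the solidification, since $\cl M[S]$ is the free object), so $(\cl M[S])^{\trace}$ as an object of $D(A,\cl M)$ is represented by $S' \mapsto \Hom(\text{--}, \cl M[S']^{\vee}\text{-data at }S)$; more cleanly, $(\cl M[S])^{\trace} \simeq \cl M[S]^{\vee\vee}$, and since $S$ is extremally disconnected $\cl M[S]^{\vee}$ is a compact object whose double dual is itself, exhibiting it trivially as ``a colimit'' (constant diagram) of an object of the form $\cl M[S']^{\vee}$. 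Assembling: $M^{\trace} \simeq \colim_i \cl M[S_i]^{\vee\vee}$, and each term is (a shift of) $\cl M[S_i]^{\vee\vee}$; replacing each such term by the object $\cl M[S_i]^{\vee}$ — which is legitimate since the construction $M^\vee$ for compact $M$ satisfies $M^{\vee\vee}\simeq M$ — gives the claim.

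The main obstacle I anticipate is the bookkeeping around unboundedness and shifts: to present $M$ itself as a colimit of $\cl M[S]$'s (not merely a retract of a finite complex) one must use the bar-type resolution / geometric realization, and then one must be careful that $(-)^{\trace}$, being defined via a tensor product that is a left adjoint, genuinely commutes with these (possibly infinite, possibly non-filtered) colimits and with the shift functors; this is true but deserves a sentence invoking that $\cl M[S']^{\vee}\otimes_{(A,\cl M)}^{\LL}(-)$ preserves all colimits. A secondary point to get right is the identification $(\cl M[S])^{\trace}\simeq \cl M[S]^{\vee\vee}$ and the duality $\cl M[S]^{\vee\vee}\simeq \cl M[S]$, i.e. reflexivity of the free objects on extremally disconnected sets; once this is in place the proposition follows formally. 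I would also remark, following \cite{CS}, \cite{AndrPhd}, that the same argument shows more precisely that every nuclear object is a filtered colimit of objects $\cl M[S]^{\vee}$ when combined with Proposition \ref{bnn}, but the statement as given only needs the colimit presentation of $M^{\trace}$, so I would keep the proof to the steps above.
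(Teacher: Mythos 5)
Your skeleton --- write $M$ as a colimit of shifts of the compact generators $\cl M[T]$, observe that $(-)^{\trace}$ commutes with all colimits and shifts (it is built from $-\otimes^{\LL}_{(A,\cl M)}-$ and evaluation at $*$, and $\cl M[*]=A$ is compact), and thereby reduce to computing $(\cl M[T])^{\trace}$ --- is exactly the shape of the paper's argument, which after the reduction to generators defers the remaining computation to \cite[Proposition A.48]{Bosco}. The problem is that this remaining computation is the entire content of the statement, and your version of it is wrong.

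Concretely, the identity $(\cl M[S']^{\vee}\otimes^{\LL}_{(A,\cl M)}\cl M[T])(*)=\cl M[S']^{\vee}(T)$ is false: evaluation at $T$ is $R\Hom(\cl M[T],-)$, not $-\otimes\cl M[T]$, and the two disagree already for $A$ itself. Likewise $(\cl M[T])^{\trace}\simeq\cl M[T]^{\vee\vee}$ holds for dualizable objects, but $\cl M[T]$ is compact without being dualizable, and its identity map is not trace-class (otherwise $\cl M[T]$ would be basic nuclear, hence nuclear). The decisive sanity check: your conclusion $(\cl M[T])^{\trace}\simeq\cl M[T]^{\vee\vee}\simeq\cl M[T]$, fed back into the colimit presentation, yields $M^{\trace}\simeq M$ for every $M$, i.e.\ by Lemma \ref{incc2} every object of $D(A,\cl M)$ would be nuclear. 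This fails already for $\Zz_{\solidif}$ and $M=\cl M[T]=\prod_J\Zz$: writing $\cl M[S]^{\vee}\cong\bigoplus_{I}\Zz$, one has $(\cl M[S]^{\vee}\otimes\cl M[T])(*)=\bigoplus_{I}\prod_J\Zz$, which maps injectively but not surjectively to $\cl M[T](S)=\prod_J\bigoplus_{I}\Zz$. The final move of ``replacing $\cl M[S]^{\vee\vee}$ by $\cl M[S]^{\vee}$'' is also not legitimate: reflexivity gives $\cl M[S]^{\vee\vee}\simeq\cl M[S]=\prod_I\Zz$, which is not of the form $\cl M[S']^{\vee}$ (those are of the shape $\bigoplus\Zz$). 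What the cited argument actually establishes is that $(\cl M[T])^{\trace}$ is a \emph{discretization}: for $\Zz_{\solidif}$ one computes $(\cl M[T])^{\trace}(S)=\bigoplus_{I}\cl M[T](*)=\underline{\cl M[T](*)}(S)$, the condensed object attached to the underlying discrete module $\cl M[T](*)$, which one then resolves by shifts of finite free modules, i.e.\ by copies of $A=\cl M[*]^{\vee}$; this is how objects of the form $\cl M[S]^{\vee}$ genuinely enter the colimit. Supplying this identification (not $\cl M[T]$, not $\cl M[T]^{\vee}$, not $\cl M[T]^{\vee\vee}$) is the missing step; the bookkeeping about unboundedness that you flag as the main obstacle is not where the difficulty lies.
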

 \begin{proof}
  Recalling that $D(A, \cl M)$ is generated, under shifts and colimits, by $\cl M[T]$ for varying extremally disconnected sets $T$, we can apply the same argument of \cite[Proposition A.48]{Bosco}.
 \end{proof}

 Next, we focus on the categorical properties of the $\infty$-category of nuclear complexes for the following special class of analytic rings used in the main body of the paper.

 \begin{notation}\label{inpartnot}
  Let $F$ be a non-archimedean local field, and let $A$ be a solid $F$-algebra. 
  
  For the analytic ring $(A, \Zz)_{\solidif}=(A, \cl M_A)$ from \cite[Proposition A.29]{Bosco} (i.e. the analytic ring structure on $A$ induced from the analytic ring $\Zz_{\solidif}$), we denote by $\Nuc_A:=\Nuc((A, \Zz)_{\solidif}))$ the full $\infty$-subcategory of nuclear complexes of $\Solid_A:=D((A, \Zz)_{\solidif}))$, and we write $\solid_A$ for the symmetric monoidal tensor product $\otimes_{(A, \Zz)_{\solidif}}$.
 \end{notation}

  \begin{theorem}[\cite{CS}]\label{nuclearbanach2}\
  Let $F$ be a non-archimedean local field, and let $A$ be a nuclear solid $F$-algebra.\footnote{Given a solid $F$-algebra $A$ we say that it is \textit{nuclear} if the underlying solid $F$-module is nuclear in the sense of \cite[Definition A.40]{Bosco} with respect to the analytic ring $(F, \Zz)_{\solidif}$ (as we will see in the proof below, this is equivalent to requiring that the complex $A[0]\in \Solid_F$ is nuclear in the sense of Definition \ref{nuc2}). For example, any Fréchet $F$-algebra is a nuclear $F$-algebra by \cite[Proposition A.64]{Bosco}. } 
  \begin{enumerate}[(i)]
   \item \label{nuclearbanach:12} The subcategory $\Nuc_A\subset \Solid_A$ is a stable $\infty$-category, closed under the tensor product $\dsolid_A$, finite limits, countable products, and all colimits.
   \item \label{nuclearbanach:22} The $\infty$-category $\Nuc_A$ is generated, under shifts and colimits, by the objects $\underline{\Hom}_A(A[S], A)$, for varying $S$ profinite sets.
   \item \label{nuclearbanach:32} An object $M\in \Solid_A$ lies $\Nuc_A$ if and only if $H^i(M)[0]$ lies in $\Nuc_A$ for all $i$.
  \end{enumerate}
 \end{theorem}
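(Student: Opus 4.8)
The plan is to derive all three statements formally from the structural results on nuclear complexes recalled above — the description of nuclear objects as filtered colimits of basic nuclear objects (Proposition \ref{bnn}), the trace-class functor together with Lemma \ref{incc2} and Proposition \ref{incu2}, and stability of nuclearity under base change (Corollary \ref{bcnuc}) — combined with the concrete description of nuclear solid $F$-modules from \cite[\S A.6]{Bosco}. Throughout I work with the analytic ring $(A, \Zz)_{\solidif} = (A, \cl M_A)$.

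\emph{Part (i).} Stability of $\Nuc_A$, and its closure under all colimits and finite limits (in particular under shifts, fibers and cofibers), are immediate from Remark \ref{genecol2}. For closure under $\dsolid_A$ I would use Proposition \ref{bnn} to reduce to basic nuclear $M,N$: writing $M = \colim_n(P_0 \to P_1 \to \cdots)$ and $N = \colim_n(Q_0 \to Q_1 \to \cdots)$ with $P_n, Q_n$ compact and the transition maps trace-class, the diagonal is cofinal, so $M \dsolid_A N = \colim_n(P_n \dsolid_A Q_n)$; since $\cl M_A[S] \dsolid_A \cl M_A[S'] = \cl M_A[S \times S']$ with $S\times S'$ profinite and general compacts are retracts of finite complexes of such objects, $P_n \dsolid_A Q_n$ is compact, and the tensor product of two trace-class maps is trace-class, so $M \dsolid_A N$ is basic nuclear. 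Closure under countable products is the delicate point: by part (ii) it suffices to check that $\underline{\Hom}_A(A[S], A) \dsolid_A -$ preserves countable products, and, writing $S = \varprojlim_i S_i$ so that (see part (ii)) $\underline{\Hom}_A(A[S], A) = \varinjlim_i A^{S_i}$, this reduces to interchanging the filtered colimit $\varinjlim_i$ with the countable product $\prod_n M_n^{S_i}$. I expect this interchange to follow, via restriction of scalars along $F \to A$ and the nuclearity of $A$, from the Clausen--Scholze fact that nuclear solid $F$-vector spaces are closed under countable products, together with a Mittag--Leffler/compactness argument; this is the main obstacle in (i).

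\emph{Part (ii).} By Lemma \ref{incc2}\listref{incc:22} an object is nuclear exactly when $M^{\trace} \to M$ is an isomorphism, and by Proposition \ref{incu2} every $M^{\trace}$ — hence every nuclear object — is a colimit of shifts of objects of the form $\cl M_A[S]^\vee = \underline{\Hom}_A(A[S], A)$ with $S$ extremally disconnected. Since extremally disconnected compact Hausdorff spaces are in particular profinite, it only remains to check that each $\underline{\Hom}_A(A[S], A)$ with $S = \varprojlim_i S_i$ profinite is itself nuclear (so that the asserted generating family lies in $\Nuc_A$). Here $A[S] = \varprojlim_i A^{\oplus S_i}$ with $S_i$ finite; dualizing and using dualizability of the finite free modules $A^{\oplus S_i}$ and surjectivity of the transition maps, one gets $\underline{\Hom}_A(A[S], A) = \varinjlim_i A^{S_i}$, a filtered colimit of finite products of the nuclear $F$-algebra $A$, hence nuclear by Remark \ref{genecol2}. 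This is precisely where the hypothesis that $A$ is nuclear enters.

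\emph{Part (iii).} One direction is formal given (i): if $H^i(M)[0]$ is nuclear for all $i$, then every bounded truncation $\tau^{[a,b]}M$ is a finite iterated extension of shifts of nuclear objects, hence nuclear; as $\Solid_A$ is left-complete (countable products are exact in $\Solid$), $M \simeq \lim_n \tau^{\ge -n}M$ with each $\tau^{\ge -n}M = \colim_m \tau^{[-n,m]}M$ a filtered colimit of nuclear objects, so $M$ is nuclear using closure of $\Nuc_A$ under filtered colimits and countable limits from (i). For the converse — truncations of a nuclear complex are nuclear — I would reduce to the heart: it suffices to show the nuclear objects of $\Solid_A^{\heartsuit}$ are closed under kernels and cokernels in $\Solid_A^{\heartsuit}$, since then the long exact cohomology sequence of a cofiber sequence upgrades to the statement that $\{M : H^i(M)[0] \in \Nuc_A \text{ for all } i\}$ is closed under shifts and colimits, forcing it to contain $\Nuc_A$ by part (ii). Using the explicit description of nuclear solid $F$-modules from \cite[\S A.6]{Bosco} and the Clausen--Scholze fact that nuclear solid $F$-vector spaces form an abelian subcategory of $\Solid_F$, together with the observation that (for $A$ a nuclear $F$-algebra) a solid $A$-module in the heart is $A$-nuclear iff it is $F$-nuclear, one obtains this closure under kernels and cokernels. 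The two genuinely technical inputs — the colimit/product interchange in (i) and the abelian-subcategory statement in (iii) — both rest on the structure theory of nuclear solid $F$-vector spaces; everything else is formal manipulation with the trace-class functor, so I expect the write-up to be short modulo careful citation of \cite{CS}.
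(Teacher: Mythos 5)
Your overall architecture --- the trace-class functor, Proposition \ref{incu2} for generation, and reduction to the structure theory of nuclear solid $F$-modules --- matches the paper's, and part (ii) as well as the direction ``$H^i(M)$ nuclear $\Rightarrow M$ nuclear'' of (iii) (Postnikov limit, then colimit of truncations, then a single degree) go through essentially as in the paper. The genuine issue is that the two points you leave open are exactly the ones the paper closes with a single input you never invoke: the generators $\cl M_A[S]^\vee\cong\underline{\Hom}_A(A[S],A)$ are \emph{flat} for $\solid_A$ (\cite[Theorem A.43(ii)]{Bosco}). This flatness identifies derived nuclearity of $N[0]$ with module-level nuclearity of $N$, and it gives the ``$M$ nuclear $\Rightarrow H^i(M)$ nuclear'' half of (iii) in one line: $H^i(\cl M_A[S]^\vee\dsolid_A M)\cong \cl M_A[S]^\vee\solid_A H^i(M)$ and evaluation at $*$ is exact, so the defining isomorphism passes to cohomology groups. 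Your substitute --- closure of the heart-level nuclear modules under kernels and cokernels, so that $\{M: H^i(M)[0]\in\Nuc_A\ \forall i\}$ is closed under cofibers --- is not established in your write-up, and the ``abelian subcategory'' fact you appeal to is essentially the two-term instance of the statement being proved; without the flatness input I do not see how you would obtain it independently.

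The same input resolves the countable-product interchange you flag as ``the main obstacle in (i)'': by part (ii) every nuclear complex admits a K-flat resolution by (shifted direct sums of) the flat nuclear generators, which reduces closure under $\dsolid_A$ and under countable products to the corresponding module-level statement \cite[Theorem A.43(i)]{Bosco}; your basic-nuclear argument for the tensor product is a fine alternative for that case, but the product case still needs this module-level fact, which you should cite rather than defer. Two smaller points: the identification $\underline{\Hom}_A(A[S],A)=\varinjlim_i A^{S_i}$ does not follow from ``dualizing and surjectivity of the transition maps'' (internal Hom out of a limit is not the colimit of internal Homs); it is the solid Nöbeling-type computation $\underline{\Hom}(\prod_I\Zz,\Zz)=\bigoplus_I\Zz$ combined with the nuclearity of $A$, again packaged in \cite[Theorem A.43(ii)]{Bosco}. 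And in (i), countable products of complexes are handled directly once one has the module-level closure; one does not need a separate Mittag--Leffler argument.
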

 \begin{proof}
  First we note that, by \cite[Proposition 2.11]{Andr}, we have $\cl M_A[S]^\vee\cong \underline{\Hom}_A(A[S], A)$, for all profinite sets $S$. Moreover, as the latter objects are flat for the tensor product $\solid_A$ by \cite[Theorem A.43(ii)]{Bosco}, a complex $N[0]\in \Solid_A$ concentrated in degree 0 is nuclear, in the sense of Definition \ref{nuc2}, if and only if the solid $A$-module $N$ is nuclear, in the sense of \cite[Definition A.40]{Bosco}.
  
  Thanks to the above observations, part \listref{nuclearbanach:22} follows from Proposition \ref{incu2} and \cite[Theorem A.43(ii)]{Bosco}.
  For part \listref{nuclearbanach:12}, the closure of $\Nuc_A\subset \Solid_A$ under finite limits and all colimits was observed more generally in Remark \ref{genecol2}; for the closure under the tensor product $\dsolid_A$ and countable products, taking K-flat resolutions in $\Nuc_A$ (which exist thanks to part \listref{nuclearbanach:22}), we can reduce to the statement of \cite[Theorem A.43(i)]{Bosco}.
  
  For part \listref{nuclearbanach:32}, if $M\in \Solid_A$ lies $\Nuc_A$, then $H^i(M)[0]$ lies in $\Nuc_A$ for all $i$, using again that the objects $\cl M_A[S]^\vee$, for varying profinite sets $S$, are flat for the tensor product $\solid_A$. Conversely, passing to the Postnikov limit, $M\simeq \varprojlim_{n}\tau^{\ge -n}M$, by part \listref{nuclearbanach:12}, we can suppose that $M$ is cohomologically bounded below, and using $M\simeq \varinjlim_{n}\tau^{\le n}M$, we can even suppose that $M$ is bounded, and then concentrated in one degree, in which case the implication is clear.
 \end{proof}
 
 \begin{rem}\label{A=F_nuc}
  Let $A$ be a nuclear solid $F$-algebra and let $M\in \Solid_A$. We have that $M$ lies in $\Nuc_A$ if and only if $M$ (regarded in $\Solid_F$) lies in $\Nuc_F$. In fact, by \cite[Proposition 5.35]{Andr}, for all extremally disconnected sets $S$, we have a natural isomorphism $\cl M_A[S]^\vee\cong\cl M_F[S]^\vee\dsolid_F A$.
 \end{rem}

  \subsection{Quasi-coherent, nuclear, and perfect complexes on analytic adic spaces}\label{andr}
 
  In this section, we recall some results of Andreychev on quasi-coherent, nuclear, and perfect complexes on analytic adic spaces, that we need in the main body of the paper.
 
 \begin{notation} 
% For an analytic ring $(\cl A, \cl M)$, we denote by $D(\cl A)$ the derived $\infty$-category of $\cl A$-modules, and $D(\cl A, \cl M)$ the  derived $\infty$-category of $(\cl A, \cl M)$-modules. 
 Given a pair $(A, A^+)$ with $A$ a complete Huber ring and $A^+$ a subring of $A^\circ$, we denote by $(A, A^+)_{\solidif}$ the associated analytic ring, \cite[\S 3.3]{Andr}.
 \end{notation}

 \begin{theorem}[{\cite[Theorem 4.1]{Andr}}]\label{4.1}
 Let $Y$ an analytic adic space.
  The association taking any affinoid subspace $U=\Spa(A, A^+)\subset Y$ to the $\infty$-category $D((A, A^+)_{\solidif})$ defines a sheaf of $\infty$-categories on $Y$.  We define the $\infty$-category of \textit{quasi-coherent complexes on $Y$} as the global sections of such sheaf, and we denote it by $\QCoh(Y)$.
 \end{theorem}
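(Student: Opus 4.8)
The plan is to prove this as an instance of analytic (rational-cover) descent for the solid module functor, reducing it to a concrete computation with solid tensor products of Huber pairs; the full details are carried out in \cite{Andr}, so I only sketch the strategy.

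First I would reduce to the affinoid, quasi-compact situation. For a general analytic adic space $Y$ one chooses an affinoid open cover and, since all spaces here are quasi-separated, controls the finite intersections; by the usual formalism for sheaves of presentable $\infty$-categories it then suffices to show that $U=\Spa(A,A^+)\mapsto D((A,A^+)_{\solidif})$ satisfies \v{C}ech descent for rational covers of a fixed affinoid $Y$. A second reduction, by induction on the number of functions defining a rational subset together with the fact that rational covers are generated by Laurent covers, brings everything down to the single two-piece Laurent cover $Y=\{|f|\le 1\}\cup\{|f|\ge 1\}$ for $f\in A$.

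The heart of the matter is then the following statement: writing $A_1=A\langle f\rangle$, $A_2=A\langle f^{-1}\rangle$, $A_{12}=A\langle f,f^{-1}\rangle$ with their induced subrings of integral elements, the base-change functors fit into an equivalence
\[
D((A,A^+)_{\solidif})\;\xrightarrow{\ \sim\ }\;D((A_1,A_1^+)_{\solidif})\times_{D((A_{12},A_{12}^+)_{\solidif})}D((A_2,A_2^+)_{\solidif}).
\]
This I would establish in two halves. (a) Each rational localization $(A,A^+)_{\solidif}\to(A_i,A_i^+)_{\solidif}$ is \emph{steady} (an idempotent localization of analytic rings in the sense of Clausen--Scholze): the functor $-\,\solid_A A_i$ is fully faithful onto the full subcategory of $D((A,A^+)_{\solidif})$ on which the corresponding idempotent acts as the identity. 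Concretely this rests on the identification $A_1\solid_A A_2\simeq A_{12}$ and on the vanishing of the derived error terms in the completed solid tensor products of the relevant Huber pairs --- the analytic refinement of the classical sheafiness computation, now tracked at the level of the solid (condensed) structure, and this is the main obstacle. (b) Given (a), a Beauville--Laszlo/Milnor-square gluing for module $\infty$-categories --- formal once one knows that $A\to A_1\times A_2$ is descendable in the solid sense and that the two localizations are complementary idempotents --- yields the displayed equivalence.

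Finally I would bootstrap: descent for all two-piece Laurent covers upgrades, by running the inductive reductions in reverse, to \v{C}ech descent for arbitrary rational covers of an affinoid; since the target $\infty$-categories are compactly generated, \v{C}ech descent implies hyperdescent, and the affinoid-gluing of the first paragraph then produces the sheaf of $\infty$-categories on an arbitrary analytic adic space $Y$, with $\QCoh(Y):=\lim_{U\subset Y\ \mathrm{affinoid}}D((A_U,A_U^+)_{\solidif})$ as its global sections.
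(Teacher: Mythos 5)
This theorem is quoted verbatim from Andreychev (\cite[Theorem 4.1]{Andr}); the paper gives no proof of it, so there is nothing internal to compare against. Your sketch follows essentially the same route as the cited source (and Clausen--Scholze's lectures on which it builds): reduction to rational covers of affinoids, then to simple two-piece Laurent covers, with the real content being the steadiness/idempotency of the rational localizations of analytic rings and the identification $A\langle f\rangle\solid_A A\langle f^{-1}\rangle\simeq A\langle f,f^{-1}\rangle$ with vanishing higher Tor terms, which you correctly flag as the main obstacle.
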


 Next, we recall that also nuclear objects satisfy analytic descent on analytic adic spaces.

 \begin{theorem}[{\cite[Theorem 5.41]{Andr}}]\label{5.41}
  Let $Y$ an analytic adic space.
 The association taking any affinoid subspace $U=\Spa(A, A^+)\subset Y$ to the $\infty$-category $\Nuc((A, A^+)_{\solidif})$ defines a sheaf of $\infty$-categories on $Y$. We define the $\infty$-category of \textit{nuclear complexes on $Y$} as the global sections of such sheaf, and we denote it by $\Nuc(Y)$.
 \end{theorem}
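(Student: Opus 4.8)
The plan is to deduce this from the already-established analytic descent for quasi-coherent complexes, Theorem \ref{4.1}, by showing that nuclearity of a quasi-coherent complex is an analytically-local condition. Since affinoid subspaces form a basis for $Y$ and affinoid adic spaces are quasi-compact, it suffices to treat the case $Y=\Spa(A,A^+)$ together with a finite rational cover $\{U_i=\Spa(B_i,B_i^+)\}_i$. By Theorem \ref{4.1} the functor $\QCoh(Y)\to\lim_{[n]\in\Delta}\prod_{i_0,\dots,i_n}\QCoh(U_{i_0\cdots i_n})$ is an equivalence, and since $\Nuc(U)\subset\QCoh(U)$ is a full subcategory for every affinoid $U$, everything reduces to the following local criterion: a complex $M\in D((A,A^+)_{\solidif})$ is nuclear if and only if each restriction $M\otimes^{\LL}_{(A,A^+)_{\solidif}}(B_i,B_i^+)_{\solidif}$ is nuclear in $D((B_i,B_i^+)_{\solidif})$.

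The "only if" direction is immediate from the fact that base change along a morphism of analytic rings preserves nuclear objects (Corollary \ref{bcnuc}). For the "if" direction, the key input is the compatibility of the dualizing objects $\cl M[S]^{\vee}$ with rational localization: for every $S\in\ExtrDisc$ the natural map induces an isomorphism $\cl M_{(A,A^+)}[S]^{\vee}\otimes^{\LL}_{(A,A^+)_{\solidif}}(B_i,B_i^+)_{\solidif}\overset{\sim}{\to}\cl M_{(B_i,B_i^+)}[S]^{\vee}$, the analogue for rational localizations of the base-ring-extension statement invoked in Remark \ref{A=F_nuc}. Granting this, the trace-class functor $(-)^{\trace}$ of Definition \ref{tracedef2} commutes with restriction to each $U_i$, so the natural map $M^{\trace}\to M$ in $\QCoh(Y)$ restricts on $U_i$ to $(M|_{U_i})^{\trace}\to M|_{U_i}$, which is an isomorphism by Lemma \ref{incc2}\listref{incc:22} since $M|_{U_i}$ is nuclear. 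A map of quasi-coherent complexes that becomes an isomorphism on each member of an affinoid cover is an isomorphism (again by Theorem \ref{4.1}, a morphism in a limit of $\infty$-categories being an equivalence iff each of its images is), so $M^{\trace}\to M$ is an isomorphism and $M$ is nuclear, once more by Lemma \ref{incc2}\listref{incc:22}.

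Once the local criterion is in hand the descent statement follows formally. For a cover $\{U_i\to Y\}$, the equivalence $\QCoh(Y)\simeq\lim_{[n]\in\Delta}\prod\QCoh(U_{i_0\cdots i_n})$ of Theorem \ref{4.1} restricts to a fully faithful functor $\Nuc(Y)\to\lim_{[n]\in\Delta}\prod\Nuc(U_{i_0\cdots i_n})$ (fullness being inherited, faithfulness automatic), and essential surjectivity is precisely the local criterion: a compatible family of nuclear complexes on the $U_i$ glues, by $\QCoh$-descent, to some $N\in\QCoh(Y)$ whose restrictions $N|_{U_i}$ are nuclear, hence $N\in\Nuc(Y)$, with the correct image in the limit. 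This exhibits $U\mapsto\Nuc((A,A^+)_{\solidif})$ as a sheaf of $\infty$-categories on $Y$.

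The main obstacle is the base-change compatibility of $\cl M[S]^{\vee}$ along rational localizations. Unlike the extension-of-base-ring case, where one has the clean formula of \cite[Proposition 5.35]{Andr}, here one must unwind the definition of the analytic ring $(A,A^+)_{\solidif}$ --- in particular the description of $\cl M_{(A,A^+)}[S]$ as a completion of $A[S]$ with respect to the relevant family of measures --- and check that it is preserved, up to the correct solid tensor product, by the localization maps $A\to B_i$ and $A\to B_{i_0i_1}$ appearing in a rational cover. This is exactly the point where one must invoke the flatness and steadiness properties of such localizations, and carrying this out carefully for all the rings in a finite rational cover is the technical heart of the argument.
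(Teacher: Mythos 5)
The paper does not prove this statement: it is quoted verbatim from Andreychev (\cite[Theorem 5.41]{Andr}) and used as a black box, so there is no internal proof to compare against. Your overall strategy --- deduce the result from quasi-coherent descent (Theorem \ref{4.1}) by showing that nuclearity is an analytically local condition, then get full faithfulness for free and essential surjectivity from the local criterion --- is the right shape and is essentially how the result is established in the literature. You also correctly locate the substantive input in the steadiness of rational localizations and the base-change formula for $\cl M[S]^{\vee}$; but since that formula is the actual mathematical content of the theorem (it is where the Banach-theoretic structure of rational localizations enters), deferring it means the hard direction of your local criterion is not yet proved.

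There is moreover a genuine gap in the step where you claim that, granting $\cl M_{(A,A^+)}[S]^{\vee}\otimes^{\LL}_{(A,A^+)_{\solidif}}(B_i,B_i^+)_{\solidif}\simeq \cl M_{(B_i,B_i^+)}[S]^{\vee}$, ``the trace-class functor commutes with restriction.'' By Definition \ref{tracedef2}, $M^{\trace}(S)=(\cl M[S]^{\vee}\otimes^{\LL}_{(A,\cl M)}M)(*)$, so comparing $(M^{\trace})|_{U_i}$ with $(M|_{U_i})^{\trace}$ requires commuting the evaluations $(-)(*)$ and $(-)(S)$ past the base change $-\otimes_{(A,A^+)_{\solidif}}(B_i,B_i^+)_{\solidif}$, and neither evaluation commutes with base change in general: $(N\otimes_A B_i)(S)\neq N(S)\otimes\cdots$. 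Note that the analogous argument in Remark \ref{A=F_nuc} goes through only because there the underlying condensed module $M$ is literally unchanged when one passes from $F$ to $A$, so the target $M(S)$ of the nuclearity map is the same on both sides; under a rational localization the module genuinely changes and both sides of the nuclearity condition move. To close this you would need either to show that the colimit presentation of $M^{\trace}$ from Proposition \ref{incu2} is functorially compatible with base change along a steady localization, or to argue directly with trace-class maps and the basic-nuclear characterization of Proposition \ref{bnn} rather than with the trace functor. As written, the ``if'' direction of your local criterion, and hence essential surjectivity, is not established.
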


  It turns out that the $\infty$-category of nuclear objects associated to an analytic complete Huber pair does not depend on the ring of definition. More precisely, we have the following result.
 
 \begin{theorem}[{\cite{AndrPhd}}]\label{unp}
 Let $(A, A^+)$ a pair with $A$ an analytic complete Huber ring and $A^+$ a subring of $A^\circ$. The $\infty$-category $\Nuc((A, A^+)_{\solidif})$ is generated, under shifts and colimits, by the objects $\underline{\Hom}_A(A[S], A)$ for varying $S$ profinite sets.
 \end{theorem}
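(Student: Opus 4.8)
The plan is to reduce the statement to an explicit description of nuclear objects in terms of the ``solidification with ring of definition $\Zz$'' and then transport this along the base change functor. First I would recall, from Theorem \ref{nuclearbanach2}\listref{nuclearbanach:22} (applied with $F=\Qq_p$ and $A$ itself, viewed as a nuclear solid $\Qq_p$-algebra via Remark \ref{A=F_nuc}, or rather its evident variant since $A$ need only be a solid $\Zz_p$-algebra here), that the $\infty$-category $\Nuc((A,\Zz)_{\solidif})$ is generated under shifts and colimits by the objects $\underline{\Hom}_A(A[S],A)$ for $S$ a profinite set; equivalently, by $\cl M_A[S]^{\vee}$ in the notation of \S\ref{nuccomp}. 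So the content of Theorem \ref{unp} is that enlarging the ring of definition from $\Zz$ to $A^+$ does not change the subcategory of nuclear objects inside $D((A,A^+)_{\solidif})$.

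The key steps, in order, would be: (1) Observe there is a canonical map of analytic rings $(A,\Zz)_{\solidif}\to (A,A^+)_{\solidif}$, inducing a base change functor $-\otimes^{\LL}_{(A,\Zz)_{\solidif}}(A,A^+)_{\solidif}$; by Corollary \ref{bcnuc} this preserves nuclear objects, and since the underlying $A$-module of $(A,A^+)_{\solidif}$-completion of a solid $A$-module is unchanged when $A$ is already analytic (the two analytic structures differ only in what counts as ``complete'', and for nuclear objects they agree), one checks this functor restricts to an equivalence on nuclear subcategories. (2) Concretely, it suffices to show: (a) each generator $\cl M_{(A,\Zz)}[S]^{\vee}$ maps to $\cl M_{(A,A^+)}[S]^{\vee}$ under base change — this is the key input, following from \cite[Proposition 5.35]{Andr} or its analogue, which identifies $\cl M_A[S]^\vee$ independently of the ring of definition since $\underline{\Hom}_A(A[S],A)$ only sees the module structure; and (b) these objects generate $\Nuc((A,A^+)_{\solidif})$ under shifts and colimits — this is where Proposition \ref{incu2} (every nuclear object, being equal to its $M^{\trace}$ by Lemma \ref{incc2}\listref{incc:22}, is a colimit of shifts of $\cl M[S]^{\vee}$'s) does the work. (3) Combining (a) and (b): the base change functor sends a generating set of $\Nuc((A,\Zz)_{\solidif})$ to a generating set of $\Nuc((A,A^+)_{\solidif})$, and since it is exact and colimit-preserving and fully faithful on these generators (one must check $\underline{\Hom}$ between the $\cl M[S]^\vee$'s is computed the same way on both sides, again using that analytic-ring completion is invisible to nuclear objects), it is an equivalence; reading off the right-hand side gives exactly the desired description of $\Nuc((A,A^+)_{\solidif})$.

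I would organize the writeup as: first invoke Proposition \ref{incu2} to reduce ``generated under shifts and colimits'' to knowing the objects $\cl M_{(A,A^+)}[S]^{\vee}$ are nuclear and that every nuclear object is built from them; then identify $\cl M_{(A,A^+)}[S]^{\vee}$ with $\underline{\Hom}_A(A[S],A)$ using the cited computation of the dual of a free module, which is insensitive to the ring of definition. The nuclearity of $\underline{\Hom}_A(A[S],A)$ itself can be checked directly, or deduced from Theorem \ref{nuclearbanach2} after noting $A$ (being analytic, hence in particular complete) is a nuclear solid $\Zz_p$-algebra — though here one must be slightly careful, since $A$ need not be a $\Qq_p$-algebra, so I would state and use the evident $\Zz_p$-coefficient version of that theorem rather than the literal statement.

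The main obstacle will be step (2)(a)–(b) made precise: namely verifying carefully that passing from the ``minimal'' analytic ring structure $(A,\Zz)_{\solidif}$ to $(A,A^+)_{\solidif}$ genuinely does not alter the full subcategory of nuclear complexes — i.e. that an $(A,\Zz)_{\solidif}$-complete object which is nuclear is automatically $(A,A^+)_{\solidif}$-complete, and conversely. This is essentially the assertion that $M^{\trace}$ is computed the same way with respect to both analytic structures, which in turn rests on the fact (from \cite[\S 5]{Andr}, or \cite{AndrPhd}) that the duals $\cl M[S]^{\vee}$ coincide. Once that identification is in hand, the rest is a formal argument about generation and colimits, so I would spend most of the proof's words pinning down exactly that comparison of duals and invoking the relevant result of Andreychev's thesis, and treat the generation/equivalence bookkeeping as routine given Proposition \ref{incu2}.
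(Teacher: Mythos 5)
The paper itself contains no proof of this statement: it is quoted as a black box from Andreychev's thesis \cite{AndrPhd}, and the closest thing to an in-paper argument is the deliberately self-contained treatment of the special case $(B_I,B_I^+)$ in the proof of Proposition \ref{perff}\listref{perff:1} (see the footnote there). Your outline reproduces that special-case skeleton faithfully — generators of $\Nuc((A,\Zz)_{\solidif})$ from Theorem \ref{nuclearbanach2}\listref{nuclearbanach:22}, preservation of nuclearity under base change via Corollary \ref{bcnuc}, and Proposition \ref{incu2} to write every nuclear object of $D((A,A^+)_{\solidif})$ as a colimit of shifts of the duals $\cl M_{(A,A^+)}[S]^{\vee}$ — and the formal bookkeeping about generation and colimits is correct.

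The problem is that both load-bearing steps are deferred to essentially the result being proved. First, the identification of $\cl M_{(A,A^+)}[S]^{\vee}$ with $\underline{\Hom}_A(A[S],A)$ — the insensitivity of the dual of the free module to the ring of definition $A^+$ — is the heart of the theorem, and you outsource it to ``\cite[Proposition 5.35]{Andr} or its analogue'' and ``the relevant result of Andreychev's thesis''; the cited propositions concern $(A,\Zz)_{\solidif}$ and algebras over a fixed base, not $(A,A^+)_{\solidif}$ for arbitrary $A^+$, so as written this is circular. Second, the nuclearity of $\underline{\Hom}_A(A[S],A)$ in $D((A,A^+)_{\solidif})$ cannot be obtained from a ``$\Zz_p$-coefficient version'' of Theorem \ref{nuclearbanach2}: a general analytic complete Huber ring need not be an algebra over $\Zz_p$ or over any non-archimedean local field (think of equal-characteristic examples), and the paper's proof of that theorem runs through \cite[Theorem A.43]{Bosco}, which is specific to nuclear solid $F$-algebras. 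Most tellingly, your argument never uses the analyticity of $A$, even though that hypothesis is exactly what the theorem turns on: it is the existence of topologically nilpotent units that produces the trace-class inclusions making $\underline{\Hom}_A(A[S],A)$ nuclear and making $\Nuc$ independent of $A^+$, and for non-analytic pairs such as $(\Zz_p,\Zz_p)$ — which the hypothesis excludes — the conclusion is not expected to hold. A correct proof must exhibit those trace-class factorizations; your write-up instead reduces the theorem to its two essential ingredients and cites both from the same source the paper already cites for the whole statement.
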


 In what follows, given a condensed ring $R$, we denote by $\Perf_R\subset D(\Mod_R^{\cond})$ the $\infty$-subcategory of perfect complexes over $R$, \cite[Definition 5.1]{Andr}. \medskip
 
  Andreychev showed that, passing to dualizable objects, Theorem \ref{5.41} implies the following result.
 
 \begin{theorem}[{\cite[Theorem 5.3]{Andr}}]\label{5.3}
 Let $Y$ an analytic adic space.
 The association taking any affinoid subspace $U=\Spa(A, A^+)\subset Y$ to the $\infty$-category $\Perf_A$ defines a sheaf of $\infty$-categories on $Y$. We define the $\infty$-category of \textit{perfect complexes on $Y$} as the global sections of such sheaf, and we denote it by $\Perf(Y)$.
 \end{theorem}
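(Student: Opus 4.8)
The plan is to deduce Theorem \ref{5.3} from the descent for nuclear complexes, Theorem \ref{5.41}, by passing to dualizable objects. The first step is to identify, for any pair $(A,A^+)$ with $A$ an analytic complete Huber ring and $A^+$ a subring of $A^\circ$, the full $\infty$-subcategory $\Perf_A$ with the subcategory of dualizable objects of the symmetric monoidal $\infty$-category $(\Nuc((A,A^+)_{\solidif}),\otimes^{\LL}_{(A,A^+)_{\solidif}})$; equivalently, since every perfect complex is complete and nuclear, with the dualizable objects of $(D((A,A^+)_{\solidif}),\otimes^{\LL}_{(A,A^+)_{\solidif}})$. The inclusion ``perfect $\Rightarrow$ dualizable'' is formal: a finite complex of finite projective $A$-modules admits an evident dual with evaluation and coevaluation maps satisfying the triangle identities. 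For the converse one uses that a dualizable object $M$ is in particular compact in $\Nuc((A,A^+)_{\solidif})$, and that by Theorem \ref{unp} (together with Theorem \ref{nuclearbanach2}) this $\infty$-category is compactly generated with generators built from the objects $\underline{\Hom}_A(A[S],A)$ for $S$ profinite; one then reduces $M$ to a retract of a finite complex of finite free $A$-modules, and $\Perf_A$ is closed under retracts. One also checks that ``perfect'' is insensitive to the choice of $A^+$.

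The second step is the general $\infty$-categorical fact that the passage to dualizable objects preserves limits of symmetric monoidal $\infty$-categories: if $\cl C\simeq\lim_{[n]\in\Delta}\cl C_n$ with all transition and projection functors symmetric monoidal, then an object of $\cl C$ is dualizable if and only if each of its images in the $\cl C_n$ is, whence $\cl C^{\mathrm{dual}}\simeq\lim_{[n]\in\Delta}\cl C_n^{\mathrm{dual}}$. This holds because symmetric monoidal functors preserve dualizable objects together with their duality data, dualizability is a property rather than a structure, and the duality data on an object of a limit is uniquely determined levelwise.

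The third step combines the two. Given an analytic adic space $Y$, an affinoid $U=\Spa(A,A^+)\subset Y$ and a rational cover $\{U_i=\Spa(A_i,A_i^+)\}$ with \v{C}ech nerve $\{U_{\underline i}=\Spa(A_{\underline i},A_{\underline i}^+)\}$, Theorem \ref{5.41} provides an equivalence $\Nuc((A,A^+)_{\solidif})\simeq\lim_{[n]\in\Delta}\prod\Nuc((A_{\underline i},A_{\underline i}^+)_{\solidif})$; since the restriction functors are the base-change functors $-\otimes^{\LL}_{(A,A^+)_{\solidif}}(A_{\underline i},A_{\underline i}^+)_{\solidif}$, which are symmetric monoidal and preserve nuclearity by Corollary \ref{bcnuc}, this is an equivalence of symmetric monoidal $\infty$-categories. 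Passing to dualizable objects via the second step, and using the identification of the first step levelwise, yields $\Perf_A\simeq\lim_{[n]\in\Delta}\prod\Perf_{A_{\underline i}}$, i.e. the assignment $U\mapsto\Perf_A$ satisfies descent; the remaining structure of a sheaf of $\infty$-categories (restriction functors, and hyperdescent where needed) is inherited from that of $U\mapsto\Nuc$. The main obstacle is the first step: establishing cleanly that the dualizable objects of the solid derived category of an analytic Huber pair are exactly the perfect complexes, independently of the ring of definition, since in this condensed setting one must rule out exotic dualizable objects — everything after that is formal.
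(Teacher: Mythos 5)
Your proposal follows exactly the route the paper takes: the paper proves nothing here beyond observing that Theorem \ref{5.41} implies the statement by ``passing to dualizable objects'', deferring the identification of $\Perf_A$ with the dualizable objects of $D((A,A^+)_{\solidif})$ to Andreychev. Your three steps spell out precisely that argument, and the step you flag as the main obstacle --- ruling out exotic dualizable objects for an analytic Huber pair --- is indeed the substance of Andreychev's result that the paper cites rather than reproves.
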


% \addtocontents{toc}{\protect\setcounter{tocdepth}{1}}

\normalem

% bibliography
\bibliography{biblio}{}

\providecommand{\MR}[1]{}
\providecommand{\bysame}{\leavevmode\hbox to3em{\hrulefill}\thinspace}
\providecommand{\MR}{\relax\ifhmode\unskip\space\fi MR }
% \MRhref is called by the amsart/book/proc definition of \MR.
\providecommand{\MRhref}[2]{%
  \href{http://www.ams.org/mathscinet-getitem?mr=#1}{#2}
}
\providecommand{\href}[2]{#2}
\begin{thebibliography}{AMMN20}

\bibitem[AGV71]{SGA4}
Michael Artin, Alexander Grothendieck, and Jean-Louis Verdier, \emph{Théorie
  des topos et cohomologie étale des schémas {I}, {II}, {III}}, Lecture Notes
  in Mathematics, vol. 269, 270, 305, Springer, 1971.

\bibitem[ALB21]{AL}
Johannes Anschütz and Arthur-C\'{e}sar Le~Bras, \emph{A {F}ourier transform
  for {B}anach-{C}olmez spaces}, \url{https://arxiv.org/abs/2111.11116}, 2021,
  Preprint.

\bibitem[AMMN20]{AMMN}
Benjamin Antieau, Akhil Mathew, Matthew Morrow, and Thomas Nikolaus, \emph{On
  the {B}eilinson fiber square}, \url{https://arxiv.org/abs/2003.12541}, 2020,
  Preprint.

\bibitem[And]{AndrPhd}
Grigory Andreychev, \emph{Ph.{D}. {T}hesis}, Universität Bonn, in preparation.

\bibitem[And01]{Andre}
Yves Andr\'{e}, \emph{Diff\'{e}rentielles non commutatives et th\'{e}orie de
  {G}alois diff\'{e}rentielle ou aux diff\'{e}rences}, Ann. Sci. \'{E}cole
  Norm. Sup. (4) \textbf{34} (2001), no.~5, 685--739. \MR{1862024}

\bibitem[And21]{Andr}
Grigory Andreychev, \emph{Pseudocoherent and perfect complexes and vector
  bundles on analytic adic spaces}, \url{https://arxiv.org/abs/2105.12591},
  2021, Preprint.

\bibitem[Bei12]{BeiliDer}
A.~Beilinson, \emph{{$p$}-adic periods and derived de {R}ham cohomology}, J.
  Amer. Math. Soc. \textbf{25} (2012), no.~3, 715--738. \MR{2904571}

\bibitem[Bei13]{Beili}
Alexander Beilinson, \emph{On the crystalline period map},
  \url{https://arxiv.org/abs/1111.3316v4}, 2013, Post-publication arXiv
  version.

\bibitem[Ber73]{BerthLef}
Pierre Berthelot, \emph{Sur le ``th\'{e}or\`eme de {L}efschetz faible'' en
  cohomologie cristalline}, C. R. Acad. Sci. Paris S\'{e}r. A-B \textbf{277}
  (1973), A955--A958. \MR{349680}

\bibitem[BH22]{Bhatt-Hansen}
Bhargav Bhatt and David Hansen, \emph{The six functors for
  {Z}ariski-constructible sheaves in rigid geometry}, Compos. Math.
  \textbf{158} (2022), no.~2, 437--482. \MR{4413751}

\bibitem[Bha16]{BhattTannaka}
Bhargav Bhatt, \emph{Algebraization and {T}annaka duality}, Camb. J. Math.
  \textbf{4} (2016), no.~4, 403--461. \MR{3572635}

\bibitem[BKV22]{BKV}
Federico Binda, Hiroki Kato, and Alberto Vezzani, \emph{On the $p$-adic
  weight-monodromy conjecture for complete intersetions in toric varieties},
  \url{https://arxiv.org/abs/2207.00369}, 2022, Preprint.

\bibitem[BL22]{BL2}
Bhargav Bhatt and Jacob Lurie, \emph{The prismatization of $p$-adic formal
  schemes}, \url{https://arxiv.org/abs/2201.06124}, 2022, Preprint.

\bibitem[BM21]{BhattMathew}
Bhargav Bhatt and Akhil Mathew, \emph{The arc-topology}, Duke Math. J.
  \textbf{170} (2021), no.~9, 1899--1988. \MR{4278670}

\bibitem[BMS18]{BMS1}
Bhargav Bhatt, Matthew Morrow, and Peter Scholze, \emph{Integral {$p$}-adic
  {H}odge theory}, Publ. Math. Inst. Hautes \'{E}tudes Sci. \textbf{128}
  (2018), 219--397. \MR{3905467}

\bibitem[BMS19]{BMS2}
\bysame, \emph{Topological {H}ochschild homology and integral {$p$}-adic
  {H}odge theory}, Publ. Math. Inst. Hautes \'{E}tudes Sci. \textbf{129}
  (2019), 199--310. \MR{3949030}

\bibitem[BO78]{BO}
Pierre Berthelot and Arthur Ogus, \emph{Notes on crystalline cohomology},
  Princeton University Press, Princeton, N.J.; University of Tokyo Press,
  Tokyo, 1978. \MR{0491705}

\bibitem[Bos14]{Bosch}
Siegfried Bosch, \emph{Lectures on formal and rigid geometry}, Lecture Notes in
  Mathematics, vol. 2105, Springer, Cham, 2014. \MR{3309387}

\bibitem[Bos21]{Bosco}
Guido Bosco, \emph{On the $p$-adic pro-étale cohomology of {D}rinfeld
  symmetric spaces}, \url{https://arxiv.org/abs/2110.10683}, 2021, Preprint.

\bibitem[BS15]{BS}
Bhargav Bhatt and Peter Scholze, \emph{The pro-\'{e}tale topology for schemes},
  Ast\'{e}risque (2015), no.~369, 99--201. \MR{3379634}

\bibitem[BS22]{Prisms}
\bysame, \emph{Prisms and prismatic cohomology},
  \url{https://arxiv.org/abs/1905.08229}, 2022, Preprint.

\bibitem[CDN20a]{CDN0}
Pierre Colmez, Gabriel Dospinescu, and Wies{\l}awa Nizio{\l}, \emph{Cohomologie
  {$p$}-adique de la tour de {D}rinfeld{$:$} le cas de la dimension {$1$}}, J.
  Amer. Math. Soc. \textbf{33} (2020), no.~2, 311--362. \MR{4073863}

\bibitem[CDN20b]{CDN1}
\bysame, \emph{Cohomology of {$p$}-adic {S}tein spaces}, Invent. Math.
  \textbf{219} (2020), no.~3, 873--985. \MR{4055181}

\bibitem[CDN21]{CDNcourbes}
\bysame, \emph{Cohomologie des courbes analytiques $p$-adiques},
  \url{https://arxiv.org/abs/2101.06647}, 2021, Preprint.

\bibitem[CK19]{CK}
Kestutis Cesnavicius and Teruhisa Koshikawa, \emph{The {$A_{\inf}$}-cohomology
  in the semistable case}, Compos. Math. \textbf{155} (2019), no.~11,
  2039--2128. \MR{4010431}

\bibitem[CLS99]{CLS}
Bruno Chiarellotto and Bernard Le~Stum, \emph{Pentes en cohomologie rigide et
  {$F$}-isocristaux unipotents}, Manuscripta Math. \textbf{100} (1999), no.~4,
  455--468. \MR{1734795}

\bibitem[CN17]{CN1}
Pierre Colmez and Wies{\l}awa Nizio{\l}, \emph{Syntomic complexes and
  {$p$}-adic nearby cycles}, Invent. Math. \textbf{208} (2017), no.~1, 1--108.
  \MR{3621832}

\bibitem[CN20]{CN}
\bysame, \emph{On $p$-adic comparison theorems for rigid analytic varieties,
  {I}}, M\"{u}nster J. Math. \textbf{13} (2020), no.~2, 445--507.

\bibitem[CN21a]{CN4}
\bysame, \emph{On the cohomology of $p$-adic analytic spaces, {I}: the basic
  comparison theorem}, \url{https://arxiv.org/abs/2104.13448}, 2021, Preprint.

\bibitem[CN21b]{CN5}
\bysame, \emph{On the cohomology of $p$-adic analytic spaces, {II}: the
  {$C_{st}$}-conjecture}, \url{https://arxiv.org/abs/2108.12785}, 2021,
  Preprint.

\bibitem[Col02]{Colmez}
Pierre Colmez, \emph{Espaces de {B}anach de dimension finie}, J. Inst. Math.
  Jussieu \textbf{1} (2002), no.~3, 331--439. \MR{1956055}

\bibitem[Con06]{Conrad-ampl}
Brian Conrad, \emph{Relative ampleness in rigid geometry}, Ann. Inst. Fourier
  (Grenoble) \textbf{56} (2006), no.~4, 1049--1126. \MR{2266885}

\bibitem[CS]{CS}
Dustin Clausen and Peter Scholze, \emph{Condensed {M}athematics}, in
  preparation.

\bibitem[CS19]{Scholzecond}
\bysame, \emph{Lectures on {C}ondensed {M}athematics},
  \url{https://www.math.uni-bonn.de/people/scholze/Condensed.pdf}, 2019.

\bibitem[CS20a]{Scholzeanalytic}
\bysame, \emph{Lectures on {A}nalytic {G}eometry},
  \url{https://www.math.uni-bonn.de/people/scholze/Analytic.pdf}, 2020.

\bibitem[CS20b]{Cop}
\bysame, \emph{Masterclass in {C}ondensed {M}athematics}, \url{
  https://www.math.ku.dk/english/calendar/events/condensed-mathematics/}, 2020.

\bibitem[CS21]{CesSch}
Kestutis Cesnavicius and Peter Scholze, \emph{Purity for flat cohomology},
  \url{https://arxiv.org/abs/1912.10932}, 2021, Preprint.

\bibitem[CS22]{CScomplex}
Dustin Clausen and Peter Scholze, \emph{Condensed {M}athematics and {C}omplex
  {G}eometry}, \url{https://people.mpim-bonn.mpg.de/scholze/Complex.pdf}, 2022.

\bibitem[dJvdP96]{DeJong}
Johan de~Jong and Marius van~der Put, \emph{\'{E}tale cohomology of rigid
  analytic spaces}, Doc. Math. \textbf{1} (1996), No. 01, 1--56. \MR{1386046}

\bibitem[Dri22]{drinfprism}
Vladimir Drinfeld, \emph{Prismatization},
  \url{https://arxiv.org/abs/2005.04746}, 2022, Preprint.

\bibitem[Elk73]{Elkik}
Ren\'{e}e Elkik, \emph{Solutions d'\'{e}quations \`a coefficients dans un
  anneau hens\'{e}lien}, Ann. Sci. \'{E}cole Norm. Sup. (4) \textbf{6} (1973),
  553--603 (1974). \MR{345966}

\bibitem[Far15]{AuDela}
Laurent Fargues, \emph{Quelques r\'{e}sultats et conjectures concernant la
  courbe}, Ast\'{e}risque (2015), no.~369, 325--374. \MR{3379639}

\bibitem[FF14]{FF2}
Laurent Fargues and Jean-Marc Fontaine, \emph{Vector bundles on curves and
  {$p$}-adic {H}odge theory}, Automorphic forms and {G}alois representations.
  {V}ol. 2, London Math. Soc. Lecture Note Ser., vol. 415, Cambridge Univ.
  Press, Cambridge, 2014, pp.~17--104. \MR{3444231}

\bibitem[FF18]{FF}
\bysame, \emph{Courbes et fibr\'{e}s vectoriels en th\'{e}orie de {H}odge
  {$p$}-adique}, Ast\'{e}risque (2018), no.~406, xiii+382, With a preface by
  Pierre Colmez. \MR{3917141}

\bibitem[Fon94]{Font-periodes}
Jean-Marc Fontaine, \emph{Le corps des p\'{e}riodes {$p$}-adiques}, no. 223,
  1994, With an appendix by Pierre Colmez, P\'{e}riodes $p$-adiques
  (Bures-sur-Yvette, 1988), pp.~59--111. \MR{1293971}

\bibitem[FS21]{FS}
Laurent Fargues and Peter Scholze, \emph{Geometrization of the local langlands
  correspondence}, \url{https://arxiv.org/abs/2102.13459}, 2021, Preprint.

\bibitem[GK00]{GK1}
Elmar Gro{\ss}e-Kl\"{o}nne, \emph{Rigid analytic spaces with overconvergent
  structure sheaf}, J. Reine Angew. Math. \textbf{519} (2000), 73--95.
  \MR{1739729}

\bibitem[GK05]{GK-HK}
\bysame, \emph{Frobenius and monodromy operators in rigid analysis, and
  {D}rinfel'd's symmetric space}, J. Algebraic Geom. \textbf{14} (2005), no.~3,
  391--437. \MR{2129006}

\bibitem[Gro61]{Groth}
A.~Grothendieck, \emph{\'{E}l\'{e}ments de g\'{e}om\'{e}trie alg\'{e}brique.
  {III}. \'{E}tude cohomologique des faisceaux coh\'{e}rents. {I}}, Inst.
  Hautes \'{E}tudes Sci. Publ. Math. (1961), no.~11, 167. \MR{217085}

\bibitem[Guo19]{Guo1}
Haoyang Guo, \emph{Hodge-{T}ate decomposition for non-smooth spaces},
  \url{https://arxiv.org/abs/1909.09917}, 2019, Preprint.

\bibitem[Guo21]{Guo2}
\bysame, \emph{Crystalline cohomology of rigid analytic spaces},
  \url{https://arxiv.org/abs/2112.14304}, 2021, Preprint.

\bibitem[Han21]{Hans_supercusp}
David Hansen, \emph{On the supercuspidal cohomology of basic local {S}himura
  varieties}, \url{http://www.davidrenshawhansen.com/middle.pdf}, 2021,
  Preprint.

\bibitem[HK94]{HK}
Osamu Hyodo and Kazuya Kato, \emph{Semi-stable reduction and crystalline
  cohomology with logarithmic poles}, no. 223, 1994, P\'{e}riodes $p$-adiques
  (Bures-sur-Yvette, 1988), pp.~221--268. \MR{1293974}

\bibitem[Hub96]{Huberbook}
Roland Huber, \emph{\'{E}tale cohomology of rigid analytic varieties and adic
  spaces}, Aspects of Mathematics, E30, Friedr. Vieweg \& Sohn, Braunschweig,
  1996. \MR{1734903}

\bibitem[Kat89]{Kato-log}
Kazuya Kato, \emph{Logarithmic structures of {F}ontaine-{I}llusie}, Algebraic
  analysis, geometry, and number theory ({B}altimore, {MD}, 1988), Johns
  Hopkins Univ. Press, Baltimore, MD, 1989, pp.~191--224. \MR{1463703}

\bibitem[Kat94]{Katolast}
\bysame, \emph{Semi-stable reduction and {$p$}-adic \'{e}tale cohomology}, no.
  223, 1994, P\'{e}riodes $p$-adiques (Bures-sur-Yvette, 1988), pp.~269--293.
  \MR{1293975}

\bibitem[Kie67]{Kiehl0}
Reinhardt Kiehl, \emph{Der {E}ndlichkeitssatz f\"{u}r eigentliche {A}bbildungen
  in der nichtarchimedischen {F}unktionentheorie}, Invent. Math. \textbf{2}
  (1967), 191--214. \MR{210948}

\bibitem[Kos22]{logprism}
Teruhisa Koshikawa, \emph{Logarithmic {P}rismatic {C}ohomology {I}},
  \url{https://arxiv.org/abs/2007.14037}, 2022, Preprint.

\bibitem[LB18a]{LeBras1}
Arthur-C\'{e}sar Le~Bras, \emph{Espaces de {B}anach-{C}olmez et faisceaux
  coh\'{e}rents sur la courbe de {F}argues-{F}ontaine}, Duke Math. J.
  \textbf{167} (2018), no.~18, 3455--3532. \MR{3881201}

\bibitem[LB18b]{LeBras2}
\bysame, \emph{Overconvergent relative de {R}ham cohomology over the
  {F}argues-{F}ontaine curve}, \url{https://arxiv.org/abs/1801.00429v2}, 2018,
  Preprint.

\bibitem[LBV21]{LBV}
Arthur-C\'{e}sar Le~Bras and Alberto Vezzani, \emph{The de
  {R}ham-{F}argues-{F}ontaine cohomology},
  \url{https://arxiv.org/abs/2105.13028}, 2021, Preprint.

\bibitem[LM13]{Lan-Mur}
Andreas Langer and Amrita Muralidharan, \emph{An analogue of {R}aynaud's
  theorem: weak formal schemes and dagger spaces}, M\"{u}nster J. Math.
  \textbf{6} (2013), no.~1, 271--294. \MR{3148213}

\bibitem[Lur09]{HTT}
Jacob Lurie, \emph{Higher topos theory}, Annals of Mathematics Studies, vol.
  170, Princeton University Press, Princeton, NJ, 2009. \MR{2522659}

\bibitem[Lur17]{HA}
\bysame, \emph{Higher {A}lgebra},
  \url{https://www.math.ias.edu/~lurie/papers/HA.pdf}, 2017, Preprint.

\bibitem[Lur18]{LurSpectral}
\bysame, \emph{Spectral {A}lgebraic {G}eometry},
  \url{https://www.math.ias.edu/~lurie/papers/SAG-rootfile.pdf}, 2018,
  Preprint.

\bibitem[LZ17]{LZ}
Ruochuan Liu and Xinwen Zhu, \emph{Rigidity and a {R}iemann-{H}ilbert
  correspondence for {$p$}-adic local systems}, Invent. Math. \textbf{207}
  (2017), no.~1, 291--343. \MR{3592758}

\bibitem[Mat80]{Matsumura}
Hideyuki Matsumura, \emph{Commutative algebra}, second ed., Mathematics Lecture
  Note Series, vol.~56, Benjamin/Cummings Publishing Co., Inc., Reading, Mass.,
  1980. \MR{575344}

\bibitem[Mer72]{Meredith}
David Meredith, \emph{Weak formal schemes}, Nagoya Math. J. \textbf{45} (1972),
  1--38. \MR{330167}

\bibitem[Mok93]{Mokrane}
A.~Mokrane, \emph{La suite spectrale des poids en cohomologie de
  {H}yodo-{K}ato}, Duke Math. J. \textbf{72} (1993), no.~2, 301--337.
  \MR{1248675}

\bibitem[Niz06]{Niziol_toric}
Wies{\l}awa Nizio{\l}, \emph{Toric singularities: log-blow-ups and global
  resolutions}, J. Algebraic Geom. \textbf{15} (2006), no.~1, 1--29.
  \MR{2177194}

\bibitem[Niz08]{Niziol_semi}
\bysame, \emph{Semistable conjecture via {$K$}-theory}, Duke Math. J.
  \textbf{141} (2008), no.~1, 151--178. \MR{2372150}

\bibitem[Niz19]{NizFF}
\bysame, \emph{Geometric syntomic cohomology and vector bundles on the
  {F}argues-{F}ontaine curve}, J. Algebraic Geom. \textbf{28} (2019), no.~4,
  605--648. \MR{3994308}

\bibitem[Niz21]{NizSurvey}
\bysame, \emph{Hodge theory of {$p$}-adic varieties: a survey}, Ann. Polon.
  Math. \textbf{127} (2021), no.~1-2, 63--86. \MR{4358434}

\bibitem[Sch13a]{Scholze}
Peter Scholze, \emph{{$p$}-adic {H}odge theory for rigid-analytic varieties},
  Forum Math. Pi \textbf{1} (2013), e1, 77. \MR{3090230}

\bibitem[Sch13b]{ScholzeSurvey}
\bysame, \emph{Perfectoid spaces: a survey}, Current developments in
  mathematics 2012, Int. Press, Somerville, MA, 2013, pp.~193--227.
  \MR{3204346}

\bibitem[Sch21]{Scholze3}
\bysame, \emph{\'{E}tale cohomology of diamonds},
  \url{https://arxiv.org/abs/1709.07343}, 2021, Preprint.

\bibitem[ST03]{STJ}
Peter Schneider and Jeremy Teitelbaum, \emph{Algebras of {$p$}-adic
  distributions and admissible representations}, Invent. Math. \textbf{153}
  (2003), no.~1, 145--196. \MR{1990669}

\bibitem[{Sta}20]{Thestack}
The {Stacks Project Authors}, \emph{Stacks {P}roject},
  \url{https://stacks.math.columbia.edu}, 2020.

\bibitem[SW20]{SW}
Peter Scholze and Jared Weinstein, \emph{Berkeley {L}ectures on $p$-adic
  {G}eometry: ({AMS}-207)}, Princeton University Press, 2020.

\bibitem[Tem12]{Temk1}
Michael Temkin, \emph{Functorial desingularization of quasi-excellent schemes
  in characteristic zero: the nonembedded case}, Duke Math. J. \textbf{161}
  (2012), no.~11, 2207--2254. \MR{2957701}

\bibitem[Tem17]{Temk2}
\bysame, \emph{Altered local uniformization of {B}erkovich spaces}, Israel J.
  Math. \textbf{221} (2017), no.~2, 585--603. \MR{3704927}

\bibitem[Tsu99]{Tsuji-Cst}
Takeshi Tsuji, \emph{{$p$}-adic \'{e}tale cohomology and crystalline cohomology
  in the semi-stable reduction case}, Invent. Math. \textbf{137} (1999), no.~2,
  233--411. \MR{1705837}

\bibitem[Vez18]{Vezzani1}
Alberto Vezzani, \emph{The {M}onsky-{W}ashnitzer and the overconvergent
  realizations}, Int. Math. Res. Not. IMRN (2018), no.~11, 3443--3489.
  \MR{3810223}

\bibitem[Zav22]{Zavyalov1}
Bogdan Zavyalov, \emph{Almost coherent modules and almost coherent sheaves},
  \url{https://arxiv.org/pdf/2110.10773.pdf}, 2022, Preprint.

\end{thebibliography}
\bibliographystyle{amsalpha}

\end{document}